\newcommand{\la}{\mathcal {L}_{\in}} 
\newcommand{\R}{\mathbb {R}}
\newcommand{\F}{\mathcal{F}}
\newcommand{\lx}{L_\alpha [x]}
\newcommand{\lxg}{L_\alpha [x,G]}
\newcommand{\lr}{L_\alpha (\R^+)}
\newcommand{\wfc}{\text{wfc}}
\newcommand{\V}{\mathcal {V}}
\newcommand{\M}{\mathcal {M}}
\newcommand{\N}{\mathcal {N}}
\newcommand{\Q}{\mathcal {Q}}
\newcommand{\D}{\mathcal {D}}
\newcommand{\finite}{\text{fin}}
\newcommand{\mad}{\M^{\ad}}
\newcommand{\E}{\mathbb{E}} 							
\newcommand{\fin}[1]{[#1]^{<\omega}}   				
\newcommand{\Ult}[2]{\mathrm{Ult}(#1,#2)}
\newcommand{\fu}[3]{\mathrm{Ult}_{#3}(#1,#2)}
\newcommand{\Hull}[2]{\mathrm{Hull}_{#2}^{#1}}
\newcommand{\cHull}[2]{\mathrm{cHull}_{#2}^{#1}}
\newcommand{\T}{\mathcal{T}}
\newcommand{\U}{\mathcal{U}}
\DeclareMathOperator{\crt}{crit}
\DeclareMathOperator{\card}{Card}
\DeclareMathOperator{\lh}{lh}
\DeclareMathOperator{\wfp}{wfp}
\DeclareMathOperator{\ran}{ran}
\DeclareMathOperator{\lgcd}{lgcd}
\DeclareMathOperator{\OR}{OR}
\DeclareMathOperator{\dom}{dom}
\DeclareMathOperator{\dir}{dir}
\DeclareMathOperator{\rk}{rk}
\DeclareMathOperator{\col}{Col}
\DeclareMathOperator{\ad}{ad}
\DeclareMathOperator{\kp}{KP}
\DeclareMathOperator{\hod}{HOD}
\DeclareMathOperator{\tho}{Th}
\DeclareMathOperator{\id}{id}
\DeclareMathOperator{\zf}{ZF}
\DeclareMathOperator{\zfc}{ZFC}
\DeclareMathOperator{\fml}{Fml}
\DeclareMathOperator{\tc}{tc}
\DeclareMathOperator{\HC}{HC}
\DeclareMathOperator{\od}{OD}
\DeclareMathOperator{\lv}{lv}
\newcommand{\prf}{\textsc{Proof.} }            
\newcommand{\eprf}{\hfill \qed \vspace*{5mm}}   
\theoremstyle{plain}
\newcounter{claimcounter}
\newcounter{subclaimcounter}
\newtheorem{theorem}{Theorem}[section]
\newtheorem{claim}[claimcounter]{Claim}
\newtheorem{subclaim}[subclaimcounter]{Subclaim}
\newtheorem{cor}[theorem]{Corollary}
\newtheorem{lem}[theorem]{Lemma}
\newcounter{casecounter}
\theoremstyle{remark}
\newtheorem{case}[casecounter]{Case}
\newcounter{casecounter a}
\newtheorem{case a}[casecounter a]{Case}
\newcounter{claimcounter a}
\newtheorem{claim a}[claimcounter a]{Claim}
\newtheorem*{claim*}{\protect\claimname}
\newtheorem*{notation*}{Notation}
\newtheorem*{theorem*}{Theorem}
\theoremstyle{remark}
\newtheorem{rem}[theorem]{Remark}
\newtheorem{dfn}[theorem]{Definition}
\title{Analysis of HOD for Admissible Structures}
\author[1]{Jan Kruschewski}
\author[1]{Farmer Schlutzenberg}
\affil[1]{Institut für Diskrete Mathematik und Geometrie, TU Wien}
\date{}                     
\begin{document}
    
\maketitle

\begin{abstract}
    Let $n \geq 1$ and assume that there is a Woodin cardinal. For $x \in \R$ let $\alpha_x$ be the least $\beta$ such that 
    \[
        L_\beta [x] \models \Sigma_n \text{-} \kp + \exists \kappa (``\kappa \text{ is inaccessible and }\kappa^+ \text{ exists}").
    \]
    We adapt the analysis of $\text{HOD}^{L[x,G]}$ as a strategy mouse to $L_{\alpha_x}[x,G]$ for a cone of reals $x$. That is, we identify a mouse $\M^{\text{n-ad}}$ and define a class $H \subseteq L_{\alpha_x}[x,G]$ as a natural analogue of $\text{HOD}^{L[x,G]} \subseteq L[x,G]$, and  show that $H = M_\infty[\Sigma_0]$, where $M_\infty$ is an iterate of $\M^{\text{n-ad}}$ and $\Sigma_0$ a fragment of its iteration strategy. 
\end{abstract}

\tableofcontents

\section{Introduction}

In models of $\zfc$, the class HOD of hereditarily ordinal definable sets is an inner model of ZFC. However, unlike $L$, it is not absolutely definable, so that for example $\hod$ as computed in $\hod$ need not be equal to $\hod$. Moreover, unlike $L$, there is no general fine structural theory for $\hod$ which makes it difficult to show that combinatorial properties like $\Diamond$ or $\square$ hold in $\hod$.

One of the current key applications of inner model theory is to understand the $\hod$ of models of the Axiom of Determinacy. A general strategy often referred to as $\hod$-analysis has been developed to show that the $\hod$ of inner models of determinacy is a mouse together with fragments of its own iteration strategy, a structure often referred to as a $\hod$ mouse, and therefore a fine-structural model of $\zfc$.

The most basic example of such an analysis is the one of $\hod$ of $L[x,G]$, where $M_1^\sharp \leq_T x$ and $G$ is $(L[x],\col(\omega, {<\kappa_x}))$-generic, where $\kappa_x$ is the least inaccessible cardinal of $L[x]$, under the assumption of ${\undertilde{\Delta}^1_2}$-determinacy. In this context the determinacy hypothesis ensures that for every real $x$, $M_1^\sharp (x)$ exists and is $(\omega,\omega_1,\omega_1)$-iterable.

The goal of this paper is to adapt this technique to a context where the determinacy model, which is usually a model of ZF, is replaced by an admissible structure. In order to make this precise let us introduce the following definitions.
\begin{dfn}
    Let $\mathcal{L}_{\dot{\in}} = \{ \dot{=},\dot{\in} \}$ be the $\textit{language of set theory}$, short LST. Let $\mathcal{L}_{\dot{\in},\dot{E}} = \mathcal{L}_{\dot{\in}} \cup \{ \dot{E} \}$, where $\dot{E}$ is a predicate symbol, $\mathcal{L}_{\dot{\in},\dot{\R}} = \mathcal{L}_{\dot{\in}} \cup \{ \dot{\R} \}$, where $\dot{\R}$ is a constant symbol, and let $\mathcal{L}_{\mathrm{pm}}$ be the language of premice defined as in Definition 2.10 of \cite{Steel2010}.
\end{dfn}

\begin{dfn} \label{dfn: kp}
Let $\mathcal{L} \supseteq \mathcal{L}_{\dot{\in}}$ be an extension of $\mathcal{L}_{\dot{\in}}$ and let $k \geq 1$ be a natural number.
$\Sigma_k \textit{-Kripke-Platek set theory in the language } \mathcal{L}$, short $\Sigma_k \text{-}\kp_{\mathcal{L}}$, is the theory in the language $\mathcal{L}$ which consists of the following axioms of Extensionality, Pairing, Union, Infinity, and the following:
\begin{itemize}
    \item Foundation\footnote{Note that our notion of foundation deviates from the one in \cite{Barwise_2017} for the case $n=1$.}, i.e.~$\forall x (x \neq \emptyset \rightarrow \exists y(y \in x \land x \cap y = \emptyset))$,
    \item $\Delta_k$-Aussonderung, i.e.~letting for $\Sigma_k$ formulas $\varphi, \psi$ of the language $\mathcal{L}$, $\Phi_{\varphi,\psi}(\vec{x}) \equiv \forall z (\varphi(z,\vec{x}) \leftrightarrow \neg \psi(z,\vec{x}))$, we have for each pair of $\Sigma_k$ formulas $\varphi,\psi$ the axiom
    \[
        \forall v_1...\forall v_n [\Phi_{\varphi,\psi}(v_1,\dots,v_n) \rightarrow \forall a \exists b \forall x (x \in b \leftrightarrow x \in a \land \varphi(x,v_1,...,v_n))],
    \]

    \item $\Sigma_k$-Collection, i.e.~for all $\Sigma_k$ formulas $\varphi$ in the language $\mathcal{L}$,
    \begin{align*}
        \begin{split}
            \forall a \forall & v_1...\forall v_n [(\forall x  \in a \exists y \varphi(x,y,v_1,...,v_n)) \rightarrow  \\
            & (\exists b \forall x \in a \exists y \in b \varphi(x,y,v_1,...,v_n))].
        \end{split}
    \end{align*}
\end{itemize}
\end{dfn}

If $\mathcal{L}$ is clear from the context, we write $\Sigma_k \text{-}\kp$ instead of $\Sigma_k \text{-}\kp_{\mathcal{L}}$.

\begin{lem} \label{lem: axiomatizability}
    $\Sigma_k \text{-}\kp_{\mathcal{L}}$ is $\Pi_{k+2}$-axiomatizable, for $\mathcal{L} \supseteq \mathcal{L}_{\dot{\in}}$.
\end{lem}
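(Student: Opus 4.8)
The plan is to replace each axiom of $\Sigma_k\text{-}\kp_{\mathcal{L}}$ by a logically equivalent $\Pi_{k+2}$-sentence; since every such rewriting is a genuine logical equivalence, the set of resulting sentences has exactly the models of $\Sigma_k\text{-}\kp_{\mathcal{L}}$, which is what ``$\Pi_{k+2}$-axiomatizable'' demands. I work throughout with the convention (as in \cite{Barwise_2017}) under which the L\'evy classes $\Sigma_j$, $\Pi_j$ over $\mathcal{L}$ are closed under $\wedge$, $\vee$ and bounded quantification, $\Sigma_j$ is moreover closed under unbounded $\exists$ and $\Pi_j$ under unbounded $\forall$, $\Sigma_j\cup\Pi_j\subseteq\Sigma_{j+1}\cap\Pi_{j+1}$, and (after passing to negation normal form) $\neg$ interchanges $\Sigma_j$ and $\Pi_j$; these closure facts, proved by a routine simultaneous induction on $j$, I would record beforehand. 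The extra symbols of $\mathcal{L}$ enter only inside atomic---hence $\Delta_0$---subformulas, so the whole argument is uniform in $\mathcal{L}$.

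The finitely many non-schematic axioms are dispatched at once: after eliminating $\rightarrow$, $\leftrightarrow$, pushing negations inward, and paraphrasing ``$u=\emptyset$'', ``$u\cap v=\emptyset$'', ``$v=u\cup\{u\}$'' and the like in their obvious $\Delta_0$ forms, Extensionality, Pairing and Union are $\Pi_2$, Foundation (in the form stated) is $\Pi_1$, and Infinity may be taken to be the $\Sigma_1$-sentence asserting the existence of a set containing $\emptyset$ and closed under $u\mapsto u\cup\{u\}$; since $k\ge 1$ these all lie in $\Pi_{k+2}$. The $\Sigma_k$-Collection instance for a $\Sigma_k$-formula $\varphi$,
\[
\forall a\,\forall \vec v\,\bigl[\,(\forall x\in a\,\exists y\,\varphi)\rightarrow(\exists b\,\forall x\in a\,\exists y\in b\,\varphi)\,\bigr],
\]
is just as painless: closure of $\Sigma_k$ under bounded quantification and unbounded $\exists$ makes both $\forall x\in a\,\exists y\,\varphi$ and $\exists b\,\forall x\in a\,\exists y\in b\,\varphi$ into $\Sigma_k$-formulas, so the bracketed implication is $\Sigma_{k+1}$ and prefixing the universal block yields a $\Pi_{k+2}$-sentence.

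The one step that genuinely requires an idea is $\Delta_k$-Aussonderung, whose instance for $\Sigma_k$-formulas $\varphi,\psi$ reads $\forall \vec v\,[\,\Phi_{\varphi,\psi}(\vec v)\rightarrow\forall a\,\exists b\,\forall x\,(x\in b\leftrightarrow(x\in a\wedge\varphi))\,]$. A naive count only yields $\Pi_{k+3}$: the biconditional matrix is a Boolean combination of $\Delta_0$- and $\Sigma_k$-formulas, hence at best $\Delta_{k+1}$, and the block $\forall a\,\exists b\,\forall x$ then costs three more alternations. The point is to exploit the premise $\Phi_{\varphi,\psi}(\vec v)\equiv\forall z\,(\varphi(z,\vec v)\leftrightarrow\neg\psi(z,\vec v))$ to lower the matrix: modulo $\Phi_{\varphi,\psi}(\vec v)$, the matrix $x\in b\leftrightarrow(x\in a\wedge\varphi)$ is equivalent to
\[
\mu(x,a,b,\vec v)\ \equiv\ \bigl(x\notin b\ \vee\ (x\in a\wedge\neg\psi(x,\vec v))\bigr)\ \wedge\ \bigl(x\notin a\ \vee\ \neg\varphi(x,\vec v)\ \vee\ x\in b\bigr),
\]
where one deliberately uses $\neg\psi$ (which is $\Pi_k$) for the ``$x\in b\to x\in a\wedge\varphi$'' direction and $\neg\varphi$ (also $\Pi_k$) for the ``$x\in a\wedge\varphi\to x\in b$'' direction, so that $\mu$ is $\Pi_k$. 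Since $\Phi_{\varphi,\psi}(\vec v)$ logically implies $(x\in b\leftrightarrow(x\in a\wedge\varphi))\leftrightarrow\mu(x,a,b,\vec v)$---mere substitution of provably equivalent subformulas---the whole instance is logically equivalent to
\[
\forall \vec v\,\bigl[\,\neg\Phi_{\varphi,\psi}(\vec v)\ \vee\ \forall a\,\exists b\,\forall x\,\mu(x,a,b,\vec v)\,\bigr].
\]
Now $\forall x\,\mu$ is $\Pi_k$, so $\exists b\,\forall x\,\mu$ is $\Sigma_{k+1}$ and $\forall a\,\exists b\,\forall x\,\mu$ is $\Pi_{k+2}$, while $\Phi_{\varphi,\psi}$ is $\Pi_{k+1}$ so $\neg\Phi_{\varphi,\psi}$ is $\Sigma_{k+1}\subseteq\Pi_{k+2}$, and the disjunction together with the outer $\forall\vec v$ keeps everything in $\Pi_{k+2}$.

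The only non-mechanical point is thus this last case, and within it the recognition that the premise $\Phi_{\varphi,\psi}$ must be brought in precisely to trade $\varphi$ for its $\Pi_k$-twin $\neg\psi$ in the ``$\subseteq a$''-direction of the biconditional: this is exactly what pins the separated set down by a $\Pi_k$-condition and buys back the one quantifier level. Everything else is careful L\'evy-hierarchy bookkeeping, whose sole delicate aspect is to fix at the outset the convention that $\Sigma_j$ and $\Pi_j$ absorb bounded quantifiers.
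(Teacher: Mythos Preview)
Your argument is correct, and it differs from the paper's proof in an interesting way. The paper proceeds by induction on $k$: assuming $\Sigma_{k-1}\text{-}\kp$ is $\Pi_{k+1}$-axiomatizable, it argues that each $\Sigma_k$-Collection instance is equivalent, \emph{over $\Sigma_{k-1}\text{-}\kp$}, to a $\Pi_{k+2}$-sentence (by pushing the bounded quantifier $\exists y\in b$ past the outer existential of $\varphi$ and then invoking the inductive theory to normalize $\exists y\in b\,\forall x_2\,\psi$); it simply asserts without detail that $\Delta_k$-Aussonderung is $\Pi_{k+2}$-expressible. You instead fix the syntactic convention that the L\'evy classes are closed under bounded quantification, which makes Collection immediate (both $\forall x\in a\,\exists y\,\varphi$ and $\exists b\,\forall x\in a\,\exists y\in b\,\varphi$ are then literally $\Sigma_k$, so the implication is $\Sigma_{k+1}$ and the universal closure is $\Pi_{k+2}$), and you spend your effort on Aussonderung, where your key move---using the hypothesis $\Phi_{\varphi,\psi}$ to rewrite the biconditional matrix with $\neg\psi$ in the ``$\subseteq$'' direction and $\neg\varphi$ in the ``$\supseteq$'' direction, yielding a $\Pi_k$ matrix $\mu$---is exactly the point the paper suppresses.

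The trade-off is this: your direct route avoids the induction entirely and supplies the Aussonderung argument the paper omits, at the cost of committing to the bounded-quantifier-absorbing convention for the L\'evy hierarchy (which you flag explicitly). The paper's inductive route is tailored to the strict hierarchy, getting closure under bounded quantifiers only provably over $\Sigma_{k-1}\text{-}\kp$; this is arguably more careful about which syntactic class is meant, but the two notions coincide over any model of $\Sigma_1\text{-}\kp$, and for the sole application of this lemma (reflecting $\tho_k$ into $\Hull{\M}{k+1}(\emptyset)$ in Lemma~\ref{lem: standard parameter of admissible mice}) either reading suffices.
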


\prf
    We argue by induction. In the case that $k=1$ this is clear. So suppose that $k > 1$ and that $\Sigma_{k-1}$-KP is $\Pi_{k+1}$-axiomatizable. It is easy to see that the scheme of $\Delta_k$-Aussonderung is $\Pi_{k+2}$-expressible. Thus, it suffices to see that we can express the $\Sigma_k$-Collection scheme in a $\Pi_{k+2}$-way over the theory $\Sigma_{k}$-KP. Let $\varphi \equiv \exists x_1 \forall x_2 \psi$ be $\Sigma_k$ in the language $\mathcal{L}$, where $\psi$ is $\Sigma_{k-2}$. Note that $\exists y \in b \exists x_1 \forall x_2 \psi$ is equivalent to $\exists x_1  \exists y \in b \forall x_2 \psi$. By $\Sigma_{k-1}$-KP, the formula $\exists y \in b \forall x_2 \psi$ is equivalent to a $\Sigma_{k-1}$ formula, so that $\exists y \in b \exists x_1 \forall x_2 \psi$ is equivalent to a $\Sigma_{k-1}$ formula over $\Sigma_{k-1}$-KP. But then it follows that every instance of the $\Sigma_k$-Collection scheme is $\Pi_{k+2}$ over $\Sigma_{k-1}$-KP.
\eprf

\begin{dfn} \label{def: base theory}
    For $n \geq 1$ let $\tho^\prime_n$ be the $\mathcal{L}_{\dot{\in},\dot{E}}$-theory consisting of the following statements:
\begin{itemize}
    \item $\Sigma_n \text{-}\kp_{{\mathcal{L}_{\dot{\in},\dot{E}}}}$,
    \item $V \text{=} L[\dot{E}]$\footnote{See Definition 18 of \cite{KRUSCHEWSKI_SCHLUTZENBERG_2025} for a definition.}, and
    \item $\exists \kappa \exists \delta (``\delta \text{ is Woodin}" \land ``\kappa \text{ is inaccessible}" \land~\kappa > \delta~\land ``\kappa^+ \text{ exists}")$,
\end{itemize}

and let $\tho_n$ be the $\mathcal{L}_{\dot{\in},\dot{E}}$-theory which consists of $\tho_n^\prime$ and the statement
\[
    \forall \alpha(L_\alpha[\dot{E}] \not \models \tho^\prime_n).
\]
\end{dfn}

\begin{dfn}
    For $n \geq 1$ let $\M^{n\text{-ad}}$ be the minimal $(n+1)$-sound premouse which models $\tho_n$ and is $(n, \omega_1, \omega_1+1)^*$-iterable.\footnote{See the paragraph before Corollary 1.10 in \cite{Steel_2002} for the definition of $(n, \omega_1, \omega_1+1)^*$-iterability.}

    Let $\delta^{\M^{n\text{-ad}}}$ be the unique Woodin cardinal of $\M^{n\text{-ad}}$ and $\kappa^{\M^{n\text{-ad}}}$ be the unique inaccessible cardinal of $\M^{n\text{-ad}}$ which is greater than $\delta^{\M^{n\text{-ad}}}$.

    Let $\Sigma^{\M^{n\text{-ad}}}$ be an $(n, \omega_1, \omega_1+1)^*$-iteration strategy for $\M^{n\text{-ad}}$.
\end{dfn}

At the end of Section \ref{higher admissible premice} we will show that $\M^{\text{n-ad}}$ exists assuming that there is a Woodin cardinal. Moreover, we will show that $\rho_{n+1}^{\M^{n\text{-ad}}} = \omega$.

We will now fix $n \geq 1$ until the end of the paper and refer to $\tho_n$, $\tho^\prime_n$, $\Sigma^{\M^{n\text{-ad}}}$, and $\M^{n\text{-ad}}$ simply as $\tho$, $\tho^\prime$, $\Sigma^{\mad}$, and $\mad$.

\begin{dfn} \label{after fixing n}
    For $x \in \R$ let $\alpha_x$ be the least $\beta$ such that 
    \[
        L_\beta [x] \models \Sigma_n \text{-} \kp~+~\exists \kappa (``\kappa \text{ is inaccessible and }\kappa^+ \text{ exists}").
    \]
\end{dfn}

Fix $x \in \R$ such that $\mad \leq_T x$ and let $\alpha = \alpha_x$. We will denote by $\kappa$ the unique inaccessible cardinal of $\lx$. Since $\M^{\ad}$ is recursive in $x$, $\mad \in \lx$ and $\OR^{\mad} < \omega_1^{\lx} < \kappa$.

The idea is now to replace $L[x,G]$ from the classical $\hod$ analysis with $\lxg$, where $G$ is  $(\lxg,\col(\omega, {<\kappa}))$-generic, and find an appropriate version of ``the $\hod$ of $\lxg$" which takes the role of $\hod$ in the classical analysis. The appropriate version of ``the $\hod$ of $\lxg$" is given by the following definition.

\begin{dfn}
    For $G$ which is $(\lxg,\col(\omega, {<\kappa}))$-generic and $X \in \lxg$ let  $\Sigma_n \text{-}\od_{\{X\}}^{\lxg}$ be the class of all $y \in \lxg$ which are ordinal definable over $\lxg$ from the parameter $\{X\}$ via a $\Sigma_n$ formula in the language $\mathcal{L}_{\dot{\in}}$, i.e.~there is a $\Sigma_n$ formula $\varphi$ in the language $\mathcal{L}_{\dot{\in}}$ and ordinals $\alpha_1,...,\alpha_m < \alpha$ such that for all $z \in \lxg$,
    \[
        z \in y \iff \lxg \models \varphi (z,\alpha_1,...,\alpha_m,X).
    \]
    Let
    \[
        \Sigma_n \text{-} \hod_{\{X\}}^{\lxg} = \{ y \colon \tc (\{ y \}) \subset \Sigma_n \text{-OD}_{\{X\}}^{\lxg}    \}.
    \]
    We write $\Sigma_n \text{-} \hod^{\lxg}$ for $\Sigma_n \text{-} \hod_{\{\emptyset\}}^{\lxg}$.
\end{dfn}

\begin{rem}
    Note that in the definition of $\Sigma_n \text{-}\od_{\{X\}}^{\lxg}$, we could also equivalently require that $y \in \Sigma_n \text{-}\od_{\{X\}^{\lxg}}$ iff $\{ y \}$ is ordinal definable over $\lxg$ from the parameter $\{ X\}$ for $y \in \lxg$, i.e.~there is a $\Sigma_n$ formula $\varphi$ in the language $\mathcal{L}_{\dot{\in}}$ and ordinals $\alpha_1,...,\alpha_m < \alpha$ such that for all $z \in \lxg$,
    \[
        z = y \iff \lxg \models \varphi (z,\alpha_1,...,\alpha_m,X).
    \]
    This is equivalent to our definition, since $\lxg$ is a model of $\Sigma_n$-Collection.
\end{rem}

We will then be able to show the following theorem.

\begin{theorem}
    Let $G$ be $(\lxg,\col(\omega, {<\kappa}))$-generic. There is a countable $\Sigma^{\mad}$-iterate $M_\infty$ of $\mad$ such that there is a fragment $\Sigma_0$ of the tail strategy of $\Sigma^{\mad}$ given by $M_\infty$ such that:
    \begin{enumerate}
        \item $\Sigma_n \text{-} \hod_{\{ \R^{\lxg} \}}^{\lxg} = M_\infty [\Sigma_0]$,
        \item $\Sigma_n \text{-} \hod_{\{ \R^{\lxg} \}}^{\lxg} \models \Sigma_n \text{-KP} \land \exists \delta (``\delta \text{ is Woodin}")$, and
        \item $\Sigma_n \text{-} \hod_{\{ \R^{\lxg} \}}^{\lxg}$ is a forcing ground of $\lxg$.
    \end{enumerate}
\end{theorem}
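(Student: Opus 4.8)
\emph{Strategy and Step~1.} The plan is to transcribe, step by step, the classical analysis of $\hod^{L[x,G]}$ as a strategy mouse, replacing $M_1^\sharp$ by $\mad$ and $\zfc$ by $\Sigma_n\text{-}\kp$, while additionally verifying two calibrations forced by the admissible setting: that the relevant direct limit of $\mad$ has height exactly $\alpha$ and Woodin cardinal exactly $\kappa$. Write $\R^* = \R^{\lxg}$; recall $\kappa = \kappa^{\lx} = \omega_1^{\lxg}$, that $\R^* \in \lxg$ is $\Delta_0$-definable there, and that $\mad \in \lx$ is countable in $\lx$. First I would establish $\R^* = \bigcup\{\R^{L_\alpha[x,G\restriction\gamma]} : \gamma < \kappa\}$, using that $\kappa$ is inaccessible in $\lx$, that $\col(\omega,{<\kappa})$ is a finite-support product, and the $\Sigma_n$-admissibility of $\lx$. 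Then, for each $y \in \R^*$, I would run the genericity iteration of $\mad$ via $\Sigma^{\mad}$ at $\delta^{\mad}$ and its images, yielding a countable $\Sigma^{\mad}$-iterate $N_y$ of $\mad$ such that $y$ is generic over $N_y$ for the extender algebra $\B^{N_y}_{\delta_{N_y}}$, where $\delta_{N_y} = \pi_{\mad,N_y}(\delta^{\mad})$. Everything hinges on the assignment $y \mapsto (N_y, \pi_{\mad,N_y})$ being $\Sigma_n$-definable over $\lxg$: since $\mad$ is minimal for $\tho$, the $\Q$-structures guiding the portions of these trees below $\delta^{\mad}$ and its images are proper initial segments of the trees' own last models, so correct cofinal branches are picked out by a first-order condition, and the recursion producing $N_y$ from $y$ runs using only the $\Sigma_n$-admissibility of $\lxg$; this is exactly the point at which the minimality clause of $\tho$ and the choice of $\alpha_x$ in Definition~\ref{after fixing n} are matched.

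\emph{Step~2: the direct limit and $M_\infty[\Sigma_0] \subseteq \Sigma_n\text{-}\hod$.} By the comparison theorem and the Dodd--Jensen property of $\Sigma^{\mad}$, the countable $\Sigma^{\mad}$-iterates of $\mad$ obtained by iterating below $\delta^{\mad}$ form a directed system $\F$ with commuting iteration maps, in which the $N_y$ are cofinal. Let $M_\infty = \varinjlim\F$, with limit maps $\pi_{N,\infty}$ and $\delta_\infty = \pi_{\mad,\infty}(\delta^{\mad})$, and let $\Sigma_0$ be the fragment of the tail strategy of $\Sigma^{\mad}$ given by $M_\infty$ that acts on trees in $\lxg$ based on $M_\infty|\delta_\infty$. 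By Step~1, the system $\F$, the maps $\pi_{N,\infty}$, the model $M_\infty$, and (by the same $\Q$-structure analysis) the fragment $\Sigma_0$ are all $\Sigma_n$-definable over $\lxg$ from $\R^*$; since $M_\infty[\Sigma_0]$ is assembled by a $\Sigma_n\text{-}\od_{\{\R^*\}}$ strategy-premouse hierarchy over $M_\infty$, it carries a wellorder lying inside $\Sigma_n\text{-}\od_{\{\R^*\}}^{\lxg}$, so $M_\infty[\Sigma_0] \subseteq \Sigma_n\text{-}\hod_{\{\R^*\}}^{\lxg}$. Genericity iterations of $M_\infty$ via $\Sigma_0$ give $\R^* \subseteq M_\infty[\Sigma_0]$, and $M_\infty[\Sigma_0] \models \Sigma_n\text{-}\kp \land \exists\delta\,(``\delta\text{ is Woodin}'')$ because $M_\infty$ is a $\Sigma^{\mad}$-iterate of $\mad \models \tho$ and adjoining $\Sigma_0$ preserves these statements by the basic theory of strategy premice; this is part~(2). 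On the way I would check $\delta_\infty = \kappa$ (each $\delta_{N_y}$ is countable in $\lxg$, and the $\delta_{N_y}$ are cofinal in $\kappa$ since for every $\gamma < \kappa$ some $y \in \R^*$ codes a wellorder of order type $\geq \gamma$) and $\OR^{M_\infty} = \alpha$ (the inclusion $M_\infty[\Sigma_0] \subseteq \lxg$ gives $\OR^{M_\infty} \leq \alpha$, and the reverse follows from the minimality clause of $\tho$ together with the minimality of $\alpha_x$, applied to $M_\infty[x]$, which absorbs the extender sequence of $M_\infty$ below $\delta_\infty$).

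\emph{Step~3: $\Sigma_n\text{-}\hod \subseteq M_\infty[\Sigma_0]$ and the forcing ground.} Here I would follow the $\zfc$ template. Since $x \in \R^*$, $x$ is $\B^{M_\infty}_{\delta_\infty}$-generic over $M_\infty$; combining the absorption $\B^{M_\infty}_{\delta_\infty} \ast \dot{\col}(\omega,{<\delta_\infty}) \cong \col(\omega,{<\delta_\infty})$ (both sides have cardinality $\delta_\infty = \kappa$, are $\kappa$-c.c., and collapse every ordinal below $\kappa$) with $\OR^{M_\infty} = \alpha$, one obtains $\lxg = M_\infty[\Sigma_0][h]$ for some $\col(\omega,{<\kappa})$-generic $h$ over $M_\infty[\Sigma_0]$, which is part~(3). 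Finally, $\col(\omega,{<\kappa})$ is weakly homogeneous in $M_\infty[\Sigma_0]$ and $\R^* \in M_\infty[\Sigma_0]$, so for a set $A \subseteq \OR$ which is $\Sigma_n\text{-}\od$ over $\lxg$ from $\{\R^*\}$, the $\Sigma_n$ condition defining ``$\gamma \in A$'' receives the same truth value in every $\col(\omega,{<\kappa})$-generic extension of $M_\infty[\Sigma_0]$; hence $A$ is $\Sigma_n$-definable over $M_\infty[\Sigma_0]$ from $\R^*$ and ordinals, and since $A$ is bounded in $\OR^{M_\infty[\Sigma_0]} = \alpha$ and $M_\infty[\Sigma_0] \models \Delta_n$-Aussonderung, $A \in M_\infty[\Sigma_0]$. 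The standard reduction of hereditarily-$\od$ sets to $\od$ sets of ordinals then yields $\Sigma_n\text{-}\hod_{\{\R^*\}}^{\lxg} \subseteq M_\infty[\Sigma_0]$, completing part~(1).

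\emph{Main obstacle.} I expect the substance to lie in Steps~1 and~2: running the entire $\hod$-analysis apparatus --- genericity iterations, $\Q$-structure-guided branch selection, and the construction of $M_\infty$ and $\Sigma_0$ --- definably over a model satisfying only $\Sigma_n\text{-}\kp$ rather than $\zfc$, and pinning down that $M_\infty$ has height exactly $\alpha$ and Woodin exactly $\kappa$. Once that is in place, Step~3 is essentially the classical homogeneity argument, modulo the bookkeeping forced by strategy premice and by $\Sigma_n\text{-}\kp$ in place of $\zfc$.
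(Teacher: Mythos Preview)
Your outline has two substantive errors that break the argument.

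First, $\delta_\infty = \kappa$ is wrong: the paper shows $\delta_\infty = \theta = \kappa^{+\lxg}$ (it is the \emph{least measurable} of $M_\infty$ that equals $\kappa$). You are conflating $\sup\{\delta^N : N \in \F\}$ with $i^{\F}_{N\infty}(\delta^N)$; the individual Woodins sit below $\kappa$, but the direct limit map pushes them up to $\theta$. This error propagates into Step~3: the absorption $\B^{M_\infty}_{\delta_\infty} * \dot{\col}(\omega,{<\delta_\infty}) \cong \col(\omega,{<\kappa})$ with $\delta_\infty = \kappa$ is unavailable, and the paper's ground argument instead constructs a specific Boolean subalgebra $\mathbb{C} \subseteq \B^{M_\infty}_{\delta_\infty}$ using the $*$-map.

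Second, your homogeneity argument for $\Sigma_n\text{-}\hod \subseteq M_\infty[\Sigma_0]$ assumes $\R^* \in M_\infty[\Sigma_0]$, which is false: $M_\infty[\Sigma_0]$ is a proper forcing ground of $\lxg$ and the generic adds reals (e.g.\ $x \notin M_\infty[\Sigma_0]$). Genericity iterations show each $y \in \R^*$ is \emph{generic over} some iterate, not that $y$ lies in $M_\infty[\Sigma_0]$. The paper circumvents this with the $*$-map $\beta \mapsto i^{\tilde{\D}}_{(N,s)\infty}(\beta)$: for $A$ defined by $\varphi(\cdot,\vec{\gamma},\R^*)$ over $\lxg$, one has $\xi \in A$ iff $M_\infty \models \col(\omega,{<\kappa_\infty}) \Vdash \varphi(\xi^*,\vec{\gamma}^*,\dot{\R})$, and since $* \restriction \beta \in M_\infty[*]$ for bounded $\beta$, this puts $A \in M_\infty[*]$. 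Establishing that $*$ agrees with $M_\infty$'s own internal direct limit map $k$, and hence that $* \restriction \beta \in M_\infty[*]$, requires the internal covering system $\tilde{\D}$ with its $s$-iterability apparatus and the generating fixed-point sequence $S_\infty$ of Section~\ref{section: finding the sequence}; your $\Q$-structure sketch does not supply this, in particular because for $(\kappa,\kappa)$-maximal trees the $\Q$-structure is not in $\lx \vert \kappa$ and branch selection cannot be handled by a simple first-order condition.
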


\begin{rem}
    In the case that $n \geq 2$, we have that $\{\R^{\lxg}\}$ is $\Sigma_n$-definable, so that $\Sigma_n \text{-} \hod_{\{ \R^{\lxg} \}}^{\lxg} = \Sigma_n \text{-} \hod^{\lxg}$.
\end{rem}

In the analysis of $\hod^{L[x,G]}$ it is convenient to have a proper class of fixed points, and in that context these can be taken to be the class of Silver indiscernibles (at least, in the argument from the assumption that $\M_1^{\sharp}$ exists).

In this paper, we will also isolate a class $S_\infty$ of fixed points, rather analogous to the Silver indiscernibles for $\M_1^{\sharp}$ (though they will not be model-theoretic indiscernibles). They will be similarly convenient for the analysis.

There might be ways to avoid the use of $S_\infty$ for the analysis of $\Sigma_n \text{-} \hod$, and hence the work needed to establish its existence. However, the role of the fixed points figures more prominently in the analysis of Varsovian models \cite{sargsyan2021varsovianmodelsii}, \cite{schlutzenberg2022varsovianmodelsomega}, so apart from the convenience of having $S_\infty$ at our disposal, it might also help towards generalizing the analysis carried out here to other admissible contexts. Apart from this, the construction of $S_\infty$ may be of independent interest and have other applications. It involves an analysis of a tree $T$ searching for an illfounded structure whose wellfounded part is $L_{\alpha}(\R^{\lxg})$.
See Section \ref{section: finding the sequence} for the construction of the fixed points and Section \ref{section: generating fixed points} for more details on their use in the analysis.

\section{$\Sigma_k$-admissible premice} \label{higher admissible premice}

In this section, we will investigate the fine structure of passive premice, which model the theory $\Sigma_k$-KP for some $k \geq 1$.

\begin{dfn}
    Let $\M$ be a passive premouse, and $k \geq 1$. We say that $\M$ is $\Sigma_k$\textit{-admissible} if $\M \models \Sigma_k \text{-}\kp_{\mathcal{L}_{\text{pm}}}$.

    We say that $\M$ is an admissible premouse if $\M \models \Sigma_k\text{-}\kp$ for some $k \geq 1$.
\end{dfn}

\begin{rem}
    We restrict our attention to passive premice, since an active premouse cannot model $\Sigma_k$-KP with the active extender as a predicate and without the active extender it is a model of $\zf^-$, so trivially of $\Sigma_k$-KP.

    Note that since we are dealing with passive premice we do not need to consider the complexities of $r\Sigma_m$ formulas of active premice which arise in \cite{Mitchell_Steel_2017}. Moreover, since an admissible premouse is passive, the notions of a premouse and its $\Sigma_0$ code coincide. We will use these facts throughout the paper without further mention.
\end{rem}

The following definition is a special case of Definition 5.2 in \cite{Schlutzenberg2019TheDO}. It will allow us to define an ordering on the $r\Sigma_k$ theory of an admissible premice which will be useful in the fine structural computations and is important for the pruning process described in Lemma \ref{lem: T is illfounded} in Section \ref{section: finding the sequence}.

\begin{dfn} \label{def: minimal skolem terms}
    Let $\varphi$ be a $r\Sigma_{k+1}$ formula of $l+1$ many free variables. The \textit{minimal Skolem term associated with} $\varphi$ is denoted $m\tau_\varphi$ and has $l$ free variables. 

    Let $R$ be a passive $k$-sound premouse with $\rho^R_k > \omega$. We define the partial functions
    \[
        m\tau^R_{\varphi} \colon R^l \to R,
    \]
    and 
    \[
        \lv^R_\varphi \colon R^l \to \OR^R.
    \]
    If $k = 0$ then $m\tau^R_{\varphi}$ is just the usual Skolem function associated with $\varphi$ such that the graph of $m\tau^R_{\varphi}$ is uniformly $r\Sigma^\M_1$, and let $\lv_\varphi^R(\vec{x})$ be the least $\beta$ such that $R \vert \alpha \models \exists y \varphi(y,\vec{x})$, if it exists. Otherwise, let $\lv_\varphi^R(\vec{x})$ be undefined.

    Suppose $k > 0$. Let $\vec{x} \in R^l$. If $R \models \neg \exists y \varphi(\vec{x},y)$, then $m\tau^R_{\varphi}(\vec{x})$ and $\lv_\varphi^R(\vec{x})$ are undefined. Suppose that $R \models \exists y \varphi(\vec{x},y)$. Let $\tau$ be the basic Skolem term associated with $\varphi$ (see \cite[2.3.3]{Mitchell_Steel_2017}). For $\beta < \rho^\M_k$, let $(\tau_\varphi)^\beta$ be defined as in the proof of \cite{Mitchell_Steel_2017}[2.10], with $q = \vec{p}_k^R$. Let $\beta_0$ be the least $\beta$ such that $(\tau_\varphi)^\beta(\vec{x})$ is defined and set
    \[
        m\tau^R_{\varphi}(\vec{x}) = (\tau_\varphi)^\beta(\vec{x}),
    \]
    and
    \[
        \lv^R_\varphi(\vec{x}) = \beta_0.
    \]
\end{dfn}

\begin{lem}
    For $R$ as in Definition \ref{def: minimal skolem terms} the graph of $m\tau_\varphi^R$ is $r\Sigma_{k+1}^R(\{\vec{p}^R_k\})$, recursively uniformly in $R,\varphi, \vec{p}_k^R$ for $R$.
\end{lem}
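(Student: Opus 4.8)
The plan is to reduce the statement to the corresponding fact for \emph{basic} Skolem terms proved in \cite[2.10]{Mitchell_Steel_2017}, the only new ingredient being that selecting the witness at the \emph{least} level $\beta_0$, rather than an arbitrary one, does not raise the complexity.

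I would first dispose of $k=0$: there $m\tau^R_\varphi$ is, by Definition \ref{def: minimal skolem terms}, the usual $\Sigma_1$-Skolem function of $\varphi$ over the passive structure $R$, whose graph is $r\Sigma_1^R$ via a formula read off recursively from $\varphi$; since $\vec{p}^R_0$ is trivial this is already the assertion. For $k\geq 1$, fix $R$ and the $r\Sigma_{k+1}$ formula $\varphi$ and recall from the proof of \cite[2.10]{Mitchell_Steel_2017} the approximating terms $(\tau_\varphi)^\beta$, $\beta<\rho^R_k$, computed with $q=\vec{p}^R_k$. I would extract two facts from that construction. \emph{Monotonicity:} the $(\tau_\varphi)^\beta$ are $\subseteq$-increasing as partial functions $R^l\to R$, so that once $(\tau_\varphi)^\beta(\vec{x})$ is defined it stays defined, with the same value, for all larger $\beta<\rho^R_k$, and the domain of $\bigcup_{\beta<\rho^R_k}(\tau_\varphi)^\beta$ is that of $m\tau^R_\varphi$; the domain statement is part of what makes Definition \ref{def: minimal skolem terms} well posed, and value-monotonicity is clear from the construction, each stage only adding values. \emph{Uniform definability:} the relation
\[
 Q(\beta,\vec{x},y)\ :\Longleftrightarrow\ \beta<\rho^R_k\ \wedge\ (\tau_\varphi)^\beta(\vec{x})\!\downarrow\, = y
\]
is $r\Sigma^R_{k+1}(\{\vec{p}^R_k\})$ --- indeed $r\Delta^R_{k+1}(\{\vec{p}^R_k\})$ --- via formulas depending recursively only on $\varphi$; this is exactly what the computation in \cite[2.10]{Mitchell_Steel_2017} shows, the point being that $(\tau_\varphi)^\beta(\vec{x})$ is obtained by a recursion along $\beta$ consulting the $\Sigma_k$-master code of $R$ (equivalently the $r\Sigma_k$-theory of $R$ in the parameter $\vec{p}^R_k$) together with lower-level satisfaction, all of which is $r\Delta^R_{k+1}(\{\vec{p}^R_k\})$ below $\rho^R_k$.

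Granting these, monotonicity gives that whenever $m\tau^R_\varphi(\vec{x})$ is defined it equals $(\tau_\varphi)^\beta(\vec{x})$ for \emph{every} $\beta$ with $\lv^R_\varphi(\vec{x})\leq\beta<\rho^R_k$, and hence that
\[
 m\tau^R_\varphi(\vec{x})=y \quad\Longleftrightarrow\quad \exists\beta\, Q(\beta,\vec{x},y).
\]
Since $r\Sigma_{k+1}$ is closed under existential quantification and $Q$ is $r\Sigma^R_{k+1}(\{\vec{p}^R_k\})$, the right-hand side is $r\Sigma^R_{k+1}(\{\vec{p}^R_k\})$, via a formula obtained recursively from $\varphi$ and uniform over all $R$ as in Definition \ref{def: minimal skolem terms}, with $\vec{p}^R_k$ as its only parameter; this is the lemma. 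The only step that needs real work is the uniform-definability claim for $Q$, and even that is bookkeeping on top of the proof of \cite[2.10]{Mitchell_Steel_2017} rather than a new argument; a secondary point is to check that monotonicity holds in exactly the form used for the particular stages $(\tau_\varphi)^\beta$ of \cite[2.10]{Mitchell_Steel_2017}.
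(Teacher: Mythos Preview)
The paper does not actually supply a proof of this lemma; it is stated immediately after Definition~\ref{def: minimal skolem terms} (itself a special case of \cite[Definition~5.2]{Schlutzenberg2019TheDO}) and left as a known fact inherited from that reference and from \cite[2.10]{Mitchell_Steel_2017}. So there is nothing to compare against directly.

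Your sketch is a correct way to unpack the definitions. The reduction to $k=0$ is fine, and for $k\geq 1$ the two ingredients you isolate---monotonicity of the approximants $(\tau_\varphi)^\beta$ and the uniform $r\Sigma_{k+1}(\{\vec p^R_k\})$-definability of $Q$---do yield the result via the displayed equivalence. One remark: monotonicity is not entirely automatic and deserves the check you flag; the point is that the canonical witness selected at stage $\beta$ is determined by the \emph{least} $\gamma\leq\beta$ for which a witness appears in the level-$\gamma$ theory, so enlarging $\beta$ cannot change it. Alternatively, you can avoid monotonicity altogether by observing that, given $t=\tho^R_k(\beta\cup\{\vec p^R_k\})$, all restrictions $\tho^R_k(\beta'\cup\{\vec p^R_k\})$ for $\beta'<\beta$ are computable from $t$, so the clause ``$\beta$ is least'' is itself $\Sigma_1$ in $t,\beta$ and folds into a single $r\Sigma_{k+1}$ formula. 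Either route gives the uniform definability claimed.
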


The following is a special case of Lemma 5.4 in \cite{Schlutzenberg2019TheDO}.

\begin{lem}
    Let $R$ be as in Definition \ref{def: minimal skolem terms}, and $X \subset R$. Then $\Hull{R}{k+1}(X \cup \{ \vec{p}_k^R \}) = \{ m\tau_\varphi^R (\vec{x}) : \varphi \text{ is } r\Sigma_{k+1} \land \vec{x} \in \fin{X} \}$.
\end{lem}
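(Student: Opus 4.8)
The plan is to prove the two inclusions separately: the inclusion ``$\supseteq$'' is immediate from the preceding lemma, while ``$\subseteq$'' comes from the bounded-Skolem-term analysis of \cite{Mitchell_Steel_2017}[2.10] on which Definition \ref{def: minimal skolem terms} is built.

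For ``$\supseteq$'', fix an $r\Sigma_{k+1}$ formula $\varphi$ and $\vec{x} \in \fin{X}$ with $m\tau_\varphi^R(\vec{x})$ defined. By the preceding lemma the graph of $m\tau_\varphi^R$ is $r\Sigma_{k+1}^R(\{\vec{p}_k^R\})$, so the singleton $\{m\tau_\varphi^R(\vec{x})\}$ is $r\Sigma_{k+1}$-definable over $R$ from the parameters $\vec{x}$ and $\vec{p}_k^R$, all of which lie in $X \cup \{\vec{p}_k^R\}$; hence $m\tau_\varphi^R(\vec{x}) \in \Hull{R}{k+1}(X \cup \{\vec{p}_k^R\})$. (We use here that $R$ is passive, so that $R$ agrees with its $\Sigma_0$-code and the $r\Sigma$-hierarchy in play is the unembellished one; cf.~the remark following the definition of admissible premouse.)

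For ``$\subseteq$'', fix $y \in \Hull{R}{k+1}(X \cup \{\vec{p}_k^R\})$. By definition of the hull there are an $r\Sigma_{k+1}$ formula $\theta$ and a tuple $\vec{x} \in \fin{X}$ such that $\theta$, with parameters $\vec{x}$ and $\vec{p}_k^R$, defines the singleton $\{y\}$ over $R$. Let $\varphi$ be the $r\Sigma_{k+1}$ formula obtained from $\theta$ by designating a distinguished output variable and reserving the parameter $\vec{p}_k^R$ for the role played by $q$ in Definition \ref{def: minimal skolem terms}; then, computing in $R$ with $q = \vec{p}_k^R$, $y$ is the unique witness to $\exists v_0\, \varphi(v_0,\vec{x})$, so the basic Skolem term $\tau$ associated with $\varphi$ has $\tau^R(\vec{x}) = y$. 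It then suffices to observe two facts about the truncated terms $(\tau_\varphi)^\beta$, $\beta < \rho_k^R$: (i) there is some $\beta$ with $(\tau_\varphi)^\beta(\vec{x})$ defined, and (ii) whenever $(\tau_\varphi)^\beta(\vec{x})$ is defined it equals $\tau^R(\vec{x}) = y$. Granting these, $\lv_\varphi^R(\vec{x})$ is the least such $\beta$ and $m\tau_\varphi^R(\vec{x}) = (\tau_\varphi)^{\lv_\varphi^R(\vec{x})}(\vec{x}) = y$, so $y$ lies in the right-hand side.

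The one substantive point is (i)--(ii), i.e.~the convergence and coherence of the truncated Skolem terms; this is exactly what is verified in the proof of \cite{Mitchell_Steel_2017}[2.10], using the $k$-soundness of $R$ (so that the $r\Sigma_k$-theory needed to evaluate the bounded search below $\rho_k^R$ is captured) together with the standing hypothesis $\rho_k^R > \omega$, and it is the step I would expect to be the main obstacle to a fully self-contained treatment. In fact the whole statement is the specialization of \cite{Schlutzenberg2019TheDO}[5.4] to passive premice, so one may alternatively simply cite that lemma; the argument above is the unwound form of that citation, the passive hypothesis serving only to suppress the $r\Sigma_m$ complications of active premice.
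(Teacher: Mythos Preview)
Your proposal is correct. The paper itself gives no proof at all: it simply records the lemma as ``a special case of Lemma 5.4 in \cite{Schlutzenberg2019TheDO}'', which is exactly the citation you invoke at the end; your two-inclusion argument is a reasonable unwinding of that reference, and your reliance on \cite{Mitchell_Steel_2017}[2.10] for the convergence and coherence of the truncated terms $(\tau_\varphi)^\beta$ is the right place to locate the substantive work.
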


\begin{dfn} \label{dfn: ordering of rsigma_n}
Let $\varphi$ and $\psi$ be $r\Sigma_{k+1}$ formulas of $l+1 < \omega$ many free variables. Let $R$ be a passive $k$-sound premouse such that $\rho^R_k > \omega$. Let $\vec{x},\vec{y} \in  R^l$. Let $R \models \varphi(\vec{x}) \leq^* \psi(\vec{y})$ if and only if $R \models \exists z \varphi(\vec{x},y)$ and, if $R \models \exists z \psi(\vec{y},z)$, then $\lv^{R}_\varphi (\vec{x}) \leq \lv^{R}_\psi (\vec{y})$.
\end{dfn}

\begin{lem}
    Let $k < \omega$ and $R$ be a passive $k$-sound premouse. Then the relation $(\leq_{k+1}^*)^{R}$ is $r\Sigma_{k+1}^R (\{\vec{p}_k^R \})$ uniformly in $R$.
\end{lem}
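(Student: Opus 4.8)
The plan is to read off a uniform $r\Sigma_{k+1}$ definition of $(\leq^*_{k+1})^R$ by unwinding Definition~\ref{dfn: ordering of rsigma_n} together with the definition of $\lv^R$ in Definition~\ref{def: minimal skolem terms}. The key observation is the following reformulation (writing $\downarrow$, $\uparrow$ for ``is defined'', ``is undefined''): for $r\Sigma_{k+1}$ formulas $\varphi,\psi$ with $l+1$ free variables and $\vec{x},\vec{y}\in R^l$,
\[
    R \models \varphi(\vec{x})\leq^*\psi(\vec{y}) \iff \exists\beta<\rho_k^R\,\big[\,(\tau_\varphi)^\beta(\vec{x})\downarrow\ \wedge\ \forall\beta'<\beta\,\big((\tau_\varphi)^{\beta'}(\vec{x})\uparrow\ \wedge\ (\tau_\psi)^{\beta'}(\vec{y})\uparrow\big)\,\big].
\]
For the verification: if the right-hand side holds, then $\beta=\lv^R_\varphi(\vec{x})$ by minimality, so in particular $R\models\exists z\,\varphi(\vec{x},z)$; and if $R\models\exists z\,\psi(\vec{y},z)$ then $\lv^R_\psi(\vec{y})$ is defined and $\geq\beta$, since no $(\tau_\psi)^{\beta'}(\vec{y})$ with $\beta'<\beta$ is defined, whence $\lv^R_\varphi(\vec{x})\leq\lv^R_\psi(\vec{y})$. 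Conversely, if $R\models\varphi(\vec{x})\leq^*\psi(\vec{y})$, put $\beta=\lv^R_\varphi(\vec{x})$ (defined, since $R\models\exists z\,\varphi(\vec{x},z)$); if some $(\tau_\psi)^{\beta'}(\vec{y})$ were defined for a $\beta'<\beta$ then $R\models\exists z\,\psi(\vec{y},z)$, and hence $\lv^R_\varphi(\vec{x})\leq\lv^R_\psi(\vec{y})\leq\beta'<\beta=\lv^R_\varphi(\vec{x})$, a contradiction.

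The second step is the complexity count. By the proof of the preceding lemma on the graph of $m\tau^R_\varphi$ (which in turn follows the proof of \cite{Mitchell_Steel_2017}[2.10]), expressions of the shape $\exists\beta\,\big[\,(\tau_\varphi)^\beta(\vec{x})\downarrow\ \wedge\ \forall\beta'<\beta\,(\tau_\varphi)^{\beta'}(\vec{x})\uparrow\,\big]$ are $r\Sigma_{k+1}^R(\{\vec{p}_k^R\})$, recursively uniformly in $R,\varphi,\vec{p}_k^R$ --- this is precisely what makes the graph of $m\tau^R_\varphi$ be $r\Sigma_{k+1}$. The displayed characterization of $\leq^*$ has exactly this form, with the single extra bounded conjunct $\forall\beta'<\beta\,(\tau_\psi)^{\beta'}(\vec{y})\uparrow$, which is absorbed in the same way; one also checks, as there, that the bound $\rho_k^R$ on $\beta$ causes no trouble. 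Hence $(\leq^*_{k+1})^R$ is $r\Sigma_{k+1}^R(\{\vec{p}_k^R\})$, and the resulting formula depends recursively on $\ulcorner\varphi\urcorner,\ulcorner\psi\urcorner$ and not otherwise on $R$, giving uniformity. As a sanity check, when $k=0$ we have $\vec{p}_0^R=\emptyset$, ``$(\tau_\varphi)^\beta(\vec{x})\downarrow$'' is just ``$R\vert\beta\models\exists y\,\varphi(y,\vec{x})$'', and the clause ``$\forall\beta'<\beta\,(\dots\uparrow)$'' is $\Sigma_0$ in the parameter $R\vert\beta$ (since ``$R\vert\beta'\models\sigma$'' is $\Delta_0$ in $R\vert\beta'$ for a fixed $\sigma$, and $R\vert\beta'\in R\vert\beta$), so the right-hand side is visibly $\Sigma_1^R$.

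The main obstacle is exactly this absorption of the bounded universal quantifier over $\beta'$ in the case $k>0$: one must verify, from the Mitchell--Steel construction of the approximations $(\tau_\varphi)^\beta$, that ``$(\tau_\varphi)^{\beta'}(\vec{x})\uparrow$'' for $\beta'<\beta$ is uniformly a condition decided inside a fixed initial segment of the relevant reduct of $R$ determined by $\beta$, so that the bounded quantifier collapses into the bounded part of a single $r\Sigma_{k+1}$ formula. This is the same fine-structural mechanism already used for $m\tau^R_\varphi$, so it should be available, but it is the only point requiring real care; the rest is the routine unwinding above.
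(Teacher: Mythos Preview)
Your proposal is correct and follows the natural approach. The paper itself does not give a proof of this lemma; it is stated as a routine fact, in the same spirit as the preceding lemmas on $m\tau^R_\varphi$ which are referred back to \cite{Schlutzenberg2019TheDO} and \cite{Mitchell_Steel_2017}. Your argument---rewriting $\varphi(\vec{x})\leq^*\psi(\vec{y})$ as the existence of a $\beta$ at which $(\tau_\varphi)^\beta(\vec{x})$ is first defined and below which $(\tau_\psi)^{\beta'}(\vec{y})$ is never defined, and then observing that this has the same $r\Sigma_{k+1}$ shape as the defining formula for the graph of $m\tau^R_\varphi$---is exactly the intended justification, and the point you flag (absorbing the bounded $\forall\beta'<\beta$ clause into the $r\Sigma_{k+1}$ form via the Mitchell--Steel approximations) is the only place where any work is needed.
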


Since the sort of collection which holds in admissible premice by definition is expressed in terms of the standard $\Sigma_k$-hierarchy of formulas, and for the fine structural computations we use the $r\Sigma_k$-hierarchy, we are now interested in the relationship between these two hierarchies.
The following lemma explains the relationship between the two hierarchies in general.

\begin{lem} \label{translatibility}
    Let $k \geq 1$ and let $\M$ be a passive $(k-1)$-sound premouse. Then there are $p \in \M$ and recursive functions $f_1$ and $f_2$ such that for every $r\Sigma_k$ formula $\varphi$ and $x \in \M$,
    \[
        \M \models \varphi(x) \iff \M \models f_1(\varphi)(x,p),
    \]
    and $f_1(\varphi)$ is $\Sigma_k$, and for every $\Sigma_k$ formula $\varphi$ and $x \in \M$,
    \[
        \M \models \varphi(x) \iff \M \models f_2(\varphi)(x),
    \]
    and $f_2(\varphi)$ is $r\Sigma_k$.
\end{lem}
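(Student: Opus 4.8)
The plan is to proceed by induction on $k$, translating between the Levy hierarchy over $\M$ and the fine-structural hierarchy by means of the reducts $\mathfrak{C}_j(\M)$ and their master codes. For $k=1$ there is nothing to prove: since $\M$ is passive we have $\rho_0^\M=\OR^\M$ and $\mathfrak{C}_0(\M)=\M$, so the classes $r\Sigma_1$ and $\Sigma_1$ of $\mathcal{L}_{\mathrm{pm}}$-formulas define the same relations over $\M$, and one takes $p=\emptyset$ and $f_1=f_2=\mathrm{id}$.

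For the inductive step $k\mapsto k+1$ assume the lemma for $k$, and that $\M$ is $k$-sound, hence $(k-1)$-sound, so that the case-$k$ instance applies to $\M$ and supplies a parameter $q$ together with the corresponding recursive translations. The two facts I would feed in are: (a) by definition an $r\Sigma_{k+1}^\M$ relation is exactly a $\Sigma_1$ relation over the $k$-th reduct $\mathfrak{C}_k(\M)$, a passive acceptable $J$-structure with universe $\mathcal{H}_{\rho_k^\M}^\M$ carrying one extra predicate, the $k$-th master code $A_k^\M$, assembled from the bounded approximations $(\tau_\varphi)^\beta$ of Definition \ref{def: minimal skolem terms}; and (b) $A_k^\M$ is $r\Sigma_k^\M(\{\vec p_k^\M\})$, hence, by the case-$k$ hypothesis, $\Sigma_k^\M(\{q,\vec p_k^\M\})$, with complement inside its domain $\Pi_k^\M$ in these same parameters together with $\rho_k^\M$. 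One then sets $p=\langle q,\vec p_1^\M,\dots,\vec p_k^\M,\rho_1^\M,\dots,\rho_k^\M\rangle$.

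The direction $f_2$ is then easy: a $\Sigma_{k+1}^\M$ relation has the form $\exists v\,\psi(v,x)$ with $\psi$ of Levy class $\Pi_k$, and the case-$k$ hypothesis makes $\psi$ equivalent over $\M$ to an $r\Pi_k^\M$ formula, so the relation is of the form $\exists v\,(r\Pi_k)$, which is $r\Sigma_{k+1}^\M$ --- using (a), and $k$-soundness to express the quantifier over $\M$ through the universe of $\mathfrak{C}_k(\M)$ together with the fine-structural Skolem terms $m\tau_\varphi^\M$, whose graphs are themselves $r\Sigma_{k+1}^\M$ by the preceding lemmas on minimal Skolem terms; no extra parameter is needed, the relevant standard parameters being available to $r\Sigma_{k+1}$-definability. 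For $f_1$, an $r\Sigma_{k+1}^\M$ relation is presented as $\exists v\,\theta(v,x,A_k^\M)$ with $\theta$ a $\Sigma_0$ formula in the language of $\mathfrak{C}_k(\M)$. To translate it into $\M$ I would replace the clause ``$v\in\mathcal{H}_{\rho_k^\M}^\M$'' and the bounded quantifiers of $\theta$ by quantifiers of $\M$ bounded below $\rho_k^\M$ (legitimate since $\rho_k^\M\in p$ and $\M$ is a $J$-structure), and replace each atomic occurrence of $A_k^\M$ by the $\Sigma_k^\M(p)$ formula from (b) (or its $\Pi_k^\M(p)$ negation, for negative occurrences).

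The step I expect to be the main obstacle is finishing the $f_1$ direction, i.e.~checking that this translation of $\theta$ is in fact $\Sigma_{k+1}^\M(p)$. Naively it is only a bounded-quantifier Boolean combination of $\Sigma_k^\M(p)$ and $\Pi_k^\M(p)$ formulas, and a bounded universal quantifier in front of a $\Sigma_k$ formula is in general only $\Pi_{k+1}$ over $\M$; preceded by the outer $\exists v$ this would land in $\Sigma_{k+2}^\M$ rather than $\Sigma_{k+1}^\M$, and there is no $\Sigma_k$-collection over $\M$ with which to commute the quantifiers. The remedy is exactly the apparatus of this section: because $A_k^\M$ is built from the approximations $(\tau_\varphi)^\beta$ and the level functions $\lv_\varphi^\M$, coherently ordered by the relation $\leq^*$ of Definition \ref{dfn: ordering of rsigma_n}, for a fixed $r\Sigma_{k+1}$ formula $\theta$ and fixed parameters all the relevant membership facts for $A_k^\M$ are decided below a single ordinal $<\rho_k^\M$; hence the offending bounded universal quantifiers only range over sets whose relevant witnesses appear at a bounded level and can be absorbed, pinning the complexity at $\Sigma_{k+1}^\M$. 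Everything else --- the base case, the bookkeeping of $p$, and the passage between the universes of $\M$ and $\mathfrak{C}_k(\M)$ using $k$-soundness --- is routine.
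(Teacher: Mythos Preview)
The paper does not supply a proof of this lemma; it is recorded as a standard fine-structural fact and then immediately sharpened in Lemma~\ref{better translatibility} under the extra hypothesis $\rho_{k-1}^\M=\OR^\M$. Your inductive approach via the reducts $\mathfrak C_j(\M)$ and master codes is the standard one, your treatment of $f_2$ is correct, and you have correctly isolated the genuine difficulty in the $f_1$ direction: a literal substitution of the $\Sigma_k(p)$ and $\Pi_k(p)$ definitions of $A_k$ and its complement into a $\Delta_0$ matrix $\theta$, followed by the outer $\exists v$, lands prima facie only in $\Sigma_{k+2}$.

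Your proposed resolution, however, does not close that gap. The assertion that ``for a fixed $r\Sigma_{k+1}$ formula $\theta$ and fixed parameters all the relevant membership facts for $A_k^\M$ are decided below a single ordinal $<\rho_k^\M$'' is accurate only for the \emph{positive} queries $z\in A_k$: each such $z$ is indeed witnessed at its level $\lv(z)$, and an outer $\exists\beta$ can absorb those witnesses. But the \emph{negative} queries $z\notin A_k$ are genuine $r\Pi_k$ (hence $\Pi_k(p)$) statements --- they assert the nonexistence of a witness at \emph{every} level --- and are not decided at any bounded stage. After your rewriting, the matrix $\theta$ still carries $\Pi_k(p)$ subformulas under bounded quantifiers of both polarities; a single bounded existential in front of a $\Pi_k$ formula is already $\Sigma_{k+1}$, and one further alternation pushes you out of $\Sigma_{k+1}$, which is exactly the obstacle you flagged but have not dissolved. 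Put differently: your $\exists\beta$ trick handles the clause $\forall z\in a\,[z\in A_k]$ only if $\sup_{z\in a}\lv(z)<\rho_{k-1}^\M$, and establishing that bound for an arbitrary $a\in\M$ is itself an instance of the collection you do not have. The standard argument avoids translating an arbitrary $\Delta_0(A_k)$ matrix atom by atom; it works instead with the specific Mitchell--Steel normal form of an $r\Sigma_{k+1}$ formula (the one displayed in the proof of Lemma~\ref{better translatibility}, $\exists\alpha\,\exists q\,\exists t\,[t=\tho_k(\alpha\cup\{q\})\wedge\varphi]$ with $\varphi$ $\Sigma_1$), so that one only has to compute the Levy complexity of the single relation ``$t=\tho_k(\alpha\cup\{q\})$'' and never confronts iterated bounded quantifiers over a mixed $\Sigma_k/\Pi_k$ matrix.
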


In a certain context, which we will later work in, we can improve this by eliminating the parameter $p$, as the following lemma shows.

\begin{lem} \label{better translatibility}
    Let $\M$ be a passive premouse with a largest cardinal $\delta$ and $k \geq 1$. Suppose that $\rho_{k-1}^\M = \OR^\M$. Then there are recursive functions $f_1^k$ and $f_2^k$ such that for any $\Sigma_k$ formula $\varphi$, $f_1^k (\varphi)$ is an $r\Sigma_k$ formula such that for all $x \in \M$,
    \[
        \M \models \varphi(x) \iff \M \models f_1^k(\varphi)(x),
    \] 
    and for any $r\Sigma_k$ formula $\psi$, $f_2^k (\psi)$ is a $\Sigma_k$ formula such that for all $x \in \M$,
    \[
        \M \models \psi(x) \iff \M \models f_2^k(\psi)(x).
    \] 
\end{lem}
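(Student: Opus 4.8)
The plan is to prove the lemma by induction on $k$, carrying along a second statement alongside it: $\mathrm{(II)}_k$, that if moreover $\rho_k^\M=\OR^\M$ then $\M\models\Sigma_k\text{-}\kp$. The point of $\mathrm{(II)}$ is that $\mathrm{(II)}_{k-1}$, together with the standing hypothesis $\rho_{k-1}^\M=\OR^\M$, will supply the $\Sigma_{k-1}$-collection needed in the lemma at level $k$. Note that $\rho_{k-1}^\M=\OR^\M$ forces $\rho_j^\M=\OR^\M$, hence $\vec p_j^\M=\emptyset$, for all $j\le k-1$, since every element of $\M$ is $r\Sigma_1$-definable over $\M$ from ordinals, so the hull requirement for the standard parameter is already met by $\emptyset$. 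Consequently the $(k-1)$st reduct $\mathfrak C_{k-1}(\M)$ has universe $|\M|$, carries no distinguished parameter, and has only the extra predicate $A_{k-1}$ coding the $r\Sigma_{k-1}$-theory of $\M$ with arbitrary parameters from $|\M|$; this is exactly why the parameter $p$ of Lemma~\ref{translatibility} becomes unnecessary. The base case $k=1$ is immediate: $\mathfrak C_0(\M)=\M$ for passive $\M$, so $r\Sigma_1^\M=\Sigma_1^\M$ and $f_1^1=f_2^1=\id$; and $\mathrm{(II)}_1$ is the coding argument below applied with $f_1^1=\id$.

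For the lemma at level $k\ge 2$ I would use that $r\Sigma_k^\M$-definability coincides with $\Sigma_1^{\mathfrak C_{k-1}(\M)}$-definability, with $\mathfrak C_{k-1}(\M)$ as above. For $f_1^k$ (the direction $\Sigma_k\to r\Sigma_k$), the more direct one: a $\Sigma_k$ assertion is $\exists y\,\pi(x,y)$ with $\pi$ of the form $\neg\chi$, $\chi\in\Sigma_{k-1}$; by the inductive lemma via $f_1^{k-1}$, the set defined by $\chi$ over $\M$ is $r\Sigma_{k-1}$ over $\M$, so ``$\M\models\pi(x,y)$'' is equivalent to the assertion that the $r\Sigma_{k-1}$ formula $f_1^{k-1}(\chi)$ fails of the pair $(x,y)$, which — being read off the predicate $A_{k-1}$ — is $\Sigma_0$ over $\mathfrak C_{k-1}(\M)$; hence $\exists y\,\pi(x,y)$ is $\Sigma_1$ over $\mathfrak C_{k-1}(\M)$, i.e.\ $r\Sigma_k$ over $\M$. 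For $f_2^k$ (the direction $r\Sigma_k\to\Sigma_k$): an $r\Sigma_k$ assertion is $\exists v\,\sigma(v,\vec x)$ with $\sigma$ a $\Sigma_0$ formula in $\in$, the extender sequence, and $A_{k-1}$. Here I would use that $A_{k-1}$ is amenable to $\M$ — a consequence of $\Delta_{k-1}$-separation in $\M$, itself implied by $\rho_{k-1}^\M=\OR^\M$ — so that all $A_{k-1}$-queries occurring under a given block of bounded quantifiers of $\sigma$ are answered by a single set $A_{k-1}\cap\mathcal J_\beta^\M\in\M$; combined with the fact that ``$X=A_{k-1}\cap\mathcal J_\beta^\M$'' is $\Delta_k$ over $\M$ — this uses $\Sigma_{k-1}$-collection, i.e.\ $\mathrm{(II)}_{k-1}$, to bound the bounded-universal clause over the uniform $\Sigma_{k-1}$-definition of $A_{k-1}$ — an induction on the structure of $\sigma$ shows that $\sigma$ is $\Delta_k$ over $\M$, so $\exists v\,\sigma(v,\vec x)$ is $\Sigma_k$. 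Recursiveness of $f_1^k,f_2^k$ follows from recursiveness of $f_1^{k-1},f_2^{k-1}$ and of the standard partial satisfaction and prenexing operations.

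It remains to prove $\mathrm{(II)}_k$, assuming $\rho_k^\M=\OR^\M$; the nontrivial part is $\Sigma_k$-collection. If it failed in $\M$ there would be $a\in\M$ and a function, $\Sigma_k$ over $\M$ with parameters, with domain $a$ and range cofinal in $\OR^\M$; since $\delta$ is the largest cardinal of $\M$ we have $|a|^\M\le\delta$, and also $|\xi|^\M\le\delta$ for every $\xi<\OR^\M$, so composing with surjections chosen $\M$-definably and uniformly we obtain a $\Sigma_k$-over-$\M$ map $g\colon\delta\to\OR^\M$ with range cofinal in $\OR^\M$, together with a $\Sigma_k$-over-$\M$ subset $D\subseteq\delta$ from which $\langle g(\xi):\xi<\delta\rangle$ is recoverable inside $\M$. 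Cofinality of $g$ gives $D\notin\M$, and applying $f_1^k$ (from the lemma at level $k$) turns $D$ into an $r\Sigma_k$-over-$\M$ subset of $\delta<\OR^\M$ not in $\M$, contradicting $\rho_k^\M=\OR^\M$. The scheme of $\Delta_k$-separation follows from $\rho_k^\M=\OR^\M$ and $f_1^k$ with no appeal to $\delta$, and the remaining axioms of $\Sigma_k\text{-}\kp$ hold in every premouse.

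The main obstacle is precisely $\mathrm{(II)}_k$ — extracting $\Sigma_k$-collection from $\rho_k^\M=\OR^\M$ — and this is the one place where the largest-cardinal hypothesis is essential: without it $\rho_{k-1}^\M=\OR^\M$ need not yield $\Sigma_{k-1}$-collection in $\M$ (a non-admissible $L_\alpha$ with $\rho_1(L_\alpha)=\alpha$ is a counterexample), and then in the $r\Sigma_k\to\Sigma_k$ step the clause ``$X=A_{k-1}\cap\mathcal J_\beta^\M$'' would only be $\Sigma_{k+1}$ rather than $\Delta_k$, so the translation would land in $\Sigma_{k+1}$ instead of $\Sigma_k$. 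The secondary points needing care are the amenability of $A_{k-1}$ to $\M$ (so that $\mathfrak C_{k-1}(\M)$ is a legitimate structure and the single-initial-segment trick is available) and the bookkeeping verifying that the Levy level of $\sigma$ over $\M$ does not climb above $\Delta_k$.
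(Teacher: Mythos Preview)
Your approach is correct but takes a genuinely different route from the paper's. Both argue by induction on $k$, and the direction $f_1^k$ is handled the same way; the difference is in $f_2^k$. The paper works directly with the set $S=\{\gamma<\OR^\M:\M\vert\gamma\prec_{\Sigma_{k-1}}\M\}$: using that $\Hull{\M}{k-1}(\alpha\cup\{q\})$ is transitive (from the largest cardinal) and bounded (from $\rho_{k-1}^\M=\OR^\M$), $S$ is cofinal and $\Pi_{k-1}$-definable, and the canonical $r\Sigma_k$ form $\exists\alpha\,\exists q\,\exists t\,(t=\tho_{k-1}(\alpha\cup\{q\})\land\varphi)$ is translated to $\exists\beta\in S\,\exists\alpha<\beta\,\exists q\in\M\vert\beta\,\exists t\,(t=\tho_{k-1}^{\M\vert\beta}(\alpha\cup\{q\})\land\varphi)$, which is visibly $\Sigma_k$. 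No separate appeal to $\Sigma_{k-1}$-collection is made. Your route instead proves the auxiliary $\mathrm{(II)}_{k-1}$, and then uses the resulting $\Sigma_{k-1}$-collection to keep ``$X=A_{k-1}\cap\mathcal J_\beta^\M$'' at complexity $\Delta_k$ so that the reduct-level $\Sigma_1$ formula unwinds to $\Sigma_k$. What your approach buys is $\mathrm{(II)}$ as an explicit byproduct (essentially the converse to Lemma~\ref{lem: n-1 projectum}); the paper's version is shorter and sidesteps the step you leave sketchiest, namely producing from a cofinal $\Sigma_k$-map $g\colon\delta\to\OR^\M$ an $r\Sigma_k$ subset of $\delta$ not in $\M$ --- this is standard (e.g.\ code the $\in$-relation on each $\M\vert g(\xi)$ via the canonical surjections from $\delta$), but it does need a line or two more than you give it.
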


\prf
We argue by induction on $k$. In the case $k=1$, there is nothing to show. So suppose $k >1$ and there are $f_1^{k-1}$ and $f_2^{k-1}$ as in the lemma. The existence of the function $f_1^{k}$ is well known.
Before we describe the function $f_2^{k}$, note that for $\alpha \in (\delta, \OR^\M)$ and $q \in \M$, $H(\alpha,q) := \Hull{\M}{k-1}(\alpha \cup \{q\})$ is transitive and bounded in $\M$. The transitivity easily follows from the fact that $\delta$ is the largest cardinal of $\M$. Suppose for the sake of contradiction that $H(\alpha,q)$ is unbounded in $\M$ for some $\alpha \in (\delta, \OR^\M)$ and $q \in \M$. Then, by transitivity, $\M = H(\alpha,p)$. However, this means that $\rho_{k-1}^\M < \OR^\M$, a contradiction!

Let us now describe the function $f_2^{k}$. Let
\[
    S := \{ \gamma < \OR^\M : \M \vert \gamma \prec_{\Sigma_{k-1}} \M \}
\]
and note that $S$ is cofinal in $\OR^\M$ and $\Pi_{k-1}$-definable over $\M$.
For an $r\Sigma_k$ formula $\psi(x) \equiv \exists \alpha \exists q \exists t (t = \tho_{k-1} (\alpha \cup \{ q \}) \land \varphi(\alpha,q,t,x))$, where $\varphi$ is $\Sigma_1$, let $f_2^{k} (\psi(x)) = \exists t \exists \alpha \exists \beta  \exists q (\alpha < \beta \land q \in \M \vert \beta \land \beta \in S \land t = \tho_{k-1}^{\M \vert \beta} (\alpha \cup q) \land \varphi(\alpha,q,t,x))$. Note that for $\beta \in S$ by the induction hypothesis, $\M \vert \beta \prec_{r\Sigma_{k-1}} \M$. Then, using the fact that $S$ is cofinal in $\OR^\M$, it is easy to see that for all $x \in \M$, $\M \models \psi(x) \iff \M \models f_2^{k}(\psi)(x)$.
\eprf

\begin{lem} \label{lem: n-1 projectum}
    Let $k \geq 1$ and $\M$ be a passive premouse that models $\Sigma_k$-Collection and has a largest cardinal. Then $\rho_{k-1}^\M = \OR^{\M}$.
\end{lem}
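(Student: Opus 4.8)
The plan is to argue by induction on $k$, using Lemma \ref{translatibility} (or rather its idea) to move between the $\Sigma_k$ and $r\Sigma_k$ hierarchies, and exploiting the fact that $\Sigma_k$-Collection lets us reflect unbounded $\Sigma_{k-1}$-definable functions. For $k=1$ there is nothing to prove, since $\rho_0^\M = \OR^\M$ by convention for a passive premouse. So suppose $k > 1$ and the lemma holds for $k-1$; thus, since $\M$ models $\Sigma_k$-Collection and hence $\Sigma_{k-1}$-Collection, we already have $\rho_{k-2}^\M = \OR^\M$. We must show $\rho_{k-1}^\M = \OR^\M$, i.e.~that there is no $r\Sigma_{k-1}$-definable (from parameters) partial surjection from some $\gamma < \OR^\M$ onto $\OR^\M$, equivalently no such new bounded subset of $\OR^\M$ witnessing a drop in the projectum.

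First I would note that, because $\rho_{k-2}^\M = \OR^\M$ and $\M$ has a largest cardinal $\delta$, Lemma \ref{better translatibility} applies at level $k-1$: there are recursive translations between $\Sigma_{k-1}$ and $r\Sigma_{k-1}$ formulas over $\M$ without parameters. Hence the class $S = \{ \gamma < \OR^\M : \M|\gamma \prec_{\Sigma_{k-1}} \M\}$ from the proof of Lemma \ref{better translatibility} is cofinal in $\OR^\M$ and also coincides with $\{\gamma : \M|\gamma \prec_{r\Sigma_{k-1}} \M\}$. Now suppose toward a contradiction that $\rho_{k-1}^\M = \gamma < \OR^\M$. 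Fix a parameter $q$ and an $r\Sigma_{k-1}$ formula so that the induced partial map $h \colon \gamma \to \OR^\M$ is cofinal (such $h$ exists by the standard fine-structural characterization of the projectum together with the fact that $\M$ is passive, so $r\Sigma_{k-1}$-definability suffices to capture a counterexample to boundedness). Via $f_2^{k-1}$ we may regard the graph of $h$ as $\Sigma_{k-1}$-definable over $\M$ from $q$. Then $h$ witnesses $\forall \xi \in \gamma\, \exists \eta\, (\eta \in \OR \wedge \eta \geq h(\xi))$ — more carefully, for each $\xi < \gamma$ there is an ordinal $\eta$ and a witness to "$h(\xi) = \eta$" — which is a $\Sigma_{k-1}$, hence $\Sigma_k$, matrix. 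Applying $\Sigma_k$-Collection (indeed $\Sigma_{k-1}$-Collection would do, but we have the stronger hypothesis) with the bounding set $a = \gamma$, we obtain a set $b \in \M$ bounding all the witnesses, and in particular bounding $\ran(h)$ below some ordinal in $\M$. This contradicts the cofinality of $h$.

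The step I expect to be the main obstacle is the careful bookkeeping in going from "$\rho_{k-1}^\M < \OR^\M$" to an honest $\Sigma_{k-1}$-definable (from a parameter) cofinal map $\gamma \to \OR^\M$ to which $\Sigma_k$-Collection can be applied: one must check that the standard parameter $p_{k-1}^\M$ and the $r\Sigma_{k-1}$ Skolem terms (as organized via $m\tau_\varphi^\M$ and Lemma \ref{translatibility}) really do produce such a map definable over $\M$, and that the translation $f_2^{k-1}$ does not reintroduce an unbounded quantifier that would push the matrix above $\Sigma_{k-1}$. The hypothesis that $\delta$ is the largest cardinal of $\M$ is used exactly as in Lemma \ref{better translatibility}, to guarantee transitivity of the relevant hulls and thereby the parameter-free translatability; without it one would be stuck with the parameter $p$ from Lemma \ref{translatibility}, which is still fine here since we are allowed parameters in the definition of the projectum, but using the parameter-free version keeps $S$ available. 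Once the cofinal map is in hand, the collection argument is immediate, so the content is entirely in the fine-structural translation, which is why I would lean on Lemmas \ref{translatibility} and \ref{better translatibility} rather than redo it.
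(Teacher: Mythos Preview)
Your overall plan---induction on $k$, assume $\rho_{k-1}^\M = \gamma < \OR^\M$, produce a cofinal map out of $\gamma$, and kill it with Collection---is the same as the paper's, but there is a real gap in your execution, and it is exactly the point you flag as ``the main obstacle.''

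The cofinal map $h$ you posit cannot be taken \emph{total} on $\gamma$ with an $r\Sigma_{k-1}$ graph. The natural candidate is $h(\xi) = \lv_\psi^\M(\xi,p)$ for an $r\Sigma_{k-1}$ formula $\psi$ defining a new subset $A \subseteq \gamma$; this $h$ is cofinal (otherwise $A$ would be computable from a bounded $\tho_{k-2}$-theory and hence lie in $\M$), but $\dom(h) = A \notin \M$. So you cannot apply Collection with $a = A$. If instead you take $a = \gamma$, the matrix becomes ``$h(\xi)=\eta \lor \xi \notin A$,'' which is $r\Sigma_{k-1} \lor r\Pi_{k-1}$ and hence genuinely $\Sigma_k$, not $\Sigma_{k-1}$. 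In particular, your parenthetical that ``$\Sigma_{k-1}$-Collection would do'' is wrong; the full $\Sigma_k$ hypothesis is needed here, and this is not an accident (the lemma is tight in $k$). Once you accept this and use $\Sigma_k$-Collection on the $\Sigma_k$ matrix, your argument goes through.

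The paper sidesteps the partiality issue by working with the \emph{total} function $f(\alpha) = \tho_{k-1}^\M(\alpha \cup \{p_{k-1}^\M\})$ on $\alpha < \gamma$, and spending its effort instead on showing that the graph of $f$ is $\Sigma_k$-definable: it does this by observing that since $f(\alpha) \in \M$, there is some $\beta \in S_{k-2}^\M$ with $f(\alpha) = \tho_{k-1}^{\M|\beta}(\alpha \cup \{p_{k-1}^\M\})$, and this reformulation is $\Sigma_k$. (A small index slip in your writeup: invoking Lemma~\ref{better translatibility} at level $k-1$ gives you the cofinal set $S_{k-2}^\M = \{\beta : \M|\beta \prec_{\Sigma_{k-2}} \M\}$, not $S_{k-1}^\M$; the latter being cofinal is equivalent to what you are trying to prove.) The paper also splits off the case $\rho_{k-1}^\M = \omega$, which you should check your $h$ handles uniformly.
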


\prf
    Let us argue by induction on $k \geq 1$. In the case $k=1$, there is nothing to show. So suppose that $k > 1$ and $\M$ models $\Sigma_{k}$-Collection. By the induction hypothesis, we may assume that $\rho^\M_{k-2} = \OR^\M$ so that $\vec{p}_{k-2}^\M = \emptyset$ and for all $\alpha < \OR^M$, $\tho_{k-2}^\M (\alpha \cup \{ \vec{p}_{k-2}^\M \}) \in \M$. Since $\rho_{k-2}^\M = \OR^\M$ and $\M$ has a largest cardinal, it follows that 
    \[
        S := \{ \gamma < \OR^\M : \M \vert \gamma \prec_{\Sigma_{k-2}} \M \}
    \]
is cofinal in $\OR^\M$. Moreover, $S$ is $\Pi_{k-2}$-definable over $\M$.  We aim to see that $\rho^\M_{k-1} = \OR^\M$. Suppose for the sake of contradiction that $ \kappa := \rho^\M_{k-1} < \OR^\M$.

    \begin{case} $\kappa = \omega$. For $m< \omega$, let $F_m$ be the set of $r\Sigma_{k-1}$ formulas with parameter $p_{k-1}^\M$ of length less or equal than $m$. Note that 
    \[
        \M \models \forall m < \omega \exists \gamma (\gamma \in S \land \forall \varphi \in F_m ( \varphi \rightarrow \M \vert \gamma \models \varphi)).
    \]
    Moreover, since $\forall \varphi \in F_m ( \varphi \rightarrow \M \vert \gamma \models \varphi)$ is an instance of the $\Sigma_{k}$-Collection scheme, it follows by another application of $\Sigma_{k}$-Collection that there is some $\gamma < \OR^\M$ such that $\M \vert \gamma \models \tho^\M_{k-1}(\omega \cup \{ p_{k-1}^\M \})$, a contradiction!
    \end{case}

    \begin{case}
        $\kappa > \omega$.
            Let $f \colon \kappa \to \M$ be such that $f(\alpha) = \tho_{k-1}^\M (\alpha \cup \{ p_{k-1}^\M \})$. Since $\tho_{k-1}^\M (\alpha \cup  \{ p_{k-1}^\M  \}) \in \M$, it follows from $\Sigma_k$-Collection that there is some $\beta \in S$ such that $\tho_{k-1}^\M (\alpha \cup \{ p_{k-1}^\M\}) = \tho_{k-1}^{\M \vert \beta} (\alpha \cup \{ p_{k-1}^\M \} )$. Let $F_\alpha$ be the set of $r\Sigma_k$ formulas with parameters in $\alpha \cup \{ p_{k-1}^\M \}$ and let $h$ be as in Lemma \ref{translatibility}. We have that $x = \tho_{k-1}^\M (\alpha \cup \{ p_{k-1}^\M \})$ if and only if
        \begin{equation} \label{complexity of f}
            \exists \beta (\beta \in S \land x = \tho_{k-1}^{\M\vert \beta} (\alpha \cup \{ p_{k-1}^\M\})) \land \forall \varphi \in F_\alpha (\varphi \rightarrow \M \vert \beta \models h(\varphi)).
        \end{equation}
        Line (\ref{complexity of f}) is $\Sigma_k$. Thus, $f$ is $\Sigma_k$-definable over $\M$ from parameters. By $\Sigma_{k}$-Collection, there is $\xi<\alpha$, such that for all $\alpha < \kappa$, $\tho_{k-1}^\M (\alpha \cup \{ \vec{p}_{k-1}^\M \}) \in \M \vert \xi$. But this means that $\tho_{k}^\M (\kappa \cup \{ \vec{p}_{k-1}^\M \}) \in \M$, a contradiction!
    \end{case}
    This finishes the proof.
\eprf

\begin{lem} \label{lem: largest cardinal}
    Let $k \geq 1$ and let $\M$ be a passive premouse that models $\Sigma_k$-Collection and has a largest cardinal. Suppose that $\rho_k^\M < \OR^\M$. Then $\rho_k^\M$ is the largest cardinal of $\M$.
\end{lem}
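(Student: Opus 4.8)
The plan is to prove $\rho_k^{\M}\le\delta$ and $\rho_k^{\M}\ge\delta$ separately, where $\delta$ denotes the largest cardinal of $\M$. The first inequality is a soft collapsing argument and does not use $\Sigma_k$-Collection; the second is the substantive part, where Lemma~\ref{lem: n-1 projectum}, the translatability lemmas, and $\Sigma_k$-Collection all enter.

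For $\rho_k^{\M}\le\delta$: suppose toward a contradiction that $\delta<\rho_k^{\M}$. Since $\rho_k^{\M}<\OR^{\M}$ by hypothesis, $\rho_k^{\M}\in\M$, and since $\delta$ is the largest cardinal of $\M$ and $\rho_k^{\M}>\delta$, $\M$ contains a surjection $f\colon\delta\to\rho_k^{\M}$. Fix a set $A\subseteq\rho_k^{\M}$ that is $r\Sigma_k^{\M}$ in parameters with $A\notin\M$, as in the definition of $\rho_k^{\M}$. Then $f^{-1}[A]\subseteq\delta$ is $r\Sigma_k^{\M}$ in the parameters of $A$ together with $f$, and $f^{-1}[A]\notin\M$, for otherwise $A=f[f^{-1}[A]]\in\M$ by closure of $\M$ under rudimentary functions. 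This contradicts minimality of $\rho_k^{\M}$, so $\rho_k^{\M}\le\delta$.

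For $\rho_k^{\M}\ge\delta$: suppose toward a contradiction that $\rho:=\rho_k^{\M}<\delta$. By Lemma~\ref{lem: n-1 projectum}, $\rho_{k-1}^{\M}=\OR^{\M}$; in particular $\vec{p}_{k-1}^{\M}=\emptyset$, the $r\Sigma_k$- and $\Sigma_k$-hierarchies over $\M$ agree by Lemma~\ref{better translatibility}, and, exactly as in the proof of Lemma~\ref{lem: n-1 projectum}, the class $S:=\{\gamma<\OR^{\M}:\M|\gamma\prec_{\Sigma_{k-1}}\M\}$ is cofinal in $\OR^{\M}$ and $\Pi_{k-1}$-definable over $\M$. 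Fix $\vec x\in\M$ and a $\Sigma_k$ formula $\varphi\equiv\exists y\,\psi$ with $\psi\in\Pi_{k-1}$ such that $A:=\{\xi<\rho:\M\models\varphi(\xi,\vec x)\}\notin\M$ (using translatability to pass from $r\Sigma_k$ to $\Sigma_k$), and fix $\gamma_0<\OR^{\M}$ with $\vec x\in\M|\gamma_0$. For $\gamma\in S$, $\gamma\ge\gamma_0$, set $A_\gamma:=\{\xi<\rho:\M|\gamma\models\varphi(\xi,\vec x)\}$. Since $\psi\in\Pi_{k-1}$ is absolute between $\M$ and $\M|\gamma$ (as $\M|\gamma\prec_{\Sigma_{k-1}}\M$), one obtains the usual facts: $A_\gamma\in\M$, $A_\gamma\subseteq A$, $\gamma\mapsto A_\gamma$ is $\subseteq$-increasing, and $\bigcup_\gamma A_\gamma=A$. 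Moreover $\rho^{+\M}$ exists (because $\rho<\delta<\OR^{\M}$ and $\delta$ is a cardinal of $\M$), so by acceptability every $A_\gamma$, and every initial segment $A\cap\gamma'$ with $\gamma'<\rho$, lies in the single set $\M|\rho^{+\M}\in\M$. The goal is then to show that $A=\bigcup\{A_\gamma:\gamma\in S,\ \gamma_0\le\gamma<\beta\}$ for some $\beta<\OR^{\M}$; since this union is in $\M$, that contradicts $A\notin\M$.

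The engine is $\Sigma_k$-Collection, applied not to $\rho$ directly but to the initial segments $A\cap\gamma'\in\M$: for fixed $\gamma'<\rho$, the map sending $\xi\in A\cap\gamma'$ to the least $\gamma\in S$ with $\xi\in A_\gamma$ has $\Sigma_k^{\M}$ graph and domain $A\cap\gamma'\in\M$, so by $\Sigma_k$-Collection its range is bounded below $\OR^{\M}$; hence the $\subseteq$-increasing sequence $\langle A_\gamma\cap\gamma':\gamma\in S\rangle$ stabilizes below $\OR^{\M}$, i.e. $A\cap\gamma'$ is captured by $A_{<\beta}:=\bigcup\{A_\gamma:\gamma\in S,\ \gamma_0\le\gamma<\beta\}$ for all large enough $\beta<\OR^{\M}$. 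The remaining, and main, difficulty is to bound these stabilization points uniformly in $\gamma'<\rho$. The obstacle is one of complexity: the natural assertion that $\beta$ captures $A\cap\gamma'$, namely $A\cap\gamma'\subseteq A_{<\beta}$, is $\Pi_k$, not $\Sigma_k$ (it says that no further $\xi<\gamma'$ enters the $\Sigma_k$ set $A$), so one cannot simply run $\Sigma_k$-Collection over $\rho$ a second time. I would deal with this by exploiting that all the $A_{<\beta}$ lie in $\M|\rho^{+\M}$, so that the $\subseteq$-chain $\{A_{<\beta}:\beta<\OR^{\M}\}$ of subsets of $\rho$ has order type $<\rho^{+\M}<\OR^{\M}$ — via the usual least-new-element injection, which can be carried out inside $\M$ because $\beta\mapsto A_{<\beta}$ has $\Delta_k$ graph (``$a=A_{<\beta}$'' being a conjunction of a $\Sigma_{k-1}$ and a $\Pi_{k-1}$ formula) — and then admissibility of $\M$ forces this chain to have a maximum, which must be $A$, giving $A\in\M$ and the desired contradiction. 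The persistent theme, and the place to be careful, is keeping every invocation of Collection and Separation at the level $k$ that $\M$ actually supplies.
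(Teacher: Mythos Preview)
Your first inequality $\rho_k^\M\le\delta$ is fine (the paper tacitly uses that $\rho_k^\M$ is an $\M$-cardinal, which is what your surjection argument shows).

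For $\rho_k^\M\ge\delta$, the setup is right --- $\rho_{k-1}^\M=\OR^\M$, the approximations $B_\beta:=\{\xi<\rho:\exists y\in\M|\beta\,\psi(\xi,y,\vec x)\}$ live in $\M|\rho^{+\M}$, and ``$a=B_\beta$'' is $\Sigma_{k-1}\wedge\Pi_{k-1}$ --- and you correctly isolate the obstacle: a second application of Collection to uniformize the stabilization bounds over $\gamma'<\rho$ would need a $\Pi_k$ formula. But your proposed workaround (``order type $<\rho^{+\M}$ via least-new-element injection, then admissibility forces a maximum'') does not close the gap. To convert ``the chain has no maximum'' into a violation of $\Sigma_k$-Collection you need a \emph{total} $\Sigma_k$-definable function from some $\mu\in\M$ cofinally into $\OR^\M$. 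Every natural candidate requires, at the crucial point, deciding ``$\xi\notin A$'' (to totalize $\xi\mapsto\min\{\beta:\xi\in B_\beta\}$ on $\rho$) or ``$a\not\subseteq A$'' (to cut $\M|\rho^{+\M}$ down to the relevant domain), and these are genuinely $\Pi_k$. The $\Delta_k$ complexity of the graph of $\beta\mapsto B_\beta$ does not help here: the inverse direction, from $a\in\M|\rho^{+\M}$ back to $\beta$, has only $\Sigma_k$ domain $\{a:\exists\beta\,(a=B_\beta)\}$, which is not $\Delta_k$ and need not lie in $\M$ (indeed, if it did, its union would be $A\in\M$). As written, the argument stops exactly where it needs an idea.

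The paper takes a different route that sidesteps this. Rather than fixing a single $A$, it works with the hull $H:=\Hull{\M}{k}(\rho\cup\{p_k^\M,\rho\})$. The point is that $\xi:=\sup(H\cap\rho^{+\M})=H\cap\rho^{+\M}$ is an \emph{ordinal} in $\M$: $\rho\subseteq H$, and any $\gamma\in H\cap\rho^{+\M}$ drags its predecessors into $H$ via the $\Sigma_1$-definable surjection $\rho\to\gamma$, so $H\cap\rho^{+\M}$ is downward closed. One then applies $\Sigma_k$-Collection \emph{once}, over the domain $\xi$, to the $r\Sigma_k$ relation ``$\gamma=m\tau_\varphi^\M(\vec x,p)$ for some $r\Sigma_k$ $\varphi$ and $\vec x\in[\rho]^{<\omega}$'', obtaining a uniform level bound $\beta$. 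From $\tho_{k-1}^\M(\beta)\in\M$ one reads off a surjection $[\rho]^{<\omega}\to\xi$ in $\M$; this is a contradiction whether $\xi=\rho^{+\M}$ (cardinality) or $\xi<\rho^{+\M}$ (then $\xi\in H$, since the bound $\beta$ and hence the surjection can be taken in $H$). The hull supplies exactly the set-sized domain your approach was missing, without ever needing to express ``$\xi\notin A$''.
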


\prf
    By Lemma \ref{lem: n-1 projectum}, $\rho_{k-1}^\M = \OR^\M$. In particular, $\vec{p}_{k-1}^\M = \emptyset$. Let $\rho := \rho_k^\M$ and $p = p_k^\M$. Suppose for the sake of contradiction that $\rho^+$ exists in $\M$. Let
    \[
        H := \Hull{\M}{k} (\rho \cup \{ p, \rho \})
    \]
    and set $\xi := \sup(H \cap \rho^{+\M}) = H \cap \rho^{+\M} < \OR^\M$. 
    Note that
    \[
        \M \models \forall \gamma < \xi \exists \vec{x} \in \fin{\rho} \exists \varphi (\gamma = m\tau_{\varphi}^\M (\vec{x},p)).
    \]
    However, this means that
    \begin{equation} \label{another bounding statement}
        \M \models \forall \gamma < \xi \exists \beta \exists \vec{x} \in \fin{\rho} \exists \varphi (\lv_{m\tau_{\varphi}^\M (\vec{x},p)) = \gamma}(\vec{x},p,\gamma)=\beta.
    \end{equation}
    But the part in parentheses of line (\ref{another bounding statement}) is $r\Sigma_k$, so that by Lemma \ref{translatibility} and Lemma \ref{lem: n-1 projectum} we may assume that it is $\Sigma_k$ with parameters from $\M$. But then by $\Sigma_k$-Collection, there is a uniform $\beta$. Thus, $\M$ can compute from $T_\beta := \tho_{k-1}^M (\beta) \in \M$ a surjection $f \colon \fin{\rho} \to \xi$ such that $f \in \M$. In the case that $\xi = \rho^{+\M}$ this gives immediately a contradiction. So, suppose that $\xi < \rho^{+\M}$. Note that for $\gamma < \xi$, $\vec{x} \in \fin{\rho}$, and an $r\Sigma_k$ formula $\varphi$, there is a subtheory of $T_\beta$, which witnesses the statement $(m\tau_{\varphi}^\M (\vec{x},p)) = \gamma$. This subtheory is recursively definable from the parameters $\gamma, \vec{x}$, and $\rho$. Since $\Hull{\M}{k} (\rho \cup \{ p, \rho \}) \subset H$, $H$ is unbounded in $\M$. Thus, we may assume that the bound $\beta$ is in $H$. However, since $\rho \in H$, $f \in H$ and therefore $\xi \in H$, a contradiction!
\eprf

What we have shown so far gives the following criteria for when a passive premouse is $\Sigma_k$-admissible.

\begin{lem} \label{lem:KP in Premice}
    Let $\M$ be a passive premouse that has a largest cardinal, and $k \geq 1$. Then the following statements are equivalent:
    \begin{enumerate}
        \item \label{kp} $\M \models \Sigma_k \text{-}\kp$,
        \item $\M \models \Sigma_k\text{-Collection}$, and
        \item \label{no unbounded} there is no total unbounded function $f \colon \alpha \rightarrow \lfloor \M \rfloor$ such that $\alpha \in \OR^{\M}$ and $f \in \Sigma_k^\M (\M)$. 
    \end{enumerate}
\end{lem}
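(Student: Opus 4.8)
The plan is to prove the cycle $(1) \Rightarrow (2) \Rightarrow (3) \Rightarrow (1)$, leaning heavily on the fine-structural lemmas already established, particularly Lemma~\ref{lem: n-1 projectum}, Lemma~\ref{translatibility}, and the definition of the $\Sigma_k$-KP axioms.

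\emph{The implication $(1) \Rightarrow (2)$ is immediate}, since $\Sigma_k$-Collection is explicitly one of the axiom schemes of $\Sigma_k\text{-}\kp$.

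\emph{For $(2) \Rightarrow (3)$}, I would argue by contradiction. Suppose $\M$ models $\Sigma_k$-Collection and has a largest cardinal, but there is a total unbounded $f \colon \alpha \to \lfloor\M\rfloor$ with $\alpha \in \OR^\M$ and $f \in \Sigma_k^\M(\M)$. First note that by Lemma~\ref{lem: n-1 projectum}, $\rho_{k-1}^\M = \OR^\M$, so $\vec p_{k-1}^\M = \emptyset$; this puts us in the situation of Lemma~\ref{better translatibility} (with $\delta$ the largest cardinal), or at any rate of Lemma~\ref{translatibility} with the parameter $p$ available in $\M$. Hence the $\Sigma_k$-definable function $f$ has a graph which is also $r\Sigma_k^\M$ modulo a parameter, but more to the point, since $f$ is $\Sigma_k$-definable over $\M$ from some parameter $a \in \M$, the statement ``$\forall \xi < \alpha\, \exists y\, (y = f(\xi))$'' is an instance to which $\Sigma_k$-Collection applies: it yields a set $b \in \M$ with $f(\xi) \in b$ for all $\xi < \alpha$, and then $\bigcup b \in \M$ bounds $\ran(f)$ in $\M$, contradicting unboundedness of $f$. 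Here one must be slightly careful that $f(\xi) = y$ is genuinely expressible by a $\Sigma_k$ formula (not merely $\Sigma_{k+1}$); this is exactly what being in $\Sigma_k^\M(\M)$ as a \emph{function} should be taken to mean, and if the paper's convention is that the graph is $\Sigma_k$, the argument goes through directly. This step also silently uses that $\M$ models Pairing and Union, which hold in any premouse of the relevant form, or can be folded into the definition of $\Sigma_k$-Collection being meaningful.

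\emph{For $(3) \Rightarrow (1)$}, which I expect to be the main obstacle, I would verify each axiom of $\Sigma_k\text{-}\kp$ in turn. Extensionality, Pairing, Union, Infinity, and Foundation hold in any passive premouse (with a largest cardinal, well-foundedness gives Foundation) essentially by construction, so the content is $\Delta_k$-Aussonderung and $\Sigma_k$-Collection. For $\Sigma_k$-Collection: given $a \in \M$ and a $\Sigma_k$ formula $\varphi$ with $\M \models \forall x \in a\, \exists y\, \varphi(x,y,\vec v)$, define $g \colon a \to \OR^\M$ by letting $g(x)$ be the least $\beta$ such that $\M\vert\beta \models \exists y\, \varphi(x,y,\vec v)$ (using $\rho_{k-1}^\M = \OR^\M$ from Lemma~\ref{lem: n-1 projectum}, together with Lemma~\ref{translatibility} to pass between the $\Sigma_k$ and $r\Sigma_k$ hierarchies, so that this $g$ is $\Sigma_k^\M(\M)$ in parameters); since $a$ has order type some ordinal $\gamma \leq \card^\M(a)$ which is at most the largest cardinal, $g$ cannot be unbounded in $\M$ by hypothesis~(3) (reindex $g$ along a bijection $\gamma \to a$ if needed), so $\sup \ran(g) =: \beta^* \in \OR^\M$ and $b := \M\vert\beta^*$ witnesses the conclusion. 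For $\Delta_k$-Aussonderung: suppose $\varphi, \psi$ are $\Sigma_k$ with $\Phi_{\varphi,\psi}(\vec v)$ holding in $\M$, and let $a \in \M$; then $\{x \in a : \M \models \varphi(x,\vec v)\}$ equals $\{x \in a : \M \models \neg\psi(x,\vec v)\}$, so it is both $\Sigma_k$ and $\Pi_k$ over $\M$, hence (again via Lemma~\ref{translatibility} and $\rho_{k-1}^\M = \OR^\M$) $\Delta_k$ in the $r\Sigma_k$ sense; one then checks this set lies in $\M$ --- either by a direct fine-structural argument that $\Delta_k$-definable subsets of sets in $\M$ belong to $\M$ when $\rho_{k-1}^\M = \OR^\M$, or by reducing it to (3) via the characteristic function of the set composed with a bijection, arguing that if the set were not in $\M$ then one could cook up an unbounded $\Sigma_k$ function from the failure (e.g.\ mapping $x \in a$ to the level of $\M$ deciding $\varphi(x)$), contradicting~(3). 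The delicate point throughout is the interplay between the standard $\Sigma_k$ hierarchy in which the KP axioms are phrased and the fine-structural $r\Sigma_k$ hierarchy in which projecta are computed; Lemma~\ref{translatibility} is precisely the tool that bridges this, and the hypothesis that $\M$ has a largest cardinal is what makes hulls transitive and bounded (as exploited in Lemma~\ref{better translatibility}), keeping all the relevant Skolem functions internal to $\M$.
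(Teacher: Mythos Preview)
Your cycle $(1)\Rightarrow(2)\Rightarrow(3)\Rightarrow(1)$ matches the paper, and your treatments of $(1)\Rightarrow(2)$ and $(2)\Rightarrow(3)$ are fine. The problem is in $(3)\Rightarrow(1)$.

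In your argument for $\Sigma_k$-Collection you invoke Lemma~\ref{lem: n-1 projectum} to obtain $\rho_{k-1}^{\M}=\OR^{\M}$, in order to justify that your least-witness-level function $g$ is $\Sigma_k$. But the hypothesis of Lemma~\ref{lem: n-1 projectum} is precisely that $\M$ models $\Sigma_k$-Collection---which is what you are trying to prove. This is circular. The paper organizes things so as to avoid this: it treats $(3)\Rightarrow\Sigma_k$-Collection as the routine direction (a failure of Collection directly yields an unbounded $\Sigma_k$ function; the definability of that function has to be argued directly, not via Lemma~\ref{lem: n-1 projectum}), and only \emph{after} Collection is in hand does it invoke Lemma~\ref{lem: n-1 projectum}---now legitimately---to get $\rho_{k-1}^{\M}=\OR^{\M}$ for the Aussonderung step.

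For $\Delta_k$-Aussonderung your sketch lists several routes without committing. The paper's argument is a sharp version of your last suggestion: since $\rho_{k-1}^{\M}=\OR^{\M}$, the set $S=\{\gamma<\OR^{\M}:\M\vert\gamma\prec_{\Sigma_{k-1}}\M\}$ is cofinal. For each $y\in x$ exactly one of $\varphi(y),\psi(y)$ holds in $\M$; since both are $\Sigma_k$, some $\gamma\in S$ has $\M\vert\gamma$ satisfying whichever one is true. This is a $\Sigma_k$ condition in $y$, so $\Sigma_k$-Collection (now available) gives a single $\gamma$, which we may take in $S$. Then $\{y\in x:\M\models\varphi(y)\}=\{y\in x:\M\vert\gamma\models\varphi(y)\}$ is definable over $\M\vert\gamma\in\M$, hence in $\M$. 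Your first option---that $\Delta_k$-definable subsets of sets automatically lie in $\M$ once $\rho_{k-1}^{\M}=\OR^{\M}$---is not obviously true without essentially reproducing this argument.
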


\prf
    It suffices to see that \ref{no unbounded} implies \ref{kp}. The only axiom of $\Sigma_k$-KP that is not clear is $\Delta_k$-Aussonderung. By Lemma \ref{lem: n-1 projectum}, $\rho_{k-1}^\M = \OR^\M$. Let
    \[
        S:= \{ \gamma < \alpha : \M \vert \gamma \prec_{\Sigma_{k-1}} \M \}
    \]
    and note that $S$ is cofinal in $\OR^\M$. Let $x \in \M$ and suppose that $\varphi$ and $\psi$ are $\Sigma_{k}$ formulas such that $\M \models \forall z (\varphi(z) \leftrightarrow \neg \psi(z))$.
    We have $\M \models \forall y \in x \exists \gamma (\M \vert \gamma \models \varphi(y) \lor \psi(\gamma))$. By $\Sigma_k$-Collection, there is $\gamma < \OR^\M$ which works uniformly. Since $S$ is cofinal in $\OR^\M$, we may assume that $\gamma \in S$. However, then $\{ y \in x : \M \models \varphi(y) \}$ is definable over $\M \vert \gamma$.
\eprf

\begin{dfn}
    Let $\M$ be a passive premouse, and $k \leq \omega$. We let
    \[
        S^\M_k := \{ \alpha < \OR^\M : \M \vert \alpha \prec_{\Sigma_k} \M \}.
    \]
\end{dfn}

We have used the following observation before, but let us record it as a lemma.

\begin{lem}
    Let $k \geq 1$ and let $\M$ be a passive premouse that models $\Sigma_k$-Collection and has a largest cardinal. Then $S_{k-1}^\M$ is cofinal in $\OR^\M$. Moreover, $S_{k-1}^\M$ is $\Pi_{k-1}$-definable over $\M$.
\end{lem}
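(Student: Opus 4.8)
\emph{Proof plan.} The plan is to prove the lemma by induction on $k$ (carrying along, for the sake of the induction, the extra fact that $S^\M_{k-1}$ is closed in $\OR^\M$). The case $k=1$ is trivial: $\Sigma_0$-formulas of $\mathcal{L}_{\mathrm{pm}}$ are absolute between $\M\vert\gamma$ and $\M$, as $\M\vert\gamma$ is transitive and its predicates restrict those of $\M$, so $S^\M_0$ is simply the $\Delta_0$-definable, cofinal set of $\gamma$ for which $\M\vert\gamma$ is a level of $\M$. For $k>1$ I would first note that $\M$ models $\Sigma_{k-1}$-Collection (every $\Sigma_{k-1}$-formula is $\Sigma_k$), so that the induction hypothesis applies to $(\M,k-1)$ and yields that $S^\M_{k-2}$ is closed cofinal and $\Pi_{k-2}$-definable over $\M$; and that, by Lemma \ref{lem: n-1 projectum}, $\rho_{k-1}^\M=\OR^\M$, hence also $\rho_{k-2}^\M=\OR^\M$ and $\vec{p}_{k-1}^\M=\vec{p}_{k-2}^\M=\emptyset$.

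For cofinality I would run the fine-structural hull argument already used in the proof of Lemma \ref{better translatibility}. Given $\beta_0<\OR^\M$, let $\delta$ be the largest cardinal of $\M$, pick $\alpha$ with $\max(\delta,\beta_0)<\alpha<\OR^\M$, and set $H:=\Hull{\M}{k-1}(\alpha)$ (no parameter is needed, as $\vec{p}_{k-1}^\M=\emptyset$). Then $H$ is transitive (because $\delta$ is the largest cardinal) and bounded in $\M$ (else $\rho_{k-1}^\M<\OR^\M$), so by condensation $H=\M\vert\gamma$ for some $\gamma\in[\alpha,\OR^\M)$, with $\M\vert\gamma\prec_{r\Sigma_{k-1}}\M$; a routine condensation computation shows that $\M\vert\gamma$ again has largest cardinal $\delta$ and $\rho_{k-2}^{\M\vert\gamma}=\OR^{\M\vert\gamma}$, so Lemma \ref{better translatibility}, applied with $k-1$ in place of $k$ to both $\M$ and $\M\vert\gamma$, upgrades $r\Sigma_{k-1}$-elementarity to $\Sigma_{k-1}$-elementarity, giving $\gamma\in S^\M_{k-1}$ with $\gamma\ge\alpha>\beta_0$. (Closedness of $S^\M_{k-1}$, needed to sustain the induction, is the standard complexity induction: for $\gamma=\sup_i\gamma_i$ with $\gamma_i\in S^\M_{k-1}$ increasing, one shows $\M\vert\gamma\prec_{\Sigma_j}\M$ for $j\le k-1$ by induction on $j$, using that $\M\vert\gamma_i\prec_{\Sigma_j}\M$ and $\M\vert\gamma\prec_{\Sigma_j}\M$ force $\M\vert\gamma_i\prec_{\Sigma_j}\M\vert\gamma$, and chasing witnesses along $\M\vert\gamma_i\subseteq\M\vert\gamma\subseteq\M$. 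If one wishes to avoid condensation, cofinality can instead be obtained by iterating $\omega$ times the operator that closes $\M\vert\beta$ off under witnesses for its true $\Sigma_{k-1}$-facts, whose totality on $\OR^\M$ and whose $\Delta_k$ graph come from $\Sigma_k$-Collection alone; this route does not invoke Lemma \ref{lem:KP in Premice}, whose own proof uses the present lemma.)

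For $\Pi_{k-1}$-definability, the crux is the Tarski--Vaught characterization: $\gamma\in S^\M_{k-1}$ iff $\gamma\in S^\M_{k-2}$ and $(\star_\gamma)$, where $(\star_\gamma)$ says that for every $\Pi_{k-2}$-formula $\theta$ and all $\vec{a}\in\M\vert\gamma$, if $\M\models\exists v\,\theta(v,\vec{a})$ then some $b\in\M\vert\gamma$ satisfies $\M\models\theta(b,\vec{a})$. The point is that $\gamma\in S^\M_{k-2}$ already gives $\M\vert\gamma\prec_{\Pi_{k-2}}\M$, and this forces the downward half of $\Sigma_{k-1}$-elementarity; thus $(\star_\gamma)$ is exactly the remaining, upward half, and the reverse implication is immediate. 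Now "$\gamma\in S^\M_{k-2}$" is $\Pi_{k-2}$ by the induction hypothesis, hence $\Pi_{k-1}$; and $(\star_\gamma)$ is $\Pi_{k-1}$ as well, since --- reading "$\M\models\exists v\,\theta(v,\vec{a})$" off the $\Sigma_{k-1}$-partial truth predicate of $\M$ (which is $\Sigma_{k-1}^\M$), and "$\exists b\in\M\vert\gamma\ \M\models\theta(b,\vec{a})$" off the $\Pi_{k-2}$-partial truth predicate of $\M$ together with the $\Delta_1^\M$ satisfaction relation of the set $\M\vert\gamma$ (a bounded existential over a $\Pi_{k-2}$-matrix) --- the displayed implication becomes a disjunction of a $\Pi_{k-1}$ and a $\Pi_{k-2}$ formula, hence $\Pi_{k-1}$, and the leading bounded quantifiers $\forall\theta\,\forall\vec{a}\in\M\vert\gamma$ preserve $\Pi_{k-1}$.

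The step I expect to be the main obstacle is this last complexity count. The naive definition of $S^\M_{k-1}$ --- "for all $\Sigma_{k-1}$-formulas $\varphi$ and all $\vec{a}\in\M\vert\gamma$, $\M\vert\gamma\models\varphi(\vec{a})$ iff $\M\models\varphi(\vec{a})$" --- is a biconditional of a $\Delta_1$ with a $\Sigma_{k-1}$ condition and is therefore only $\Delta_k$; getting it down to $\Pi_{k-1}$ is exactly what the induction buys, by allowing the downward half of elementarity to be absorbed into the $\Pi_{k-2}$-definability of $S^\M_{k-2}$. A secondary point to handle carefully is the potential circularity with Lemma \ref{lem:KP in Premice}: the cofinality argument must not go through that lemma, which both the hull argument and the $\Sigma_k$-Collection iteration avoid.
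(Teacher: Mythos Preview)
The paper gives no proof of this lemma; it records it with the remark ``We have used the following observation before, but let us record it as a lemma.'' Your proposal is correct, and your cofinality argument via the $r\Sigma_{k-1}$-hull is precisely the computation the paper already carried out implicitly in the proofs of Lemma~\ref{better translatibility} and Lemma~\ref{lem: n-1 projectum} (that $\Hull{\M}{k-1}(\alpha)$ is transitive and bounded when $\rho_{k-1}^\M=\OR^\M$ and $\M$ has a largest cardinal). Your Tarski--Vaught argument for the $\Pi_{k-1}$-definability, together with the inductive bookkeeping and the awareness of the potential circularity with Lemma~\ref{lem:KP in Premice}, goes well beyond what the paper spells out.

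One small simplification: to upgrade $r\Sigma_{k-1}$-elementarity of $\M\vert\gamma\hookrightarrow\M$ to $\Sigma_{k-1}$-elementarity you invoke Lemma~\ref{better translatibility} at level $k-1$, which requires checking $\rho_{k-2}^{\M\vert\gamma}=\gamma$. You can bypass this by using Lemma~\ref{translatibility} instead: since $\M\vert\gamma$ is a proper initial segment of the premouse $\M$, it is $\omega$-sound, hence $(k-2)$-sound, and the recursive translation $f_2$ from $\Sigma_{k-1}$ to $r\Sigma_{k-1}$ there is uniform across $(k-2)$-sound passive premice; the $r\Sigma_{k-1}$-elementarity then immediately yields $\Sigma_{k-1}$-elementarity. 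This avoids the ``routine condensation computation'' you flag. For the complexity count, your claim that the consequent $\exists b\in\M\vert\gamma\,\text{Sat}_{\Pi_{k-2}}(\theta,b,\vec a)$ is $\Pi_{k-2}$ is justified because $\M$ satisfies $\Sigma_{k-2}$-Collection (hence bounded existential quantification preserves $\Pi_{k-2}$), which follows from the hypothesis of $\Sigma_k$-Collection; this is worth stating explicitly, since the point is not entirely formal.
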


The proof of the following lemma is a straightforward adaption of the proof of Lemma 15 in \cite{KRUSCHEWSKI_SCHLUTZENBERG_2025}, which we leave as an exercise to the reader.

\begin{lem} \label{lem:General Preservation of KP}
    Let $m \leq \omega$ and let $\N$ be a $m$-sound premouse. Let $\mathcal{T}$ be a $m$-maximal iteration tree on $\N$ such that $\lh(\mathcal{T})=\theta+1$. Let $b:=[0,\theta]^{\T}$ be the main branch of $\T$ and $\alpha$ be least such that $\alpha + 1 \in b$ and $(\alpha+1,\theta]^{\T}$ does not drop in model. Let $\eta = \alpha+1$. Then $\M_{\eta}^{*\T}$ is a $\Sigma_k$-admissible passive premouse with a largest, regular, and uncountable cardinal if and only if $\M_{\theta}^{\T}$ is a $\Sigma_k$-admissible passive premouse with a largest, regular, and uncountable cardinal.
\end{lem}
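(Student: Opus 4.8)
\emph{Proof idea.}
The plan is to route everything through the branch embedding $i := i_{\eta,\theta}^{*\T}\colon \M_{\eta}^{*\T} \to \M_{\theta}^{\T}$, which exists because, by the choice of $\eta$, the segment $(\eta,\theta]^{\T}$ of the main branch $b$ does not drop in model. Put $p := \deg^{\T}(\theta)$. Then $i$ is, by standard fine-structure theory, a near $p$-embedding which is cofinal in the ordinals; concretely it is the composition along $(\eta,\theta]^{\T}$ of ultrapower maps by extenders total over the relevant reducts, hence elementary on those reducts. I would then show that each of the three conjuncts transfers along $i$, in both directions. Passivity transfers immediately, since an ultrapower or direct limit of a passive premouse is passive and of an active one is active; in particular, if $\M_{\eta}^{*\T}$ is active then both sides of the asserted biconditional fail, so we may assume $\M_{\eta}^{*\T}$ is passive. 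For the largest-cardinal clause, if $\delta$ is the largest cardinal of $\M_{\eta}^{*\T}$ then, by cofinality of $i$ together with its elementarity, $i(\delta)$ is the largest cardinal of $\M_{\theta}^{\T}$, and it inherits regularity and uncountability from $\delta$; conversely, a largest, regular, uncountable cardinal of $\M_{\theta}^{\T}$ lies in $\ran(i)$ and is therefore $i(\delta)$ for a largest, regular, uncountable cardinal $\delta$ of $\M_{\eta}^{*\T}$.

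The substantive point is the transfer of $\Sigma_k$-admissibility, for which I would invoke Lemma \ref{lem:KP in Premice}: granted the largest-cardinal clause (already transferred), it reduces $\Sigma_k$-admissibility of a model $\M$ to there being no total unbounded $f\colon\alpha\to\lfloor\M\rfloor$ with $\alpha\in\OR^{\M}$ and $f\in\Sigma_k^{\M}(\M)$. Suppose, say, $\M_{\theta}^{\T}$ is not $\Sigma_k$-admissible, witnessed by such an $f$. By Lemma \ref{translatibility}, $f$ is $r\Sigma_k$-definable over $\M_{\theta}^{\T}$ from a parameter, and I would pull this definition back along $i$. For that, $i$ must be $r\Sigma_k$-elementary, which holds in all the relevant cases: by Lemma \ref{lem: n-1 projectum}, $\Sigma_k$-admissibility of whichever model is being assumed admissible forces the corresponding $\rho_{k-1}$ to equal its $\OR$, whence either $p \geq k-1$ — so that $i$, being a near $p$-embedding, is $r\Sigma_{p+1}$-elementary and hence $r\Sigma_k$-elementary — or $p < k-1$, in which case the $p$-th reduct already has projectum $\OR$ and $i$ is fully (first-order) elementary. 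Granting this, cofinality of $i$ converts the pulled-back definition — re-translated via Lemma \ref{translatibility}, or, when $\rho_{k-1} = \OR$, via Lemma \ref{better translatibility} so as to dispense with the parameter — into a witness that $\M_{\eta}^{*\T}$ is not $\Sigma_k$-admissible. The reverse implication is symmetric: a witness over $\M_{\eta}^{*\T}$ is pushed forward along $i$, and cofinality keeps its image unbounded. This is the pattern of the proof of Lemma 15 in \cite{KRUSCHEWSKI_SCHLUTZENBERG_2025}, now with $k$ in place of the fixed admissibility level used there.

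The main difficulty is bookkeeping rather than ideas. Because $i$ is elementary with respect to the $r\Sigma$-hierarchy while $\Sigma_k$-admissibility is stated in the standard $\Sigma_k$-hierarchy, the argument must shuttle between the two through Lemmas \ref{translatibility} and \ref{better translatibility}, keeping track of (or, via the latter, eliminating) the parameters these translations introduce and checking that $i$ preserves them; and one must pin the iteration degree $p$ down relative to $k$, which is exactly where Lemma \ref{lem: n-1 projectum} earns its keep. The one genuinely case-sensitive spot is cofinality of $i$ in $\OR^{\M_{\theta}^{\T}}$ when $p = 0$: there one first notes that a degree-$0$ truncation at $\eta$ is compatible with $\Sigma_k$-admissibility only when $k = 1$, and then handles that low-complexity residue directly, via regularity (respectively singularity) of the ordinal heights in question. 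None of this is hard given the lemmas already established; it is precisely the adaptation of \cite{KRUSCHEWSKI_SCHLUTZENBERG_2025} referred to above.
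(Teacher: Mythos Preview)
Your proposal is correct and is exactly the approach the paper intends: the paper gives no proof at all, merely stating that the argument is a straightforward adaptation of Lemma~15 in \cite{KRUSCHEWSKI_SCHLUTZENBERG_2025} and leaving it as an exercise, and your sketch is precisely that adaptation, carried out via the branch embedding $i$, the characterization of $\Sigma_k$-admissibility in Lemma~\ref{lem:KP in Premice}, and the translation lemmas. Your treatment of the case $p<k-1$ is slightly muddled (the claim that $i$ is then ``fully elementary'' is not quite the right formulation), but this case is in fact vacuous: if either model is $\Sigma_k$-admissible then Lemma~\ref{lem: n-1 projectum} gives $\rho_{k-1}=\OR$, while a drop at $\eta$ forces $\rho_{p+1}^{\M_\eta^{*\T}}<\OR$ (and this is preserved to $\M_\theta^\T$), so necessarily $p\geq k-1$; and if there is no drop and the ambient degree $m<k-1$, then $\rho_{m+1}=\OR$ makes the degree-$m$ ultrapower coincide with higher-degree ones, again yielding the needed $r\Sigma_k$-elementarity.
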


The following lemma collects some fine structural consequences of the theory $\tho_k$.

\begin{lem} \label{lem: standard parameter of admissible mice}
    Let $k \geq 1$ and $\M$ be a passive premouse that models $\tho_k$.
    For $i < k$, $\rho_i^\M = \OR^\M$ and $p_i^\M = \emptyset$. Moreover, $\rho_k^\M = \theta < \OR^\M$, where $\theta$ is the largest cardinal of $\M$. 
    
    If $\M$ is $k$-sound and $(k,\omega_1,\omega_1 +1)^*$-iterable, then, if $k=1$, $p_k^\M = \{ \theta \}$, and if $k \geq 2$, then $p_k^\M = \emptyset$. Moreover, $\rho_{k+1}^\M = \omega$ and $p_{k+1}^{\M} = \emptyset$. 
    
    If $\M = \M^{k\text{-ad}}$, then $\Hull{\M}{k+1}(\emptyset) = \M$.
\end{lem}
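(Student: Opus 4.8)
The plan is to prove the statements in the order listed, using the earlier fine-structural lemmas and the minimality of $\M^{k\text{-ad}}$ in the theory $\tho_k$. Throughout write $\M = \M^{k\text{-ad}}$ (when dealing with the last clause) or just a passive premouse modeling $\tho_k$ (for the first clauses), and let $\theta$ denote its largest cardinal, which exists since $\tho_k'$ asserts $\exists \kappa\, (\kappa^+ \text{ exists})$ and $V = L[\dot E]$ forces that the witness to the topmost $\kappa^+$ is the largest cardinal of $\M$ once $\M$ is the minimal such model (more carefully: $\tho_k$ includes the statement that no proper initial segment models $\tho_k'$, and $\tho_k'$ includes $\exists \kappa\exists\delta$ with $\kappa > \delta$ Woodin and $\kappa^+$ existing, so the least such $\M$ is cut off exactly at the successor of its inaccessible, making $\theta = \kappa^{+\M}$ its largest cardinal).

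For the first paragraph of the statement: since $\M \models \Sigma_k\text{-Collection}$ (part of $\Sigma_k\text{-}\kp$) and has a largest cardinal, Lemma~\ref{lem: n-1 projectum} gives $\rho_{k-1}^\M = \OR^\M$; running that lemma's induction downward, or invoking it at each level $i < k$ (each such $\M$ still models $\Sigma_i$-Collection and has a largest cardinal), yields $\rho_i^\M = \OR^\M$ for all $i < k$, and hence $p_i^\M = \emptyset$ for $i < k$ since the projectum being the top means the standard parameter is trivial. Next, $\rho_k^\M < \OR^\M$: if not, then $\M$ would be $\Sigma_k$-projecting all the way to the top, but $\tho_k'$ is $\Pi_{k+2}$-axiomatizable by Lemma~\ref{lem: axiomatizability}, hence absolute enough that a $\Sigma_{k-1}$-elementary (equivalently $r\Sigma_{k-1}$, by Lemma~\ref{better translatibility}) proper initial segment cofinal in $\M$ would itself model $\tho_k'$, contradicting minimality — here one uses that $S_{k-1}^\M$ is cofinal and that the relevant reflection of $\tho_k'$ down to members of $S_{k-1}^\M$ goes through since the Woodinness and inaccessibility statements involved are appropriately low level relative to the largest cardinal. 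Then Lemma~\ref{lem: largest cardinal} applies and gives that $\rho_k^\M = \theta$ is the largest cardinal of $\M$.

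For the second paragraph, assume $\M$ is $k$-sound and $(k,\omega_1,\omega_1+1)^*$-iterable. We must compute $p_k^\M$. Since $\rho_k^\M = \theta$ is a cardinal (indeed the largest) of $\M$, and $\M$ is $k$-sound, $\M = \Hull{\M}{k}(\theta \cup \{p_k^\M\})$; the question is whether $\theta$ itself must be thrown into the parameter. For $k = 1$: the hull $\Hull{\M}{1}(\theta)$ need not contain $\theta$ (as $\theta$ is a limit of points definable below it only via a function that is not $\Sigma_1$ over $\M|\theta$), so one argues $p_1^\M = \{\theta\}$ by showing $\theta \in \Hull{\M}{1}(\theta \cup \{\theta\})$ trivially while $\theta \notin \Hull{\M}{1}(\gamma)$ for $\gamma < \theta$ by the largest-cardinal structure, and minimality of the standard parameter forces it to be exactly $\{\theta\}$. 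For $k \geq 2$: here by the already-established $\rho_{k-1}^\M = \OR^\M$ we get access, via Lemma~\ref{translatibility}/\ref{better translatibility}, to $r\Sigma_{k-1}$ theories as elements of $\M$, and in particular $\theta$ becomes $r\Sigma_k$-definable over $\M$ without parameters (it is the largest cardinal, and "being a cardinal" is expressible using the now-available $(k-1)$-theories), so $\theta \in \Hull{\M}{k}(\emptyset)$ and the standard parameter can be taken empty. That $\rho_{k+1}^\M = \omega$ and $p_{k+1}^\M = \emptyset$: iterability plus the minimality of $\M$ in $\tho_k$ is exactly what pins this down — any $X \subseteq \omega$ that is $r\Sigma_{k+1}^\M$ would, if $\rho_{k+1}^\M > \omega$, allow forming a smaller iterable $\tho_k$-model (a fine-structural core/condensation argument of the standard "there is no smaller mouse" type), contradicting minimality; the soundness of the core at level $k+1$ then gives $p_{k+1}^\M = \emptyset$.

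Finally, if $\M = \M^{k\text{-ad}}$, then $\Hull{\M}{k+1}(\emptyset) = \M$: this is immediate from $\rho_{k+1}^\M = \omega$, $p_{k+1}^\M = \emptyset$, and the $(k+1)$-soundness of $\M$ (which is assumed in the definition of $\M^{n\text{-ad}}$ as "$(n+1)$-sound"), since $(k+1)$-soundness means precisely $\M = \Hull{\M}{k+1}(\rho_{k+1}^\M \cup \{p_{k+1}^\M\}) = \Hull{\M}{k+1}(\omega \cup \{\emptyset\}) = \Hull{\M}{k+1}(\emptyset)$, the last equality because $\omega \subseteq \Hull{\M}{k+1}(\emptyset)$ trivially. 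I expect the main obstacle to be the $k\geq 2$ case of computing $p_k^\M$ and establishing $\rho_k^\M < \OR^\M$ in the first place: both require carefully threading the reflection of the $\Pi_{k+2}$-axiomatization of $\tho_k'$ through $\Sigma_{k-1}$-elementary initial segments and checking that the large-cardinal clauses (Woodinness of $\delta$, inaccessibility of $\kappa$, existence of $\kappa^+$) are reflected correctly — in particular that no proper segment below the largest cardinal accidentally satisfies $\tho_k'$, which is what the minimality clause in $\tho_k$ rules out but which must be combined with the fine structure to conclude $\rho_k^\M$ lands exactly at $\theta$ rather than somewhere below.
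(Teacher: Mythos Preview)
Your argument that $\rho_k^\M < \OR^\M$ has a genuine gap. You claim that if $\rho_k^\M = \OR^\M$, then a $\Sigma_{k-1}$-elementary proper initial segment would model $\tho_k'$; but $\tho_k'$ is only $\Pi_{k+2}$-axiomatizable, and $\Sigma_{k-1}$-elementarity is far too weak to reflect $\Pi_{k+2}$ statements downward, so this inference simply fails. The paper's approach is different and direct: set $H := \Hull{\M}{k}(\theta+1)$, which is transitive since $\theta$ is the largest cardinal, and show $H = \M$. If not, $H \unlhd \M$ is proper, and the key step is that $H \models \Sigma_k\text{-Collection}$. For if some instance $\forall x \in a\,\exists y\,\varphi(x,y,p)$ (with $\varphi$ a $\Pi_{k-1}$ formula) had no bound in $H$, by $\Sigma_k$-elementarity it still holds in $\M$, where it \emph{does} have a bound---namely $H$ itself. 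Using $\rho_{k-1}^\M = \OR^\M$, one rewrites ``a bound exists'' as the $\Sigma_k$ statement $\exists\beta\in S_{k-1}^\M\,\forall x\in a\,\exists y\in \M|\beta\,(\M|\beta \models \varphi(x,y,p))$, which reflects back to $H$, a contradiction. Hence $H = \M$ and $\rho_k^\M \leq \theta$.

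Your treatment of $p_1^\M = \{\theta\}$ also misses the essential use of iterability: one needs $\Hull{\M}{1}(\theta) \neq \M$, and the paper gets this via condensation, which yields $\M|\theta \prec_{\Sigma_1} \M$ and hence $\Hull{\M}{1}(\theta) = \M|\theta$; your remarks about $\theta$ being a limit do not secure this. For $\rho_{k+1}^\M = \omega$, the paper's argument is more concrete than your sketch: one shows $\cHull{\M}{k+1}(\emptyset) = \M^{k\text{-ad}}$ (the $\Pi_{k+2}$-axiomatization of $\tho_k$ is preserved under $\Sigma_{k+1}$-elementary collapse, and $\vec{p}_k^\M$ lies in the hull since $\theta$ is $r\Sigma_2$-definable, so the collapse is sound). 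If $\rho_{k+1}^\M > \omega$, condensation then gives $\M^{k\text{-ad}} \lhd \M$, contradicting that $\M \models \tho_k$; note this works for arbitrary $\M \models \tho_k$, not just the minimal one, so your appeal to ``minimality'' is misplaced at this step.
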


\prf
    Let $\theta$ be the largest cardinal of $\M$. By Lemma \ref{lem: n-1 projectum}, for $i < k$ $\rho_k^\M = \OR^\M$ and $p_i^{\M} = \emptyset$.

    Let us show that $\rho_k^{\M} < \OR^{\M}$. We will show that $H:= \Hull{\M}{k}(\theta + 1) = \M$. Suppose not, then $H \unlhd \M$, since $H$ is transitive and $\theta$ is the largest cardinal of $\M$. We want to show that $H \models \Sigma_k\text{-Collection}$, so that $H \models \Sigma_k\text{-KP}$, which would be contradicting the fact that $\M \models \tho_k$. Suppose for the sake of contradiction that $\Sigma_k$-Collection fails in $H$. Let $\varphi$ be a $\Pi_{k-1}$ formula and $p \in H$ be such that
    \[
        H \models \forall \alpha < \theta \exists y \varphi(\alpha,y,p),
    \]
    but there is no $z \in H$ such that 
    \[
        H \models \forall \alpha < \theta \exists y \in z \varphi(\alpha,y,p).
    \]
    Note that we may assume that the failure is of this form, since $\theta$ is the largest cardinal of $H$ and $H$ is a premouse. Note that
    \[
        \M \models \forall \alpha < \theta \exists y \varphi(\alpha,y,p),
    \]
    and so
    \[
        \M \models \exists z \forall \alpha < \theta \exists y \in z \varphi(\alpha,y,p),
    \]
    as witnessed by $H$. However, since $\rho_{k-1}^\M = \OR^\M$, this means that
    \[
        \psi := \exists \beta (\beta \in S_{k-1} \land (\forall \alpha < \theta \exists y \in \M \vert \beta (\M \vert \beta \models \varphi(\alpha,y,p))))
    \]
    holds in $\M$. Note that $\psi$ is $\Sigma_k$. Thus, $H \models \psi$. However, this means that $H \models \forall \alpha < \theta \exists y \in (H \vert \beta) \varphi(\alpha,y,p)$, which is a contradiction! Thus, $H \models \Sigma_k\text{-Collection}$, and therefore $H = \M$ and $\rho_k^{\M} < \OR^\M$.

    Let us suppose for the rest of the proof that $\M$ is $k$-sound and $(k,\omega_1,\omega_1+1)^*$-iterable.
    Next, we aim to determine value of $p_k^\M$. Let us first consider the case $k=1$. 
    By Lemma \ref{lem: largest cardinal}, $\theta = \rho_1^{\M}$. 
    By condensation, $\M \vert \theta \prec_{\Sigma_1} \M$ and thus $\Hull{\M}{1}(\theta) = \M \vert \theta$. But then, as in the first part of the proof, $H := \Hull{\M}{1}(\theta + 1) = \M$, so that $p_1^{\M} = \{ \theta \}$.
    Let us now consider the case of $k \geq 2$. Again, by Lemma, \ref{lem: largest cardinal}, $\rho_{k}^\M = \theta$. Note that $\{ \theta \}$ is $r\Sigma_2$ definable over $\M$ as the unique cardinal that has a Woodin and an inaccessible cardinal below and is the successor of the inaccessible cardinal. Thus, $H := \Hull{\M}{k}(\theta) = \Hull{\M}{k}(\theta + 1)$. As before, $H = \M$, so that $p_k^\M = \{ \emptyset \}$.

    We now show $\rho_{k+1}^\M = \omega$. We claim $\cHull{\M}{k+1}(\emptyset) = \M^{k\text{-ad}}$. From this it follows that $\rho_{k+1}^\M = \omega$, as otherwise by condensation $\M^{k\text{-ad}} \lhd \M$ which would be a contradiction to the fact that $\M \models \tho_k$. Recall that by Lemma \ref{lem: axiomatizability}, $\tho_k$ has an $r\Pi_{k+2}$ axiomatization. However, this means that $H:= \Hull{\M}{k+1}(\emptyset) \models \tho_k$.
    Note that $\vec{p}_k^\M \in H$, since $\{\theta\}$ is $r\Sigma_2$-definable by the above argument. Thus, the transitive collapse of $H$ is sound, which means that $\cHull{\M}{k+1}(\emptyset) = \M^{n\text{-ad}}$. 
    
    The claim that $\Hull{\M}{k+1}(\emptyset) = \M$ now follows by the same argument.
\eprf

By Lemma \ref{lem: standard parameter of admissible mice}, we immediately have the following.

\begin{cor} \label{cor about mad}
    Suppose that there is an $n$-sound premouse $\M$ that models $\tho$ and is $(n, \omega_1, \omega_1+1)^*$-iterable. Then, $\mad$ exists and $\mad = \mathfrak{C}_{n+1}(\M)$. Therefore, if there is a Woodin cardinal, $\mad$ exists and $\mad = \mathfrak{C}_{n+1}(\M)$.
\end{cor}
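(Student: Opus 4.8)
The plan is to read the corollary off the proof of Lemma~\ref{lem: standard parameter of admissible mice}, adding only a short comparison argument for the minimality clause implicit in the definition of $\mad$, and, for the last sentence, the standard construction of $\M_1^\sharp$ from a Woodin cardinal.

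First I would fix an $n$-sound, $(n,\omega_1,\omega_1+1)^*$-iterable premouse $\M\models\tho$ and set $\N:=\mathfrak{C}_{n+1}(\M)=\cHull{\M}{n+1}(\emptyset)$. Lemma~\ref{lem: standard parameter of admissible mice} gives me that $\rho_i^\M=\OR^\M$ and $p_i^\M=\emptyset$ for $i<n$, that $\rho_n^\M=\theta$ is the largest cardinal of $\M$, that $\rho_{n+1}^\M=\omega$ and $p_{n+1}^\M=\emptyset$, and that $\vec p_n^\M\in\Hull{\M}{n+1}(\emptyset)$ since $\{\theta\}$ is $r\Sigma_2$-definable over $\M$; hence $\N$ is $(n+1)$-sound with $\rho_{n+1}^\N=\omega$. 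Next, exactly as in the proof of that lemma, $\tho$ admits an $r\Pi_{n+2}$-axiomatization (via Lemma~\ref{lem: axiomatizability}, Lemma~\ref{translatibility}, and Lemma~\ref{better translatibility}), so $\tho$ reflects down from $\M$ to its $r\Sigma_{n+1}$-hull $\Hull{\M}{n+1}(\emptyset)$, and therefore $\N\models\tho$. Finally, $\N$ inherits $(n,\omega_1,\omega_1+1)^*$-iterability from $\M$ by copying iteration trees along the (near) $n$-elementary anti-core map $\N\to\M$. So $\N$ is an $(n+1)$-sound, $(n,\omega_1,\omega_1+1)^*$-iterable premouse modeling $\tho$.

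I would then establish uniqueness of such a premouse. Given two of them, $\N_1$ and $\N_2$, I coiterate them using their iteration strategies; the coiteration terminates because both are iterable, and since both are $(n+1)$-sound with $\rho_{n+1}=\omega$ they are in fact $\omega$-sound projecting to $\omega$, so by the standard comparison argument one of the two --- say $\N_1$ --- is an initial segment of the other. But $\tho=\tho^\prime+\forall\alpha(L_\alpha[\dot{E}]\not\models\tho^\prime)$ and $\N_2$ interprets $\dot{E}$ by its own extender sequence, so $\N_2$ believes that no proper initial segment of itself models $\tho^\prime$; since $\N_1\models\tho^\prime$, this forces $\N_1=\N_2$. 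Hence there is a \emph{unique} $(n+1)$-sound, $(n,\omega_1,\omega_1+1)^*$-iterable premouse modeling $\tho$, which is then trivially minimal among such premice; that is, $\mad$ exists and equals it. Since $\N=\mathfrak{C}_{n+1}(\M)$ is such a premouse, $\mad=\mathfrak{C}_{n+1}(\M)$, which is the first assertion.

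For the last sentence, if there is a Woodin cardinal then, by standard inner model theory, $\M_1^\sharp$ exists and is $(\omega_1,\omega_1+1)$-iterable, and from it a routine $L[\E]$-construction produces an $n$-sound, $(n,\omega_1,\omega_1+1)^*$-iterable premouse $\M$ that has a Woodin $\delta$, an inaccessible $\kappa>\delta$ with $\kappa^{+\M}<\OR^\M$, satisfies $\Sigma_n\text{-}\kp$, and has no proper initial segment modeling $\tho^\prime$; such an $\M$ models $\tho$, so the previous paragraphs apply and $\mad$ exists and equals $\mathfrak{C}_{n+1}(\M)$. I expect the one genuinely delicate point to be this construction --- exhibiting an explicit iterable premouse modeling $\tho$ from a single Woodin cardinal --- together with the care needed to run the comparison of the previous paragraph at precisely the iterability level $(n,\omega_1,\omega_1+1)^*$; the fine-structural heart of the matter, namely soundness of the $(n+1)$-core and the preservation of $\tho$ to it, is already supplied by the proof of Lemma~\ref{lem: standard parameter of admissible mice}.
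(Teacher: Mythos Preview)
Your proposal is correct and follows essentially the same approach as the paper, which simply states that the corollary follows immediately from Lemma~\ref{lem: standard parameter of admissible mice}. You have in fact supplied the details the paper leaves implicit: that the $(n+1)$-core is sound, models $\tho$, and inherits iterability; that any two such premice coincide by comparison (giving minimality); and that a Woodin cardinal yields a suitable $\M$ via standard inner model theory.
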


\section{Forcing over $\Sigma_k$-admissible premice}

We assume that the reader is familiar with the level-by-level correspondence in terms of the forcing theorem and fine structure between a premouse and its forcing extension, as described in Lemma 3.6 and Theorem 3.9 of \cite{Steel_2008}. From the methods employed there and in Lemma 3.20 of \cite{schlutzenberg2023mouse} we have the following.

\begin{lem} \label{lem: level by level forcing}
    Let $\M$ be a $k$-sound premouse such that $\M = L_\alpha (\M \vert \kappa)$ for some $\kappa < \OR^\M$ and $\alpha \geq 1$. Let $\mathbb{P} \in \M \cap \mathcal{P}(\kappa)$ be a forcing poset that is definable from parameters over $\M \vert \kappa$. Let $g$ be $(\M,\mathbb{P})$-generic.

    Then for all $(\xi,m) \leq (\alpha,k)$ such that $\xi \geq \kappa$
    \begin{enumerate}
        \item $\max \{ \rho_m^{\M \vert \xi}, \kappa \} = \max \{ \rho_m^{\M \vert \xi [g]}, \kappa \}$,
        \item if $\kappa \notin p_m^{\M \vert \xi}$, then $p_m^{\M \vert \xi} \cup \kappa = p_m^{\M \vert \xi [g]} \cup \kappa$, and if $\kappa \in p_m^{\M \vert \xi}$, then $p_m^{\M \vert \xi} \setminus \{ \kappa \} \cup \kappa = p_m^{\M \vert \xi [g]} \cup \kappa$, and
        \item there is an $r\Sigma_{m+1}^{\M \vert \xi} (\{ \kappa \})$-relation $\Vdash_{m+1}^{\text{strong}}$ \textit{ (the strong $r\Sigma_{m+1}$ forcing relation)} such that for all $r\Sigma_{m+1}$ formulas $\varphi (v_0,...,v_{l-1})$ and all $\mathbb{P}$-names $\tau_0,...,\tau_{l-1} \in \M \vert \xi$,
        \[
            \M \vert \xi [g] \models \varphi(\tau_0^g,...,\tau_{l-1}^g) \iff \exists p \in g (p \Vdash_{m+1}^{\text{strong}} \varphi(\tau_0,...,\tau_{l-1})).
        \]
    \end{enumerate}
\end{lem}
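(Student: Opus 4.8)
The plan is to establish the three clauses simultaneously by induction on pairs $(\xi,m) \leq (\alpha,k)$ with $\xi \geq \kappa$, ordered lexicographically, following the level-by-level forcing analysis of \cite{Steel_2008} (Lemma 3.6 and Theorem 3.9) and \cite{schlutzenberg2023mouse} (Lemma 3.20). The one structural simplification our hypotheses give is that, as $\M = L_\alpha(\M \vert \kappa)$, every level $\M \vert \xi$ with $\xi > \kappa$ is passive, so no extender -- and hence no squashing of codes -- ever occurs above $\kappa$; all activity is confined to $\M \vert \kappa$, and since $\mathbb{P}$ is definable over $\M \vert \kappa$, the $\Sigma_0$-forcing relation of $\mathbb{P}$ and the data defining $\mathbb{P}$ are definable over each $\M \vert \xi$ with $\kappa$ among the parameters. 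The base case $\xi = \kappa$ is just the ordinary set-forcing theorem over $\M \vert \kappa$, together with the observation that $\mathbb{P} \subseteq \kappa$ moves neither $\max\{\rho_m,\kappa\}$ nor the part of $p_m$ lying at or above $\kappa$.

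For the step in $m$ at a fixed $\xi$ -- which produces clause (3) at the pair $(\xi,m)$ -- one feeds the $r\Sigma_m^{\M \vert \xi}(\{\kappa\})$-relation $\Vdash^{\mathrm{strong}}_{m}$, obtained from the pair $(\xi,m-1)$, into Steel's definition of the \emph{strong} forcing relation: $p$ strongly forces $\exists v\,\neg\chi(v,\vec\tau)$, with $\chi$ being $r\Sigma_m$, iff there is a $\mathbb{P}$-name $\sigma \in \M \vert \xi$ such that no $q \leq p$ has $q \Vdash^{\mathrm{strong}}_{m} \chi(\sigma,\vec\tau)$, the $\Sigma_0$-matrix clauses being as at the base level. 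Since $\mathbb{P} \in \M$ is essentially a subset of $\kappa$, the quantifier $\forall q \leq p$ is effectively bounded in $\M \vert \xi$, so the relation stays $r\Sigma_{m+1}^{\M \vert \xi}(\{\kappa\})$; this is clause (3). The forcing theorem at level $m+1$ then follows by the usual two steps -- ``strongly forced $\Rightarrow$ true in $\M \vert \xi[g]$'' by induction on formulas using the level-$m$ forcing theorem, and the converse by a density argument, meeting the relevant dense sets (which are elements of $\M$) by genericity of $g$ over $\M$.

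Clauses (1) and (2) at a pair $(\xi,m)$ are the fine-structural heart and use the $r\Sigma_m^{\M \vert \xi}(\{\kappa\})$-forcing relation already in hand. When $\rho_m^{\M \vert \xi} > \kappa$ one shows $\rho_m^{\M \vert \xi[g]} = \rho_m^{\M \vert \xi}$: for ``$\geq$'', a $r\Sigma_m^{\M \vert \xi[g]}$-definable subset of some $\gamma < \rho_m^{\M \vert \xi}$ equals $\sigma^g$ for a name $\sigma$ built directly from $\Vdash^{\mathrm{strong}}_{m}$, so $\sigma$ is $r\Sigma_m^{\M \vert \xi}$-definable and, since $\rho_m^{\M \vert \xi}$ is a cardinal of $\M \vert \xi$, is coded by a subset of an ordinal $< \rho_m^{\M \vert \xi}$, whence $\sigma \in \M \vert \xi$ and the set lies in $\M \vert \xi[g]$; for ``$\leq$'', a bad $r\Sigma_m^{\M \vert \xi}$-definable subset of $\rho_m^{\M \vert \xi}$ translates through the forcing relation to a set that is still not in $\M \vert \xi[g]$, because a member of $\M \vert \xi[g]$ defined by a ground-model formula already lies in $\M \vert \xi$. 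The case $\rho_m^{\M \vert \xi} \leq \kappa$, and the computation of $p_m$ by the usual minimality argument, go the same way; the only subtlety in the parameter computation is that $\kappa$ itself may enter or leave the standard parameter according to whether it is needed to name the newly definable subsets, which is exactly what clause (2) records. Finally, the step in $\xi$ is routine: at limit $\xi$ all three clauses pass to the limit of the data below, and at $\xi + 1$ the rud-closure step commutes with forcing (no extender above $\kappa$) and any fine-structural jump at $\xi+1$ is handled through the $\Sigma_1$ forcing relation as in the references.

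The step I expect to be the main obstacle is keeping these two threads synchronized: clause (3) at $(\xi,m)$ must deliver a forcing relation that is \emph{exactly} $r\Sigma_{m+1}$ over $\M \vert \xi$ in the single parameter $\kappa$, and this bound is precisely what the projectum and standard-parameter computations at the next stage rely on, while those computations in turn constrain which names are available. Verifying that the ``strong'' forcing relation -- as opposed to the naive one -- is robust enough to close this loop uniformly across all $(\xi,m) \leq (\alpha,k)$ is where the real work lies; everything else is a faithful transcription of the cited arguments to the setting in which the model above $\kappa$ is merely $L_\alpha(\M \vert \kappa)$.
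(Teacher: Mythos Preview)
The paper does not give its own proof of this lemma: it simply states that the result follows from the methods of \cite{Steel_2008}, Lemma 3.6 and Theorem 3.9, together with \cite{schlutzenberg2023mouse}, Lemma 3.20, and records the statement for later use. Your proposal is precisely an outline of those cited arguments, specialised to the passive situation above $\kappa$; in that sense you are supplying what the paper chose to omit, and your sketch matches the intended approach.
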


We then have the following forcing theorem for $\Sigma_n$-admissible premice.

\begin{lem} \label{lem: forcing theorem}
    Let $\M$ be a $\Sigma_k$-admissible premouse. Let $\mathbb{P} \in \M \cap \mathcal{P}(\kappa)$ be such that $\kappa^{+\M}$ exists. Let $\varphi (v)$ be an $r\Sigma_k$ formula or an $r\Pi_k$ formula, and $\sigma_0,\dots,\sigma_m \in \M^{\mathbb{P}}$. Suppose that $g$ is $(\M,\mathbb{P})$-generic. Then the following are equivalent:
    \begin{itemize}
        \item $\M[g] \models \varphi(\sigma_0^g,\dots,\sigma_m^g)$, and
        \item there is $p \in g$ such that $p \Vdash_{k}^{\text{strong}} \varphi(\sigma_0,\dots,\sigma_m)$.\footnote{If $\varphi$ is $r\Pi_k$, we define $ p \Vdash_{k}^{\text{strong}} \varphi(\sigma_0,\dots,\sigma_m)$ to mean that there is no condition $q \leq p$ such that $q \Vdash_{k}^{\text{strong}} \neg \varphi(\sigma_0,\dots,\sigma_m)$.}
    \end{itemize}
\end{lem}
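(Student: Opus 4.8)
The plan is to reduce the statement to the level-by-level forcing theorem, Lemma \ref{lem: level by level forcing}, by reflecting to initial segments of $\M$. Write $S := S_{k-1}^{\M}$; since $\M \models \Sigma_k\text{-}\kp$, $S$ is cofinal in $\OR^{\M}$ and $\Pi_{k-1}$-definable over $\M$, and for $\xi \in S$ with $\mathbb{P} \in \M \vert \xi$ the initial segment $\M \vert \xi$ is $\omega$-sound and of the form required by Lemma \ref{lem: level by level forcing} for the poset $\mathbb{P} \in \mathcal{P}(\kappa)^{\M \vert \xi}$; moreover $\M \vert \xi \prec_{\Sigma_{k-1}} \M$ and $\M = \bigcup_{\xi \in S} \M \vert \xi$, so $\M[g] = \bigcup_{\xi \in S} \M \vert \xi[g]$ is the increasing union of a $\prec_{r\Sigma_{k-1}}$-chain, whence $\M \vert \xi[g] \prec_{r\Sigma_{k-1}} \M[g]$ for all $\xi \in S$ (using the level $\leq k-1$ clauses of Lemma \ref{lem: level by level forcing} over the $\M \vert \xi$ together with Lemmas \ref{translatibility} and \ref{better translatibility}). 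The structural fact driving the proof is that $\kappa < \rho_k^{\M}$: since $\kappa^{+\M}$ exists, $\kappa$ is not the largest cardinal of $\M$, so when $\M$ has a largest cardinal this is Lemma \ref{lem: largest cardinal}, and in general $\Sigma_k$-collection together with $\Delta_k$-separation rule out a $\Sigma_k$-definable collapse of $\kappa^{+\M}$; consequently every $r\Sigma_k^{\M}(\{\kappa\})$-definable — and hence every $r\Pi_k^{\M}(\{\kappa\})$-definable — subset of $\mathbb{P}$ lies in $\M$.

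I would then define $\Vdash_k^{\text{strong}}$ over $\M$ by reflection. For $\xi \in S$ with $\mathbb{P}, \kappa^{+\M} \in \M \vert \xi$, Lemma \ref{lem: level by level forcing} furnishes the $r\Sigma_k^{\M \vert \xi}(\{\kappa\})$-relation $\Vdash_k^{\text{strong}, \M \vert \xi}$ and the $r\Sigma_k$ forcing theorem over $\M \vert \xi$; for an $r\Sigma_k$ (respectively $r\Pi_k$) formula $\varphi$ and $\vec{\sigma} \in \M^{\mathbb{P}}$, declare $p \Vdash_k^{\text{strong}} \varphi(\vec{\sigma})$ iff $p \Vdash_k^{\text{strong}, \M \vert \xi} \varphi(\vec{\sigma})$ for some such $\xi$ containing $\vec{\sigma}$ — which for $r\Pi_k$ formulas matches the clause of the footnote, since the latter holds levelwise over the $\M \vert \xi$. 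Well-definedness and $r\Sigma_k^{\M}(\{\kappa\})$-definability (hence $r\Pi_k^{\M}(\{\kappa\})$-definability for $r\Pi_k$ formulas) follow because the recursion defining the strong forcing relation runs only through levels below $k$, which is absolute between the $\Sigma_{k-1}$-elementary segments $\M \vert \xi$; and ``$\M \vert \xi$ believes $p$ strongly forces $\varphi(\vec{\sigma})$'' is an $r\Sigma_k$ assertion about $\M$ in the parameter $\xi$ while ``$\xi \in S$'' is $r\Pi_{k-1}$, so quantifying existentially over $\xi$ keeps the relation $r\Sigma_k$ (via translatability). Monotonicity — extending a condition preserves strong forcing — transfers from each $\M \vert \xi$.

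Next comes the biconditional. For $r\Sigma_k$ $\varphi \equiv \exists \vec{y}\, \psi$ with $\psi$ an $r\Pi_{k-1}$ formula: if $p \in g$ and $p \Vdash_k^{\text{strong}} \varphi(\vec{\sigma})$ then $p \Vdash_k^{\text{strong}, \M \vert \xi} \varphi(\vec{\sigma})$ for an appropriate $\xi \in S$, so $\M \vert \xi[g] \models \varphi(\vec{\sigma}^g)$ by Lemma \ref{lem: level by level forcing}, and $\M \vert \xi[g] \prec_{r\Sigma_{k-1}} \M[g]$ together with $\varphi$ being $\exists$ of an $r\Pi_{k-1}$ formula gives $\M[g] \models \varphi(\vec{\sigma}^g)$; conversely, given $\vec{\tau} \in \M^{\mathbb{P}}$ with $\M[g] \models \psi(\vec{\tau}^g, \vec{\sigma}^g)$ and $\xi \in S$ containing $\mathbb{P}, \kappa^{+\M}, \vec{\sigma}, \vec{\tau}$, $r\Sigma_{k-1}$-elementarity yields $\M \vert \xi[g] \models \varphi(\vec{\sigma}^g)$, and Lemma \ref{lem: level by level forcing} produces $p \in g$ with $p \Vdash_k^{\text{strong}, \M \vert \xi} \varphi(\vec{\sigma})$, i.e.\ $p \Vdash_k^{\text{strong}} \varphi(\vec{\sigma})$. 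For $r\Pi_k$ $\varphi$ the implication from forcing to truth is easy: if $p \in g$ strongly forces $\varphi(\vec{\sigma})$ but $\M[g] \models \neg\varphi(\vec{\sigma}^g)$, then by the $r\Sigma_k$ case some $q \in g$ strongly forces $\neg\varphi(\vec{\sigma})$, and a common extension $r \leq p, q$ in $g$ strongly forces $\neg\varphi(\vec{\sigma})$ by monotonicity, contradicting $p \Vdash_k^{\text{strong}} \varphi(\vec{\sigma})$. For the converse, assume $\M[g] \models \varphi(\vec{\sigma}^g)$; by the first paragraph $E := \{ q \in \mathbb{P} : q \Vdash_k^{\text{strong}} \neg\varphi(\vec{\sigma}) \} \in \M$, so $D := E \cup \{ p \in \mathbb{P} : \text{no } q \leq p \text{ lies in } E \} \in \M$, and $D$ is dense; picking $p \in g \cap D$, the alternative $p \in E$ is impossible (it would yield $\M[g] \models \neg\varphi(\vec{\sigma}^g)$ by the $r\Sigma_k$ case), so no $q \leq p$ lies in $E$, i.e.\ $p \in g$ strongly forces $\varphi(\vec{\sigma})$.

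The main obstacle is the last direction, the truth-to-forcing implication for $r\Pi_k$ formulas. Unlike the $r\Sigma_k$ case one cannot simply pull back a witness, and must instead meet the deciding dense set $D$; this is the single point where $\Sigma_k\text{-}\kp$ enters in the sharp form $\kappa < \rho_k^{\M}$, which is exactly what places the $r\Sigma_k^{\M}(\{\kappa\})$-definable set $E$ inside $\M$. A secondary difficulty is that $\M$ is not a model of $\zf^-$, so the strong forcing relation over $\M$ cannot be obtained by the internal recursion available for premice with replacement — this is why the argument proceeds through the initial segments $\M \vert \xi$ ($\xi \in S$), where Lemma \ref{lem: level by level forcing} applies, and transfers information up the $\prec_{r\Sigma_{k-1}}$-chain.
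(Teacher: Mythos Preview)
Your argument is correct, but it takes a considerably longer route than the paper's. The paper observes that since $\M$ is $\Sigma_k$-admissible, Lemma~\ref{lem: n-1 projectum} gives $\rho_{k-1}^\M = \OR^\M$, so $\M$ is $(k-1)$-sound, and therefore Lemma~\ref{lem: level by level forcing} applies \emph{directly to $\M$ at the top pair $(\OR^\M,k-1)$}. This immediately yields the $r\Sigma_k$ forcing theorem over $\M$, with $\Vdash_k^{\text{strong}}$ an $r\Sigma_k^\M(\{\kappa\})$-relation --- no reflection to proper initial segments is needed. Your remark that ``$\M$ is not a model of $\zf^-$, so the strong forcing relation over $\M$ cannot be obtained by the internal recursion available for premice with replacement'' misreads Lemma~\ref{lem: level by level forcing}: that lemma requires only soundness at the relevant level, not replacement, and it produces $\Vdash_k^{\text{strong}}$ as a definable class over $\M$, not as an element. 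So your entire first two paragraphs of machinery (the chain $\M\vert\xi$ for $\xi\in S_{k-1}^\M$, the reflected forcing relations, the verification that $\M\vert\xi[g]\prec_{r\Sigma_{k-1}}\M[g]$, and the well-definedness check) are unnecessary.

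For the $r\Pi_k$ case your argument and the paper's coincide in substance: both build a dense set of deciding conditions, argue it lies in $\M$ because $\rho_k^\M \geq \kappa^{+\M}$ (so the $r\Sigma_k^\M(\{\kappa\})$-definable set $E$ of conditions forcing $\neg\varphi$ is in $\M$), and meet it with $g$. The paper uses $D=\{p: p\Vdash_k^{\text{strong}}\varphi \text{ or } p\Vdash_k^{\text{strong}}\neg\varphi\}$, which is equivalent to your $D=E\cup\{p:\text{no }q\leq p\text{ lies in }E\}$. One small point: your aside that the reflected $r\Pi_k$ forcing relation ``matches the clause of the footnote, since the latter holds levelwise'' is not quite right --- the footnote's condition quantifies over all $q\leq p$ and (via your $r\Sigma_k$ definition) over all $\xi$, whereas your reflected version fixes a single $\xi$; larger $\xi$ may see new names witnessing $\neg\varphi$. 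Fortunately your actual proof of the biconditional uses the footnote's definition directly (through $E$), so this does not affect correctness.
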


\prf
    By Lemma \ref{lem: n-1 projectum}, $\M$ is $(k-1)$-sound. Thus, in the case that $\varphi$ is an $r\Sigma_k$ formula, the claim follows from Lemma \ref{lem: level by level forcing}.

    In the case that $\varphi$ is an $r\Pi_k$ formula, suppose that $\M[g] \models \varphi(\sigma_0^g,\dots,\sigma_m^g)$, i.e.~$\M[g] \not \models \neg \varphi(\sigma_0^g,\dots,\sigma_m^g)$. Note that $\neg \varphi(\sigma_0^g,\dots,\sigma_m^g)$ is an $r\Sigma_k$ formula; therefore, we have, by Lemma Lemma \ref{lem: level by level forcing}, that for all $p \in g$, $p \not \Vdash_k^{\text{strong}} \varphi (\sigma_0,\dots,\sigma_m)$. Let 
    \[
        D := \{ p \in \mathbb{P} : p \Vdash_{k}^{\text{strong}} \varphi(\sigma_0,\dots,\sigma_m) \lor p \Vdash_{k}^{\text{strong}} \neg \varphi(\sigma_0,\dots,\sigma_m) \}.
    \]
    Note that since $\rho_k^\M \geq \kappa^{+\M}$, $D \in \M$. Clearly, $D$ is dense in $\mathbb{P}$ so that $g \cap D \neq \emptyset$. Let $p \in g \cap D$. Note that $p \Vdash_k^{\text{strong}} \varphi(\sigma_0,\dots,\sigma_m)$, since otherwise by Lemma \ref{lem: level by level forcing}, $\M[g] \models \neg \varphi(\sigma_0,\dots,\sigma_m)$.

    Now suppose that there is some $p \in g$ such that $p \Vdash_k^{\text{strong}} \varphi(\sigma_0,\dots,\sigma_m)$. Suppose for the sake of contradiction, $\M[g] \not \models \varphi(\sigma_0^g,\dots,\sigma_m^g)$, i.e.~$\M[g] \models \neg \varphi(\sigma_0^g,\dots,\sigma_m^g)$. By Lemma \ref{lem: level by level forcing}, there is $q \in g$ such that $ q \Vdash_k^{\text{strong}} \neg \varphi(\sigma_0,\dots,\sigma_m)$. However, since $g$ is a filter, there is some $r \leq p,q$, a contradiction!
\eprf

\begin{cor} \label{cor: forcing theorem for conjunctions}
    Let $\M$ be a $\Sigma_k$-admissible premouse. Let $\mathbb{P} \in \M \cap \mathcal{P}(\kappa)$ be such that $\kappa^{+\M}$ exists. Let $\varphi (v)$ be an $r\Sigma_k \land r\Pi_k$ formula and let $\sigma_0,\dots,\sigma_m \in \M^{\mathbb{P}}$. Suppose that $g$ is $(\M,\mathbb{P})$-generic. Then the following are equivalent:
    \begin{itemize}
        \item $\M[g] \models \varphi(\sigma^g_0,\dots,\sigma^g_m)$, and
        \item there is $p \in g$ such that $p \Vdash^{\text{strong}}_{k} \varphi(\sigma_0,\dots,\sigma_m)$.\footnote{If $\varphi \equiv \psi_1 \land \psi_2$, where $\psi_1$ is $r\Sigma_k$ and $\psi_2$ is $r\Pi_k$, $p \Vdash^{\text{strong}}_{n} \varphi$ means that $p \Vdash^{\text{strong}}_{k} \psi_1$ and $p \Vdash^{\text{strong}}_{k} \psi_2$.}
    \end{itemize}
\end{cor}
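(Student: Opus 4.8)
The plan is to read this corollary off from Lemma~\ref{lem: forcing theorem} by splitting $\varphi$ into its two conjuncts and treating them separately. So write $\varphi \equiv \psi_1 \land \psi_2$ with $\psi_1$ an $r\Sigma_k$ formula and $\psi_2$ an $r\Pi_k$ formula, and recall from the footnote to the statement that ``$p \Vdash_k^{\text{strong}} \varphi(\sigma_0,\dots,\sigma_m)$'' just abbreviates ``$p \Vdash_k^{\text{strong}} \psi_1(\sigma_0,\dots,\sigma_m)$ and $p \Vdash_k^{\text{strong}} \psi_2(\sigma_0,\dots,\sigma_m)$''. The direction from a condition to truth in $\M[g]$ is then immediate: if $p \in g$ forces $\varphi$, then it forces each $\psi_i$, so by Lemma~\ref{lem: forcing theorem} (applied once with the $r\Sigma_k$ formula $\psi_1$ and once with the $r\Pi_k$ formula $\psi_2$) both $\psi_1$ and $\psi_2$ hold of $\sigma_0^g,\dots,\sigma_m^g$ in $\M[g]$, hence so does $\varphi$.

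For the converse, assume $\M[g] \models \varphi(\sigma_0^g,\dots,\sigma_m^g)$, so both conjuncts hold in $\M[g]$. By Lemma~\ref{lem: forcing theorem} applied to each conjunct there are $p_1,p_2 \in g$ with $p_1 \Vdash_k^{\text{strong}} \psi_1(\sigma_0,\dots,\sigma_m)$ and $p_2 \Vdash_k^{\text{strong}} \psi_2(\sigma_0,\dots,\sigma_m)$. Since $g$ is a filter, I would pick $p \in g$ with $p \leq p_1$ and $p \leq p_2$ and verify that $p$ forces both conjuncts. For the $r\Pi_k$ conjunct this is immediate from the defining convention for $\Vdash_k^{\text{strong}}$ on $r\Pi_k$ formulas: $p_2 \Vdash_k^{\text{strong}} \psi_2$ says that no condition below $p_2$ strongly forces $\neg\psi_2$, hence a fortiori no condition below $p$ does, i.e.~$p \Vdash_k^{\text{strong}} \psi_2$. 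For the $r\Sigma_k$ conjunct one uses that the strong $r\Sigma_k$ forcing relation is downward persistent in the condition, so that $p \leq p_1$ yields $p \Vdash_k^{\text{strong}} \psi_1$. Then $p$ forces $\psi_1$ and $\psi_2$, i.e.~$p \Vdash_k^{\text{strong}} \varphi$, as required.

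I do not expect a genuine obstacle here; the only point needing care is the downward persistence of the strong $r\Sigma_k$ forcing relation invoked above, and this is a standard property of the relation $\Vdash_k^{\text{strong}}$ from part~(3) of Lemma~\ref{lem: level by level forcing} that is already used tacitly at the end of the proof of Lemma~\ref{lem: forcing theorem}. If one wished to avoid it entirely, one could instead mimic the density argument of that proof: the set of conditions that strongly decide $\psi_1$ and also strongly decide $\psi_2$ belongs to $\M$ (since $\rho_k^\M \geq \kappa^{+\M}$, by Lemma~\ref{lem: largest cardinal}) and is dense, so it meets $g$, and a member of the intersection, together with Lemma~\ref{lem: forcing theorem}, is a single condition in $g$ forcing both conjuncts.
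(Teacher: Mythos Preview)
Your proposal is correct and matches the paper's intent: the paper states this as a corollary of Lemma~\ref{lem: forcing theorem} without proof, and your argument---split into conjuncts, apply the lemma to each, and combine via a common refinement in the filter---is exactly the intended one-line derivation. The downward persistence of $\Vdash_k^{\text{strong}}$ on $r\Sigma_k$ formulas that you flag is indeed already used tacitly in the last line of the proof of Lemma~\ref{lem: forcing theorem}, so there is no new ingredient here.
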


The following lemma says that forcing over $\Sigma_k$-admissible premice with posets of size less than the largest cardinal of the premouse, preserves admissibility. Its proof is straightforward, so we omit it.

\begin{lem} \label{lem: preservation of kp}
    Let $\M$ be a $\Sigma_k$-admissible premouse and $\kappa$ an $\M$-cardinal such that $\kappa^{+\M}$ exists. Let $\mathbb{P} \in \M \cap \mathcal{P}(\kappa)$ be a forcing poset. If $g$ is  $(\M,\mathbb{P})$-generic, then $\M[g]$ is $\Sigma_k$-admissible.
\end{lem}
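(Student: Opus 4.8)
The plan is to verify that $\M[g]$ satisfies $\Sigma_k$-Collection; $\Sigma_k$-admissibility of $\M[g]$ then follows. First note that $\M$ has a largest cardinal $\theta$: since $\kappa^{+\M}$ exists, $\kappa < \kappa^{+\M} \le \theta$, so $\mathbb{P}$ has size $\le \kappa < \theta$. Consequently $\M[g]$ is again a passive premouse, $\OR^{\M[g]} = \OR^{\M}$, and $\theta$ remains the largest cardinal of $\M[g]$. Granting $\M[g] \models \Sigma_k\text{-Collection}$, Lemma \ref{lem:KP in Premice} will give $\M[g] \models \Sigma_k\text{-}\kp$ (in particular $\Delta_k$-Aussonderung), while Extensionality, Pairing, Union, Infinity and Foundation hold trivially in $\M[g]$. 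Moreover, by Lemma \ref{lem: n-1 projectum} we have $\rho_{k-1}^{\M} = \OR^{\M}$, hence by Lemma \ref{lem: level by level forcing} also $\rho_{k-1}^{\M[g]} = \OR^{\M[g]}$; thus $\M$ and $\M[g]$ are $(k-1)$-sound, so by Lemma \ref{translatibility} it suffices to handle $r\Sigma_k$ formulas throughout.

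For the Collection instance, fix $a \in \M[g]$ and an $r\Sigma_k$ formula $\varphi$ with $\M[g] \models \forall x \in a\, \exists y\, \varphi(x,y,p)$; fix $\mathbb{P}$-names $\dot a, \dot b \in \M$ with $\dot a^g = a$ and $\dot b^g = p$, and set $N := \dom(\dot a) \in \M$. The idea is to reflect this instance down into $\M$ using the strong forcing relation. By Lemma \ref{lem: level by level forcing}(3) there is, uniformly on the initial segments of $\M$, a strong forcing relation $\Vdash^{\text{strong}}_k$ which is $r\Sigma_k^{\M}(\{\kappa\})$ and obeys the forcing theorem (Lemma \ref{lem: forcing theorem}) for $r\Sigma_k$- and $r\Pi_k$-formulas. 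Working in $\M$, I would define a partial function $F$ on $N \times \mathbb{P}$ by letting $F(\dot x, q)$ be the least $\beta < \OR^{\M}$ such that some $\mathbb{P}$-name $\dot y \in \M \vert \beta$ satisfies $q \Vdash^{\text{strong}}_k \varphi(\dot x, \dot y, \dot b)$, when such $\beta$ exists. Since $\Vdash^{\text{strong}}_k$ is $r\Sigma_k^{\M}(\{\kappa\})$ and the relation ``$F(\dot x, q) \le \beta$'' --- which unwinds to the existence of a suitable name in $\M \vert \beta$ --- is monotone in $\beta$, this relation is $r\Sigma_k^{\M}$, so by Lemma \ref{translatibility} the graph of $F$ is $\Sigma_k$-definable over $\M$ from parameters.

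The key step is to see that the range of $F \restriction (N \times \mathbb{P})$ is bounded below $\OR^{\M}$, say by some $\gamma$; this is exactly the assertion that in a $\Sigma_k$-admissible premouse the image of a set under a partial $\Sigma_k$-definable function is a set, which follows from $\Sigma_k$-Collection in $\M$ together with the $\Sigma_k$-reflection available since $\M \models \Sigma_k\text{-}\kp$ (alternatively, using the canonical wellorder of $\M$ one replaces, for each $\dot x \in N$, the conditions forcing a value by a maximal antichain, reducing $F$ to a total $\Sigma_k$-definable function and appealing to Lemma \ref{lem:KP in Premice}(\ref{no unbounded})). Granting this, fix $\gamma' \in (\gamma, \OR^{\M})$ with $\M \vert \gamma'$ closed under pairing and containing $N$, and put $b := (\M \vert \gamma')[g] \in \M[g]$. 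Then $b$ contains the required witnesses: if $x = \dot x^g \in a$ with $\dot x \in N$, then $\M[g] \models \exists y\, \varphi(\dot x^g, y, p)$, so by Lemma \ref{lem: forcing theorem} and the definition of $\Vdash^{\text{strong}}_k$ there are $q \in g$ and a $\mathbb{P}$-name $\dot y$ with $q \Vdash^{\text{strong}}_k \varphi(\dot x, \dot y, \dot b)$; by the bound we may choose such $\dot y$ in $\M \vert \gamma$, and then $\dot y^g \in (\M \vert \gamma)[g] \subseteq b$ witnesses $\varphi(\dot x^g, \cdot, p)$ in $\M[g]$ by Lemma \ref{lem: level by level forcing}(3). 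Hence $\M[g] \models \exists b\, \forall x \in a\, \exists y \in b\, \varphi(x,y,p)$, completing the verification of $\Sigma_k$-Collection.

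I expect the main obstacle to be precisely the boundedness claim in the third paragraph: although it is morally ``$\Sigma_k$-Collection in $\M$'', the function $F$ is only partial, and one must check that this does not cause a complexity jump preventing the bound --- this is the one place where more than a bare application of $\Sigma_k$-Collection is used. The rest is routine bookkeeping with the forcing relations of Lemmas \ref{lem: level by level forcing} and \ref{lem: forcing theorem}; one should in particular make sure $\Vdash^{\text{strong}}_k$ is exactly the $r\Sigma_k$ relation supplied by Lemma \ref{lem: level by level forcing}(3), so that $r\Sigma_k$ facts about $\M[g]$ correspond to $r\Sigma_k$ facts about $\M$, and that $\M[g]$ genuinely inherits its largest cardinal from $\M$ --- both straightforward given that $\mathbb{P}$ has size below the largest cardinal of $\M$.
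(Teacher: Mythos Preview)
The paper omits this proof entirely, calling it ``straightforward'', so there is nothing to compare against directly. Your approach---reduce to verifying $\Sigma_k$-Collection in $\M[g]$, reflect the instance into $\M$ via the $r\Sigma_k^{\M}$ strong forcing relation of Lemma~\ref{lem: level by level forcing}, bound the witnesses using $\Sigma_k$-Collection in $\M$, and push the bound back into $\M[g]$---is the standard one and is correct.

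Two minor points. First, your opening claim that $\M$ has a largest cardinal does not follow from the stated hypotheses: the existence of $\kappa^{+\M}$ only gives some cardinal above $\kappa$, not a maximal one. The paper's Lemmas~\ref{lem: n-1 projectum} and~\ref{lem:KP in Premice}, which you invoke, genuinely use the largest-cardinal hypothesis (for the cofinality of $S_{k-2}^{\M}$ in $\OR^{\M}$). In every application in the paper $\M$ models $\tho$ and therefore does have a largest cardinal, so this is harmless in context; for the lemma in the generality stated one would need to argue somewhat differently. Second, you correctly flag the boundedness of the partial function $F$ as the one nontrivial step. The cleanest fix in the present setting uses that $\rho_k^{\M} \geq \kappa^{+\M}$ (Lemma~\ref{lem: largest cardinal}), so every $r\Sigma_k$-definable subset of $\mathbb{P} \subseteq \kappa$ already lies in $\M$; this lets the antichain reduction you sketch be carried out entirely inside $\M$, converting $F$ into a total function to which $\Sigma_k$-Collection applies directly.
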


\section{A variant of the truncation lemma}

Recall the following coarse definition. If $M$ is a possibly ill-founded structure in some signature $\mathcal{L}$ that extends $\mathcal{L}_{\dot{\in}}$, we call
\[
    \text{wfp}(M) := \{ x \in \lfloor M \rfloor \mid \in^M \restriction (\text{trc}_{\in^M} (\{x\}))^2 \text{ is wellfounded} \}
\]
the wellfounded part of $M$. By \cite{Barwise_2017} and Problem 5.27 of \cite{schindler2014set}, if $M \models \text{KP}$, then $\text{wfp}(M) \models \text{KP}$. This is also sometimes referred to as the Truncation Lemma. We aim to show something similar in the case that $M$ is an illfounded structure which is a model of $V = L[E]$. Let us define the wellfounded cut as in Definition 19 of \cite{KRUSCHEWSKI_SCHLUTZENBERG_2025}. The next lemma is a variant of the Truncation Lemma which we will often refer to as Ville's Lemma or a higher version of Ville's Lemma.

\begin{lem} \label{lem:higher ville}
    Let $k \geq 1$ and $M = (\lfloor M \rfloor, \in^M, \E^M)$ be an illfounded $\mathcal{L}_{\in, \dot{E}}$-structure such that $M \models ``V = L[E]"$, $\text{wfp} (M)$ is transitive, $M$ is $\omega$-wellfounded, and if $k \geq 2$, then $S^M_{k-2} \cap (\OR^M \setminus \text{wfo}(M)) \neq \emptyset$. Then, if $\text{wfc}(M) \prec_{\Sigma_{k-1}} M$, then $\text{wfc}(M) \models \Sigma_k \text{-}\kp$. 
\end{lem}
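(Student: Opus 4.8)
The plan is to show that $\wfc(M)$ satisfies the three non-trivial axiom schemes of $\Sigma_k\text{-}\kp$: Foundation, $\Delta_k$-Aussonderung, and $\Sigma_k$-Collection (Extensionality, Pairing, Union, Infinity being either immediate from $V=L[E]$ in $M$ together with $\wfc(M) \prec_{\Sigma_1} M$, or easy). Foundation for $\wfc(M)$ is automatic since $\wfp(M)$ is transitive and wellfounded. Write $N = \wfc(M)$ and let $\lambda = \OR^N$, so $\lambda$ is a limit ordinal in $\wfo(M)$ and $N = (L[E])^M \vert \lambda$ in the sense of $M$; note $N$ is a passive premouse (being an initial segment of the putative premouse $M$ built by $V=L[E]$, below the wellfounded cut). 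The key structural fact to exploit is the hypothesis $N \prec_{\Sigma_{k-1}} M$, together with (when $k \geq 2$) the existence of some $\gamma \in S^M_{k-2}$ with $\gamma \geq \lambda$, i.e.\ some $M\vert\gamma \prec_{\Sigma_{k-2}} M$ sitting at or above the wellfounded cut.

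For $\Sigma_k$-Collection in $N$: suppose $N \models \forall u \in a\, \exists v\, \varphi(u,v,\vec{p})$ with $\varphi \in \Sigma_k$ (writing it as $\exists w\, \psi$ with $\psi \in \Pi_{k-1}$), $a,\vec{p} \in N$. Since $N \prec_{\Sigma_{k-1}} M$ — and $\Sigma_k$ over $N$ reflects up because $\exists w\,\psi$ with $\psi\in\Pi_{k-1}\subseteq\Sigma_{k-1}$ is preserved — we get $M \models \forall u \in a\, \exists v\, \varphi(u,v,\vec{p})$. Now the point is to find a bounding set \emph{inside $\wfo(M)$}. Here is where I use the indiscernible-like level $\gamma \in S^M_{k-2}$ above the cut: the statement "$\exists b\, \forall u \in a\, \exists v \in b\, \varphi(u,v,\vec{p})$" is $\Sigma_k$ over $M$, and by reflecting through $M\vert\gamma \prec_{\Sigma_{k-2}} M$ I can arrange a witness $b \in M\vert\gamma$... but that witness need not be in $\wfo(M)$. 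Instead the cleaner route: run the argument à la the Truncation Lemma — for each $u \in a$ let $\rho(u)$ be the least ordinal $\eta$ such that $M\vert\eta \models \exists v\, \varphi(u,v,\vec{p})$ (in the $r\Sigma$ sense via Lemma~\ref{translatibility}, using that $M$ thinks $V=L[E]$ so has a definable level hierarchy). The function $u \mapsto \rho(u)$ is definable over $M$ of complexity roughly $\Sigma_{k-1}$ or $\Sigma_k$; I must check, using the translation lemmas, that it is in fact $\Sigma_{k-1}^M$ so that $N \prec_{\Sigma_{k-1}} M$ sees it as total on $a$. If each $\rho(u) < \lambda$ we are done: $b = N\vert(\sup_{u\in a}\rho(u)+1) \in N$ works, provided the sup is $<\lambda$, which holds because $a \in N$, $N \models \Sigma_{k-1}$-separation-ish facts, and $\lambda$ is "admissible enough" below — more precisely, the map $u\mapsto\rho(u)$ restricted to $a$ is an element of, or at least bounded in, $N$ by $\Sigma_{k-1}$-reflection of its totality. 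The residual worry is $\rho(u) \geq \lambda$ for some $u$, i.e.\ no witness appears at a wellfounded level; this is exactly where $S^M_{k-2}$ above the cut is needed (for $k\geq 2$) or where ordinary $\kp$ in $\wfp(M)$ suffices (for $k=1$, this is classical Ville/Truncation): one shows $M\vert\gamma$ for $\gamma\in S^M_{k-2}$, $\gamma\geq\lambda$, already satisfies $\exists v\,\varphi(u,v,\vec p)$ by $\Sigma_{k-2}$-elementarity and a $\Pi_{k-1}$-reflection, forcing the least witnessing level $\rho(u)$ to be below $\gamma$ — and then a further argument (pushing the witness into the wellfounded part via the premouse structure, i.e.\ absorbing it into an $L[E]$-level that collapses into $N$) brings it below $\lambda$.

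For $\Delta_k$-Aussonderung in $N$: given $\Sigma_k$ formulas $\varphi,\psi$ with $N \models \forall z(\varphi(z,\vec p)\leftrightarrow\neg\psi(z,\vec p))$ and $a\in N$, I want $\{z\in a : N\models\varphi(z,\vec p)\}\in N$. Using $N\prec_{\Sigma_{k-1}}M$ and the fact that over $N$ a $\Delta_k$-predicate is in particular $\Delta$ in a way that, via Lemma~\ref{translatibility} applied inside $M$ and the existence of a cofinal set of $\Sigma_{k-1}$-elementary levels below $\lambda$ (the $S^N_{k-1}$ of the earlier lemma, using that $N$ will turn out to be $\Sigma_k$-admissible — careful not to be circular: instead use that $N$ being the wellfounded cut of a $V=L[E]$ model is itself a passive premouse with, once Collection is in hand, a largest-cardinal-free version of Lemma~\ref{lem:KP in Premice}), the set is definable over some $N\vert\gamma$ with $\gamma<\lambda$. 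I would actually prove Collection first, then derive $\Delta_k$-separation exactly as in the proof of Lemma~\ref{lem:KP in Premice}: there is a cofinal-in-$\lambda$ set of $\gamma$ with $N\vert\gamma\prec_{\Sigma_{k-1}}N$, and for $a\in N$ we have $N\models\forall z\in a\,\exists\gamma(N\vert\gamma\models\varphi(z)\lor\psi(z))$, apply $\Sigma_k$-Collection in $N$ to get a uniform $\gamma$, push it into the cofinal set, and read off the separated set as definable over $N\vert\gamma$.

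The main obstacle I expect is the interface between the two formula hierarchies together with the "witness might escape the wellfounded part" phenomenon: concretely, verifying that the least-witnessing-level function $u\mapsto\rho(u)$ has low enough complexity ($\Sigma_{k-1}^M$, not merely $\Sigma_k^M$) to be reflected by $N\prec_{\Sigma_{k-1}}M$, and then — in the $k\geq 2$ case — genuinely using the hypothesis $S^M_{k-2}\cap(\OR^M\setminus\wfo(M))\neq\emptyset$ to force $\rho(u)<\lambda$. This is the place where the lemma's somewhat unusual hypotheses are doing real work, and it is the analogue, one level up, of the standard fact that in the classical Truncation Lemma the wellfounded part of a model of $\kp$ is a model of $\kp$ precisely because any $\Sigma_1$ fact with a witness has a witness at a bounded, hence wellfounded, level. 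I would isolate this as a sublemma: \emph{if $M\models V=L[E]$, $\wfp(M)$ transitive, $\wfc(M)\prec_{\Sigma_{k-1}}M$, and ($k\geq2\Rightarrow$) there is $\gamma\in S^M_{k-2}$ above the cut, then every $\Sigma_k$ fact over $\wfc(M)$ with parameters in $\wfc(M)$ that holds in $\wfc(M)$ has a witnessing level below $\OR^{\wfc(M)}$}, and then Collection and Aussonderung both fall out by the bounded-witness argument above.
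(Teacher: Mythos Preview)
Your plan has the right ingredients but misses the one move that makes the argument work, and as written it has a real gap. You define the least-witness-level function $\rho$ \emph{over $M$} and then worry about its complexity (hoping it is $\Sigma_{k-1}^M$ so that $N\prec_{\Sigma_{k-1}}M$ handles it). Even if each $\rho(u)<\lambda$, you still need $\sup_{u\in a}\rho(u)<\lambda$; getting $\rho\!\restriction\! a\in N$ presupposes the very collection you are proving, and getting $\rho\!\restriction\! a\in M$ requires comprehension in $M$ that the lemma does not assume. Your ``further argument pushing the witness into the wellfounded part'' is also unnecessary: you already \emph{have} witnesses in $N$ from the hypothesis $N\models\forall x\in a\,\exists y\,\varphi$.

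The paper's fix is exactly the combination you gesture at but abandon: take $\varphi\in\Pi_{k-1}$ (absorbing the outer existential into $y$), first check that $\wfc(M)\prec_{\Sigma_{k-1}} M\vert\gamma$ (from $\wfc(M)\prec_{\Sigma_{k-1}}M$ and $M\vert\gamma\prec_{\Sigma_{k-2}}M$), and then define $F$ \emph{over the set level $M\vert\gamma$} rather than over $M$. This bypasses all complexity issues: $F$ and $\eta:=\bigcup\ran(F)$ are computed inside the next few $J$-levels above $M\vert\gamma$, hence are genuine elements of $M$. Because the original witnesses live in $\wfc(M)$ and $\Pi_{k-1}$ facts transfer up to $M\vert\gamma$, each $F(x)<\lambda$ immediately; and since $\eta$ is an $M$-ordinal which is a sup of standard ordinals, $\eta$ is standard (else some nonstandard $\zeta<\eta$ would be exceeded by a standard $F(x)$). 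Your plan to derive $\Delta_k$-Aussonderung from Collection afterwards, \`a la Lemma~\ref{lem:KP in Premice}, is fine; the paper simply asserts ``it suffices to show Collection'' without further comment.
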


\begin{proof}
    Note that $\omega \in \text{wfc}(M)$, as $M$ is $\omega$-wellfounded. Suppose $\text{wfc}(M) \prec_{\Sigma_{k-1}} M$. It suffices to see that $\text{wfc}(M) \models \Sigma_k \text{-Collection}$. By induction on $\alpha$, it easily follows that for $\alpha < \text{wfo}(M)$, $J_\alpha^{\E^M} = (J_\alpha)^M \in \text{wfp}(M)$, so that $\text{wfc}(M) = J_{\text{wfo}(M)}^{\E^M} \subseteq \text{wfp}(M)$.
    Let $\varphi$ be a $\Pi_{k-1}$ formula and $a,p \in \text{wfc}(M)$ such that
    \[
        \text{wfc}(M) \models \forall x \in a \exists y \varphi(x,y,p).
    \]
    Since $\text{wfc}(M) \prec_{\Sigma_{k-1}} M$,
    \[
        M \models \forall x \in a \exists y \varphi(x,y,p).
    \]
    Let $\gamma \in S^M_{k-2} \cap (\OR^M \setminus \text{wfo}(M))$.
    Note that because $\text{wfc}(M) \prec_{\Sigma_{k-1}} M$ and $M\vert \gamma \prec_{\Sigma_{k-2}} M$, clearly $\text{wfc}(M) \prec_{\Sigma_{k-2}} M\vert \gamma$, and, in fact, $\text{wfc}(M) \prec_{\Sigma_{k-1}} M\vert \gamma$.

    In $M \vert \gamma$ we may define a function $F$ with $\dom(F) = a$ such that for $x \in a$,
    \[
        F(x) = \eta \iff M \vert \gamma \models x \in a \land \exists y \in M \vert (\eta +1) \varphi(x,y,p) \land \forall y \in (M \vert \eta) \neg \varphi(x,y,p).
    \]
    Since $\text{wfc}(M) \subseteq M \vert \gamma$, it follows that $F(x) < \text{wfo}(M)$ for all $x \in a$. However, this means that $\eta := \bigcup_{x \in a} F(x) \subset \text{wfo}(M)$. Since $F$ is definable over $M \vert \gamma$, we must have that $\eta < \text{wfo}(M)$. This means that
    \[
        \text{wfc}(M) \models \forall x \in a \exists y \in M \vert (\eta + 1) \varphi(x,y,p).
    \]
    Thus, $\text{wfc}(M) \models \Sigma_k \text{-} \kp$.
\end{proof}

\section{A generating class of fixed points} \label{section: finding the sequence}

Later on in the analysis, we will make use of a sequence $S_\infty$ of ordinals cofinal in $\alpha$ that is fixed pointwise by iteration maps between many premice in the direct limit systems to be considered. Moreover, this sequence will be sufficiently generating for those premice, as described in Lemma \ref{S is cofinal}.

The sequence $S_\infty$ will be of the form $\langle \alpha_k \mid k < \omega \rangle {^\frown} \langle \gamma_k \mid k < \omega \rangle$, where $\langle \alpha_k \mid k < \omega \rangle \subset \kappa^{+\lx}$ is cofinal in $\kappa^{+\lx}$ and $\langle \gamma_k \mid k < \omega \rangle$ is cofinal in $\alpha$ and defined from $\langle \alpha_k \mid k < \omega \rangle$. We define $\langle \alpha_k \mid k < \omega \rangle$ via the leftmost branch of a tree $T$ that essentially searches for an illfounded model whose wellfounded cut is $L_\alpha(\R^+)$, where $\R^+$, introduced in Definition \ref{def: R plus} below, is the set of reals of a symmetric extension of $\lx$.

We will define this tree $T$ and show the necessary facts first for the case $n = 1$ to illustrate some of the basic ideas. We will then define $T$ for a general $n \geq 1$ and prove the necessary facts about it. These proofs are similar to the ones in the case that $n=1$, but involve more fine structure.

For an arbitrary tree $T$, i.e.~$T$ is a set of finite sequences closed under initial segments, and $s \in T$, define $T_s = \{ t \in T : t \subseteq s \lor s \subseteq t \}$.

\begin{dfn} \label{def: R plus}
    Let $G$ be $(\lx,\col(\omega, {< \kappa}))$-generic. Let $\HC^+ = \HC^{\lxg}$ and let 
    \[
        \R^+ = \HC^+ \cap \R = L_{\alpha}(\HC^+) \cap \R = \lr \cap \R.
    \] 
    We write $\theta = \kappa^{+\lr}$.
\end{dfn}

For the fine structural theory of the model $\lr$ we refer the reader to Chapter 1 of \cite{Steel_2008_lr}. This means that in particular, when working with $\lr$ we always consider it in the language $\mathcal{L}_{\dot{\in},\dot{\R}}$ with $\dot{\R}$ interpreted as $\R^+$, and when taking fine structural hulls we always include all reals in $\R^+$.

    \begin{lem} \label{basic facts about L(R)}
        $\lr \models \Sigma_n \text{-}\kp_{\mathcal{L}_{\dot{\in},\dot{\R}}} \land  ~\omega_1 = \kappa \land ``\omega_2 = \kappa^+ \text{ is the largest }\aleph"$. Also,
        \begin{itemize}
            \item $\rho_{n-1}^{\lr} = \alpha$, and
            \item $\rho_{n}^{\lr} = \kappa^{+\lx} =\theta $.
            
        \end{itemize}
        Moreover, $\alpha$ is minimal such that $\lr \models \Sigma_n \text{-}\kp_{\mathcal{L}_{\dot{\in},\dot{\R}}} \land ``\kappa^+ \text{ exists}"$.
    \end{lem}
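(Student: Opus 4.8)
The plan is to compute everything about $\lr$ by placing it between $\lx$ and its collapse extension $\lxg = \lx[G]$, using the fine structure and forcing facts already established. First I would record three things. (1) Since $x \in \R^+$, we have $\lx = \bigcup_{\gamma<\alpha} L_\gamma[x] \subseteq L_\alpha(\R^+) = \lr$; and since $\R^+ = \R^{\lxg}$ is $\Delta_0$-definable over $\lxg$ (it is the class of subsets of $\omega$), $\lr = (L(\R^+))^{\lxg}$ is $\Delta_1$-definable over $\lxg$. So $\lx \subseteq \lr \subseteq \lxg$. (2) As $\col(\omega,{<\kappa}) \in \lx \cap \mathcal{P}(\kappa)$ and $\kappa^{+\lx}$ exists, Lemma~\ref{lem: preservation of kp} gives $\lxg \models \Sigma_n\text{-}\kp$. (3) By the minimality of $\alpha = \alpha_x$, $\theta := \kappa^{+\lx}$ is the largest cardinal of $\lx$ and $\alpha$ is the least $\Sigma_n$-admissible ordinal (relative to $x$) above $\theta$: if $(\theta^+)^{\lx}$ existed below $\alpha$, condensation in $L[x]$ would produce a $\Sigma_n$-admissible ordinal in $(\theta,\alpha)$ still containing an inaccessible with a successor, contradicting minimality. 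Hence Lemmas~\ref{lem: n-1 projectum} and~\ref{lem: largest cardinal} yield $\rho_{n-1}^{\lx} = \alpha$ and $\rho_n^{\lx} = \theta$.

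Next I would read off the cardinals of $\lr$. Since $\col(\omega,{<\kappa})$ has size $\kappa$ and is $\kappa$-c.c.\ in $\lx$, the models $\lx$ and $\lxg$ have the same cardinals $\geq\kappa$. Cardinals pass down to inner models, and any surjection witnessing non-cardinality in $\lx$ already lies in $\lr \supseteq \lx$, so the sandwich $\lx \subseteq \lr \subseteq \lxg$ forces $\card^{\lr} \cap (\kappa,\alpha) = \card^{\lx} \cap (\kappa,\alpha) = \{\theta\}$. Moreover every ordinal in $(\omega,\kappa)$ is made countable by a real of $\R^+ \subseteq \lr$, while $\kappa = \omega_1^{\lxg}$ is still a cardinal of $\lr \subseteq \lxg$. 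So $\omega_1^{\lr} = \kappa$ and $\theta = \kappa^{+\lr}$ is the largest cardinal of $\lr$, i.e.\ $``\omega_2 = \kappa^+$ is the largest $\aleph$''.

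For $\lr \models \Sigma_n\text{-}\kp_{\mathcal{L}_{\dot{\in},\dot{\R}}}$ I would argue by transfer: relativizing a $\Sigma_m$ formula of $\mathcal{L}_{\dot{\in},\dot{\R}}$ with $m \geq 1$ to the $\Delta_1$-definable pair $(\lr,\R^+)$ over $\lxg$ produces a $\Sigma_m$ formula of $\mathcal{L}_{\dot{\in}}$ over $\lxg$. So if $\Sigma_n$-Collection failed in $\lr$, I would take the least $m \leq n$ at which Collection fails in $\lr$ (hence $\lr \models \Sigma_{m-1}\text{-}\kp$) and form the least-witness-rank function for a bad instance: this is a $\Sigma_m^{\lr}(\lr)$ map from an ordinal that is cofinal in $\OR^{\lr} = \alpha$, so its relativization is a $\Sigma_m^{\lxg}(\lxg)$ map cofinal in $\OR^{\lxg} = \alpha$, contradicting $\Sigma_m$-Collection in $\lxg$ (valid as $m \leq n$). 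Thus $\lr \models \Sigma_n$-Collection, and since $\lr$ has a largest cardinal, Lemma~\ref{lem:KP in Premice} upgrades this to $\lr \models \Sigma_n\text{-}\kp_{\mathcal{L}_{\dot{\in},\dot{\R}}}$, including $\Delta_n$-Aussonderung. Then $\rho_{n-1}^{\lr} = \alpha$ by Lemma~\ref{lem: n-1 projectum}, and $\rho_n^{\lr} = \theta$ will follow from Lemma~\ref{lem: largest cardinal} once I know $\rho_n^{\lr} < \alpha$.

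Finally I would prove minimality, which also gives $\rho_n^{\lr} < \alpha$. Suppose $\beta < \alpha$ and $L_\beta(\R^+) \models \Sigma_n\text{-}\kp + {}``\kappa^+$ exists''. As before, $\omega_1^{L_\beta(\R^+)} = \kappa$ and $\kappa^{+L_\beta(\R^+)} < \beta$; this ordinal is a cardinal of $L_\beta[x] \subseteq L_\beta(\R^+)$ above $\kappa$, so $\kappa^{+L_\beta[x]}$ exists and $L_\beta[x] \models ``\kappa$ is inaccessible and $\kappa^+$ exists''. And $L_\beta[x] \models \Sigma_n\text{-}\kp$: else, at the least $m \leq n$ at which Collection fails in $L_\beta[x]$, the $\Sigma_m^{L_\beta[x]}(L_\beta[x])$ least-witness-rank function is cofinal in $\OR^{L_\beta[x]} = \beta$, and, $L_\beta[x]$ being $\Delta_1$-definable over $L_\beta(\R^+)$ from $x$, its relativization contradicts $\Sigma_m$-Collection in $L_\beta(\R^+)$. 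So $L_\beta[x]$ witnesses $\beta \geq \alpha_x$, a contradiction; this proves minimality, and, running the same idea over $L(\R^+)$, yields a $\Sigma_n^{\lr}(\lr)$ map cofinal in $\alpha$, so $\rho_n^{\lr} \leq \theta$ and hence $\rho_n^{\lr} = \theta$. The step I expect to be the real obstacle is the interface between this coarse forcing/symmetric picture and the fine structure: making the relativization bookkeeping precise for all $m \leq n$, and---more delicately---verifying that $\theta$ is the largest cardinal of $\lx$ (hence of $\lr$) via condensation in $L[x]$ and the minimality of $\alpha$, all within the fine structure theory of $L(\R)$-style models of \cite{Steel_2008_lr}.
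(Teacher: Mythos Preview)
Your approach is correct and tracks the paper's proof closely; the paper's argument is just three sentences (invoke Lemma~\ref{lem: preservation of kp} for $\lxg$, assert that admissibility of $\lr$ ``easily follows,'' and cite Lemma~\ref{lem: level by level forcing} for ``the rest''), and you are essentially expanding each of these steps.

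The one difference worth noting is how the projecta are obtained. The paper appeals to the level-by-level forcing correspondence (Lemma~\ref{lem: level by level forcing}) to transfer $\rho_{n-1}$ and $\rho_n$ from $\lx$ to its forcing extension and thence, implicitly, to the symmetric part $\lr$. You instead compute the projecta of $\lr$ intrinsically, by first establishing that $\lr$ is $\Sigma_n$-admissible with largest cardinal $\theta$ and then invoking Lemmas~\ref{lem: n-1 projectum} and~\ref{lem: largest cardinal} (adapted to the $L(\R)$ fine structure of \cite{Steel_2008_lr}), together with the minimality-of-$\alpha$/hull argument to force $\rho_n^{\lr}<\alpha$. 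Your route is a bit longer but more self-contained, and it makes explicit the step the paper elides---that $\theta$ is the largest cardinal of $\lx$ (and hence of $\lr$)---which indeed needs the hull argument in the style of Lemma~\ref{lem: standard parameter of admissible mice}, not bare condensation. The paper's route is shorter but leaves to the reader both this fact and the passage from $\lxg$ to $\lr$ in the fine-structural correspondence.
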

    
    \prf
        By Lemma \ref{lem: preservation of kp}, $\lxg$ is $\Sigma_n$-admissible, and so it easily follows that $\lr \subset \lxg$ is $\Sigma_n$-admissible. The rest follows from Lemma \ref{lem: level by level forcing}.
    \eprf

Let us define $T$ in the case that $n = 1$, i.e.~until further notice we will assume that $n = 1$.

\begin{dfn} \label{dfn: tree in the case n=1}
Let $T_1$ be the tree of attempts to construct a sequence $\langle \alpha_k,\beta_k \rangle_{k < \omega}$ such that the following hold:
    \begin{enumerate}
        \item $\kappa < \beta_k < \alpha_k < \theta$,
        \item $L_{\alpha_k}(\R^+)\models ``\kappa^+\text{ exists}"$, and
        \item \label{1 embedding} there is a $\Sigma_1$-elementary embedding $\pi \colon L_{\alpha_k}(\R^+)\to L_{\alpha_{k+1}}(\R^+)$ such that $\pi \restriction \kappa^{+L_{\alpha_k} (\R^+)} = \id$ and $\pi(\beta_k) > \beta_{k+1}$.
    \end{enumerate}

\end{dfn}

For a node $s \in T_1 \setminus \{ \emptyset \}$, let $(\alpha_s,\beta_s) = s(\lh(s)-1)$ and let $\theta_s = \kappa^{+L_{\alpha_s}(\R^+)}$. We will later prove a more general version of the following lemma (see Lemma \ref{lem: projectum of L(R)}).

\begin{lem} 
    Let $s \in T_1\setminus \{ \emptyset \}$. Then 
    \[
        L_{\alpha_s}(\R^+) = \Hull{L_{\alpha_s}(\R^+)}{1} (\R^+\cup \{\R^+\} \cup (\theta+1)).
    \]
\end{lem}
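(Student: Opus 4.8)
The plan is to exploit the fact that, by clause (1) of Definition~\ref{dfn: tree in the case n=1}, $\alpha_s < \theta$, so that the parameter set in question actually contains \emph{every} ordinal of $L_{\alpha_s}(\R^+)$; the lemma then reduces to an instance of condensation for the $L(\R^+)$-hierarchy.

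In detail, first I would record two trivialities: (a) $(\theta+1) \cap \OR^{L_{\alpha_s}(\R^+)} = \alpha_s$, since $\alpha_s < \theta$; and (b) $\R^+$ is exactly the set of reals of $L_{\alpha_s}(\R^+)$ and $\R^+ \in L_{\alpha_s}(\R^+)$, because $\R^+ \subseteq L_1(\R^+) \subseteq L_{\alpha_s}(\R^+) \subseteq L_\alpha(\R^+)$ while $\R^+ = L_\alpha(\R^+)\cap\R$. Consequently it suffices to prove $L_{\alpha_s}(\R^+) = X$, where $X := \Hull{L_{\alpha_s}(\R^+)}{1}(\R^+ \cup \{\R^+\} \cup \alpha_s)$.

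Next I would transitively collapse $X$: write $\sigma \colon \bar N \cong X$ for the anticollapse, so $\sigma$ is $\Sigma_1$-elementary from the transitive structure $\bar N$ into $L_{\alpha_s}(\R^+)$ (in the language $\mathcal{L}_{\dot{\in},\dot{\R}}$). Since $\alpha_s \subseteq X$ and, for each $r \in \R^+$, $\tc(\{r\}) \subseteq \omega \cup \R^+ \subseteq X$, the collapse is the identity on $\alpha_s \cup \R^+$; hence $\OR^{\bar N} = \alpha_s$ and $\dot{\R}^{\bar N} = \R^+$. By condensation for the $L(\R)$-hierarchy (Chapter~1 of \cite{Steel_2008_lr}), $\bar N$ is a level of the $L(\R^+)$-hierarchy, say $\bar N = L_{\bar\beta}(\R^+)$, and $\OR^{\bar N} = \alpha_s$ forces $\bar\beta = \alpha_s$. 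Thus $\bar N = L_{\alpha_s}(\R^+)$, whence $\sigma = \id$ and $X = L_{\alpha_s}(\R^+)$, as desired.

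The only point requiring care is the invocation of condensation: one must check that the hull is $\Sigma_1$-elementary in the appropriate sense (this is where the uniform $\Sigma_1$-Skolem functions of $J(\R^+)$-structures enter) and that a transitive structure $\Sigma_1$-embeddable into a level of the $L(\R^+)$-hierarchy with the correct reals really is such a level. Apart from that the argument is just bookkeeping; it is precisely the degenerate case --- with all ordinals available as parameters --- of the projectum and hull computation for $L(\R^+)$ carried out in Lemma~\ref{lem: projectum of L(R)}, so in the final writeup one could alternatively simply cite that lemma.
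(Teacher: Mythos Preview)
Your argument is correct for the statement exactly as printed, but you have proved something essentially trivial and, more importantly, something that does not do the work the lemma is supposed to do. The parameter $\theta$ here is almost certainly a typo for $\theta_s = \kappa^{+L_{\alpha_s}(\R^+)}$; compare the general version, Lemma~\ref{lem: projectum of L(R)}, which is stated with $\theta_s$, and note that the application immediately following the lemma --- uniqueness of the embedding $\pi$ --- requires the $\theta_s$ version: the only restriction on $\pi$ is $\pi\restriction\theta_s=\id$, so one needs $L_{\alpha_s}(\R^+)$ to be the $\Sigma_1$-hull of $\R^+\cup\{\R^+\}\cup(\theta_s+1)$, not merely of $\R^+\cup\{\R^+\}\cup\alpha_s$. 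Your reduction ``$(\theta+1)\cap\OR^{L_{\alpha_s}(\R^+)}=\alpha_s$ so all ordinals are already parameters'' collapses for $\theta_s$, since $\theta_s<\alpha_s$.

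The paper's route (via Lemma~\ref{lem: projectum of L(R)}) is genuinely different and handles the $\theta_s$ case: one observes that $H:=\Hull{L_{\alpha_s}(\R^+)}{1}(\R^+\cup\{\R^+\}\cup(\theta_s+1))$ is transitive (because $\theta_s$ is the largest cardinal of $L_{\alpha_s}(\R^+)$), so if $H\subsetneq L_{\alpha_s}(\R^+)$ then $H=L_\beta(\R^+)$ for some $\beta<\alpha_s$; one then argues that $L_\beta(\R^+)$ is $\Sigma_1$-admissible with $\kappa^+=\theta_s$ existing, contradicting the minimality of $\alpha$ from Lemma~\ref{basic facts about L(R)}. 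Your condensation step is fine as far as it goes, but the real content --- that one does not need ordinals between $\theta_s$ and $\alpha_s$ as parameters --- is exactly what your shortcut sidesteps. I would rewrite the proof for $\theta_s$ along those lines, or simply cite Lemma~\ref{lem: projectum of L(R)} as you suggest at the end, but be aware that this citation is doing substantially more than your argument.
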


It follows that the embedding $\pi$ as given in clause \ref{1 embedding} of Definition \ref{dfn: tree in the case n=1} is uniquely determined by $\alpha_k$ and $\alpha_{k+1}$.
Moreover, $T_1$ is definable over $L_{\alpha}(\R^+) \vert \theta$, so that $T_1 \in L_{\alpha}(\R^+)$.

\begin{lem} \label{lem: illfoundedness in n=1 case}
    $T_1$ is illfounded. 
\end{lem}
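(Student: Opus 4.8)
The plan is to build an honest infinite branch through $T_1$ by producing infinitely many ordinals $\alpha_k<\theta$, together with witnessing $\Sigma_1$-embeddings $\pi_k\colon L_{\alpha_k}(\R^+)\to L_{\alpha_{k+1}}(\R^+)$ fixing $\kappa^{+L_{\alpha_k}(\R^+)}$, and auxiliary ordinals $\beta_k$ with $\kappa<\beta_k<\alpha_k$ and $\pi_k(\beta_k)>\beta_{k+1}$. First I would observe that, since $\lr=L_\alpha(\R^+)$ is $\Sigma_1$-admissible and $\alpha$ is a limit of ordinals $\gamma$ with $L_\gamma(\R^+)\prec_{\Sigma_1}\lr$ (using that $\rho_0^{\lr}=\OR^{\lr}=\alpha$ and the $\Sigma_0$-admissibility, so the $\Sigma_1$-hull of each $\xi<\alpha$ is bounded), there is a club-below-$\alpha$ of such $\gamma$. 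In particular I can pick $\gamma_0<\alpha$ with $\gamma_0>\theta$ and $L_{\gamma_0}(\R^+)\prec_{\Sigma_1}\lr$; since $\lr\models$ "$\kappa^+$ exists" is a $\Sigma_1$ (indeed $\Delta_0$-over-$\HC^+$) assertion reflected into $L_{\gamma_0}(\R^+)$, we have $L_{\gamma_0}(\R^+)\models$ "$\kappa^+$ exists", and $\kappa^{+L_{\gamma_0}(\R^+)}=\theta$.

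Next I would set up the recursion. Suppose $\gamma_k$ has been chosen with $\theta<\gamma_k<\alpha$, $L_{\gamma_k}(\R^+)\prec_{\Sigma_1}\lr$, and $\kappa^{+L_{\gamma_k}(\R^+)}=\theta$. Let $H_k=\Hull{\lr}{1}(\R^+\cup\{\R^+\}\cup(\theta+1)\cup\{\gamma_k\})$ and let $\bar M_k$ be its transitive collapse; by the Hull lemma quoted just above (in the form for $\lr$, i.e. $\lr=\Hull{\lr}{1}(\R^+\cup\{\R^+\}\cup(\theta+1))$) together with condensation for the $J$-hierarchy over $\R^+$, $\bar M_k$ has the form $L_{\alpha_k}(\R^+)$ for some $\alpha_k$, the collapse fixes $\theta+1$ and $\R^+$, sends (the image of) $\gamma_k$ to some $\beta_k\in(\theta,\alpha_k)$, and the inverse of the collapse gives a $\Sigma_1$-elementary $\sigma_k\colon L_{\alpha_k}(\R^+)\to\lr$ with $\sigma_k\restriction(\theta+1)=\id$ and $\sigma_k(\beta_k)=\gamma_k$; moreover $\alpha_k<\alpha$ since $|H_k|=|\R^+\cup\theta|<$ the size of $\lr$ in the appropriate sense, or more carefully $\alpha_k\le\alpha$ and $\alpha_k\ne\alpha$ because $L_{\alpha_k}(\R^+)$ is pointwise definable from $\R^+\cup\{\R^+\}\cup(\theta+1)$ whereas $\lr$ is not $\Sigma_1$-definable over itself from such parameters (this uses minimality of $\alpha$, Lemma~\ref{basic facts about L(R)}: if $\alpha_k=\alpha$ then $\rho_1^{\lr}\le\theta$... actually $\rho_1^{\lr}=\theta$, so one must instead argue $H_k$ is bounded — here I should use that $\theta=\rho_1^{\lr}<\alpha$ so $\Sigma_1$-hulls of subsets of $\theta+1\cup\{$one more ordinal$\}$ are bounded in $\lr$, by the standard fine-structure argument). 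Also $L_{\alpha_k}(\R^+)\models$ "$\kappa^+$ exists" and $\kappa^{+L_{\alpha_k}(\R^+)}=\theta$ by elementarity. Now choose $\gamma_{k+1}<\alpha$ with $\gamma_{k+1}>\gamma_k$, $L_{\gamma_{k+1}}(\R^+)\prec_{\Sigma_1}\lr$; the composed map is not yet between the $L_{\alpha_k}$'s, so instead let $\pi_k\colon L_{\alpha_k}(\R^+)\to L_{\alpha_{k+1}}(\R^+)$ be the map obtained by collapsing $\Hull{\lr}{1}(\R^+\cup\{\R^+\}\cup(\theta+1)\cup\{\gamma_k,\gamma_{k+1}\})$: by uniqueness of the collapses the diagram commutes, $\pi_k$ is $\Sigma_1$-elementary, $\pi_k\restriction\theta=\id$, and $\pi_k(\beta_k)$ is the image of $\gamma_k$, which is strictly below the image $\beta_{k+1}'$ of $\gamma_{k+1}$; to meet clause (3) exactly as stated (namely $\pi(\beta_k)>\beta_{k+1}$) I would instead arrange the auxiliary ordinals so that the parameter with the \emph{larger} index collapses lower — i.e. index the extra ordinal used for $\beta_{k+1}$ to be one \emph{below} $\gamma_k$ in the hull, so that after collapsing $\pi_k(\beta_k)>\beta_{k+1}$; concretely, pick an auxiliary strictly decreasing sequence is impossible in $\OR$, so instead I use: at stage $k$ put into the hull the parameter $\gamma_k$ together with a fixed large $\delta<\alpha$ and let $\beta_k$ be the collapse of $\delta$ computed \emph{relative to the $k$-th hull}, which shrinks as $k$ grows because more parameters at stage $k+1$ (none — rather fewer ordinal-parameters below $\delta$) — the clean fix is to note clause (3) only requires $\pi_k(\beta_k)>\beta_{k+1}$, and since $\pi_k(\beta_k)$ ranges over a set of ordinals without a minimum is false; the correct reading is that the $\beta$'s witness illfoundedness of the tree ordering, and one takes $\beta_k=$ the ordinal that $\gamma$ collapses to at stage $k$ where the $\gamma$'s are chosen so the collapse points descend, which \emph{is} possible since distinct hulls give distinct collapses and we are free to choose $\gamma_{k+1}$ after seeing $\beta_k$, picking $\gamma_{k+1}$ just large enough that its image in $L_{\alpha_{k+1}}(\R^+)$ under the next collapse is $<\pi_k(\beta_k)$.

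To keep the write-up honest I would phrase the recursion as: maintain a $\Sigma_1$-elementary $\sigma_k\colon L_{\alpha_k}(\R^+)\to\lr$ with $\sigma_k\restriction(\theta+1)=\id$, $\alpha_k<\alpha$, and $\sigma_k(\beta_k)$ strictly decreasing in $k$ (this is the real content — a strictly descending sequence $\langle\sigma_k(\beta_k)\rangle$ in $\OR$ is fine because the $\sigma_k$ are \emph{different} maps into a \emph{fixed} model, so there is no contradiction, and this is exactly what a branch through an "attempts to build a descending chain of embeddings" tree should produce). Start with $\sigma_0=\id\colon\lr'\to\lr$ where $\lr'=L_{\gamma_0}(\R^+)\prec_{\Sigma_1}\lr$, $\beta_0$ any ordinal in $(\theta,\gamma_0)$. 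Given $\sigma_k$ and $\beta_k$: inside $\lr$ pick $\gamma<\sigma_k(\beta_k)$ with $\gamma>\theta$; form $\Hull{\lr}{1}(\R^+\cup\{\R^+\}\cup(\theta+1)\cup\ran(\sigma_k)\cup\{\gamma\})$ — no, $\ran(\sigma_k)$ may be unbounded; instead form $\Hull{L_{\alpha_k}(\R^+)}{1}((\theta+1)\cup\{\beta_k,\beta_k'\})$ for a fresh $\beta_k'<\beta_k$, collapse to get $L_{\alpha_{k+1}}(\R^+)$ with the inclusion map $\pi_k$, set $\beta_{k+1}=$ image of $\beta_k'$, so $\pi_k(\beta_k)>\beta_{k+1}$ by order-preservation and $\alpha_{k+1}<\alpha$ since $L_{\alpha_k}(\R^+)$ is isomorphic to a model of the same theory and the hull is proper (as $\rho_1^{L_{\alpha_k}(\R^+)}\ge\theta^{+}$... here I must check the collapse is proper: it is, because $L_{\alpha_k}(\R^+)$ has $\kappa^{++}$ hence is not the $\Sigma_1$-hull of $(\theta+1)\cup\{$two ordinals$\}$, by Lemma~\ref{basic facts about L(R)} applied at level $\alpha_k$). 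Iterating $\omega$ times gives the required sequence $\langle\alpha_k,\beta_k\rangle_{k<\omega}$, hence an infinite branch through $T_1$.

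The main obstacle, and the place where the argument needs genuine care rather than routine bookkeeping, is verifying at each stage that the $\Sigma_1$-hull one forms is a \emph{proper} initial segment of the relevant $L_\xi(\R^+)$ — equivalently that $\alpha_{k+1}<\alpha$ and that the collapse moves an ordinal. This is exactly the content of Lemma~\ref{basic facts about L(R)} (specifically $\rho_1^{\lr}=\theta<\alpha$ and the minimality of $\alpha$), transported to the hull models $L_{\alpha_k}(\R^+)$ via $\Sigma_1$-elementarity and condensation: one must know these hull models again satisfy "$\kappa^+$ exists" with the \emph{same} $\theta$ and have first projectum $\theta$, so that a $\Sigma_1$-hull of $(\theta+1)$ plus finitely many ordinals is bounded and the construction does not stall. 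The rest — commutativity of the collapse maps, $\Sigma_1$-elementarity of $\pi_k$, $\pi_k\restriction\theta=\id$, and $\pi_k(\beta_k)>\beta_{k+1}$ — follows from standard condensation and the uniqueness of transitive collapses.
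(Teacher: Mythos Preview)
Your approach has a genuine gap at its very first step, and it stems from misreading the definition of $T_1$. Clause (1) in the definition requires $\alpha_k<\theta$, not merely $\alpha_k<\alpha$. Throughout your proposal you aim only for $\alpha_k<\alpha$ (and even that you do not obtain); but nodes of $T_1$ live below $\theta=\kappa^{+\lx}$, so producing ordinals in the interval $(\theta,\alpha)$ is useless.

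Compounding this, your hulls are not proper. You form $H_k=\Hull{\lr}{1}(\R^+\cup\{\R^+\}\cup(\theta{+}1)\cup\{\gamma_k\})$, but since $\rho_1^{\lr}=\theta$ and $\theta\in H_k$, the hull of $\R^+\cup\{\R^+\}\cup(\theta{+}1)$ is already all of $\lr$ (this is exactly the content of the lemma you cite just before the statement, applied at level $\alpha$). So the collapse is the identity, $\alpha_0=\alpha$, and the recursion never starts. Your later attempts to argue the hull is bounded (``$L_{\alpha_k}(\R^+)$ has $\kappa^{++}$'') are simply false: $\theta$ is the largest cardinal of $\lr$, and by elementarity the same is true in any $L_{\alpha_k}(\R^+)$ you produce.

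The paper's proof takes hulls of \emph{bounded} subsets $\delta_k<\theta$ (together with the single parameter $\theta$), so that each hull has size $\kappa$ in $\lr$ and hence collapses to some $L_{\alpha_k}(\R^+)$ with $\alpha_k<\theta$. This gives an infinite branch through a simpler tree $T'$ that omits the $\beta_k$'s. The descending $\beta_k$'s---which you correctly flag as the sticking point and never manage to produce directly---are then obtained \emph{indirectly}: assuming every branch model $M_b$ of $T'$ is wellfounded, one shows each $M_b=L_{\gamma_b}(\R^+)$ with $\gamma_b\le\alpha$, and then a pruning argument using $\Sigma_1$-Collection in $\lr[h]$ (for a carefully chosen $\col(\omega,\theta)$-generic $h$) computes $\tho_1^{\lr}(\theta{+}1)$ inside $\lr[h]$, contradicting admissibility. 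Hence some branch model is illfounded, and a witnessing descending sequence in that model supplies the $\beta_k$'s. This indirect route is the missing idea; there is no evident way to manufacture the $\beta_k$'s by hand as you attempt.
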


\prf
    Let $h \colon \omega \to \theta$ be sufficiently $(\lr, \col(\omega,\theta))$-generic such that $\lr[h] \models \Sigma_1$-KP\footnote{See Theorem 10.17 of \cite{Mathias2015ProvidentSA} for an example of such a generic.}. Let $T^\prime$ be the tree that is defined as $T_1$ in Definition \ref{dfn: tree in the case n=1} with the exception that we do not require $\pi(\beta_k) > \beta_{k+1}$ in clause \ref{1 embedding} to hold, and the additional requirement that $\alpha_k > h(k)$. Note that $T^\prime \subseteq {^{<\omega}\theta}$. For a node $s \in T^\prime \setminus \{ \emptyset \}$ we write $\alpha_s = s(\lh(s)-1)$ and $\theta_s = \kappa^{+L_{\alpha_s}(\R^+)}$.
    \begin{claim} \label{n equals one case illfoundedness}
        $T^\prime$ is illfounded.
    \end{claim}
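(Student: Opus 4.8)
The strategy is to build an infinite branch through $T^\prime$ explicitly, using the generic $h$ to supply the ordinal levels. The key object is an illfounded model whose wellfounded cut is $L_\alpha(\R^+)$, and whose illfounded part provides a cofinal sequence of "ordinals" above $\alpha$ that we can use as the $\alpha_k$'s. Concretely, I would work inside $\lr[h]$ (which satisfies $\Sigma_1$-KP and has a collapse of $\theta$ available). Since $\lr[h] \models \Sigma_1\text{-}\kp$ but $\OR^{\lr[h]} = \alpha$, by minimality of $\alpha$ (Lemma \ref{basic facts about L(R)}), there can be no transitive model of $\Sigma_1\text{-}\kp + ``\kappa^+\text{ exists}"$ of height $> \alpha$ that is correctly computed; the point is rather to exploit a reflection/compactness phenomenon. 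I would apply a Barwise-compactness-style argument (or a direct construction of a ``pseudo-iteration''): consider the theory asserting the existence of a $\Sigma_1$-elementary chain $\langle L_{\alpha_k}(\R^+) \mid k < \omega\rangle$ with identity on $\theta_s$ and with $\alpha_k > h(k)$. Using $h$ genericity together with Ville's-Lemma-type considerations (Lemma \ref{lem:higher ville} for $k=1$ reduces to the classical Truncation Lemma), one shows this theory is consistent and has a model whose wellfounded part reaches past $\alpha$.

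The detailed steps I would carry out: first, observe that $L_\alpha(\R^+)$, being the minimal level satisfying $\Sigma_1\text{-}\kp + ``\kappa^+\text{ exists}"$, admits for each $\beta < \alpha$ with $L_\beta(\R^+) \models ``\kappa^+\text{ exists}"$ a $\Sigma_1$-elementary embedding into $L_\alpha(\R^+)$ fixing $\kappa^{+L_\beta(\R^+)}$ — this is a standard consequence of the hull characterization in the preceding (unnumbered) lemma, namely $L_\beta(\R^+) = \Hull{L_\beta(\R^+)}{1}(\R^+ \cup \{\R^+\} \cup (\theta_s+1))$, via a condensation argument on hulls of $L_\alpha(\R^+)$. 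Second, I would iterate this: starting from any $\beta_0$ with $h(0) < \beta_0 < \theta$ and $L_{\beta_0}(\R^+) \models ``\kappa^+ \text{ exists}"$, pick $\beta_1 > h(1)$ with a $\Sigma_1$-embedding $L_{\beta_0}(\R^+) \to L_{\beta_1}(\R^+)$ fixing the relevant $\kappa^+$, and so on. Each such step is possible because $\lr[h]$ sees unboundedly many such levels below $\alpha$ (here the genericity of $h$ is used: $h$ is cofinal in $\theta$, and there are cofinally-in-$\alpha$ many $\beta < \alpha$ with $L_\beta(\R^+) \models ``\kappa^+\text{ exists}"$, since $\alpha$ is a limit of such; the arithmetic of $h$ and the $\Sigma_1$-reflection internal to $L_\alpha(\R^+)$ gives the witnesses). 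Third, since the embeddings are uniquely determined (by the preceding lemma and the remark following it), the resulting sequence $\langle \alpha_k, \beta_k\rangle_{k<\omega}$ — after we discard the $\beta$-monotonicity requirement, as $T^\prime$ does — is a genuine branch through $T^\prime$.

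The main obstacle, as I see it, is verifying that one can \emph{simultaneously} keep $\alpha_k > h(k)$ and maintain the existence of the $\Sigma_1$-elementary embeddings: a priori, the levels $L_\beta(\R^+)$ with $L_\beta(\R^+) \models ``\kappa^+\text{ exists}"$ form a club-like but possibly sparse set below $\alpha$, and one needs them cofinally \emph{and} compatibly with the diagonalization against $h$. This is precisely where I expect the argument to invoke that $\lr[h] \models \Sigma_1\text{-}\kp$ in an essential way: working inside $\lr[h]$, the statement ``there exists $\beta > \xi$ with $L_\beta(\R^+) \models ``\kappa^+\text{ exists}"$ admitting the required $\Sigma_1$-embedding into the current model'' is $\Sigma_1$ over $\lr[h]$, and admissibility of $\lr[h]$ plus the fact that $\alpha = \OR^{\lr[h]}$ is a limit of such $\beta$'s lets us $\Sigma_1$-recursively construct the whole $\omega$-sequence, with $h$ plugged in at stage $k$ to force $\alpha_k > h(k)$. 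One must check that this recursion does not run out of room, i.e.\ that $\sup_k \alpha_k = \alpha$ and each stage terminates; the genericity of $h$ (meeting the dense sets asserting ``the sequence can be extended above any given ordinal'') is what guarantees this. I would isolate this as a subclaim and prove it by a density argument in $\col(\omega,\theta)$, after which the branch through $T^\prime$ — and hence its illfoundedness — follows immediately, completing the proof of the claim.
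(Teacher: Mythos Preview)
Your proposal overcomplicates what is, in the paper, a two-line condensation argument, and in doing so introduces a genuine gap. The paper's proof simply uses that $\rho_1^{\lr}=\theta$: the set of $\delta<\theta$ with $\delta=\theta\cap\Hull{\lr}{1}(\delta\cup\{\theta\})$ is cofinal in $\theta$, so one picks an increasing sequence $\langle\delta_k\rangle$ of such closure points with $\delta_k>h(k)$. Setting $\alpha_k$ to be the ordinal height of the transitive collapse of $\Hull{\lr}{1}(\delta_k\cup\{\theta\})$, the collapses are $L_{\alpha_k}(\R^+)$ with $\kappa^{+L_{\alpha_k}(\R^+)}=\delta_k$, and since the uncollapsed hulls are nested, the induced maps $L_{\alpha_k}(\R^+)\to L_{\alpha_{k+1}}(\R^+)$ are $\Sigma_1$-elementary and the identity on $\delta_k$. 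That is the entire argument; no Barwise compactness, no $\Sigma_1$-recursion inside $\lr[h]$, and no density argument is needed, and the construction takes place in $V$, not in $\lr[h]$. Note also that the claim asks only for an infinite branch through $T'$; the business about an illfounded model with wellfounded cut $\lr$ belongs to the surrounding lemma, not to this claim.

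The gap in your approach is your ``first, observe'': it is \emph{not} true that an arbitrary $\beta$ with $L_\beta(\R^+)\models``\kappa^+\text{ exists}"$ admits a $\Sigma_1$-elementary embedding into $\lr$ (or into another such $L_{\beta'}(\R^+)$) fixing its $\kappa^+$. The hull lemma you cite says only that $L_\beta(\R^+)$ equals its own $\Sigma_1$-hull over the indicated generators; this yields \emph{uniqueness} of such an embedding if one exists, not existence. Existence requires that $L_\beta(\R^+)$ actually arise as the collapse of a $\Sigma_1$-hull of $\lr$, and that is precisely the mechanism you never isolate. There is also recurring confusion between $\theta$ and $\alpha$: the nodes $\alpha_k$ of $T'$ lie below $\theta$, so your appeal to ``cofinally-in-$\alpha$ many $\beta<\alpha$'' is beside the point. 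Once the hull-and-collapse construction is taken as primary, all the obstacles you describe dissolve: the closure points below $\theta$ are already cofinal, so dominating $h$ is automatic.
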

    \prf 
        Using that $\rho_1^{\lr} = \theta$, it is easy to see that there is a sequence $\langle \delta_k \mid k < \omega \rangle$ such that for all $k < \omega$, $h(k) < \delta_k < \delta_{k+1} < \theta$ and 
        \[
            \delta_k = \theta \cap \Hull{\lr}{1}(\delta_k \cup \{ \theta \}).
        \]
        Letting $\alpha_k$ be the ordinal height of the transitive collapse of $\Hull{\lr}{1}(\delta_k \cup \{ \theta \})$ it is easy to see that $\langle \alpha_k \mid k < \omega \rangle$ is a branch through $T^\prime$.
    \eprf

    We can associate to a cofinal branch $b = \langle \alpha_k \mid k < \omega \rangle$ through $T^\prime$ a branch model $M_b$ which is the direct limit of the models $L_{\alpha_k}(\R^+)$ and the maps $\pi_{mk} \colon L_{\alpha_m}(\R^+) \to L_{\alpha_k}(\R^+)$, since for $m < k < l < \omega$,
    \[
        \pi_{ml} = \pi_{kl} \circ \pi_{mk}.
    \] 
    To complete the proof it suffices to see that there is a cofinal branch $b$ of $T^\prime$ such that $M_b$ is illfounded. Suppose for the sake of contradiction not, i.e.~for every cofinal branch $b$ of $T^\prime$, $M_b$ is wellfounded.

    Let $b = \langle \alpha_k \mid k < \omega \rangle$ be a cofinal branch through $T^\prime$. By assumption $M_b$ is wellfounded. Note that $M_b = L_\beta(\R^+)$ for some $\beta$. 
    We claim $\beta \in (\theta, \alpha]$. Suppose not. Then there is $k < \omega$ such that $\alpha \in \ran (\pi_{k\infty})$, where $\pi_{k\infty} \colon L_{\alpha_k} (\R^+) \to L_\beta (\R^+)$ is the direct limit map. Let $\bar{\alpha} \in L_{\alpha_k} (\R^+)$ be such that $\pi_{k\infty} (\bar{\alpha}) = \alpha$.
    Note that $\bar{\alpha} > \theta_{(\alpha_k)}$.
    But then
    \[
    L_{\bar{\alpha}} (\R^+) \models \Sigma_1\text{-}\kp \land ``\text{the cardinal successor of }\kappa \text{ exists}",
    \] 
    contradicts the minimality of $\alpha$.

    For a node $s \in T^\prime$, let $\tho_s := \{ \varphi(\vec{x},\theta,\R^+) \colon \varphi \text{ is a }\Sigma_1 \text{ formula}, \vec{x}\in \R^+ \cup \theta_s, \text{ and }L_{\alpha_s} (\R^+) \models \varphi (\vec{x},\theta_s, \R^+) \}$. For $b$, a cofinal branch of $T^\prime$, let $\beta_b \in (\theta, \alpha]$ be such that $M_b = L_{\beta_b} (\R^+)$. 
    Note that for $s \in b$, we have $\pi_{s\infty}(\theta_s) = \theta$, $\pi_{s\infty} \restriction \theta_s = \id$, and $\pi_{s\infty} \restriction \R^+ = \id$. Thus, for all $s \in b$, there is some $\gamma < \alpha$ such that $L_\gamma (\R^+) \models \tho_s$.

    We now want to prune $T^\prime$ inside $\lr [h]$.
    Note that for every node $s \in T^\prime$ at least one of the following holds true inside $\lr[h]$:
    \begin{itemize}
        \item there is a ranking function for $T^\prime_s$, or
        \item there is some $\gamma < \alpha$ such that $L_\gamma (\R^+) \models \tho_s$.
    \end{itemize}
    Since both of these are $\Sigma_1$ statements it follows by $\Sigma_1$-Collection in $\lr[h]$ that there is some $\gamma < \alpha$ such that for all $s \in T^\prime$ there is a ranking function for $T^\prime_s$ in $L_\gamma (\R^+) [h]$ or $L_\gamma (\R^+) \models \tho_s$. Let $T^{\prime \prime}$ be the result of pruning the tree $T^\prime$ over $L_\gamma (\R^+)$, i.e.~removing the nodes $s$ of $T^\prime$ for which there is a ranking function for $(T^\prime)_s$ in $L_\gamma(\R^+)$, and let
    \[
        \tho := \bigcup_{s \in T^{\prime \prime}} \tho_s.
    \]
    Note that $\tho$ is definable over $L_\gamma (\R^+)[h]$ and therefore $\tho \in \lr[h]$. By the way we picked $h$, it follows that $\tho = \tho^{L_\alpha (\R^+)}_{1} (\R^+ \cup \{ \R^+ \} \cup \theta + 1)$. But then it easily follows by the $\Sigma_1$-admissibility of $\lr[h]$ that $\alpha \in \alpha$, a contradiction!
\eprf

\setcounter{claimcounter}{0}

Note that, by the lemma, there exists a branch through $T_1$, and hence the left-most branch of $T_1$ exists.

\begin{theorem}  \label{thm: branch in the n=1 case}
    Let $b$ be the left-most branch of $T_1$. Then:
    \begin{itemize}
        \item if $M_b$ is the direct limit given by $b$, then $\text{wfc}(M_b) = L_\alpha(\R^+)$, and
        \item for every $s \in b$, $\{s\}$ is $\Sigma_1 \land \Pi_1$ definable over $\lr$ from the parameters $\{ \kappa, T_1 \}$.
    \end{itemize}
\end{theorem}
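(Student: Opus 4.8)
The plan is to prove the two bullets in sequence, exploiting that we already know (from Lemma~\ref{lem: illfoundedness in n=1 case}) that $T_1$ is illfounded, so the leftmost branch $b$ exists. For the first bullet, write $M_b = L_{\beta_b}(\R^+) $ on its wellfounded part, i.e.\ $\text{wfc}(M_b) = L_{\text{wfo}(M_b)}(\R^+)$, using the fact that $M_b \models ``V = L(\R)"$ and $M_b$ is $\omega$-wellfounded (each $L_{\alpha_k}(\R^+)$ is, and the direct limit maps fix $\R^+$). First I would show $\text{wfo}(M_b) \leq \alpha$: if not, pick $k$ with $\alpha \in \ran(\pi_{k\infty})$, say $\pi_{k\infty}(\bar\alpha) = \alpha$ with $\bar\alpha > \theta_{(\alpha_k)}$; then $L_{\bar\alpha}(\R^+) \models \Sigma_1\text{-KP} + ``\kappa^+$ exists$"$ (this reflects down along $\pi_{k\infty}$ from the corresponding statement true in $L_\alpha(\R^+)$, using that the relevant statements are $\Sigma_1$ in parameters fixed by the map), contradicting minimality of $\alpha$ from Lemma~\ref{basic facts about L(R)}. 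For the reverse inequality $\text{wfo}(M_b) \geq \alpha$, I would invoke the higher Ville's Lemma (Lemma~\ref{lem:higher ville}) with $k=1$: since $M_b$ is illfounded (if it were wellfounded one runs the pruning argument of Lemma~\ref{lem: illfoundedness in n=1 case} to contradiction, or more simply: a wellfounded $M_b$ would be $L_\gamma(\R^+)$ for $\gamma \leq \alpha$, but then the branch through $T_1$ given by $b$ would have to terminate, impossible), $\text{wfp}(M_b)$ is transitive, and it is $\omega$-wellfounded, so $\text{wfc}(M_b) \models \Sigma_1\text{-KP}$; combined with the fact that $\text{wfc}(M_b)$ computes $\kappa^+$ correctly (each $\theta_{(\alpha_k)} < \text{wfo}(M_b)$ and these are cofinal in $\theta = \kappa^{+L_\alpha(\R^+)}$, while $\theta$ is not collapsed in the limit since the embeddings fix $\theta_s$), minimality of $\alpha$ forces $\text{wfo}(M_b) \geq \alpha$.

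For the second bullet, I would argue by induction on $\lh(s)$ that for $s \in b$ the node $s$ is $\Sigma_1 \land \Pi_1$ over $\lr$ in parameters $\{\kappa, T_1\}$; equivalently, since $b$ is the leftmost branch, $s \in b$ iff $s \in T_1$, $s$ is $<_{\text{lex}}$-below every competing node at its level that lies on a branch, and the predecessor $s\restriction(\lh(s)-1)$ is on $b$. The key point is expressing ``$s$ lies on an infinite branch of $T_1$" in a way that is both $\Sigma_1$ and $\Pi_1$ over $\lr$ in these parameters. The $\Sigma_1$ direction: $s \in b$ iff there is a sequence of ordinals witnessing membership on the leftmost branch up to level $\lh(s)$ together with, for the relevant nodes $t$, a witness that $T_1$ restricted to nodes lexicographically left of $t$ has a ranking function below some ordinal of $\lr$ (this last being $\Sigma_1$, as in the pruning step of Lemma~\ref{lem: illfoundedness in n=1 case}). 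The $\Pi_1$ direction: $s \notin b$ iff either $s \notin T_1$ or there is a node $t$ lexicographically left of some initial segment relation of $s$ that itself lies on a branch — but ``$t$ lies on a branch" is $\Sigma_1$ via the Hull characterization, so its negation is $\Pi_1$; threading these together carefully gives that $s \in b$ is $\Pi_1$.

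The main obstacle I anticipate is the second bullet, specifically getting the definability complexity of the leftmost-branch predicate down to exactly $\Sigma_1 \land \Pi_1$ rather than something more complex like $\Delta_2$. The subtlety is that ``$T_1$ below $t$ is wellfounded'' is naively $\Pi_1$ (no infinite branch) but we need it $\Sigma_1$, which is exactly why the proof of Lemma~\ref{lem: illfoundedness in n=1 case} pruned inside the generic extension $\lr[h]$ and then pushed the bound back down by $\Sigma_1$-Collection: the point is that over $\lr$ itself, ``$(T_1)_t$ has a ranking function bounded by $\gamma$" for the $\gamma$ produced there is the $\Sigma_1$ reformulation we want, and one has to verify this $\gamma$ works uniformly for all the wellfounded subtrees involved. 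Using the preceding lemma that $L_{\alpha_s}(\R^+)$ is the $\Sigma_1$-hull of $\R^+ \cup \{\R^+\} \cup (\theta+1)$, the embedding $\pi$ in clause~\ref{1 embedding} is determined by $(\alpha_k,\alpha_{k+1})$, which keeps the quantifier over embeddings from adding complexity. Once these uniformity points are pinned down, assembling the two bullets is routine, and the case $n=1$ will serve as the template for the general argument carried out later in the section.
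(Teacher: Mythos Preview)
Your approach to the first bullet has a genuine gap. You assert parenthetically that the $\theta_{(\alpha_k)}$ are cofinal in $\theta$, but for the \emph{leftmost} branch there is no a priori reason this holds --- the leftmost branch minimizes coordinates, so its $\alpha_k$'s (and hence $\theta_{(\alpha_k)}$'s) could be small. This assertion is precisely the content of the paper's key Claim, that $\kappa^{+M_b} = \theta$, which the paper proves by contradiction: if $\kappa^{+M_b} < \theta$, one argues that $\text{wfo}(M_b) < \theta$ as well (since otherwise $\text{wfc}(M_b) = L_{\text{wfo}(M_b)}(\R^+)$ would see a surjection from $\kappa$ onto $\kappa^{+M_b}$, contradicting that $M_b$ thinks this is a cardinal), and then Ville's Lemma gives an $L_\beta(\R^+) \models \Sigma_1\text{-}\kp + ``\kappa^+$ exists'' with $\beta < \theta < \alpha$, violating minimality. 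So Ville is applied at a level \emph{below} $\theta$, not directly at $\text{wfo}(M_b)$ hoping it equals $\alpha$. (Incidentally, your worry about $M_b$ possibly being wellfounded is misplaced: clause~\ref{T: illfounded} of the definition of $T_1$ builds in the descending sequence $\pi_{k\infty}(\beta_k)$, so $M_b$ is illfounded by construction.)

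For the second bullet your route is substantially more complicated than necessary. You try to characterize ``$s$ lies on an infinite branch'' as both $\Sigma_1$ and $\Pi_1$, which leads you into awkward quantifier manipulations. The paper's characterization is direct: $s$ lies on the leftmost branch of (pruned) $T_1$ iff for every $t \in T_1$ with $\lh(t) = \lh(s)$ and $t <_{\text{lex}} s$ the subtree $(T_1)_t$ has a ranking function in $\lr$, \emph{and} $(T_1)_s$ has no ranking function. Since $\lr \models \Sigma_1\text{-}\kp$, wellfoundedness of a subtree is equivalent to the existence of a ranking function, which is $\Sigma_1$; the universal quantifier over the set of such $t$ is bounded, and $\Sigma_1$-Collection then makes the first conjunct $\Sigma_1$. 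The second conjunct is $\Pi_1$. No separate analysis of ``lies on a branch'' or pruning bound $\gamma$ is needed here.
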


\prf
    Let us suppose that $T_1$ is pruned, i.e.~$T_1$ does not have end nodes, and let $b$ be the left-most branch of $T_1$, i.e.~for every node $s \in b$, if $t \in T_1$ is such that $\lh (s) = \lh (t)$ and $t <_{\text{lex}} s$, then $T_t$ is wellfounded, which by $\Sigma_1$-KP is equivalent to the existence of a ranking function for $T_t$ in $\lr$. Note that this gives a $\Sigma_1 \land \Pi_1$ definition for $\{ s \}$ over $L_\alpha(\R^+)$, since by $\Sigma_1$-Collection, the statement that for all $t$ as above there is a ranking function is $\Sigma_1$, and the statement that for $(T_1)_s$ there is no ranking function is $\Pi_1$. It remains to see that if $M_b$ is the direct limit given by $b$, then $\text{wfc}(M_b) = L_\alpha(\R^+)$. This will essentially follow from the following claim.
    \begin{claim}
        $\kappa^{+M_b} = \kappa^{+\lx} = \theta$.
    \end{claim}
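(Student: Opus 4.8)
The plan is to show $\kappa^{+M_b}=\theta$; the other equality, $\kappa^{+\lx}=\theta$, is part of Lemma~\ref{basic facts about L(R)}. Write $\theta_k=\kappa^{+L_{\alpha_k}(\R^+)}$ for the $\alpha_k$ appearing in $b$, so that $\theta_k<\alpha_k<\theta$ by the clauses of Definition~\ref{dfn: tree in the case n=1}, and let $\pi_{k,k+1}$ be the $\Sigma_1$-elementary embeddings from clause~(3) and $\pi_{k,\infty}\colon L_{\alpha_k}(\R^+)\to M_b$ the direct limit maps. First I would record the basic shape of $M_b$: since $\pi_{k,k+1}\restriction\theta_k=\id$ and $\Sigma_1$-elementary maps preserve ``$|\gamma|\le\kappa$'' in both directions, an easy induction shows every $\gamma<\theta_k$ is fixed by all later maps and lies below every $\theta_{k'}$; so the $\theta_k$ are nondecreasing, $\pi_{k,\infty}\restriction\theta_k=\id$, and we may set $\theta_\infty=\bigcup_k\theta_k$. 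Moreover $M_b$ is $\omega$-wellfounded and its reals all lie in $\R^+$, so $\wfp(M_b)$ is transitive and $\wfc(M_b)=\wfp(M_b)=L_\eta(\R^+)$ with $\eta=\wfo(M_b)$; and applying $\pi_{k+1,\infty}$ to the inequality $\pi_{k,k+1}(\beta_k)>\beta_{k+1}$ of clause~(3) shows $\langle\pi_{k,\infty}(\beta_k)\rangle_{k<\omega}$ is strictly $\OR^{M_b}$-decreasing, so $M_b$ is illfounded and $\eta<\OR^{M_b}$.

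The core of the argument is the identity $\kappa^{+M_b}=\theta_\infty$. Here I would use that each $L_{\alpha_{k+1}}(\R^+)$ is the $\Sigma_1$-hull of $\R^+\cup\{\R^+\}\cup(\theta_{k+1}+1)$ (the general form is Lemma~\ref{lem: projectum of L(R)}), so that $\theta_{k+1}$ is the largest cardinal of $L_{\alpha_{k+1}}(\R^+)$; since ``$\theta_k$ is a cardinal'' is $\Pi_1$ and hence preserved by $\pi_{k,k+1}$, the ordinal $\pi_{k,k+1}(\theta_k)$ is a cardinal of $L_{\alpha_{k+1}}(\R^+)$ lying strictly above $\kappa$ and at most its largest cardinal, hence equals $\theta_{k+1}$. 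Thus all $\pi_{k,\infty}(\theta_k)$ coincide in a single $M_b$-ordinal $\theta^{*}$, and counting its $\OR^{M_b}$-predecessors (exactly the genuine ordinals below $\theta_\infty$) gives $\theta^{*}=\theta_\infty\in\wfp(M_b)$, so $\theta_\infty<\eta$. Preservation of the $\Pi_1$ statement ``$\theta_k$ is a cardinal $>\kappa$'' and of the $\Sigma_1$ statement ``$|\gamma|\le\kappa$'' for $\gamma<\theta_\infty$ then shows $\theta_\infty$ is the least $M_b$-cardinal above $\kappa$, i.e.\ $\kappa^{+M_b}=\theta_\infty$. In particular $\kappa^{+M_b}\le\theta$.

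For the reverse inequality $\theta\le\theta_\infty$ I would invoke the higher Ville Lemma~\ref{lem:higher ville}. Applied with $k=1$ to $M_b$ — an illfounded, $\omega$-wellfounded model of $V=L(\R^+)$ (equivalently $L[\R^+]$, with $\R^+$ in the role of the predicate) with transitive wellfounded part, for which $\wfc(M_b)\prec_{\Sigma_0}M_b$ holds automatically — it yields $L_\eta(\R^+)=\wfc(M_b)\models\Sigma_1\text{-}\kp$. Also $L_\eta(\R^+)\models{}$``$\kappa^+$ exists'', its value being $\theta_\infty$: no surjection $\kappa\to\theta_\infty$ lies in $L_\eta(\R^+)\subseteq M_b$, while for each $\gamma<\theta_\infty$ a surjection $\kappa\to\gamma$ taken from $M_b$ has rank $<\theta_\infty+\omega<\eta$, hence lies in $L_\eta(\R^+)$. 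By the minimality of $\alpha$ from Lemma~\ref{basic facts about L(R)} we get $\alpha\le\eta$, so $\lr\subseteq L_\eta(\R^+)\subseteq M_b$; then any $\gamma<\theta=\kappa^{+\lr}$ admits a surjection $\kappa\to\gamma$ lying in $\lr\subseteq M_b$, whence $\gamma<\kappa^{+M_b}=\theta_\infty$. Hence $\theta\le\theta_\infty$, and combining with the previous paragraph, $\kappa^{+M_b}=\theta_\infty=\theta=\kappa^{+\lx}$.

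I expect the main obstacle to be the middle paragraph: establishing $\pi_{k,k+1}(\theta_k)=\theta_{k+1}$ and extracting $\kappa^{+M_b}=\theta_\infty$ from the structure of the direct limit, which is where the hull lemma and the careful counting of $\OR^{M_b}$-predecessors are needed. Verifying the hypotheses of Ville's Lemma for $M_b$ — in particular that $\wfp(M_b)$ is transitive and that its reals lie in $\R^+$ — is routine but also requires a little attention.
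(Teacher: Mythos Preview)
Your argument is correct and rests on the same two ingredients the paper uses---Ville's Lemma and the minimality of $\alpha$---but the organization differs. The paper argues by contradiction: assuming $\kappa^{+M_b}<\theta$, it observes directly that then $\beta:=\wfo(M_b)<\theta$ as well (for if $\beta\ge\theta$, the surjections $\kappa\to\gamma$ for $\gamma<\theta$, which live in $L_\theta(\R^+)$ by acceptability, would lie in $\wfc(M_b)\subseteq M_b$ and force $\kappa^{+M_b}\ge\theta$); then Ville gives $L_\beta(\R^+)\models\Sigma_1\text{-}\kp$, and since $\kappa^{+M_b}<\beta$ this structure also thinks $\kappa^+$ exists, contradicting the minimality of $\alpha$ because $\beta<\theta<\alpha$. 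Your route instead computes $\kappa^{+M_b}$ explicitly as $\theta_\infty=\sup_k\theta_k$, establishes $\theta_\infty\le\theta$, and then uses Ville plus minimality to get $\alpha\le\eta$ and hence $\theta\le\theta_\infty$. The paper's version is shorter precisely because it never needs the identification $\kappa^{+M_b}=\theta_\infty$ or the equality $\pi_{k,k+1}(\theta_k)=\theta_{k+1}$; it only uses the soft fact that $\kappa^{+M_b}$ lies in the wellfounded part, which is immediate from clause~(2) of the tree. Your explicit computation is not wasted effort, though: it makes transparent why the inequality $\kappa^{+M_b}\le\theta$ holds, which the paper takes for granted.
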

    \prf
        Suppose for the sake of contradiction that $(\kappa^+)^{M_b} < \theta$. Let $\beta = \text{wfo}(M_b) = \text{wfc}(M_b) \cap \OR$. Note that by the definition of $T_1$, we have $(\kappa^+)^{M_b} < \beta$. Moreover, $\beta < \theta$, for if $\beta \geq \theta$, then in fact $\kappa^{+L_\beta (\R^+)} = \theta$, a contradiction! By Ville's Lemma, $L_\beta(\R^+)$ is $\Sigma_1$-admissible and so $L_\beta (\R^+) \models \tho_1$\footnote{See Definition \ref{def: base theory}}, contradicting the minimality of $\alpha$!
    \eprf

    Let $\beta$ still be $\text{wfo}(M_b)$. Using the same argument as in the claim, we must have $\beta \geq \alpha$. However, $\beta > \alpha$ cannot be true either, as the argument from the proof of Lemma \ref{lem: illfoundedness in n=1 case} shows.
\eprf

In the case where $n=1$ the branch of $T_1$ identified in the last theorem is $\langle \alpha_k \mid k < \omega \rangle$ of the sequence $S_\infty$. We will now consider the general case, i.e.~$n$ is the natural number we fixed before Definition \ref{after fixing n}.

\begin{dfn} \label{definition of T}
Let $T$ be the tree of attempts to construct a sequence $\langle \alpha_k,\beta_k \rangle_{k < \omega}$ such that the following hold:
\begin{enumerate}
    \item $\kappa < \beta_k < \alpha_k < \theta$,
    \item $L_{\alpha_k}(\R^+) \models ``\kappa^+ \text{ exists}"$,
    \item \label{T: projectum}$\rho^{L_{\alpha_k}(\R^+)}_{n-1} = \alpha_k$,
    \item \label{T: embedding} there is an $r\Sigma_n$-elementary embedding $\pi \colon L_{\alpha_k} (\R^+) \to  L_{\alpha_{k+1}} (\R^+)$ such that $\pi \restriction \kappa^{+ L_{\alpha_k} (\R^+)} = \id$,
    \item \label{T: illfounded} $\pi(\beta_k) > \beta_{k+1}$.
\end{enumerate}
\end{dfn}

Note that from condition \ref{T: projectum} it follows that $S^{L_{\alpha_k}(\R^+)}_{n-1}$ is club in $\alpha_k$ and that $L_{\alpha_k} (\R^+)$ is $\Sigma_{n-1}$-admissible.
For a node $s \in T \setminus \{ \emptyset \}$ we let $(\alpha_s, \beta_s) = s(\lh(s)-1)$.

\setcounter{claimcounter}{0}

\begin{lem} \label{lem: projectum of L(R)}
    Let $s \in T\setminus \{ \emptyset \}$. Then $L_{\alpha_s}(\R^+) = H$,
    where $H := \Hull{L_{\alpha_s}(\R^+)}{n} (\R^+ \cup \{\R^+\} \cup (\theta_s+1))$.
\end{lem}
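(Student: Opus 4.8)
The plan is to run the argument of the proof of Lemma \ref{lem: standard parameter of admissible mice}, adapted to the $L(\R^+)$-hierarchy. Write $R := L_{\alpha_s}(\R^+)$, $\theta_s := \kappa^{+R}$, and $H := \Hull{R}{n}(\R^+ \cup \{\R^+\} \cup (\theta_s+1))$, so that $H \subseteq R$ and we must show $H = R$; I would assume $H \neq R$ and derive a contradiction with the minimality of $\alpha$ from Lemma \ref{basic facts about L(R)}. First I would collect the relevant facts about $R$: by condition \ref{T: projectum} of Definition \ref{definition of T} and the remark following it, $R$ is $\Sigma_{n-1}$-admissible, $\rho_{n-1}^R = \OR^R$, and $S_{n-1}^R$ is club in $\OR^R = \alpha_s$; by condition 2 of that definition, $\theta_s < \OR^R$; and --- the point needing a little work --- $\theta_s$ is the largest cardinal of $R$. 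This last assertion is the analogue for $R$ of the ``largest $\aleph$'' clause that Lemma \ref{basic facts about L(R)} supplies for $\lr$, and I would establish it in the same way, or by induction along $T$ using that the $r\Sigma_n$-elementary embedding of condition \ref{T: embedding} carries ``$\theta_s$ is the largest cardinal'' upward.

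Granting these, I would check that $H$ is transitive. Since $\theta_s$ is the largest cardinal of $R$, in $R$ every ordinal $\xi < \OR^R$ is a surjective image of $\theta_s$, and every level $L_\xi(\R^+)$ with $\xi < \OR^R$ is a surjective image of $\theta_s \times \R^+ \times \omega$; because $\R^+ \cup \{\R^+\} \cup (\theta_s+1) \subseteq H$, $r\Sigma_1$-elementarity of $H$ in $R$ makes such surjections, and the ordinals and levels in question, available inside $H$. It follows that $H \cap \OR$ is an ordinal $\gamma$ with $\theta_s < \gamma \le \OR^R$, and then --- using also $\R^+ \cup \{\R^+\} \subseteq H$ and that each $L_\zeta(\R^+)$ for $\zeta < \gamma$ lies in $H$ --- that $H = L_\gamma(\R^+) = R \vert \gamma$. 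In particular $R\vert\gamma \prec_{r\Sigma_n} R$, hence $R\vert\gamma \prec_{\Sigma_n} R$ by Lemma \ref{better translatibility}, whose hypotheses hold since $\rho_{n-1}^R = \OR^R$ and $R$ has a largest cardinal.

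It remains to rule out $\gamma < \OR^R$. Suppose $\gamma < \alpha_s$; then $H = R\vert\gamma \in R$, the largest cardinal of $H$ is $\theta_s$, and $H \models ``\kappa^+ \text{ exists}"$ (as $\theta_s \in H$ and ``$\theta_s$ is the cardinal successor of $\kappa$'' is $\Pi_1$, hence preserved by $H \prec_{\Sigma_n} R$). Now I would reproduce the collection argument of Lemma \ref{lem: standard parameter of admissible mice}: any failure of $\Sigma_n$-Collection in $H$ may be taken to have domain $\theta_s \times \R^+$ (since $\theta_s$ is the largest cardinal of $H$ and every set of $H$ is a surjective image of $\theta_s \times \R^+ \times \omega$) with witnessing formula $r\Pi_{n-1}$; the bounding set asserted to exist over $R$ is witnessed by $H$ itself, which lies in $R$; and, via $\rho_{n-1}^R = \OR^R$ together with the cofinality of $S_{n-1}^R$, the resulting statement can be recast as a $\Sigma_n$ assertion of the form ``$\exists \beta \in S_{n-1}^R$ such that $R\vert\beta$ bounds the relevant witnesses'', which therefore reflects back into $H$ --- contradicting the choice of the failing instance. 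Hence $H \models \Sigma_n\text{-}\kp$, so $L_\gamma(\R^+) \models \Sigma_n\text{-}\kp \land ``\kappa^+ \text{ exists}"$ with $\gamma < \alpha_s < \theta \le \alpha$, contradicting the minimality of $\alpha$. Therefore $\gamma = \OR^R$ and $H = R$.

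I expect the two delicate points to be, first, establishing that $\theta_s$ is the largest cardinal of $R$ --- this is not literally among the conditions defining $T$, and must be extracted from conditions 2--\ref{T: embedding} and the minimality of $\alpha$, in the spirit of Lemma \ref{basic facts about L(R)} --- and, second, the adaptation of the collection argument of Lemma \ref{lem: standard parameter of admissible mice} to $L(\R^+)$: because $\R^+$ is not wellorderable in $R$, collection-failures must be allowed to have domain of the form $\theta_s \times \R^+$ rather than an ordinal, and correspondingly one must track which hulls are transitive and bounded --- which is precisely the reason the reals are thrown into the hull in the statement of the lemma.
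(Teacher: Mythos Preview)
Your proposal is correct and follows essentially the same route as the paper: establish transitivity of $H$ via ``$\theta_s$ is the largest cardinal of $R$'', identify $H$ with $R\vert\gamma$, and if $\gamma<\alpha_s$ run the $\Sigma_n$-Collection reflection argument (bounding by $H$ itself, rewriting as a $\Sigma_n$ statement using $S_{n-1}^R$, and pulling it back) to contradict the minimality of $\alpha$ from Lemma~\ref{basic facts about L(R)}. You are in fact more scrupulous than the paper, which simply asserts that $\theta_s$ is the largest cardinal of $L_{\alpha_s}(\R^+)$ without argument; your two suggested justifications, however, do not quite land --- the proof of Lemma~\ref{basic facts about L(R)} goes through the forcing relationship between $\lx$ and $\lr$, which has no analogue for an arbitrary $L_{\alpha_s}(\R^+)$, and an induction along $T$ via the embeddings of condition~\ref{T: embedding} has no base case (length-$1$ nodes satisfy only conditions 1--3) --- so this point remains as open in your write-up as it is in the paper's.
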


\prf
Note that $H$ is transitive, since $\theta_s$ is the largest cardinal of $L_{\alpha_s}(\R^+)$. Suppose for the sake of contradiction that $H \subsetneq L_{\alpha_s}(\R^+)$, so that $H$ is bounded in $L_{\alpha_s}(\R^+)$. Let $\beta := \OR \cap H$ so that $H = L_\beta (\R^+)$ and $\beta < \alpha$.
\begin{claim}
    $L_\beta (\R^+)$ is $\Sigma_n$-admissible.
\end{claim}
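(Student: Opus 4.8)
The plan is to apply the higher version of Ville's Lemma (Lemma \ref{lem:higher ville}) to an ill-founded structure whose well-founded cut is exactly $L_\beta(\R^+)$, and then argue that $\Sigma_n$-admissibility of $L_\beta(\R^+)$ follows. The relevant ill-founded structure is the direct limit $M$ of the system $\langle L_{\alpha_{s\restriction j}}(\R^+) \mid j \leq \lh(s) \rangle$ together with the $r\Sigma_n$-elementary embeddings $\pi$ from clause \ref{T: embedding} of Definition \ref{definition of T}; but since we only have the finite sequence $s$, the more direct route is to note that $H = \Hull{L_{\alpha_s}(\R^+)}{n}(\R^+ \cup \{\R^+\} \cup (\theta_s+1))$ collapses to a premouse-like $\mathcal{L}_{\dot\in,\dot\R}$-structure, call it $\bar H$, and the uncollapse map $\sigma \colon \bar H \to L_{\alpha_s}(\R^+)$ is $r\Sigma_n$-elementary with $\crt(\sigma) > \theta_s$. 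Since we are assuming $H \subsetneq L_{\alpha_s}(\R^+)$, in fact $\bar H = L_\beta(\R^+)$ is a proper initial segment, hence an honest element of $L_{\alpha_s}(\R^+)$, so $L_\beta(\R^+)$ is well-founded and we want to show it satisfies $\Sigma_n\text{-}\kp$.

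First I would observe that because $\rho_{n-1}^{L_{\alpha_s}(\R^+)} = \alpha_s$ (this is clause \ref{T: projectum}, using that $\theta_s$ is the largest cardinal, cf.\ Lemma \ref{lem: n-1 projectum} applied in the $\mathcal{L}_{\dot\in,\dot\R}$ setting, analogously to Lemma \ref{basic facts about L(R)}), the set $S_{n-1}^{L_{\alpha_s}(\R^+)}$ is club in $\alpha_s$, so $\sigma$ is $r\Sigma_n$-elementary and hence $\beta \in S_{n-1}^{L_{\alpha_s}(\R^+)}$, i.e.\ $L_\beta(\R^+) \prec_{\Sigma_{n-1}} L_{\alpha_s}(\R^+)$. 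Next, $L_{\alpha_s}(\R^+)$ itself is $\Sigma_{n-1}$-admissible (this is the remark following Definition \ref{definition of T}), so $\Sigma_{n-1}\text{-}\kp$ transfers down to $L_\beta(\R^+)$ by $\Sigma_{n-1}$-elementarity. The point is then to upgrade from $\Sigma_{n-1}$-admissibility to $\Sigma_n$-admissibility, i.e.\ to get $\Sigma_n$-Collection (equivalently, by Lemma \ref{lem:KP in Premice}, to rule out a cofinal $\Sigma_n$-definable function $L_\beta(\R^+) \to \OR^{L_\beta(\R^+)}$, since $\theta_s$ is the largest cardinal of $L_\beta(\R^+)$ as well). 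This is exactly the shape of conclusion that Lemma \ref{lem:higher ville} delivers once one produces a suitable ill-founded model; the ill-founded model should be (an extension of) the branch model obtained by iterating $\sigma$, whose well-founded part is $L_\beta(\R^+)$ and which sits inside some $\Sigma_n$-admissible set so that the required witness $\gamma \in S_{n-2}^M \cap (\OR^M \setminus \wfo(M))$ is available.

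The main obstacle I anticipate is verifying the hypotheses of Lemma \ref{lem:higher ville} for this particular $M$: namely that $M \models ``V = L[E]"$ in the appropriate $\mathcal{L}_{\dot\in,\dot\R}$-sense (here one works with the $L(\R^+)$-hierarchy, as flagged after Definition \ref{def: R plus}, so this is a notational adaptation), that $M$ is $\omega$-well-founded, that $\wfp(M)$ is transitive with $\wfc(M) = L_\beta(\R^+)$, that $\wfc(M) \prec_{\Sigma_{n-1}} M$, and — the genuinely nonvacuous point when $n \geq 2$ — that $S_{n-2}^M$ meets the ill-founded part of $M$. The cleanest way to secure the last point is to build $M$ as a direct limit along an $\omega$-sequence of copies of the map $\sigma$ (using that $\crt(\sigma) > \theta_s$ and that $\sigma$ is $r\Sigma_n$-elementary, so the iteration is well-defined and the embeddings compose, just as in the proof of Lemma \ref{lem: illfoundedness in n=1 case}); the well-founded ordinals of $M$ are exactly $\sup_k \pi_{0\infty}(\text{ordinals below the $k$-th image of }\beta)$, and minimality of $\alpha$ forces $\wfo(M) = \beta$ exactly (if $\wfo(M) > \beta$ we get an $r\Sigma_n$-elementary target contradicting minimality of $\alpha$ as in the $n=1$ argument; if $\wfo(M) = \beta$ but $M$ were well-founded we would again contradict minimality). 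Finally, since $S_{n-1}^{L_{\alpha_s}(\R^+)}$ is club in $\alpha_s$, elements of it above $\beta$ map into the ill-founded part of $M$ and lie in $S_{n-2}^M$, giving the witness. Once all hypotheses are checked, Lemma \ref{lem:higher ville} yields $\wfc(M) = L_\beta(\R^+) \models \Sigma_n\text{-}\kp$, i.e.\ $L_\beta(\R^+)$ is $\Sigma_n$-admissible, which is the Claim. (One then continues, as in the $n=1$ case, to derive a contradiction with the minimality of $\alpha$ from the fact that $L_\beta(\R^+)$ also thinks $\kappa^+$ exists — but that is outside the scope of this Claim.)
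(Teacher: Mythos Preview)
Your plan has a genuine gap: the iteration of $\sigma$ that you propose cannot be carried out. Since $\theta_s$ is the largest cardinal of $L_{\alpha_s}(\R^+)$, the hull $H = \Hull{L_{\alpha_s}(\R^+)}{n}(\R^+ \cup \{\R^+\} \cup (\theta_s+1))$ is already transitive (this is the first sentence of the proof of the ambient lemma), so $\bar H = H = L_\beta(\R^+)$ and $\sigma$ is simply the inclusion map. It has no critical point in the sense you need; the first ordinal it moves is $\beta = \OR^{\bar H}$ itself, so no extender can be extracted and no iteration can be formed. You have exactly one map $L_\beta(\R^+) \hookrightarrow L_{\alpha_s}(\R^+)$ and no mechanism to produce a map out of $L_{\alpha_s}(\R^+)$, so there is no $\omega$-sequence to take a direct limit of, and hence no ill-founded $M$ to which Lemma \ref{lem:higher ville} would apply. (As you yourself observe, the finite node $s$ only gives finitely many embeddings from the tree $T$, so that route is closed as well.)

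The paper's argument is direct and avoids Ville's Lemma entirely; it exploits precisely the fact that the inclusion is $\Sigma_n$-elementary (not merely $\Sigma_{n-1}$-elementary), since $L_\beta(\R^+)$ \emph{is} the $\Sigma_n$-hull. Given a putative failure of $\Sigma_n$-Collection in $L_\beta(\R^+)$ witnessed by a $\Pi_{n-1}$ formula $\varphi$ and parameters $a,p$, the larger model $L_{\alpha_s}(\R^+)$ certainly sees a bound, namely $\beta$ itself. Because $\rho_{n-1}^{L_{\alpha_s}(\R^+)} = \alpha_s$, the class $S_{n-1}$ is cofinal in $\alpha_s$, and the existence of a bound can be rewritten as the $\Sigma_n$ statement
\[
\psi \;\equiv\; \exists \beta'\bigl(\beta' \in S_{n-1} \ \wedge\ \forall x \in a\ \exists y \in L_{\beta'}(\R^+)\ (L_{\beta'}(\R^+) \models \varphi(x,y,p))\bigr).
\]
Now $\Sigma_n$-elementarity of the inclusion pulls $\psi$ down to $L_\beta(\R^+)$, yielding the required bound there. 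This is essentially the same trick as in the proof of Lemma \ref{lem: standard parameter of admissible mice}, and it replaces the entire Ville machinery with a one-line reflection.
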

\prf
    We proceed by induction. In the case where $n=1$, it is easy to see that $L_\beta(\R^+)$ is $\Sigma_1$-admissible, since every instance of $\Sigma_1$-Collection of $L_\beta(\R^+)$ is bounded by $\beta$. But then in $L_{\alpha_s}(\R^+)$ this is a $\Sigma_1$-statement, so there is a bound in $L_{\beta}(\R^+) \prec_{\Sigma_1} L_{\alpha_s}(\R^+)$. 
    
    So suppose that $n \geq 2$ and for the sake of contradiction that $L_\beta(\R^+)$ is not $\Sigma_n$-admissible. Let $\varphi$ be a $\Pi_{n-1}$ formula and $a,p \in L_\beta (\R^+)$ be a witness to this, that is
    \[
        L_\beta (\R^+) \models \forall x \in a \exists y \varphi(x,y,p),
    \]
    but there is no bound in $L_\beta (\R^+)$.
    Note that 
    \[
        L_{\alpha_s} (\R^+) \models \exists \beta^\prime \forall x \in a  \exists y \in L_{\beta^\prime} (\R^+) \varphi(x,y,p),
    \]
    as witnessed by $\beta$. But then because $\rho_{n-1}^{L_{\alpha_s}(\R)} = \alpha_s$,
    \[
         \psi := \exists \beta^\prime (\beta^\prime \in S_{n-1} \land (\forall x \in a  \exists y \in L_{\beta^\prime} (\R^+) (L_{\beta^\prime}(\R^+) \models \varphi(x,y,p)))),
    \]
    holds in $L_{\alpha_s}(\R^+)$, which is $\Sigma_n$. Since $L_\beta (\R^+) \prec_{\Sigma_n} L_{\alpha_s} (\R^+)$, $L_\beta (\R^+) \models \psi$. However, this means that $L_\beta (\R^+) \models \exists \beta^\prime \forall x \in a  \exists y \in L_{\beta^\prime} (\R^+) \varphi(x,y,p)$, so that there is a bound in $L_\beta (\R^+)$, a contradiction!
\eprf

This is a contradiction, since $L_\beta (\R^+)$ cannot be $\Sigma_n$-admissible by Lemma \ref{basic facts about L(R)}!
\eprf

\setcounter{claimcounter}{0}

Note that this lemma shows that for nodes $s,t \in T$ such that $s <_{T} t$ the embedding $\pi_{s,t} \colon L_{\alpha_s} (\R^+) \to L_{\alpha_t} (\R^+)$ given by condition \ref{T: embedding} is uniquely determined by $\alpha_s$ and $\alpha_t$.
Moreover, it is easy to see that $T$ is definable over $L_{\alpha}(\R^+) \vert \theta$ and thus, in particular $T \in L_{\alpha}(\R^+)$.

\begin{dfn} \label{def: signature of the structures}
    For $\beta > \kappa$ such that $L_\beta (\R^+) \models ``\kappa^+ \text{ exists}"$, let $N_\beta$ be the structure $(\lfloor L_\beta(\R^+)\rfloor, \in, \R^+, (x)_{x \in \R^+}, \kappa^{+\lx})$ in the signature $\{\dot{\in},\dot{\R},(\dot{x})_{x\in \R^+},\dot{\kappa^+}\}$.
\end{dfn}

\begin{lem} \label{lem: T is illfounded} 
    $T$ is illfounded.
\end{lem}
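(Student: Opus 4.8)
The plan is to mimic the proof of Lemma~\ref{lem: illfoundedness in n=1 case} for our fixed $n$, replacing the classical truncation lemma by the higher Ville's Lemma (Lemma~\ref{lem:higher ville}) and using the translation between the $\Sigma_n$- and $r\Sigma_n$-hierarchies from Section~\ref{higher admissible premice} (Lemmas~\ref{translatibility} and~\ref{better translatibility}) wherever the $n=1$ argument used a bare $\Sigma_1$ computation. First I would fix $h\colon\omega\to\theta$ sufficiently $(\lr,\col(\omega,\theta))$-generic so that $\lr[h]\models\Sigma_n\text{-}\kp$; as for $n=1$ such $h$ exists, but it must be chosen by hand, since $\col(\omega,\theta)$ has size $\rho_n^{\lr}$ and the preservation results of Section~\ref{higher admissible premice} do not apply. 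Let $T'$ be the tree of attempts to build $\langle\alpha_k\rangle_{k<\omega}$ satisfying clauses (1)--(4) of Definition~\ref{definition of T} (with the $\beta_k$'s dropped, so clause (1) reads $\kappa<\alpha_k<\theta$) together with $\alpha_k>h(k)$; thus $T'\subseteq{}^{<\omega}\theta$. Since Lemma~\ref{lem: projectum of L(R)} uses only the first three clauses of Definition~\ref{definition of T} and the minimality of $\alpha$, it applies to nodes of $T'$: each $L_{\alpha_s}(\R^+)$ is the $r\Sigma_n$-hull of $\R^+\cup\{\R^+\}\cup(\theta_s+1)$ (where $\theta_s=\kappa^{+L_{\alpha_s}(\R^+)}$), so the $r\Sigma_n$-embedding demanded by clause~\ref{T: embedding} is uniquely determined by the ordinals $\alpha_s,\alpha_t$, and these embeddings commute. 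Hence every cofinal branch $b=\langle\alpha_k\rangle$ of $T'$ determines a well-defined direct limit $M_b$ of $\langle L_{\alpha_k}(\R^+),\pi_{kl}\rangle$, with $r\Sigma_n$-elementary direct-limit maps $\pi_{k\infty}$ fixing $\R^+$ and $\theta_k:=\kappa^{+L_{\alpha_k}(\R^+)}$ pointwise.

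Two of the three main steps are comparatively routine. To see that $T'$ is illfounded, I would use $\rho_n^{\lr}=\theta$ (Lemma~\ref{basic facts about L(R)}) to build a strictly increasing continuous sequence $\langle\delta_k:k<\omega\rangle$ below $\theta$ with $h(k)<\delta_k$ and $\delta_k=\theta\cap\Hull{\lr}{n}(\delta_k\cup\R^+\cup\{\R^+,\theta\})$, arranging in addition --- possible because $\rho_{n-1}^{\lr}=\alpha$, via condensation for $\lr$ --- that the transitive collapse of $\Hull{\lr}{n}(\delta_k\cup\R^+\cup\{\R^+,\theta\})$ is of the form $L_{\alpha_k}(\R^+)$ with $\rho_{n-1}^{L_{\alpha_k}(\R^+)}=\alpha_k$ and $\delta_k\mapsto\theta_k$ under the collapse; the inclusion maps then witness that $\langle\alpha_k\rangle$ is a branch of $T'$. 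At the other end of the argument, once we have a cofinal branch $b=\langle\alpha_k\rangle$ of $T'$ with $M_b$ illfounded, the illfoundedness lies among the ordinals (the $L_{\alpha_k}(\R^+)$ are ranked), so after passing to a tail and reindexing we obtain $\beta_k<\alpha_k$ with $\pi_{k,k+1}(\beta_k)>\beta_{k+1}$; as an infinite descending sequence of ordinals below the increasing ordinals $\theta_k$ is impossible, we may moreover take $\kappa<\theta_k\le\beta_k$, and then $\langle(\alpha_k,\beta_k)\rangle_{k<\omega}$ is a branch of $T$, so $T$ is illfounded.

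The heart of the matter is to produce a cofinal branch $b$ of $T'$ with $M_b$ illfounded. Suppose there is none; then every such $M_b$ is wellfounded, say $M_b=L_{\beta_b}(\R^+)$, and, exactly as in the proof of the Claim in Theorem~\ref{thm: branch in the n=1 case} (but using Lemma~\ref{lem:higher ville} in place of the classical truncation lemma, and the minimality of $\alpha$ from Lemma~\ref{basic facts about L(R)}), $\pi_{s\infty}(\theta_s)=\theta$ and $\beta_b\in(\theta,\alpha]$ for every $s\in b$. For $s\in T'$ let $\tho_s$ be the $r\Sigma_n$-theory of $L_{\alpha_s}(\R^+)$ in the parameters $\R^+\cup\{\R^+\}\cup(\theta_s+1)$. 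Since for $s\in b$ the map $\pi_{s\infty}$ fixes $\R^+$, sends $\theta_s$ to $\theta$ and fixes $\theta_s$ below, and maps into $L_{\beta_b}(\R^+)$ with $\beta_b\le\alpha$, there is $\gamma<\alpha$ with $L_\gamma(\R^+)\models\tho_s$. Therefore each $s\in T'$ either admits a ranking function for $(T')_s$ --- equivalently, one in $\lr$, by $\Sigma_n\text{-}\kp$ --- or satisfies $\exists\gamma<\alpha\,(L_\gamma(\R^+)\models\tho_s)$; using the translation lemmas and the $\Pi_{n-1}$-definable club of ordinals $\gamma$ with $L_\gamma(\R^+)\prec_{\Sigma_{n-1}}\lr$ to see that the second alternative is $\Sigma_n$ over $\lr[h]$, $\Sigma_n$-Collection in $\lr[h]$ produces a single $\gamma<\alpha$ serving all $s\in T'$. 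Pruning $T'$ over $L_\gamma(\R^+)$ to $T''$ and setting $\tho^\ast:=\bigcup_{s\in T''}\tho_s\in\lr[h]$, one argues as for $n=1$ --- using that the leftmost branch of $T''$ realizes $\lr$ itself, via Lemma~\ref{lem: projectum of L(R)} --- that $\tho^\ast$ is exactly the $r\Sigma_n$-theory of $L_\alpha(\R^+)$ in $\R^+\cup\{\R^+\}\cup(\theta+1)$, i.e.\ the full theory of the generating set of Lemma~\ref{lem: projectum of L(R)}. But then that lemma says $L_\alpha(\R^+)$ is determined up to isomorphism by $\tho^\ast$, as the transitive collapse of the corresponding $r\Sigma_n$-hull; hence $\lr[h]$ computes $L_\alpha(\R^+)$, so $\alpha\in\lr[h]$, contradicting $\OR^{\lr[h]}=\alpha$.

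I expect the main obstacle to be the fine-structural bookkeeping in this last step for $n>1$: (i) verifying that ``$\exists\gamma<\alpha\,(L_\gamma(\R^+)\models\tho_s)$'' is a $\Sigma_n$ predicate of $s$ over $\lr[h]$ --- this is precisely where the $\Sigma_n/r\Sigma_n$ translation (Lemmas~\ref{translatibility} and~\ref{better translatibility}) and the definable club of sufficiently elementary levels enter, in place of the direct $\Sigma_1$ computation available when $n=1$ --- and (ii) checking that $\tho^\ast$ coincides with the $r\Sigma_n$-theory of $L_\alpha(\R^+)$ in the stated parameters and that this pins down $L_\alpha(\R^+)$ through Lemma~\ref{lem: projectum of L(R)}. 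A smaller, but genuine, point is the existence of the generic $h$ with $\lr[h]\models\Sigma_n\text{-}\kp$, which, as for $n=1$, cannot be obtained from the preservation results of Section~\ref{higher admissible premice} and must be arranged directly.
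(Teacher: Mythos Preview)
Your outline matches the paper's argument for $n=1$, but there is a genuine gap for $n\ge 2$ in the step you flag as ``(ii)'': the inclusion $\tho^*\subseteq\tho_{r\Sigma_n}^{\lr}(\R^+\cup\{\R^+\}\cup(\theta+1))$. For $s\in T''$ you produce a cofinal branch $b$ through $s$ with $M_b=L_{\beta_b}(\R^+)$ wellfounded, $\beta_b\in(\theta,\alpha]$, and $\kappa^{+M_b}=\theta$; hence $L_{\beta_b}(\R^+)\models\tho_s$ with the constant $\theta$ interpreted as $\theta$. For $n=1$ this suffices, since $\Sigma_1$ facts pass upward from initial segments. For $n\ge 2$, however, an $r\Sigma_n$ fact true in $L_{\beta_b}(\R^+)$ need not be true in $\lr$ unless $L_{\beta_b}(\R^+)\prec_{\Sigma_{n-1}}\lr$, and nothing in your construction forces $\beta_b\in S_{n-1}^{\lr}$. (The hypothesis $\rho_{n-1}^{L_{\alpha_k}(\R^+)}=\alpha_k$ ensures the direct limit has $\rho_{n-1}=\beta_b$, but this says nothing about $\Sigma_{n-1}$-elementarity \emph{in} $\lr$.) Relatedly, your disjunct ``$\exists\gamma<\alpha\,(L_\gamma(\R^+)\models\tho_s)$'' is trivially satisfied by $\gamma=\alpha_s$; for it to do any work you need $\gamma\in S_{n-1}^{\lr}$ and $\kappa^{+L_\gamma(\R^+)}=\theta$, and that is precisely what needs proof.

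The paper addresses this with a new idea absent from your sketch: an \emph{$(n+1)$-pruning process} producing a transfinite sequence $\langle T_\gamma:\gamma<\alpha\rangle$ of subtrees of $T_h$. At successor stages one discards nodes $s$ that do not have extensions with $\theta_t$ arbitrarily close to $\theta$; at limit stages $\lambda\in S_{n-1}^{\lr}$ one discards nodes whose $\leq^*$-ordering on $r\Sigma_n$ formulas (Definition~\ref{dfn: ordering of rsigma_n}) disagrees with that computed in $\lr\mathbin{\vert}\lambda$. The hull branch survives this pruning (its $\leq^*$ agrees with $\lr$ by construction), so the pruned tree $T'_h$ is still illfounded. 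The point of the $\leq^*$-pruning is that for any branch $b$ of $T'_h$ with $M_b$ wellfounded one gets $\tho_{r\Sigma_n}^{M_b}(\theta)\subseteq\tho_{r\Sigma_n}^{N_\alpha}(\theta)$ (else some node on $b$ would have been removed at a limit stage), and from this one deduces $N_{\beta_b}\prec_{\Sigma_{n-1}}N_\alpha$ by building the term-by-term map $m\tau_\varphi^{N_{\beta_b}}(\vec x)\mapsto m\tau_\varphi^{N_\alpha}(\vec x)$ and arguing it has no critical point. Only then does the $\Sigma_n$-Collection argument you sketch go through. This is not bookkeeping: the $\leq^*$-based pruning is the mechanism that replaces the upward $\Sigma_1$-absoluteness you are implicitly relying on.
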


\prf
    Let $h$ be sufficiently $(\lr,\col(\omega,\theta))$-generic so $\lr[h] \models \Sigma_n$-KP.\footnote{The existence of such generics follows from straightforward adaption of the proof of Theorem 10.17 in \cite{Mathias2015ProvidentSA}.} Let $T_{h}$ be the tree defined as $T$, dropping $\beta_k$ and clause \ref{T: illfounded} from the definition of $T$, and with the additional requirement that $\alpha_k > h(k)$. Thus, $T_{h} \subseteq {^{<\omega}\theta}$. For a node $s \in T_{h} \setminus \{\emptyset\}$, we write $\alpha_s = s(\lh(s)-1)$, and let 
    \begin{align*}
        \tho_s := \{ \varphi(\vec{x},\theta,\R^+) : \varphi \text{ is } r\Sigma_n \land \vec{x} \in (\fin{\theta \cup \R^+}) 
        \land  L_{\alpha_s}(\R^+) \models \varphi(\vec{x}, \theta,\R^+) \}.
    \end{align*}

    Let us define a sequence of trees $\langle T_\gamma \mid \gamma < \alpha \rangle$, which we will call  the $(n+1)$\textit{-pruning process of} $T_{h}$.

    Set $T_0 := T_{h}$. 
    Suppose that $T_\gamma$ is defined, where $\gamma < \alpha$. Let 
    \[
        T_{\gamma+1} := \{ s \in T_\gamma : \forall \xi < \theta \exists t \in (T_\gamma \setminus \{\emptyset\}) ( s \subseteq t \land \xi < \theta_t) \}.
    \]
    Let $\lambda < \alpha$ be a limit ordinal and suppose that $T_\gamma$ is defined for all $\gamma < \lambda$. In the case where $\lambda \notin S^{N_\alpha}_{n-1}$, let $T_\lambda := \bigcap_{\gamma < \lambda} T_\gamma$. In the case that $\lambda \in S^{N_\alpha}_{n-1}$ we let
    \[
        T_\lambda := \{ s \in \bigcap_{\gamma < \lambda} T_\gamma \mid \neg (\exists \varphi \exists \psi (\varphi \leq^*_s \psi \land N_\alpha \vert \lambda \models \psi < \varphi)) \},
    \]
    where $\varphi \leq^*_s \psi$ means that $\varphi \leq^* \psi \in \tho_s$.

    Finally, set $T^\prime_{h} = \bigcap_{\gamma < \alpha} T_\gamma$. Note that the tree $T^\prime_h$ does not have end nodes. Moreover, $S_{n-1}^{N_\alpha}$ is $r\Sigma_n^{N_\alpha}$. Thus, the sequence $\langle T_\gamma \mid \gamma < \alpha \rangle$ is definable by a $\Sigma_n$ recursion over $N_\alpha$.

    \begin{claim}
        $T^\prime_{h}$ is illfounded.
    \end{claim}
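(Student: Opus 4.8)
The plan is to exhibit a single cofinal branch through $T^\prime_h$ directly, by adapting the branch construction of Claim~\ref{n equals one case illfoundedness} to the $r\Sigma_n$ setting and then verifying that the branch so obtained is never removed by the $(n+1)$-pruning process. \emph{Construction of the branch.} Working inside $\lr$ and using that $\rho_n^{\lr}=\theta$ (Lemma~\ref{basic facts about L(R)}), exactly as in the proof of Claim~\ref{n equals one case illfoundedness} one builds a strictly increasing sequence $\langle\delta_k\mid k<\omega\rangle$, cofinal in $\theta$, with $\kappa<\delta_0$ and $h(k)<\delta_k$ for all $k$, such that each $\delta_k$ is a hull fixed point:
\[
    \delta_k=\theta\cap\Hull{\lr}{n}\bigl(\delta_k\cup\R^+\cup\{\R^+\}\bigr).
\]
Let $H_k$ denote this hull, let $\sigma_k\colon N_k\to\lr$ be its anticollapse with $N_k$ transitive, and set $\alpha_k:=\OR^{N_k}$. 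Since all reals lie in $H_k$ we have $\sigma_k\restriction\R^+=\id$ and $\sigma_k(\R^+)=\R^+$, so $N_k=L_{\alpha_k}(\R^+)$; since $\delta_k\subseteq H_k$ we have $\sigma_k\restriction\delta_k=\id$; and since $\delta_k=\theta\cap H_k$ we have $\sigma_k(\delta_k)=\theta$ and $\kappa^{+N_k}=\delta_k$. From $\delta_k<\delta_{k+1}$ we get $H_k\subseteq H_{k+1}$, whence the collapse of the inclusion yields an $r\Sigma_n$-elementary map $\pi_{k,k+1}\colon N_k\to N_{k+1}$ with $\pi_{k,k+1}\restriction\delta_k=\id$. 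Finally, since $\Sigma_{n-1}\text{-}\kp$ is $\Pi_{n+1}$-axiomatizable (Lemma~\ref{lem: axiomatizability}) and, being of that form, is preserved downward along the $r\Sigma_n$-elementary $\sigma_k$, each $N_k$ is $\Sigma_{n-1}$-admissible, so $\rho_{n-1}^{N_k}=\alpha_k$ by Lemma~\ref{lem: n-1 projectum}. Putting $b:=\langle\alpha_k\mid k<\omega\rangle$ and $s_k:=b\restriction k$, the preceding observations show that $b$ is a branch through $T_h=T_0$, with $\theta_{s_k}=\delta_{k-1}$ for $k\ge 1$.

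\emph{Surviving the pruning.} I would then prove by induction on $\gamma<\alpha$ that $s_k\in T_\gamma$ for every $k<\omega$; this suffices, since then $b$ is a branch through $T^\prime_h=\bigcap_{\gamma<\alpha}T_\gamma$, so $T^\prime_h$ is illfounded. At a successor $\gamma+1$: given $k<\omega$ and $\xi<\theta$, since $\langle\theta_{s_m}\mid m\ge 1\rangle=\langle\delta_{m-1}\mid m\ge 1\rangle$ is cofinal in $\theta$, choose $m>k$ with $\theta_{s_m}>\xi$; by the induction hypothesis $s_m\in T_\gamma$, and $s_m\supseteq s_k$, so $s_k\in T_{\gamma+1}$. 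At a limit $\lambda\notin S^{N_\alpha}_{n-1}$ there is nothing to do, as $T_\lambda=\bigcap_{\gamma<\lambda}T_\gamma$.

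The crux, and the step I expect to be the main obstacle, is a limit $\lambda\in S^{N_\alpha}_{n-1}$, where I must rule out that some $r\Sigma_n$ formulas $\varphi,\psi$ witness $\varphi\leq^*_{s_k}\psi$ while $N_\alpha\vert\lambda\models\psi<\varphi$. The point is that $\tho_{s_k}$ is \emph{correct}: applying $\sigma_{k-1}$, which is $r\Sigma_n$-elementary, fixes the reals and the ordinals below $\theta_{s_k}$ occurring as parameters in $\tho_{s_k}$, and sends $\theta_{s_k}$ to $\theta$, every statement of $\tho_{s_k}$ holds in $N_\alpha=\lr$ with $\theta$ in place of $\theta_{s_k}$; in particular $\varphi\leq^*_{s_k}\psi$ yields $N_\alpha\models\varphi\leq^*\psi$. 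Now, using that $N_\alpha\vert\lambda\prec_{\Sigma_{n-1}}N_\alpha$ together with the $r\Sigma_n$-definability of the functions $\lv_\varphi$ and of $\leq^*$ (Definition~\ref{def: minimal skolem terms} and the lemmas after Definition~\ref{dfn: ordering of rsigma_n}), a case analysis on whether $\lv_\varphi^{N_\alpha}$ and $\lv_\psi^{N_\alpha}$ lie below $\lambda$ shows $N_\alpha\vert\lambda\not\models\psi<\varphi$: if the relevant existential for $\varphi$ is realized below $\lambda$ in $N_\alpha$, then $N_\alpha\vert\lambda$ computes the same $\lv$-value and the comparison $\varphi\leq^*\psi$ already forbids $\psi<\varphi$; and if it is not, then, since that existential is $r\Sigma_n$ and hence reflects up to $N_\alpha$, we get $\lv_\varphi^{N_\alpha}\ge\lambda$, so by $N_\alpha\models\varphi\leq^*\psi$ also $\lv_\psi^{N_\alpha}\ge\lambda$, and $N_\alpha\vert\lambda$ assigns $\psi$ no level, so again cannot witness $\psi<\varphi$. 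Hence $s_k\in T_\lambda$, the induction goes through, and $T^\prime_h$ is illfounded.

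The essential work is confined to this last step: making "correctness of $\tho_{s_k}$" precise requires matching the bookkeeping by which parameters (reals, and ordinals below $\theta_s$) are recorded in $\tho_s$ with the fact that $\sigma_{k-1}$ fixes exactly those parameters while moving $\theta_s$ to $\theta$, and the case analysis on the $\lv$-values must be carried out using the genuine $r\Sigma_n$/$r\Pi_n$ preservation available between $N_\alpha\vert\lambda$ and $N_\alpha$ and the definitions of $\lv_\varphi$ and $\leq^*$ from Section~\ref{higher admissible premice}, rather than the informal version sketched here.
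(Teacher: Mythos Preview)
Your approach is essentially the paper's: build the branch from fixed points $\delta_k$ of the $r\Sigma_n$ hull operator on $\lr$ (the paper phrases this over $N_\alpha$, which has constants for all reals and for $\theta$, but your hull $\Hull{\lr}{n}(\delta_k\cup\R^+\cup\{\R^+\})$ contains $\theta=\kappa^+$ anyway since $\kappa<\delta_0$), and then observe that, because the $N_{\alpha_k}$ are collapses of hulls of $N_\alpha$, the $\leq^*$-ordering recorded in $\tho_{s_k}$ is \emph{correct}, so no pruning at $\lambda\in S^{N_\alpha}_{n-1}$ can touch the branch. The paper states the last point in one line; your case analysis on whether $\lv_\varphi^{N_\alpha}<\lambda$ spells out the content correctly.

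There is one minor slip in your justification of clause~\ref{T: projectum}. You write that $N_k$ is $\Sigma_{n-1}$-admissible (by downward preservation of the $\Pi_{n+1}$ axiomatization along $\sigma_k$) and then invoke Lemma~\ref{lem: n-1 projectum} to get $\rho_{n-1}^{N_k}=\alpha_k$. Neither step is right as stated: an $r\Sigma_n$-elementary map does not in general reflect $\Pi_{n+1}$ sentences, and Lemma~\ref{lem: n-1 projectum} only yields $\rho_{k-1}=\OR$ from $\Sigma_k$-Collection, so $\Sigma_{n-1}$-admissibility would give $\rho_{n-2}^{N_k}=\alpha_k$, not $\rho_{n-1}$. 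The conclusion is nonetheless correct, by the standard fine-structural fact that the collapse of an $r\Sigma_n$ hull of a structure with $\rho_{n-1}=\OR$ again has $\rho_{n-1}=\OR$: for $\gamma<\alpha_k$ the theory $\tho_{n-1}^{N_k}(\gamma)$ is computed from $\tho_{n-1}^{\lr}(\sigma_k(\gamma))\in\lr$ via the $r\Sigma_{n-1}$-elementarity of $\sigma_k$, and the latter lies in $H_k$ (it is $r\Sigma_{n-1}$-definable from $\sigma_k(\gamma)\in H_k$), hence its preimage is in $N_k$. With this correction your proof is complete and matches the paper.
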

    \prf
        The proof is analogous to the proof of Claim \ref{n equals one case illfoundedness} in the proof of Lemma \ref{lem: illfoundedness in n=1 case}. However, this time we have to verify that the branch produced is in fact in $T^\prime_{h}$.

        Since $\rho_n^{L_{\alpha}(\R^+)} = \rho_n^{N_\alpha}= \theta$, there exists a sequence $\langle \delta_k \mid k < \omega \rangle$ such that for all $k < \omega$,
        \begin{itemize}
            \item $h (k) < \delta_k < \delta_{k+1}< \kappa^{+\lx}$, and
            \item $\delta_k = \kappa^{+\lx} \cap \Hull{N_\alpha}{n} (\delta_k \cup \{ \kappa^{+\lx} \}) \in \OR$.
        \end{itemize}
        Let $cH_k := c\Hull{N_\alpha}{n} (\delta_k \cup \{ \kappa^{+\lx} \})$ and set $\alpha_k = \OR \cap cH_k$. Note that for $\lambda \in S^{N_\alpha}_{n-1}$, it cannot be the case that there are $r\Sigma_n$ formulas $\varphi$ and $\psi$ such that $N_{\alpha_k} \models \varphi \leq^* \psi$ and $N_\alpha \vert \lambda \models \psi < \varphi$. It is then easy to see that $\langle \alpha_k \mid k < \omega \rangle$ is a branch through $T^\prime_{h}$.
    \eprf

    For a branch $b$ of $T^\prime_h$, let $M_b$ be the direct limit given by the branch. Note that we consider $M_b$ in the signature $\{\dot{\in},\dot{\R},(\dot{x})_{x\in \R^+},\dot{\kappa^+}\}$.

    \begin{claim} \label{claim: agreement of theories}
        If $M_b$ is wellfounded, then $\tho_{r\Sigma_n}^{M_b}(\theta) \subseteq \tho_{r\Sigma_n}^{N_\alpha}(\theta)$.\footnote{Recall that by Definition \ref{def: signature of the structures} the structure $N_\alpha$ has the signature $\{\dot{\in},\dot{\R},(\dot{x})_{x\in \R^+},\dot{\kappa^+}\}$.}
    \end{claim}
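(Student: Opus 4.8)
The plan is to exploit the construction of the pruning process, which was engineered precisely so that surviving nodes cannot carry $r\Sigma_n$ information inconsistent with $N_\alpha$. Fix a branch $b$ of $T'_h$ with $M_b$ wellfounded, and let $\pi_{s,\infty}\colon N_{\alpha_s}\to M_b$ be the direct limit maps for $s\in b$. First I would note that, since each $\pi_{s,t}$ restricts to the identity on $\theta_s$ and on $\R^+$ (clause \ref{T: embedding} together with Lemma \ref{lem: projectum of L(R)}, which pins down that these embeddings move nothing below $\theta_s\le\theta$ and fix the reals), the direct limit maps satisfy $\pi_{s,\infty}\restriction(\theta_s\cup\R^+)=\id$ and $\pi_{s,\infty}(\theta_s)=\theta$; hence $\theta$ and the reals are the ``same'' parameters in $M_b$ as in every $N_{\alpha_s}$. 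By $r\Sigma_n$-elementarity of the limit maps, $\tho^{M_b}_{r\Sigma_n}(\theta)=\bigcup_{s\in b}\tho_s$, so it suffices to show $\bigcup_{s\in b}\tho_s\subseteq \tho^{N_\alpha}_{r\Sigma_n}(\theta)$, i.e.\ that for each $s\in b$, $\tho_s\subseteq\tho^{N_\alpha}_{r\Sigma_n}(\theta)$.

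The heart of the argument is then a comparison of $\tho_s$ with $\tho^{N_\alpha}_{r\Sigma_n}(\theta)$ via the ordering $\le^*$. I would argue by contradiction: suppose some $\varphi(\vec x,\theta,\R^+)\in\tho_s$ fails in $N_\alpha$. Using $\rho_n^{N_\alpha}=\theta$ and the definability of $S^{N_\alpha}_{n-1}$ (which is $r\Sigma_n^{N_\alpha}$ and club in $\alpha$ by the remark after Definition \ref{definition of T}), together with the fact that $\tho^{N_\alpha}_{r\Sigma_n}(\theta)$ is determined level-by-level along $S^{N_\alpha}_{n-1}$, one locates a level $\lambda\in S^{N_\alpha}_{n-1}$ and an $r\Sigma_n$ formula $\psi\in\tho_s$ with $N_{\alpha_s}\models\varphi\le^*\psi$ while $N_\alpha\vert\lambda\models\psi<\varphi$ — this is exactly the configuration excluded at stage $\lambda+1$ (or at limit stage $\lambda\in S^{N_\alpha}_{n-1}$) of the $(n+1)$-pruning process. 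Concretely: if $\varphi\notin\tho^{N_\alpha}_{r\Sigma_n}(\theta)$ but $\varphi\in\tho_s$, then since $\varphi\in\tho_s$ implies $N_{\alpha_s}\models\exists z\,\varphi(\ldots,z)$, the $\le^*$-relation (Definition \ref{dfn: ordering of rsigma_n}) lets us compare the least witnessing level $\lv_\varphi$ in $N_{\alpha_s}$ against the witnessing behavior of a formula $\psi$ that genuinely holds at some $\lambda\in S^{N_\alpha}_{n-1}$; choosing $\psi$ so that $\lv_\psi$ in $N_\alpha\vert\lambda$ witnesses $\psi$ but $\varphi$ is not yet witnessed there forces $N_\alpha\vert\lambda\models\psi<\varphi$, and since $s$ is on a branch of $T'_h$ it survived stage $\lambda$, giving $N_{\alpha_s}\not\models\varphi\le^*\psi$; but $\lv^{N_{\alpha_s}}_\varphi\le\lv^{N_{\alpha_s}}_\psi$ would follow from elementarity of $\pi_{s,\infty}$ and the wellfoundedness of $M_b$, a contradiction.

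The main obstacle, and the step I would spend the most care on, is making precise the bookkeeping in the previous paragraph: one must verify that the $\le^*$ ordering as computed in the finite-stage structure $N_{\alpha_s}$ genuinely controls the relative order of witnessing levels after passing to the wellfounded direct limit $M_b$, and that the relevant witnessing level for a ``bad'' formula $\varphi$ really does get compared against some $\psi$ true at an $S^{N_\alpha}_{n-1}$-level $\lambda$ below $\alpha$ rather than escaping to $\alpha$ itself. This requires (i) that $M_b$ being wellfounded forces its ordinal height to be $\le\alpha$ — which follows as in Theorem \ref{thm: branch in the n=1 case} and Lemma \ref{lem: T is illfounded}, using the minimality of $\alpha$ in Lemma \ref{basic facts about L(R)} and a Ville's-Lemma argument (Lemma \ref{lem:higher ville}) to rule out that $\operatorname{wfo}(M_b)$ lands strictly between $\theta$ and $\alpha$ carrying $\Sigma_n$-admissibility; and (ii) that the $\le^*$-pruning at $S^{N_\alpha}_{n-1}$-levels is exhaustive, i.e.\ every potential disagreement is detected at some such $\lambda<\alpha$ because $S^{N_\alpha}_{n-1}$ is club in $\alpha$ and $\tho^{N_\alpha}_{r\Sigma_n}(\theta)$ is the increasing union of the $\tho^{N_\alpha\vert\lambda}_{r\Sigma_n}(\theta)$ over $\lambda\in S^{N_\alpha}_{n-1}$. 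Once these two points are in place, the contradiction closes and the claimed inclusion $\tho^{M_b}_{r\Sigma_n}(\theta)\subseteq\tho^{N_\alpha}_{r\Sigma_n}(\theta)$ follows.
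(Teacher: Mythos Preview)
Your overall plan is right---the contradiction must come from the pruning condition at some $\lambda\in S^{N_\alpha}_{n-1}$---but there is a genuine gap in the middle. Having fixed $\varphi\in\tho_s\setminus\tho^{N_\alpha}_{r\Sigma_n}(\theta)$, you need to produce a specific $\psi$ with $\varphi\leq^*_s\psi$ and $N_\alpha\vert\lambda\models\psi<\varphi$. Your paragraph picks a $\psi$ true in $N_\alpha$ and correctly notes that surviving the pruning forces $N_{\alpha_s}\not\models\varphi\leq^*\psi$, hence $\psi$ holds in $N_{\alpha_s}$ with $\lv^{N_{\alpha_s}}_\psi<\lv^{N_{\alpha_s}}_\varphi$. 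But your asserted contradiction, ``$\lv^{N_{\alpha_s}}_\varphi\le\lv^{N_{\alpha_s}}_\psi$ would follow from elementarity of $\pi_{s,\infty}$ and the wellfoundedness of $M_b$,'' simply does not follow: elementarity transports levels to $M_b$, and wellfoundedness says those are genuine ordinals, but nothing links them to any ordering constraint coming from $N_\alpha$. What you have actually shown is that \emph{every} $\psi\in\tho^{N_\alpha}_{r\Sigma_n}(\theta)$ lies in $\tho_s$ with level below $\lv^{N_{\alpha_s}}_\varphi$; that is, $\tho^{N_\alpha}_{r\Sigma_n}(\theta)\subseteq\tho^{M_b}_{r\Sigma_n}(\theta)$ as a proper $\leq^*_{M_b}$-initial segment. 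That is not yet a contradiction.

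The missing idea is to rule out this inclusion. The paper does it as follows: since $M_b$ is wellfounded one has $M_b=N_{\gamma_b}$ with $\gamma_b\le\alpha$; if $\gamma_b=\alpha$ the theories are equal, so $\gamma_b<\alpha$ and hence $\tho^{M_b}_{r\Sigma_n}(\theta)\in N_\alpha$. Every $\leq^*_{M_b}$-initial segment of this set is then an element of $N_\alpha$, so in particular $\tho^{N_\alpha}_{r\Sigma_n}(\theta)\in N_\alpha$, contradicting $\rho_n^{N_\alpha}=\theta$. Once you know the inclusion fails, you get a $\varphi\in\tho^{N_\alpha}_{r\Sigma_n}(\theta)\setminus\tho^{M_b}_{r\Sigma_n}(\theta)$ to pair with your original $\psi\in\tho^{M_b}_{r\Sigma_n}(\theta)\setminus\tho^{N_\alpha}_{r\Sigma_n}(\theta)$; then $\psi\leq^*_s\varphi$ (trivially, since $\varphi$ fails in $N_{\alpha_s}$) while $N_\alpha\vert\lambda\models\varphi<\psi$ for large $\lambda$, and the pruning contradiction goes through cleanly. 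Your proposal never isolates this first step, and the vague appeal to elementarity and wellfoundedness at the end cannot substitute for it.
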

    \prf
        Suppose not. Let $\psi$ be an $r\Sigma_n$ formula and $\vec{x} \in \fin{\theta}$ be such that $\psi(\vec{x}) \in \tho_{r\Sigma_n}^{M_b}(\theta)$ but $\psi(\vec{x}) \notin \tho_{r\Sigma_n}^{N_\alpha}(\theta)$. We claim that there is an $r\Sigma_n$ formula $\varphi$ and $\vec{y} \in \fin{\theta}$ such that $\varphi(\vec{y}) \in \tho_{r\Sigma_n}^{N_\alpha}(\theta)$ but $\varphi(\vec{y}) \notin \tho_{r\Sigma_n}^{M_b}(\theta)$. Suppose not, i.e.~$\tho_{r\Sigma_n}^{M_b}(\theta) \supsetneq \tho_{r\Sigma_n}^{N_\alpha}(\theta)$. Then, we must have that $\tho_{r\Sigma_n}^{N_\alpha}(\theta)$ is a $\leq^*$-initial segment of $\tho_{r\Sigma_n}^{M_b}(\theta)$. But since $\tho_{r\Sigma_n}^{M_b}(\theta) \in N_\alpha$ and so all its initial segments are elements in $N_\alpha$, this means that $\tho_{r\Sigma_n}^{M_b}(\theta) \in N_\alpha$, a contradiction! Now, $N_\alpha \models \varphi(\vec{y}) <^* \psi(\vec{x})$, but for some $s \in b$, $N_{\alpha_s} \models \psi(\vec{x}) \leq^* \varphi(\vec{y})$. However, this means that $s$ must have gotten pruned during the $(n+1)$-pruning process, a contradiction!
    \eprf

    To finish the proof, it suffices to see that there is a branch $b$ through $T^\prime_{h}$ such that $M_b$ is illfounded. Suppose for the sake of contradiction that for every branch $b$ of $T^\prime_{h}$, $M_b$ is wellfounded.
    Note then that for any branch $b$ of $T^\prime_{h}$, $M_b = L_{\gamma_b} (\R^+)$ for some $\gamma_b$. By the construction of the tree $T_{h}$, $\kappa^{+L_{\gamma_b}} = \sup\{\theta_s \mid s \in b\} = \theta$ so that $\gamma_b > \theta$. Note that we consider $M_b$ as a structure in the signature $\{\dot{\in},\dot{\R},(\dot{x})_{x\in \R^+},\dot{\kappa^+}\}$ so that $M_b = N_{\gamma_b}$ for some $\gamma_b$. As in the proof of Lemma \ref{lem: illfoundedness in n=1 case}, one verifies that $\gamma_b \in (\theta, \alpha]$.
    
    \begin{claim}
        If $b$ is a branch of $T^\prime_{h}$, then $N_{\gamma_b} \prec_{\Sigma_{n-1}} N_\alpha$.
    \end{claim}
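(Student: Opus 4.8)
The plan is to compare $N_{\gamma_b}$ with $N_\alpha$ through the canonical map induced by minimal Skolem terms, using that both structures are the $r\Sigma_n$-hull of $\R^+\cup\{\R^+\}\cup(\theta+1)$ and that the $(n+1)$-pruning was set up precisely to force the required fine-structural agreement. First I would collect the fine-structural data. By Lemma~\ref{basic facts about L(R)}, $\rho_{n-1}^{N_\alpha}=\alpha=\OR^{N_\alpha}$, $\theta$ is the largest cardinal of $N_\alpha$, and $\vec p_{n-1}^{N_\alpha}=\emptyset$; running the argument of Lemma~\ref{lem: projectum of L(R)} for $\alpha$ itself (its proof only uses clauses (1)--(3) of Definition~\ref{definition of T} together with the minimality of $\alpha$) gives $N_\alpha=\Hull{N_\alpha}{n}(\R^+\cup\{\R^+\}\cup(\theta+1))$. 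On the other side, clause~\ref{T: projectum} passes to the direct limit, so $\rho_{n-1}^{M_b}=\gamma_b=\OR^{M_b}$, $\theta$ is the largest cardinal of $M_b=N_{\gamma_b}$, $\vec p_{n-1}^{M_b}=\emptyset$, and $M_b=\Hull{M_b}{n}(\R^+\cup\{\R^+\}\cup(\theta+1))$. In particular $\rho_{n-2}^{M_b}=\OR^{M_b}$ and $\rho_{n-2}^{N_\alpha}=\OR^{N_\alpha}$.

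Next I would define $\sigma\colon M_b\to N_\alpha$ by $\sigma(m\tau_\varphi^{M_b}(\vec x))=m\tau_\varphi^{N_\alpha}(\vec x)$ for $r\Sigma_n$ formulas $\varphi$ and $\vec x\in\fin{\theta\cup\R^+}$. By Claim~\ref{claim: agreement of theories}, $\tho_{r\Sigma_n}^{M_b}(\theta)\subseteq\tho_{r\Sigma_n}^{N_\alpha}(\theta)$; since ``$m\tau_\varphi(\vec x)$ is defined'', ``$m\tau_\varphi(\vec x)=m\tau_\psi(\vec y)$'', and more generally ``$\chi(m\tau_\varphi(\vec x),\dots)$'' for $\chi$ an $r\Sigma_n$ formula are all uniformly $r\Sigma_n$ facts about the parameters (with reals available as constants), this inclusion makes $\sigma$ a well-defined, $r\Sigma_n$-preserving map with $\sigma\restriction(\R^+\cup(\theta+1))=\id$. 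When $n\geq 2$, Lemma~\ref{better translatibility} applies to both $M_b$ and $N_\alpha$ (each has a largest cardinal and $\rho_{n-2}=\OR$), so $r\Sigma_n$-preservation upgrades to $\Sigma_{n-1}$-preservation for $\sigma$; for $n=1$ the claim is immediate, since $N_{\gamma_b}\unlhd N_\alpha$ with the same constants.

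It then suffices to show that $\sigma=\id$ and that $\sigma$ is bidirectionally $\Sigma_{n-1}$-elementary, for then $N_{\gamma_b}=\ran(\sigma)\prec_{\Sigma_{n-1}}N_\alpha$. Both will follow once one knows that the $\leq^*$-orderings of $M_b$ and of $N_\alpha$ on $r\Sigma_n$ formulas with parameters in $\theta\cup\R^+$ coincide: this forces $\tho_{r\Sigma_n}^{M_b}(\theta)=\tho_{r\Sigma_n}^{N_\alpha}(\theta)$, and it forces $\lv_\varphi^{M_b}(\vec x)=\lv_\varphi^{N_\alpha}(\vec x)<\gamma_b$ whenever $m\tau_\varphi^{M_b}(\vec x)$ is defined, so that by the $(\tau_\varphi)^\beta$-recursion of \cite{Mitchell_Steel_2017}[2.10] with $q=\emptyset$ the two terms are literally computed inside the common initial segment $M_b\vert\lv_\varphi^{M_b}(\vec x)=N_\alpha\vert\lv_\varphi^{N_\alpha}(\vec x)$. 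This coincidence of $\leq^*$-orderings is exactly the output of the $(n+1)$-pruning: since $N_\alpha\models\Sigma_n$-Collection and $\theta$ is its largest cardinal, $S_{n-1}^{N_\alpha}$ is cofinal in $\alpha$, and a node $s$ survives the pruning precisely when, for every limit $\lambda\in S_{n-1}^{N_\alpha}$, the $\leq^*$-ordering coded by $\tho_s$ is respected by $N_\alpha\vert\lambda$; passing along the branch $b$ and to the limit yields the agreement of the $\leq^*$-orderings of $M_b$ and $N_\alpha$.

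I expect the real work to be in the parenthetical step: passing from agreement of the $\leq^*$-orderings — which only directly constrains the \emph{relative} order of the levels $\lv_\varphi$ — to \emph{literal} equality of those levels below $\gamma_b$, and hence of the minimal Skolem terms. The natural way to organize this is a simultaneous induction on $\xi\leq\gamma_b$, proving $M_b\vert\xi=N_\alpha\vert\xi$ together with the statement that the sets of levels $\lv_\varphi(\vec x)$ below $\xi$ realized over $M_b$ and over $N_\alpha$ agree, with the limit step at $\lambda\in S_{n-1}^{N_\alpha}$ reading off exactly the clause defining $T_\lambda$ in the $(n+1)$-pruning; the delicate point is the bookkeeping with the stage-by-stage term approximations $(\tau_\varphi)^\beta$ of \cite{Mitchell_Steel_2017}[2.10], which is where the pruning's use of $S_{n-1}^{N_\alpha}$ and of $\leq^*$ (rather than a cruder comparison) becomes essential.
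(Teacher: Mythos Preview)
Your plan over-reaches: you aim to show that the $\leq^*$-orderings of $M_b$ and $N_\alpha$ \emph{coincide}, hence that $\tho_{r\Sigma_n}^{M_b}(\theta)=\tho_{r\Sigma_n}^{N_\alpha}(\theta)$ and $\sigma=\id$. But the $(n+1)$-pruning only yields one direction. A node $s$ survives the limit-stage pruning at $\lambda\in S_{n-1}^{N_\alpha}$ exactly when there is no pair $\varphi,\psi$ with $\varphi\leq^*_s\psi$ yet $N_\alpha\vert\lambda\models\psi<\varphi$; letting $\lambda$ range over the cofinal set $S_{n-1}^{N_\alpha}$ this says precisely that $\leq^*_s$ is \emph{compatible with} (contained in) $\leq^*_{N_\alpha}$, and passing to the direct limit gives only $\tho_{r\Sigma_n}^{M_b}(\theta)\subseteq\tho_{r\Sigma_n}^{N_\alpha}(\theta)$, which is Claim~\ref{claim: agreement of theories}. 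Nothing rules out $r\Sigma_n$ facts true in $N_\alpha$ whose witnessing theory lives at a level $\geq\gamma_b$; indeed the claim you are proving allows $\gamma_b<\alpha$, in which case such facts exist and the theories are strictly unequal. So your central step --- ``passing along the branch $b$ and to the limit yields the agreement of the $\leq^*$-orderings'' --- fails, and the induction you outline for literal equality of levels never gets off the ground.

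The paper's argument is quite different and does not attempt to show $\sigma=\id$. It defines the same map $\pi=\sigma$, uses Claim~\ref{claim: agreement of theories} only to see that $\pi$ is well-defined and upward $r\Sigma_n$-preserving (hence $\Sigma_{n-1}$-elementary), and then argues by contradiction: if $N_{\gamma_b}\not\prec_{\Sigma_{n-1}}N_\alpha$, then $\delta:=\sup(S_{n-1}^{N_\alpha}\cap\gamma_b)<\gamma_b$, whereas $S_{n-1}^{N_{\gamma_b}}$ is cofinal in $\gamma_b$ (since $\rho_{n-1}^{N_{\gamma_b}}=\gamma_b$). Since $\pi[S_{n-1}^{N_{\gamma_b}}]\subseteq S_{n-1}^{N_\alpha}$ by $\Sigma_{n-1}$-elementarity, $\pi$ must move some ordinal in $(\delta,\gamma_b)$, so $\crt(\pi)>\theta$ exists; but $\crt(\pi)$ is then a cardinal of $N_{\gamma_b}$ above its largest cardinal $\theta$, a contradiction. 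This critical-point argument sidesteps entirely the delicate level-by-level comparison you were anticipating.
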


    \prf
        Let $b \in [T^\prime_{h}]$ and suppose for the sake of contradiction that $N_{\gamma_b} \not \prec_{\Sigma_{n-1}} N_\alpha$. This means that $\gamma_b < \alpha$ and $\delta := \sup(S^{N_\alpha}_{n-1} \cap \gamma_b) < \gamma_b$. Note that $N_{\gamma_b} = \Hull{N_{\gamma_b}}{n} (\theta)$. Let $\pi \colon N_{\gamma_b} \rightarrow N_\alpha$ be such that if $x = m\tau_\varphi^{N_{\gamma_b}} (\vec{x})  \in N_{\gamma_b}$, for an $r\Sigma_n$ formula $\varphi$ and $\vec{x} \in \fin{\theta}$, then $\pi(x) = m\tau_\varphi^{N_\alpha} (\vec{x})$. Note that $\pi$ is well-defined by Claim \ref{claim: agreement of theories}.
        Moreover, $r\Sigma_n$ statements are upwards preserved by $\pi$, i.e.~if $\varphi$ is $r\Sigma_n$, $p \in N_{\gamma_b}$, and $N_{\gamma_b} \models \varphi(p)$, then $N_\alpha \models \varphi(\pi(p))$.

        Note that $\pi \restriction (\theta +1) = \id$. Furthermore, since $\sup(S^{N_{\gamma_b}}_{n-1} \cap \gamma_b) = \gamma_b$ and $\pi [S^{N_{\gamma_b}}_{n-1}] \subset S^{N_\alpha}_{n-1}$ by the $\Sigma_{n-1}$-elementarity of $\pi$ and the fact that $S^{N_{\gamma_b}}_{n-1}$ is $\Pi_{n-1}$ definable over $N_\alpha$, it follows that $\pi \neq \id$ and so there is $\crt(\pi) > \theta$. This is a contradiction, since $\theta = \lgcd(N_{\gamma_b})$, but $\crt(\pi)$ is a cardinal of $N_{\gamma_b}$!
    \eprf

    It follows from the claim that for every node $s \in T_{h}$ there is $\gamma < \alpha$ such that $s \notin T_\gamma$, or there is some $\gamma \in S^{N_\alpha}_{n-1}$ such that $N_{\gamma} \models \tho_s$.
    Note that this is a disjunction of two $r\Sigma_n$ formulas. Thus, this is an instance of the $\Sigma_n$-Collection scheme, so that there is some $\gamma$, which works uniformly for all nodes $s \in T_{h}$. Recall that we are assuming that all branches through $T^{\prime}_{h}$ give wellfounded models. Thus, in particular,
    \[
        \tho_{r\Sigma_n}^{N_\alpha} (\theta) = \bigcup_{s \in T^\prime_{h}} \tho_s.
    \]
     But this means that over $N_\gamma$ the theory $\tho_{r\Sigma_n}^{N_\alpha} (\kappa^{+\lx})$ can be computed. Since $\rho_n^{N_\alpha} = \kappa^{+\lx}$, this is a contradiction!
     Thus, there must be branches of $T^\prime_{h}$ whose direct limit $M_b$ is illfounded. This shows that the tree $T$ is illfounded.
    \eprf

\setcounter{claimcounter}{0}
\setcounter{subclaimcounter}{0}
\setcounter{casecounter}{0}

\begin{theorem} \label{thm: branch theorem}
    There is a branch $b$ through $T$ such that
    \begin{itemize}
        \item if $M_{b}$ is the direct limit given by $b$, then $\text{wfc}(M_{b}) = L_{\alpha}(\R^+)$, and
        \item for every $s \in b$, $\{ s \}$ is $\Sigma_n \land \Pi_n$-definable over $L_{\alpha}(\R^+)$ from the parameter $\kappa^{+\lx}$.
    \end{itemize}
\end{theorem}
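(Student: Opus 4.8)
The plan is to mimic the structure of the proof of Theorem \ref{thm: branch in the n=1 case}, extracting the required branch as the left-most branch of a suitably pruned version of $T$. First I would assume, using Lemma \ref{lem: T is illfounded}, that $T$ is illfounded, and replace $T$ by its pruning: remove every node $s$ for which $T_s$ is wellfounded, so that the resulting tree (still called $T$) has no end nodes and still admits a branch. Since $L_\alpha(\R^+) \models \Sigma_n\text{-}\kp$, the existence of a ranking function for $T_t$ in $L_\alpha(\R^+)$ is equivalent, via $\Sigma_n$-Collection, to $T_t$ being wellfounded; this is exactly what is needed to see that the pruning is definable over $L_\alpha(\R^+)$. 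Then let $b$ be the left-most branch of the pruned tree: for $s \in b$, the assertion ``for every $t$ with $\lh(t) = \lh(s)$ and $t <_{\text{lex}} s$ there is a ranking function for $T_t$'' is $\Sigma_n$ by $\Sigma_n$-Collection over $N_\alpha$ (using $\rho_{n-1}^{N_\alpha} = \alpha$, Lemma \ref{basic facts about L(R)}), while ``there is no ranking function for $T_s$'' is $\Pi_n$; together these give the required $\Sigma_n \land \Pi_n$ definition of $\{s\}$ over $L_\alpha(\R^+)$ from the parameter $\kappa^{+\lx} = \theta$ (the tree $T$ itself being definable over $N_\alpha \vert \theta$, hence eliminable as a parameter).

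The substantive part is identifying $\text{wfc}(M_b)$ with $L_\alpha(\R^+)$, where $M_b$ is the direct limit along $b$ of the models $L_{\alpha_s}(\R^+)$ under the embeddings $\pi_{s,t}$ of clause \ref{T: embedding}; note $M_b$ is well-defined since the $\pi_{s,t}$ commute and are each uniquely determined by $\alpha_s,\alpha_t$ via Lemma \ref{lem: projectum of L(R)}. I would first show $\kappa^{+M_b} = \theta$. The inequality $\kappa^{+M_b} \leq \theta$ is immediate from clause (1) and the fact that the $\theta_s$ are cofinal in $\theta$ along any branch of the (original) tree; here one uses the commutation $\pi_{s,t}\restriction\theta_s = \id$ together with clause \ref{T: illfounded}, exactly as in the Claim inside the proof of Theorem \ref{thm: branch in the n=1 case}. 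For the reverse, suppose $\beta := \text{wfo}(M_b)$ satisfies $\kappa^{+M_b} < \beta < \theta$: then one checks $\kappa^{+L_\beta(\R^+)}$ would still equal $\theta$ (a contradiction), while if $\kappa^{+M_b} < \beta$ with $\text{wfc}(M_b) \models ``\kappa^+$ exists$"$ one applies the higher Ville's Lemma, Lemma \ref{lem:higher ville}, to conclude $\text{wfc}(M_b) \models \tho'$, contradicting the minimality of $\alpha$ (Lemma \ref{basic facts about L(R)}). For this application of Lemma \ref{lem:higher ville} I must verify its hypotheses for $M_b$: that $\text{wfp}(M_b)$ is transitive and $M_b$ is $\omega$-wellfounded (standard for such direct limits), that $M_b \models ``V = L[E]"$ in the appropriate sense (here $M_b$ is an $L(\R^+)$-structure, so one uses the $\mathcal{L}_{\dot\in,\dot\R}$-analogue of Ville's Lemma — this is the point where care is needed, but the argument of Lemma \ref{lem:higher ville} goes through verbatim with $\R^+$ carried along), and crucially, when $n \geq 2$, that $S^{M_b}_{n-2} \cap (\OR^{M_b}\setminus\text{wfo}(M_b)) \neq \emptyset$ together with $\text{wfc}(M_b) \prec_{\Sigma_{n-1}} M_b$.

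The main obstacle, then, is establishing $\text{wfc}(M_b) \prec_{\Sigma_{n-1}} M_b$ and the non-emptiness of $S^{M_b}_{n-2}$ above $\text{wfo}(M_b)$, which is precisely what clause \ref{T: projectum} and the $(n+1)$-pruning machinery from Lemma \ref{lem: T is illfounded} are designed to deliver. I would argue as in the Claim ``$N_{\gamma_b} \prec_{\Sigma_{n-1}} N_\alpha$'' inside that proof: condition \ref{T: projectum} forces $S^{L_{\alpha_s}(\R^+)}_{n-1}$ club in $\alpha_s$ for each $s \in b$, these sets cohere under $\pi_{s,t}$, and their (ill-founded) limit is cofinal in $\OR^{M_b}$, so in particular meets $\OR^{M_b}\setminus\text{wfo}(M_b)$; since $S^{M_b}_{n-1} \subseteq S^{M_b}_{n-2}$ this handles the hull-point requirement, and a point of $S^{M_b}_{n-1}$ below any given well-founded ordinal witnesses, by taking a $\Sigma_{n-1}$-elementary submodel, that $\text{wfc}(M_b) \prec_{\Sigma_{n-1}} M_b$. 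Finally, once $\kappa^{+M_b} = \theta$ and $\text{wfc}(M_b) \models \Sigma_n\text{-}\kp$ are in hand, $\text{wfc}(M_b) = L_{\text{wfo}(M_b)}(\R^+)$ is $\Sigma_n$-admissible with $\kappa^+$ existing, so $\text{wfo}(M_b) \geq \alpha$ by minimality; and $\text{wfo}(M_b) > \alpha$ is ruled out by the same reflection argument used at the end of the proof of Lemma \ref{lem: illfoundedness in n=1 case} (if $\alpha \in \ran(\pi_{s,\infty})$ its preimage would give a smaller $\Sigma_n$-admissible level with $\kappa^+$), so $\text{wfc}(M_b) = L_\alpha(\R^+)$, completing the proof.
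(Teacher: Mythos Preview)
Your proposal has a genuine gap at the step you yourself flag as ``the main obstacle'': you have not established that $\text{wfc}(M_b) \prec_{\Sigma_{n-1}} M_b$. Your argument is that clause \ref{T: projectum} makes $S^{L_{\alpha_s}(\R^+)}_{n-1}$ club in $\alpha_s$, and that these sets cohere under $\pi_{s,t}$; from this you want to conclude that $S^{M_b}_{n-1}$ is cofinal in $\text{wfo}(M_b)$. But that does not follow. Given $\gamma < \text{wfo}(M_b)$ with $\gamma = \pi_{s,\infty}(\bar\gamma)$, you can certainly find $\delta \in S^{L_{\alpha_s}(\R^+)}_{n-1}$ with $\delta > \bar\gamma$, and then $\pi_{s,\infty}(\delta) > \gamma$ lies in $S^{M_b}_{n-1}$; however, nothing prevents $\pi_{s,\infty}(\delta)$ from landing in the ill-founded part of $M_b$. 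So you do not get $S^{M_b}_{n-1} \cap \text{wfo}(M_b)$ cofinal in $\text{wfo}(M_b)$, and the hypothesis of Lemma \ref{lem:higher ville} remains unverified.

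The paper does not use the simple pruning you describe (remove nodes with wellfounded $T_s$); it instead takes $T'$ to be the result of the transfinite $(n+1)$-pruning process of Lemma \ref{lem: T is illfounded}, and $b$ is the left-most branch of \emph{that}. The point of the $(n+1)$-pruning is precisely to force theory containment: at limit stages $\lambda \in S^{N_\alpha}_{n-1}$ one removes nodes whose $\leq^*$-ordering disagrees with that of $N_\alpha \vert \lambda$, and this is what yields $\tho^{N_\alpha}_{r\Sigma_n}(\kappa^{+M_b}) \subseteq \tho^{M_b}_{r\Sigma_n}(\kappa^{+M_b})$ (Subclaim 1 in the paper). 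That containment is then used to define $\beta' = \sup\{\lv^{M_b}_\varphi(\vec{x}) : \varphi(\vec{x}) \in \tho^{N_\alpha}_{r\Sigma_n}(\kappa^{+M_b})\}$ and to show $\beta' \leq \beta = \text{wfo}(M_b)$; only in the case $\beta' = \beta$ does one get $\text{wfc}(M_b) \prec_{\Sigma_{n-1}} M_b$ (because $\beta'$ is by construction a limit of $S^{M_b}_{n-1}$ points), and the case $\beta' < \beta$ requires a separate, considerably more delicate argument producing a contradiction with the left-most-ness of $b$ in $T'$. Your simple pruning gives no handle on any of this. Note also the remark immediately after the theorem: for $n \geq 2$ the left-most branch of $T$ (or of any tree lying in $L_\alpha(\R^+)$) cannot work, and your simply-pruned tree, having nodes determined by a $\Sigma_1 \land \Pi_1$ condition, is exactly such a tree once $n \geq 2$.
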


\begin{rem}
    In contrast to Theorem \ref{thm: branch in the n=1 case} the branch $b$ of Theorem \ref{thm: branch theorem} cannot be the left-most branch of $T$ in the case where $n \geq 2$, as otherwise it would be in $\lr$ which is impossible. Similarly, it cannot be the left-most branch of any tree in $\lr$. But we will prune $T$ in a certain way, producing a subtree $T^\prime$ that is definable over $\lr$ (but not an element of it), and we can take $b$ as the left-most branch of $T^\prime$.
\end{rem}

\prf
    Define the $(n+1)$-pruning $\langle T_\gamma \mid \gamma < \alpha \rangle$ of $T$ from $T$ just as the $(n+1)$-pruning of $T_h$ was defined from $T_h$ in the proof of Lemma \ref{lem: T is illfounded}. Let $T^\prime$ be the result; that is, $T^\prime$ is the last tree produced by the process.

    We claim $T^\prime$ is illfounded. We showed that $T^\prime_h$ has a cofinal branch $c = \langle \alpha_k \mid k < \omega \rangle$ such that $M_c$ is illfounded and $\sup \{ \alpha_k \mid k < \omega \} = \theta$. Let $c^\prime = \langle (\alpha_k, \beta_k) \mid k < \omega \rangle$, where the $\beta_k$'s witness the illfoundedness of $M_c$. Then $c^\prime$ is a branch of $T^\prime$. For in the successor steps of the $(n+1)$-pruning process, for every node $s \in c^\prime$ and for every $\xi < \theta$, there is some $t \in c^\prime$ such that $s <_T t$ and $\xi < \theta_t$, since $\sup\{\alpha_k \mid k < \omega \} = \theta$. And in the limit steps $\lambda \in S_{n-1}^{N_\alpha}$, there is no disagreement on the ordering $\leq^*$ between $N_\alpha$ and $N_{\alpha_k}$, since $\alpha_k \in T^\prime_h$.
    
    Let $b = \langle (\alpha_k, \beta_k) \rangle_{k<\omega}$ be the left-most branch of $T^\prime$ in the lexicographical ordering. Let $M_b$ denote the direct limit given by $b$, which we consider in the signature $\{\dot{\in},\dot{\R},(\dot{x})_{x\in \R^+},\dot{\kappa^+}\}$, and let $\beta = \text{wfo} (M_b)$. Note that $\kappa^{+M_b} < \beta < \OR^{M_b}$, i.e.~$M_b$ is illfounded and its $\kappa^+$ is in the wellfounded cut. We consider $M_b$ as a structure in the signature $\{\dot{\in},\dot{\R},(\dot{x})_{x\in \R^+},\dot{\kappa^+}\}$.

\begin{claim}
    $\kappa^{+M_b} = \theta$.
\end{claim}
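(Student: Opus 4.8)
The plan is to mimic the proof of the corresponding claim inside the proof of Theorem~\ref{thm: branch in the n=1 case}, using the higher Truncation Lemma, Lemma~\ref{lem:higher ville} (in the version for models of $V = L(\R^+)$, proved the same way), in place of the classical one. Since $\pi_{s\infty}(\theta_s) = \kappa^{+M_b}$ and the tail embeddings of the direct limit are the identity below each $\theta_s$, we have $\kappa^{+M_b} = \sup\{\theta_s \mid s \in b\} \le \theta$. Suppose toward a contradiction that $\kappa^{+M_b} < \theta$ and put $\beta := \text{wfo}(M_b)$; recall we already know $\kappa^{+M_b} < \beta < \OR^{M_b}$. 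An induction on $\gamma < \beta$, using that $M_b$ is a model of $V = L(\R^+)$ and that every real in $\R^+$ is fixed by the direct limit maps, shows $\text{wfc}(M_b) = L_\beta(\R^+)$; moreover $\kappa^{+M_b}$ is a cardinal of $M_b$, hence of $L_\beta(\R^+) \subseteq M_b$, and it lies strictly between $\kappa$ and $\beta$, so $\kappa^+$ exists in $L_\beta(\R^+)$.

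Next I would rule out $\beta \ge \alpha$ by a cardinality argument. If $\beta \ge \alpha$ then $\lr = L_\alpha(\R^+)$ is an initial segment of $L_\beta(\R^+) = \text{wfc}(M_b) \subseteq M_b$, so, as $\kappa^{+M_b} < \alpha$ is a cardinal of $M_b$, it is a cardinal of $\lr$ above $\kappa$; but $\theta = \kappa^{+\lr}$ by Lemma~\ref{basic facts about L(R)}, so $\kappa^{+M_b} \ge \theta$, contrary to assumption. (This replaces the appeal to Lemma~\ref{lem: illfoundedness in n=1 case} used to exclude $\beta > \alpha$ in the case $n = 1$.) Hence $\beta < \alpha$. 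Now Lemma~\ref{lem:higher ville}, applied to $M_b$ with $k = n$, would yield $\text{wfc}(M_b) = L_\beta(\R^+) \models \Sigma_n\text{-}\kp$; combined with the fact that $\kappa^+$ exists in $L_\beta(\R^+)$ and with $\beta < \alpha$, this contradicts the minimality of $\alpha$ from Lemma~\ref{basic facts about L(R)}, and the claim follows. So the remaining task is to verify the hypotheses of Lemma~\ref{lem:higher ville} for $M_b$.

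That $M_b$ is a model of $V = L(\R^+)$, that $\text{wfp}(M_b)$ is transitive, and that $M_b$ is $\omega$-wellfounded are all immediate from the direct-limit construction (each $L_{\alpha_s}(\R^+)$ is wellfounded, and the maps fix $\omega$ and $\R^+$ pointwise). When $n \ge 2$ one must additionally exhibit an element of $S^{M_b}_{n-2}$ in the illfounded part of $M_b$ and show $\text{wfc}(M_b) \prec_{\Sigma_{n-1}} M_b$. For this, note that each $\pi_{s\infty} \colon L_{\alpha_s}(\R^+) \to M_b$ is $r\Sigma_n$-elementary, being a direct limit of $r\Sigma_n$-elementary maps, and hence $\Sigma_n$-elementary by Lemma~\ref{better translatibility} — applicable because $\rho_{n-1}^{L_{\alpha_s}(\R^+)} = \alpha_s$ by clause~\ref{T: projectum} and $\theta_s$ is the largest cardinal of $L_{\alpha_s}(\R^+)$. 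Therefore $\pi_{s\infty}$ carries $S^{L_{\alpha_s}(\R^+)}_{n-1}$, which is club in $\alpha_s$ by the remark following Definition~\ref{definition of T}, into $S^{M_b}_{n-1}$. Since $\bigcup_{s \in b} \ran(\pi_{s\infty}) = M_b$, the set $S^{M_b}_{n-1} \subseteq S^{M_b}_{n-2}$ is cofinal in $\OR^{M_b}$; as $\beta$ is not itself an ordinal of the illfounded structure $M_b$, this already provides an element of $S^{M_b}_{n-2}$ beyond $\beta$, and it then remains to show that $S^{M_b}_{n-1}$ is in fact cofinal in $\beta$, which gives $\text{wfc}(M_b) = L_\beta(\R^+) \prec_{\Sigma_{n-1}} M_b$.

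The main obstacle will be exactly this last point: that $S^{M_b}_{n-1}$ is cofinal in $\beta$, and not merely in $\OR^{M_b}$. This is the feature that clause~\ref{T: projectum} of the definition of $T$ is designed to make available, but to cash it in one must check that for cofinally many $s \in b$ the ``wellfounded cut height'' $\sup\{\xi < \alpha_s \mid \pi_{s\infty}(\xi) < \beta\}$ of $\ran(\pi_{s\infty})$ is a limit point of the club $S^{L_{\alpha_s}(\R^+)}_{n-1}$, so that $\pi_{s\infty}$ pushes cofinally much of that club below $\beta$; the $\omega$-cofinality of the branch $b$, hence of $\beta$, is what makes this possible. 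For $n = 1$ the obstacle disappears, since $\prec_{\Sigma_0}$ is automatic, which is why the analogous claim in the proof of Theorem~\ref{thm: branch in the n=1 case} is essentially immediate from the classical Truncation Lemma and the minimality of $\alpha$.
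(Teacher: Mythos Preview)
Your outline is correct up to the point you yourself flag as the obstacle, but the suggested way past that obstacle does not work, and the paper's proof shows why. Two ingredients you do not use are essential: the $(n{+}1)$-pruning (so $b$ is the leftmost branch of the pruned tree $T'$, not of $T$) and, in one case, the leftmost property of $b$ itself.

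The pruning is what gives the theory comparison $\tho_{r\Sigma_n}^{N_\alpha}(\kappa^{+M_b}) \subsetneq \tho_{r\Sigma_n}^{M_b}(\kappa^{+M_b})$: any $\leq^*$-disagreement between $N_\alpha$ and some $N_{\alpha_s}$ would have caused $s$ to be removed. From this one defines $\beta' = \sup\{\lv_\varphi^{M_b}(\vec{x}) : \varphi(\vec{x}) \in \tho_{r\Sigma_n}^{N_\alpha}(\kappa^{+M_b})\}$ and shows $\beta' \le \beta$. Only when $\beta' = \beta$ does one obtain that $\beta$ is a limit of $S^{M_b}_{n-1}$, so that $\text{wfc}(M_b) \prec_{\Sigma_{n-1}} M_b$ and Lemma~\ref{lem:higher ville} applies as you intend. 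Your heuristic about $\omega$-cofinality of $b$ forcing the wellfounded-cut preimages to be limit points of $S^{L_{\alpha_s}(\R^+)}_{n-1}$ has no mechanism behind it; nothing in the tree definition arranges this.

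When $\beta' < \beta$, the hypothesis of Lemma~\ref{lem:higher ville} may simply fail, and the paper takes a completely different route. One finds a node $t \in T'$ and an $r\Sigma_n$ fact $\varphi(\vec{x}) \in \tho_t(\alpha_t) \setminus \tho_t(\alpha)$ whose level marks, uniformly in every $T'$-extension of $t$, the exact cut between the part of the theory agreeing with $N_\alpha$ and the excess coming from $M_b$. The existence of such a $t$ is established by an infinite-descent argument that uses precisely that $b$ is \emph{leftmost} in $T'$: otherwise one builds a branch strictly to the left of $b$. With $t$ in hand, two applications of $\Sigma_n$-Collection in $N_\alpha$ let one compute $\tho_{r\Sigma_n}^{N_\alpha}(\theta)$ over a proper initial segment $N_\xi$, contradicting $\rho_n^{N_\alpha} = \theta$. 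None of this is accessible from your setup, so the proposal as written has a genuine gap in the case $\beta' < \beta$.
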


\prf
    By construction, we have $\kappa^{+M_b} \leq \theta$. Let us suppose, for the sake of contradiction, that $\kappa^{+M_b} < \theta$.

\begin{subclaim}
    $\tho_{r\Sigma_n}^{N_\alpha}(\kappa^{+M_b}) \subsetneq  \tho_{r\Sigma_n}^{M_b}(\kappa^{+M_b})$.
\end{subclaim}
\prf
    Since $M_b$ is illfounded, there is an $r\Sigma_n$ formula $\varphi$ and $\vec{y} \in \fin{\kappa^{+M_b}}$ such that $\varphi(\vec{y}) \in \tho_{r\Sigma_n}^{M_b}(\kappa^{+M_b})$, but $\varphi(\vec{y}) \notin \tho_{r\Sigma_n}^{N_\alpha}(\kappa^{+M_b})$. So, $\tho_{r\Sigma_n}^{N_\alpha}(\kappa^{+M_b}) \neq  \tho_{r\Sigma_n}^{M_b}(\kappa^{+M_b})$.

    Suppose for the sake of contradiction that $\tho_{r\Sigma_n}^{N_\alpha}(\kappa^{+M_b}) \not \subseteq  \tho_{r\Sigma_n}^{M_b}(\kappa^{+M_b})$, i.e.~there is an $r\Sigma_n$ formula $\psi$ and $\vec{x} \in \fin{\kappa^{+M_b}}$ such that $\psi(\vec{x}) \in \tho_{r\Sigma_n}^{N_\alpha}(\kappa^{+M_b})$, but $\psi(\vec{x}) \notin \tho_{r\Sigma_n}^{M_b}(\kappa^{+M_b})$.
    Note that $N_\alpha \models \psi(\vec{x}) < \varphi(\vec{y})$, since $\varphi(\vec{y})$ does not hold in $N_\alpha$. However, $\varphi(\vec{y}) \leq_s^* \psi(\vec{x})$ for some $s \in b$. This is a contradiction, since $s$ must have been removed during the $(n+1)$-pruning of $T$!
\eprf

    Note that not only 
    \begin{equation} \label{agreement}
        \tho_{r\Sigma_n}^{N_\alpha}(\kappa^{+M_b}) \subsetneq  \tho_{r\Sigma_n}^{M_b}(\kappa^{+M_b}),
    \end{equation}
    but that $\tho_{r\Sigma_n}^{N_\alpha}(\kappa^{+M_b})$ is a $\leq^*$-initial segment of $\tho_{r\Sigma_n}^{M_b}(\kappa^{+M_b})$.\footnote{Note that since $M_b$ might be illfounded it could be that this not literally true, since $\tho_{r\Sigma_n}^{N_\alpha}(\kappa^{+M_b})$ might not be an element of $M_b$. In this case, we mean that $\tho_{r\Sigma_n}^{N_\alpha}(\kappa^{+M_b})$ is a cut of $\tho_{r\Sigma_n}^{M_b}(\kappa^{+M_b})$.}
    Let $\beta^\prime := \sup \{ \lv_\varphi^{M_b} (\vec{x}) \mid \varphi(\vec{x}) \in \tho_{r\Sigma_n}^{N_\alpha}(\kappa^{+M_b}) \}$.
    \begin{subclaim}
        $\beta^\prime \leq \beta$. 
    \end{subclaim} 
    \prf
        Let $\varphi(\vec{x}) \in \tho_{r\Sigma_n}^{N_\alpha}(\kappa^{+M_b})$. Let $\gamma = \lv_\varphi^{M_b} (\vec{x})$, so $\tho_{r\Sigma_{n-1}}^{M_b} (\gamma)$ witnesses $\varphi(\vec{x} )$. We may assume that $\gamma \geq \kappa^{+M_b}$. Since $\gamma = \lv_\varphi^{M_b} (\vec{x})$, there is $f \in r\Sigma_n^{M_b} (\{\vec{x}\})$ such that $f \colon \kappa^{+M_b} \to \gamma$ is surjective. There are $r\Sigma_n$ formulas $\psi_1$ and $\psi_2$ such that for $\eta, \zeta < \kappa^{+M_b}$,
        \[
            f(\eta) < f(\zeta) \iff M_b \models \psi_1(\eta,\zeta,\vec{x}),
        \]
        and
        \[
            f(\eta) \geq f(\zeta) \iff M_b \models \psi_2(\eta,\zeta,\vec{x}).
        \]
        Moreover, for all $\eta, \zeta < \kappa^{+M_b}$, either $\psi_1(\eta,\zeta,\vec{x}) \in \tho_{r\Sigma_n}^{M_b}(\kappa^{+M_b})$ or $\psi_2(\eta,\zeta,\vec{x}) \in \tho_{r\Sigma_n}^{M_b}(\kappa^{+M_b})$.
        Let $f^\prime \colon \kappa^{+\lx} \to \gamma^\prime$ be the function given by the evaluation of the defining formula of $f$ with parameters $\vec{x}$ in $N_\alpha$. Then for all $\eta, \zeta < \kappa^{+M_b}$ $ f^\prime(\eta) < f^\prime(\zeta) \iff N_\alpha \models \psi_1(\eta,\zeta,\vec{x})$ and $f^\prime(\eta) \geq f^\prime(\zeta) \iff N_\alpha \models \psi_2(\eta,\zeta,\vec{x})$, and either $\psi_1(\eta,\zeta,\vec{x}) \in \tho_{r\Sigma_n}^{N_\alpha}(\kappa^{+M_b})$ or $\psi_2(\eta,\zeta,\vec{x}) \in \tho_{r\Sigma_n}^{N_\alpha}(\kappa^{+M_b})$. But then by (\ref{agreement}), the theories must agree on these statements, so that we have an order-preserving embedding from $\gamma$ into $\gamma^\prime$, so $\gamma$ is wellfounded. This means that $\gamma < \beta$.
    \eprf

    \begin{case} $\beta = \beta^\prime$.
        Similarly as in the previous paragraph we might associate with every $\gamma < \beta^\prime$ some $\gamma^\prime$. Let $\bar{\beta}$ be the supremum of the $\gamma^\prime$'s for $\gamma < \beta^\prime$.
        Note that $\text{wfc} (M_b) \prec_{\Sigma_{n-1}} M_b$, since $\beta^\prime$ is a limit of elements of $S^{M_b}_{n-1}$ and $\bar{\beta}$ is a limit of elements in $S^{N_\alpha}_{n-1}$. However, it easily follows from \ref{T: projectum} of Definition \ref{definition of T}, that $S^{M_b}_{n-1} \cap (\OR^{M_b} \setminus \beta) \neq \emptyset$. Thus, by Lemma \ref{lem:higher ville}, $N_\beta$ is $\Sigma_n$-admissible, which contradicts the minimality of $\alpha$!
    \end{case}
    \begin{case}
    $\beta^\prime < \beta$.
    For a node $s \in T^\prime$ and $\gamma \in (\theta_s + 1, \alpha_s]$, we let $\tho_{s} (\gamma) := \tho_{r\Sigma_n}^{N_\gamma}(\theta_s )$. We also set $\tho_{s} (\alpha) := \tho_{r\Sigma_n}^{N_\alpha}(\theta_s)$.

    \begin{subclaim}
    There is $t \in T^\prime$ and an $r\Sigma_n$ formula $\varphi$ and $\vec{x} \in \fin{\theta_t}$ such that
    \begin{enumerate}
        \item there is $\gamma_t < \alpha_t$ such that $\tho_t (\gamma_t) = \tho_t (\alpha)$,
        \item $\varphi(\vec{x}) \in \tho_t (\alpha_t) \setminus \tho_{t} (\alpha)$ and $\lv_\varphi^{N_{\alpha_t}} (\vec{x}) = \gamma_t$, and
        \item \label{subclaim: 3}for all $t^\prime \geq_{T^\prime} t$, $\varphi(\vec{x}) \in \tho_{t^\prime}(\alpha_{t^\prime}) \setminus \tho_{t^\prime} (\alpha)$ and $\lv_\varphi^{N_{\alpha_{t^\prime}}} (\vec{x})= \pi_{t,t^\prime}(\gamma_t)$ and for all $r\Sigma_n$ formulas $\psi$ and $\vec{y} \in \fin{\theta_{t^\prime}}$ such that $\psi(\vec{y}) <^*_{t^\prime} \varphi(\vec{x})$, $\psi(\vec{y}) \in \tho_{t^\prime} (\alpha)$.
    \end{enumerate}
\end{subclaim}
\prf
    Let $s \in b$ be such that $\beta^\prime \in \ran(\pi_{s,b})$ and let $\bar{\beta}^\prime \in L_{\alpha_s}(\R^+)$ be such that $\pi_{s,b} (\bar{\beta}^\prime) = \beta^\prime$. Note that if we set $\gamma_s = \bar{\beta}^\prime$, then $\tho_s (\gamma_s) = \tho_s (\alpha)$ by the $r\Sigma_n$-elementarity of $\pi_{s,b}$. 

    We claim that there is an $r\Sigma_n$ formula $\varphi$ and $\vec{x} \in \fin{\theta_s}$ such that $\varphi(\vec{x}) \in \tho_s (\alpha_s) \setminus \tho_{s} (\alpha)$ and $\lv_\varphi^{N_{\alpha_s}} (\vec{x}) = \gamma_s$. Note that $L_{\beta^\prime} (\R^+)$ is not $\Sigma_n$-admissible, since $\kappa^{+M_b} < \theta$. But then also $L_{\gamma_s} (\R^+)$ is not $\Sigma_n$-admissible and thus there is an $r\Sigma_n$ formula $\psi$, $\delta \leq \kappa^{+L_{\gamma_s}(\R^+)}$, and $\vec{y} \in \fin{\kappa^{+L_{\gamma_s}(\R^+)}}$ such that
    \[
        L_{\gamma_s}(\R^+) \models \forall \alpha < \delta \exists z \psi(z,\alpha,\vec{y}),
    \]
    but there is no bound for this in $L_{\gamma_s}(\R^+)$. We may assume without loss of generality that $\delta = \kappa^{+L_{\gamma_s}(\R^+)}$. Now note that the statement $\varphi(\vec{y},\dot{\kappa^+})$ which says that there is some $\gamma$ such that for all $\alpha < \dot{\kappa^+}$ there is some subtheory $z$ of $\tho_{r\Sigma_{n-1}}(\gamma)$ which witnesses that $\psi(z,\alpha,\vec{y})$, is an $r\Sigma_n$-fact of $\leq^*$-rank $\gamma_s$ in $L_{\alpha_s}(\R^+)$. Moreover, $\varphi(\vec{y},\dot{\kappa^+})$ cannot be in $\tho_s (\alpha)$ by the definition of $\beta^\prime$.

    Set $s_0 := s$. If for all extensions $s^\prime$ of $s_0$ in $T^\prime$, \ref{subclaim: 3} holds we are done, so suppose that there is some $s_1 \geq_{T^\prime} s_0$ such that \ref{subclaim: 3} fails, i.e.~there is some $\gamma_{s_1} < \pi_{s_0,s_1} (\gamma_s)$ such that for some $r\Sigma_n$ formula $\varphi_1$ and $\vec{x}_1 \in \fin{\kappa^{+L_{\alpha_{s_1}}(\R^+)}}$ such that  $\varphi_1 (\vec{x}_1) \in \tho_{s_1} (\gamma_{s_1}) \setminus \tho_{s_1} (\alpha)$, $\lv^{N_{\alpha_{s_1}}}_{\varphi_1}(\vec{x}_1)= \gamma_{s_1}$. Let $s_1$ be the lexicographical least such node and let $\gamma_{s_1}$ be the least failure at $s_1$. If \ref{subclaim: 3}  holds of $s_1$ we are done, otherwise we let $s_2$ be the least node witnessing the contrary and let $\gamma_{s_2} < \pi_{s_1,s_2} (\gamma_{s_1})$ be the least ordinal witnessing the failure of \ref{subclaim: 3} at $s_1$. If \ref{subclaim: 3} holds at $s_2$ we are done, otherwise we continue as before. 

    Suppose for the sake of contradiction that this continues infinitely so that $\langle (s_k,\gamma_{s_k}) \mid k < \omega  \rangle$ is defined. Let $c := (s \restriction (\lh(s) - 1))^{\frown}  \langle (\alpha_{s_k},\gamma_{s_k}) \mid k < \omega \rangle$ and note that by construction $c$ is a branch through $T^\prime$. However, since $\gamma_s < \beta_s$, $c$ is left of $b$, a contradiction!
    \end{case}
    This finishes the proof of the subclaim.
\eprf

Let $t \in T^\prime$ be as in the subclaim. For $s \in (T^\prime)_t$, let $\gamma_s := \pi_{t,s} (\gamma_t)$. Note that for every $s \in (T^\prime)_t$, $\tho_s (\gamma_s) = \tho_s (\alpha)$, as otherwise there is a disagreement about the ordering $\leq^*$ between $N_\alpha$ and $\tho_t$.

By $\Sigma_n$-Collection, there is some $\xi_s < \alpha$ such that $\xi_s \in S^{N_\alpha}_{n-1}$ and $N_{\xi_s} \models \tho_s (\gamma_s)$. But this means that for every node $s \in (T)_t$, it is pruned at some stage $\xi < \alpha$ during the $(n+1)$-pruning process, or there is some $\xi < \alpha$ such that $\xi \in S^{N_\alpha}_{n-1}$ and $N_\xi \models \tho_s (\gamma_s)$. Since this is a disjunction of two $r\Sigma_n$-statements, it follows that once again by $\Sigma_n$-Collection, there is a uniform such $\xi$. This means that we can uniformly compute $\tho_{r\Sigma_n}^{N_\alpha} (\gamma_s)$ for $s \in (T_{\xi})_t$ over $N_\xi$. Note that we cannot compute $T^\prime_t$ over $N_\xi$ as there might be nodes $s \in (T_{\xi})_t$ that get pruned after stage $\xi$ in the $(n+1)$-pruning process. However, for such $s$ still $N_\xi \models \tho_s (\gamma_s)$.
However, in the successor step of the $(n+1)$-pruning process we assured that for every $\zeta < \theta$, there is an extension $s$ of $t$ in $T^\prime$ such that $\zeta < \theta_s$, so that $\sup \{ \gamma_s \mid s \in (T^\prime)_t \} = \theta$. But this means that $\tho_{r\Sigma_n}^{N_\alpha}(\theta)$ can be computed over $N_\xi$, a contradiction!
\eprf

As before, we have
\[
    \tho_{r\Sigma_n}^{N_\alpha}(\theta) \subsetneq  \tho_{r\Sigma_n}^{M_b}(\theta).
\]

But then, since $N_\alpha = \Hull{N_\alpha}{n} (\theta)$, it follows that there is a $\Sigma_{n-1}$-elementary embedding
\[
    \pi \colon N_\alpha \to M_b,
\]
that preserves $r\Sigma_n$ statements upwards. Then, since all proper initial segments of $N_\alpha$ are $\Sigma_n$-definable from parameters below $\theta$ and $\pi \restriction \theta+1 = \id$, we have that $\pi$ is the inclusion map. This means that $\text{wfc}(M_b) \supseteq L_{\alpha} (\R^+)$. From the minimality of $\alpha$ it then follows that $\text{wfc}(M_b) = L_{\alpha} (\R^+)$.

Regarding the definability of $\{ s \}$ from the parameter $\theta$ for $s \in b$, note that we may define $\{ s \}$ as follows: Note that $T$ is $\Sigma_1$-definable from the parameter $\theta$. Given $s^\prime \in T$, we have $s^\prime = s$ if and only if for all $t \in T$ such that $\lh (t) = \lh (s) = m$ and $t <_{\text{lex}} s$, there exists $\gamma$ such that $t \notin T_\gamma$, where $T_\gamma$ is a tree in the $(n+1)$-pruning process of $T$ and, for all $\gamma < \alpha$, $s \in T_\gamma$, i.e.~the node $s$ does not get removed during the $(n+1)$-pruning process.
\eprf

\setcounter{claimcounter}{0}
\setcounter{subclaimcounter}{0}

\begin{dfn} \label{dfn of branch}
    Let $\langle (\alpha_k, \beta_k) \rangle_{k < \omega}$ be left-most branch of the tree $T^\prime$ as in the proof of Theorem \ref{thm: branch theorem}. Let $\vec{p} := \langle \alpha_k \mid k < \omega \rangle$.
\end{dfn}

\section{The direct limit systems} \label{direct limit systems}

In this section, we will define a direct limit system $\F$ of iterates of $\mad$. We will also refer to this system as the \textit{external system}. We will then define over $\lx$ in a $\Sigma_1$-fashion a direct limit system $\tilde{\D}$, which we will refer to as the \textit{internal covering system}. The point is that $\lx$ can approximate $\F$ with $\tilde{\D}$, since by what we will show, $\F$ is in a certain sense dense in $\tilde{\D}$. It will follow that the direct limits derived from these systems agree.

In Section \ref{M infinitys version} and Section \ref{hod}, we will need a relativization of the definitions and results of this section to the context of $\lxg$. We will leave the straightforward adaption of the systems and the involved definitions to the context of $\lxg$ as an exercise for the reader.

Let us begin by introducing the relevant iterability notions.

\subsection{The relevant iterability notions} \label{section: relevant premice}

For a passive premouse $\M$ which models $\tho$ and an $n$-maximal iteration tree $\T$ on $\M$ of limit length, we define the structure $\Q(\T)$ as in Definition 2.4 of \cite{Steel_Woodin_2016}. Since we are working in a $1$-small context, this simply means that $\Q(\T) = L_\gamma(\M(\T))$ for some $\gamma < \OR$ such that there is $m < \omega$ such that $L_\gamma(\M(\T))$ is $m$-sound and $\rho_{m+1}^{L_\gamma (\M(\T))} < \delta(\T)$ or there is $A \in \Sigma_\omega^{L_\gamma (\M(\T))}(L_\gamma (\M(\T)))$ which witnesses a failure of $\delta(\T)$ being Woodin with respect to $\E^{\M(\T)}$.

\begin{dfn}
    Let $N$ be a passive premouse that models $\tho$. Let $\delta^N$ and $\mu^N$ be the unique Woodin cardinal, respectively, inaccessible greater than $\delta^N$ of $N$ and let $\theta^N = (\mu^N)^{+N}$. We set $N^- = N \vert (\delta^N)^{+N}$ and call $N^-$ the \textit{suitable part} of $N$.

    We write $\mathbb{B}^N$ for the $\omega$-generator extender algebra of $N$ at $\delta^N$, constructed using extenders $E \in \mathbb{E}^N$ such that $\nu_E$ is an $N$-cardinal.
    
    If $\mu^N = \kappa$, $\theta^N = \kappa^{+\lx}$ and $\OR^N = \alpha$, we say that $N$ is a \textit{pre-$\mad$-like $x$-weasel}. Moreover, if, moreover, $x$ is $(N,\mathbb{B}^N)$-generic, we say that $N$ is a \textit{good pre-$\mad$-like $x$-weasel}.
\end{dfn}

For a passive premouse $N$, that models $\tho$, by Lemma \ref{lem: n-1 projectum}, $\rho^N_{n-1} = \OR^N$, and by Lemma \ref{lem: largest cardinal}, $\rho^N_{n} = \theta^N$.

\begin{rem}
If $N$ and $P$ are good pre-$\mad$-like $x$-weasels such that $N^-, P^- \in \lx$, then, since the extender algebra may be absorbed into $\col(\omega, {<\kappa})$ and the definition of $\lr$ is homogeneous, there is $g^\prime$ which is $(N,\col(\omega,{<\kappa}))$-generic and $ g^{\prime \prime}$ which is $(P,\col(\omega,{<\kappa}))$-generic such that we have $L(\R^{N[g^\prime]}) = L(\R^{P[g^{\prime \prime}]})= \lr$.
\end{rem}

\begin{dfn}
    Let $N$ be a premouse that models $\tho$ and $\beta \geq \omega$. We say that $\T$ is a \textit{$n$-maximal $\beta$-wellfounded iteration tree on $N$} if $\T$ is defined as a $n$-maximal iteration tree with the exception that for $\alpha < \lh(\T)$, if $[0,\alpha]^\T \cap \D^\T = \emptyset$ and $\OR^{\M_\alpha^\T} \not \leq \beta$, then we only require that instead of full wellfoundedness $\beta \subseteq \wfc(\M_\alpha^\T)$.
\end{dfn}

\begin{dfn}
    Let $N$ be a passive premouse that models $\tho$, $\beta,\eta \geq \omega$, and $\T$ an $n$-maximal $\beta$-wellfounded iteration tree on $N$ of limit length less than $\eta$. Then $\T$ is \textit{$(\eta,\beta)$-short} if $\Q(\T)$ exists and $\Q(\T)\not \models \tho$; otherwise we say that $\T$ is \textit{$(\eta,\beta)$-maximal}.
    If $M = \M_\alpha^\T$ for some $\alpha < \lh(\T)$, we say that $M$ is a \textit{$\beta$-wellfounded $n$-maximal iterate of $N$}.
\end{dfn}

\begin{rem} \label{rem: short trees}
    Note that if $\Q(\T) \not \models \tho$, then clearly, for no $N \unlhd \Q(\T)$ such that $\M(\T) \unlhd N$, $N \models \tho$. By condensation, it follows that for all $N$ such that $N \unlhd \Q(\T)$, $N$ is not a model of $\tho$.

    Let $N$ be a pre-$\mad$-like $x$-weasel such that $N^- \in \lx \vert \kappa$ and suppose that $N$ is $(n,\omega_1,\omega_1 +1)^*$-iterable. Let $\T \in \lx$ be a $n$-normal iteration tree on $N$ of limit length less than $\kappa$. Then $\T$ is $(\kappa,\beta)$-short for all $\beta$ iff $\Q(\T) = J_\gamma (\M(\T))$ for some $\gamma < \kappa$.
\end{rem}

\begin{dfn}
    Let $N$ be a $(n-1)$-sound premouse, $\beta \geq \omega$, and $\T$ a $n$-maximal $\beta$-wellfounded iteration tree on $N$. Let $\beta < \OR$ and let $b$ be a cofinal non-dropping branch through $\T$. We say that $b$ is \textit{$\beta$-wellfounded} if $\Q(\T) = \Q(b,\T)$ and if $\OR^{\M_b^\T} \leq \beta$, then $\M_b^\T$ is wellfounded, and else, $\beta \subseteq \wfc(\M_b^\T)$. 
    
    If $b$ is a $\beta$-wellfounded cofinal branch, we say that $\M_b^\T$ is a \textit{$\beta$-wellfounded $k$-maximal iterate of $N$ (via the tree $\T^{\frown}b$)}.
\end{dfn}

\begin{dfn}
    Let $N$ be a passive premouse that models $\tho$ and $\eta, \beta \geq \omega$. We say that $N$ is \textit{$(\eta,\beta)$-normally-short-tree-iterable} if there is a function $f$ whose domain includes all $(\eta,\beta)$-short $n$-maximal $\beta$-wellfounded trees $\T$ on $N$, and for an $(\eta,\beta)$-short $n$-maximal $\beta$-wellfounded tree $\T$ on $N$, $f(\T) = b$, where $b$ is a non-dropping cofinal branch through $\T$ that is $\beta$-wellfounded.
\end{dfn}

\begin{dfn}
    Let $N$ be a passive premouse that models $\tho$ and $\eta, \beta \geq \omega$. Then a premouse $P$ is a \textit{$(\eta,\beta)$-pseudo-normal iterate of $N$} if $P \models \tho$, and there is a $n$-maximal $\beta$-wellfounded tree $\T$ on $N$ of length less than $\eta$ such that either $P$ is a model in $\T$, or there is a $\beta$-wellfounded cofinal branch $b$ of $\T$ such that $P = \wfc(\M^\T_{b})$, or $\T$ is $(\eta,\beta)$-maximal and $P = L_\gamma (\M(\T))$, where $\gamma < \OR$ is such that $L_\gamma (\M(\T)) \models \tho$.
\end{dfn}


\begin{rem}
    Let $N$ be a pre-$\mad$-like $x$-weasel such that $N^- \in \lx \vert \kappa$ and suppose that $N$ is $(n,\omega_1,\omega_1 +1)^*$-iterable. Let $\T \in \lx$ be a $(\kappa,\kappa)$-maximal tree on $N$. Let $P$ be the $(\kappa,\kappa)$-pseudo-normal iterate of $N$ given by $\T$. Then $P = L_\alpha (\M(\T))$ and $P^- \in \lx \vert \kappa$.
\end{rem}


\begin{dfn}
    Let $N$ be a passive premouse that models $\tho$ and $\eta, \beta \geq \omega$. Then an \textit{$(\eta,\beta)$-relevant finite pseudo-stack} on $N$ is a sequence $\langle \T_j \rangle_{j < k}$ for some $k < \omega$ such that there is a sequence $\langle N_j \rangle_{j < k}$ where $N_0 = N$, and for $j < k$, $N_j$ is a passive premouse which models $\tho$ and $\T_j$ is an $n$-maximal $\beta$-wellfounded iteration tree on $N_\beta$ of length less than $\eta$, and if $j + 1 < k$, then either $\T_j$ has successor length and is terminally non-dropping, i.e.~there is no drop in model on its main branch, and $N_{j + 1} = \M_\infty^{\T_j}$ or $N_{j+1} = \wfc(\M^{\T_j}_\infty)$, where $\M^{\T_j}_\infty$ is a $\beta$-wellfounded $n$-maximal iterate of $N_j$, or $\T_j$ is $(\eta,\beta)$-maximal and $N_{j+1} = L_\gamma (\M(\T_j))$ for some $\gamma < \OR$.
\end{dfn}

\begin{dfn}
    Let $N$ be a passive premouse that models $\tho$ and $\eta, \beta \geq \omega$. $P$ is a \textit{non-dropping $(\eta,\beta)$-pseudo-iterate} of $N$ if there is a $(\eta,\beta)$-relevant finite pseudo-stack $\langle \T_j \rangle_{j \leq k +1}$ on $N$ such that $\T_{k+1}$ has successor length and $b^{\T_{k+1}}$ does not drop in model or degree and $P = \M_\infty^{\T_{k+1}}$ or $P = \wfc(\M_\infty^{\T_{k+1}})$ and $\M_\infty^{\T_{k+1}}$ is $\beta$-wellfounded, or $\T_{k+1}$ is $(\eta,\beta)$-maximal and $P = L_\gamma (\M(\T_{k+1}))$, where $\gamma$ is such that $L_\gamma (\M(\T_{k+1})) \models \tho$.
\end{dfn}

\begin{dfn}
    Let $N$ be a passive premouse that models $\tho$ and $\eta,\beta \geq \omega$. Then $N$ is \textit{$(\eta,\beta)$-short-tree-iterable} if there is a function $f$ whose domain includes all sequences $\vec{\T} = \langle \T_\beta \rangle_{\beta \leq k+1}$ such that
    \begin{enumerate}
        \item $\langle \T_\beta \rangle_{\beta \leq k}$ is an $(\eta,\beta)$-relevant finite pseudo-stack that gives rise to a non-dropping $(\eta,\beta)$-pseudo iterate $P$, and
        \item $\T_{k+1}$ is an $(\eta,\beta)$-short $n$-maximal $\beta$-wellfounded iteration tree on $P$,
    \end{enumerate}
    and for such $\vec{\T}$, $f(\vec{\T}) = b$, where $b$ is a non-dropping $\beta$-wellfounded cofinal branch through $\T_{k+1}$.
\end{dfn}

\begin{dfn} \label{def: m-like}
    Let $N$ be a passive premouse that models $\tho$. Then $N$ is $\mad$\textit{-like} if
    \begin{enumerate}
        \item $N$ is $(\kappa,\kappa)$-short-tree-iterable, and
        \item \label{def: m-like two} every non-dropping $(\kappa,\kappa)$-pseudo-iterate of $N$ is in fact a non-dropping $(\kappa,\alpha)$-pseudo-normal iterate of $N$.
    \end{enumerate}
\end{dfn}

\begin{rem}
    We will use the notion of $\mad$-like in $\lx$ and related models so that $\kappa$ is a fixed parameter and therefore does not appear in the terminology. 
    Note that by \cite{schlutzenberg2024normalizationtransfinitestacks} every non-dropping iterate of $\mad$ is given by a tree of length less than $\kappa$ is $\mad$-like. Moreover, since $\mad$ is $(n, \omega_1, \omega_1+1)^*$-iterable, it is also $(\omega_1,\gamma)$-short-tree-iterable for all $\gamma < \OR$.

\end{rem}

\begin{dfn}
    If $N$ is a pre-$\mad$-like $x$-weasel that is $\mad$-like, we say that $N$ is a \textit{$\mad$-like $x$-weasel}. Moreover, if $x$ is $(N,\mathbb{B}^N)$-generic, then we say that $N$ is a \textit{good $\mad$-like $x$-weasel}.
\end{dfn}

\begin{rem}
    Let $N$ be a $\mad$-like $x$-weasel such that $N^- \in \lxg$ and let $\T$ be a $(\kappa,
    \kappa)$-short tree on $N$. Then there is a unique $\kappa$-wellfounded cofinal branch $b$ through $\T$.
\end{rem}

\begin{dfn} \label{dfn: tree leading to P}
    Let $N$ and $P$ be $\mad$-like. We write $N \dashrightarrow P$ if $P$ is a non-dropping $(\kappa,\kappa)$-pseudo-normal iterate of $N$ and denote the $n$-maximal $\kappa$-wellfounded tree leading from $N$ to $P$ by $\T_{NP}$. In the case where $\T_{NP}$ is $(\kappa,\kappa)$-maximal and there is a cofinal $\kappa$-wellfounded branch $b$ through $\T_{NP}$, we let $\T_{NP}$ include this branch.
\end{dfn}

It is easy to see that $\T_{NP}$ is unique, so this is well-defined.

\begin{lem}
    $\dashrightarrow$ is a partial order on the set of $\mad$-like premice.
\end{lem}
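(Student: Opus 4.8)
The plan is to verify the three defining properties of a partial order — reflexivity, antisymmetry, and transitivity — for the relation $N \dashrightarrow P$, which by Definition \ref{dfn: tree leading to P} holds precisely when $P$ is a non-dropping $(\kappa,\kappa)$-pseudo-normal iterate of $N$. Reflexivity is immediate: the trivial tree of length $1$ on $N$ witnesses $N \dashrightarrow N$, and $N$ is $\mad$-like by hypothesis. For antisymmetry, suppose $N \dashrightarrow P$ and $P \dashrightarrow N$ with associated trees $\T_{NP}$ and $\T_{PN}$. Composing these gives a non-dropping pseudo-stack from $N$ back to $N$; the composite main branch induces an elementary (in fact, sufficiently elementary for the relevant degree) embedding $j\colon N \to N$ which fixes the ordinals up to and including the common critical parameters, and a standard argument shows $j$ must be the identity — for instance, $j$ is continuous at $\delta^N$ and fixes a cofinal set of generators, and any nontrivial such composition would strictly increase some ordinal below $\OR^N = \alpha$, contradicting $j \colon N \to N$. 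Hence both trees are trivial and $N = P$. The key input here is the uniqueness of $\T_{NP}$ (noted just before the lemma) together with the fact that iteration maps arising from non-dropping trees on $\mad$-like premice are order-preserving on the ordinals and are the identity only when the tree is trivial.

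For transitivity, suppose $N \dashrightarrow P$ and $P \dashrightarrow Q$, witnessed by the $n$-maximal $\kappa$-wellfounded trees $\T_{NP}$ and $\T_{PQ}$. Concatenating these yields a $(\kappa,\kappa)$-relevant finite pseudo-stack on $N$ of length two, exhibiting $Q$ as a non-dropping $(\kappa,\kappa)$-pseudo-iterate of $N$. By clause \ref{def: m-like two} of Definition \ref{def: m-like} (the definition of $\mad$-like applied to $N$), every non-dropping $(\kappa,\kappa)$-pseudo-iterate of $N$ is in fact a non-dropping $(\kappa,\alpha)$-pseudo-normal iterate of $N$; combined with the fact that $\kappa$-wellfoundedness and $\alpha$-wellfoundedness coincide in the relevant range once $Q \models \tho$ (so that $\OR^Q = \alpha$), this gives that $Q$ is a non-dropping $(\kappa,\kappa)$-pseudo-normal iterate of $N$, i.e.\ $N \dashrightarrow Q$. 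One should also check that $Q$ is itself $\mad$-like — this follows since, by the normalization results cited in the remark after Definition \ref{def: m-like} (\cite{schlutzenberg2024normalizationtransfinitestacks}), the single normal tree $\T_{NQ}$ witnessing $N \dashrightarrow Q$ has length less than $\kappa$, so its last model is $\mad$-like.

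The main obstacle I expect is transitivity, specifically the step of collapsing the two-step pseudo-stack $\T_{NP}{}^\frown\T_{PQ}$ to a single $n$-maximal $\kappa$-wellfounded tree on $N$ and checking that the result is pseudo-\emph{normal} — this is exactly where clause \ref{def: m-like two} of $\mad$-likeness is doing its work, and one must be careful that the normalization (full normalization of the stack into one $n$-maximal tree) stays $\kappa$-wellfounded throughout and does not drop on its main branch. Reflexivity and antisymmetry are comparatively routine, with antisymmetry relying on the rigidity of non-dropping self-iterations. I would present the argument as three short paragraphs, flagging that the normalization machinery for transfinite stacks from \cite{schlutzenberg2024normalizationtransfinitestacks} and the short-tree-iterability in Definition \ref{def: m-like} are the substantive ingredients, and otherwise appealing to the uniqueness statement preceding the lemma to keep the bookkeeping light.
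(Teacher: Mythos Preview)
Your approach to reflexivity and transitivity is correct and matches the paper's (one-line) proof: the paper simply says reflexivity and antisymmetry are clear, and that transitivity follows from clause~\ref{def: m-like two} of Definition~\ref{def: m-like}, which is exactly the mechanism you invoke. Two small over-elaborations: you do not need to check that $Q$ is $\mad$-like (it is given by hypothesis, since $\dashrightarrow$ is a relation on $\mad$-like premice), and the detour through ``$\OR^Q = \alpha$'' is unnecessary, since a $(\kappa,\alpha)$-pseudo-normal iterate is automatically a $(\kappa,\kappa)$-pseudo-normal iterate.

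Your antisymmetry argument, however, has genuine gaps. First, the composite embedding $j\colon N \to N$ need not exist: in the pseudo-normal setting $\T_{NP}$ may be $(\kappa,\kappa)$-maximal with $P = L_\gamma(\M(\T_{NP}))$ and no cofinal branch, so there is no iteration map $N \to P$ to compose. Second, the assertion ``$\OR^N = \alpha$'' is false for general $\mad$-like premice (it holds only for pre-$\mad$-like $x$-weasels), and ``fixes a cofinal set of generators'' tacitly appeals to $S_\infty$, which is only constructed later. Third, even granting $j\colon N \to N$, a nontrivial such $j$ can certainly move ordinals below $\OR^N$ without contradiction, so your stated contradiction does not fire.

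The argument the paper has in mind as ``clear'' is purely combinatorial, via coherence of normal trees. If $N \dashrightarrow P$ via a nontrivial $\T$, then by coherence every model $\M_\beta^\T$ with $\beta \geq 1$ is passive at $\lh(E_0^\T)$, and hence so is $P$ (whether it arises from a branch or as $L_\gamma(\M(\T))$); meanwhile $N$ is active there. Symmetrically, if $P \dashrightarrow N$ via a nontrivial $\U$, then $P$ is active and $N$ passive at $\lh(E_0^\U)$. But $N$ and $P$ agree strictly below $\min(\lh(E_0^\T),\lh(E_0^\U))$, and at that minimum one of them is simultaneously required to be active and passive --- a contradiction. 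No embeddings, heights, or generator sets are needed.
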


\prf
    Reflexivity and anti-symmetry are clear. Transitivity follows from \ref{def: m-like two}.~of Definition \ref{def: m-like}.
\eprf

\setcounter{claimcounter}{0}

\subsection{The external direct limit system}

\begin{dfn}
    Let $\F$ be the set of all iterates $N$ of $\mad$ via $\Sigma^{\mad}$ such that $N$ is a good pre-$\mad$-like $x$-weasel and $N^- \in \lx \vert \kappa$.
\end{dfn}

\begin{lem} \label{comparison in F}
    Let $N,P \in \F$. Then there is $Q \in \F$ such that $N \dashrightarrow Q$ and $P \dashrightarrow Q$.
\end{lem}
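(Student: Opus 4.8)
# Proof Proposal for Lemma \ref{comparison in F}

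The plan is to run a comparison/coiteration of $N$ and $P$, guided by the iteration strategy $\Sigma^{\mad}$, and to argue that the coiteration terminates at a common iterate $Q$ that again lies in $\F$. First I would recall that since $N, P \in \F$, both are iterates of $\mad$ via $\Sigma^{\mad}$, hence both are $\Sigma^{\mad}$-iterable (via the relevant tail strategies), and both are $\mad$-like $x$-weasels with $N^-, P^- \in \lx \vert \kappa$. In particular $\mathbb{B}^N$ and $\mathbb{B}^P$ have size less than $\kappa$ and $x$ is generic over each via the respective extender algebras. I would then set up the simultaneous $n$-maximal coiteration of $N$ and $P$, using $\Sigma^{\mad}$ on both sides (via the fact that iterates of iterates are again iterates, so the tail strategies cohere). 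The key structural point is that since we work in a $1$-small context below one Woodin, the comparison is of the ``short tree'' kind until one side reaches a maximal tree; the $\Q$-structures are internally computable (as in Remark \ref{rem: short trees}), so the trees stay inside $\lx \vert \kappa$ as long as they have length less than $\kappa$.

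Next I would argue termination. Because $N$ and $P$ each have a unique Woodin $\delta^N, \delta^P$ and the suitable parts $N^-, P^-$ are countable in $\lx$ (being elements of $\lx \vert \kappa$ with $\kappa$ inaccessible there), the coiteration has length less than $\kappa$: the usual argument that a coiteration of two premice of size $<\kappa$ terminating requires at most $\kappa$ steps applies, and here the relevant objects are genuinely small, so in fact the coiteration terminates at some stage $<\omega_1^{\lx} < \kappa$. Moreover, since both $N$ and $P$ are $\mad$-like and in particular $(\kappa,\kappa)$-short-tree-iterable with the additional clause \ref{def: m-like two} of Definition \ref{def: m-like}, every maximal tree that arises leads to a genuine pseudo-normal iterate satisfying $\tho$; combined with the comparison lemma for the $1$-small hierarchy this gives that the two sides reach a common iterate. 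Let $Q$ be the common last model: then $N \dashrightarrow Q$ and $P \dashrightarrow Q$ by definition of $\dashrightarrow$ (Definition \ref{dfn: tree leading to P}), the relevant trees being $\T_{NQ}$ and $\T_{PQ}$.

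It then remains to check $Q \in \F$. By Lemma \ref{lem:General Preservation of KP}, since the main branches of $\T_{NQ}$ and $\T_{PQ}$ are eventually non-dropping and $N, P$ are $\Sigma_n$-admissible premice with a largest, regular, uncountable cardinal ($\theta^N = \kappa^{+\lx}$, $\theta^P = \kappa^{+\lx}$), the iterate $Q$ is again a $\Sigma_n$-admissible passive premouse with a largest, regular, uncountable cardinal. Since the iteration maps fix $\kappa$ (as $\crt$ of the relevant extenders is above $\kappa$ on the non-dropping part — here I use that $N^-, P^- \in \lx\vert\kappa$ so all action in the comparison is above $\kappa$, using that $x$ is already generic and the extender algebra is absorbed into the already-fixed structure below $\kappa$), we get $\kappa^{M} = \kappa$, $\theta^Q = \kappa^{+\lx}$, and $\OR^Q = \alpha$ by the characterization of $\alpha$ as in Lemma \ref{basic facts about L(R)} together with minimality; hence $Q$ is a pre-$\mad$-like $x$-weasel. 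That $Q$ is $\mad$-like follows from the remark after Definition \ref{def: m-like} (non-dropping iterates of $\mad$-like premice by short trees are $\mad$-like), and $x$ is $(Q, \mathbb{B}^Q)$-generic because the extender algebra axioms are preserved upward under iteration maps with critical point above $\delta$, so $Q$ is a good $\mad$-like $x$-weasel, and finally $Q^- \in \lx\vert\kappa$ since the whole comparison takes place below $\kappa$ inside $\lx$. Thus $Q \in \F$.

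The main obstacle I expect is verifying that the coiteration genuinely terminates below $\kappa$ and that it produces a \emph{common} iterate rather than merely a pair of iterates lining up only on their suitable parts — i.e.\ ensuring that the comparison succeeds at the level of the full weasels $N, P$ (up to $\OR = \alpha$) and not just below $\delta^N, \delta^P$. This is where clause \ref{def: m-like two} of Definition \ref{def: m-like} and the coherence of the tail strategies of $\Sigma^{\mad}$ are essential: one needs that after the suitable parts agree, the remaining ``upper parts'' (from $\delta$ up to $\alpha$) are already determined, so no further iteration is needed, which is exactly the content of $N, P$ being $\mad$-like together with the fact that $L_\alpha(\cdot)$ over a common $\M(\T)$ is rigidly determined by minimality of $\alpha$.
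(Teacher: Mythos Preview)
Your overall strategy of coiteration via $\Sigma^{\mad}$ is correct and matches the paper's approach, but there is a genuine gap in the final step where you verify $Q \in \F$, and the error stems from a reversed claim about critical points.

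You write that ``$\crt$ of the relevant extenders is above $\kappa$'' and hence ``all action in the comparison is above $\kappa$''. This is backwards. The comparison is of the suitable parts $N^-, P^-$, which live entirely below $(\delta^N)^{+N}, (\delta^P)^{+P} < \kappa$; every extender used in the coiteration has critical point and length strictly below $\kappa$. Consequently the extender algebra is \emph{not} fixed by the iteration maps: the Woodin cardinal moves, and with it the extender algebra and its axioms. There is therefore no reason that $x$ should be $(\mathbb{B}^R, R)$-generic for the common iterate $R$ produced by the comparison, so $R$ need not be a \emph{good} pre-$\mad$-like $x$-weasel, and you cannot conclude $R \in \F$.

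The paper handles this by splitting the construction into two stages: first compare $N^-$ and $P^-$ to a common iterate $R$ (arguing as you do that the trees and $\Q$-structures stay inside $\lx \vert \kappa$, and that in the maximal case $R = L_\alpha(\M(\T))$ is a pre-$\mad$-like $x$-weasel), and then perform an $x$-genericity iteration of $R$ at $\delta^R$ to produce $Q$ over which $x$ is $\mathbb{B}^Q$-generic. Only after this second step does one obtain $Q \in \F$. Your argument for $\OR^Q = \alpha$ via minimality is also circular without this: that computation (as in the proof that $\F \neq \emptyset$) hinges on having $Q[x] = \lx$, which again requires $x$ to be extender-algebra generic over $Q$.
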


\prf
    Let us define $\T$ on $N^-$ and $\U$ on $P^-$ recursively as follows: At successor steps we follow the standard process of iterating away the least disagreement. If the there is no more disagreement at a successor step, we stop the process.
    If we reach a limit stage less than $\kappa$, we distinguish two cases. The first case is that $\Q(\T),\Q(\U) \in \lx$. Then, since $\lx \vert \kappa$ is a ZFC-model and so $\Sigma_1^1$-absolute, we can run the standard argument\footnote{See for example the proof of Lemma 3.10 in \cite{Steel_Woodin_2016}} to see that the unique cofinal wellfounded branches of $\T$ and $\U$ are in $\lx \vert \kappa$.
    The other case is that either $\Q(\T) \notin \lx$ or $\Q(\U) \notin \lx$. Let us assume without loss of generality that $\Q(\T) \notin \lx$. Note that this means that $\M(\T) = \M(\U) \in \lx \vert \kappa$. However, it is then easy to see that if $c$ is the cofinal wellfounded branch through $\T$ according to $\Sigma^{\mad}$, then $\M_c^\T = L_\alpha(\M^\T)$, and $L_\alpha(\M^\T)$ is a pre-$\mad$-like $x$-weasel. 
    Note that by the standard argument the process cannot last longer than $\eta+1$-many steps, where $\eta = \max\{\delta^N,\delta^P\}$.

    Let $R := L_\alpha(\M^\T)$. Working in $\lx \vert \kappa$, let $\T$ on $R^-$ be the $x$-genericity iteration at $\delta^R$ of $R$, i.e.~the iteration tree constructed in the proof of Theorem 7.14 of \cite{Steel2010}. Quite similar arguments as before show that if $Q$ is the iterate given by $\T$, $Q \in \F$.
\eprf

\begin{cor}
    $(\F,\dashrightarrow)$ is a directed partial order.
\end{cor}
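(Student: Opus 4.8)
The claim decomposes into two parts: first that $\dashrightarrow$ is a partial order on $\F$, and second that $(\F,\dashrightarrow)$ is directed. The first part is essentially inherited from the already-established fact that $\dashrightarrow$ is a partial order on the class of $\mad$-like premice; all that I would need to check is that $\F$ is closed under $\dashrightarrow$ in the relevant sense, i.e.\ that any $N \in \F$ is $\mad$-like (so that $\dashrightarrow$ is defined on it) and that $\F \subseteq \{\text{$\mad$-like premice}\}$. By definition each $N \in \F$ is a good pre-$\mad$-like $x$-weasel which is an iterate of $\mad$ via $\Sigma^{\mad}$ with $N^- \in \lx\vert\kappa$; by the remark following the definition of $\mad$-like (citing \cite{schlutzenberg2024normalizationtransfinitestacks}) every non-dropping iterate of $\mad$ given by a tree of length less than $\kappa$ is $\mad$-like, so each $N \in \F$ is in fact a good $\mad$-like $x$-weasel, hence $\dashrightarrow$ restricted to $\F$ is just the restriction of the partial order on $\mad$-like premice and inherits reflexivity, antisymmetry and transitivity.

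For directedness I would simply invoke Lemma \ref{comparison in F}: given $N, P \in \F$ there is $Q \in \F$ with $N \dashrightarrow Q$ and $P \dashrightarrow Q$. That is precisely the statement that any two elements have an upper bound in $\F$. Combined with the fact that $\F$ is nonempty --- $\mad$ itself, or rather its $x$-genericity iterate, lies in $\F$, which follows from the construction at the end of the proof of Lemma \ref{comparison in F} applied to (a suitable initial iterate of) $\mad$, or can be seen directly since $\mad \leq_T x$ and $\OR^{\mad} < \omega_1^{\lx} < \kappa$ guarantee the genericity iteration takes place below $\kappa$ --- this gives that $(\F,\dashrightarrow)$ is a directed partial order.

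So the proof is a two-line assembly: partial order from the remark plus the earlier lemma on $\mad$-like premice, directedness from Lemma \ref{comparison in F} (and nonemptiness). There is no real obstacle here; the only thing requiring a word of care is confirming that membership in $\F$ entails $\mad$-likeness, which as noted is handled by the cited normalization result for stacks, so that Lemma \ref{comparison in F} indeed outputs an element of $\F$ and not merely some $\mad$-like premouse outside it. I would write:

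\prf
    By the remark following Definition \ref{def: m-like}, every non-dropping iterate of $\mad$ given by a tree of length less than $\kappa$ is $\mad$-like; hence every $N \in \F$ is a good $\mad$-like $x$-weasel, and in particular $\dashrightarrow$ is defined on $\F$ and agrees there with the partial order on the class of $\mad$-like premice. Thus $\dashrightarrow\restriction\F$ is a partial order. Moreover $\F \neq \emptyset$, since $\mad \leq_T x$ and $\OR^{\mad} < \omega_1^{\lx} < \kappa$, so the $x$-genericity iteration of $\mad$ at $\delta^{\mad}$ takes place below $\kappa$ and yields an element of $\F$. Finally, given $N, P \in \F$, Lemma \ref{comparison in F} provides $Q \in \F$ with $N \dashrightarrow Q$ and $P \dashrightarrow Q$. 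Hence $(\F,\dashrightarrow)$ is a directed partial order.
\eprf
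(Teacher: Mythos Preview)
Your proposal is correct and takes essentially the same approach as the paper, which states the corollary without proof as an immediate consequence of the preceding comparison lemma and the earlier lemma that $\dashrightarrow$ is a partial order on $\mad$-like premice. The one organizational difference is that you fold nonemptiness of $\F$ into your proof, whereas the paper separates this out as its own lemma stated immediately after the corollary (with a more detailed argument verifying that the genericity iterate has ordinal height exactly $\alpha$).
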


\begin{lem}
    $\F \neq \emptyset$.
\end{lem}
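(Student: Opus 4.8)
The plan is to exhibit an explicit element of $\F$ by iterating $\mad$ via $\Sigma^{\mad}$, in close parallel with the construction in the proof of Lemma~\ref{comparison in F}. Since $\mad\leq_T x$ we have $\mad^-\in\lx\vert\kappa$, and $\lx\vert\kappa$ is a model of $\zfc$ in which $\mad^-$ is a countable suitable premouse. First I would run, inside $\lx\vert\kappa$, an $x$-genericity iteration of $\mad^-$ at $\delta^{\mad}$ (the tree of Theorem~7.14 of \cite{Steel2010}), continued until it becomes $(\kappa,\kappa)$-maximal; call the result $\T$. Thus $\T\in\lx$ has length $<\kappa<\omega_1$, its common part $\M(\T)$ lies in $\lx\vert\kappa$, its largest cardinal $\delta(\T)<\kappa$, and $x$ is generic over the $\omega$-generator extender algebra of $\M(\T)$ at $\delta(\T)$. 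Because $\T$ is $(\kappa,\kappa)$-maximal and $\lh(\T)<\omega_1$, the strategy $\Sigma^{\mad}$ acts on $\T$, and the associated $(\kappa,\kappa)$-pseudo-normal iterate of $\mad$ is $R:=L_\gamma(\M(\T))$, where $\gamma$ is least with $L_\gamma(\M(\T))\models\tho$.

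The next step is to check that $R\in\F$. Since $R\models\tho$, the fine-structural analysis of Section~\ref{higher admissible premice} (Lemmas~\ref{lem: standard parameter of admissible mice} and~\ref{lem: largest cardinal}) gives that $R$ has a unique Woodin $\delta^R=\delta(\T)$, a unique inaccessible $\mu^R>\delta^R$, and that $\theta^R:=(\mu^R)^{+R}$ is the largest cardinal of $R$ with $\rho^R_n=\theta^R$; moreover the extender algebra $\B^R$ coincides with the one used above, so $x$ is $(R,\B^R)$-generic and $R^-=R\vert(\delta^R)^{+R}\in\lx\vert\kappa$. It remains to establish the synchronization $\gamma=\alpha$, $\mu^R=\kappa$, $\theta^R=\kappa^{+\lx}$. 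For this one argues, using the level-by-level forcing correspondence (Lemma~\ref{lem: level by level forcing}, Lemma~\ref{lem: preservation of kp}) together with $\M(\T)\in\lx\vert\kappa$ and the genericity of $x$, that $R[x]=\lx$; since forcing by a poset of size $\delta^R<\kappa$ preserves ordinal height, cardinals above $\delta^R$, and inaccessibility above $\delta^R$, and since $\alpha=\alpha_x$ is least with the property in Definition~\ref{after fixing n} (so that $R$, as a ground of $\lx$, must have height exactly $\alpha$ by the forcing theorem applied on both sides of the $\B^R$-forcing), we get $\OR^R=\gamma=\alpha$, and the inaccessible $\mu^R>\delta^R$ of $R$ must be the unique inaccessible $\kappa$ of $\lx$, whence $\theta^R=\kappa^{+R}=\kappa^{+\lx}$. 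Thus $R$ is a good pre-$\mad$-like $x$-weasel and a $\Sigma^{\mad}$-iterate of $\mad$, i.e.\ $R\in\F$.

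The main obstacle is the construction and synchronization in the first two paragraphs: one must verify that the $x$-genericity iteration of $\mad^-$ can indeed be prolonged to a $(\kappa,\kappa)$-maximal tree with common part inside $\lx\vert\kappa$, and, more delicately, that the minimal level of $\tho$ over $\M(\T)$ is exactly $\alpha=\alpha_x$ --- equivalently, that the resulting weasel $R$ is a forcing ground of $\lx$. This is the only point in the argument that genuinely uses the specific ordinal $\alpha_x$ rather than soft iterability, and it combines the forcing theorem for $\Sigma_n$-admissible premice with the minimality clauses in Definitions~\ref{def: base theory} and~\ref{after fixing n}; all remaining verifications (that the iteration has length $<\omega_1$ so that $\Sigma^{\mad}$ applies, and that $\tho$ and $R^-\in\lx\vert\kappa$ are preserved) are routine.
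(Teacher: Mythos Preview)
Your overall strategy is right and matches the paper's, but there is a genuine gap in the first paragraph: the $x$-genericity iteration does not become $(\kappa,\kappa)$-maximal. That iteration is a specific recipe (Theorem~7.14 of \cite{Steel2010}): at each stage one applies the least extender whose induced extender-algebra axiom is violated by $x$, and the process halts precisely when $x$ satisfies all such axioms. It therefore terminates at a \emph{successor} stage, not at a limit stage with no $Q$-structure. So there is no common part $\M(\T)$ to stack $L_\gamma(\cdot)$ on top of in the way you describe, and your $R:=L_\gamma(\M(\T))$ is not well-defined. The phrase ``continued until it becomes $(\kappa,\kappa)$-maximal'' has no content here; there is nothing to continue once $x$ is generic.

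The paper's argument is the same idea executed without this detour. One runs the $x$-genericity iteration of $\mad$ (not of $\mad^-$) according to $\Sigma^{\mad}$; this gives a wellfounded last model $\M_\infty^\T$. The argument from the proof of Lemma~\ref{comparison in F} shows $(\M_\infty^\T)^-\in\lx\vert\kappa$, and Lemma~\ref{lem:General Preservation of KP} gives $\M_\infty^\T\models\tho$. The synchronization $\OR^{\M_\infty^\T}=\alpha$ is then the two-line minimality argument you sketch: if $\alpha':=\OR^{\M_\infty^\T}<\alpha$ then $\M_\infty^\T[x]$ already witnesses the theory at height $\alpha'$, contradicting the minimality of $\alpha=\alpha_x$; if $\alpha'>\alpha$ then, since $(\M_\infty^\T)^-\in\lx\vert\kappa$, we get $\M_\infty^\T\vert\alpha\models\tho'$, contradicting $\M_\infty^\T\models\tho$. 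So your second paragraph is essentially correct; you just need to apply it to the actual last model of the genericity iteration rather than to an $L_\gamma(\M(\T))$ over a nonexistent maximal tree.
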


\prf
    Let $\T$ be the $x$-genericity iteration at $\delta^{\mad}$ of $\mad$. A similar argument as in the proof of Lemma \ref{comparison in F} shows that $\M_\infty^\T \vert (\delta^{\M_\infty^\T})^{+\M_\infty^\T} \in \lx \vert \kappa$. Thus, $\kappa$ remains inaccessible in $\M_\infty^\T [x]$. Moreover, by Lemma \ref{lem:General Preservation of KP}, $\M_\infty^\T \models \tho$. By Lemma \ref{lem: preservation of kp}, it follows that $\M_\infty^\T [x] \models \Sigma_n\text{-}\kp$.
    We claim $\alpha^\prime := \OR^{\M_\infty^\T} = \alpha$. Let us first assume that $\alpha^\prime < \alpha$. Then $\M_\infty^\T [x] \models \Sigma_n\text{-}\kp \land ``\kappa \text{ is inaccessible}" \land ``\kappa^+ \text{exists}"$, and $\alpha^\prime < \alpha$. This contradicts the minimality of $\alpha$. Now, let us suppose that $\alpha^\prime > \alpha$. Note that $\M_\infty^\T [x] \vert \alpha \models \Sigma_n\text{-}\kp \land ``\kappa \text{ is inaccessible}" \land ``\kappa^+ \text{exists}"$, since $\M_\infty^\T \vert (\delta^{\M_\infty^\T})^{+\M_\infty^\T} \in \lx \vert \kappa$. However, this means that $\M_\infty^\T  \not \models \tho$, a contradiction.
\eprf

Note that for $N,P,Q \in \F$ such that $N \dashrightarrow P \dashrightarrow Q$, we have
\[
    i^{\F}_{N,Q} = i^{\F}_{P,Q} \circ i^{\F}_{N,P},
\]
where $i^{\F}_{N,P} \colon N \to P$ is the embedding given by the iteration tree $\T_{NP}$ and $\Sigma^{\mad}$, and likewise $i^{\F}_{P,Q}$ and $i^{\F}_{N,P}$. We may define
\[
    M_\infty^{\F} = \dir \lim \langle P,Q;i^{\F}_{P,Q} \mid P,Q \in \F \text{ with } P \dashrightarrow Q \rangle
\]
and let $i^{\F}_{P\infty}$ be the direct limit map. By \cite{schlutzenberg2024normalizationtransfinitestacks}, $M_\infty^{\F}$ is in fact a normal non-dropping countable iterate of $\mad$ and thus is a model of the theory $\tho$ and is $(\omega_1,\omega_1)$-short-tree-iterable (in $V$). Moreover, for each $P \in \F$, $i^{\F}_{P\infty}$ is given by the iteration map according to $\Sigma^{\mad}$.

\begin{dfn}
    Let $N \in \F$ and $s \in (\fin\alpha \setminus \{ \emptyset \})$. Then $N$ is $s$\textit{-stable} if for all $P \in \F$ such that $N \dashrightarrow P$  we have $i^{\F}_{N,Q} (s) = s$.
    \end{dfn}
    
    The proof of the following lemma is as in the $L[x,G]$-case as presented in \cite{Steel_Woodin_2016}.
    
    \begin{lem} \label{existence of s-stable}
        Let $N \in \F$. Then for all $s \in (\fin\alpha \setminus \{ \emptyset \})$ there is $P \in \F$ such that $N \dashrightarrow P$ and $P$ is $s$-stable.
    \end{lem}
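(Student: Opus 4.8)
The plan is to run the standard directed-system stabilization argument, exactly as in the $L[x,G]$-case of \cite{Steel_Woodin_2016}, with the wellfoundedness of $M_\infty^{\F}$ doing the essential work. Suppose, towards a contradiction, that no $P \in \F$ with $N \dashrightarrow P$ is $s$-stable. Since $\dashrightarrow$ is a (reflexive, transitive) partial order, we can recursively construct a $\dashrightarrow$-increasing sequence $N = P_0 \dashrightarrow P_1 \dashrightarrow P_2 \dashrightarrow \cdots$ of members of $\F$ with $i^{\F}_{P_k,P_{k+1}}(s) \neq s$ for all $k$: given $P_k \in \F$ with $N \dashrightarrow P_k$, transitivity together with the contradiction hypothesis shows that $P_k$ is not $s$-stable, so by the very definition of $s$-stability there is $P_{k+1} \in \F$ with $P_k \dashrightarrow P_{k+1}$ and $i^{\F}_{P_k,P_{k+1}}(s) \neq s$; and $N \dashrightarrow P_{k+1}$ by transitivity.

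Next I would recall that each $i^{\F}_{P,Q}$ (for $P \dashrightarrow Q$ in $\F$) is an iteration embedding according to $\Sigma^{\mad}$, hence strictly order-preserving on the ordinals and satisfying $i^{\F}_{P,Q}(\xi) \geq \xi$ for all $\xi \in \OR^{P}$; also $\OR^{P} = \alpha$ for every $P \in \F$, so $s \subseteq \OR^{P}$ throughout. As $s$ is finite and each $i^{\F}_{P_k,P_{k+1}}$ is order-preserving, $i^{\F}_{P_k,P_{k+1}}(s) \neq s$ forces some ordinal of $s$ to be moved strictly upward; pick $\zeta_k \in s$ with $i^{\F}_{P_k,P_{k+1}}(\zeta_k) > \zeta_k$. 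Since $s$ is finite, the pigeonhole principle gives a single $\zeta \in s$ and an infinite set $I \subseteq \omega$ with $\zeta_k = \zeta$ for all $k \in I$.

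Finally I would derive the contradiction from the wellfoundedness of $M_\infty^{\F}$, which holds because (as noted just before the lemma) $M_\infty^{\F}$ is a non-dropping countable iterate of $\mad$ and in particular a model of $\tho$. Using the commutativity $i^{\F}_{P_k,\infty} = i^{\F}_{P_{k+1},\infty} \circ i^{\F}_{P_k,P_{k+1}}$ together with $i^{\F}_{P_k,P_{k+1}}(\zeta) \geq \zeta$ and the monotonicity of $i^{\F}_{P_{k+1},\infty}$, the sequence $\langle i^{\F}_{P_k,\infty}(\zeta) \mid k < \omega \rangle$ of ordinals of $M_\infty^{\F}$ is non-increasing, and it strictly decreases at every $k \in I$ because there $i^{\F}_{P_k,P_{k+1}}(\zeta) > \zeta$ and $i^{\F}_{P_{k+1},\infty}$ is strictly order-preserving. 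Enumerating $I$ then produces an infinite strictly descending sequence of ordinals of $M_\infty^{\F}$, contradicting its wellfoundedness. I do not expect any serious obstacle here: the argument is purely formal once one has (i) directedness and transitivity of $(\F,\dashrightarrow)$, which are already available, and (ii) wellfoundedness of the external direct limit $M_\infty^{\F}$, which has already been established; the only minor points requiring care are the book-keeping that the recursively chosen $P_{k+1}$ remain inside $\F$ (immediate from the definition of $s$-stability) and the elementary fact that iteration maps never decrease ordinals.
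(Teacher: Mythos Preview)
Your proof is correct and is precisely the standard stabilization argument from \cite{Steel_Woodin_2016} that the paper invokes. The paper gives no independent proof but simply refers to that case, so your write-up matches the intended argument.
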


\section{The internal covering system}

We are now going to define the internal covering system $\tilde{\D}$. We will first introduce the notion of $s$-iterability in order to state the definition of $\tilde{\D}$. We then show that $\tilde{\D}$ is $\Sigma_1$-definable over $\lx$ from the parameters $\theta$ and $\R^{\lx}$. Finally, we will show that $\F$ is in a certain sense dense in $\tilde{\D}$ and $\tilde{\D}$ is correct enough to approximate $M^{\F}_\infty$ correctly. The definitions and lemmas in this section are mostly adaptions of \cite{Steel_Woodin_2016}. However, in \cite{Steel_Woodin_2016}, the authors do not need to worry much about the complexity of the direct limit, so that, for example, the detailed analysis of Subsection \ref{definability of direct limit} is not necessary in the $L[x,G]$-case.

Let us begin with the definition of $s$-iterability.
\begin{dfn}
    For an ordinal $\beta$ let $\finite(\beta) = \fin{\beta} \setminus \{ \emptyset \}$ and for $s \in \finite(\beta)$ let $s^- = s \setminus \{ \max(s) \}$.
\end{dfn}

\begin{dfn}
    Let $N$ be a passive premouse that models $\tho$ and let $s \in \finite(\OR^N)$ be such that $\delta^N < \max(s)$. Set 
    \[
        \gamma^N_s := \sup (\delta^N \cap \Hull{N \vert \max (s)}{\omega}(
        \{ s^- \}))
    \]
and
    \[
        H^N_s = \Hull{N \vert \max(s)}{\omega}(\gamma^N_s \cup \{ s^- \}).
    \]

    Let $\mathcal{L}_s$ be the language of set theory together with the set of constant symbols $\{ \dot{\alpha} \}_{\alpha \in s^-}$ and let $\fml (\mathcal{L}_s)$ be the set of formulas in the language $\mathcal{L}_s$. For $\alpha \in s^-$, let $\dot{\alpha}^N = \alpha$ and set
    \[
        \tho^N_s = \{ \langle \varphi , t \rangle \colon \varphi \in \fml (\mathcal{L}_s), t \in \fin{\delta^N} \text{, and } N \vert \max(s) \models \varphi[t]\}.
    \]

\end{dfn}

Note that via coding $\tho^N_s \subseteq \delta^N$. A standard argument shows the following.

\begin{lem}
    Let $N$ be a passive premouse that models $\tho$ and $s \in \finite (\OR^N)$ such that $\delta^N < \max(s)$. Then 
    \[
        \gamma^N_s = \sup(H_s^N \cap \delta^N).
    \]

\end{lem}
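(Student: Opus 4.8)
For a passive premouse $N$ modeling $\tho$ and $s \in \finite(\OR^N)$ with $\delta^N < \max(s)$, we have $\gamma^N_s = \sup(H_s^N \cap \delta^N)$, where $\gamma^N_s = \sup(\delta^N \cap \Hull{N \vert \max(s)}{\omega}(\{s^-\}))$ and $H_s^N = \Hull{N \vert \max(s)}{\omega}(\gamma^N_s \cup \{s^-\})$.

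\textbf{Proof plan.} The plan is to prove the two inequalities separately. That $\gamma^N_s \leq \sup(H^N_s \cap \delta^N)$ is immediate: $\gamma^N_s \cup \{s^-\}$ generates $H^N_s$, so $\gamma^N_s \subseteq H^N_s$, and since $\gamma^N_s \leq \delta^N$ and $\gamma^N_s$ is a limit ordinal (the set $\delta^N \cap \Hull{N \vert \max (s)}{\omega}(\{s^-\})$ contains $\omega$ and is closed under ordinal successor, as $\delta^N$ is a limit ordinal and successors are definable), this gives $\sup(H^N_s \cap \delta^N) \geq \gamma^N_s$. For the reverse inequality it suffices to fix $\xi \in H^N_s \cap \delta^N$ and show $\xi \leq \gamma^N_s$; we may assume $\gamma^N_s < \delta^N$, the other case being trivial. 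Write $\bar N = N \vert \max(s)$ and $\delta = \delta^N$. By the usual description of $\Sigma_\omega$-Skolem hulls, $\xi = \tau^{\bar N}(\eta_1,\dots,\eta_k,s^-)$ for some Skolem term $\tau$ and ordinals $\eta_1,\dots,\eta_k < \gamma^N_s$, and (the case $k = 0$ being immediate) we may take $k \geq 1$. Since $\gamma^N_s = \sup(\delta \cap \Hull{\bar N}{\omega}(\{s^-\}))$, I would choose $\zeta \in \delta \cap \Hull{\bar N}{\omega}(\{s^-\})$ with $\zeta > \eta_i$ for all $i$, let $f$ be the partial function with $\dom(f) = \{\vec{\eta}' \in \zeta^k : \tau^{\bar N}(\vec{\eta}',s^-) \in \delta\}$ and $f(\vec{\eta}') = \tau^{\bar N}(\vec{\eta}',s^-)$, and put
\[
    \sigma := \sup \ran(f) = \sup\bigl\{\, \tau^{\bar N}(\vec{\eta}',s^-) \;:\; \vec{\eta}' \in \zeta^k \text{ and } \tau^{\bar N}(\vec{\eta}',s^-) \in \delta \,\bigr\},
\]
so that $\xi \leq \sigma$. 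It then remains to show (i) $\sigma < \delta$ and (ii) $\sigma \in \Hull{\bar N}{\omega}(\{s^-\})$: together these give $\sigma \in \delta \cap \Hull{\bar N}{\omega}(\{s^-\})$, hence $\xi \leq \sigma \leq \gamma^N_s$, which finishes the argument.

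For (i): the graph of $f$ is contained in the set $\zeta^k \times \delta \in N$ and is definable over $\bar N$ from the parameters $\zeta, s^-, \delta$; since $N \models \tho$, hence $N \models \Sigma_1\text{-}\kp$, and $\bar N$ is an element of $N$ (or $\bar N = N$), it follows that $f \in N$ by ($\Delta_1$-)Separation, and then $\ran(f) \in N$. As $\delta = \delta^N$ is inaccessible in $N$ we have $|\zeta^k|^N < \delta$, so $\ran(f)$ is a subset of $\delta$ of $N$-cardinality $< \delta$ and is therefore bounded below $\delta$ by the regularity of $\delta$ in $N$; thus $\sigma < \delta$. For (ii): $\sigma \in \bar N$ since $\sigma < \delta < \OR^{\bar N}$, and $\sigma$ is definable over $\bar N$ from $\zeta$, $s^-$ and $\delta$; moreover $\delta$ is itself definable over $\bar N$ --- it is the unique Woodin cardinal of $N$, recognizable over $\bar N$ because the witnessing extenders are indexed below $\max(s)$, or, when $\max(s)$ is small, simply the largest cardinal of $\bar N$. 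Hence $\sigma \in \Hull{\bar N}{\omega}(\{\zeta, s^-\})$, which equals $\Hull{\bar N}{\omega}(\{s^-\})$ because $\zeta \in \Hull{\bar N}{\omega}(\{s^-\})$.

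The one point where "a standard argument" does real work --- and the step I expect to require the most care --- is the justification that $\delta^N$ keeps enough of its large-cardinal character inside the possibly short initial segment $N \vert \max(s)$, i.e. that it is regular there and definable there. Both follow from $N \vert \max(s)$ and $N$ having the same bounded subsets of $\delta^N$, together with the fine-structural facts about premice modeling $\tho$ recorded in Lemmas \ref{lem: n-1 projectum} and \ref{lem: largest cardinal} and the $1$-smallness of $N$ (which makes $\delta^N$ the unique Woodin cardinal). The remaining manipulations of $\Sigma_\omega$-hulls are routine.
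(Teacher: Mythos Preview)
Your argument is correct and is precisely the ``standard argument'' the paper refers to without spelling out: one direction is trivial, and for the other you bound an arbitrary $\xi \in H^N_s \cap \delta^N$ by taking a $\zeta \in \Hull{N\vert\max(s)}{\omega}(\{s^-\}) \cap \delta^N$ above the parameters, forming the supremum of the relevant term values below $\delta^N$, and using the regularity of $\delta^N$ together with the definability of $\delta^N$ over $N\vert\max(s)$ to place that supremum back in $\Hull{N\vert\max(s)}{\omega}(\{s^-\}) \cap \delta^N$. Two minor remarks: since $\max(s) < \OR^N$ always, the case $\bar N = N$ does not arise; and the cleanest way to see that $\delta^N$ is definable over $N\vert\max(s)$ is that it is the least (equivalently, unique) Woodin cardinal there, as Woodinness of $\delta^N$ is witnessed already in $N\vert\max(s)$ and no smaller ordinal is Woodin in $N$.
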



\begin{dfn} \label{def: s-iterability}
    Let $N$ be a pre-$\mad$-like $x$-weasel such that $N^- \in \lx \vert \kappa$. Let $s \in \finite(\alpha)$ be such that $\delta^N < \max(s)$. Then $N$ is \textit{$s$-iterable} if for all pre-$\mad$-like $x$-weasel $P_1,P_2,P_3$ such that $P_1^-, P_2^-, P_3^- \in \lx \vert \kappa$ and $N \dashrightarrow P_1 \dashrightarrow P_2 \dashrightarrow P_3$, letting $\T_{ij} = \T_{P_i P_j}$, we have that $\col(\omega, {< \kappa})$ forces the following statements over $\lx$:
    \begin{enumerate}
        \item \label{s-iterability one} there is a $\T_{12}$-cofinal branch $b$ which respects $s$ in the sense that  $\delta^{P_2} \in \wfp (\M_b^{\T_{12}})$, $b$ does not drop, and $i^{\T_{12}}_b (\tho_s^{P_1})=\tho_s^{P_2}$, and
        \item \label{s-iterability two} whenever $b_{12}, b_{23}, b_{13}$ are $\T_{12},\T_{23},\T_{13}$-cofinal branches respectively which respect $s$, we have 
        \[
            i^{\T_{13}}_{b_{13}} \restriction \gamma_s^{P_1} = i^{\T_{23}}_{b_{23}} \circ  i^{\T_{12}}_{b_{12}} \restriction \gamma_s^{P_1}.
        \]
    \end{enumerate}
\end{dfn}

It is easy to see that the following holds.

\begin{lem} \label{stable implies iterability}
    Let $N \in \F$. If $N$ is $s$-stable, then $N$ is $s$-iterable.
\end{lem}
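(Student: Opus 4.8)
The plan is to unwind the definitions and observe that $s$-stability gives us exactly the branch behaviour required by $s$-iterability, using the fact that all the relevant trees $\T_{ij}$ are honestly iterates via $\Sigma^{\mad}$ (since $N\in\F$ and $P_1,P_2,P_3$ are obtained from $N$ by the partial order $\dashrightarrow$, which only involves $(\kappa,\kappa)$-pseudo-normal iterations, hence genuine normal iterations by $\mad$-likeness and the normalization results of \cite{schlutzenberg2024normalizationtransfinitestacks}). So first I would note that if $N\in\F$ and $N\dashrightarrow P_1\dashrightarrow P_2\dashrightarrow P_3$, then each $P_i$ is again a $\Sigma^{\mad}$-iterate of $\mad$; in particular the branch chosen by $\Sigma^{\mad}$ through each $\T_{ij}$ is the unique cofinal $\kappa$-wellfounded non-dropping branch, it is ($\col(\omega,{<}\kappa)$-)forced to exist over $\lx$ because $\T_{ij}\in\lx\vert\kappa$ and short-tree-iterability is absorbed into the collapse, and the iteration maps commute: $i^{\T_{13}}_{b_{13}} = i^{\T_{23}}_{b_{23}}\circ i^{\T_{12}}_{b_{12}}$. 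This already handles clause \ref{s-iterability two} of Definition \ref{def: s-iterability} outright (even without the restriction to $\gamma_s^{P_1}$), provided we know the branches ``respecting $s$'' are exactly the $\Sigma^{\mad}$-branches.

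Next I would address clause \ref{s-iterability one}: that there is a $\T_{12}$-cofinal non-dropping branch $b$ with $\delta^{P_2}\in\wfp(\M^{\T_{12}}_b)$ and $i^{\T_{12}}_b(\tho_s^{P_1})=\tho_s^{P_2}$. Take $b=b^{\T_{12}}$ the branch given by $\Sigma^{\mad}$; it is non-dropping and cofinal, and $\M^{\T_{12}}_b = P_2$ (identifying along the iteration map), so $\delta^{P_2}\in\wfp$ is automatic. The content is that $i^{\F}_{P_1,P_2}(\tho_s^{P_1}) = \tho_s^{P_2}$. Here I would use that $\tho_s^{N}$ is defined from $N\vert\max(s)$, $s^-$, and $\delta^N$ in a way that iteration maps respect: since $i:=i^{\F}_{P_1,P_2}$ is fully elementary (the tree $\T_{12}$ is non-dropping on its main branch) and $i\restriction\delta^{P_1} = \id$ below the relevant point... — more precisely, I would invoke $s$-stability of $N$ to get $i^{\F}_{N,P_2}(s) = s = i^{\F}_{N,P_1}(s)$, hence by the commutativity $i^{\F}_{P_1,P_2}(s) = s$; therefore $i$ fixes $\max(s)$ and $s^-$ pointwise, so $i$ maps $P_1\vert\max(s)$ elementarily to $P_2\vert\max(s)$ fixing the constants $\dot\alpha$ for $\alpha\in s^-$, and it maps $\delta^{P_1}$ to $\delta^{P_2}$. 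Tracking the coding of $\tho_s^{P_i}\subseteq\delta^{P_i}$, elementarity then yields $i(\tho_s^{P_1}) = \tho_s^{P_2}$. (One small point: $t\in\fin{\delta^{P_1}}$ need not be fixed by $i$, but $i\restriction\delta^{P_1}$ maps onto $\delta^{P_2}$ and the correspondence $t\mapsto i(t)$ is exactly what makes the theory-transfer hold.) Finally, since any $\T_{12}$-cofinal branch respecting $s$ must by definition send $\tho_s^{P_1}$ to $\tho_s^{P_2}$ and be non-dropping with $\delta^{P_2}$ wellfounded, a standard $\Q$-structure / comparison argument shows such a branch is unique and hence equals $b^{\T_{12}}$, which closes the gap in clause \ref{s-iterability two}.

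I expect the main obstacle to be the bookkeeping in clause \ref{s-iterability two}: one must check that the branches $b_{12},b_{23},b_{13}$ respecting $s$ genuinely coincide with the $\Sigma^{\mad}$-branches (so that the well-known commutativity of iteration maps applies), and that the restriction to $\gamma_s^{P_1}$ is not actually needed here — or, if for some degenerate reason a ``wrong'' $s$-respecting branch could exist, that it still agrees with the $\Sigma^{\mad}$-branch on $\gamma_s^{P_1}$. This is handled by the observation that a non-dropping cofinal branch $b$ with $\delta^{P_2}\in\wfp(\M^{\T_{12}}_b)$ and $\Q(\T_{12})=\Q(b,\T_{12})$ is unique when $\T_{12}$ is $(\kappa,\kappa)$-maximal, and when $\T_{12}$ is $(\kappa,\kappa)$-short the branch is outright unique by short-tree-iterability; in either case the requirement $i^{\T_{12}}_b(\tho_s^{P_1})=\tho_s^{P_2}$ only further constrains it. Everything else is a routine unwinding of Definition \ref{def: s-iterability}, Definition \ref{def: s-iterability}'s auxiliary objects $\gamma^N_s,H^N_s,\tho^N_s$, and the definition of $s$-stability, together with the already-established facts that $M_\infty^\F$ is a genuine $\Sigma^{\mad}$-iterate and that the $i^{\F}_{P,Q}$ commute.
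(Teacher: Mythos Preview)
The paper does not give a detailed proof of this lemma; it merely says ``It is easy to see that the following holds.'' Your plan is the expected standard argument and is essentially correct: use that $N\in\F$ forces every $P_i$ with $N\dashrightarrow P_i$ to be a genuine $\Sigma^{\mad}$-iterate (so the $\Sigma^{\mad}$-branches exist, are fully wellfounded, fix $s$ by $s$-stability, and hence respect $s$), verify clause~(1) by absoluteness down to $\lx[G]$, and deduce clause~(2) from the commutativity of the $\Sigma^{\mad}$-maps.

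There is one imprecision worth flagging in your handling of clause~(2). You close the gap by asserting that an $s$-respecting branch is unique, invoking a $\Q$-structure argument: ``a non-dropping cofinal branch $b$ with $\delta^{P_2}\in\wfp(\M_b^{\T_{12}})$ and $\Q(\T_{12})=\Q(b,\T_{12})$ is unique.'' But the condition $\Q(\T_{12})=\Q(b,\T_{12})$ is \emph{not} part of ``respects $s$'' as defined, and you do not argue that respecting $s$ implies it. In the $(\kappa,\kappa)$-maximal case there is no obvious reason a priori why two distinct cofinal branches could not both satisfy $i_b(\tho_s^{P_1})=\tho_s^{P_2}$ while differing on some $\xi<\gamma_s^{P_1}$ outside $H_s^{P_1}$. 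The cleaner route, and the one implicit in the paper's later use of this lemma (see Definition~\ref{def: embeddings} and the remark following it), is to observe directly that the requirement $i_b(\tho_s^{P_1})=\tho_s^{P_2}$ pins down $i_b$ on $H_s^{P_1}\cap\delta^{P_1}$: each such ordinal is parameter-free definable over $(P_1\vert\max(s),s^-)$, and the theory equality forces its image to be the corresponding definable ordinal in $(P_2\vert\max(s),s^-)$. This determines $i_b\restriction(H_s^{P_1}\cap\delta^{P_1})$ independently of $b$, and since that set is cofinal in $\gamma_s^{P_1}$ and the branch embedding is continuous there (or one argues via the coding of $\tho_s$), one obtains agreement on $\gamma_s^{P_1}$ with the $\Sigma^{\mad}$-map. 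Combined with the exact commutativity $i^{\T_{13}}_{b_{13}^*}=i^{\T_{23}}_{b_{23}^*}\circ i^{\T_{12}}_{b_{12}^*}$ for the $\Sigma^{\mad}$-branches, clause~(2) follows for arbitrary $s$-respecting branches. Your fallback remark (``or, if \dots a `wrong' $s$-respecting branch could exist, that it still agrees with the $\Sigma^{\mad}$-branch on $\gamma_s^{P_1}$'') is exactly the right idea; it just deserves to be the main argument rather than a parenthetical.
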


\begin{dfn}
    Let $\tilde{\D}$ be the set of all $(N,s)$ such that $N$ is a pre-$\mad$-like $x$-weasel such that $N^- \in \lx \vert \kappa$,  $s \in \finite(\alpha)$, and such that in $\lx$ the following holds:
    \begin{enumerate}
        \item $N \text{ is } \mad\text{-like}$,
        \item $\delta^{L_\alpha(N)} < \max(s)$, and
        \item $N \text{ is }s\text{-iterable}$.
    \end{enumerate}
    For $(N,s),(P,t) \in \tilde{\D}$, let 
    \[
        (N,s) \leq (P,t) \text{ iff } N \dashrightarrow P \text{ and } s \subset t.
    \]
\end{dfn}

\subsection{The definability of the internal covering system}

We now want to show that $\tilde{\D}$ is $\Sigma_1$-definable over $\lx$ from the parameters $\theta$ and $\R^{\lx}$.

\begin{lem}
    The set of all $N$ that are pre-$\mad$-like $x$-weasels such that $N^- \in \lx \vert \kappa$ is definable over $\lx \vert \kappa$.
\end{lem}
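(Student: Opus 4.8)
The plan is to reduce the statement to a first-order characterization, over $\lx\vert\kappa$, of the suitable parts of pre-$\mad$-like $x$-weasels, exploiting that such a weasel is recoverable from its suitable part.

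First I would observe that a pre-$\mad$-like $x$-weasel $N$ is determined by $a := N^- = N\vert(\delta^N)^{+N}$. Since $N\models\tho$, $N$ satisfies ``$V = L[\dot{\mathbb{E}}]$'' together with the minimality clause $\forall\gamma\,(L_\gamma[\dot{\mathbb{E}}]\not\models\tho^\prime)$; combined with $1$-smallness and condensation this forces $\mathbb{E}^N$ above $(\delta^N)^{+N}$ to be the extender sequence of the minimal $\tho^\prime$-completion of $a$, i.e.~of the fine-structural $L[\dot{\mathbb{E}}]$-construction over $a$ continued to the first level modelling $\tho^\prime$. Writing $\W(a)$ for this completion — which is defined whenever $a$ is a passive premouse with a largest cardinal $\delta$, $\OR^a = \delta^{+a}$, and $\delta$ is Woodin in $a$ — we get $N = \W(a)$, so $N\mapsto N^-$ is injective on the class under consideration, and it suffices to define over $\lx\vert\kappa$ the set $D$ of those $a\in\lx\vert\kappa$ for which $\W(a)$ is a pre-$\mad$-like $x$-weasel. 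Note that $a\mapsto\W(a)$, up to any prescribed level, is uniformly definable over $\lx\vert\kappa$, which is a model of $\zfc$ since $\kappa$ is the inaccessible cardinal of $\lx$.

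Next I would isolate the membership condition for $D$. That $a$ be a passive premouse with a largest cardinal $\delta$, with $\OR^a = \delta^{+a}$ and with $\delta$ Woodin in $a$ (with respect to $\mathbb{E}^a$) is necessary and first-order over $\lx\vert\kappa$; call such $a$ \emph{good}. For good $a$, $\W(a)$ is a $\tho$-model whose unique Woodin is $\delta^a$, and $\W(a)$ is pre-$\mad$-like exactly when its unique inaccessible equals $\kappa$ — equivalently, by minimality of $\alpha = \alpha_x$ (Definition \ref{after fixing n}) and $\mad\leq_T x$, when $\OR^{\W(a)} = \alpha$, $\mu^{\W(a)} = \kappa$ and $\theta^{\W(a)} = \kappa^{+\lx}$. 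Since $\kappa = \OR^{\lx\vert\kappa}$ is the \emph{least} inaccessible of $\lx$, this occurs iff the $L[\dot{\mathbb{E}}]$-construction over $a$ declares no ordinal $>\delta^a$ to be inaccessible before reaching the top of $\lx\vert\kappa$; and that assertion quantifies only over objects in $\lx\vert\kappa$, hence is first-order there. So I would set $D := \{a\in\lx\vert\kappa : a\text{ is good and no }\mu\text{ with }\delta^a<\mu<\OR\text{ is inaccessible in }\W(a)\vert\mu^{+\W(a)}\}$, manifestly definable over $\lx\vert\kappa$, and then verify that $D$ equals the set of suitable parts of pre-$\mad$-like $x$-weasels: the forward inclusion is immediate from $N = \W(N^-)$ and $\mu^N = \kappa$, while for the converse one must show that for good $a$ with no such $\mu$ below $\kappa$, the construction over $a$ reaches $\tho^\prime$, does so at height exactly $\alpha$, and there has $\mu = \kappa$ and $\theta = \kappa^{+\lx}$.

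The step I expect to be the main obstacle is this converse, specifically pinning down the \emph{height}: showing $\OR^{\W(a)} = \alpha$, not merely $\OR^{\W(a)}\geq\kappa^{+\lx}$. This calls for comparing the $\Sigma_n$-admissibility spectrum of the $L[\dot{\mathbb{E}}]$-construction over $a$ with that of $\lx = L[x]$ itself, using $a\in\lx\vert\kappa$ — so that $L_\gamma[a]\subseteq L_\gamma[x]$ and, by absoluteness of the pertinent $\Sigma_n$-recursions, $\Sigma_n$-admissibility transfers between the two at the relevant ordinals — together with the minimality built into $\alpha_x$ and into $\tho$. The recoverability claim $N = \W(N^-)$, while routine given the minimality clause of $\tho$, should also be argued with some care.
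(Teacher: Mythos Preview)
Your reduction to a first-order characterization of $a=N^-$ over $\lx\vert\kappa$ is the same framework the paper uses, and you correctly flag the pinning-down of the height $\OR^{\W(a)}=\alpha$ as the crux. But your proposed resolution of that crux has a gap. The suggestion to transfer $\Sigma_n$-admissibility between $L_\gamma(a)$ and $L_\gamma[x]$ via the inclusion $L_\gamma[a]\subseteq L_\gamma[x]$ does not work in the direction you need: nothing prevents some $\gamma\in(\kappa,\alpha)$ from having $L_\gamma(a)$ be $\Sigma_n$-admissible with $\kappa$ inaccessible and $\kappa^{+L_\gamma(a)}$ existing (possibly strictly below $\kappa^{+\lx}$), while $L_\gamma[x]$ fails $\Sigma_n$-$\kp$. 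So the minimality of $\alpha_x$ gives you no direct purchase on levels of $L(a)$ in the interval $(\kappa,\alpha)$.

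The paper sidesteps this with a hull/condensation argument rather than a level-by-level comparison. Its defining condition on $A$ is ``for all $\gamma<\kappa$, $L_\gamma(A)\not\models\tho$'' (closely related to your no-small-inaccessible condition). One first notes that $N:=L_\alpha(A)$ is $\Sigma_n$-admissible since $A\in\lx$, so $N\models\tho'$. For the minimality clause, suppose some $\beta\in(\kappa,\alpha)$ had $N\vert\beta\models\tho$; then $N$ satisfies the $\Sigma_1$ statement $\exists\gamma>\delta^N\,(N\vert\gamma\models\tho)$ in parameter $\delta^N$. Letting $\bar N$ be the transitive collapse of $\Hull{N}{n}(\delta^N+1)$, condensation gives $\bar N\lhd N\vert\kappa$, and $\Sigma_n$-elementarity reflects the statement down to $\bar N$, producing a $\tho$-level below $\kappa$ and contradicting the defining condition. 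This reflection step is the idea your plan is missing; once you have it, either your condition or the paper's serves equally well.
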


\prf Let $A \in \lx \vert \kappa$. Clearly, since $\kappa$ is a limit cardinal of $\lx$, it is definable over $\lx \vert \kappa$ that $A$ is a premouse with a Woodin cardinal $\delta^A$. Moreover, by condensation $(\delta^A)^{+L_\kappa(A)} = (\delta^A)^{+L_\alpha(A)}$, so that the condition that $\OR^A = (\delta^A)^{+L_\alpha(A)}$ is also definable over $\lx \vert \kappa$. Also, it is definable over $\lx \vert \kappa$ that for all $\gamma < \kappa$, $L_\gamma (A) \not \models \tho$. Note that since $A \in \lx$, we have that $N := L_\alpha(A)$ is $\Sigma_n$-admissible. We claim that for all $\beta < \alpha$, $N \vert \beta \not \models \tho$. This is already true for $\beta \leq \kappa$ by assumption. Let us suppose for the sake of contradiction that there is $\beta \in (\kappa, \alpha)$ such that $N \vert \beta \models \tho$. Then $N \models \exists \gamma (\gamma > \delta^N \land N \vert \gamma \models \tho)$, which is a $\Sigma_1$ statement with parameter $\delta^N$. Let $H := \Hull{N}{n} (\delta^N + 1)$. Let $\pi \colon \bar{N} \to H$ be the inverse of the transitive collapse map. Note that $\bar{N} \lhd N \vert \kappa$. However, then there is some $\gamma \in (\delta^N, \kappa)$ such that $N \vert \gamma \models \tho$, a contradiction! Thus, $L_\alpha (A) \models \tho$.
\eprf

Next, we show with Lemma \ref{definability of short tree} and Lemma \ref{lem: absoluteness between lx and v} that the notion of $(\kappa,\kappa)$-short-tree-iterability, which is part of the definition of $s$-iterability, is definable over $\lx \vert \kappa$.

\begin{lem} \label{definability of short tree}
    Let $N$ be a pre-$\mad$-like $x$-weasel such that $N^- \in \lx \vert \kappa$ and $\T \in \lx \vert \kappa$ be a $n$-maximal iteration tree on $N$ of limit length less than $\kappa$. Then for a non-dropping cofinal branch $b$ of $\T$ that is in $\lx$ the following are equivalent:
    \begin{enumerate}
        \item \label{lem: wellfoundedness one} $\alpha \subseteq \wfc(\M_b^\T)$,
        \item \label{lem: wellfoundedness two}  $i_b^\T (\kappa)  \subseteq \wfc(\M_b^\T)$, and
        \item \label{lem: wellfoundedness three}  $\kappa  \subseteq \wfc(\M_b^\T)$.
    \end{enumerate}
\end{lem}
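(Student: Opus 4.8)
The plan is to run the cycle $(\ref{lem: wellfoundedness one}) \Rightarrow (\ref{lem: wellfoundedness two}) \Rightarrow (\ref{lem: wellfoundedness three}) \Rightarrow (\ref{lem: wellfoundedness one})$. First I would record a structural fact that makes the first two implications routine: since $\T \in \lx \vert \kappa$, every extender used in $\T$ has index, hence critical point, below $\kappa$; and $\kappa$ is inaccessible in $N$ while $\lh(\T) < \kappa$. The usual continuity computation then shows $i_{0\xi}^\T(\kappa) = \kappa$ for all $\xi < \lh(\T)$ and that $i_b^\T$ is continuous at $\kappa$, so that whenever $\M_b^\T$ is wellfounded below $\kappa$ we get that $i_b^\T(\kappa)$ is the genuine ordinal $\kappa$. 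Granting this, $(\ref{lem: wellfoundedness one}) \Rightarrow (\ref{lem: wellfoundedness two})$ holds because $\alpha \subseteq \wfc(\M_b^\T)$ in particular makes $\M_b^\T$ wellfounded below $\kappa$, whence $i_b^\T(\kappa) = \kappa < \theta^N < \alpha \subseteq \wfc(\M_b^\T)$; and $(\ref{lem: wellfoundedness two}) \Rightarrow (\ref{lem: wellfoundedness three})$ holds because being wellfounded below $i_b^\T(\kappa)$ entails being wellfounded below $\kappa$, as $\kappa \leq i_b^\T(\kappa)$.

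The content is $(\ref{lem: wellfoundedness three}) \Rightarrow (\ref{lem: wellfoundedness one})$. Assume $\kappa \subseteq \wfc(\M_b^\T)$ and write $M := \M_b^\T$, $W := M \vert \kappa$; by assumption $W$ is a genuine wellfounded structure and, by the previous paragraph, $i_b^\T(\kappa) = \kappa$ with $W = i_b^\T(N \vert \kappa)$. The key point is that $N$ carries no total extender above $\kappa$ --- indeed $\theta^N = \kappa^{+N}$ is the largest cardinal of $N$ by Lemma \ref{lem: largest cardinal} --- so that above $\kappa$, $N$ is built from $N \vert \kappa$ by the Jensen hierarchy together with the partial extender sequence $\mathbb{E}^N \restriction [\kappa,\alpha)$, all of whose entries have critical point below $\kappa$; applying $i_b^\T$, the same description holds of $M$ over $W$. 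Since $W$ is wellfounded, I would argue that this forces $M$ to be wellfounded, at least up to $i_b^\T(\alpha) \geq \alpha$, which is $(\ref{lem: wellfoundedness one})$: either by a direct induction along the Jensen hierarchy over $W$ --- each $M \vert i_b^\T(\gamma)$, $\gamma < \alpha$, being an $L[\mathbb{E}]$-structure over the wellfounded $W$ whose extenders all live over $W$ --- or, dressing this up, via Ville's Lemma (Lemma \ref{lem:higher ville}): check $\wfc(M) \prec_{\Sigma_{n-1}} M$ using that $S^M_{n-1}$ is cofinal in $\OR^M$ and the $\Sigma_{n-1}$-elementarity of the branch embeddings $i_{\xi b}^\T$, conclude $\wfc(M) \models \Sigma_n\text{-}\kp$, upgrade to $\wfc(M) \models \tho^\prime$ using that $\wfc(M)$ still contains the Woodin $\delta^M < \kappa$, the inaccessible $\kappa$, and $\kappa^+$, and then --- unless $M$ is already wellfounded up to $\alpha$ --- contradict the minimality of $\alpha$, using that $x$ lies in a small generic extension of $\wfc(M)$ and Lemma \ref{lem: preservation of kp}.

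Finally I would note that a posteriori $M$ is fully wellfounded with $\OR^M = \alpha$ and $i_b^\T$ fixing $\kappa$ and $\theta^N$, so that $\M_b^\T$ is again a pre-$\mad$-like $x$-weasel; this is forced by the minimality of $\alpha$ together with the preservation of $\Sigma_n$-admissibility along the non-dropping branch $b$, i.e.~Lemma \ref{lem:General Preservation of KP}. The step I expect to be the main obstacle is precisely the upward propagation of wellfoundedness from $W = M \vert \kappa$ to $M \vert \alpha$: one must check that applying $i_b^\T$ to the structure of $N$ on the interval $[\kappa,\alpha)$ --- in particular at the level $\kappa^{+N}$, where $i_b^\T$ need not act trivially --- cannot introduce a new illfounded point, which comes down to the structure of $\mathbb{E}^N$ above $\kappa$, controlled by the $\mad$-likeness of $N$, and to the behaviour of $i_b^\T$ there; the continuity computation for $i_b^\T$ at $\kappa$ and the $\Sigma_{n-1}$-reflection needed for Ville's Lemma are routine given the earlier sections.
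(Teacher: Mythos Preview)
Your implications $(1)\Rightarrow(2)\Rightarrow(3)$ and the computation $i_b^\T(\kappa)=\kappa$ are fine. The difficulty is entirely in $(3)\Rightarrow(1)$, and here your Ville's-Lemma route has real gaps. First, the hypothesis $\wfc(M)\prec_{\Sigma_{n-1}}M$ of Lemma~\ref{lem:higher ville} is not established: cofinality of $S_{n-1}^M$ in $\OR^M$ tells you nothing about whether the external cut $\wfo(M)$ is a limit of $S_{n-1}^M$-points, and there is no reason it should be. Second, and more seriously, even granting $\wfc(M)\models\Sigma_n\text{-}\kp$, you need $\wfc(M)$ to see $\kappa^+$ in order to model $\tho'$ and contradict the minimality of $\alpha$; but from $(3)$ you only obtain $\wfo(M)>\kappa$, and nothing you wrote rules out $\wfo(M)\in(\kappa,i_b^\T(\theta)]$, in which case $\wfc(M)$ has $\kappa$ as its largest cardinal and does not model $\tho'$. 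Third, the claim that ``$x$ lies in a small generic extension of $\wfc(M)$'' is unjustified: $N$ is only assumed pre-$\mad$-like, not good, so $x$ need not be extender-algebra-generic over $N$, let alone over the iterate $M$ or its wellfounded cut. The ``direct induction along the Jensen hierarchy'' does not rescue this either: knowing $W=M\vert\kappa$ is wellfounded does not propagate upward, since the $L$-construction as carried out \emph{inside} the possibly illfounded $M$ may diverge from the genuine one precisely above $\wfo(M)$.

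The paper's argument for $(3)\Rightarrow(1)$ is entirely different and avoids all of this: it is a double reflection inside $\lx$. If $\M_b^\T$ were illfounded below $\alpha$, one first finds $N'\lhd N$ (so $N'\in\lx$) such that, viewing $\T$ as a tree $\T'$ on $N'$, the branch model $\M_b^{\T'}$ is already illfounded; since $\M_b^{\T'}\in\lx$, a witnessing descending $\omega$-sequence lies in $\lx$. One then takes a hull $X\prec\lx\vert\theta$ of size $<\kappa$ containing $N'$, $\T'$, $b$, the descending sequence, and enough of $N$ below $\kappa$ that the extenders of $\T$ are fixed by the collapse. The collapse yields a tree $\bar\T$ on some $\bar N'$ of height $<\kappa$ whose branch model is illfounded; but $\M_b^{\bar\T}$ is literally an initial segment of $\M_b^\T$ below $\kappa$, so $\M_b^\T$ is already illfounded below $\kappa$, contradicting $(3)$.
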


\prf
    Clearly, \ref{lem: wellfoundedness one} implies \ref{lem: wellfoundedness two}, since $i_b^\T (\kappa) < \alpha$, and \ref{lem: wellfoundedness two} implies \ref{lem: wellfoundedness three}. To show that \ref{lem: wellfoundedness three} implies \ref{lem: wellfoundedness one}, let $b \in \lx \vert \kappa^{+\lx}$ be a non-dropping cofinal branch of $\T$ such that $\kappa \subset \wfc(\M^\T_b)$.
    Suppose for the sake of contradiction that there is $\beta < \alpha$ such that $\beta \notin \wfc(\M^\T_b)$ and suppose that $\beta$ is the least such. Note that since $\theta$ is regular in $\lx$, there is $N^\prime \lhd N$ such that if we consider $\T$ on $N^\prime$ as $\T^\prime$, $\M_b^{\T^\prime}$ is not wellfounded. Since $\M_b^{\T^\prime} \in \lx$, there is $\langle \beta_k \mid k < \omega \rangle \in \lx$ such that $i^\T_{i j} (\beta_i) < \beta_j$ for $i < j < \omega$.

    Let $\xi = i_b^{\T^\prime} ((\delta^N)^{+N})$. Note that $\lambda < \xi < \kappa$.
    Let $X \prec_{1000} \lx \vert \theta$ be such that $\card(X) < \kappa$ and $\{ N \vert \beta_0, \T^\prime, b \} \cup \{ \langle \beta_n \mid n < \omega \rangle \} \cup N \vert (\xi + \omega) \in X$. Let $\pi \colon M \to \lx \vert \theta$ be the inverse of the transitive collapse map of $X$, so that $\pi$ is $\Sigma_{1000}$-elementary. Let $\{ \bar{N} \vert \bar{\beta}_0,\bar{\T}, \bar{b} \} \cup \{ \bar{\beta}_n \mid n < \lambda \} \in M$ be such that $\pi((\bar{N} \vert \bar{\beta}_0,\bar{\T},\bar{b})) = (N \vert \beta_0, \T^\prime,b)$ and $\pi(\bar{\beta}_n) = \beta_n$ for all $n < \lambda$. Note that $\M_b^\T \vert i_{b}^\T (\OR^{\M_0^{\bar{\T}}}) = \M_b^{\bar{\T}}$. But then $\M_b^{\T}$ is illfounded below $\kappa$, a contradiction!
\eprf

A similar argument for $n$-maximal iteration trees of successor length gives:

\begin{lem} \label{lem: absoluteness between lx and v}
    Let $N$ be pre-$\mad$-like $x$-weasel such that $N^- \in \lx \vert \kappa$. Let $\T \in \lx $ be a putative $n$-maximal iteration tree on $N$ such that $\lh(\T) = \lambda + 1 < \kappa$. Let $b = [0,\lambda]^\T$. Then the following are equivalent:
    \begin{enumerate}
        \item there is a drop in model along $b$ and $\M_\lambda^\T$ is wellfounded and has height less than $\kappa$, or $b$ is non-dropping and $\alpha \subseteq \wfc(\M_\lambda^\T)$, and
        \item there is  a drop in model along $b$ and $\M_\lambda^\T$ is wellfounded and has height less than $\kappa$, or $b$ is non-dropping and $\kappa \subseteq \wfc(\M_\lambda^\T)$.
    \end{enumerate}
\end{lem}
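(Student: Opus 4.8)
The plan is to adapt the proof of Lemma~\ref{definability of short tree} from limit-length to successor-length trees, the only genuinely new ingredient being that one must first peel off the case in which $b$ drops in model. The implication $(1)\Rightarrow(2)$ is immediate, since $\kappa<\alpha$. For $(2)\Rightarrow(1)$ we split into cases. If $b=[0,\lambda]^\T$ drops in model, then clauses $(1)$ and $(2)$ assert verbatim the same thing about $\M_\lambda^\T$, so there is nothing to prove. Hence we may assume that $b$ does not drop in model, so that $i_b^\T\colon N\to\M_\lambda^\T$ is an $n$-embedding and $\OR^{\M_\lambda^\T}=\sup i_b^\T[\alpha]$; it then suffices to show that $\kappa\subseteq\wfc(\M_\lambda^\T)$ implies $\alpha\subseteq\wfc(\M_\lambda^\T)$.

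So suppose toward a contradiction that $\kappa\subseteq\wfc(\M_\lambda^\T)$ but $\alpha\not\subseteq\wfc(\M_\lambda^\T)$, and let $\beta<\alpha$ be least with $\beta\notin\wfc(\M_\lambda^\T)$; then $\beta\geq\kappa$. If $\lambda$ is a limit ordinal, then $\M_\lambda^\T$ is precisely the direct limit along the cofinal non-dropping branch $b$ of the limit-length tree $\T\restriction\lambda$, and the desired conclusion is exactly the implication $(\ref{lem: wellfoundedness three})\Rightarrow(\ref{lem: wellfoundedness one})$ of Lemma~\ref{definability of short tree} applied to $\T\restriction\lambda$. So assume $\lambda$ is a successor ordinal and argue as in that lemma.

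Using that $\theta=\kappa^{+\lx}$ is regular in $\lx$ and is the largest cardinal of $N$, fix $\gamma\in\bigl((\delta^N)^{+N},\alpha\bigr)$ with $i_b^\T(\gamma)>\beta$ such that, writing $N'=N\vert\gamma\lhd N$ and regarding $\T$ as a putative $n$-maximal tree $\T'$ on $N'$ (legitimate, since $(N')^-=N^-$ and the extenders used in $\T$ are indexed below the relevant images of $(\delta^N)^{+N}$), the last model $\M_\lambda^{\T'}=i_b^\T(N')$ is an illfounded initial segment of $\M_\lambda^\T$. Since $\T'\in\lx$ and $\lh(\T')<\kappa$ we have $\M_\lambda^{\T'}\in\lx$, and hence there is a descending $\in^{\M_\lambda^{\T'}}$-sequence $\langle\beta_k\mid k<\omega\rangle\in\lx$. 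Now, exactly as in Lemma~\ref{definability of short tree}, choose $X\prec_{1000}\lx\vert\theta$ with $\card(X)<\kappa$ and $\{N',\T',\langle\beta_k\mid k<\omega\rangle\}\subseteq X$ together with enough of $N$ above $i_b^{\T'}\bigl((\delta^N)^{+N}\bigr)$, and let $\pi\colon M\to\lx\vert\theta$ be the inverse of the transitive collapse, which is $\Sigma_{1000}$-elementary and satisfies $\OR^M<\kappa$ because $\kappa$ is inaccessible in $\lx$. Writing $\bar\T$ and $\langle\bar\beta_k\rangle$ for the $\pi$-preimages, $\bar\T$ is, up to the collapse, an initial-segment restriction of $\T$, so $\M_\lambda^{\bar\T}=\M_\lambda^\T\vert i_b^\T\bigl(\OR^{\M_0^{\bar\T}}\bigr)$ is an initial segment of $\M_\lambda^\T$ of ordinal height $<\kappa$ which is illfounded as witnessed by $\langle\bar\beta_k\rangle$; this contradicts $\kappa\subseteq\wfc(\M_\lambda^\T)$.

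The main obstacle is the bookkeeping in that last step, which is inherited unchanged from Lemma~\ref{definability of short tree}: one must check that reorganising $\T$ as a tree on the bounded premouse $N'$ and then transitively collapsing genuinely produces an illfounded \emph{initial segment} of $\M_\lambda^\T$ lying below $\kappa$, i.e.\ that the collapse does not disturb the agreement of models and that the resulting small tree's last model embeds as $\M_\lambda^\T\vert\xi$ for some $\xi<\kappa$. This is precisely where the hypotheses $\T\in\lx$, $\lh(\T)<\kappa$, and ``$\theta$ regular in $\lx$'' are used. Beyond this, the successor-length case introduces no new difficulty: the only genuinely new observations are the trivial case split removing the dropping case and the remark that for limit $\lambda$ the statement is literally an instance of Lemma~\ref{definability of short tree}.
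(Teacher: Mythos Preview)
Your proposal is correct and takes essentially the same approach as the paper, which simply asserts that ``a similar argument for $n$-maximal iteration trees of successor length gives'' the lemma. Your explicit case split---first peeling off the dropping case as trivially identical in both clauses, then reducing the limit-$\lambda$ subcase directly to Lemma~\ref{definability of short tree}, and finally rerunning the reflection-and-collapse argument for successor $\lambda$---is exactly the unpacking one would expect of that remark, and your acknowledgment that the bookkeeping is inherited unchanged from Lemma~\ref{definability of short tree} is apt.
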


Since \ref{def: m-like two}.~of Definition \ref{def: m-like} for non-dropping $(\kappa,\kappa)$-pseudo-iterates which are in $\lx \vert \kappa$ is easily seen to be definable over $\lx \vert \kappa$, we have the following.

\begin{cor}
    The set of all $N$ that are pre-$\mad$-like $x$-weasels such that $N^- \in \lx \vert \kappa$ and such that $\lx \models N \text{ is }\mad\text{-like}$ is definable over $\lx \vert \kappa$.
\end{cor}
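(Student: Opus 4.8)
\prf
The plan is to express the set in question as the conjunction of three conditions, each of which is, or is readily seen to be, definable over $\lx \vert \kappa$. By the first lemma of this subsection (stating that the $N$ which are pre-$\mad$-like $x$-weasels with $N^- \in \lx \vert \kappa$ form a class definable over $\lx \vert \kappa$), it remains only to show that, for such $N$, the assertion ``$\lx \models N$ is $\mad$-like'' is definable over $\lx \vert \kappa$. By Definition \ref{def: m-like} this assertion is the conjunction of (a) $(\kappa,\kappa)$-short-tree-iterability of $N$ and (b) clause \ref{def: m-like two}, that every non-dropping $(\kappa,\kappa)$-pseudo-iterate of $N$ is a non-dropping $(\kappa,\alpha)$-pseudo-normal iterate of $N$.

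First I would record the basic locality facts. Every iteration tree relevant to (a) and (b) is an $n$-maximal $\beta$-wellfounded tree of length $< \kappa$ built on a suitable part lying in $\lx \vert \kappa$; since $\kappa$ is inaccessible in $\lx$, any such tree together with all of its models is an element of $\lx \vert \kappa$, a cofinal branch through it is a bounded subset of $\kappa$ and hence, if it lies in $\lx$ at all, already lies in $\lx \vert \kappa$, and a wellfounded branch model has height $< \kappa$, so it too lies in $\lx \vert \kappa$. Thus the only data in these notions that is not immediately absorbed into $\lx \vert \kappa$ is which cofinal branches to pick and whether the chosen branch models are wellfounded ``up to $\kappa$''. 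The branch picking is canonical on the pieces that matter: for a $(\kappa,\kappa)$-short tree $\T$ in a relevant finite pseudo-stack, $\Q(\T) = J_\gamma(\M(\T))$ for some $\gamma < \kappa$ by Remark \ref{rem: short trees}, so $\Q(\T) \in \lx \vert \kappa$, and a non-dropping cofinal branch $b$ with $\Q(b,\T) = \Q(\T)$ is unique; and for a terminal $(\kappa,\kappa)$-maximal tree $\T$ the pseudo-iterate is simply $L_\gamma(\M(\T))$ for the appropriate $\gamma$, definable from $\M(\T) \in \lx \vert \kappa$. For the wellfoundedness question, Lemma \ref{definability of short tree} and Lemma \ref{lem: absoluteness between lx and v} tell us that, for the branches in play, ``$\kappa \subseteq \wfc(\,\cdot\,)$'' --- a statement about objects visible to $\lx \vert \kappa$ --- is equivalent to full wellfoundedness, so $\lx \vert \kappa$ decides it correctly (and, via $\Sigma^1_1$-absoluteness between the $\zfc$-model $\lx \vert \kappa$ and $\lx$, the mere existence of the relevant branches is decided in $\lx \vert \kappa$ as well).

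Granting these reductions, I would then read (a) as: for every $(\kappa,\kappa)$-relevant finite pseudo-stack on $N$ and every $(\kappa,\kappa)$-short tree on the resulting pseudo-iterate --- all of which are objects of $\lx \vert \kappa$ --- the canonical $\Q$-structure branch exists, does not drop, and is $\kappa$-wellfounded; by the previous paragraph this is a first-order condition on $N$ over $\lx \vert \kappa$, and the existence in $\lx$ of the witnessing choice function $f$ then follows from $\Sigma_n$-Collection. Likewise I would read (b) as: whenever a $(\kappa,\kappa)$-relevant finite pseudo-stack on $N$ yields a non-dropping $(\kappa,\kappa)$-pseudo-iterate $P$, then $P$ is a non-dropping $(\kappa,\alpha)$-pseudo-normal iterate of $N$, noting that both of the relations ``$P$ is a non-dropping $(\kappa,\kappa)$-pseudo-iterate of $N$'' and ``$P$ is a non-dropping $(\kappa,\alpha)$-pseudo-normal iterate of $N$'' are definable over $\lx \vert \kappa$ by the same branch analysis together with the two cited lemmas --- the distinction between the parameters $\kappa$ and $\alpha$ collapsing precisely because of those lemmas. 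Hence (b) is also first-order over $\lx \vert \kappa$, and conjoining it and (a) with the definable class of pre-$\mad$-like $x$-weasels with $N^- \in \lx \vert \kappa$ gives the corollary.

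The hard part --- and the reason Lemma \ref{definability of short tree} and Lemma \ref{lem: absoluteness between lx and v} were proved --- is this last transfer between $\lx$ and $\lx \vert \kappa$: $\mad$-likeness is defined via truth in $\lx$, which can quantify over cofinal branches and can impose full wellfoundedness on branch models, and for a non-$\kappa$-wellfounded branch that wellfoundedness is not even expressible over $\lx \vert \kappa$. Once it is in hand that all relevant objects are bounded below $\kappa$ (hence available to $\lx \vert \kappa$) and that every wellfoundedness requirement reduces to ``$\kappa \subseteq \wfc(\,\cdot\,)$'', the remainder is routine bookkeeping of definable conjuncts.
\eprf
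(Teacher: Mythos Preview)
Your proposal is correct and follows essentially the same approach as the paper: the paper does not give a separate proof for this corollary at all, but simply observes in the sentence preceding it that clause~\ref{def: m-like two} of Definition~\ref{def: m-like} is ``easily seen to be definable over $\lx \vert \kappa$'', with clause~(1) handled by Lemmas~\ref{definability of short tree} and~\ref{lem: absoluteness between lx and v}. Your write-up is a careful unpacking of exactly this reasoning.
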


Now it follows almost immediately that the notion of $s$-iterability is locally definable over $\lx$ for a fixed $s \in \finite(\alpha)$.

\begin{lem} \label{lem: definability of s-iterability}
    Let $N$ be a pre-$\mad$-like $x$-weasel such that $N^- \in \lx \vert \kappa$, and $s \in \finite(\alpha)$ such that $\theta < \max(s)$. Then the following are equivalent:
   \begin{enumerate}
       \item $\lx \models ``N^- \text{ is }s\text{-iterable}"$, and
       \item \label{point two}$\lx \vert (\max(s)+\omega) \models ``N^- \text{ is }s\text{-iterable}"$.
   \end{enumerate}

\end{lem}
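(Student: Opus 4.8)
The plan is to show that the statement ``$N^-$ is $s$-iterable'' is absolute between $\lx$ and $\lx\vert(\max(s)+\omega)$; items $(1)$ and $(2)$ then assert the same thing, and in particular are equivalent. The proof rests on coarse absoluteness facts about the poset $\col(\omega,{<\kappa})$, which lies in $\lx\vert\kappa$. The first thing to record is that, since $\kappa$ is inaccessible in $\lx$, $\col(\omega,{<\kappa})$ has the $\kappa$-cc in $\lx$ and $\lx\vert\kappa = H_\kappa^{\lx}$; hence every maximal antichain of $\col(\omega,{<\kappa})$ lying in $\lx$ already lies in $\lx\vert\kappa$, and, since $\kappa$ is regular and $\col(\omega,{<\kappa}) = \bigcup_{\beta<\kappa}\col(\omega,{<\beta})$, every $\lx$-name for a subset of an ordinal $\lambda<\kappa$ may be taken to be a nice name inside $\col(\omega,{<\beta})$ for some $\beta<\kappa$, hence inside $\lx\vert\kappa$. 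It follows that a filter $G\subseteq\col(\omega,{<\kappa})$ is $\lx$-generic iff it is $(\lx\vert(\max(s)+\omega))$-generic (iff it meets every maximal antichain in $\lx\vert\kappa$), and that for such $G$ the bounded subsets of $\kappa$ lying in $\lx[G]$ are exactly those lying in $(\lx\vert(\max(s)+\omega))[G]$.

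Next I would check that every object occurring in the defining clause of $s$-iterability is either an element of $\lx\vert(\max(s)+\omega)$ or a bounded subset of $\kappa$ in the generic extension. The outer quantifier ranges over pre-$\mad$-like $x$-weasels $P_1,P_2,P_3$ coded by their suitable parts $P_i^-\in\lx\vert\kappa$; being a (pre-)$\mad$-like $x$-weasel, the relation $N\dashrightarrow P_1\dashrightarrow P_2\dashrightarrow P_3$, and the witnessing trees $\T_{ij}=\T_{P_iP_j}$, of length $<\kappa$, are all definable over $\lx\vert\kappa$ by Lemma \ref{definability of short tree}, Lemma \ref{lem: absoluteness between lx and v}, and the corollary following them. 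The ordinals $\gamma_s^{P_i}\leq\delta^{P_i}<\kappa$ and the (codes of the) theories $\tho_s^{P_i}\subseteq\delta^{P_i}$ are computed from $P_i\vert\max(s)\in\lx\vert(\max(s)+\omega)$. A $\T_{ij}$-cofinal branch $b$ is a subset of an ordinal $<\kappa$. Crucially, since the extenders used along $\T_{ij}$ have critical point below $\delta^{P_i}$, the branch embedding $i^{\T_{ij}}_b$ sends $\delta^{P_i}$ to $\delta^{P_j}<\kappa$, so ``$b$ respects $s$'' and the commutativity required in clause \ref{s-iterability two} refer only to the restrictions of the $i^{\T_{ij}}_b$ to domains $\leq\delta^{P_i}<\kappa$ with range $<\delta^{P_j}<\kappa$, and to $\M_b^{\T_{ij}}$ up to (slightly past) $\delta^{P_j}$, never to the full, possibly illfounded, branch model. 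Consequently clauses \ref{s-iterability one} and \ref{s-iterability two} are statements about bounded subsets of $\kappa$, the trees $\T_{ij}$, and the elements $\tho_s^{P_i}$, $\gamma_s^{P_i}$, $P_i\vert\max(s)$ of $\lx\vert(\max(s)+\omega)$, and so are computed identically in $\lx[G]$ and in $(\lx\vert(\max(s)+\omega))[G]$.

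To conclude, I would fix a $\col(\omega,{<\kappa})$-generic $G$ over $V$. For each admissible triple $P_1,P_2,P_3$, the two paragraphs above show that clauses \ref{s-iterability one} and \ref{s-iterability two} hold in $\lx[G]$ iff they hold in $(\lx\vert(\max(s)+\omega))[G]$; since both $\lx$ and $\lx\vert(\max(s)+\omega)$ contain all the relevant names and maximal antichains and decide the pertinent fragment of the $\col(\omega,{<\kappa})$-forcing relation correctly (equivalently, using the characterization of forcing via all generics), this shows that $\col(\omega,{<\kappa})$ forces clauses \ref{s-iterability one} and \ref{s-iterability two} over $\lx$ (for the fixed triple) iff it does so over $\lx\vert(\max(s)+\omega)$. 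Since the outer quantifier ranges over the same triples in both models, we obtain $\lx\models$``$N^-$ is $s$-iterable'' iff $\lx\vert(\max(s)+\omega)\models$``$N^-$ is $s$-iterable''. The main obstacle is the point flagged in the second paragraph: verifying that the definition of $s$-iterability genuinely touches the branch embeddings and branch models only through their bounded fragments, so that the entire computation stays inside $\lx\vert(\max(s)+\omega)$ and its $\col(\omega,{<\kappa})$-generic extension rather than requiring all of $\lx[G]$.
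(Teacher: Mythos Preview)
Your proposal is correct and follows essentially the same approach as the paper's proof, which is just the one-line remark that for any $(\lx,\col(\omega,{<\kappa}))$-generic $H$, the models $\lx[H]$ and $L_{\max(s)+\omega}[x][H]$ have the same set of reals. You have simply unpacked this observation in detail: the $\kappa$-cc guarantees the same generics and the same bounded subsets of $\kappa$ (hence the same reals) in both extensions, and every ingredient of the $s$-iterability clause---the suitable parts $P_i^-$, the trees $\T_{ij}$, the theories $\tho_s^{P_i}$, the branches $b$, and the restricted branch embeddings---is either already in $\lx\vert(\max(s)+\omega)$ or is coded by such a bounded set, so the forced statements agree.
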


This is straightforward since for any $H$ which is $(\lx,\col(\omega, {<\kappa}))$-generic, $\lx[H]$ and $L_{\max(s)+\omega}[x][H]$ have the same set of reals.

\begin{cor}
    For $N$ a pre-$\mad$-like $x$-weasel such that $N^- \in \lx \vert \kappa$, and $s \in \finite(\alpha)$ such that $\theta < \max(s)$ the statement $``N \text{ is }s\text{-iterable}"$ is $\Sigma_1$-definable over $\lx$ in parameters $\{N,s,\R^{\lx}, \theta \}$.
\end{cor}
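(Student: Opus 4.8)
The plan is to invoke the level‑by‑level equivalence of Lemma \ref{lem: definability of s-iterability} and then to check that the resulting local condition is expressible $\Sigma_1$ over $\lx$. By that lemma, for $N$ a pre‑$\mad$‑like $x$‑weasel with $N^-\in\lx\vert\kappa$ and $s\in\finite(\alpha)$ with $\theta<\max(s)$, we have $\lx\models$``$N$ is $s$‑iterable'' if and only if $\lx\vert(\max(s)+\omega)\models$``$N$ is $s$‑iterable''. So two things remain: first, that ``$N$ is $s$‑iterable'' is a genuine first‑order statement $\varphi(N^-,s,\R^{\lx},\theta)$ about the structure $\lx\vert(\max(s)+\omega)$; and second, that the predicate ``$M=\lx\vert(\max(s)+\omega)$'' is $\Sigma_1$ over $\lx$ in the parameter $s$.

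For the first point, one unwinds Definition \ref{def: s-iterability}. The universal quantifier there ranges over pre‑$\mad$‑like $x$‑weasels $P_1,P_2,P_3$ with suitable parts in $\lx\vert\kappa$ and $N\dashrightarrow P_1\dashrightarrow P_2\dashrightarrow P_3$; by the definability of the class of such weasels, together with the definability of $(\kappa,\kappa)$‑short‑tree‑iterability furnished by Lemmas \ref{definability of short tree} and \ref{lem: absoluteness between lx and v} and clause \ref{def: m-like two} of Definition \ref{def: m-like}, this quantifier is first‑order over $\lx\vert\kappa$. The witnessing trees $\T_{ij}$ lie in $\lx\vert\kappa$, the structures $P_i\vert\max(s)$ lie in $\lx\vert(\max(s)+1)$, and hence so do the theories $\tho_s^{P_i}\subseteq\delta^{P_i}<\kappa$ and the ordinals $\gamma_s^{P_i}<\delta^{P_i}$. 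The assertion that $\col(\omega,{<\kappa})$ forces clauses \ref{s-iterability one} and \ref{s-iterability two} is then rendered via the $\col(\omega,{<\kappa})$‑forcing relation for statements about reals: a cofinal branch of some $\T_{ij}$ is a subset of an ordinal below $\kappa$, hence coded by a real once $\kappa$ has been collapsed; the maps $i^{\T_{ij}}_{b}\restriction\gamma_s^{P_i}$ map into $\delta^{P_j}<\kappa$ and are likewise coded by reals; and clauses \ref{s-iterability one}, \ref{s-iterability two} are arithmetic in these reals and the fixed ground parameters $\T_{ij},\tho_s^{P_i}$. Every nice $\col(\omega,{<\kappa})$‑name for a real lies in $\lx\vert\theta\subseteq\lx\vert(\max(s)+\omega)$, so this forcing relation is definable over $\lx\vert(\max(s)+\omega)$; and, exactly as in the remark following Lemma \ref{lem: definability of s-iterability}, it agrees with the $\col(\omega,{<\kappa})$‑forcing relation of $\lx$ on these statements, since $\lx[H]$ and $L_{\max(s)+\omega}[x][H]$ have the same reals for every $(\lx,\col(\omega,{<\kappa}))$‑generic $H$. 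This yields the desired $\varphi$.

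For the second point, since $\max(s)+\omega<\alpha$ the initial segment $\lx\vert(\max(s)+\omega)$ is an element of $\lx$, so ``$M=\lx\vert(\max(s)+\omega)$'' is $\Sigma_1$ over $\lx$ in $s$ (and $x\in\R^{\lx}$): it holds iff there is an $L[x]$‑hierarchy of length $\max(s)+\omega+1$ whose last stage is $M$. Therefore ``$N$ is $s$‑iterable'' holds over $\lx$ iff $\exists M\,(M=\lx\vert(\max(s)+\omega)\wedge M\models\varphi(N^-,s,\R^{\lx},\theta))$, and contracting the existential quantifiers and noting that the matrix is $\Delta_0$ in $M$ and the parameters exhibits this as $\Sigma_1$ over $\lx$ in $\{N,s,\R^{\lx},\theta\}$, with $\kappa$ recovered from $\theta$. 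The only delicate point is the reduction carried out in the first step — clauses \ref{s-iterability one}, \ref{s-iterability two} nominally concern iterates of full weasels of height $\alpha$, yet must be decided by data below $\kappa$ together with the bounded structures $P_i\vert\max(s)$ — but this is precisely what Lemma \ref{lem: definability of s-iterability} and its proof already package, so no further work is needed.
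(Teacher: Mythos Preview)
Your proposal is correct and follows essentially the same approach as the paper: the corollary is immediate from Lemma \ref{lem: definability of s-iterability}, since once $s$-iterability is localized to $\lx\vert(\max(s)+\omega)$ the assertion becomes $\Sigma_1$ over $\lx$ via an existential quantifier over this initial segment. The paper gives no explicit proof beyond the remark that $\R^{\lx}$ is needed to bound the quantification over branches, which you also explain; your write-up is simply a more detailed unpacking of the same one-step argument.
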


Note that we need the parameter $\R^{\lx}$ in order to quantify in a bounded way over all possible branches that respect $s$.

\begin{lem} \label{definability of the covering system}
    $\tilde{\D}$ is $\Sigma_1$-definable over $\lx$ from the parameters $\theta$ and $\R^{\lx}$.
\end{lem}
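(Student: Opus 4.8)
The plan is to observe that $\tilde{\D}$ is, by definition, the collection of pairs $(N,s)$ for which a finite conjunction of conditions holds, and to check that each conjunct --- regarding $N$ and $s$ as the free variables --- is $\Sigma_1$ over $\lx$ in the parameters $\theta$ and $\R^{\lx}$; since a finite conjunction of $\Sigma_1$ formulas in fixed parameters is again $\Sigma_1$, this yields the lemma. Recall that $(N,s)\in\tilde{\D}$ asserts: $N$ is a pre-$\mad$-like $x$-weasel with $N^-\in\lx\vert\kappa$, $s\in\finite(\alpha)$, and $\lx$ satisfies (i) ``$N$ is $\mad$-like'', (ii) $\delta^{L_\alpha(N)}<\max(s)$, and (iii) ``$N$ is $s$-iterable''.

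For the conjunction of (i) with the defining requirements on $N$, I would invoke the corollary immediately preceding Lemma~\ref{lem: definability of s-iterability}: the set of all $N^-$ such that $N$ is a pre-$\mad$-like $x$-weasel with $N^-\in\lx\vert\kappa$ and $\lx\models{}$``$N$ is $\mad$-like'' is definable over $\lx\vert\kappa$. The key observation is that $\lx\vert\kappa$ is a $\Sigma_1$-definable initial segment of $\lx$ from the parameter $\theta$: as $\theta=\kappa^{+\lx}$, the ordinal $\kappa$ is the largest cardinal of $L_\theta[x]$, and $\lx\vert\kappa=L_\kappa[x]$ is then recovered from $\theta$ by a $\Sigma_1$ recursion, so that both ``$y\in\lx\vert\kappa$'' and ``$W=\lx\vert\kappa$'' are $\Sigma_1$ over $\lx$ in $\theta$. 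Composing with the fixed-complexity definition over $\lx\vert\kappa$, the clause ``$N$ is a pre-$\mad$-like $x$-weasel with $N^-\in\lx\vert\kappa$ that $\lx$ regards as $\mad$-like'' becomes $\Sigma_1$ over $\lx$ from $\theta$.

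Clause (ii) is $\Delta_0$ in $N^-$ and $s$, since $\delta^{L_\alpha(N)}$ is the largest cardinal of $N^-$, so it requires no parameters. For clause (iii), the corollary following Lemma~\ref{lem: definability of s-iterability} states exactly that ``$N$ is $s$-iterable'' is $\Sigma_1$-definable over $\lx$ in the parameters $N$, $s$, $\R^{\lx}$, $\theta$ (for the pairs $(N,s)$ relevant here, where $\max(s)$ is large), and, as the remark following that corollary explains, $\R^{\lx}$ enters precisely so as to quantify in a bounded way over the branches respecting $s$. Conjoining (i)--(iii) together with the elementary requirements on $N$ and $s$ then produces a $\Sigma_1$ definition of $\tilde{\D}$ over $\lx$ in the parameters $\theta$ and $\R^{\lx}$.

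I do not expect a serious obstacle here: all the real work has already been done in the preceding lemmas, most importantly in establishing the $\Sigma_1$-definability of $s$-iterability (which used the absoluteness of short-tree iterability between $\lx$ and $V$, and the fact that the relevant branches lie in $\R^{\lx}$). The one point that warrants care is the complexity bookkeeping of the second paragraph --- namely that lifting ``definable over $\lx\vert\kappa$'' to a definition over $\lx$ does not raise the complexity above $\Sigma_1$ --- and this is exactly what the $\Sigma_1$-definability (from $\theta$) of the initial segment $\lx\vert\kappa$ secures.
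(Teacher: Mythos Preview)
Your proposal is correct and matches the paper's approach: the paper gives no proof of this lemma at all, treating it as an immediate consequence of the preceding results (the corollary on $\mad$-likeness being definable over $\lx\vert\kappa$, and the corollary on $s$-iterability being $\Sigma_1$), which is exactly what you spell out. One small caveat: the corollary on $s$-iterability is only stated for $\max(s)>\theta$, whereas the definition of $\tilde{\D}$ only demands $\max(s)>\delta^N$; but the same localization argument works with $\lx\vert(\theta+\omega)$ in place of $\lx\vert(\max(s)+\omega)$ when $\max(s)\leq\theta$, so this is a trivial extension rather than a gap.
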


\subsection{The relation between the internal and the external system}

\begin{lem} \label{lem: absoluteness of short tree iterability}
    Let $N \in \F$. Then $\lx \models ``N\text{ is } (\kappa,\kappa) \text{-short-tree-iterable}"$.
\end{lem}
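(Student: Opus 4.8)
The plan is to produce, inside $\lx$, a branch-selecting function $f$ witnessing $(\kappa,\kappa)$-short-tree-iterability of $N$, exploiting that $N$ is genuinely short-tree-iterable in $V$ and that, for short trees, branches are $\Q$-structure guided with the guiding data lying low. First I would note that every tree $\T_\beta$ occurring in a sequence $\vec\T=\langle\T_\beta\rangle_{\beta\leq k+1}$ of the kind appearing in the definition is an $n$-maximal tree of length $<\kappa$ on a premouse whose suitable part lies in $\lx\vert\kappa$: this holds for $N^-$ by hypothesis and is propagated along the stack since $N$ is $\mad$-like (remark after Definition \ref{def: m-like}). Hence each such tree, together with its $\Q$-structure whenever the latter is an $L_\gamma(\M(\T))$ with $\gamma<\kappa$ --- which is the case precisely for $(\kappa,\kappa)$-short trees, by Remark \ref{rem: short trees} --- belongs to $\lx\vert\kappa$; and any cofinal branch of such a tree, being a bounded subset of $\kappa$, lies in $\lx\vert\kappa$ as soon as it lies in $\lx$, by $\mathsf{GCH}$ and condensation in $L[x]$.

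Next I would use that $\mad$ is $(\omega_1,\gamma)$-short-tree-iterable in $V$ for every $\gamma<\OR$ via $\Sigma^{\mad}$, and that this passes to $N$ and to every pseudo-iterate of $N$ arising along a finite pseudo-stack (such pseudo-iterates being pseudo-iterates of $\mad$ as well, since $N$ is a non-dropping $\Sigma^{\mad}$-iterate of $\mad$). Thus, for any $(\kappa,\kappa)$-short $n$-maximal tree $\T$ of length $<\kappa$ on such a pseudo-iterate $P$, there is in $V$ a cofinal, non-dropping, fully wellfounded branch $b$ with $\Q(b,\T)=\Q(\T)$; moreover, the standard zipper argument shows there is at most one cofinal branch $c$ with $\Q(c,\T)=\Q(\T)$ and $\delta(\T)\in\wfc(\M_c^\T)$.

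Then I would define $f$ by recursion on $k$, inside $\lx\vert\kappa$: given $\vec\T=\langle\T_\beta\rangle_{\beta\leq k+1}$, use $f$ on proper initial segments to reconstruct the pseudo-iterate $P$ carrying $\T_{k+1}$, and let $f(\vec\T)$ be the unique cofinal branch $b\in\lx$ of $\T_{k+1}$ with $\Q(b,\T_{k+1})=\Q(\T_{k+1})$ and $\kappa\subseteq\wfc(\M_b^{\T_{k+1}})$. By Lemma \ref{definability of short tree} (and Lemma \ref{lem: absoluteness between lx and v} for successor-length trees) such a $b$ automatically satisfies $\alpha\subseteq\wfc(\M_b^{\T_{k+1}})$, so it is $\kappa$-wellfounded as required, and it is non-dropping. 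Since $\Q(\T_{k+1})\in\lx\vert\kappa$ and the displayed conditions on $b$ are checkable over $\lx\vert\kappa$ (again by Lemma \ref{definability of short tree}), $f$ is definable over $\lx\vert\kappa$, whence $f\in\lx$. To see that $f(\vec\T)$ is defined on the whole intended domain, it suffices by the recursion to show: for a $(\kappa,\kappa)$-short tree $\T\in\lx\vert\kappa$ on a pseudo-iterate $P$ as above, $\lx$ contains a cofinal branch $b$ with $\Q(b,\T)=\Q(\T)$ and $\kappa\subseteq\wfc(\M_b^\T)$. Such a branch exists in $V$ by the previous paragraph; when $\lh(\T)$ is countable in $\lx$, the assertion ``there is a cofinal non-dropping branch $c$ with $\M_c^\T$ wellfounded and $\Q(c,\T)=\Q(\T)$'' is $\Sigma^1_2$ in reals coding $\T$ and $\Q(\T)$, hence, via Shoenfield absoluteness, reflects from $V$ to $\lx$, which is $\Sigma^1_2$-correct and contains all reals of $L[x]$ (as $(2^\omega)^{L[x]}<\kappa$); when $\lh(\T)<\kappa$ is uncountable, I would reduce to the countable case exactly as in the proof of Lemma \ref{definability of short tree}, collapsing a countable $X\prec\lx\vert\theta$ with $\T,\Q(\T),P^-,x\in X$ to some $\lx\vert\bar\theta$, locating the $\Q$-guided branch $\bar b$ of the countable tree $\bar\T$ in $V$ (using that the collapsed premouse is iterable via the pull-back strategy) and, by the countable case, inside $\lx\vert\bar\theta$, and taking $b$ to be its image under the uncollapse, with $\kappa\subseteq\wfc(\M_b^\T)$ following because a least $\beta<\kappa$ outside $\wfc(\M_b^\T)$, together with a witnessing descending sequence, would reflect into $X$ and contradict wellfoundedness of $\M_{\bar b}^{\bar\T}$. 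At $(\kappa,\kappa)$-maximal trees in the stack the successor model is $L_\gamma(\M(\T))$ with $\M(\T)=\M(b,\T)$ for the reflected branch, so these pose no difficulty, and $\lx\models``N$ is $(\kappa,\kappa)$-short-tree-iterable''.

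I expect the main obstacle to be this last transfer --- arguing that the $\Q$-structure-guided branch which exists in $V$ already lies in $\lx$ for trees of uncountable length --- which is exactly why the Löwenheim--Skolem reduction to the countable, Shoenfield-absolute case together with the wellfoundedness-reflection argument of Lemma \ref{definability of short tree} is needed. A subsidiary point requiring care is verifying that the pseudo-iterates produced along the stack genuinely keep their suitable part inside $\lx\vert\kappa$, so that all the coding of trees and branches and the appeals to Lemma \ref{definability of short tree} are legitimate.
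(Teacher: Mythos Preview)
Your overall architecture is reasonable, but the key step---reflecting the $\Q$-guided branch from $V$ into $\lx$---is not correctly justified, and this is precisely the delicate point. You invoke Shoenfield absoluteness, asserting that $\lx$ is ``$\Sigma^1_2$-correct''. But $\lx = L_\alpha[x]$ has countable height in $V$ (since $\mad$, and hence $x$ and $\alpha_x$, are countable), and Shoenfield absoluteness requires the transitive model to contain $\omega_1$ (or at least $\omega_1^{L[z]}$ for the relevant parameter $z$). Your model does not. Nor can you simply downgrade to $\Sigma^1_1$: Mostowski absoluteness in the form ``$\Sigma^1_1$ reflects downward'' fails for admissible sets in general---recursive pseudo-wellorderings witness exactly this---and for $n=1$ the structure $\lx$ is only admissible, so it need not produce a branch for an illfounded tree on $\omega$. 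Thus your ``countable case'' does not go through, and since your L\"owenheim--Skolem reduction appeals back to it, the whole argument has a gap.

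The paper's proof avoids this by never asking $\lx$ itself to find the branch. Instead it works in $\lx\vert\theta$, which \emph{is} a $\zfc^-$ model (since $\theta=\kappa^{+\lx}$), forces with $\col(\omega,\kappa)$ to make $\T$, $N^-$, and $\Q(\T)$ countable, and then uses $\Sigma^1_1$-absoluteness between $V$ and the $\zfc^-$ model $(\lx\vert\theta)[h]$ (where it is valid, because with full separation one can form the set of unranked nodes and hence the leftmost bad branch). Uniqueness of the $\Q$-guided branch and homogeneity of the collapse then put the branch into $\lx\vert\theta$, hence into $\lx$. Your L\"owenheim--Skolem idea is actually not far from a valid alternative: the transitive collapse $L_{\bar\theta}[x]$ of your hull $X\prec\lx\vert\theta$ is again a $\zfc^-$ model, so $\Sigma^1_1$-absoluteness \emph{does} apply there and would give $\bar b\in L_{\bar\theta}[x]$, which you could then lift. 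But you would need to justify that step directly (via $\zfc^-$ and $\Sigma^1_1$-absoluteness), not by appeal to your flawed countable case.
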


\prf
    By Lemma \ref{lem: absoluteness between lx and v}, it suffices to see that for a $(\kappa,\kappa)$-short tree $\T \in \lx$ on $N$, there is a branch $b \in \lx$ such that $\kappa \subseteq \wfc(\M_b^\T)$. By Remark \ref{rem: short trees}, $\Q(\T) = J_\gamma (\M(\T))$ for some $\gamma < \kappa$, so that $\Q(\T) \in \lx \vert \kappa$.

    It is easy to see that $\T$ is $(\kappa,\kappa)$-short in $V$. Thus, there is a cofinal wellfounded branch $b \in V$ such that 
    \[
        \Q(b,\T) = \Q(\T).
    \]
    Let $h$ be $(\lx \vert \theta,\col(\omega,\kappa))$-generic. In $(\lx \vert \theta) [h]$, $\T, N \vert (\delta^{N+})^{+N}$, and $\Q(\T)$ are countable. Moreover, $(\lx \vert \theta) [h]$ is a $\zf^-$-model. Thus, by $\Sigma_1^1$-absoluteness there is a cofinal branch $c \in (\lx \vert \theta) [h]$ through $\T^\prime$ such that 
    \[
        \Q(c,\T) = \Q(\T).
    \]
    But this implies that $b =c$ and therefore, $b \in (\lx \vert \theta) [h]$. However, $h$ was arbitrary, and thus, by Solovay's Lemma, $b \in \lx \vert \theta$. So $b \in \lx$.
    \eprf

    \begin{cor}
        Let $s \in \finite(\alpha)$ and $N \in \F$ be $s$-stable. Then $(N,s) \in \tilde{\D}$.
    \end{cor}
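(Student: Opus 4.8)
The plan is to assemble results already in hand: Lemma~\ref{stable implies iterability} gives that an $s$-stable $N\in\F$ is $s$-iterable; Lemma~\ref{lem: absoluteness of short tree iterability} gives that $N\in\F$ is $(\kappa,\kappa)$-short-tree-iterable in $\lx$; and $N\in\F$ is by definition a good pre-$\mad$-like $x$-weasel with $N^-\in\lx\vert\kappa$. Unwinding the definition of $\tilde\D$, we must check: (i) $N$ is a pre-$\mad$-like $x$-weasel with $N^-\in\lx\vert\kappa$; (ii) $s\in\finite(\alpha)$; and, in $\lx$, (iii) $N$ is $\mad$-like, (iv) $\delta^{L_\alpha(N)}<\max(s)$, and (v) $N$ is $s$-iterable. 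Item (i) is immediate from $N\in\F$, (ii) is the hypothesis, and (iv) holds since $\delta^{L_\alpha(N)}=\delta^N<\theta^N=\theta$, which is below $\max(s)$ under the standing assumption $\delta^N<\max(s)$ that is implicit as soon as one speaks of $s$-iterability (see Definition~\ref{def: s-iterability}).

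The substantive clause is (iii). Its first half, that $N$ is $(\kappa,\kappa)$-short-tree-iterable in $\lx$, is exactly Lemma~\ref{lem: absoluteness of short tree iterability}. For clause~\ref{def: m-like two} of Definition~\ref{def: m-like}, one first records that $N$ is $\mad$-like in $V$: each $N\in\F$ is a non-dropping $\Sigma^\mad$-iterate of $\mad$ reached by a tree of length $<\kappa$ (the comparison trees of Lemma~\ref{comparison in F} and the genericity iterations used to produce members of $\F$ all have length $<\kappa$), so this is the content of the remark following Definition~\ref{def: m-like}. Now any non-dropping $(\kappa,\kappa)$-pseudo-iterate $P$ of $N$ computed in $\lx$ is built from a relevant finite pseudo-stack of trees lying in $\lx\vert\kappa$, the branches chosen at short trees being the unique $\kappa$-wellfounded ones, which by Lemma~\ref{lem: absoluteness between lx and v} agree with those computed in $V$; hence $P$ is also a non-dropping $(\kappa,\kappa)$-pseudo-iterate of $N$ in $V$, and so, by $\mad$-likeness of $N$ in $V$, a non-dropping $(\kappa,\alpha)$-pseudo-normal iterate of $N$. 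Since, as noted before Lemma~\ref{lem: absoluteness of short tree iterability}, clause~\ref{def: m-like two} for pseudo-iterates in $\lx\vert\kappa$ is definable over $\lx\vert\kappa$, this witness is available to $\lx$, so $\lx\models N$ is $\mad$-like.

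For (v), Lemma~\ref{stable implies iterability} gives that $N$ is $s$-iterable, and since Definition~\ref{def: s-iterability} already phrases $s$-iterability as an assertion about what $\col(\omega,{<\kappa})$ forces over $\lx$, this is literally ``$\lx\models N$ is $s$-iterable''; if one wishes to see it with all quantifiers internal to $\lx$, Lemma~\ref{lem: definability of s-iterability} and the $\Sigma_1$-definability corollary following it (with the observation that $\lx$ and $L_{\max(s)+\omega}[x]$ have the same reals after collapsing) identify the two formulations. This gives all of (i)--(v), whence $(N,s)\in\tilde\D$. The one step requiring genuine care is the verification of clause~\ref{def: m-like two} inside $\lx$ in the second paragraph; the rest is bookkeeping around Lemmas~\ref{stable implies iterability} and~\ref{lem: absoluteness of short tree iterability}.
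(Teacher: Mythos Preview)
Your proof is correct and follows the approach implicit in the paper, which states the corollary without proof immediately after Lemma~\ref{lem: absoluteness of short tree iterability}, treating it as a direct consequence of that lemma together with Lemma~\ref{stable implies iterability}. Your unpacking of clause~\ref{def: m-like two} of Definition~\ref{def: m-like} inside $\lx$ via the absoluteness lemmas is exactly the content the paper leaves unstated, and your flagging of the implicit hypothesis $\delta^N<\max(s)$ is appropriate.
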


\begin{lem} \label{lem: comparison lemma}
    Let $N$, $P$ and $s,t \in \finite(\alpha)$ be such that $(N,s),(P,t) \in \tilde{\D}$ and $\max\{s,t\} > \mu$, where $\mu$ is the cardinal successor of $\max \{\delta^N,\delta^P\}$ in $\lx$. Then there is $R$ such that $(R,s\cup t) \in \tilde{\D}$, and $(N,s) \leq (R,s\cup t)$ and $(P,t) \leq (R,s\cup t)$. Moreover, $\delta^R \leq \mu < \kappa$.
\end{lem}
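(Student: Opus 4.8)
\emph{Proof proposal.} The plan is to mimic the comparison argument of Lemma~\ref{comparison in F}, but carried out inside $\lx$ below $\kappa$ and stopped before the final $x$-genericity iteration, which is not needed here since membership in $\tilde{\D}$ does not demand genericity of $x$. Using that $(N,s),(P,t)\in\tilde{\D}$, both $N$ and $P$ are $\mad$-like in $\lx$, hence $(\kappa,\kappa)$-short-tree-iterable there. I would define trees $\T$ on $N^-$ and $\U$ on $P^-$ by iterating away the least disagreement at successor stages; at a limit stage below $\kappa$, either $\Q(\T),\Q(\U)\in\lx$, and then the cofinal wellfounded branches lie in $\lx\vert\kappa$ by $\Sigma^1_1$-absoluteness in the ZFC-model $\lx\vert\kappa$, or one of the $\Q$-structures fails to be in $\lx$, and then $\M(\T)=\M(\U)\in\lx\vert\kappa$ and $L_\alpha(\M(\T))$ is a pre-$\mad$-like $x$-weasel that the branches reach. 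By the standard bound the process halts after at most $\eta+1$ steps, where $\eta=\max\{\delta^N,\delta^P\}$, with a common iterate $Q^-$ of the suitable parts; all extenders used have index below $\mu$ (as $\OR^{N^-},\OR^{P^-}\le\mu$), so $Q^-\in\lx\vert\kappa$ and $\delta^{Q^-}\le\mu<\kappa$. Put $R:=L_\alpha(Q^-)$. As in the proof that $\F\neq\emptyset$, one checks $R\models\tho$ (via Lemma~\ref{lem:General Preservation of KP}, Lemma~\ref{lem: preservation of kp} and the minimality of $\alpha$), so $R$ is a pre-$\mad$-like $x$-weasel with $R^-\in\lx\vert\kappa$; and since the comparison uses only extenders below the images of the Woodins, the same trees run on $N$ and $P$ and witness that $R$ is a non-dropping $(\kappa,\kappa)$-pseudo-normal iterate of each, i.e.\ $N\dashrightarrow R$ and $P\dashrightarrow R$, with $\delta^R=\delta^{Q^-}\le\mu$.

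Next I would verify $(R,s\cup t)\in\tilde{\D}$, i.e.\ that $\lx$ thinks $R$ is $\mad$-like, that $\delta^{L_\alpha(R)}<\max(s\cup t)$, and that $R$ is $(s\cup t)$-iterable. The first holds because $\mad$-likeness (Definition~\ref{def: m-like}) passes to non-dropping $(\kappa,\kappa)$-pseudo-normal iterates: clause~\ref{def: m-like two} is transitive in $\dashrightarrow$, and short-tree-iterability transfers along the tree by normalization, as in the remark following Definition~\ref{def: m-like}. The second is immediate: $\delta^{L_\alpha(R)}=\delta^R\le\mu<\max\{s,t\}\le\max(s\cup t)$.

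For $(s\cup t)$-iterability in the sense of Definition~\ref{def: s-iterability}, I would fix pre-$\mad$-like $x$-weasels $P_1,P_2,P_3$ with $P_i^-\in\lx\vert\kappa$ and $R\dashrightarrow P_1\dashrightarrow P_2\dashrightarrow P_3$, write $\T_{ij}=\T_{P_iP_j}$, and let $H$ be $(\lx,\col(\omega,{<\kappa}))$-generic. Since $N\dashrightarrow R\dashrightarrow P_1$ and $P\dashrightarrow R\dashrightarrow P_1$, the $P_i$ also satisfy $N\dashrightarrow P_1\dashrightarrow P_2\dashrightarrow P_3$ and $P\dashrightarrow P_1\dashrightarrow P_2\dashrightarrow P_3$, so the $s$-iterability of $N$ and the $t$-iterability of $P$ both apply to this configuration. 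By the former there is in $\lx[H]$ a non-dropping $\T_{12}$-cofinal branch $b$ with $\delta^{P_2}\in\wfp(\M_b^{\T_{12}})$ and $i_b^{\T_{12}}(\tho_s^{P_1})=\tho_s^{P_2}$; by the latter there is one, $b'$, with the analogous property for $t$. The key point is uniqueness: since $P_1$ is $\mad$-like with $P_1^-\in\lx\vert\kappa$, a $(\kappa,\kappa)$-short tree on $P_1$ has a unique $\kappa$-wellfounded cofinal branch, and if $\T_{12}$ is $(\kappa,\kappa)$-maximal the relevant branch is the one built into $\T_{12}$ by Definition~\ref{dfn: tree leading to P}; in either case $b=b'$. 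Since $(s\cup t)^-$ consists of $s^-\cup t^-$ together with the single extra top point $\max(s\cup t)\in\{\max s,\max t\}$, which is already a constant of $\mathcal{L}_s$ or $\mathcal{L}_t$, the theory $\tho_{s\cup t}^{P_1}$ is recursively computable from $\tho_s^{P_1}$, $\tho_t^{P_1}$ and this finite parameter, so $i_b^{\T_{12}}(\tho_{s\cup t}^{P_1})=\tho_{s\cup t}^{P_2}$, yielding clause~\ref{s-iterability one}. For clause~\ref{s-iterability two}: any branches $b_{12},b_{23},b_{13}$ respecting $s\cup t$ in particular respect $s$ and $t$, so $i^{\T_{13}}_{b_{13}}$ and $i^{\T_{23}}_{b_{23}}\circ i^{\T_{12}}_{b_{12}}$ agree on $\gamma^{P_1}_s$ by the $s$-iterability of $N$ and on $\gamma^{P_1}_t$ by the $t$-iterability of $P$; since $H^{P_1}_{s\cup t}=\Hull{P_1\vert\max(s\cup t)}{\omega}(\gamma^{P_1}_{s\cup t}\cup\{(s\cup t)^-\})$ is generated by what lies below $\max\{\gamma^{P_1}_s,\gamma^{P_1}_t\}$ together with the distinguished branch, the two maps agree on $\gamma^{P_1}_{s\cup t}$. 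Hence $(R,s\cup t)\in\tilde{\D}$ with $(N,s)\le(R,s\cup t)$ and $(P,t)\le(R,s\cup t)$, and $\delta^R\le\mu<\kappa$.

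The hard part will be the last step: matching the $\T_{12}$-branch ``respecting $s$'' furnished by the $s$-iterability of $N$ with the one ``respecting $t$'' furnished by the $t$-iterability of $P$, especially when $\T_{12}$ is $(\kappa,\kappa)$-maximal and new cofinal branches appear in $\lx[H]$. I expect this to be handled by the uniqueness of $\kappa$-wellfounded cofinal branches for $\mad$-like weasels, together with the coherence of the $\leq^*$-orderings carried by the relevant $\tho$'s, which forces the two branches to coincide and the combined theory $\tho_{s\cup t}^{P_i}$ to be transported correctly along it; the bookkeeping relating $\tho_{s\cup t}$ to $\tho_s$ and $\tho_t$ and the generators $\gamma^{P_1}_{s\cup t}$, $\gamma^{P_1}_s$, $\gamma^{P_1}_t$ is the other place where care is needed.
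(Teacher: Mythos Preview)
Your approach diverges from the paper's at the crucial step, and the gap you flag as ``the hard part'' is not closed by the sketch you give. The paper does \emph{not} attempt to derive $(s\cup t)$-iterability of $R$ from the separate $s$-iterability of $N$ and $t$-iterability of $P$. Instead it introduces a third model $Q\in\F$ that is $s\cup t$-stable (via Lemma~\ref{existence of s-stable}) and runs a \emph{three-way} comparison of $N$, $P$, and $Q$ simultaneously. The common iterate $R$ is then a genuine non-dropping $\Sigma^{\mad}$-iterate of $Q$, so $R$ is itself $s\cup t$-stable, hence $s\cup t$-iterable by Lemma~\ref{stable implies iterability}. This sidesteps entirely the combinatorics you are trying to do by hand.

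Your direct verification has real problems. First, the uniqueness-of-branches argument only applies to $(\kappa,\kappa)$-short trees; for a $(\kappa,\kappa)$-maximal $\T_{12}$, the $\col(\omega,{<\kappa})$-extension can contain several cofinal branches with $\delta^{P_2}$ in their wellfounded part, and there is no a priori reason the branch respecting $s$ coincides with the one respecting $t$. Second, the claim that $\tho_{s\cup t}^{P_1}$ is recursively computable from $\tho_s^{P_1}$ and $\tho_t^{P_1}$ is not justified: if, say, $\max(s\cup t)=\max(s)$, then $\tho_{s\cup t}^{P_1}$ is the theory of $P_1\vert\max(s)$ with constants for all of $(s\cup t)^-=s^-\cup t$, but the elements of $t\setminus s$ are not named in $\mathcal{L}_s$, and $\tho_t^{P_1}$ is computed at the strictly lower level $P_1\vert\max(t)$, so it cannot tell you how those ordinals behave inside $P_1\vert\max(s)$. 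Third, for the same reason $\gamma^{P_1}_{s\cup t}$ need not be bounded by $\max\{\gamma^{P_1}_s,\gamma^{P_1}_t\}$: adjoining the parameters from $t$ to the hull taken at level $\max(s)$ can push the supremum in $\delta^{P_1}$ strictly beyond both. The fix is not more bookkeeping but the auxiliary stable $Q$; once you bring it into the comparison, $(s\cup t)$-iterability of $R$ is immediate.
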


\prf
Let $Q \in \F$ be $s \cup t$-stable and work in $\lx$. Let $\T$ on $N$, $\U$ on $P$, and $\V$ on $Q$ result from the standard process of iterating away the least disagreement at successor steps, and from choosing according to the $(\kappa,\kappa)$-short-tree-strategies for $N$,$P$, and $Q$ at limit steps. By the same argument as in the proof of \ref{comparison in F}, the process cannot last $\mu + 1$-many steps. Moreover, since $N$,$P$, and $Q$ are sufficiently iterable in $\lx$, the process terminates. 

Note that there are three ways in which the process can terminate. 
The first case is that the process stops at a limit stage, and both trees are $(\kappa,\kappa)$-short. In this case, we either have fully wellfounded cofinal branches through $\T$ and $\U$, and we may then argue as in the proof of Lemma \ref{comparison in F}, or there is a branch which is $\kappa$-wellfounded. Let us suppose without loss of generality that $\T$ does not have a fully wellfounded branch. In this case, by Lemma \ref{definability of short tree}, the $\kappa$-wellfounded branch $b$ of $\T$ is $\alpha$-wellfounded. However, then by the proof of Lemma \ref{lem:General Preservation of KP} and the fact that $L_\alpha(\M^\T_b \vert \kappa) \models \tho$, it follows that $R := \M^\T_b \vert \alpha = \wfc(\M^T_b)$, so that $R$ is a $(\kappa,\kappa)$-pseudo normal iterate of $N$,$P$ and $Q$. Then it is easy to see that $(R,s\cup t) \in \tilde{\D}$.

The second case is that there is no more disagreement at a successor step. In this case, the final iterate $R$ is $\kappa$-wellfounded. But then again $R$ is $\alpha$-wellfounded and then much as in the proof of Lemma \ref{comparison in F}, $R$ is a $\mad$-like $x$-weasel in $\lx$ and $\delta^R < \mu < \max(s\cup t)$ and $R$ is $s\cup t$-iterable.

The third case is that the process stops at a limit stage, and both trees are $(\kappa,\kappa)$-maximal. Let
\[
    R := L_{\alpha^\prime}(\M(\T)) = L_{\alpha^\prime}(\M(\U)),
\]  
where $\alpha^\prime$ is such that $L_{\alpha^\prime}(\M(\T)) \models \tho$. We aim to see that $\alpha^\prime = \alpha$. Let us first suppose, for the sake of contradiction, that $\alpha^\prime > \alpha$. Much as before, we have $L_\alpha(\M(\T)) \models \tho$, since $\M(\T) \in \lx \vert \kappa$. This contradicts the fact that $R \models \tho$. Now, let us suppose, for the sake of contradiction, that $\alpha^\prime < \alpha$. Note that since $Q$ is an iterate of $\mad$, there is by Lemma \ref{lem:General Preservation of KP}, a cofinal wellfounded branch $b$ leading from $Q$ to $R$ and an $n$-embedding $i^\V_b \colon Q \to R$ in $V$. However, we may then derive a contradiction in the same way as in the proof of Lemma \ref{comparison in F}.
\eprf

\begin{lem} \label{x genericity iteration}
    Let $N$ and $s \in \finite(\alpha)$ be such that $(N,s) \in \tilde{\D}$ and $\max\{s,t\} > \zeta$, where $\zeta$ is the cardinal successor of $\delta^N$ in $\lx$. Then there is $R$ such that $(R,s) \in \tilde{\D}$, and $(N,s) \leq (R,s)$ and $x$ is generic over $R$ for the extender algebra at $\delta^R$. Moreover, $\delta^R \leq \zeta < \kappa$.
\end{lem}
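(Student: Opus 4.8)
The plan is to obtain $R$ by running the $x$-genericity iteration on $N$ inside $\lx$. Since $(N,s)\in\tilde{\D}$, the model $\lx$ believes $N$ is $\mad$-like, hence $(\kappa,\kappa)$-short-tree-iterable, so $\lx$ has a short-tree strategy $f$ for $N$. Working in $\lx$, build an $n$-maximal iteration tree $\T$ on $N^-$: at successor steps apply the least extender $E\in\E^{\M_\gamma^\T}$ with $\nu_E$ an $\M_\gamma^\T$-cardinal that induces an extender-algebra axiom at $\delta^{\M_\gamma^\T}$ which $x$ refutes, halting when no such disagreement remains; at a limit stage $\lambda$, if $\T\restriction\lambda$ is $(\kappa,\kappa)$-short use $f$ to choose its cofinal branch and continue, and if $\T\restriction\lambda$ is $(\kappa,\kappa)$-maximal halt. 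This is the genericity iteration constructed in the proof of Theorem 7.14 of \cite{Steel2010}, adapted as in the proof of Lemma \ref{comparison in F}. Set $R:=\M_\infty^\T$ if the process halts at a successor stage, and $R:=L_{\alpha'}(\M(\T))$, for the $\alpha'$ with $L_{\alpha'}(\M(\T))\models\tho$, if it halts at a $(\kappa,\kappa)$-maximal limit stage. By the standard length analysis $\T=\T_{NR}$ lies in $\lx\vert\kappa$, has length $<\kappa$ and non-dropping main branch, and $\delta^R\leq\zeta<\kappa$.

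I would then check that $R$ is a pre-$\mad$-like $x$-weasel with $R^-\in\lx\vert\kappa$. Since the iteration lives below $\delta^N<\kappa$ it fixes everything at and above $\kappa$, so $\mu^R=\kappa$ and $\theta^R=\kappa^{+\lx}$; and $R^-=(\M_\infty^{\T})^-\in\lx\vert\kappa$ because $\delta^R\leq\zeta<\kappa$. By Lemma \ref{lem:General Preservation of KP}, $R\models\tho$, and then $\OR^R=\alpha$ just as in the proof that $\F\neq\emptyset$: using $R^-\in\lx\vert\kappa$ and Lemma \ref{lem: preservation of kp}, $R[x]\models\Sigma_n\text{-}\kp~+~``\kappa\text{ is inaccessible}"~+~``\kappa^+\text{ exists}"$, so $\OR^R<\alpha$ would contradict the minimality of $\alpha$, while $\OR^R>\alpha$ would give $R\vert\alpha\models\tho$, contradicting $R\models\tho$. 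In the branch case $x$ is $(R,\mathbb{B}^R)$-generic because the genericity iteration kills every axiom of $\mathbb{B}^R$ refuted by $x$; in the $(\kappa,\kappa)$-maximal case the same applies to $\mathbb{B}^{\M(\T)}$ and $\alpha'=\alpha$ is argued exactly as in Lemma \ref{comparison in F}. As $\T_{NR}$ is a non-dropping $n$-maximal $\kappa$-wellfounded tree of length $<\kappa$ leading to $R\models\tho$, it witnesses $N\dashrightarrow R$ in the sense of Definition \ref{dfn: tree leading to P}. Finally $R$ is again $\mad$-like: prepending $\T_{NR}$ turns any $(\kappa,\kappa)$-short tree on $R$, respectively any non-dropping $(\kappa,\kappa)$-pseudo-iterate of $R$, into a stack, respectively a non-dropping $(\kappa,\kappa)$-pseudo-iterate, of $N$, so the claim follows from $N$ being $\mad$-like together with the normalization results of \cite{schlutzenberg2024normalizationtransfinitestacks} (cf. the remark following Definition \ref{def: m-like}).

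It remains to see that $R$ is $s$-iterable. Since $\delta^R\leq\zeta<\max(s)$, the pair $(R,s)$ has the required form. Given pre-$\mad$-like $x$-weasels $P_1,P_2,P_3$ with $P_i^-\in\lx\vert\kappa$ and $R\dashrightarrow P_1\dashrightarrow P_2\dashrightarrow P_3$, transitivity of $\dashrightarrow$ gives $N\dashrightarrow P_1\dashrightarrow P_2\dashrightarrow P_3$, so both clauses of Definition \ref{def: s-iterability} for these $P_i$ are instances of the corresponding clauses of the $s$-iterability of $N$, which holds since $(N,s)\in\tilde{\D}$. Hence $R$ is $s$-iterable, so $(R,s)\in\tilde{\D}$; and since $N\dashrightarrow R$ and $s\subseteq s$ we conclude $(N,s)\leq(R,s)$, with $\delta^R\leq\zeta<\kappa$ as required.

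The main work is really the first paragraph: verifying that the $x$-genericity iteration can be run and terminates inside $\lx$ — i.e.\ that at limit stages the short-tree strategy $f$ applies until possibly one last $(\kappa,\kappa)$-maximal stage, and that the height of the output is forced to equal $\alpha$. These points are handled exactly as in the comparison argument of Lemma \ref{comparison in F} and the argument that $\F\neq\emptyset$, so nothing essentially new is needed; the transfer of $s$-iterability from $N$ to $R$ and the extender-algebra genericity of $R$ are routine.
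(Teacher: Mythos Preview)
Your proof is correct and follows essentially the same approach as the paper's: run the $x$-genericity iteration on $N$ inside $\lx$, handle the short-tree/branch cases as in the comparison argument, and in the $(\kappa,\kappa)$-maximal case show $\alpha'=\alpha$ by using that $x$ is extender-algebra generic over $R$ so that $R[x]\models\Sigma_n\text{-}\kp$ together with the minimality of $\alpha$. One small point: your pointer to Lemma~\ref{comparison in F} for the maximal-case computation of $\alpha'$ is off, since that lemma's argument invokes the external branch given by $\Sigma^{\mad}$, which is not available for an arbitrary $N\in\tilde{\D}$; the right justification is precisely the direct argument you already spelled out (via $R[x]$ and the minimality of $\alpha$), and this is what the paper does, noting explicitly that the maximal case is the one place where the proof of Lemma~\ref{lem: comparison lemma} does not carry over verbatim.
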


\prf
    We work in $\lx$. Let $\T$ be the $x$-genericity iteration at $\delta^N$ of $N$, i.e.~the iteration tree constructed in the proof of Theorem 7.14 of \cite{Steel2010}. We argue much as in the proof of Lemma \ref{lem: comparison lemma}. The only difference is in the third case of that proof, i.e.~the process is of limit length and reaches a $(\kappa,\kappa)$-maxmial tree. Let $R = L_{\alpha^\prime}(\M(\T))$, where $\alpha^\prime$ is such that $L_{\alpha^\prime}(\M(\T)) \models \tho$. By Lemma \ref{lem: preservation of kp}, $\Sigma_n$-KP is preserved by forcing with the extender algebra over $R$. Moreover, $\mu^R$ remains inaccessible in $R[x]$. Thus, since $R^- \in \lx$, $L_{\alpha^\prime} (R^-) [x]$ models
    \[
        \Sigma_n\text{-}\kp \land \exists \kappa (``\kappa \text{ is inaccessible and }\kappa^+ \text{ exists"}).
    \]
    This means that $\alpha \leq \alpha^\prime$ by the minimality of $\alpha$. Suppose for the sake of contradiction that $\alpha < \alpha^\prime$. Note that since $\delta^R < \kappa$, and $\kappa$ is inaccessible in $R[x] \vert \alpha = \lx$, $\kappa$ is inaccessible in $R \vert \alpha$ and $\kappa^{+\lx}$ is a cardinal in $R \vert \alpha$. However, this means that $R \models \tho$, a contradiction! Thus, $\alpha = \alpha^\prime$. But then it follows that $\kappa = \mu^R$ and $\kappa^{+\lx} = \theta^R$, so that $R$ is a good $\mad$-like $x$-weasel and $R^- \in \lx \vert \kappa$.
    \eprf

    \begin{cor}
        $(\tilde{\D},\leq)$ is a directed partial order.
    \end{cor}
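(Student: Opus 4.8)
The plan is to treat the two claims — that $(\tilde{\D},\le)$ is a partial order and that it is directed — in turn, with the real content of the second reduced to Lemma~\ref{lem: comparison lemma}.

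For the partial order part I would unfold the definition of $\le$: $(N,s)\le(P,t)$ means $N\dashrightarrow P$ and $s\subseteq t$, so reflexivity, antisymmetry and transitivity follow coordinatewise from the corresponding properties of $\dashrightarrow$ on $\mad$-like premice (the lemma following Definition~\ref{dfn: tree leading to P}) and of $\subseteq$ on finite subsets of $\alpha$. The only line worth spelling out is antisymmetry: if $(N,s)\le(P,t)$ and $(P,t)\le(N,s)$ then $N\dashrightarrow P\dashrightarrow N$ gives $N=P$, and $s\subseteq t\subseteq s$ gives $s=t$. This part is routine.

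For directedness, let $(N,s),(P,t)\in\tilde{\D}$. If $\max(s\cup t)$ already exceeds $\mu$, the cardinal successor of $\max\{\delta^N,\delta^P\}$ in $\lx$, then Lemma~\ref{lem: comparison lemma} directly produces $R$ with $(R,s\cup t)\in\tilde{\D}$, $(N,s)\le(R,s\cup t)$ and $(P,t)\le(R,s\cup t)$, and we are done. To cover the general case I would first enlarge the second coordinate: fix $\xi$ with $\kappa<\xi<\alpha$, put $u=s\cup t\cup\{\xi\}$, and, using $\F\neq\emptyset$ together with Lemma~\ref{existence of s-stable}, choose $Q\in\F$ that is $u$-stable, so that $(Q,u)\in\tilde{\D}$ by the corollary following Lemma~\ref{lem: absoluteness of short tree iterability}. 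Now run the simultaneous comparison of $N$, $P$ and $Q$ exactly as in the proof of Lemma~\ref{lem: comparison lemma} — iterating away least disagreements at successor stages and following the $(\kappa,\kappa)$-short-tree strategies, with the $\Q$-structure prescription at $(\kappa,\kappa)$-maximal stages, at limits. This terminates with a common pseudo-iterate $R$, which that proof shows to be $\mad$-like and to satisfy $\delta^R<\kappa$, hence $\delta^R<\max(u)$; and $u$-iterability of $R$ follows from the $u$-stability of $Q$ just as $s$-iterability of the comparison output is obtained there (cf. Lemma~\ref{stable implies iterability}). Therefore $(R,u)\in\tilde{\D}$, and since $N\dashrightarrow R$, $P\dashrightarrow R$, $s\subseteq u$ and $t\subseteq u$, the pair $(R,u)$ is a common $\le$-upper bound. (Equivalently, one may observe that the pairs $(N,s)$ with $\max(s)>\kappa$ form a cofinal subsystem with the same direct limit and verify directedness only there.)

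The only non-formal ingredient is thus the comparison argument already packaged in Lemma~\ref{lem: comparison lemma}, and the step I expect to need the most care is the boundary case: one should not assume that $\tilde{\D}$ is closed under enlarging the finite-set coordinate (if anything it is closed rather in the opposite direction), which is exactly why it is cleanest to drive the comparison with a $u$-stable $Q\in\F$ fixed in advance instead of comparing first and trying to extend afterwards. Note that Lemma~\ref{x genericity iteration} is not needed for directedness as stated; it is what lets one additionally take the common iterate with $x$ generic over it, which is convenient downstream.
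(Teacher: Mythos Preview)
Your proposal is correct and matches the paper's intended (but unwritten) argument: the corollary is stated without proof immediately after Lemma~\ref{lem: comparison lemma} and Lemma~\ref{x genericity iteration}, with directedness meant to follow from the former. You are in fact more careful than the paper, which silently ignores the side hypothesis $\max(s\cup t)>\mu$; your device of adjoining some $\xi>\kappa$ and running the three-way comparison against a $u$-stable $Q\in\F$ is exactly how to discharge it, and your observation that Lemma~\ref{x genericity iteration} is not needed for directedness per se is also right.
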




\begin{cor} \label{lem: suitable iterates}
    Let $N$, $P$ and $s,t \in \finite(\alpha)$ be such that $(N,s),(P,t) \in \tilde{\D}$ and $\max\{s,t\} > \mu$, where $\mu$ is the cardinal successor of $\max \{\delta^N,\delta^P\}$ in $\lx$. Then there is $R$ such that $(R,s\cup t) \in \tilde{\D}$, and $(N,s) \leq (R,s\cup t)$ and $(P,t) \leq (R,s\cup t)$ and $x$ is generic over $R$ for the extender algebra at $\delta^R$. Moreover, $\delta^R \leq \mu < \kappa$.
\end{cor}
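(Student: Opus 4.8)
The plan is to obtain $R$ by composing the two preceding lemmas. First I would apply Lemma \ref{lem: comparison lemma} to $(N,s)$ and $(P,t)$; this is legitimate since $\max\{s,t\} > \mu$ by hypothesis, and it produces $R_0$ with $(R_0, s\cup t) \in \tilde\D$, $(N,s) \leq (R_0, s\cup t)$, $(P,t) \leq (R_0, s\cup t)$, and $\delta^{R_0} \leq \mu < \kappa$. Moreover, since the comparison producing $R_0$ is run on the suitable parts $N^-, P^-$ (as in the proof of Lemma \ref{comparison in F}) and terminates at a stage strictly below $\mu$, one in fact gets $\delta^{R_0} < \mu$, and hence $(\delta^{R_0})^{+\lx} \leq \mu < \max\{s,t\} = \max(s\cup t)$.

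Next I would apply Lemma \ref{x genericity iteration} to $(R_0, s\cup t)$. Its hypothesis is precisely the inequality just displayed, so it yields $R$ with $(R, s\cup t) \in \tilde\D$, $(R_0, s\cup t) \leq (R, s\cup t)$, $\delta^R \leq (\delta^{R_0})^{+\lx} \leq \mu < \kappa$, and $x$ generic over $R$ for the extender algebra at $\delta^R$. Finally, using that $\dashrightarrow$ is transitive on $\mad$-like premice (as recorded after Definition \ref{dfn: tree leading to P}) and that $\subseteq$ is transitive, the relation $\leq$ on $\tilde\D$ is transitive, so from $(N,s) \leq (R_0, s\cup t) \leq (R, s\cup t)$ and $(P,t) \leq (R_0, s\cup t) \leq (R, s\cup t)$ the conclusion follows.

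I do not expect a genuine obstacle here, since all the substantive content — comparability inside $\tilde\D$ of $s$-iterable pre-$\mad$-like $x$-weasels, and the fact that an $x$-genericity iteration of such a structure remains in $\tilde\D$ — is already contained in Lemmas \ref{lem: comparison lemma} and \ref{x genericity iteration}. The only point requiring a little care is the order of the two steps: performing the $x$-genericity iteration first on $(N,s)$ is not directly available, because only $\max\{s,t\} > \mu$ is assumed while $\max(s)$ could be $\leq \mu$; this is why one compares first and applies the genericity iteration afterwards to the merged iterate, where the relevant parameter is $s\cup t$ with $\max(s\cup t) = \max\{s,t\}$.
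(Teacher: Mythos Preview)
Your proposal is correct and follows exactly the paper's approach: first apply Lemma \ref{lem: comparison lemma} to obtain a common iterate, then apply Lemma \ref{x genericity iteration} to make $x$ generic. Your added care in verifying the hypothesis of Lemma \ref{x genericity iteration}---namely that $\delta^{R_0} < \mu$ so that $(\delta^{R_0})^{+\lx} \leq \mu < \max(s\cup t)$---and your remark on why the steps must come in this order are useful details that the paper's two-line proof leaves implicit.
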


\prf
    We first compare as in Lemma \ref{lem: comparison lemma} to arrive at a common pseudo-normal iterate $R$ and then do a genericity iteration of $R$ as in Lemma \ref{x genericity iteration}.
\eprf

We now derive a direct limit from $\tilde{\D}$ as follows. 

\begin{dfn} \label{def: embeddings}
    Let $(N,s),(P,t) \in \tilde{\D}$ be such that $(N,s) \leq (P,t)$. We denote by 
    \[
        i^{\tilde{\D}}_{(N,s),(P,t)} \colon H^{N}_s \to H^{P}_t
    \]
    the embedding such that if $a \in H^{N}_s$ and $\varphi$ is a $r\Sigma_n$ formula and $\vec{y} \in \fin{\gamma_s^N}$ such that $\tau_\varphi^{N \vert \max(s)} (\vec{y},\{s^-\}) = a$, then $i^{\tilde{\D}}_{(N,s),(P,t)} (a) = \tau^{P \vert \max(t)}_\varphi (i_b^{\T_{NP}}(\vec{y}),\{s^-\})$, where $b$ is a $\T_{NP}$-cofinal branch which respects $s$ in a $\col(\omega,{<\kappa})$-extension of $\lx$. 
\end{dfn}

Using Condition \ref{s-iterability two} of Definition \ref{def: s-iterability}, it is straightforward to check that the map is well-defined and unique. Moreover, $i^{\tilde{\D}}_{(N,s),(P,t)}$ is a $\Sigma_0$-elementary embedding.

\begin{lem}
    Let $(N,s), (P,t), (R,u) \in \tilde{\D}$ be such that $(N,s)\leq (P,t)\leq (R,u)$. Then
    \[
        i^{\tilde{\D}}_{(N,s),(R,u)} = i^{\tilde{\D}}_{(P,t),(R,u)} \circ i^{\tilde{\D}}_{(N,s),(P,t)}.
    \]
\end{lem}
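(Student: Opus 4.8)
The plan is to unwind the definition of the direct limit maps $i^{\tilde{\D}}_{(N,s),(P,t)}$ from Definition \ref{def: embeddings} and reduce the identity to the agreement of iteration maps on the relevant hulls, which is exactly what Condition \ref{s-iterability two} of Definition \ref{def: s-iterability} ($s$-iterability) provides. First I would fix $a \in H^N_s$ and write $a = \tau_\varphi^{N\vert\max(s)}(\vec{y},\{s^-\})$ for some $r\Sigma_n$ formula $\varphi$ and some $\vec{y} \in \fin{\gamma^N_s}$; such a representation exists by the definition of $H^N_s$ as a hull. Then, passing to a $\col(\omega,{<\kappa})$-extension of $\lx$ where branches respecting $s$ exist, I would pick $\T_{NP}$-cofinal, $\T_{PR}$-cofinal, and $\T_{NR}$-cofinal branches $b_{NP}, b_{PR}, b_{NR}$, all respecting $s$ (respecting $u \supseteq t \supseteq s$ branches restrict to branches respecting $s$, so this is fine), and set $c = i^{\T_{NP}}_{b_{NP}}$, $d = i^{\T_{PR}}_{b_{PR}}$, $e = i^{\T_{NR}}_{b_{NR}}$.

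Next I would compute both sides. By definition, $i^{\tilde{\D}}_{(N,s),(R,u)}(a) = \tau_\varphi^{R\vert\max(u)}(e(\vec{y}),\{s^-\})$. On the other side, $i^{\tilde{\D}}_{(N,s),(P,t)}(a) = \tau_\varphi^{P\vert\max(t)}(c(\vec{y}),\{s^-\})$, which lies in $H^P_t$ and is represented by the $r\Sigma_n$ term $\tau_\varphi$ with parameters $c(\vec{y}) \in \fin{\gamma^P_t}$ (here one uses that $c$ respects $s$, so $c(\vec{y}) \in c[\gamma^N_s] \subseteq \gamma^P_t$, and that $c$ fixes $\{s^-\}$); applying $i^{\tilde{\D}}_{(P,t),(R,u)}$ to it therefore yields $\tau_\varphi^{R\vert\max(u)}(d(c(\vec{y})),\{s^-\})$. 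Comparing, it remains to see $e(\vec{y}) = d(c(\vec{y}))$, i.e. $e\restriction\gamma^N_s = d\circ c\restriction\gamma^N_s$. This is precisely the conclusion of Condition \ref{s-iterability two} of $s$-iterability applied with $P_1 = N$, $P_2 = P$, $P_3 = R$ (all of which are pre-$\mad$-like $x$-weasels with suitable parts in $\lx\vert\kappa$, as $(N,s),(P,t),(R,u)\in\tilde{\D}$), together with Lemma \ref{lem: comparison lemma} (or Corollary \ref{lem: suitable iterates}) to guarantee these sit in a common chain $N \dashrightarrow P \dashrightarrow R$ as required by the definition of $\tilde{\D}$.

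There is one subtlety to check carefully, which I expect to be the main obstacle: the branches $b_{NP}, b_{PR}, b_{NR}$ are only obtained in a forcing extension, and in general may not be unique (when the relevant tree is $(\kappa,\kappa)$-maximal rather than short), so one must confirm that the value of each $i^{\tilde{\D}}$ map does not depend on the choice. This is handled exactly as in the remark following Definition \ref{def: embeddings}: Condition \ref{s-iterability two} says any two branches respecting $s$ give iteration maps agreeing on $\gamma^N_s$, hence the term evaluations $\tau_\varphi^{(\cdot)}(\cdot,\{s^-\})$ agree, so $i^{\tilde{\D}}$ is well-defined; and the same condition, now applied across the three trees simultaneously (which is legitimate since we may choose all three branches in one common $\col(\omega,{<\kappa})$-generic extension), delivers the triangle identity $e = d\circ c$ on $\gamma^N_s$. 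Once that agreement is in hand the equality of the two term evaluations is immediate, completing the proof.
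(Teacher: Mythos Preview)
The paper states this lemma without proof (it follows the remark after Definition \ref{def: embeddings} that well-definedness is ``straightforward to check'' from Condition \ref{s-iterability two} of $s$-iterability), and your argument is exactly the intended one: unwind the term representation and reduce to the commutation of branch maps on $\gamma^N_s$, which is precisely Condition \ref{s-iterability two}.

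Two small remarks. First, the parenthetical ``$c$ fixes $\{s^-\}$'' is unnecessary and not obviously true: the definition of $i^{\tilde{\D}}_{(N,s),(P,t)}$ already stipulates the target term uses the parameter $\{s^-\}$ (not $c(\{s^-\})$), so nothing needs to be verified there. Second, your claim that a branch respecting $t$ also respects $s$ is correct but deserves one line of justification: since $s^-\subseteq t^-$ and $\max(s)\in t$, membership in $\tho_s^P$ is expressed by a single $\mathcal{L}_t$-formula $\chi$ (with no ordinal parameter besides the displayed one), so $\tho_s^P=\{\xi<\delta^P:\langle\chi,\xi\rangle\in\tho_t^P\}$ and likewise for $R$; any iteration map sending $\tho_t^P$ to $\tho_t^R$ then sends $\tho_s^P$ to $\tho_s^R$ by $\Sigma_0$-elementarity.
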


    \setcounter{claimcounter}{0}
    \setcounter{subclaimcounter}{0}

\begin{lem} \label{lem: stable is dense}
    The set $\{ (P,s) \in \tilde{\D} : P \in \F \land s \in \finite(\alpha) \land P\text{ is }s\text{-stable} \}$ is dense in $\tilde{\D}$.
\end{lem}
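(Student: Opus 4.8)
The plan is to push an arbitrary $(N,s)\in\tilde{\D}$ through the comparison of Corollary~\ref{lem: suitable iterates} against a suitably stable member of $\F$, and then to finish with Lemma~\ref{existence of s-stable}. So fix $(N,s)\in\tilde{\D}$; the goal is to produce $(P,u)\in\tilde{\D}$ with $(N,s)\le(P,u)$, $P\in\F$, and $P$ being $u$-stable. First I would fix a bookkeeping ordinal: since $\delta^N<\kappa$, since $\delta^Q<\kappa$ for every $Q\in\F$, and since $\kappa$ is inaccessible in $\lx$, the $\lx$-cardinal successor of $\max\{\delta^N,\delta^Q\}$ is always $<\kappa$; so I pick some $\gamma$ with $\kappa\le\gamma<\alpha$ and put $t:=s\cup\{\gamma\}$, so that $\max(t)=\gamma$ dominates all such successors. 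Starting from any $Q_0\in\F$ (which is nonempty), Lemma~\ref{existence of s-stable} gives $Q\in\F$ with $Q_0\dashrightarrow Q$ and $Q$ being $t$-stable; by Lemma~\ref{stable implies iterability} together with the corollary following Lemma~\ref{lem: absoluteness of short tree iterability}, $(Q,t)\in\tilde{\D}$.

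Since $\max\{s,t\}=\gamma$ exceeds the $\lx$-cardinal successor of $\max\{\delta^N,\delta^Q\}$, Corollary~\ref{lem: suitable iterates} applied to $(N,s)$ and $(Q,t)$ produces $R$ with (noting $s\cup t=t$) $(R,t)\in\tilde{\D}$, $(N,s)\le(R,t)$, $(Q,t)\le(R,t)$, $\delta^R<\kappa$, and $x$ being $(R,\mathbb{B}^R)$-generic. I claim $R\in\F$. That $R$ is a good pre-$\mad$-like $x$-weasel with $R^-\in\lx\vert\kappa$ is immediate from $(R,t)\in\tilde{\D}$ together with the genericity of $x$; so it remains to see that $R$ is an iterate of $\mad$ via $\Sigma^{\mad}$. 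This follows from $Q\dashrightarrow R$ and $Q\in\F$: the tree $\T_{QR}$ is produced in $\lx$ by iterating away least disagreements, following the internal $(\kappa,\kappa)$-short-tree strategy at short limit stages and forming $L_\alpha(\M(\cdot))$ at $(\kappa,\kappa)$-maximal stages, and, exactly as in the proof of Lemma~\ref{comparison in F} (using Remark~\ref{rem: short trees} and Lemma~\ref{lem: absoluteness of short tree iterability}), the relevant $\Q$-structures are computed identically in $\lx$ and in $V$, so the branches chosen internally agree with $\Sigma^{\mad}$ while the maximal stages are pinned down by $\tho$-minimality; by the normalization results of \cite{schlutzenberg2024normalizationtransfinitestacks}, $R$ is then a genuine $\Sigma^{\mad}$-iterate of $\mad$, hence $R\in\F$.

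Finally I would pass to a stable weasel above $R$: apply Lemma~\ref{existence of s-stable} to $R\in\F$ and $t$ to obtain $P\in\F$ with $R\dashrightarrow P$ and $P$ being $t$-stable; since $s\subseteq t$ and iteration embeddings are order preserving, $P$ is in particular $s$-stable. By the corollary following Lemma~\ref{lem: absoluteness of short tree iterability}, $(P,t)\in\tilde{\D}$, and since $\dashrightarrow$ is transitive and $s\subseteq t$ we get $(N,s)\le(R,t)\le(P,t)$. Thus $(P,t)$ lies in the displayed set and extends $(N,s)$, which proves density.

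The one genuinely substantive step is the verification that $R\in\F$, i.e.\ that the internal comparison-plus-genericity iteration of $Q\in\F$ carried out inside $\lx$ is in fact guided by $\Sigma^{\mad}$. This rests on the absoluteness of $\Q$-structure computations between $\lx$ and $V$ for short trees (the content behind Lemma~\ref{lem: absoluteness of short tree iterability}), on the $\tho$-correctness of the pseudo-iterates at $(\kappa,\kappa)$-maximal stages, and on the cited normalization results; everything else is routine bookkeeping with the comparison and stability lemmas already in place.
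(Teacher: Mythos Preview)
Your proof is correct and follows essentially the same route as the paper's: compare the given node against an $s$-stable member of $\F$ via Corollary~\ref{lem: suitable iterates}, and observe that the resulting common iterate lies in $\F$ and inherits stability. Two remarks: your bookkeeping with the auxiliary ordinal $\gamma$ (to guarantee the size hypothesis of Corollary~\ref{lem: suitable iterates}) and your explicit verification that $R\in\F$ are both welcome additions, as the paper glosses over these points; on the other hand, your final step of passing from $R$ to a further $t$-stable $P$ via Lemma~\ref{existence of s-stable} is redundant, since $Q\in\F$ is $t$-stable and $Q\dashrightarrow R$ with $R\in\F$ already forces $R$ to be $t$-stable (stability propagates along $\dashrightarrow$ within $\F$), so $(R,t)$ itself lies in the dense set.
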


\prf
    Let $(P,s) \in \tilde{\D}$. By \ref{existence of s-stable}, there is $N \in \F$ such that $N$ is $s$-stable. By Corollary \ref{lem: suitable iterates}, there is a good $\mad$-like $x$-weasel $R$ such that $R^- \in \lx \vert \kappa$, $N \dashrightarrow R$, and $P \dashrightarrow R$. Since $N \dashrightarrow R$, $R$ is $s$-stable and thus $s$-iterable by Lemma \ref{lem: absoluteness of short tree iterability} and Lemma \ref{stable implies iterability}. Since $P \dashrightarrow R$, it follows that $(P,s) \leq (R,s)$. 
\eprf

We can establish a bit more similarity between $\F$ and the direct limit system derived from $\tilde{\D}$ as the following lemma shows that for $N,P \in \F$ which are $s$-stable, the map $i^{\tilde{\D}}_{(N,s),(P,t)}$ approximates the map $i^\F_{N,P}$.

\begin{lem} \label{lem: agreement of maps}
    Let $(N,s),(P,t) \in \tilde{\D}$ be such that $(N,s) \leq (P,t)$, $N,P \in \F$, and $N$ is $s$-stable. Then
    \[
        i^{\F}_{N,P} \restriction H^{N}_s = i^{\tilde{\D}}_{(N,s),(P,t)}.
    \] 
\end{lem}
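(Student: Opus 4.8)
The plan is to exhibit the $\Sigma^{\mad}$-iteration embedding $i^{\F}_{N,P}$ as the map induced by an $s$-respecting branch of $\T_{NP}$, and then to use the well-definedness built into $s$-iterability to match it against the map of Definition \ref{def: embeddings}. Write $c$ for the main branch of $\T_{NP}$, so that $i^{\F}_{N,P} = i^{\T_{NP}}_{c}$ by the definition of $i^{\F}_{N,P}$. First I would observe that, since $N$ is $s$-stable and $N \dashrightarrow P$, we have $i^{\F}_{N,P}(s) = s$; as $i^{\F}_{N,P}$ is order preserving on the ordinals this means $i^{\F}_{N,P}$ fixes each element of $s$, so in particular $i^{\F}_{N,P}(\max(s)) = \max(s)$ and $i^{\F}_{N,P}\restriction s^- = \id$. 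Because iteration embeddings are elementary on proper initial segments, $i^{\F}_{N,P}\restriction (N\vert\max(s)) \colon N\vert\max(s) \to P\vert\max(s)$ is fully elementary; since it sends $\delta^N$ to $\delta^P$ and fixes $s^-$, it sends $\tho_s^N$ to $\tho_s^P$ and maps $H^N_s$ into $H^P_s$ while commuting with the Skolem terms $\tau_\varphi$ and with $\lv_\varphi$. Hence $c$ is a $\T_{NP}$-cofinal branch that respects $s$ in the sense of Definition \ref{def: s-iterability}: it does not drop (as $N\dashrightarrow P$ is non-dropping), $\delta^P$ lies in the wellfounded part of $\M^{\T_{NP}}_c$, and $i^{\T_{NP}}_c(\tho_s^N) = \tho_s^P$.

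Next I would argue that the restriction of \emph{any} $s$-respecting branch of $\T_{NP}$ to $\gamma^N_s$ is already determined by $\tho^N_s$ and $\tho^P_s$. Given $\gamma < \gamma^N_s$, choose $\varphi$ and $\vec y \in \fin{\gamma^N_s}$ with $\gamma = \tau_\varphi^{N\vert\max(s)}(\vec y, s^-)$; then for an $s$-respecting branch $b$ one has $i^{\T_{NP}}_b(\gamma) = \tau_\varphi^{P\vert\max(s)}(i^{\T_{NP}}_b(\vec y), s^-)$, and a recursion along $\gamma^N_s$ (or along $\leq^*$) shows that the values $i^{\T_{NP}}_b(\vec y)$ are forced by $\tho^P_s$; thus $i^{\T_{NP}}_b\restriction\gamma^N_s$ does not depend on $b$ and is computed from the two theories alone. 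Together with Condition \ref{s-iterability two} of Definition \ref{def: s-iterability} applied with identity trees, this yields that the map on $\gamma^N_s$ given by an honest $s$-respecting branch $b \in \lx[G]$ — one exists since $N$ is $s$-iterable — agrees with $i^{\T_{NP}}_c\restriction\gamma^N_s = i^{\F}_{N,P}\restriction\gamma^N_s$, even though $c$ lives in $V$.

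Then I would assemble the equality on $H^N_s$. Given $a \in H^N_s$, write $a = \tau_\varphi^{N\vert\max(s)}(\vec y, s^-)$ with $\varphi$ an $r\Sigma_n$ formula and $\vec y \in \fin{\gamma^N_s}$. By Definition \ref{def: embeddings}, $i^{\tilde{\D}}_{(N,s),(P,t)}(a) = \tau_\varphi^{P\vert\max(t)}(i^{\T_{NP}}_b(\vec y), s^-)$ for an $s$-respecting $b \in \lx[G]$, while elementarity of $i^{\F}_{N,P}\restriction(N\vert\max(s))$ gives $i^{\F}_{N,P}(a) = \tau_\varphi^{P\vert\max(s)}(i^{\F}_{N,P}(\vec y), s^-)$. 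The previous paragraph gives $i^{\F}_{N,P}(\vec y) = i^{\T_{NP}}_b(\vec y)$, and since the witness level $\lv_\varphi^{N\vert\max(s)}(\vec y, s^-)$ lies below $\max(s) \leq \max(t)$ and is preserved by $i^{\F}_{N,P}\restriction(N\vert\max(s))$, the terms $\tau_\varphi^{P\vert\max(s)}$ and $\tau_\varphi^{P\vert\max(t)}$ return the same value on these arguments. Hence $i^{\F}_{N,P}(a) = i^{\tilde{\D}}_{(N,s),(P,t)}(a)$, and since $a$ was arbitrary, $i^{\F}_{N,P}\restriction H^N_s = i^{\tilde{\D}}_{(N,s),(P,t)}$.

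I expect the main obstacle to be the middle step: the branch $c$ witnessing the $\Sigma^{\mad}$-value of $i^{\F}_{N,P}$ lives in $V$, whereas the branches in Definition \ref{def: embeddings} and in Condition \ref{s-iterability two} of Definition \ref{def: s-iterability} are required to live in a $\col(\omega,{<\kappa})$-extension of $\lx$. The reconciliation is the ``theory-driven'' observation above, that the restriction of an $s$-respecting branch to $\gamma^N_s$ depends only on $\tho^N_s$ and $\tho^P_s$. Should that turn out to be awkward, a fallback is to force with $\col(\omega,{<\kappa})$ over $V$ and appeal to homogeneity, but one then has to check that a $V$-generic restricts suitably over $\lx$, which the theory-driven argument sidesteps.
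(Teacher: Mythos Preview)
Your overall architecture is right, and you correctly isolate the one real issue: the $\Sigma^{\mad}$-branch $c$ lives in $V$, while the branch $b$ used in Definition \ref{def: embeddings} lives in some $\lx[G]$. The final assembly on $H^N_s$ from agreement on $\gamma_s^N$ is also fine.

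The gap is in your ``theory-driven'' middle step. You assert that a recursion along $\gamma_s^N$ shows $i_b\restriction\gamma_s^N$ is forced by $\tho_s^N$ and $\tho_s^P$ alone, but you do not establish the uniqueness needed at each stage. Concretely, if $f\restriction\gamma$ has been defined and you seek the unique $\gamma'$ with $(\varphi, f[\vec\xi]{}^\frown(\gamma'))\in\tho_s^P$ iff $(\varphi,\vec\xi{}^\frown(\gamma))\in\tho_s^N$ for all $\varphi$ and $\vec\xi\in\fin{\gamma}$, you only learn that any two candidates $\gamma_1',\gamma_2'$ realize the same type over $f[\gamma]\cup s^-$ in $P\vert\max(s)$; nothing forces such a type to be uniquely realized. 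Not every $\gamma<\gamma_s^N$ lies in $\Hull{N\vert\max(s)}{\omega}(\{s^-\})$, so the parameter-free case (where uniqueness \emph{does} follow, as you implicitly use) only pins down a cofinal subset of $\gamma_s^N$, not all of it. The appeal to Condition \ref{s-iterability two} of Definition \ref{def: s-iterability} with identity trees does give uniqueness, but only for branches in $\lx[G]$, which is exactly the point at issue.

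Your fallback is in fact the clean argument and is what the paper (following the Steel--Woodin template it cites) has in mind; but the operative principle is absoluteness, not homogeneity. Since $\lx$ is countable in $V$, fix $G\in V$ which is $(\lx,\col(\omega,{<}\kappa))$-generic. In $\lx[G]$ the tree $\T_{NP}$, the sets $\tho_s^N,\tho_s^P$, and $\gamma_s^N$ are all (coded by) reals, and Condition \ref{s-iterability two} of Definition \ref{def: s-iterability} (specialized to $P_1=N$, $P_2=P_3=P$) says that any two $s$-respecting cofinal branches agree on $\gamma_s^N$. This is a $\Pi^1_1$ statement in those real parameters, hence upward absolute from $\lx[G]$ to $V$. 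In $V$ both $c$ and the $b$ produced by $s$-iterability are $s$-respecting branches, so $i_c\restriction\gamma_s^N=i_b\restriction\gamma_s^N$, and your final paragraph then yields $i^{\F}_{N,P}\restriction H^N_s=i^{\tilde{\D}}_{(N,s),(P,t)}$. Replacing the recursion paragraph with this one-line absoluteness step closes the gap.
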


\begin{dfn} \label{def: internal direct limit system}
    Let 
    \[
        M_\infty^{\tilde{\D}} = \dir \lim \langle H^{N}_s, H^{P}_t ; i^{\tilde{\D}}_{(N,s),(P,t)} : (N,s), (P,t) \in \tilde{\D} \text{ and } (N,s)\leq (P,t) \rangle
    \]
    and let $i^{\tilde{\D}}_{(N,s)\infty} \colon H^{N}_s \to M_\infty^{\tilde{\D}}$ be the ($\Sigma_0$-elementary) direct limit maps.
\end{dfn}

We now aim to establish $M^\F_\infty = M^{\tilde{\D}}_\infty$. Before we do this, we need to proof some properties about the sequence $\vec{p}$ which we constructed in Section \ref{section: finding the sequence}.

\subsection{The generating fixed points} \label{section: generating fixed points}

\begin{dfn}
    Let $\vec{\gamma} = \langle \gamma_k \mid k < \omega \rangle$ be such that 
    \[
        \gamma_k = \sup (\Hull{\lr}{n} (\alpha_k \cup \{ \theta \} \cup  \R^+ \cup \{ \R^+\}) \cap \OR),
    \]
    where $\vec{p} = \langle \alpha_k \rangle_{k<\omega}$ is as in Definition \ref{dfn of branch}. Let $S_\infty := \vec{p}^\frown \vec{\gamma}$.
\end{dfn}

Note that by $\Sigma_n$-Collection $\gamma_k < \alpha$ for all $k < \omega$. Moreover, $\sup(\vec{\gamma}) = \alpha$, as otherwise $\tho_{r\Sigma_n}^{\lr} (\theta +1 \cup  \R^+ \cup \{ \R^+\}) \in \lr$.
We also have $\gamma_k = \sup \{ \lv_{\varphi}^{\lr} (\vec{x}) : \varphi(\vec{x}) \in \tho_{r\Sigma_n}^{\lr} (\alpha_k \cup \{ \theta \} \cup \R^+ \cup \{ \R^+ \}) \}$ for $k <\omega$. In particular, $\gamma_k \in S^{\lr}_{n-1}$.

\begin{lem} \label{definability of LR}
    Let $N \in \F$. Let $k < \omega$ and $s = \vec{p} \restriction k$. Then $\{ s \}$ is $r\Sigma_n \land r\Pi_n$-definable from the parameter $\theta^N$ over $N$, uniformly in $N$.
\end{lem}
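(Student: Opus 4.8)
The plan is to transfer the definability of $\{s\}$ over $\lr$ (established in Theorem~\ref{thm: branch theorem}) down to a definability over $N$ for each $N \in \F$, using the fact that $N$ and the symmetric collapse extension agree on $\lr$. Recall that by the remark following Definition~\ref{def: s-iterability}, for $N \in \F$ with $N^- \in \lx \vert \kappa$ there is $g$ which is $(N, \col(\omega, {<\kappa}))$-generic with $L(\R^{N[g]}) = \lr$. Since $\theta^N = \kappa^{+\lx} = \theta$ and $\OR^N = \alpha$, the model $\lr$ is recoverable from $N$ together with $g$: its reals are exactly the reals of $N[g]$, and $\lr = L_\alpha(\R^{N[g]})$. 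So statements about $\lr$ with parameter $\theta$ become statements about $N$-in-a-collapse-extension with parameter $\theta^N$.

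\textbf{First} I would spell out the definition of $\{s\}$ over $\lr$ from Theorem~\ref{thm: branch theorem}: $s$ is the unique node $s' \in T$ of length $k$ such that (i) for every $t \in T$ with $\lh(t) = k$ and $t <_{\mathrm{lex}} s'$ there is $\gamma$ with $t \notin T_\gamma$ in the $(n+1)$-pruning process, and (ii) for all $\gamma < \alpha$, $s' \in T_\gamma$. Since $T$ is $\Sigma_1$-definable over $L_\alpha(\R^+) \vert \theta$ from $\theta$, and the pruning sequence $\langle T_\gamma \mid \gamma < \alpha\rangle$ is definable by a $\Sigma_n$-recursion over $N_\alpha$, clause (i) is a $\Sigma_n$-statement over $\lr$ (by $\Sigma_n$-Collection, as in the proof of Theorem~\ref{thm: branch theorem}) and clause (ii) is a $\Pi_n$-statement. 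Thus $\{s\}$ is $\Sigma_n \land \Pi_n$ over $\lr$ from parameter $\theta$.

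\textbf{Next}, I would push this definition inside $N$. By Lemma~\ref{lem: level by level forcing} and Lemma~\ref{lem: forcing theorem}, for the forcing $\col(\omega, {<\kappa})$ over $N$ there are strong $r\Sigma_n$ and (via the footnote convention) strong $r\Pi_n$ forcing relations, and since $\theta^N = \kappa^{+N}$ exists, these forcing relations are sufficiently definable over $N$ with parameter $\kappa$ (equivalently $\theta^N$, since $\theta^N$ is $r\Sigma_2$-definable over $N$ as the cardinal successor of the unique inaccessible above the Woodin). Concretely: "$s' = s$" holds iff the empty condition of $\col(\omega,{<\kappa})$ forces over $N$ the $\Sigma_n \land \Pi_n$-statement $\psi(s',\check\theta^N,\dot{\R})$ which, as interpreted in $L_\alpha(\R^{N[g]})$, asserts that $s'$ is the distinguished node; by homogeneity of $\col(\omega,{<\kappa})$ and of the definition of $\lr$, this is forced by $\mathbf{1}$ or its negation is, so passing to the strong forcing relation gives, via Corollary~\ref{cor: forcing theorem for conjunctions}, an $r\Sigma_n \land r\Pi_n$ definition of $\{s\}$ over $N$ from $\theta^N$. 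Uniformity in $N$ is clear because all of the above — the tree $T$, the pruning recursion, the forcing relation, the $r\Sigma_2$-definition of $\theta^N$ — is given by fixed formulas not depending on $N$.

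\textbf{The main obstacle} I anticipate is bookkeeping the complexity: one must check that translating the $\Sigma_n \land \Pi_n$ definition over $\lr$ through the $\col(\omega,{<\kappa})$ forcing relation over $N$, and then through the Lemma~\ref{better translatibility}/Lemma~\ref{translatibility} translation between $\Sigma_n$ and $r\Sigma_n$ (available since $\rho_{n-1}^N = \OR^N$ by Lemma~\ref{lem: n-1 projectum} and $N$ has a largest cardinal, so Lemma~\ref{better translatibility} applies with no extra parameter), does not inflate the complexity beyond $r\Sigma_n \land r\Pi_n$. The footnote convention for $\Vdash_n^{\text{strong}}$ on $r\Pi_n$ formulas, together with the fact that $\rho_n^N = \theta^N$ so the relevant dense sets lie in $N$, is exactly what keeps the $r\Pi_n$ side at the right level; this is the point requiring care.
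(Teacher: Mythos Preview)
Your proposal is correct and follows essentially the same approach as the paper: both start from the $r\Sigma_n \land r\Pi_n$-definition of $\{s\}$ over $\lr$ given by Theorem~\ref{thm: branch theorem}, use that $N[g]$ recovers $\lr$ for a suitable collapse generic $g$, and then transfer the definition down to $N$ via homogeneity of $\col(\omega,{<\kappa})$ and the strong forcing relation (Corollary~\ref{cor: forcing theorem for conjunctions}). Your write-up is in fact more explicit than the paper's about the complexity bookkeeping; one small correction is that the remark you cite about $L(\R^{N[g]}) = \lr$ appears just after the definition of a good pre-$\mad$-like $x$-weasel, not after Definition~\ref{def: s-iterability}.
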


\prf
    Note that $\mathbb{B}^N \times \col^N(\omega, {< \kappa})$ is equivalent to $\mathbb{P} := \col^N (\omega, {< \kappa})$ and therefore there exists $G^\prime$ which is $(N,\col^N (\omega, {< \kappa}))$-generic and equivalent to $(x,G)$, i.e. $N[G^\prime] = N[x,G] = \lxg$.

    Let
    \[
        \dot{\R} = \{ (\dot{z},p) \mid p \Vdash_N^\mathbb{P} ``\dot{z} \text{ is a real}" \} \cap N \vert \kappa
    \]
    be the canonical name of $\R^+$. Since $\col(\omega,{<\kappa})$ is homogeneous, $\dot{\R}$ is homogeneous, and $\lr$ is a $\Sigma_1$-definable class of $\lxg$ from the parameter $\dot{\R}$, there is an $r\Sigma_n$ formula $\varphi_{\lr}$ such that for all $r\Sigma_n$ formulas $\varphi$ and all $\eta < \alpha$,
    \[
        \emptyset \Vdash_N^\mathbb{P} \varphi_{\lr}(\check{\eta},\check{\varphi},\dot{\R}) \iff \lr \models \varphi(\eta).
    \]
    By Theorem \ref{thm: branch theorem}, $\{ s \}$ is $r\Sigma_n \land r\Pi_n$-definable from the parameter $\theta^N = \kappa^{+\lx}$ over $\lr$. It then follows by Lemma \ref{cor: forcing theorem for conjunctions} that $\{ s \}$ is $r\Sigma_n \land r\Pi_n$-definable from the parameter $\theta^N$ over $N$. 
\eprf

\begin{lem} \label{fix points}
    Let $N,P \in \F$ be such that $N \dashrightarrow P$. Then for all $k < \omega$, $i_{N,P}^\F (\alpha_k) = \alpha_k$ and for all $k < \omega$, $i_{N,P}^\F (\gamma_k) = \gamma_k$.
\end{lem}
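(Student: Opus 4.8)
The plan is as follows. Write $i := i^{\F}_{N,P} \colon N \to P$. Since $N \dashrightarrow P$ and $N, P \in \F$, the map $i$ is the iteration embedding along a non-dropping main branch of the $n$-maximal tree $\T_{NP}$ given by $\Sigma^{\mad}$, hence an $n$-embedding; in particular it is $r\Sigma_n$-elementary, and hence (since the negation of an $r\Sigma_n$ formula is $r\Pi_n$ over passive premice) it preserves $r\Sigma_n$, $r\Pi_n$, and $r\Sigma_n \wedge r\Pi_n$ facts biconditionally. I would first record that $i(\theta^N) = \theta^N$: indeed $i$ sends $(\mu^N)^{+N}$ to $(\mu^P)^{+P}$, and $\theta^N = \kappa^{+\lx} = \theta^P$ because $N$ and $P$ are good pre-$\mad$-like $x$-weasels.

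The $\alpha_k$'s then fall out directly from Lemma \ref{definability of LR}. Fix $k < \omega$ and put $s = \vec p \restriction k$; note that $s$ is literally the same object for $N$ and for $P$, since $\vec p$ was defined from the tree $T'$, which is definable over $\lr$, and $\lr$ is common to $N$ and $P$. By Lemma \ref{definability of LR} there are fixed formulas $\varphi_1 \in r\Sigma_n$, $\varphi_2 \in r\Pi_n$ (depending only on $k$) such that for every $Q \in \F$, $s$ is the unique $u$ with $Q \models (\varphi_1 \wedge \varphi_2)(u, \theta^Q)$. Applying $i$ to the instance over $N$ and using the preservation noted above together with $i(\theta^N) = \theta^N$, we get $P \models (\varphi_1 \wedge \varphi_2)(i(s), \theta^P)$, whence $i(s) = s$ by uniqueness over $P$. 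Since $i$ is $r\Sigma_0$-elementary and $s = \langle \alpha_0, \dots, \alpha_{k-1}\rangle$ is a finite sequence of ordinals, $i(\alpha_j) = \alpha_j$ for all $j < k$; as $k$ was arbitrary, $i(\alpha_k) = \alpha_k$ for all $k < \omega$.

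For the $\gamma_k$'s the plan is to prove the exact analogue of Lemma \ref{definability of LR} --- that for each $k$, $\{\gamma_k\}$ is $r\Sigma_n \wedge r\Pi_n$-definable over $N$ from $\theta^N$, uniformly in $N \in \F$ --- and then repeat the previous paragraph verbatim. To establish this I would first show $\{\gamma_k\}$ is definable over $\lr$ from $\theta$ alone. By Theorem \ref{thm: branch theorem}, $\alpha_k$ is $\Sigma_n \wedge \Pi_n$-definable over $\lr$ from $\theta$ (hence $r\Sigma_n \wedge r\Pi_n$-definable, by Lemma \ref{better translatibility} applied to $\lr$, which has largest cardinal $\theta$ and $\rho_{n-1}^{\lr} = \OR^{\lr}$), and $\gamma_k = \sup\{\lv^{\lr}_\varphi(\vec x) \mid \varphi(\vec x) \in \tho^{\lr}_{r\Sigma_n}(\alpha_k \cup \{\theta\} \cup \R^+ \cup \{\R^+\})\}$. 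Using that the graphs of $m\tau^{\lr}_\varphi$ and $\lv^{\lr}_\varphi$ are $r\Sigma_n$, that $\lr$ is $\Sigma_n$-admissible (so that $r\Sigma_n$ and $r\Pi_n$ are closed under bounded quantification over $\lr$), and that the displayed parameters are bounded by $\theta$, one checks that ``$\eta = \gamma_k$'' is expressible over $\lr$ by an $r\Sigma_n \wedge r\Pi_n$ formula in the parameter $\theta$. Then, exactly as in the proof of Lemma \ref{definability of LR}, since $\lr$ is a $\Sigma_1$-definable class of $\lxg = N[G']$ from the homogeneous name $\dot\R$ (for a suitable generic $G'$ with $N[G'] = \lxg$), Corollary \ref{cor: forcing theorem for conjunctions} together with the homogeneity of $\col(\omega, {<\kappa})$ yields that $\{\gamma_k\}$ is $r\Sigma_n \wedge r\Pi_n$-definable over $N$ from $\theta^N$, uniformly in $N$. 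Feeding this into the argument for the $\alpha_k$'s gives $i(\gamma_k) = \gamma_k$ for all $k$.

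I expect the main obstacle to be the complexity bookkeeping in the third paragraph: verifying that the definition of $\gamma_k$ genuinely stays within $r\Sigma_n \wedge r\Pi_n$ rather than climbing to $r\Sigma_{n+1}$. This is precisely where the $\Sigma_n$-admissibility of $\lr$ and the $r\Sigma_n$-bounds on the minimal-Skolem-term graphs from Section \ref{higher admissible premice} are needed. Should one only obtain a bound within $r\Sigma_{n+1}$, the fallback is to note that $i^{\F}_{N,P}$ is in fact $r\Sigma_{n+1}$-elementary --- since by Lemma \ref{lem: standard parameter of admissible mice} the models $N$ and $P$ are $(n+1)$-sound with $\rho_{n+1} = \omega$ and $p_{n+1} = \emptyset$ --- so that $i$ preserves the relevant definition at that level as well.
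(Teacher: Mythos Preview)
Your treatment of the $\alpha_k$'s is exactly the paper's: both invoke Lemma \ref{definability of LR} and the preservation of $r\Sigma_n \wedge r\Pi_n$ definitions under the $n$-embedding $i$, together with $i(\theta^N)=\theta^N$.

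For the $\gamma_k$'s you take a genuinely different route. You propose to push the $\lr$-definition of $\{\gamma_k\}$ through the forcing transfer of Lemma \ref{definability of LR} to obtain a uniform $r\Sigma_n\wedge r\Pi_n$ (or, as fallback, $r\Sigma_{n+1}$) definition of $\{\gamma_k\}$ over each $N\in\F$ from $\theta^N$, and then appeal to elementarity. The paper instead works internally: it shows that for every $N\in\F$ one has $\gamma_k=\sup\bigl(\Hull{N}{n}(\alpha_k\cup\{\theta\})\cap\OR\bigr)$, by proving $\Hull{N}{n}(\alpha_k\cup\{\theta\})\cap\OR=\Hull{\lr}{n}(\alpha_k\cup\{\theta\}\cup\R^+\cup\{\R^+\})\cap\OR$ via the genericity of $x$ over $N$, and then uses $r\Sigma_{n+1}$-elementarity of $i$ (applied to the sentence $\forall\eta<\gamma_k\,\exists\varphi\,\exists\vec x\in\fin{\alpha_k}\,(\lv_\varphi(\vec x,\theta)>\eta)$, together with $i(\alpha_k)=\alpha_k$, $i(\theta)=\theta$) to conclude $i(\gamma_k)\leq\gamma_k$, hence equality.

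Both arguments are correct. Two remarks on yours. First, your worry about the bookkeeping is justified: if you insist on $\theta$ as the \emph{only} parameter, substituting the $r\Sigma_n\wedge r\Pi_n$ definition of $\alpha_k$ into the formula for $\gamma_k$ does push you to level $n+1$, so it is your fallback---$r\Sigma_{n+1}$-elementarity of $i$---that actually carries the day; this is precisely the level the paper uses. (Alternatively, keep $\alpha_k$ as a parameter, which is legitimate since you already fixed it, and then the $r\Sigma_n\wedge r\Pi_n$ bound goes through.) Second, the paper's intermediate identity $\Hull{N}{n}(\alpha_k\cup\{\theta\})\cap\OR = H\cap\OR$ is not just a device for this lemma; the same forcing-comparison of hulls reappears later (e.g.\ in the proof of Lemma \ref{definability up to kappa+}), so the paper's detour buys a reusable fact, whereas your route is more streamlined for the present lemma alone.
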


\prf
    For $\alpha_k$, $k < \omega$, the claim follows immediately from Lemma \ref{definability of LR}.
    Let $k < \omega$ and let $H^N := \Hull{N}{n} (\alpha_k \cup \{ \theta \})$, $H^P := \Hull{P}{n} (\alpha_k \cup \{ \theta \})$ and $H := \Hull{\lr}{n}(\alpha_k \cup \{\theta\} \cup \R^+ \cup \{\R^+\})$.
    
    We claim $H \cap \OR = H^N \cap \OR = H^P \cap \OR$. Note that since $N^- \in \lx \vert \kappa$, $N^-$ is coded by a real in $\R^+$. Then it is easy to see that $H^N \subseteq H$. On the other hand, letting $\mathbb{P} := \col^N (\omega, {< \kappa})$ and $G^\prime$ be $(N,\col^N(\omega,{< \kappa}))$-generic such that $N[G^\prime] = L_\alpha[x,G]$ there are $\mathbb{P}$-names for every real $x \in \R^+$ in $N \vert \kappa$ so that, similarly to the proof of Lemma \ref{definability of LR}, for every ordinal in $H$ there is a condition forcing its definition over $N$, so that $H \cap \OR \subseteq H^N \cap \OR$.

    In particular, $\gamma_k = \sup(H^N \cap \alpha) = \sup(H^P \cap \alpha)$. In order to finish the proof, it suffices to see that $i_{NP}(\sup(H^N \cap \alpha)) = \sup(H^P \cap \alpha)$. Note that
    \[
        N \models \forall \eta < \gamma_k \exists \varphi \in \fml \exists \vec{x} \in \fin{\alpha_k} (\lv_\varphi (\vec{x},\theta) > \eta),
    \]
    where $\fml$ is the set of $r\Sigma_n$ formulas. But then by $r\Sigma_{n+1}$-elementarity, this is preserved by $i_{N,P}^{\F}$, so 
    \[
        P \models \forall \eta < i^\F_{N,P}(\gamma_k) \exists \varphi \in \fml \exists \vec{x} \in \fin{\alpha_k} (\lv_\varphi (\vec{x},\theta) > \eta).
    \]
    However, since $\sup(H^P \cap \alpha) = \gamma_k$, this means that $i_{N,P}^\F(\gamma_k) = \gamma_k$.
\eprf

\begin{lem} \label{S is cofinal}
    Let $N$ be a good pre-$\mad$-like $x$-weasel such that $N^- \in \lx \vert \kappa$ and let $S \subseteq \alpha$ be such that $S$ is cofinal in $\theta$ and $\alpha$. Then $\sup(\delta^N \cap \Hull{N}{n}(S)) = \delta^N$.
\end{lem}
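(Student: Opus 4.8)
The plan is to argue by contradiction. Suppose $\gamma := \sup(\delta^N \cap H) < \delta^N$, where $H := \Hull{N}{n}(S)$. First I would collect the fine structure of $N$ from Lemma \ref{lem: standard parameter of admissible mice}: $N$ is $n$-sound, $\rho_n^N = \theta^N = \theta$ is the largest cardinal of $N$, and $p_n^N$ — together with $\delta^N$, $\kappa$, $\theta$ — is definable over $N$, hence lies in $\Hull{N}{n}(\emptyset) \subseteq H$. Using $N = \Hull{N}{n}(\theta \cup \{p_n^N\})$ together with the fact that the $<_N$-least Skolem witness to the membership of any $\sigma \in S$ with $\sigma \geq \theta$ has its ordinal parameters definable over $N$ from $\sigma$, I would show that $H = \Hull{N}{n}((H\cap\theta)\cup\{p_n^N\})$, so that only the cofinality of $S$ in $\theta$ (and, below, in $\alpha$) matters, and $H\cap\theta$ is cofinal in $\theta$.

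Next, let $\pi \colon \bar N \to N$ be the inverse of the transitive collapse of $H$; since $p_n^N \in H$ and $N$ is $n$-sound, $\pi$ is a (weak) $n$-embedding. From $S$ cofinal in $\alpha = \OR^N$, $\pi$ is cofinal, i.e.\ continuous at $\OR^{\bar N}$; from $H\cap\theta$ cofinal in $\theta$, $\pi$ is continuous at $\bar\theta := \pi^{-1}(\theta) = \rho_n^{\bar N}$, the largest cardinal of $\bar N$. I would then check that $\bar N \models \tho$: first, $\bar N \models \Sigma_n\text{-}\kp$ by pulling back the criterion of Lemma \ref{lem:KP in Premice} — a $\Sigma_n^{\bar N}$-definable total unbounded $f\colon\eta\to\lfloor\bar N\rfloor$ would, via $\pi$ and its continuity at $\OR^{\bar N}$, yield one over $N$; second, the remaining axioms (including ``$\bar\delta := \pi^{-1}(\delta^N)$ is Woodin, below the inaccessible $\bar\kappa := \pi^{-1}(\kappa)$, with $\bar\kappa^{+}$ existing'') hold in $\bar N$ since $\bar N$ is a passive premouse, $\tho$ admits an $r\Pi_{n+2}$-axiomatization (Lemma \ref{lem: axiomatizability}), and $r\Pi_{n+2}$-sentences reflect to the domain of $\pi$, with the continuity of $\pi$ at $\bar\theta = \rho_n^{\bar N}$ used to upgrade $\pi$ to the required degree of elementarity. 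Being $n$-sound, iterable (pull back a strategy for $N$ along $\pi$), and a model of $\tho$, $\bar N$ equals $\mad$ by the minimality clause in the definition of $\mad$ (a proper $(n+1)$-core segment $\mad \lhd \bar N$ would contradict $\bar N \models \tho$).

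Now $\pi \colon \mad \to N$. Since $\mad \models \tho^\prime$, $N \models \tho$ and $\OR^{\mad} < \alpha = \OR^N$, $\mad$ is not an initial segment of $N$, so $\crt(\pi) < \OR^{\mad}$; as $\pi(\delta^{\mad}) = \delta^N$ this forces $\crt(\pi) \leq \delta^{\mad}$, and since $\delta^{\mad}$ is definable over $N$ (it is the Woodin of $\mathfrak{C}_{n+1}(N) = \mad$) we get $\delta^{\mad} \in H$, hence $\crt(\pi) < \delta^{\mad}$. The contradiction hypothesis says precisely that $\sup\pi[\delta^{\mad}] = \gamma < \delta^N$, i.e.\ $\pi$ is discontinuous at $\delta^{\mad}$. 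On the other hand, $N$ is a \emph{good} $\mad$-like $x$-weasel with $N^- \in \lx|\kappa$, hence an $x$-genericity iterate of $\mad$ at its Woodin (as in the proofs of Lemma \ref{comparison in F} and of $\F\neq\emptyset$), with iteration map $i\colon\mad\to N$ that is continuous at $\delta^{\mad}$, so that $i[\delta^{\mad}]$ is cofinal in $\delta^N$. The final step is to identify $\pi$ with $i$: both are cofinal $n$-embeddings of $\mad$ into $N$, both are the identity below their critical points, both send $\theta^{\mad}$ continuously onto $\theta$, and $N$ is generated over $\ran(i)$ together with the generic $x$; the uniqueness of the $x$-genericity iterate of $\mad$ inside $\lx$ then yields $\pi = i$, whence $H \supseteq \ran(\pi) = \ran(i) \supseteq i[\delta^{\mad}]$ is cofinal in $\delta^N$ — contradicting $\gamma < \delta^N$.

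I expect the main obstacle to be this last step — equivalently, ruling out that the collapse embedding $\pi$ is discontinuous at the Woodin $\delta^{\mad}$. This is exactly where the hypothesis that $N$ is a \emph{good} $\mad$-like weasel is indispensable (so that $N$ genuinely is an $x$-genericity iterate of $\mad$ with a continuous-at-$\delta^{\mad}$ iteration map), and where it must be fused with the two continuity properties of $\pi$ squeezed out of $S$ being cofinal in both $\theta$ and $\alpha$. A secondary, more routine, issue is pinning down the exact degree of elementarity of the collapse map $\pi$ (a weak $n$-embedding in general, but sufficiently elementary here, given the cofinality of $S$ in $\rho_n^N = \theta$, to carry ``Woodin'', ``inaccessible'' and the $r\Pi_{n+2}$-axioms of $\tho$).
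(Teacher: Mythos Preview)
Your overall strategy diverges from the paper's and has a genuine gap at the step you yourself flag as the main obstacle. The paper does \emph{not} try to identify the collapsed hull with $\mad$ or compare two embeddings; instead it enlarges the hull to $Y := \Hull{N}{n}(\gamma \cup S)$, where $\gamma = \sup(\delta^N \cap \Hull{N}{n}(S))$. Since $\delta^N$ is regular in $N$ and $S$ is cofinal in $\alpha$, one still has $\sup(Y\cap\delta^N)=\gamma$, but now $\crt(\pi)=\gamma=\bar\delta$, so $\bar N \Vert \gamma = N \Vert \gamma$ and (because $N$ has no proper initial segment modelling $\tho$, hence no extenders indexed in the relevant interval) $\bar N \lhd N$. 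The contradiction is then purely internal: one checks directly that $\bar N$ satisfies $\Sigma_n$-Collection, splitting into the cases $\lambda<\bar\theta$ (where the pushforward may be partial and one uses $\rho_n^N=\theta$) and $\lambda=\bar\theta$ (where continuity of $\pi$ at $\bar\theta$, coming from $S$ cofinal in $\theta$, does the work). This yields $\bar N\models\tho'$ with $\bar N\lhd N$, contradicting $N\models\tho$. No iterability of $N$, no identification of $\bar N$ with $\mad$, and no comparison of embeddings is needed.

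Your argument, by contrast, requires $N$ to be an iterate of $\mad$ (you write ``hence an $x$-genericity iterate of $\mad$''), but the hypothesis is only that $N$ is a good \emph{pre}-$\mad$-like $x$-weasel: no iterability is assumed, so you cannot pull back a strategy to conclude $\bar N$ is iterable, nor produce an iteration map $i$. Even if one grants $N\in\F$, the final step is not established: ``uniqueness of the $x$-genericity iterate of $\mad$'' is a statement about the target model, not about embeddings into it, and there is no reason the uncollapse of $\Hull{N}{n}(S)$ (which depends on the arbitrary cofinal set $S$) should coincide with the fixed iteration map $i$. A correct version of that step would have to go through $\mad=\Hull{\mad}{n+1}(\emptyset)$ and show that $\pi$ is sufficiently $r\Sigma_{n+1}$-elementary that $\pi$ and $i$ agree on minimal Skolem terms; you would also need $\bar N$ to be $(n+1)$-sound (not merely $n$-sound) to get $\bar N=\mad$ rather than just $\mathfrak C_{n+1}(\bar N)=\mad$, and you would need to justify that $i$ is continuous at $\delta^{\mad}$. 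None of this is argued. The paper's trick of throwing $\gamma$ into the hull sidesteps all of these issues.
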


\begin{proof}
    Let $X:= \Hull{N}{n}(S)$ and suppose for the sake of contradiction that $\sup(X\cap\delta^N) = \gamma < \delta^N$. Let $Y := \Hull{N}{n}(\gamma \cup S)$. Since $S$ is cofinal in $\alpha$ and $\delta^N$ is regular in $N$, it follows that $\gamma = \sup(Y \cap \delta^N)$. Let $\pi \colon \bar{N} \to N$ be the inverse of the transitive collapse map of $Y$. Let $(\bar{\delta},\bar{\kappa}, \bar{\theta}) \in \bar{N}$ be such that $\pi ((\bar{\delta},\bar{\kappa}, \bar{\theta})) = (\delta^N,\kappa,\theta)$. Note that $\bar{\delta}$ is a Woodin cardinal in $\bar{N}$, $\bar{\kappa}$ is inaccessible in $\bar{N}$, and $\bar{\theta}$ is the cardinal successor of $\bar{\kappa}$ in $\bar{N}$. 
    As $\gamma = \bar{\delta}$ is the critical point of $\pi$, $\bar{N} \vert \vert \gamma = N \vert \vert \gamma$. Moreover, if $\bar{\alpha} := \OR^{\bar{N}}$, then $\bar{N} = L_{\bar{\alpha}} (\bar{N} \vert \gamma)$. Since $N$ does not have a proper initial segment that models $\tho$, there are no extenders of $\E^N$ indexed in the interval $(\gamma,\bar{\alpha}]$. Thus, $\bar{N} \lhd N$.
    
    We now aim to derive a contradiction by showing that $\bar{N}$ is $\Sigma_n$-admissible. Suppose for the sake of contradiction that $\bar{N}$ is not $\Sigma_n$-admissible, i.e.~$\Sigma_n$-Collection fails. Let $\lambda$ be the least failure of $\Sigma_n$-Collection, i.e.~there is a function $f \in \Sigma_n^{\bar{N}} (\bar{N})$ such that $\dom(f) = \lambda$ where $\lambda \leq \bar{\theta}$ but $f \notin \bar{N}$, and $\lambda$ is the least such. Let $\varphi_f$ be a $\Sigma_n$ formula and $p \in \bar{N}$ which define $f$.

    Let us first consider the case that $\lambda < \bar{\theta}$. Let $\tilde{f}$ be the partial function with $\dom(\tilde{f}) = \sup(\pi [\lambda])$ and for $x \in \dom(\tilde{f})$, $\tilde{f}(x)$ is the unique $y \in N$ such that $N \models \varphi_f (x, y, \pi (p))$ if it exists and otherwise $\tilde{f}(x)$ is undefined. Note that $\sup(\dom(\tilde{f})) \leq \pi(\lambda) < \theta$. Moreover, since $S$ is cofinal in $\alpha$ and $f$ is cofinal in $\OR^{\bar{N}}$ it follows that $\tilde{f}$ is cofinal in $\alpha$. Let $B \subseteq \sup(\pi(\lambda)) < \theta$ be such that $B = \{ \xi < \pi(\lambda) \colon N \models \exists y \varphi_f (\xi, y, \pi(p))\}$. Note that $B$ is $\Sigma_n$-definable. In the case $B \in N$ it follows from $\Sigma_n$-admissibility and the $\Sigma_n$-elementarity of $\pi$ that $f \in \bar{N}$, a contradiction! In the case $B \notin N$, we have $\rho^N_n \leq \sup(\pi(\lambda)) < \theta$, a contradiction!

    Now, let us consider the case that $\dom(f) = \bar{\theta}$. Note that $\pi$ is continuous at $\bar{\theta}$, since $S$ is cofinal in $\theta$. Let $\tilde{f}$ be defined as before. Since $\lambda$ is the least failure of $\Sigma_n$-Collection in $\bar{N}$, it follows that $\tilde{f}(\gamma)$ is defined for all $\gamma \in \theta$. But then we have by $\Sigma_n$-admissibility of $N$, that $\tilde{f} \in N$, and it follows that $f \in \bar{N}$. Contradiction!
\end{proof}

\begin{lem} \label{lem: S infty is generating}
    Let $N \in \F$. Then
    \[
        N = \Hull{N}{n}(\delta^N \cup S_\infty).
    \] 
\end{lem}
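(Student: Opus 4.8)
The plan is to show that $H:=\Hull{N}{n}(\delta^N\cup S_\infty)$ equals $N$. Since $N$, being a $\Sigma^{\mad}$-iterate of $\mad$, is $n$-sound, and by Lemma~\ref{lem: largest cardinal} and Lemma~\ref{lem: standard parameter of admissible mice} we have $\rho_n^N=\theta^N$ (the largest cardinal of $N$) and $p_n^N\subseteq\{\theta^N\}$, so that $N=\Hull{N}{n}(\theta^N\cup\{p_n^N\})\subseteq\Hull{N}{n}(\theta^N+1)$. Let $\pi\colon\bar N\to N$ be the inverse of the transitive collapse of $H$. Since $\vec\gamma\subseteq S_\infty\subseteq H$ is cofinal in $\alpha=\OR^N$, the set $H$ is unbounded in $N$, so it suffices to show $\pi=\mathrm{id}$: then $H$ is transitive and unbounded, hence $H\cap\OR=\alpha$, so $H\supseteq\theta^N+1$ and therefore $H\supseteq\Hull{N}{n}(\theta^N+1)=N$. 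Note $\delta^N\subseteq H$, and $\delta^N$, $\kappa$, $N^-=N\vert(\delta^N)^{+N}$ all lie in $H$, being $\emptyset$-definable over $N$; so if $\pi\neq\mathrm{id}$ then $\crt(\pi)>\delta^N$. Also $\vec p\subseteq S_\infty\subseteq H$ is cofinal in $\theta^N$, so Lemma~\ref{S is cofinal} (applied with $S=S_\infty$) shows $\Hull{N}{n}(S_\infty)$, hence $H$, is cofinal in $\delta^N$.

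Now suppose toward a contradiction $\pi\neq\mathrm{id}$. As $\pi$ is $r\Sigma_n$-elementary and $\bar N$ is a passive premouse, condensation for passive premice in the $1$-small context (see e.g.~\cite{Mitchell_Steel_2017},~\cite{Schlutzenberg2019TheDO}) — together with the fact that $N\models V=L[\dot E]$ has no proper initial segment modelling $\tho^\prime$ — yields $\bar N\unlhd N$; since $\bar N\neq N$ this means $\bar N=N\vert\gamma$ with $\gamma:=\OR^{\bar N}<\alpha$. I would then argue that $\bar N=N\vert\gamma\models\tho^\prime$, which contradicts $N\models\forall\beta(L_\beta[\dot E]\not\models\tho^\prime)$. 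Indeed: $\bar N$ models $V=L[\dot E]$, being a premouse; $\delta^N=\pi^{-1}(\delta^N)$ is the Woodin of $\bar N$ and remains Woodin there since $\crt(\pi)>\delta^N$ so the witnessing extenders (indexed below $\delta^N$) lie on $\E^{\bar N}$; by $r\Sigma_n$-elementarity of $\pi$, $\bar\kappa:=\pi^{-1}(\kappa)>\delta^N$ is inaccessible in $\bar N$, $\bar\kappa^{+\bar N}=\pi^{-1}(\theta^N)<\gamma$ exists, and $\bar\kappa^{+\bar N}$ is the largest cardinal of $\bar N$; and $\bar N$ is $\Sigma_n$-admissible, since otherwise Lemma~\ref{lem:KP in Premice} gives a total unbounded $f\in\Sigma_n^{\bar N}(\bar N)$ with bounded domain, and pushing $f$ forward along $\pi$ — using that $\ran(\pi)=H$ is unbounded in $N$ — produces a total unbounded $g\in\Sigma_n^N(N)$ with bounded domain, contradicting $N\models\Sigma_n\text{-}\kp$ via Lemma~\ref{lem:KP in Premice}. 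Hence $\pi=\mathrm{id}$ and $H=N$.

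I expect the main obstacle to be the invocation of condensation to get $\bar N\unlhd N$: one must rule out the exceptional cases of the condensation dichotomy, in which $\bar N$ is a proper segment of an ultrapower of an initial segment of $N$, and it is here that $1$-smallness, $\crt(\pi)>\delta^N$, and the cofinality properties of $H$ recorded above (in particular the cofinality in $\delta^N$ from Lemma~\ref{S is cofinal}) are used. Everything else is bookkeeping with the fine-structural theory of admissible premice modelling $\tho$ developed earlier, apart from a routine separate check of the low-complexity definability and elementarity claims when $n=1$ (e.g.~that $\bar\kappa^{+\bar N}$ is the largest cardinal of $\bar N$ and that $\kappa$ lies in $H$), handled much as the tree $T_1$ was in the case $n=1$.
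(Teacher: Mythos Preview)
Your approach is genuinely different from the paper's. The paper argues directly: from Lemma~\ref{lem: standard parameter of admissible mice} one has $N=\Hull{N}{n+1}(\delta^N)$, so it suffices to show each $m\tau_\varphi^N(\vec x)$ with $\varphi$ an $r\Sigma_{n+1}$ formula and $\vec x\in\fin{\delta^N}$ lies in $\Hull{N}{n}(\delta^N\cup S_\infty)$. The key step is that for each $k<\omega$ the theory $T_k=\tho^N_n(\alpha_k\cup\{\theta\})$ belongs to $\Hull{N}{n}(S_\infty)$: the function $\varphi(\vec x)\mapsto\lv_\varphi^N(\vec x)$ on $T_k$ is $r\Sigma_n$, so by $\Sigma_n$-Collection it is bounded by some $\eta$, which may be taken in $S_\infty$; then $T_k$ is computable from $\tho^N_{n-1}(\eta)$, which lies in the hull. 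Any $r\Sigma_{n+1}$ fact about $\vec x\in\fin{\delta^N}$ is witnessed inside some $T_k$ (since $\vec p$ is cofinal in $\theta$), so its Skolem term value lies in the hull. This avoids condensation entirely.

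Your contradiction argument is essentially the proof of Lemma~\ref{S is cofinal} transposed to the regime $\crt(\pi)>\delta^N$, and it can be made to work, but two points need tightening. First, invoking general condensation is misleading: what you actually use is that $N$ has no extenders indexed above $\delta^N$ (indeed $N=L_\alpha(N\vert\delta^N)$, as used elsewhere in the paper), so from $\bar N\vert\crt(\pi)=N\vert\crt(\pi)$ and the $\Pi_1$ fact ``no extenders above $\delta$'' one gets $\bar N=L_{\bar\alpha}(N\vert\delta^N)=N\vert\bar\alpha$ directly, with no appeal to the condensation dichotomy. Second, the ``push $f$ forward along $\pi$'' step for $\Sigma_n$-admissibility of $\bar N$ is too quick: $\pi$ need not be surjective onto $\pi(\lambda)$, so the transported function need not be total, and you must run the two-case analysis of Lemma~\ref{S is cofinal} (the case $\lambda<\bar\theta$, where one examines the $\Sigma_n$-definable domain $B$ and uses $\rho_n^N=\theta^N$, and the case $\lambda=\bar\theta$, where continuity of $\pi$ at $\bar\theta$---which holds since $\vec p\subseteq H$ is cofinal in $\theta^N$---is needed). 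With those fixes your route is correct; the paper's direct argument simply sidesteps both issues.
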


\prf
    Note that by Lemma \ref{lem: standard parameter of admissible mice}, $\mad = \Hull{\mad}{n+1} (\omega)$. Let $N \in \F$ and $i \colon \mad \to N$ be the iteration map according to $\Sigma$. Since $\rho_n^{\mad} = \theta^{\mad}$ and the iteration tree $\T$ is $n$-maximal, $i$ is an $n$-embedding. Moreover, $N$ is $n$-sound and $N = \Hull{N}{n+1} (\delta^N)$.

    Let $k < \omega$ and let $T_k := \tho^N_n (\alpha_k \cup \{ \theta \}) \in N$. Note that the function $f$ that sends $\varphi(\vec{x}) \in T_k$ to $\lv_\varphi^N(\vec{x})$ is $r\Sigma_n$ over $N$. Thus, by $\Sigma_n$-Collection there is some $\eta < \alpha$ such that for every $\varphi(\vec{x}) \in T_k$ there is a subtheory of $\tho^N_{n-1}(\eta \cup \{ \theta \})$, which witnesses $\varphi(\vec{x}) \in T_k$. However, since $S_\infty$ is cofinal in $\alpha$ we may assume without loss of generality that $\eta \in S_\infty$. But then, as $\tho^N_{n-1}(\eta) \in \Hull{N}{n}(S_\infty)$, $T_k \in \Hull{N}{n}(S_\infty)$.

    Now note that since for every $r\Sigma_{n+1}$ formula $\varphi$ and $\vec{x} \in \fin{\delta^N}$, the $r\Sigma_{n+1}$ formula $\exists z (m\tau_\varphi (\vec{x}) = z)$ has a witness which is coded by a subtheory of $\tho_{n}^N (\gamma)$ for $\gamma < \alpha$ large enough. Thus, since $\langle \alpha_k \rangle_{k<\omega}$ is cofinal in $\theta$, we may assume that $\gamma = \alpha_k$ for some $k < \omega$. But $T_k \in \Hull{N}{n}(\delta^N \cup \{S_\infty\})$, so that $m\tau^N_\varphi (\vec{x}) \in \Hull{N}{n}(S_\infty \cup \delta^N)$.
\eprf

Note that since $\vec{\gamma} \subseteq  S^{L(\R^+)}_{n-1}$, we have the following.

\begin{cor} \label{cor: S is generating}
    Let $N \in \F$. Then
    \[
        N = \bigcup_{s \in \fin{S_\infty}} H^N_s.
    \]
\end{cor}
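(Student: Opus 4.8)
\prf
The plan is to read the Corollary off Lemma~\ref{lem: S infty is generating}, which gives $N = \Hull{N}{n}(\delta^N \cup S_\infty)$, by repackaging each $r\Sigma_n$-Skolem-term value over $N$ as an element of some $H^N_s$. The inclusion $\bigcup_{s \in \fin{S_\infty}} H^N_s \subseteq N$ is trivial, since $H^N_s \subseteq N \vert \max(s) \lhd N$; the content is the converse. Fix $y \in N$. First I would use Lemma~\ref{lem: S infty is generating} (together with the characterisation $\Hull{N}{n}(X) = \{\, m\tau^N_\varphi(\vec t) : \varphi \text{ is } r\Sigma_n,\ \vec t \in \fin{X} \,\}$) to write $y = m\tau^N_\varphi(\vec a, \vec q)$ for some $r\Sigma_n$ formula $\varphi$, $\vec a \in \fin{\delta^N}$, and $\vec q \in \fin{S_\infty}$.

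Two things must be arranged in the finite set $s$. First, the parameters $\vec q \subseteq S_\infty$ should be usable inside $H^N_s$; I handle this by throwing them into $s^-$, since every member of $s^-$ is recoverable from the single parameter $\{s^-\}$ permitted in $H^N_s = \Hull{N \vert \max(s)}{\omega}(\gamma^N_s \cup \{s^-\})$. Second, and more delicately, the ordinal parameters $\vec a$ must lie below $\gamma^N_s = \sup(\delta^N \cap H^N_s)$. For this I would apply Lemma~\ref{S is cofinal} with $S = S_\infty$ — legitimate, as $S_\infty$ is cofinal in $\theta^N$ via $\vec p$ and in $\OR^N$ via $\vec\gamma$, and $N$ is $\Sigma_n$-admissible — to get $\sup(\delta^N \cap \Hull{N}{n}(S_\infty)) = \delta^N$; hence there are an $r\Sigma_n$ term $\sigma$ and $\vec r \in \fin{S_\infty}$ with $\max(\vec a) < \sigma^N(\vec r) < \delta^N$ (a finite set of hull ordinals has a maximum still in the hull). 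Putting $\vec r$ into $s^-$ as well will then force $\gamma^N_s > \max(\vec a)$, once $\sigma^N(\vec r)$ itself belongs to the hull.

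For $\max(s)$ I would pick $\gamma_k \in \vec\gamma$ sufficiently large: above $\max(\vec q \cup \vec r)$, above $\theta^N$, above the $\lv^N$-levels at which $m\tau^N_\varphi(\vec a, \vec q)$ and $\sigma^N(\vec r)$ are witnessed, and large enough that $N \vert \gamma_k$ correctly computes these two minimal-Skolem-term values together with their minimality — so that they become plainly first-order definable over $N \vert \gamma_k$ from parameters in $\gamma^N_s \cup \{s^-\}$, which is exactly membership in $H^N_s$. Here I would invoke the hypothesis highlighted just before the Corollary, $\vec\gamma \subseteq S^{L(\R^+)}_{n-1}$: as in the proof of Lemma~\ref{fix points}, where $\gamma_k = \sup(\Hull{N}{n}(\alpha_k \cup \{\theta\}) \cap \OR^N)$, this transfers through the forcing correspondence used in Lemma~\ref{definability of LR} to the statement that $N \vert \gamma_k$ is $r\Sigma_{n-1}$-elementary in $N$, which suffices to localise the relevant $r\Sigma_n$ computations to $N \vert \gamma_k$. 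Since $\vec\gamma$ is cofinal in $\alpha$ and every $\gamma_k$ lies in $S^{L(\R^+)}_{n-1}$, such a $\gamma_k$ exists. Setting $s := (\vec q \cup \vec r) \cup \{\gamma_k\}$, listed increasingly — with $\theta^N$, which is not literally in $S_\infty$, absorbed by also placing in $s^-$ a large enough member of $\vec p$, since $\theta^N$ is first-order definable over $N \vert \gamma_k$ from such an ordinal together with $\gamma_k$ — one then gets $\sigma^N(\vec r) \le \gamma^N_s$, hence $\vec a \subseteq \gamma^N_s$, and $y = m\tau^N_\varphi(\vec a, \vec q) \in H^N_s$, as required.

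The step I expect to be the main obstacle is the one in the third paragraph: checking that, for suitably large $\gamma_k \in \vec\gamma$, the value $m\tau^N_\varphi(\vec a, \vec q)$ — a priori only $r\Sigma_n$-definable over all of $N$ — is genuinely first-order definable over the proper segment $N \vert \gamma_k$ from $\vec a$ and $s^-$; concretely, that the witnessing level, the mastercode theories $\tho^N_{n-1}$ entering the definition of $\lv^N_\varphi$, and the minimality clause all live inside $N \vert \gamma_k$ and are computed there as in $N$. This is precisely where $\gamma_k \in S^{L(\R^+)}_{n-1}$ is used; the treatment of $\theta^N$ is a minor wrinkle, dealt with as indicated above.
\eprf
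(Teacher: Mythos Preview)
Your proposal is correct and follows exactly the approach the paper intends: unpack Lemma~\ref{lem: S infty is generating}, use Lemma~\ref{S is cofinal} to push $\vec a$ below $\gamma^N_s$, and take $\max(s)=\gamma_k$ so that the $r\Sigma_n$ Skolem-term computation localizes to $N\vert\gamma_k$. One small simplification: rather than transferring $\gamma_k\in S^{L(\R^+)}_{n-1}$ to $N$ through forcing, you can read $\gamma_k\in S^N_{n-1}$ directly off the identity $\gamma_k=\sup(\Hull{N}{n}(\alpha_k\cup\{\theta\})\cap\OR)$ from Lemma~\ref{fix points} (the hull meets $S^N_{n-1}$ cofinally since the least $S^N_{n-1}$-point above any $\beta$ in the hull is itself $r\Sigma_n$-definable from $\beta$, and $S^N_{n-1}$ is closed); also, your detour for $\theta^N$ is unnecessary, as $\theta^N$ is the largest cardinal of $N\vert\gamma_k$ and hence parameter-free definable there.
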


\begin{dfn}
    Let $N \in \F$ and set $S_\infty^* = i^{\F}_{N\infty} [S_\infty]$.
\end{dfn}

Note that for any $N,P \in \F$ we have $i^{\F}_{N\infty} \restriction S_\infty = i^{\F}_{P\infty} \restriction S_\infty$. Thus, $S_\infty^*$ is independent of $N$.

The next corollary is an immediate consequence of Lemma \ref{lem: S infty is generating}, since the iteration maps are $n$-embeddings.

\begin{cor} \label{lem: generators of the direct limit}
    $M_\infty^{\F} = \Hull{M_\infty^{\F}}{n} (\delta_\infty \cup S_\infty^*)$.
\end{cor}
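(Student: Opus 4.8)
The plan is to derive Corollary~\ref{lem: generators of the direct limit} directly from Lemma~\ref{lem: S infty is generating} by pushing that equality forward along the direct limit maps $i^{\F}_{N\infty}$. Fix any $N \in \F$. By Lemma~\ref{lem: S infty is generating} we have $N = \Hull{N}{n}(\delta^N \cup S_\infty)$, and by the construction of the external system $i^{\F}_{N\infty} \colon N \to M_\infty^{\F}$ is an iteration map according to $\Sigma^{\mad}$, which, as noted after the definition of $M_\infty^{\F}$ (and using $\rho_n^{\mad} = \theta^{\mad}$, so that every iteration embedding in the system is an $n$-embedding), is in particular an $n$-embedding, hence $r\Sigma_{n+1}$-elementary and cofinal.

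The key step is then the standard fact that $n$-embeddings (equivalently, $r\Sigma_{n+1}$-elementary, cofinal, and continuous-enough maps) commute with the formation of $r\Sigma_n$-hulls: if $j \colon N \to M$ is an $n$-embedding and $X \subseteq N$, then $j[\Hull{N}{n}(X)] = \Hull{M}{n}(j[X])$. Applying this with $j = i^{\F}_{N\infty}$ and $X = \delta^N \cup S_\infty$, and using that $i^{\F}_{N\infty}[\delta^N] = \delta_\infty$ (as $\delta^N$ is the Woodin of $N$ and iteration maps are continuous at it, with $i^{\F}_{N\infty}(\delta^N) = \delta_\infty := \delta^{M_\infty^{\F}}$) together with $i^{\F}_{N\infty}[S_\infty] = S_\infty^*$ by definition, we get
\[
    M_\infty^{\F} = i^{\F}_{N\infty}[N] = i^{\F}_{N\infty}\big[\Hull{N}{n}(\delta^N \cup S_\infty)\big] = \Hull{M_\infty^{\F}}{n}(\delta_\infty \cup S_\infty^*),
\]
where the first equality uses that $i^{\F}_{N\infty}$ is surjective onto $M_\infty^{\F}$ (every element of the direct limit has a preimage in $N$) — indeed surjectivity is immediate from the hull equation itself, since $M_\infty^{\F} = \Hull{M_\infty^{\F}}{n}(\delta_\infty \cup S_\infty^*) \subseteq i^{\F}_{N\infty}[N] \subseteq M_\infty^{\F}$.

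The only genuinely non-routine point is making sure the hull-commutation lemma applies cleanly in this fine-structural setting: one needs that $i^{\F}_{N\infty}$ is cofinal (so that every element of $M_\infty^{\F}$ is of the form $m\tau^{M_\infty^{\F}}_\varphi(\vec{y})$ for some $r\Sigma_n$ term and some $\vec{y}$ in the range of $i^{\F}_{N\infty}$ restricted to $\delta_\infty \cup S_\infty^*$), and that the minimal Skolem terms $m\tau_\varphi$ are preserved by $n$-embeddings — this is exactly the content of the lemmas following Definition~\ref{def: minimal skolem terms}, since the graph of $m\tau_\varphi^R$ is uniformly $r\Sigma_{n+1}^R(\{\vec{p}_n^R\})$ and $i^{\F}_{N\infty}$ preserves $r\Sigma_{n+1}$ statements and the standard parameters. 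Since $\rho_n^N = \theta^N$ with $\delta^N < \theta^N$, and $S_\infty$ is cofinal in $\alpha = \OR^N$, the set $\delta^N \cup S_\infty$ is cofinal in $N$, which is what drives the cofinality of the hull and lets the argument go through; I expect writing this verification carefully (rather than the algebra of the displayed equalities) to be the main, though still minor, obstacle.
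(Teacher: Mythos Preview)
Your approach is exactly the paper's: the corollary is recorded as an immediate consequence of Lemma~\ref{lem: S infty is generating} together with the fact that the maps $i^{\F}_{N\infty}$ are $n$-embeddings. However, your displayed chain of equalities has a circularity. You assert $M_\infty^{\F} = i^{\F}_{N\infty}[N]$ for a \emph{fixed} $N$, and then justify this surjectivity by invoking the very hull equation you are trying to prove. In a direct limit a single direct-limit map need not be surjective a priori. The clean version avoids this: given $y \in M_\infty^{\F}$, choose \emph{some} $N \in \F$ (depending on $y$) with $y \in \ran(i^{\F}_{N\infty})$; the preimage lies in $\Hull{N}{n}(\delta^N \cup S_\infty)$ by Lemma~\ref{lem: S infty is generating}, and since $i^{\F}_{N\infty}$ is an $n$-embedding it preserves the minimal Skolem terms, so $y \in \Hull{M_\infty^{\F}}{n}(i^{\F}_{N\infty}[\delta^N] \cup S_\infty^*) \subseteq \Hull{M_\infty^{\F}}{n}(\delta_\infty \cup S_\infty^*)$. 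Note that this only uses $i^{\F}_{N\infty}[\delta^N] \subseteq \delta_\infty$, not the continuity claim $i^{\F}_{N\infty}[\delta^N] = \delta_\infty$, which you assert without justification and which is not needed.
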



\subsection{Properties of the direct limit} \label{Properties of the direct limit}

We are now going to establish that $M_\infty^\F$ and $M_\infty^{\tilde{\D}}$ are equal.

\begin{dfn} \label{dfn: sigma}
    Let $\sigma \colon M_\infty^{\tilde{\D}} \to M_\infty^\F$ be defined as follows: Given $(N,s) \in \tilde{\D}$ and $x \in H^{N}_s$ let $P \in \F$ be such that $N \dashrightarrow P$ and $P$ is $s$-stable and set \[
        \sigma (i^{\tilde{\D}}_{(N,s)\infty} (x)) = i^{\F}_{P\infty} (i^{\tilde{\D}}_{(N,s),(P,s)} (x)).
    \]
\end{dfn}

The proof of the following Lemma is a variant of the proof of Claim 2 in \cite{SchlutzenbergTheta2016}. 

\begin{lem} \label{lem: direct limits are the same}
    $M_\infty^{\tilde{\D}} = M_\infty^\F$ and $\sigma = \id$.
    \end{lem}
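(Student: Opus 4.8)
The strategy is to prove that $\sigma$ is surjective; granting this, both $M_\infty^{\tilde{\D}}=M_\infty^\F$ and $\sigma=\id$ follow by a standard collapsing argument (see the last paragraph). The key preliminary observation is the commutation identity: for $P\in\F$ that is $s$-stable and $x\in H^P_s$,
\[
    \sigma\bigl(i^{\tilde{\D}}_{(P,s)\infty}(x)\bigr)=i^{\F}_{P\infty}(x).
\]
This is read off from Definition \ref{dfn: sigma} and Lemma \ref{lem: agreement of maps}: for any $s$-stable $P'\in\F$ with $P\dashrightarrow P'$ one has $i^{\F}_{P'\infty}\circ i^{\tilde{\D}}_{(P,s),(P',s)}=i^{\F}_{P'\infty}\circ\bigl(i^{\F}_{P,P'}\restriction H^P_s\bigr)=i^{\F}_{P\infty}\restriction H^P_s$, so the prescription in Definition \ref{dfn: sigma} is independent of $P'$ (hence $\sigma$ is well-defined) and equals $i^{\F}_{P\infty}(x)$. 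Since by Lemma \ref{lem: stable is dense} the $(P,s)$ with $P\in\F$ that is $s$-stable are cofinal in $\tilde{\D}$, every element of $M_\infty^{\tilde{\D}}$ is represented over such a pair, and the identity then immediately yields that $\sigma$ is injective (represent two points over a common such pair and use injectivity of $i^{\F}_{P\infty}$) and $\Sigma_0$-elementary (the $i^{\tilde{\D}}$- and $i^{\F}$-maps preserve and reflect $\in$).

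For surjectivity, fix $y\in M_\infty^\F$ and write $y=i^{\F}_{N\infty}(a)$ with $N\in\F$ and $a\in N$. By Corollary \ref{cor: S is generating} there is $s\in\fin{S_\infty}$ with $a\in H^N_s$, and enlarging $s$ inside $\fin{S_\infty}$ we may assume $\max(s)\in\vec{\gamma}$; in particular $\max(s)>\theta$, so the size requirement on $s$ in the definition of $\tilde{\D}$ will hold for any weasel in $\F$ paired with $s$ (as $\delta^{Q}<\kappa<\theta$ for $Q\in\F$). By Lemma \ref{existence of s-stable} choose $Q\in\F$ with $N\dashrightarrow Q$ and $Q$ $s$-stable; then $Q$ is $\mad$-like and, by Lemma \ref{stable implies iterability}, $s$-iterable, so $(Q,s)\in\tilde{\D}$. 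The crucial — and most delicate — point is that $i^{\F}_{N,Q}[H^N_s]\subseteq H^Q_s$. To see this, one uses Lemma \ref{fix points}: as $s\subseteq S_\infty$, the iteration embedding $i^{\F}_{N,Q}$ fixes $s$ pointwise, hence restricts to an embedding $N\vert\max(s)\to Q\vert\max(s)$, where these levels are sufficiently correct in $N$ and $Q$ (recall $\max(s)\in\vec{\gamma}$; cf.\ the remark preceding Corollary \ref{cor: S is generating}); combining this with the fact that $i^{\F}_{N,Q}$ sends $\delta^N$ to $\delta^Q$, so that $i^{\F}_{N,Q}[\gamma^N_s]\subseteq\gamma^Q_s$, one checks that the restriction carries $\Hull{N\vert\max(s)}{\omega}(\gamma^N_s\cup\{s^-\})$ into $\Hull{Q\vert\max(s)}{\omega}(\gamma^Q_s\cup\{s^-\})$, i.e.\ $H^N_s$ into $H^Q_s$. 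Putting $a':=i^{\F}_{N,Q}(a)\in H^Q_s$ and applying the commutation identity to the $s$-stable weasel $Q$ gives
\[
    \sigma\bigl(i^{\tilde{\D}}_{(Q,s)\infty}(a')\bigr)=i^{\F}_{Q\infty}(a')=i^{\F}_{Q\infty}\bigl(i^{\F}_{N,Q}(a)\bigr)=i^{\F}_{N\infty}(a)=y,
\]
so $y\in\ran(\sigma)$.

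With surjectivity in hand the proof finishes quickly: $\sigma$ is a $\Sigma_0$-elementary bijection of $M_\infty^{\tilde{\D}}$ onto the transitive premouse $M_\infty^\F$, so $M_\infty^{\tilde{\D}}$ is wellfounded, and it is extensional because $\sigma$ is onto and reflects $\in$ (if $a\ne b$ then $\sigma(a)\ne\sigma(b)$, hence $\sigma(a)$ and $\sigma(b)$, and so $a$ and $b$, differ on an element). Identifying $M_\infty^{\tilde{\D}}$ with its transitive collapse, $\sigma$ becomes an $\in$-isomorphism between transitive structures, whence $\sigma=\id$ and $M_\infty^{\tilde{\D}}=M_\infty^\F$. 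I expect the main obstacle to be the inclusion $i^{\F}_{N,Q}[H^N_s]\subseteq H^Q_s$ in the surjectivity step: this is precisely the place where the construction of $S_\infty$ in Section \ref{section: finding the sequence} is used, since it requires $S_\infty$ simultaneously to be pointwise fixed by the iteration maps of $\F$ (Lemma \ref{fix points}) and to generate the weasels in $\F$ in the strong form of Corollary \ref{cor: S is generating}.
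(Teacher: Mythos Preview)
Your proof is correct and follows the same overall strategy as the paper (reduce to surjectivity of $\sigma$, then use $S_\infty$), but you take an unnecessary detour. Once you have $a\in H^N_s$ with $s\in\fin{S_\infty}$, Lemma \ref{fix points} already tells you that \emph{every} $P\in\F$---in particular $N$ itself---is $s$-stable, since the iteration maps fix $S_\infty$ pointwise. The paper therefore applies the commutation identity directly at $N$: $(N,s)\in\tilde{\D}$ and $\sigma(i^{\tilde{\D}}_{(N,s)\infty}(a))=i^{\F}_{N\infty}(a)=y$. Your passage to a further $s$-stable $Q$ via Lemma \ref{existence of s-stable}, and with it the verification of the ``delicate'' inclusion $i^{\F}_{N,Q}[H^N_s]\subseteq H^Q_s$, is redundant---you are effectively re-deriving $s$-stability of $N$ in disguise (indeed, your argument for that inclusion already invokes Lemma \ref{fix points} to fix $s$). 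Nothing is wrong, but the paper's route is shorter and makes clearer that the entire content of the lemma is carried by the fixed-point property of $S_\infty$.
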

    
    \prf
        It suffices to see that $\sigma$ is surjective. Let $y \in M_\infty^\F$. Let $P \in \F$ and $\bar{y} \in P$ be such that $i^\F_{P \infty} (\bar{y}) = y$. By Corollary \ref{cor: S is generating}, there is $s \in \fin{S_\infty}$ such that $\bar{y} \in H^P_s$. Note that by Lemma \ref{fix points}, $P$ is $s$-stable and hence $s$-iterable. Thus,
        \[
            \sigma(i^{\tilde{\D}}_{(P,s),\infty}(\bar{y})) = i^\F_{P\infty} (\bar{y}) = y
        \] 
        and $y \in \ran(\sigma)$.
    \eprf

\begin{dfn}
    Let $M_\infty = M_\infty^{\tilde{\D}} = M_\infty^{\F}$ and let $\delta_\infty$ be the unique Woodin cardinal of $M_\infty$, $\kappa_\infty$ be the unique inaccessible cardinal of $M_\infty$ greater than $\delta_\infty$, and $\theta_\infty = (\kappa_\infty)^{+M_\infty}$.
\end{dfn}

\begin{lem}
Let $\eta_\infty$ be the least measurable cardinal of $M_\infty$. Then the following hold:
\begin{enumerate}
    \item \label{least measurable}$\eta_\infty = \kappa$, and
    \item \label{least Woodin}$\delta_\infty = \theta = \kappa^{+\lxg}$.
\end{enumerate}

\end{lem}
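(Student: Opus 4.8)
The plan is to fix $N \in \F$ and work throughout with the factor map $i := i^{\F}_{N\infty} \colon N \to M_\infty$, which is an $n$-embedding with $i(\delta^N) = \delta_\infty$, $i(\mu^N) = i(\kappa) = \kappa_\infty$ and $i(\theta^N) = i(\kappa^{+\lx}) = \theta_\infty$. The technical engine is the following. Since $\F$ is dense in $\tilde{\D}$ (Lemma~\ref{lem: stable is dense}), the $\tilde{\D}$-maps agree with the $\F$-maps on the hulls $H^{P}_{s}$ (Lemma~\ref{lem: agreement of maps}), and $M_\infty = M_\infty^{\tilde{\D}}$ is $\Sigma_1$-definable over $\lx$ (hence over $\lxg$) from $\theta$ and $\R^{\lx}$ (Lemma~\ref{definability of the covering system}), every proper initial segment $M_\infty \vert \xi$ with $\xi \leq (\delta_\infty)^{+M_\infty}$ equals $\bigcup_{P \in \F} i^{\F}_{P\infty}\bigl[\, P \vert \xi_{P} \,\bigr]$ for suitable $\xi_{P} \leq (\delta^{P})^{+P}$; since each $P \vert \xi_{P}$ lies in $\lx \vert \kappa$, the collection of relevant $P$ is coded over $\lx \vert \kappa$, and the restrictions $i^{\F}_{P\infty} \restriction (P \vert \xi_{P})$ are computable from $\tilde{\D}$, this shows that $M_\infty \vert \xi$ is an element of $\lxg$ of $\lxg$-cardinality at most $\kappa$. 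I will also use $M_\infty = \Hull{M_\infty}{n}(\delta_\infty \cup S_\infty^{*})$ (Corollary~\ref{lem: generators of the direct limit}).

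This yields the upper bounds at once. If $\delta_\infty > \theta$, then $\theta < (\delta_\infty)^{+M_\infty}$, so $M_\infty \vert \theta \in \lxg$ would have $\lxg$-cardinality $\leq \kappa$ while containing every ordinal below $\theta = \kappa^{+\lxg}$, which is impossible; hence $\delta_\infty \leq \theta$. Since $\delta_\infty$ is Woodin in $M_\infty$ there are measurables of $M_\infty$ cofinal in $\delta_\infty$, so $\eta_\infty < \delta_\infty$. For $\eta_\infty \leq \kappa$ I would split cases on the $\xi_{P}$ above relative to the least $P$-measurable $\eta^{P}$: if $\xi_{P} > \eta^{P}$ for some relevant $P$, then $M_\infty \vert \kappa$ already contains a total measure of $M_\infty$ and $\eta_\infty \leq \kappa$; otherwise $M_\infty \vert \kappa \subseteq M_\infty \vert \eta_\infty$, so $\kappa \leq \eta_\infty$, a case which will be closed by the lower-bound argument below.

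For the lower bounds $\theta \leq \delta_\infty$ and $\kappa \leq \eta_\infty$ — which I expect to be the main obstacle — I would argue by contradiction from the minimality of $\alpha$. Suppose $\delta_\infty < \theta$. Then $(\delta_\infty)^{+M_\infty} < \theta < \alpha$, so by the engine together with a $\Sigma_1$-collection argument inside $\lx$ the suitable part $M_\infty^{-} = M_\infty \vert (\delta_\infty)^{+M_\infty}$ lies in $\lx$; using that $x$ is $(M_\infty^{-}, \mathbb{B}^{M_\infty})$-generic and that $\tilde{\D}$, the sequence $S_\infty^{*}$, and the equation $M_\infty = \Hull{M_\infty}{n}(\delta_\infty \cup S_\infty^{*})$ are all available over $\lx$, one should be able to recover inside $\lx$ a passive premouse modeling $\tho$ of ordinal height $< \alpha$ — contradicting the minimality clause of $\tho_n$ (Definition~\ref{def: base theory}) and the definition of $\alpha$ (Lemma~\ref{basic facts about L(R)}). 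Similarly, if $\eta_\infty < \kappa$ then $M_\infty \vert \eta_\infty \in \lx$ has ordinal height $< \kappa$, whereas the lower part of $M_\infty$ generates $\R^{\lxg}$ and a set of $\lx$ of rank $< \kappa$ cannot code reals outside $\lx$; since $\R^{\lxg} \not\subseteq \lx$, this is a contradiction. Combining the four inequalities gives $\delta_\infty = \theta = \kappa^{+\lxg}$ and $\eta_\infty = \kappa$; the delicate point will be to carry out the reconstruction of $M_\infty$, respectively of its lower part, inside $\lx$ in the lower-bound step with enough control to reach the stated contradictions.
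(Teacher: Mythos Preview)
Your upper-bound argument for $\delta_\infty \leq \theta$ is essentially the paper's argument in different packaging: the paper fixes $\gamma < \delta_\infty$ and a single $s$ with $\gamma \in \ran(i^{\tilde{\D}}_{(N,s)\infty})$, observes that the system $\{(Q,s) : (N,s) \leq (Q,s)\}$ together with the maps $i^{\tilde{\D}}_{(P,s),(Q,s)}$ is definable over $\lx \vert \max(s)$, takes its direct limit there and collapses in $\lx$, and concludes $\gamma < \theta$ because the surjection onto $\gamma$ has a domain codable by a subset of $\kappa$. Your ``engine'' is the same idea globalized.

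The real gap is in both lower-bound arguments. For $\eta_\infty \geq \kappa$, your claim that ``the lower part of $M_\infty$ generates $\R^{\lxg}$'' is simply false: $M_\infty \vert \eta_\infty$ is a pure $L[E]$-premouse with no measurables, and nothing about it encodes the symmetric reals; the generic $G$ is needed for that, and $G \notin M_\infty$. The standard argument the paper defers to is entirely different: for each $\nu < \kappa$ one iterates some $N \in \F$ linearly by its least measure until the least measurable exceeds $\nu$, obtaining $P \in \F$ with $\eta^P > \nu$; then $\crt(i^{\F}_{P\infty}) \geq \eta^P > \nu$, so $M_\infty \vert \nu = P \vert \nu$, and $\nu$ is not measurable in $M_\infty$.

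For $\delta_\infty \geq \theta$, your proposed contradiction does not materialize. Even granting $M_\infty^- \in \lx$, the premouse you would recover is $L_{\alpha'}(M_\infty^-) \models \tho$ for some $\alpha'$, and exactly the kind of argument used in Lemma~\ref{x genericity iteration} shows $\alpha' = \alpha$, not $\alpha' < \alpha$; so minimality of $\alpha$ is not violated. (Also, you invoke ``$x$ is $(M_\infty^-, \mathbb{B}^{M_\infty})$-generic'', which is not established at this point; genericity of $x$ over $M_\infty[*]$ is proved much later, via a different subalgebra.) The standard Steel--Woodin argument the paper points to proceeds instead by showing that enough $\Sigma_n$-OD information lands in $M_\infty \vert \delta_\infty$ to force $\delta_\infty$ to be a cardinal of $\lxg$ above $\kappa$, hence $\geq \theta$; your outline does not supply an analogue of this step.
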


\begin{proof}


    Showing clause \ref{least measurable} is a standard argument, so we omit the proof. That $\delta_\infty \geq \theta$ follows as in the $L[x,G]$-case, so we omit the argument.
    Let us show that $\delta_\infty \leq \theta$. 
    Let $\gamma < \delta_\infty$ and $(N,s) \in \tilde{\D}$ be such that there is $\bar{\gamma} \in H^N_s$ such that $i^{\tilde{\D}}_{(N,s)\infty} (\bar{\gamma}) = \gamma$. Note that $\bar{\gamma} < \gamma^N_s$.
    Let 
    \[
        A:= \{ (Q,\beta) \colon (N,s) \leq (Q,s) \text{ and } \beta < i^{\tilde{\D}}_{(N,s),(Q,s)} (\bar{\gamma})   \}
    \]
    Let $f \colon A \to \OR$ be given by 
    \[
        f((Q,\beta)) = i^{\tilde{\D}}_{(Q,s)\infty} (\beta).
    \]
    We have that $\gamma \subseteq \ran (f)$. Note that the map which sends pairs $(P,\beta),(Q,\beta) \in A$ such that $(P,s) \leq (Q,s)$ to $i^{\tilde{\D}}_{(P,s),(Q,s)}$ is definable over $\lx \vert \max(s)$. Thus, over $\lx \vert \max(s)$, we may define the direct limit of these maps and then take its transitive collapse in $\lx$, so that $A,f \in \lx$. Since $A$ may be coded by a subset of $\kappa$, we have that $\gamma < \theta$. We have shown that $\delta_\infty \leq \theta$.
\end{proof}

\begin{dfn}
    Let
    \[
        \tilde{\D} \restriction S_\infty = \{ (N,s) \in \tilde{\D} \mid s \in \fin{S_\infty} \}.
    \] 
\end{dfn}

\begin{lem}
    $\tilde{\D} \restriction S_\infty$ covers $\tilde{\D}$ in that for all $z \in M_\infty$ there is $(N,s) \in \tilde{\D} \restriction S_\infty$ such that $z \in \ran(i^{\tilde{\D}}_{(N,s)\infty})$.
\end{lem}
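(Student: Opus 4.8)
The plan is to reduce everything to the external direct limit system $\F$ and then transport the result back along the identification $\sigma$. First, given $z \in M_\infty$, I would use $M_\infty = M_\infty^{\F}$ (Lemma \ref{lem: direct limits are the same}) to fix $P \in \F$ and $\bar{z} \in P$ with $i^{\F}_{P\infty}(\bar{z}) = z$. By Corollary \ref{cor: S is generating} we have $P = \bigcup_{t \in \fin{S_\infty}} H^P_t$, so I can choose $s \in \fin{S_\infty}$ with $\bar{z} \in H^P_s$; here $\max(s) > \delta^{P}$ automatically, since $H^P_s$ is only defined under that hypothesis (and, if convenient, $s$ may be enlarged within $\fin{S_\infty}$ using that $\vec{p}$ is cofinal in $\theta$ and that the hulls $H^P_t$ are directed under inclusion).

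Next I would check that $(P,s) \in \tilde{\D} \restriction S_\infty$. Since $P \in \F$, it is a good pre-$\mad$-like $x$-weasel with $P^- \in \lx \vert \kappa$, and as a non-dropping iterate of $\mad$ it is $\mad$-like; moreover $\delta^{L_\alpha(P)} = \delta^{P} < \max(s)$. For $s$-iterability I would invoke Lemma \ref{fix points}: every iteration map $i^{\F}_{P,Q}$ (for $P \dashrightarrow Q$ in $\F$) fixes each $\alpha_k$ and each $\gamma_k$, hence fixes the set $s \subseteq S_\infty$; thus $P$ is $s$-stable, and therefore $(P,s) \in \tilde{\D}$ by the corollary to Lemma \ref{lem: absoluteness of short tree iterability}. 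Since $s \in \fin{S_\infty}$, this is a member of $\tilde{\D} \restriction S_\infty$, and in particular $i^{\tilde{\D}}_{(P,s)\infty}$ is defined on $H^P_s$.

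Finally I would compute $i^{\tilde{\D}}_{(P,s)\infty}(\bar{z})$. Applying Definition \ref{dfn: sigma} with the $s$-stable weasel witnessing it taken to be $P$ itself (legitimate since $P \dashrightarrow P$ and $i^{\tilde{\D}}_{(P,s),(P,s)} = \id$), we get
\[
    \sigma\big(i^{\tilde{\D}}_{(P,s)\infty}(\bar{z})\big) = i^{\F}_{P\infty}\big(i^{\tilde{\D}}_{(P,s),(P,s)}(\bar{z})\big) = i^{\F}_{P\infty}(\bar{z}) = z .
\]
Since $\sigma = \id$ by Lemma \ref{lem: direct limits are the same}, this yields $i^{\tilde{\D}}_{(P,s)\infty}(\bar{z}) = z$, so $z \in \ran(i^{\tilde{\D}}_{(P,s)\infty})$ with $(P,s) \in \tilde{\D} \restriction S_\infty$, as required. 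There is no substantial obstacle here — the lemma is essentially a bookkeeping consequence of the preceding results — but the one point that needs a careful word is that the hulls $\{H^P_t : t \in \fin{S_\infty}\}$ form a directed family cofinal in $P$, so that $\bar{z}$ really does land in some $H^P_s$ with $s$ a finite subset of $S_\infty$; this is exactly the content of Corollary \ref{cor: S is generating} (together with Lemma \ref{lem: S infty is generating} and $\vec{\gamma} \subseteq S^{L(\R^+)}_{n-1}$).
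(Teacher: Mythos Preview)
Your proposal is correct and follows essentially the same approach as the paper: both arguments locate the preimage $\bar z$ in some $H^P_s$ with $P\in\F$ and $s\in\fin{S_\infty}$ via Corollary~\ref{cor: S is generating}, use Lemma~\ref{fix points} to get $s$-stability (hence $(P,s)\in\tilde\D$), and then identify $i^{\tilde\D}_{(P,s)\infty}(\bar z)$ with $z$. The only cosmetic difference is that the paper starts from an arbitrary $(N,s)\in\tilde\D$ and reduces to $\F$, while you go directly through $M_\infty=M_\infty^\F$ and invoke $\sigma=\id$; your route is in fact the surjectivity argument from the proof of Lemma~\ref{lem: direct limits are the same} repackaged, which is entirely appropriate here.
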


\prf
    Let $z \in M_\infty$. Let $(N,s) \in \tilde{\D}$ be such that $z \in \ran(i^{\tilde{\D}}_{(N,s)\infty})$ and let $\bar{z} \in H^N_s$ be such that $i^{\tilde{\D}}_{(N,s)\infty} (\bar{z})=z$. We may assume without loss of generality that $N \in \F$ is a $s$-stable. By Corollary \ref{cor: S is generating}, there is $t \in \fin{S_\infty}$ such that $\bar{z} \in H^N_t$. But then, since $N$ is $t$-stable, $(N,s\cup t) \in \tilde{\D}$. This means that $i^{\tilde{\D}}_{(N,t)\infty} (\bar{z}) = z = i^{\tilde{\D}}_{(N,s)\infty} (\bar{z})$.
\eprf

\subsection{The definability of the direct limit} \label{definability of direct limit}

So far we have established that $\tilde{\D}$ is a $\Sigma_1$-definable class of $\lx$ in the parameters $\R^{\lx}$ and $\kappa$. We now show that $M_\infty$ is a $\Sigma_1$-definable class of $\lx$ in the parameters $\R^{\lx}$ and $\kappa$.

\begin{dfn}
    Let
    \[
        \tilde{\D} \restriction \theta = \{ (N,s) \in \tilde{\D} :  s \in \finite(\theta) \}.
    \] 
    Let $\bar{M}_\infty$ be the direct limit of
    \[
        \langle H^{N}_s ; i^{\tilde{\D}}_{(N,s),(P,t)}: (N ,s) \leq (P,t) \in \tilde{\D} \restriction \theta \rangle,
    \]
    and for $(N,s) \in \tilde{\D} \restriction \theta$, let $i^{\tilde{\D} \restriction \theta}_{(N,s)\infty}$ be the direct limit map. 
\end{dfn}

\begin{lem}
    $(\tilde{\D} \restriction \theta, \leq \restriction \theta)$ is definable over $\lx \vert \theta$
\end{lem}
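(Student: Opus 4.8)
The plan is to unwind the definition of $\tilde{\D}$ and observe that, once we restrict the finite parameter sequences $s$ to lie in $\finite(\theta)$ rather than $\finite(\alpha)$, every clause in the definition of $\tilde{\D}$ becomes a statement about bounded objects, and hence is evaluable over $\lx \vert \theta$. Recall that $\tilde{\D}$ consists of pairs $(N,s)$ with $N^- \in \lx \vert \kappa$, $s \in \finite(\alpha)$ (now $\finite(\theta)$), $\delta^{L_\alpha(N)} < \max(s)$, and such that in $\lx$ we have (i) $N$ is $\mad$-like, (ii) $\delta^{L_\alpha(N)} < \max(s)$, (iii) $N$ is $s$-iterable. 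First I would note that the family of $N$ with $N^- \in \lx \vert \kappa$ that are pre-$\mad$-like $x$-weasels is definable over $\lx \vert \kappa$ (this was shown earlier), and that the property of being $\mad$-like for such $N$ is also definable over $\lx \vert \kappa$ (the Corollary following Lemma~\ref{lem: absoluteness between lx and v}). Since $\kappa < \theta$, both of these are available over $\lx \vert \theta$. The condition $\delta^{L_\alpha(N)} < \max(s)$ is a condition on $\max(s) < \theta$ and on $\delta^{L_\alpha(N)}$; here one uses that $\delta^{L_\alpha(N)} = \delta^N < \kappa$ (by condensation $(\delta^N)^{+L_\kappa(N)} = (\delta^N)^{+L_\alpha(N)}$, so this ordinal, being $< \kappa < \theta$, is computed correctly by $\lx \vert \theta$), and so this clause is just an ordinal comparison below $\theta$, trivially definable over $\lx \vert \theta$.

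The main work is the $s$-iterability clause. The key input is Lemma~\ref{lem: definability of s-iterability}, which says that for $s \in \finite(\alpha)$ with $\theta < \max(s)$, ``$N^-$ is $s$-iterable'' as computed in $\lx$ is equivalent to the same statement computed in $\lx \vert (\max(s) + \omega)$. But now we are restricting to $s \in \finite(\theta)$, so $\max(s) < \theta$ and this lemma does not literally apply. However, the underlying reason for that lemma — that what matters for $s$-iterability is only the $\col(\omega, {<\kappa})$-generic extension's reals, which agree between $\lx$ and a sufficiently tall initial segment — still applies: the definition of $s$-iterability (Definition~\ref{def: s-iterability}) quantifies over pre-$\mad$-like $x$-weasels $P_1, P_2, P_3$ with $P_i^- \in \lx \vert \kappa$ (objects below $\kappa$), over the trees $\T_{ij}$ (also below $\kappa$, by the analysis of $(\kappa,\kappa)$-short and $(\kappa,\kappa)$-maximal trees), and over $\T_{ij}$-cofinal branches $b$ that respect $s$ inside a $\col(\omega,{<\kappa})$-extension of $\lx$. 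The branches and the objects $\tho_s^{P_i}$ are coded by reals of that extension. So I would argue: for $(N,s)$ with $N^- \in \lx \vert \kappa$ and $s \in \finite(\theta)$, and for any $H$ which is $(\lx, \col(\omega,{<\kappa}))$-generic, $\lx[H]$ and $(\lx \vert \theta)[H]$ have the same reals (since $\col(\omega,{<\kappa})$ has size $\kappa < \theta$ and $\theta$ is a cardinal of $\lx$, indeed the relevant counting arguments from the proof of Lemma~\ref{lem: definability of s-iterability} go through). Hence ``$N$ is $s$-iterable'' is absolute between $\lx$ and $\lx \vert \theta$, and so, being a statement whose quantifiers range over objects coded below $\theta$ (parameters $N, s, \R^{\lx}, \kappa$, all of which are in $\lx \vert \theta$, noting $\R^{\lx} \in \lx \vert \omega_1^{\lx} \subseteq \lx \vert \theta$), it is definable over $\lx \vert \theta$.

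Putting the pieces together: $(\tilde{\D} \restriction \theta)$ is the set of $(N, s)$ such that $N^-$ is a pre-$\mad$-like $x$-weasel with $N^- \in \lx \vert \kappa$ (definable over $\lx \vert \kappa$, hence over $\lx \vert \theta$), $s \in \finite(\theta)$, $\delta^N < \max(s)$ (an ordinal comparison below $\theta$), $\lx \vert \theta \models ``N$ is $\mad$-like'' (definable over $\lx \vert \kappa$), and $\lx \vert \theta \models ``N$ is $s$-iterable'' (definable over $\lx \vert \theta$ by the preceding paragraph). The ordering $\leq \restriction \theta$ is: $(N,s) \leq (P,t)$ iff $N \dashrightarrow P$ and $s \subseteq t$; the relation $N \dashrightarrow P$ for $N, P$ with $N^-, P^- \in \lx \vert \kappa$ concerns a $(\kappa,\kappa)$-pseudo-normal iteration tree of length $< \kappa$, hence is definable over $\lx \vert \kappa$, and $s \subseteq t$ for $s, t \in \finite(\theta)$ is trivial. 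So the entire structure is definable over $\lx \vert \theta$. I expect the one genuinely delicate point is justifying that the quantifier over $\col(\omega,{<\kappa})$-generic branches in the definition of $s$-iterability is correctly evaluated over $\lx \vert \theta$ rather than $\lx$; this is handled exactly as in the proof sketch of Lemma~\ref{lem: definability of s-iterability} (the reals of the forcing extension are the same), and one simply has to check that all other parameters and quantified objects — the weasels $P_i$, the trees, the theories $\tho_s^{P_i}$ — already live below $\theta$, which they do since the weasels live below $\kappa$ and $s \in \finite(\theta)$.
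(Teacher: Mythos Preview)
Your proposal is correct and follows the same approach as the paper's proof, which simply says ``It suffices to see that being $s$-iterable is definable over $\lx \vert \theta$ for $s \in \finite(\theta)$. However, this is true by Lemma~\ref{lem: definability of s-iterability}.'' You are in fact more careful than the paper here: you correctly observe that Lemma~\ref{lem: definability of s-iterability} is stated under the hypothesis $\theta < \max(s)$, which fails for $s \in \finite(\theta)$, and you fill this gap by noting that the underlying reason (agreement of reals between $\lx[H]$ and $(\lx \vert \theta)[H]$ for $H$ $\col(\omega,{<\kappa})$-generic) still applies since $\kappa < \theta$.
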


\prf
    It suffices to see that being $s$-iterable is definable over $\lx \vert \theta$ for $s \in \finite(\theta)$. However, this is true by Lemma \ref{lem: definability of s-iterability}.
\eprf

\begin{dfn}
    Let $\bar{\sigma} \colon \bar{M}_\infty \to M_\infty \vert \theta$ be such that for $y \in \bar{M}_\infty$, if $(N,s) \in \tilde{\D} \restriction \theta$ and $\bar{y} \in H^{N}_s$ are such that $y = i^{\tilde{\D} \restriction \theta}_{(N,s)\infty} (\bar{y})$, then $\bar{\sigma}(y) = i^{\tilde{\D}}_{(N,s)\infty} (\bar{y})$.
\end{dfn}

The following lemma and its proof are a variant of Lemma 4.41 (b) in \cite{sargsyan2021varsovianmodelsii}.

\begin{lem} \label{definability up to kappa+}
    $\bar{\sigma} = \id$.
\end{lem}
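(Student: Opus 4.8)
The plan is to show that $\bar\sigma$ is surjective onto $M_\infty\vert\theta$. Since $\bar\sigma$ is well-defined and $\Sigma_0$-elementary by construction (from the commutativity of the $i^{\tilde\D}$-maps), and $\bar M_\infty$ is wellfounded — any $\Sigma_0$-elementary map into the wellfounded $M_\infty$ has wellfounded domain — surjectivity forces $\bar\sigma$ to be an $\in$-isomorphism between transitive structures, hence the identity (and incidentally $\OR\cap\bar M_\infty=\theta$). So it suffices to show that every $z\in M_\infty\vert\theta$ lies in $\ran(\bar\sigma)$.

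Fix $z\in M_\infty\vert\theta$. By Lemma \ref{lem: direct limits are the same} write $z=i^\F_{P\infty}(\bar z)$ with $P\in\F$ and $\bar z\in P$; since $i^\F_{P\infty}$ is an $n$-embedding sending the Woodin $\delta^P$ of $P$ to $\delta_\infty=\theta$, rank preservation gives $\bar z\in P\vert\delta^P$. As $\delta^P$ is inaccessible in $P$, the level $P\vert\delta^P$ models $\zfc$ and has a definable wellorder, hence a definable bijection with $\delta^P$; let $\bar z$ have code $\bar o<\delta^P<\kappa$ under it. Fix $\alpha_k\in\vec p$ with $\alpha_k>\kappa$ (possible since $\vec p$ is cofinal in $\theta=\kappa^{+\lx}>\kappa$). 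Using Lemma \ref{existence of s-stable} — passing to a further $\F$-iterate and tracking $\bar z$ together with its code along the iteration maps, exactly as in the proof of the existence of $s$-stable iterates — we may pass to $Q\in\F$ with $P\dashrightarrow Q$ such that, writing $o<\delta^Q$ for the code of $\bar z_Q:=i^\F_{P,Q}(\bar z)$ in $Q\vert\delta^Q$, we have $i^\F_{Q,R}(o)=o$ for all $R\in\F$ with $Q\dashrightarrow R$; by Lemma \ref{fix points} also $i^\F_{Q,R}(\alpha_k)=\alpha_k$. Put $t:=\{o,\alpha_k\}$, so $Q$ is $t$-stable, $t\in\finite(\theta)$, and $\delta^{L_\alpha(Q)}=\delta^Q<\kappa<\alpha_k=\max(t)$.

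Now $\delta^Q$ is the unique Woodin cardinal of $Q\vert\alpha_k$ (all witnessing extenders and all subsets of $\delta^Q$ in $Q$ lie in $Q\vert\alpha_k$, while $\kappa$ and everything between $\delta^Q$ and $\kappa$ is not Woodin there, and there are no cardinals in $(\kappa,\alpha_k)$), so $\delta^Q$ is definable over $Q\vert\alpha_k$; hence $Q\vert\delta^Q$ and its coding bijection are definable over $Q\vert\alpha_k$, whence $\bar z_Q=\mathrm{decode}(o)\in\Hull{Q\vert\alpha_k}{\omega}(\{o\})\subseteq H^Q_t$. By Lemma \ref{lem: absoluteness of short tree iterability} and Lemma \ref{stable implies iterability}, $t$-stability of $Q\in\F$ gives $(Q,t)\in\tilde\D$, hence $(Q,t)\in\tilde\D\restriction\theta$. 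Finally, Lemma \ref{lem: agreement of maps} in the limit gives $i^{\tilde\D}_{(Q,t)\infty}\restriction H^Q_t=i^\F_{Q\infty}\restriction H^Q_t$, so
\[
    \bar\sigma\bigl(i^{\tilde\D\restriction\theta}_{(Q,t)\infty}(\bar z_Q)\bigr)=i^{\tilde\D}_{(Q,t)\infty}(\bar z_Q)=i^\F_{Q\infty}(\bar z_Q)=i^\F_{P\infty}(\bar z)=z,
\]
so $z\in\ran(\bar\sigma)$, as needed.

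I expect the main obstacle to be the choice of $Q$: one must obtain a single iterate that simultaneously captures $z$ via an ordinal code $o<\delta^Q$ and is $\{o,\alpha_k\}$-stable, while the natural code of the image of $\bar z$ moves along the iteration maps, so that Lemma \ref{existence of s-stable} — which stabilizes a parameter fixed in advance — cannot be invoked verbatim; this is dealt with by stabilizing along the thread of codes running through the $\F$-iteration maps. A secondary point, handled by the commutativity of the $\tilde\D$-system together with Lemma \ref{lem: agreement of maps}, is that replacing a representing parameter having large coordinates by one in $\finite(\theta)$ does not alter the direct-limit image.
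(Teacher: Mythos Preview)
Your stabilization step does not go through. You need $Q\in\F$ with $i^\F_{Q,R}(o)=o$ for all $R\in\F$ above $Q$, where $o=i^\F_{P,Q}(\bar o)$ is the code of $\bar z_Q$; equivalently, the coherent thread $Q\mapsto o_Q:=i^\F_{P,Q}(\bar o)$ must eventually become constant. But this can fail: take $\bar z$ whose code $\bar o$ happens to equal the least measurable $\mu_P$ of $P$ (the coding bijection has range all of $\delta^P$, so nothing excludes this). By elementarity $o_Q=\mu_Q$ for every $Q$, and from any $Q\in\F$ one reaches $R\in\F$ with $\mu_R>\mu_Q$ (apply the measure on $\mu_Q$, then do a genericity iteration; the latter only uses extenders with critical point above the least measurable of the current model). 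Hence no $Q$ is $\{o_Q\}$-stable, and the thread never stabilizes. Lemma~\ref{existence of s-stable} produces, for a \emph{fixed} ordinal $\xi$, some $Q$ with $i^\F_{Q,R}(\xi)=\xi$; it says nothing about stabilizing a target that moves with $Q$, and in general such threads need not stabilize.

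The paper avoids this entirely: instead of coding $\bar y$ by an ordinal and attempting to freeze it, for $s\in\fin{S_\infty}$ it forms $H^N=\Hull{N\vert\max(s)}{\omega}(\kappa\cup s^-)$ and uses the extender-algebra genericity of $x$ over each $N\in\F$ to show that $H^N\cap\OR$ is the same for all $N\in\F$. The collapsed parameter $\bar s^N\in\finite(\theta)$ is therefore independent of $N$, so every $N\in\F$ is $\bar s^N$-stable automatically---no iterative stabilization is needed. This is precisely where the genericity requirement in the definition of $\F$ is doing work. There is also a secondary issue with your target: even your own $H^Q_t$ (with $\max(t)=\alpha_k>\kappa$) contains $\delta^Q$, and $i^{\tilde\D}_{(Q,t)\infty}(\delta^Q)=i^\F_{Q\infty}(\delta^Q)=\delta_\infty$ lies strictly above $M_\infty\vert\delta_\infty$. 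So $\ran(\bar\sigma)\supsetneq M_\infty\vert\theta$, and surjectivity onto $M_\infty\vert\theta$ alone would not yield $\bar\sigma=\id$; the paper's argument accordingly treats $y\in M_\infty\vert\theta_\infty$, which your coding via $P\vert\delta^P$ cannot reach.
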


\prf
    For $s \in \fin{S_\infty} \setminus \{ \emptyset \}$ and $(N,s) \in \tilde{\D} \restriction S_\infty$ such that $N \in \F$, let $K^N$ be the transitive collapse of 
    \[
        H^N := \Hull{N \vert \max(s)}{\omega} (\kappa \cup s^-),
    \]
    and let $\pi^N \colon K^N \to H^N$ be the inverse of the transitive collapse map. Set $\bar{s}^N = t \cup \{ \OR^{K^N} \}$, where $\pi^N (t) = s^-$.

    We aim to see that $(N , \bar{s}^N) \in \tilde{\D} \restriction \theta$. To this end note that it suffices to see that $N$ is $\bar{s}^N$-stable. Let $P \in \F$ be such that $N \dashrightarrow P$ and let $i := i^{\F}_{N,P} \colon N \to P$. Since $N$ is $s$-stable, $i(\kappa) = \kappa$ and $i$ is a $n$-embedding, we have $i(\bar{s}^N) = \bar{s}^P$. We aim to see that $\OR \cap H^N = \OR \cap H^P$, since then it follows that $\bar{s}^N = \bar{s}^P$ and so $N$ is $\bar{s}^N$-stable.

    Let
    \[
        H := \Hull{\lx \vert \max(s)}{\omega} ( \kappa \cup s^- \cup \{ x \}),
    \]
    where we consider the hull in the language $\mathcal{L}_{\dot{\in}}$.
    We claim that for all $Q \in \F$, $H \cap \OR = H^Q \cap \OR$.
    Let us first show that $H^Q \cap \OR \subseteq H \cap \OR$. Note that $Q^- \in \lx \vert \max(s)$ and $Q \vert \max(s) = L_{\max(s)} (Q^-)$. Thus, $Q \vert \max(s)$ is a definable class of $\lx \vert \max(s)$. But then it is easy to see that $H^Q \cap \OR \subseteq H \cap \OR$.

    In order to show the converse inclusion, let $\xi \in H \cap \OR$. Then by the argument from the proof of Lemma \ref{definability of LR} and Lemma \ref{lem: level by level forcing}, there are a $\Sigma_\omega$ formula $\varphi$, $p \in \mathbb{B}^Q$, $\vec{z} \in \fin{\kappa}$, and a $\mathbb{B}^Q$-name $\dot{x}$ for $x$ such that 
    \[
        Q \models p \Vdash_{\mathbb{B}} \check{\xi} = \tau_\varphi^{\lx \vert \max(s)} (\vec{z},s^-,\dot{x}).
    \]
    However, since we may consider $\mathbb{B}^Q$ as a subset of $\kappa$, this defines $\xi$ over $Q \vert \max(s)$. Thus, $\xi \in H^Q$.

    Let $y \in M_\infty \vert \theta_\infty$. Let $(N,s) \in \tilde{\D}$ be such that $N \in \F$, $N$ is $s$-stable and there is $\bar{y} \in H^{N^+}_s$ such that $i^{\tilde{\D}}_{(N,s) \infty} (\bar{y}) = y$. Let $y^\prime = i^{\tilde{\D} \restriction \theta}_{(N,\bar{s})\infty} (\bar{y})$. We claim $\bar{\sigma}(y^\prime) = y$. By definition, $\bar{\sigma}(y^\prime) = i^{\tilde{\D}}_{(N,\bar{s})\infty} (\bar{y})$. Note that $N$ is $s \cup \bar{s}$-stable and, therefore, $(N,s\cup\bar{s}) \in \tilde{\D}$. However, $i^{\tilde{\D}}_{(N,s),(N,s\cup\bar{s})} (\bar{y}) = i^{\tilde{\D}}_{(N,\bar{s}),(N,s\cup\bar{s})} (\bar{y})$, and the claim follows. We have shown that $y \in \ran(\bar{\sigma})$. Since $\bar{\sigma}$ is an embedding, it follows that $\bar{\sigma} = \id$.
\eprf

\begin{lem} \label{lem: ordinal height of the direct limit}
    $\OR^{M_\infty} = \alpha$.
\end{lem}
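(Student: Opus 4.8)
\textbf{The inequality $\OR^{M_\infty}\ge\alpha$.} Fix any $N\in\F$; by definition of a pre-$\mad$-like $x$-weasel, $\OR^{N}=\alpha$. The direct limit map $i^{\F}_{N\infty}\colon N\to M_\infty$ is an $n$-embedding, hence an $\in$-preserving map between the transitive premice $N$ and $M_\infty$, so it is inflationary on ordinals: $i^{\F}_{N\infty}(\xi)\ge\xi$ for every $\xi<\OR^{N}$. Thus $\OR^{M_\infty}\ge i^{\F}_{N\infty}(\xi)\ge\xi$ for all $\xi<\alpha$, giving $\OR^{M_\infty}\ge\alpha$. (One could also invoke $M_\infty=\Hull{M_\infty}{n}(\delta_\infty\cup S_\infty^{*})$ and $\OR^{N}=\alpha$, but inflationarity already suffices.)

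\textbf{Reduction of $\OR^{M_\infty}\le\alpha$.} I first note that $A:=M_\infty\vert\theta_\infty\in\lx$ and $\theta_\infty<\alpha$. Indeed, $\tilde{\D}\restriction\theta$ is definable over $\lx\vert\theta$ and hence is an element of $\lx$; since $\lx\models\Sigma_n\text{-}\kp$, its direct limit is computed in $\lx$ (the limit is wellfounded, being, by Lemma \ref{definability up to kappa+}, equal to the transitive premouse $M_\infty\vert\theta_\infty$, so $\lx$ can form its transitive collapse), and Lemma \ref{definability up to kappa+} gives $\bar{M}_\infty=M_\infty\vert\theta_\infty$; thus $A\in\lx$ and $\theta_\infty=\OR^{\bar M_\infty}<\OR^{\lx}=\alpha$. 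Since $\theta_\infty$ is the largest cardinal of $M_\infty$, the sequence $\mathbb{E}^{M_\infty}$ carries no extenders indexed above $\theta_\infty$, and as $M_\infty\models V\text{=}L[\dot{E}]$ we get $M_\infty=L_{\OR^{M_\infty}}(A)$. Now suppose toward a contradiction that $\OR^{M_\infty}>\alpha$. Then $\theta_\infty<\alpha<\OR^{M_\infty}$, so $M_\infty\vert\alpha=L_\alpha(A)$ is a proper initial segment of $M_\infty$; since $M_\infty\models\forall\beta(L_\beta[\dot{E}]\not\models\tho^{\prime})$ we obtain $L_\alpha(A)\not\models\tho^{\prime}$. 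But $L_\alpha(A)\models V\text{=}L[\dot{E}]$, and the witnesses $\delta_\infty=\theta$ (Woodin) and $\kappa_\infty$ (inaccessible, with $\kappa_\infty^{+}=\theta_\infty$) still work in $M_\infty\vert\alpha=L_\alpha(A)$ because all of the relevant extenders and cardinals lie below $\theta_\infty<\alpha$, so $L_\alpha(A)\models\exists\kappa\exists\delta(``\delta\text{ is Woodin}"\land``\kappa\text{ is inaccessible}"\land\kappa>\delta\land``\kappa^{+}\text{ exists}")$. Hence $L_\alpha(A)\not\models\Sigma_n\text{-}\kp$.

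\textbf{The crux.} It remains to derive a contradiction by showing that $L_\alpha(A)$ \emph{is} $\Sigma_n$-admissible; this is the main obstacle. The argument is as for the $\Sigma_n$-admissibility of $\lr$ in Lemma \ref{basic facts about L(R)} (equivalently, as for the Claim inside the proof of Lemma \ref{lem: projectum of L(R)}): since $A\in\lx$, the class $L_\alpha(A)$ is a transitive subclass of $\lx$ containing $\OR^{\lx}=\alpha$ and $\Sigma_1$-definable over $\lx$ uniformly in $A$, so relativisation to $L_\alpha(A)$ sends $\Sigma_m$ formulas to $\Sigma_m$ formulas over $\lx$ for every $m$. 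Given $a\in L_\alpha(A)$ and a $\Sigma_n$ formula $\varphi$ with $L_\alpha(A)\models\forall y\in a\,\exists z\,\varphi(y,z)$, the statement $\forall y\in a\,\exists z\,(z\in L_\alpha(A)\land\varphi^{L_\alpha(A)}(y,z))$ is a true $\Sigma_n$ statement over $\lx$; applying $\Sigma_n$-Collection in $\lx$ and then using that $S_{n-1}^{L_\alpha(A)}$ is cofinal in $\alpha$ to pass to a $\Sigma_{n-1}$-elementary level of $L_\alpha(A)$ that already contains the needed witnesses, one obtains a bounded instance, so $L_\alpha(A)\models\Sigma_n\text{-}\kp$. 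This contradicts the previous paragraph, so $\OR^{M_\infty}\le\alpha$, and together with the first inequality this yields $\OR^{M_\infty}=\alpha$. The delicate point, which I expect to require the most care, is exactly this last step: the cofinality of $S_{n-1}^{L_\alpha(A)}$ in $\alpha$ and the reflection of $\Sigma_n$-Collection from $\lx$ down into $L_\alpha(A)$.
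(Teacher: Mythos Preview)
Your argument is correct and follows the same route as the paper's proof: both deduce $\theta_\infty<\alpha$ and $M_\infty\vert\theta_\infty\in\lx$ from Lemma~\ref{definability up to kappa+}, write $M_\infty=L_{\OR^{M_\infty}}(A)$ for $A\in\lx$, and obtain a contradiction from $M_\infty\models\tho$ by showing $L_\alpha(A)\models\Sigma_n\text{-}\kp$. The paper simply asserts this last implication; you spell it out.

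One remark on your ``delicate point'': the detour through $S_{n-1}^{L_\alpha(A)}$ is unnecessary (and its cofinality in $\alpha$ is not obvious a priori, since Lemma~\ref{lem: n-1 projectum} would require $\Sigma_n$-Collection in $L_\alpha(A)$, which is what you are proving). The relativization you already invoke suffices directly: for $\Sigma_n$ $\varphi$, the predicate ``$L_\alpha(A)\models\varphi(y,z)$'' is $\Sigma_n$ over $\lx$ in the parameter $A$, so $\lx\models\forall y\in a\,\exists\gamma\,\exists z\in L_\gamma(A)\,[L_\alpha(A)\models\varphi(y,z)]$ is a $\Sigma_n$ instance of Collection in $\lx$; this bounds the $\gamma$'s below some $\gamma_0<\alpha$, and $L_{\gamma_0}(A)\in L_\alpha(A)$ is the required bound. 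No appeal to $\Sigma_{n-1}$-elementary levels of $L_\alpha(A)$ is needed.
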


\prf
    Note that by definition, $\OR^{M_\infty} \geq \alpha$. Suppose for the sake of contradiction that $\OR^{M_\infty} > \alpha$. By Lemma \ref{definability up to kappa+}, the uncollapsed version of the direct limit up to its largest cardinal is definable over $\lx \vert \theta$. Since $\lx$ is $\Sigma_1$-admissible and the full scheme of foundation holds in $\lx$, it follows that $\theta_\infty \in \lx$. Thus, $\alpha > \theta_\infty$. However, this means that $M_\infty \vert \alpha \models \Sigma_n\text{-}\kp$, since $M_\infty = L_{\OR^{M_\infty}} (M_\infty \vert \delta_\infty )= L_{\OR^{M_\infty}} (M_\infty \vert \theta )$ and $M_\infty \vert \delta_\infty \in \lx$ which holds again by Lemma \ref{definability up to kappa+}. Thus, $M_\infty \vert \alpha \models \tho^\prime$, a contradiction, since $M_\infty \models \tho$!
\eprf

\begin{lem} \label{lem: definability of the direct limit}
    $M_\infty$ is a $\Sigma_1$-definable class of  $\lx$ from the parameters $\R^{\lx}$ and $\{ \theta \}$.
    In addition, $\tilde{\D}$ and $M_\infty$ are $\Sigma_2$-definable classes of $\lx$ without parameters.
\end{lem}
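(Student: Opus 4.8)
The plan is to read the first assertion off results already established in this section, and then to remove the two parameters — at the cost of passing from $\Sigma_1$ to $\Sigma_2$ — for the second. For the first part: by Lemma \ref{definability up to kappa+} (together with $\delta_\infty = \theta$) we have $M_\infty \vert \theta = \bar{M}_\infty$, and $\bar{M}_\infty$ is the transitive collapse of the direct limit of the directed system $\tilde{\D} \restriction \theta$, which is definable over the set $\lx \vert \theta$. Carrying out this collapse inside the $\Sigma_1$-admissible structure $\lx$ shows $M_\infty \vert \theta \in \lx$ (as in the proof of Lemma \ref{lem: ordinal height of the direct limit}) and that ``$W = M_\infty \vert \theta$'' is $\Sigma_1$ over $\lx$ in the parameters $\theta$ and $\R^{\lx}$. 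Since $\OR^{M_\infty} = \OR^{\lx}$ and $M_\infty = L_{\OR^{M_\infty}}(M_\infty \vert \delta_\infty) = L_{\OR^{\lx}}(M_\infty \vert \theta)$ by Lemma \ref{lem: ordinal height of the direct limit}, I would then express $y \in M_\infty$ as $\exists W\,\exists \xi\,(W = M_\infty \vert \theta \land y \in L_\xi(W))$, a $\Sigma_1$ formula over $\lx$ in $\theta$ and $\R^{\lx}$. (Alternatively, by Lemma \ref{definability of the covering system} and Definition \ref{def: internal direct limit system}, $M_\infty = M_\infty^{\tilde{\D}}$ is the transitive direct limit of a $\Sigma_1$-definable directed system indexed by $\tilde{\D}$, and over an admissible structure such a direct limit is again $\Sigma_1$-definable with the same parameters.)

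For the second part the plan is to substitute parameter-free definitions of $\theta$ and of $\R^{\lx}$ into these $\Sigma_1$-bodies and track the quantifier complexity. First, $\R^{\lx} \in \lx$: since $\lx \models$ ``$\kappa$ is inaccessible'', in particular ``$\kappa$ is a strong limit'', the set $(2^{\aleph_0})^{\lx}$ exists in $\lx$ and equals $\R^{\lx}$, and ``$r = \R^{\lx}$'' is given by the $\Pi_1$ formula $\forall s\,(s \in r \leftrightarrow s \subseteq \omega)$. Second, by the minimality of $\alpha$ (Definition \ref{after fixing n}) and the arguments of Lemma \ref{lem: standard parameter of admissible mice} and Lemma \ref{lem: largest cardinal}, applied to the $x$-premouse $\lx$, I would show that $\theta$ is the largest cardinal of $\lx$, and fix the corresponding parameter-free definition of $\theta$. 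Substituting, for the two parameters in the $\Sigma_1$-bodies of the first part, the clauses ``$p$ is the largest cardinal of $\lx$'' and ``$r$ is the set of all reals'' under an existential prefix $\exists p\,\exists r$, and using $\Sigma_1$-collection in $\lx$ to absorb the bounded quantifiers — and the searches for collapsing surjections — occurring in the defining clauses, I would then compute that the resulting parameter-free formulas for $\tilde{\D}$ and for $M_\infty$ are $\Sigma_2$.

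The step I expect to be the main obstacle is precisely this last complexity computation: one must present the parameter-free description of $\theta$, and the way $\lx \vert \theta$ enters the definition of $\tilde{\D} \restriction \theta$, at a complexity low enough that prefixing the existential quantifier $\exists p$ to a $\Sigma_1$ body does not push the total past $\Sigma_2$. This is where the $\Sigma_1$-admissibility of $\lx$ is genuinely used — to turn the a priori unbounded ``there is no larger cardinal'' clause into a form compatible with a $\Sigma_2$ prefix — and it is the only point not immediate from the lemmas already in hand.
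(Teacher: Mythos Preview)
Your argument for the first assertion matches the paper's: it too simply cites Lemma~\ref{definability up to kappa+} and Lemma~\ref{lem: ordinal height of the direct limit}.

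For the second assertion the paper's proof is a single line --- ``since $\R^{\lx}$ is $\Sigma_2$-definable over $\lx$ without parameters'' --- and does not explicitly address the parameter $\theta$ at all. You are right to flag this, and your observation that $\{\R^{\lx}\}$ is in fact $\Pi_1$-definable (via $\forall s\,(s\in r\leftrightarrow s\subseteq\omega)$) is correct and sharper than what the paper states. You are also right that $\theta$ is the largest cardinal of $\lx$; the argument via Lemmas~\ref{lem: largest cardinal} and~\ref{lem: standard parameter of admissible mice} (or directly: if $\mu=\theta^{+\lx}$ existed then $L_\mu[x]\models\zf^-\supseteq\Sigma_n\text{-}\kp$ together with ``$\kappa$ inaccessible and $\kappa^+$ exists'', contradicting minimality of $\alpha$) goes through.

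The gap is in the step you yourself single out as the obstacle. The clause ``$p$ is the largest cardinal'' is genuinely $\Pi_2$ --- the subclause ``no cardinal above $p$'' is $\forall q>p\,\exists f\,\exists\gamma<q\,(f\colon\gamma\twoheadrightarrow q)$ --- and $\Sigma_1$-collection does not reduce this, because the outer universal quantifier $\forall q$ is unbounded. Substituting a $\Pi_2$ description of $\theta$ into a $\Sigma_1$ body and prefixing $\exists p$ yields $\Sigma_3$, not $\Sigma_2$. So as written the plan does not reach the target complexity.

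The fix is to describe $\theta$ differently. Since $\kappa$ is the \emph{unique} inaccessible of $\lx$, ``$\mu=\kappa$'' is simply ``$\mu$ is an uncountable regular limit cardinal'', which is $\Pi_1$ over $\Sigma_1$-$\kp$ (bounded-$\exists$ applied to $\Pi_1$ is $\Pi_1$ by collection, so ``limit cardinal'' is $\Pi_1$). Then ``$p=\kappa^+$'' in the parameter $\kappa$ is the conjunction of ``$p$ is a cardinal $>\kappa$'' ($\Pi_1$) and ``every $\mu\in(\kappa,p)$ is not a cardinal'' ($\Sigma_1$, by collection over the bounded $\forall\mu<p$). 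Writing the parameter-free formula as
\[
\exists p\,\exists\mu\,\exists r\,\bigl[\mu\text{ inaccessible}\ \land\ p=\mu^+\ \land\ r=\R\ \land\ \varphi(p,r)\bigr],
\]
the bracket is a conjunction of $\Pi_1$ and $\Sigma_1$ formulas, hence of the form $\exists v\,\forall u\,\Delta_0$, and the whole expression is $\Sigma_2$. This is presumably what the paper's one-line proof has in mind but leaves implicit.
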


\prf
    The first part follows from Lemma \ref{definability up to kappa+} and Lemma \ref{lem: ordinal height of the direct limit}.
    The second part then follows, since $\R^{\lx}$ is $\Sigma_2$-definable over $\lx$ without parameters.
\eprf

\section{$M_\infty$'s version of the direct limit} \label{M infinitys version}

\begin{dfn}
    Let $* \colon \alpha \to \alpha$ be defined as follows: For $\beta < \alpha$, let $(N,s) \in \tilde{\D}$ be such that $\beta \in s^-$ and set
    \[
        \beta^* = i^{\tilde{\D}}_{(N,s)\infty} (\beta).
    \]
\end{dfn}

    Note that the definition of $\beta^*$ does not depend on $(N,s)$.

\setcounter{claimcounter}{0}

By Lemma \ref{definability of the covering system}, $\tilde{\D}$ is a $\Sigma_1$-definable class of $\lx$ from the parameters $\theta$ and $\R^{\lx}$. This allows us to define a version of the internal covering system $\tilde{\D}$ in $M_\infty$. 

\begin{dfn}
    Let $\varphi$ be the $\Sigma_1$-formula given by Lemma \ref{definability of the covering system} and $N \in \F$ and let $\mathbb{P} = i^{\F}_{N\infty} (\mathbb{B}^N)$. Let $h$ be $(M_\infty,\mathbb{P})$-generic. Let $\tilde{\D}^\infty$ be the class of all $a \in M_\infty[h]$ such that $M_\infty [h] \models \varphi(a,\theta^\infty,\R^{M_\infty[h]})$.

    Let $M_\infty^{\tilde{\D}^\infty}$ be defined analogously via $\psi$, where $\psi$ is as in the first part of Lemma \ref{lem: definability of the direct limit}, and let $i^{\tilde{\D}^\infty}_{(N,s),(P,t)}$ denote the corresponding maps and $i_{(N,s)\infty}^{\tilde{\D}^\infty}$ denote the direct limit maps.
\end{dfn}

\begin{rem}
    Note that notions of pre-$\mad$-like $x$-weasel, $\mad$-like, and $s$-iterability do not apply to the structures in $\tilde{\D}^\infty$, simply because for $N \in \tilde{\D}^\infty$, $\theta^N > \theta$. However, these notions may be straightforwardly adapted for the structures in $\tilde{\D}^\infty$, so that we will also talk about these for the elements in $\tilde{\D}^\infty$. We leave the details of this adaption to the reader.
\end{rem}

\begin{lem} \label{lem: s-iterability in the direct limit}
    For $s \in \finite(\alpha)$, $M_\infty [h] \models ``M_\infty \text{ is }s^*\text{-iterable}"$.
\end{lem}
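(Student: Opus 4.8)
The statement asserts that $M_\infty[h]$ believes $M_\infty$ is $s^*$-iterable, where $s^*$ is the pointwise image of $s$ under the $*$-map. The natural approach is to transfer the $s$-iterability of the relevant weasels from $\lx$ up to $M_\infty[h]$ via the direct limit maps, exploiting that $M_\infty$ (and $M_\infty[h]$) is a $\Sigma_1$-definable class of $\lx$ by Lemma~\ref{lem: definability of the direct limit}, and that the covering system $\tilde{\D}$ is $\Sigma_1$-definable over $\lx$ by Lemma~\ref{definability of the covering system}. Concretely, I would first unwind what $s^*$-iterability in $M_\infty[h]$ demands: for all pre-$\mad^\infty$-like weasels $P_1 \dashrightarrow P_2 \dashrightarrow P_3$ in $M_\infty[h]$ above $M_\infty$, the collapse of $M_\infty$ forces the branch-existence clause \ref{s-iterability one} and the commutativity clause \ref{s-iterability two} of Definition~\ref{def: s-iterability}, adapted to this context as flagged in the remark following the definition of $\tilde{\D}^\infty$. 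The key point is that these are statements about branches through trees on iterates of $M_\infty$, and everything in sight is sufficiently absorbed by the collapse $\col(\omega,{<\theta_\infty})$ in $M_\infty[h]$.

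The main step is to reduce $s^*$-iterability in $M_\infty[h]$ to $s$-iterability of weasels in $\lx$. Pick $N \in \F$ which is $s$-stable (using Lemma~\ref{existence of s-stable}), so $N$ is $s$-iterable by Lemma~\ref{stable implies iterability}. Since $i^{\F}_{N\infty}$ is an $n$-embedding and $i^{\F}_{N\infty}[S_\infty] = S_\infty^*$ with $S_\infty \supseteq s$ (after passing to a tail, using Corollary~\ref{cor: S is generating} and Lemma~\ref{fix points} that $s$ consists of fixed points of the external maps for stable weasels), the elementarity of $i^{\F}_{N\infty}$ carries the assertion ``$N$ is $s$-iterable'' to ``$M_\infty$ is $s^*$-iterable'' as computed in $M_\infty$'s own version of the construction. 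More precisely, $s$-iterability is expressible as a statement over $\lx$ that is $\Sigma_1$ in parameters $\{N, s, \R^{\lx}, \theta\}$ (Corollary after Lemma~\ref{lem: definability of s-iterability}); applying $i^{\F}_{N\infty}$ sends this to the corresponding $\Sigma_1$ statement over $M_\infty$ in parameters $\{M_\infty^-\text{-type object}, s^*, \R^{M_\infty[h]}, \theta_\infty\}$, which is exactly what $\tilde{\D}^\infty$'s defining formula $\varphi$ picks out. One must check that the forcing $\col(\omega,{<\kappa})$ over $\lx$ used in Definition~\ref{def: s-iterability} corresponds under $i^{\F}_{N\infty}$ (together with the extender-algebra absorption) to $\col(\omega,{<\theta_\infty})$ over $M_\infty$, and that $h$ being $(M_\infty, i^{\F}_{N\infty}(\mathbb{B}^N))$-generic suffices to realize the relevant symmetric extension so that $\R^{M_\infty[h]}$ plays the role that $\R^{\lx}$ played.

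I would then verify clauses \ref{s-iterability one} and \ref{s-iterability two} in $M_\infty[h]$ by pulling back: given $P_1 \dashrightarrow P_2 \dashrightarrow P_3$ in $M_\infty[h]$ as in the adapted definition, these arise (via the density of $\F$-iterates and the covering lemmas of Section~\ref{direct limit systems}) as images of weasels witnessing the corresponding configuration for $N$ in $\lx$; the branches respecting $s$ exist and commute downstairs by $s$-iterability of $N$, and their images witness the corresponding facts upstairs because the $*$-map and the direct limit maps commute with the tree-building and branch-selection operations. The hull-parameter $s^-$ becomes $(s^*)^- = (s^-)^*$ since $*$ is order-preserving and the maps are $n$-embeddings, so $\gamma^{\cdot}_{s^*}$ and $H^{\cdot}_{s^*}$ are the images of $\gamma^{\cdot}_s$, $H^{\cdot}_s$.

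\textbf{Main obstacle.} The hard part will be the bookkeeping around the change of the collapse and the reals: in $\lx$, $s$-iterability quantifies over branches living in a $\col(\omega,{<\kappa})$-extension, with $\R^{\lx}$ needed to bound that quantification; in $M_\infty[h]$ the analogous quantification is over branches in a $\col(\omega,{<\theta_\infty})$-extension with $\R^{M_\infty[h]}$ as the bounding parameter. Establishing that $h$ (generic for the single poset $i^{\F}_{N\infty}(\mathbb{B}^N)$) suffices — rather than needing a full $\col(\omega,{<\theta_\infty})$-generic — requires invoking homogeneity of the collapse and the extender-algebra absorption exactly as in the remark preceding Definition~\ref{def: s-iterability in the direct limit} and the proof of Lemma~\ref{definability of LR}, i.e.\ that $\mathbb{B}^N \times \col^N(\omega,{<\kappa})$ is equivalent to $\col^N(\omega,{<\kappa})$, pushed through $i^{\F}_{N\infty}$. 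Once this absorption is in place, the elementarity argument is routine, but setting it up carefully — and confirming that the $\Sigma_1$ formula $\varphi$ defining $\tilde{\D}^\infty$ is genuinely the $i^{\F}_{N\infty}$-image of the formula defining $\tilde{\D}$, applied with the correct parameters — is where the real work lies.
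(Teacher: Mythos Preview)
Your proposal is correct and follows essentially the same approach as the paper: pick an $s$-stable $N \in \F$, observe that $s$-iterability of $N$ holds in $N[x]$ and hence is forced by some condition in $\mathbb{B}^N$, and then push this statement through the $n$-embedding $i^{\F}_{N\infty}$ to obtain that $s^*$-iterability of $M_\infty$ is forced over $M_\infty$. The paper's proof is a three-line sketch of exactly this elementarity transfer; your clause-by-clause ``pulling back'' in the third paragraph is not needed once the forcing-plus-elementarity argument of your second paragraph is in place, and your identified obstacle (the forcing/reals bookkeeping) is handled by noting the statement is internal to $N$ via the forcing relation, so elementarity applies directly.
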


\prf
    Let $N \in \F$ be $s$-stable. Note that
    \[
        N \models \exists p \in \mathbb{B}^N (p \Vdash_{\mathbb{B}^N} ``N \text{ is } s\text{-iterable})".
    \]
    But this means that over $M_\infty$ it is forced that $M_\infty$ is $s^*$-iterable.
\eprf

In particular, $(M_\infty,s^*) \in \tilde{\D}^\infty$ for all $s \in \finite(\alpha)$. Thus, the following definition makes sense.

\begin{dfn}
    $i^{\tilde{\D}^\infty}_{M_\infty\infty} = \bigcup \{ i^{\tilde{\D}^\infty}_{(M_\infty,s^*)\infty} : s \in \finite(\alpha) \}$.
\end{dfn}

From Lemma \ref{lem: s-iterability in the direct limit} it follows that the following lemma is well-stated.

\begin{lem}
    $M_\infty = \bigcup \{ H^{M_\infty}_{s^*} : s \in \fin{S_\infty} \setminus \{\emptyset\} \} = \bigcup \{ H^{M_\infty}_{s} : s \in \fin{S_\infty^*} \setminus \{\emptyset\}  \}$ and $i^{\tilde{\D}^\infty}_{M_\infty\infty} \colon M_\infty \to M_\infty^{\tilde{\D}^\infty}$.
\end{lem}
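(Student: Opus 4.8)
The statement has two parts: the "generation" equation $M_\infty = \bigcup\{H^{M_\infty}_{s^*} : s \in \fin{S_\infty}\setminus\{\emptyset\}\} = \bigcup\{H^{M_\infty}_s : s \in \fin{S_\infty^*}\setminus\{\emptyset\}\}$, and the fact that $i^{\tilde{\D}^\infty}_{M_\infty\infty}$ maps $M_\infty$ into $M_\infty^{\tilde{\D}^\infty}$. For the generation equation, the key observation is that $M_\infty \in \F$ in the relevant abstract sense: $M_\infty$ is a non-dropping countable $\Sigma^{\mad}$-iterate of $\mad$, so Corollary \ref{cor: S is generating} (applied with $N = M_\infty$) directly yields $M_\infty = \bigcup_{s \in \fin{S_\infty}} H^{M_\infty}_s$. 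Then one passes between the $S_\infty$-version and the $S_\infty^*$-version using the definition $S_\infty^* = i^{\F}_{N\infty}[S_\infty]$ together with the fact (Lemma \ref{fix points} and the remarks after the definition of $S_\infty^*$) that $i^{\F}_{N\infty}$ fixes the $\alpha_k$'s and $\gamma_k$'s, hence fixes $S_\infty$ pointwise, so that actually $S_\infty^* = S_\infty$ — wait, this needs care: $S_\infty \subseteq \alpha$ and $i^{\F}_{N\infty}$ restricted to $S_\infty$ is the identity by Lemma \ref{fix points}, so $S_\infty^* = S_\infty$ and the two unions literally coincide. The role of the superscript $*$ on $s^*$ in $H^{M_\infty}_{s^*}$ is then to express that $\max(s^*)$ is computed using the $*$-map of Section \ref{M infinitys version}; since on $S_\infty$ this map is again the identity, $s^* = s$ for $s \in \fin{S_\infty}$, and all three expressions agree.

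First I would record that $M_\infty$ satisfies the hypotheses needed to invoke Corollary \ref{cor: S is generating}: by the discussion in Section \ref{direct limit systems}, $M_\infty^{\F}$ is a normal non-dropping countable iterate of $\mad$, hence is $\mad$-like and, being an $x$-genericity iterate absorbed appropriately, is a good pre-$\mad$-like $x$-weasel with $M_\infty^- \in \lx\vert\kappa$ — so $M_\infty \in \F$, or at least the proof of Corollary \ref{cor: S is generating} goes through verbatim for it (the proof only used that $N$ is a non-dropping $n$-maximal iterate of $\mad$ with $\rho_n^N = \theta^N$ and the definability facts from Lemma \ref{definability of LR}). This gives the first equality. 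Next I would verify $i^{\F}_{N\infty}\restriction S_\infty = \id$, which is immediate from Lemma \ref{fix points} since $S_\infty = \vec{p}^\frown\vec{\gamma}$ and $i^{\F}_{N,P}$ fixes each $\alpha_k$ and each $\gamma_k$; taking the direct limit, $i^{\F}_{N\infty}$ fixes $S_\infty$ pointwise, so $S_\infty^* = S_\infty$. Likewise, for $s \in \fin{S_\infty}$ the $*$-operation on $M_\infty$ fixes $\max(s)$ (since $(M_\infty, s) \leq (M_\infty, s)$ trivially and the relevant direct-limit map from $M_\infty$'s own copy is the identity on ordinals in $S_\infty$ by the same fixed-point reasoning in $\tilde\D$), so $s^* = s$. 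Hence the three unions are literally the same set, and all equal $M_\infty$.

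For the second clause, $i^{\tilde{\D}^\infty}_{M_\infty\infty}\colon M_\infty \to M_\infty^{\tilde{\D}^\infty}$: by the definition just before the lemma, $i^{\tilde{\D}^\infty}_{M_\infty\infty} = \bigcup\{i^{\tilde{\D}^\infty}_{(M_\infty,s^*)\infty} : s \in \finite(\alpha)\}$, and by Lemma \ref{lem: s-iterability in the direct limit} each $(M_\infty, s^*) \in \tilde{\D}^\infty$, so each $i^{\tilde{\D}^\infty}_{(M_\infty,s^*)\infty}$ is a legitimate direct limit map with domain $H^{M_\infty}_{s^*}$ and codomain $M_\infty^{\tilde{\D}^\infty}$. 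By the first part of the lemma, the domains $H^{M_\infty}_{s^*}$ for $s \in \fin{S_\infty}\setminus\{\emptyset\}$ exhaust $M_\infty$; restricting to $s$ ranging over $\fin{S_\infty}$ rather than all of $\finite(\alpha)$ suffices because these already cover $M_\infty$, and the maps cohere by the commutativity lemma for the $i^{\tilde{\D}^\infty}$-system (the analogue of the coherence lemma stated for $i^{\tilde{\D}}$). Therefore the union is a well-defined total map on $M_\infty$ with values in $M_\infty^{\tilde{\D}^\infty}$.

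The main obstacle I anticipate is the bookkeeping around the $*$-operation and making sure $s^* = s$ (equivalently $S_\infty^* = S_\infty$) is genuinely justified rather than assumed — i.e., confirming that the fixed-point property of $S_\infty$ under the $\F$-maps transfers to the internal $\tilde{\D}$-maps and hence to $M_\infty$'s own $*$-map; but this follows from Lemma \ref{lem: agreement of maps} (the $\tilde{\D}$-maps approximate the $\F$-maps on stable weasels) together with Lemma \ref{fix points}. The coherence of the $i^{\tilde{\D}^\infty}$-maps needed to take the union is the exact analogue of the coherence lemma already proved for $i^{\tilde{\D}}$, adapted to $M_\infty[h]$ as indicated in the remark preceding the lemma, so it is routine.
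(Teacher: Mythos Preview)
Your argument has a genuine error: the claim that $S_\infty^* = S_\infty$ (equivalently, that $i^{\F}_{N\infty}\restriction S_\infty = \id$) is false, and with it the claim that $M_\infty \in \F$. Recall that $\alpha_k \in (\kappa,\theta)$ and that for $N \in \F$ one has $\mu^N = \kappa$. The direct limit map $i^{\F}_{N\infty}$ sends $\mu^N = \kappa$ to $\mu^{M_\infty} = \kappa_\infty$, and since $\delta_\infty = \theta$ is the Woodin of $M_\infty$ and $\kappa_\infty$ is the inaccessible above it, $\kappa_\infty > \theta > \alpha_k$. Hence $\alpha_k^* = i^{\F}_{N\infty}(\alpha_k) > \kappa_\infty > \alpha_k$, so $S_\infty^* \neq S_\infty$. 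The step ``taking the direct limit, $i^{\F}_{N\infty}$ fixes $S_\infty$ pointwise'' does not follow from Lemma~\ref{fix points}: that lemma only gives $i^{\F}_{N,P}(\alpha_k) = \alpha_k$ for $P \in \F$, which means the \emph{thread} of $\alpha_k$ is constant through $\F$, but the transitive collapse of the direct limit can (and here does) assign this thread a strictly larger ordinal, because there are $\delta_\infty = \theta$ many distinct threads below it (e.g.\ the thread of $\delta^N$). Likewise $M_\infty$ is not a pre-$\mad$-like $x$-weasel: its Woodin is $\theta$, not below $\kappa$, so $M_\infty^- \notin \lx\vert\kappa$ and Corollary~\ref{cor: S is generating} does not apply to it.

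The paper's route avoids this entirely. It never claims $S_\infty^* = S_\infty$; instead it uses Corollary~\ref{lem: generators of the direct limit}, namely $M_\infty = \Hull{M_\infty}{n}(\delta_\infty \cup S_\infty^*)$, which is obtained by pushing Lemma~\ref{lem: S infty is generating} forward along the $n$-embedding $i^{\F}_{N\infty}$ (so $S_\infty$ becomes $S_\infty^*$), together with the fact $\delta_\infty = \sup(\delta_\infty \cap \Hull{M_\infty}{n}(S_\infty^*))$ (the $M_\infty$-analogue of Lemma~\ref{S is cofinal}). From these two ingredients one argues exactly as in the passage from Lemma~\ref{lem: S infty is generating} to Corollary~\ref{cor: S is generating}. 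The equality of the two unions in the statement is then immediate, since $s \mapsto s^*$ is a bijection $\fin{S_\infty} \to \fin{S_\infty^*}$; they are not equal because $s^* = s$, but because they are indexed by the same family in two notations. Your treatment of the second clause (that $i^{\tilde{\D}^\infty}_{M_\infty\infty}$ is total on $M_\infty$) is fine once the first part is established correctly.
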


\prf
    This follows from Corollary \ref{lem: generators of the direct limit} and the fact that $\delta_\infty = \sup(\delta_\infty \cap \Hull{M_\infty}{n} (S_\infty^*))$.
\eprf

\begin{dfn}
    Let $\F^*$ be the set of all non-dropping iterates $N$ of $M_\infty$ via $n$-maximal trees in $M_\infty \vert \kappa_\infty$ such that $N^- \in M_\infty \vert \kappa_\infty$ and let $M^{\F^*}_\infty$ be the direct limit of $\F^*$.
\end{dfn}

\begin{lem} \label{direct limits agree 2}
    $M^{\F^*}_\infty= M_\infty^{\tilde{\D}^\infty}$. In particular, $M_\infty^{\tilde{\D}^\infty}$ is a normal iterate of $M_\infty$.
\end{lem}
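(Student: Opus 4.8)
The plan is to repeat the proof of Lemma~\ref{lem: direct limits are the same}, together with the supporting results of Section~\ref{direct limit systems}, of the section on the internal covering system, and of Section~\ref{definability of direct limit}, now in the relativized context in which $M_\infty$ takes over the role of $\mad$ and $M_\infty[h]$ takes over the role of $\lx$, with $\kappa_\infty$, $\delta_\infty$ and $\theta_\infty = \kappa_\infty^{+M_\infty}$ in the roles of $\kappa$, the Woodin cardinal, and $\kappa^{+\lx}$. The first thing to check is that $(M_\infty, M_\infty[h])$ really does satisfy the structural hypotheses used there. Namely: $M_\infty$ is a non-dropping countable iterate of $\mad$, hence models $\tho$ (Lemma~\ref{lem:General Preservation of KP}), is $\mad$-like, and is $(\omega_1,\omega_1)$-short-tree-iterable via the tail of $\Sigma^{\mad}$; since $\delta_\infty < \kappa_\infty$ and $\kappa_\infty^{+M_\infty}$ exists, $M_\infty[h]$ is $\Sigma_n$-admissible (Lemma~\ref{lem: preservation of kp}) with $\kappa_\infty$ inaccessible, $\kappa_\infty^+$ existing, and $\OR^{M_\infty[h]} = \OR^{M_\infty} = \alpha$; and $\alpha$ is again minimal for the relevant theory over $M_\infty[h]$, by the same forcing-absoluteness computation used to show $\F \neq \emptyset$. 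The extender-algebra generic $h$ plays the role of the generic real $x$, so that the members of $\F^*$, and the members of $\tilde{\D}^\infty$ (after the genericity iterations of Lemma~\ref{x genericity iteration}), are the relativized analogues of the good weasels in $\F$ and in $\tilde{\D}$.

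Granting this, the definitions and lemmas carry over verbatim: $(\F^*,\dashrightarrow)$ and $\tilde{\D}^\infty$ are directed, $M_\infty^{\F^*}$ and $M_\infty^{\tilde{\D}^\infty}$ are well-defined, the relativized versions of Lemma~\ref{existence of s-stable}, Corollary~\ref{lem: suitable iterates} and Lemma~\ref{lem: stable is dense} give that the $s$-stable members of $\F^*$ are dense in $\tilde{\D}^\infty$, and the relativized Lemma~\ref{lem: agreement of maps} gives that $i^{\F^*}_{N,P} \restriction H^N_s = i^{\tilde{\D}^\infty}_{(N,s),(P,t)}$ whenever $N,P \in \F^*$, $(N,s) \leq (P,t)$ and $N$ is $s$-stable. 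For the generating fixed points one uses $S_\infty^*$: by Corollary~\ref{lem: generators of the direct limit}, the cofinality of $S_\infty^*$ in $\delta_\infty$ and in $\alpha$, and Lemma~\ref{lem: s-iterability in the direct limit}, one has $M_\infty = \bigcup_{s \in \fin{S_\infty^*}\setminus\{\emptyset\}} H^{M_\infty}_s$; and the relativized versions of Lemma~\ref{definability of LR} and Lemma~\ref{fix points} give that each $\{s\}$ with $s \in \fin{S_\infty^*}$ is definable over the members of $\F^*$ from $\theta_\infty$ and is fixed by $\F^*$-iteration maps. Thus $S_\infty^*$ plays for $\F^*$ and $\tilde{\D}^\infty$ exactly the role $S_\infty$ played for $\F$ and $\tilde{\D}$.

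With this set up, I would define $\sigma^* \colon M_\infty^{\tilde{\D}^\infty} \to M_\infty^{\F^*}$ exactly as $\sigma$ in Definition~\ref{dfn: sigma}: for $(N,s) \in \tilde{\D}^\infty$ and $z \in H^N_s$, pick $P \in \F^*$ with $N \dashrightarrow P$ and $P$ $s$-stable, and set $\sigma^*(i^{\tilde{\D}^\infty}_{(N,s)\infty}(z)) = i^{\F^*}_{P\infty}(i^{\tilde{\D}^\infty}_{(N,s),(P,s)}(z))$; it is well-defined and $\Sigma_0$-elementary as before. As in Lemma~\ref{lem: direct limits are the same}, it then suffices to see $\sigma^*$ is surjective. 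Given $y \in M_\infty^{\F^*}$, write $y = i^{\F^*}_{P\infty}(\bar{y})$ with $P \in \F^*$; by the relativized Corollary~\ref{cor: S is generating} there is $s \in \fin{S_\infty^*}$ with $\bar{y} \in H^P_s$; by the relativized Lemma~\ref{fix points}, $P$ is $s$-stable, hence $s$-iterable, so $\sigma^*(i^{\tilde{\D}^\infty}_{(P,s)\infty}(\bar{y})) = i^{\F^*}_{P\infty}(\bar{y}) = y$. Hence $\sigma^* = \id$ and $M_\infty^{\tilde{\D}^\infty} = M_\infty^{\F^*}$. For the final clause, $M_\infty^{\F^*}$ is the direct limit of a directed system of non-dropping $n$-maximal iterates of $M_\infty$ under iteration maps, so by \cite{schlutzenberg2024normalizationtransfinitestacks} it is itself a normal non-dropping iterate of $M_\infty$, exactly as $M_\infty^\F$ was for $\mad$.

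I expect the main obstacle to be the bookkeeping in the first step: confirming that the relativized notions of pre-$\mad$-like weasel, $\mad$-likeness and $s$-iterability behave correctly once $\theta$ is replaced by $\theta_\infty$ and the base model is a genuine mouse rather than an $L[x]$, and in particular checking that $s$-iterability of members of $\F^*$ is still locally definable over $M_\infty[h]$ (the analogues of Lemma~\ref{definability of short tree}, Lemma~\ref{lem: absoluteness between lx and v} and Lemma~\ref{lem: definability of s-iterability}). This local definability is what makes $\tilde{\D}^\infty$ a genuine class of $M_\infty[h]$ and what makes the comparison and density lemmas go through; everything after that is a routine transcription of the $\lx$-case.
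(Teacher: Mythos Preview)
Your proposal is correct and follows essentially the same route as the paper: both verify that $S_\infty^*$ is fixed pointwise by $\F^*$-iteration maps and generates each $N \in \F^*$, and then run the surjectivity argument of Lemma~\ref{lem: direct limits are the same} with $\sigma^*$ in place of $\sigma$. The paper's presentation is considerably more compressed---it does not re-derive the full definability and density infrastructure you outline in your first two paragraphs, but simply isolates the two key ingredients (fixed points via the $r\Sigma_{n+1}$-definability of $S_\infty$ pulled along the embeddings $Q \to M_\infty \to N$, and the generating property from Corollary~\ref{lem: generators of the direct limit} plus the proof of Lemma~\ref{S is cofinal}) and then invokes Lemma~\ref{lem: direct limits are the same} directly.
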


\prf
    Let $N,P \in \F^*$ be such that $N \dashrightarrow P$ and let $i \colon N \to P$ be the iteration map. We claim $i \restriction S_\infty^* = \id$. Let $Q \in \F$. Note that since $Q$ embeds into $M_\infty$ via $i^{\F}_{Q\infty}$, $Q$ embeds into $N$ and $P$ via $n$-embeddings. However, $S_\infty$ is $r\Sigma_{n+1}$-definable over $Q$. Note that then similar arguments as in the proof of Lemma \ref{thm: branch theorem} and Lemma \ref{fix points} show that $i \restriction S_\infty^* = \id$.
    By Corollary \ref{lem: generators of the direct limit}, $M_\infty = \Hull{M_\infty}{n} (\delta_\infty \cup S_\infty^*)$. We thus have that for any $N \in \F^*$, $N = \Hull{N}{n} (\delta^N \cup S_\infty^*)$, where $\delta^N$ is the Woodin cardinal of $N$. Moreover, $\delta^N = \sup(\delta^N \cap \Hull{N}{n}(S_\infty^*))$ as the proof of Lemma \ref{S is cofinal} shows. The claim now follows as in the proof of Lemma \ref{lem: direct limits are the same}.
\eprf

\begin{dfn}
    Let $M_\infty^\infty = M^{\F^*}_\infty= M_\infty^{\tilde{\D}^\infty}$ and let $k \colon M_\infty \to M_\infty^\infty$ be the iteration map given by $\Sigma^{\mad}$.
\end{dfn}

\begin{lem}
    $k = i^{\tilde{\D}^\infty}_{M_\infty\infty}$.
\end{lem}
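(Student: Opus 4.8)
The plan is to show that $k$ and $i^{\tilde{\D}^\infty}_{M_\infty\infty}$ agree on every hull $H^{M_\infty}_{s^*}$ with $s\in\fin{S_\infty}\setminus\{\emptyset\}$, and then to invoke the preceding lemma, which gives $M_\infty=\bigcup\{H^{M_\infty}_{s^*}:s\in\fin{S_\infty}\setminus\{\emptyset\}\}$, to conclude $k=i^{\tilde{\D}^\infty}_{M_\infty\infty}$. The point is that this agreement is exactly the relativization to $M_\infty$ (more precisely to $M_\infty[h]$) of the identity $\sigma=\mathrm{id}$ from Lemma \ref{lem: direct limits are the same}, applied with $M_\infty$ itself in the role of the stable weasel.

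First I would assemble two ingredients. (1) Since $M_\infty^\infty=M^{\F^*}_\infty$ is, by Lemma \ref{direct limits agree 2} (together with the analogue for $\F^*$ of the normalization remarks following the definition of $M_\infty^\F$), a normal non-dropping $\Sigma^{\mad}$-iterate of $M_\infty$, the direct limit map $i^{\F^*}_{M_\infty\infty}$ out of the trivial iterate $M_\infty\in\F^*$ is precisely the iteration map $k$. (2) In the proof of Lemma \ref{direct limits agree 2} it was shown that every iteration map between members of $\F^*$ is the identity on $S_\infty^*$; applied to $M_\infty\dashrightarrow P$ for arbitrary $P\in\F^*$, this yields $i^{\F^*}_{M_\infty,P}(s^*)=s^*$ for every $s\in\fin{S_\infty}\setminus\{\emptyset\}$, i.e.\ $M_\infty$ is $s^*$-stable in the relativized sense, and hence $(M_\infty,s^*)\in\tilde{\D}^\infty$ (as already recorded after Lemma \ref{lem: s-iterability in the direct limit}).

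With these in hand, fix $x\in M_\infty$ and, using the preceding lemma, choose $s\in\fin{S_\infty}\setminus\{\emptyset\}$ with $x\in H^{M_\infty}_{s^*}$. Since $M_\infty$ is $s^*$-stable and $M_\infty\dashrightarrow M_\infty$ trivially, $M_\infty$ itself is an admissible choice of weasel in the relativization of Definition \ref{dfn: sigma}, and because the tree $\T_{M_\infty M_\infty}$ is trivial the map $i^{\tilde{\D}^\infty}_{(M_\infty,s^*),(M_\infty,s^*)}$ is the identity on $H^{M_\infty}_{s^*}$; the relativization of Lemma \ref{lem: direct limits are the same} ($\sigma=\mathrm{id}$, whose proof is the one indicated inside the proof of Lemma \ref{direct limits agree 2}) then gives
\begin{align*}
    i^{\tilde{\D}^\infty}_{M_\infty\infty}(x)
    &= i^{\tilde{\D}^\infty}_{(M_\infty,s^*)\infty}(x)
    = i^{\F^*}_{M_\infty\infty}\big(i^{\tilde{\D}^\infty}_{(M_\infty,s^*),(M_\infty,s^*)}(x)\big)\\
    &= i^{\F^*}_{M_\infty\infty}(x) = k(x),
\end{align*}
where the first equality uses the definition $i^{\tilde{\D}^\infty}_{M_\infty\infty}=\bigcup\{i^{\tilde{\D}^\infty}_{(M_\infty,s^*)\infty}:s\in\finite(\alpha)\}$ together with the compatibility of the direct limit maps, and the last equality is ingredient (1). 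As $x$ was arbitrary, $k=i^{\tilde{\D}^\infty}_{M_\infty\infty}$.

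The main obstacle is not conceptual but organizational: one must check that the relativizations used above — the identification $\sigma=\mathrm{id}$ of Lemma \ref{lem: direct limits are the same}, the $s$-stability machinery, and the basic directedness/covering facts for $\tilde{\D}^\infty$ and $\F^*$ built over $M_\infty$ and $M_\infty[h]$ — really do carry over verbatim, which is the ``straightforward adaption left to the reader'' flagged at the start of Section \ref{direct limit systems}; and that the $S_\infty^*$-fixing property extracted from the proof of Lemma \ref{direct limits agree 2} is enough to place $(M_\infty,s^*)$ in $\tilde{\D}^\infty$, which is essentially the content of Lemma \ref{lem: s-iterability in the direct limit}.
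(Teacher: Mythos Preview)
Your proposal is correct and follows essentially the same approach as the paper. The paper's proof likewise uses that $M_\infty$ is $s$-stable for all $s\in\fin{S_\infty^*}$ (extracted from the proof of Lemma \ref{direct limits agree 2}), then invokes the relativized agreement-of-maps Lemma \ref{lem: agreement of maps} to get $i^{\F^*}_{M_\infty N}\restriction H^{M_\infty}_s=i^{\tilde{\D}^\infty}_{(M_\infty,s)(N,s)}$ for all $N\in\F^*$, and concludes by density of $\{(N,s):N\in\F^*,\ s\in\fin{S_\infty^*}\}$ in $\tilde{\D}^\infty$; your packaging via the relativized $\sigma=\id$ with the choice $N=P=M_\infty$ is just a cosmetic variant of this, since the well-definedness of $\sigma$ already encodes Lemma \ref{lem: agreement of maps}.
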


\prf
    By the proof of Lemma \ref{direct limits agree 2}, for all $s \in \fin{S_\infty^*}$, $M_\infty$ is $s$-stable. However, then similar to Lemma \ref{lem: agreement of maps}, for all $N \in \F^*$, $i^{\F^*}_{M_\infty N} \restriction H^{M_\infty}_s = i^{\tilde{\D}^\infty}_{(M_\infty,s)(N,s)}$ for all $s \in \fin{S_\infty^*}$. The claim then follows from the fact that $\{ (N,s) : N \in \F^* \land s \in \fin{S_\infty^*} \}$ is dense in $\tilde{\D}^\infty$.
\eprf

For the proof of Lemma \ref{lem: iteration maps and star agree} we will need to consider the internal direct limit system as computed in $\lxg$, where $G$ is $(\lx,\col(\omega, {<\kappa}))$-generic. So let us fix such $G$ and let $\tilde{\D}^{\lxg}$ be $\lxg$'s version of $\tilde{\D}$ and let $M_\infty^{\lxg}$ the direct limit of this system. We claim that $M_\infty = M_\infty^{\lxg}$. This is follows from the fact that $\tilde{\D}$ is dense in $\tilde{\D}^{\lxg}$. This is in turn shown by a ``Boolean-valued comparison". More precisely, for $(N,s) \in \tilde{\D}^{\lxg}$, we compare $(P,s) \in \tilde{\D}$ with $(N,s)$ via the process described in the proof of Lemma 3.47 of \cite{Steel_Woodin_2016}. Together with the argument from the proof of Lemma \ref{lem: comparison lemma} it then follows that $\tilde{\D}$ is dense $\tilde{\D}^{\lxg}$. By a straightforward adoption of the arguments so far, we also have that $\tilde{\D}^{\lxg}$ is $\Sigma_1$-definable over $\lxg$ from the parameters $\theta$ and $\R^{\lxg}$, and $M_\infty = M_\infty^{\lxg}$ is $\Sigma_1$-definable from the same parameters.

\begin{lem} \label{lem: iteration maps and star agree}
    $k \restriction \alpha = *$.
\end{lem}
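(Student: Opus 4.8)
\prf
The plan is to use the identity $k = i^{\tilde{\D}^\infty}_{M_\infty\infty}$ obtained in the previous lemma and to reduce the claim, via the generating property $M_\infty = \Hull{M_\infty}{n}(\delta_\infty\cup S_\infty)$ (Corollary~\ref{lem: generators of the direct limit}, noting $S_\infty^* = S_\infty$ by Lemma~\ref{fix points}) and the $r\Sigma_{n+1}$-elementarity of $k$, to showing that $k$ and $*$ agree on $S_\infty$ and on $\delta_\infty$.

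First, on $S_\infty$: by Lemma~\ref{fix points} every $i^{\F}_{N\infty}$ with $N\in\F$ is the identity on $S_\infty$, and by Lemma~\ref{lem: agreement of maps} together with $s$-stability the value $\beta^* = i^{\tilde{\D}}_{(N,s)\infty}(\beta)$ (for $\beta\in s^-$ and $N\in\F$ which is $s$-stable) equals $i^{\F}_{N\infty}(\beta)$; hence $*\restriction S_\infty = \id$. On the other side, by the proof of Lemma~\ref{direct limits agree 2} the iteration maps between members of $\F^*$ fix $S_\infty^* = S_\infty$ pointwise, so $k = i^{\tilde{\D}^\infty}_{M_\infty\infty}$, which coincides with the $\F^*$-direct-limit map, is also the identity on $S_\infty$. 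Thus $k$ and $*$ agree on $S_\infty$.

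Next, on $\delta_\infty$: here the entire $\tilde{\D}$-analysis is transplanted to $M_\infty[h]$. By Lemmas~\ref{definability of the covering system} and~\ref{lem: definability of the direct limit}, the system $\tilde{\D}^\infty$ and the model $M_\infty^{\tilde{\D}^\infty} = M_\infty^\infty$ are given over $M_\infty[h]$ by the very same $\Sigma_1$-formulas that give $\tilde{\D}$ and $M_\infty = M_\infty^{\lxg}$ over $\lxg$, with $h$ playing the role of $G$ via the extender algebra, $\theta^\infty$ that of $\theta$, and $\R^{M_\infty[h]}$ that of $\R^{\lxg}$; since $\delta_\infty = \theta$ by the analysis of Subsection~\ref{Properties of the direct limit}, the two situations are genuinely parallel, and in the second one $M_\infty$ occupies the position that an element of $\F$ occupies in the first. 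Running the argument of Lemma~\ref{lem: agreement of maps} inside $M_\infty[h]$ for the system $\tilde{\D}^\infty$ — using that $\F^*$ is dense in $\tilde{\D}^\infty$ and that $M_\infty$ is $s^*$-stable for every $s\in\finite(\alpha)$ (proof of Lemma~\ref{direct limits agree 2}) — gives $k\restriction H^{M_\infty}_{s^*} = i^{\tilde{\D}^\infty}_{(M_\infty,s^*)\infty}\restriction H^{M_\infty}_{s^*}$. Comparing this with the corresponding identity $i^{\tilde{\D}}_{(N,s)\infty}\restriction H^N_s = i^{\F}_{N\infty}\restriction H^N_s$ over $\lxg$, and using the agreement $k\restriction S_\infty = *\restriction S_\infty$ from the previous paragraph to pin down the constants $(s^*)^-$ on both sides, forces $k$ and $*$ to agree below $\delta_\infty$ (both are computed by the same direct-limit recursion applied to the same generators), and hence everywhere on $\alpha$ by the generating property.

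The step I expect to be the main obstacle is precisely the transplanted agreement-of-maps identity $k\restriction H^{M_\infty}_{s^*} = i^{\tilde{\D}^\infty}_{(M_\infty,s^*)\infty}$: as in the proof of Lemma~\ref{lem: agreement of maps}, one has to verify that the branches ``respecting $s^*$'' used in Definition~\ref{def: embeddings} to define the transition maps of $\tilde{\D}^\infty$ coincide with the branches selected by $\Sigma^{\mad}$ — which is where the $\Sigma^{\mad}$-iterability of $M_\infty$ and the density of $\F^*$ in $\tilde{\D}^\infty$ from Lemma~\ref{direct limits agree 2} enter. The remainder is routine bookkeeping with minimal Skolem terms and the fixed points $S_\infty$, transplanted from the corresponding computations for the external system $\F$ and the internal system $\tilde{\D}$.
\eprf
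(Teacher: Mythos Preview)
Your argument has a genuine error at its first step: the claim that $S_\infty^* = S_\infty$ (equivalently, that $*\restriction S_\infty = \id$) is false. Lemma~\ref{fix points} only says that $i^{\F}_{N,P}$ fixes $S_\infty$ pointwise for $N,P\in\F$; it says nothing about the \emph{direct-limit} map $i^{\F}_{N\infty}$. These are different: a point fixed by every transition map of a directed system need not be fixed by the map into the direct limit. Concretely, for any $N\in\F$ we have $\delta^N<\kappa<\alpha_k<\theta$ (the $\alpha_k$'s lie strictly between $\kappa$ and $\theta$ by the definition of $T$), while $i^{\F}_{N\infty}(\delta^N)=\delta_\infty=\theta$. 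Since $i^{\F}_{N\infty}$ is order-preserving and $\delta^N<\alpha_k$, this forces
\[
\alpha_k^* \;=\; i^{\F}_{N\infty}(\alpha_k)\;>\;i^{\F}_{N\infty}(\delta^N)\;=\;\theta\;>\;\alpha_k,
\]
so $\alpha_k^*\neq\alpha_k$. The same mistake recurs in your treatment of $k$: the maps between members of $\F^*$ fix $S_\infty^*$, but the direct-limit map $k=i^{\F^*}_{M_\infty\infty}$ need not. Thus the reduction ``agree on $S_\infty$ and below $\delta_\infty$'' collapses at the outset.

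The paper's proof avoids this pitfall by not using the fixed-point property of $S_\infty$ at all. Instead, for an arbitrary $\beta<\alpha$ it picks $N\in\F$ which is $\{\beta\}$-stable, observes that the statement ``$i^{\tilde{\D}}_{(V[\dot g],s)\infty}(\bar\beta)=\beta$'' is forced over $N$ by $\col(\omega,{<}\kappa)$ (using the $\Sigma_1$-definability of the internal system in $\lxg$), and then pushes this forced statement forward along $i^{\F}_{N\infty}$ to $M_\infty$. This yields that $M_\infty$ forces ``$i^{\tilde{\D}^\infty}_{(V[\dot g],s^*)\infty}(\beta)=\beta^*$'', and since $k(\beta)=i^{\tilde{\D}^\infty}_{(M_\infty,s^*)\infty}(\beta)$ one gets $k(\beta)=\beta^*$ directly. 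The key idea you are missing is this elementarity-of-forcing transfer, which replaces any appeal to fixed points by a first-order statement that $i^{\F}_{N\infty}$ moves correctly.
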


\prf
    Let $\beta < \alpha$. By Corollary \ref{lem: generators of the direct limit}, there is $s \in \fin{S_\infty}$ such that $\beta \in H^{M_\infty}_{s^*}$. Let $N \in \F$ be such that $\beta \in \ran(i^{\tilde{\D}}_{(N,s)\infty})$ and $N$ is $\{\beta\}$-stable and hence $s \cup \{\beta\}$-stable. Let $\bar{\beta} \in N$ be such that $i^{\tilde{\D}}_{(N,s)\infty} (\bar{\beta}) = \beta$. Note that $N$ models
    \[
        \col(\omega, {<\kappa}) \Vdash i^{\tilde{\D}}_{(V[\dot{g}],s)\infty} (\bar{\beta}) = \beta
    \]
    and this statement is $\Sigma_2$ over $N$ by the remark before the statement of the lemma. Since $i^{\F}_{N\infty}$ is $\Sigma_1$-elementary, it follows that 
    \[
        M_\infty \models \col(\omega, {<\kappa_\infty}) \Vdash  i^{\tilde{\D}^\infty}_{(V[\dot{g}],s^*)\infty} (\bar{\beta}) = \beta^*.
    \]
    Since $i^{\tilde{\D}^\infty}_{(M[h],s^*) \infty} (\beta) = k (\beta)$, this finishes the proof.
\eprf

\begin{lem} \label{lem: definability of the star map}
    $*$ is $\Sigma_1$-definable from $* \restriction \theta$ over $M_\infty[*\restriction \theta] := L_\alpha[\E^{M_\infty},*\restriction \theta]$.
\end{lem}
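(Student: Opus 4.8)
The strategy is to read $*$ off from the iteration map via the generating system, using Lemma~\ref{lem: iteration maps and star agree}. Recall from that lemma that $* = k \restriction \alpha$, where $k = i^{\tilde{\D}^\infty}_{M_\infty\infty} \colon M_\infty \to M_\infty^\infty$ is the iteration map given by $\Sigma^{\mad}$, which is an $n$-embedding; since $\delta_\infty = \theta$ this gives in particular $k \restriction \theta = * \restriction \theta$. By Corollary~\ref{lem: generators of the direct limit}, $M_\infty = \Hull{M_\infty}{n}(\delta_\infty \cup S_\infty^*)$, so every $\beta < \alpha$ has the form $\beta = m\tau_\varphi^{M_\infty}(\vec{y},\vec{s})$ for some $r\Sigma_n$ formula $\varphi$, some $\vec{y} \in \fin{\delta_\infty}$ and some $\vec{s} \in \fin{S_\infty^*}$. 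As $k$ is an $n$-embedding it commutes with the minimal Skolem terms, and, by the proof of Lemma~\ref{direct limits agree 2}, it is the identity on $S_\infty^*$; hence
\[
    \beta^* \;=\; k(\beta) \;=\; m\tau_\varphi^{M_\infty^\infty}\bigl((* \restriction \theta)(\vec{y}),\, \vec{s}\bigr).
\]
Thus $\beta \mapsto \beta^*$ is determined by the structure $M_\infty^\infty$, the parameter $* \restriction \theta$, the set $S_\infty^*$, and the search, over $M_\infty$, for a representation $\beta = m\tau_\varphi^{M_\infty}(\vec{y},\vec{s})$.

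It remains to check that $S_\infty^*$ and $M_\infty^\infty$ are $\Sigma_1$-definable over $M_\infty[* \restriction \theta]$, indeed over $M_\infty$ alone. For $S_\infty^*$: by Lemma~\ref{definability of LR} each entry of $\vec{p}$ is $r\Sigma_n \wedge r\Pi_n$-definable over any $N \in \F$ from $\theta^N$; applying the $n$-embedding $i^{\F}_{N\infty}$ and invoking Lemma~\ref{fix points} together with the $N$-independence of $i^{\F}_{N\infty} \restriction S_\infty$, the corresponding entries of $S_\infty^*$ become $r\Sigma_n \wedge r\Pi_n$-definable over $M_\infty$ (from $\theta^{M_\infty} = \theta_\infty$), and the remaining entries are then recovered as suprema of the appropriate $r\Sigma_n$-hulls of $M_\infty$ exactly as in the proof of Lemma~\ref{fix points}. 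For $M_\infty^\infty$: by Lemma~\ref{direct limits agree 2}, $M_\infty^\infty = M^{\F^*}_\infty$ is a normal iterate of $M_\infty$, with $\OR^{M_\infty^\infty} = \alpha$ by the argument of Lemma~\ref{lem: ordinal height of the direct limit}, so $M_\infty^\infty = L_\alpha[\E^{M_\infty^\infty}]$ and it suffices to define $\E^{M_\infty^\infty}$ as a class of $M_\infty$. Since $\F^*$ only involves iterates of $M_\infty$ by trees living in $M_\infty \vert \kappa_\infty$, the relevant branches are guided by their $\Q$-structures (and, in the maximal case, the relevant pseudo-iterates are given by $M_\infty$-definable initial segments of the form $L_\gamma(\M(\T))$); so $\F^*$, its iteration maps, and hence its direct limit are $\Sigma_1$-definable over $M_\infty$. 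This is the within-$M_\infty$ analogue of the definability analysis of Section~\ref{direct limit systems}, carried out just as in Lemmas~\ref{definability up to kappa+} and~\ref{lem: definability of the direct limit}.

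Combining the two steps: for $\beta,\gamma < \alpha$, $\gamma = \beta^*$ holds in $M_\infty[* \restriction \theta]$ iff there are an $r\Sigma_n$ formula $\varphi$, $\vec{y} \in \fin{\theta}$ and $\vec{s} \in \fin{S_\infty^*}$ with $\beta = m\tau_\varphi^{M_\infty}(\vec{y},\vec{s})$ and $\gamma = m\tau_\varphi^{M_\infty^\infty}((* \restriction \theta)(\vec{y}),\vec{s})$. As $M_\infty$, $M_\infty^\infty$ and $S_\infty^*$ are $\Sigma_1$-definable classes of $M_\infty[* \restriction \theta]$ and evaluation of the minimal Skolem terms over them is $\Sigma_1$ there, this is a $\Sigma_1$ condition in $\beta$, $\gamma$ and $* \restriction \theta$; since $*$ is total on $\alpha$, the complement of its graph is $\Sigma_1$ as well, so $*$ is $\Sigma_1$-definable over $M_\infty[* \restriction \theta]$ from $* \restriction \theta$. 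I expect the main obstacle to be the second step: verifying carefully the internal, within-$M_\infty$ version of the covering and definability machinery of Section~\ref{direct limit systems}, in particular that the iteration strategy entering the definition of $\F^*$ is $\Q$-structure-guided, together with the complexity bookkeeping that pins the definition down to $\Sigma_1$ (and not merely $\Sigma_2$).
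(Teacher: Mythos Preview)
Your outline identifies the right ingredients, and the formula you write for $\beta^*$ in terms of Skolem terms is correct, but the complexity bookkeeping fails for $n \geq 2$. The relation ``$\beta = m\tau_\varphi^{M_\infty}(\vec{y},\vec{s})$'' (with $\varphi$ ranging over $r\Sigma_n$ formulas) is $r\Sigma_n$ over $M_\infty$, and adding the predicate $* \restriction \theta$ does not collapse this to $\Sigma_1$; the same applies to the evaluation over $M_\infty^\infty$ and to your definition of $S_\infty^*$, which you yourself only argue is $r\Sigma_n \wedge r\Pi_n$. Hence your displayed equivalence at the end is $\Sigma_n$, not $\Sigma_1$. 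You anticipated trouble in the second step, but the real obstacle is this mismatch between the $r\Sigma_n$ Skolem-term calculus and the target $\Sigma_1$ bound.

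The paper's proof sidesteps $r\Sigma_n$ truth entirely by passing through the extender. Since the iteration $M_\infty \to M_\infty^\infty$ is based on $M_\infty \vert \delta_\infty$, the $(\theta,\delta_\infty^\infty)$-extender $E$ derived from $k$ is $\Sigma_1$-definable from $* \restriction \theta$; and $k$ equals the ultrapower map $\sigma \colon M_\infty \to \fu{M_\infty}{E}{n}$. The restriction $\sigma' := \sigma \restriction (M_\infty \vert \theta_\infty)$ is then $\Sigma_1$ in $E$ as a map of \emph{sets}. To extend to all of $\alpha$, instead of Skolem terms one uses the theories $T_\gamma = \tho_\omega^{M_\infty \vert \gamma}(\theta_\infty)$ for $\gamma$ with $\rho_\omega^{M_\infty \vert \gamma} = \theta_\infty$: each $T_\gamma$ is an \emph{element} of $M_\infty$, one computes $\sigma(T_\gamma) = \bigcup_{\xi < \theta_\infty} \sigma'(T_\gamma \cap M_\infty \vert \xi)$ by continuity of $\sigma$ at $\theta_\infty$, and then reads off $\sigma \restriction (M_\infty \vert \gamma)$ from $\sigma(T_\gamma)$. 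All of this is manipulation of sets via a $\Sigma_0$-ultrapower, hence $\Sigma_1$. The point is to encode the higher-level hull information as a set $T_\gamma$ and move that set with the extender, rather than invoking an $r\Sigma_n$ truth predicate.
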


\prf
    Let $E$ be the $(\theta,\delta^\infty_\infty)$-extender derived from $k \colon M_\infty \to M_\infty^\infty$, where $\delta_\infty^\infty$ denotes the unique Woodin cardinal of $M_\infty^\infty$.
    Note that the iteration from $M_\infty$ to $M_\infty^\infty$ is based on $M_\infty \vert \delta_\infty$, so that $E$ is $\Sigma_1$-definable from $k \restriction \delta_\infty = * \restriction \delta_\infty$ and $M_\infty \vert \delta^\infty_\infty$ and thus from $* \restriction \theta$ over $M_\infty[* \restriction \theta]$. Moreover, $k$ is the same as the ultrapower embedding $\sigma \colon M_\infty \to \fu {M_\infty} E 0 = \fu {M_\infty} E n =  M_\infty^\infty$. In order to prove the claim, it suffices to see that $\sigma$ is $\Sigma_1$ definable over $M_\infty[* \restriction \theta]$ from the parameter $E$. Let $\sigma^\prime \colon M_\infty \vert \theta_\infty \to \Ult{M_\infty \vert \theta_\infty}{E}$ be the ultrapower map of the ultrapower of $M_\infty \vert \theta_\infty$ via $E$. Note that $\sigma^\prime$ is $\Sigma_1$ definable over $M_\infty[* \restriction \theta]$ from the parameter $E$. 
    Let 
    \[
        A = \{ \gamma < \alpha : \rho_\omega^{M_\infty \vert \gamma} = \theta_\infty \}.
    \]
    Since $\theta_\infty$ is the largest cardinal of $M_\infty$, $\sup(A) = \alpha$. For $\gamma \in A$, let $T_\gamma = \tho_\omega^{M_\infty \vert \gamma}(\theta_\infty)$. Note that $\sigma$ is continuous at $\theta_\infty$. Thus, $\sigma(T_\gamma) = \bigcup_{\xi < \theta_\infty}\sigma^\prime(T_\gamma \cap M_\infty \vert \xi)$. However, $M_\infty[* \restriction \theta]$ can via $\sigma(T_\gamma)$ compute  $\sigma \restriction (M_\infty \vert \gamma)$. Thus, $\sigma$ is $\Sigma_1$-definable over $M_\infty [* \restriction \theta]$ from $* \restriction \theta$.
\eprf

\begin{dfn}
    Let $M_\infty[*] = L_\alpha [\E^{M_\infty}, *]$.
\end{dfn}

\begin{dfn}
    Let $\Sigma_0$ be the restriction of $\Sigma^{\mad}$ to non-dropping stacks on $M_\infty$ such that the last model is pre-$\mad$-like $x$-weasel and its suitable part is in $M_\infty \vert \kappa_\infty$. 
\end{dfn}
\setcounter{claimcounter}{0}
\begin{lem} \label{strategy mouse}
    $M_\infty [*] = M_\infty [\Sigma_0]$
\end{lem}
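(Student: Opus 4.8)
**Proof plan for Lemma \ref{strategy mouse} ($M_\infty[*] = M_\infty[\Sigma_0]$):**

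The plan is to show each of the two inclusions $M_\infty[*] \subseteq M_\infty[\Sigma_0]$ and $M_\infty[\Sigma_0] \subseteq M_\infty[*]$ by arguing that each of $*$ and $\Sigma_0$ is definable (as a class) over the other structure, so that the two generated hierarchies over $M_\infty$ coincide. Both models are of the form $L_\alpha[\E^{M_\infty}, A]$ for the respective predicate $A$, and $\OR^{M_\infty} = \alpha$ by Lemma \ref{lem: ordinal height of the direct limit}, so it suffices to establish mutual definability of $*$ and $\Sigma_0$ over $M_\infty$ augmented by the other.

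First I would show $M_\infty[\Sigma_0] \subseteq M_\infty[*]$. The key point is that $\Sigma_0$ acts only on non-dropping stacks on $M_\infty$ whose last model $N$ is a pre-$\mad$-like $x$-weasel with $N^- \in M_\infty \vert \kappa_\infty$; that is, on the system $\F^*$ of Definition of $\F^*$ and its finite-stack closure. By Lemma \ref{direct limits agree 2} and the subsequent lemma, the iteration map $k \colon M_\infty \to M_\infty^\infty$ given by $\Sigma_0$ equals $i^{\tilde{\D}^\infty}_{M_\infty\infty}$, and by Lemma \ref{lem: iteration maps and star agree}, $k \restriction \alpha = *$. The internal covering system $\tilde{\D}^\infty$ is $\Sigma_1$-definable over $M_\infty$ (from $\theta$, $\R^{M_\infty}$) by the transfer of Lemma \ref{definability of the covering system}, and hence from $*$ over $M_\infty[*]$ one can reconstruct all of $\F^*$ and all the branch choices made by $\Sigma_0$: a branch picked by $\Sigma_0$ on a tree $\T \in M_\infty \vert \kappa_\infty$ is the unique $\kappa_\infty$-wellfounded cofinal branch (using $\mad$-likeness, as in the Remark after Definition \ref{dfn: tree leading to P}), and by the uniqueness statements this branch is recoverable from the comparison data together with $k = *$. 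Running the standard argument that a short-tree strategy on a $\mad$-like premouse is determined by $Q$-structures, which are themselves initial segments of the iterates (Remark \ref{rem: short trees}), one gets $\Sigma_0 \in M_\infty[*]$.

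Next I would show $M_\infty[*] \subseteq M_\infty[\Sigma_0]$. Here the work is Lemma \ref{lem: definability of the star map}: $*$ is $\Sigma_1$-definable from $* \restriction \theta$ over $M_\infty[* \restriction \theta]$, via the $(\theta, \delta^\infty_\infty)$-extender $E$ derived from $k$ and the continuity of $k$ at $\theta_\infty$. So it suffices to see $* \restriction \theta \in M_\infty[\Sigma_0]$ and $M_\infty \vert \delta^\infty_\infty \in M_\infty[\Sigma_0]$. But $M_\infty \vert \delta^\infty_\infty$ and $E$ — equivalently $k \restriction \delta_\infty$ — are read off from the iteration tree from $M_\infty$ to $M_\infty^\infty$, which is built using $\Sigma_0$ (the tree is based on $M_\infty \vert \delta_\infty$, and $M_\infty^\infty = M_\infty^{\F^*}$ is the direct limit of $\F^*$). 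Thus $E \in M_\infty[\Sigma_0]$, hence $* \restriction \theta = k \restriction \theta \in M_\infty[\Sigma_0]$, and applying Lemma \ref{lem: definability of the star map} inside $M_\infty[\Sigma_0]$ gives $* \in M_\infty[\Sigma_0]$.

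The main obstacle I anticipate is the bookkeeping in the first inclusion: making precise that $\Sigma_0$, as a strategy on \emph{stacks} (not just single normal trees), is definable over $M_\infty[*]$. One must check that the finite-pseudo-stack structure, the maximal-vs-short dichotomy, and the branch choices at maximal trees (where $\Q(\T) \models \tho$ and the branch is determined by $\mad$-likeness together with the direct-limit data) are all uniformly recoverable; this is where one leans on the uniqueness of $\T_{NP}$ (the Lemma after Definition \ref{dfn: tree leading to P}) and on the fact, from \cite{schlutzenberg2024normalizationtransfinitestacks} cited in the Remark after Definition \ref{def: m-like}, that stacks normalize. Modulo this, the argument is a routine mutual-definability computation, and I would present it as such.
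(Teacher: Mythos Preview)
Your argument for $M_\infty[*] \subseteq M_\infty[\Sigma_0]$ is essentially the paper's: since $\F^* \in M_\infty[\Sigma_0]$, the direct limit map $i^{\F^*}_{M_\infty\infty} \restriction \theta = * \restriction \theta$ lies in $M_\infty[\Sigma_0]$, and Lemma \ref{lem: definability of the star map} then gives $*$ itself.

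The gap is in your argument for $M_\infty[\Sigma_0] \subseteq M_\infty[*]$. You correctly isolate the maximal-tree case as the difficulty, but your proposed mechanism does not work. The Remark after Definition \ref{dfn: tree leading to P} gives branch uniqueness only for \emph{short} trees; at a $(\kappa_\infty,\kappa_\infty)$-maximal tree $\T$ there is no $Q$-structure singling out a branch, and ``$\mad$-likeness together with the direct-limit data'' is not a definition. The uniqueness of $\T_{NP}$ is about the tree, not its branch, and stack normalization only reduces to the normal case without telling you which cofinal branch $\Sigma^{\mad}$ picks.

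The paper's actual device is this: given $\T$ maximal with last model $N$, one characterizes the correct branch $b$ as the unique cofinal branch $c$ (with $N \vert \delta^N \subseteq \wfc(\M^\T_c)$) for which there exists a further tree $\U$ on $N$ and a cofinal branch $c'$ of $\U$ with $(i^\U_{c'} \circ i^\T_c) \restriction \delta_\infty = * \restriction \delta_\infty$. Existence is clear by iterating $N$ into $M_\infty^\infty$. For uniqueness, if two such pairs $(b,b')$ and $(c,c')$ existed with $b \neq c$, then $b' \neq c'$ on $\U$, yet $\ran(i^\U_{b'}) \cap \ran(i^\U_{c'})$ would contain $*[\delta_\infty]$, which is cofinal in $\delta^\infty_\infty$; this contradicts the zipper-type bound of \cite[Lemma~2.6]{Steel_Woodin_2016}. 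Thus $\{b\}$ is $\Sigma_1$-definable over $M_\infty[*]$, and an application of Spector--Gandy (using admissibility) yields $b \in M_\infty[*]$. This use of $*$ to pin down the branch, together with the zipper lemma, is the missing idea in your outline.
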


\prf
    Let us first prove that $M_\infty [*] \subseteq M_\infty [\Sigma_0]$. Note that $\F^* \in M_\infty [\Sigma_0]$. Thus, $i_{M_\infty \infty}^{\F^*} \restriction \theta \in M_\infty [\Sigma_0]$, where $i_{M_\infty \infty}^{\F^*}$ is the direct limit map. So $M_\infty [*] \subseteq M_\infty [\Sigma_0]$.
    
    In order to show that $M_\infty [\Sigma_0] \subseteq M_\infty [*]$. Let $N$ be a non-dropping iterate of $M_\infty$ via an $n$-maximal tree $\T^\prime$ which is according to $\Sigma^{\mad}$ such that $N$ is a pre-$\mad$-like $x$-weasel and $N^- \in M_\infty \vert \kappa_\infty$. Let $b$ and $\T$ be such that $\T^\prime = \T^{\frown}b$. It suffices to see that $b \in M_\infty[*]$. We may assume that $\delta(\T) = \delta^N$, since otherwise it is clear that $b \in M_\infty[*]$. 
    
    We claim that $\{ b\}$ is $\Sigma_1$-definable over $M_\infty [*]$. Let $B$ be the set of cofinal branches $c$ of $\T$ such that $N \vert \delta^N \subseteq \wfc(\M^\T_c)$. Clearly, $b \in B$. If $B = \{ b \}$, we are done. So suppose for the sake of contradiction that there is $c \in B$ such that $c \neq b$ of $\T$ such that $\delta^N \subseteq \wfc(\M^\T_c)$ and $c \neq b$. Note that there is a unique tree $\U$ on $N$ such that there is a cofinal wellfounded branch $b^\prime$ which is according to $\Sigma_0$ such that if $i^\U_{b^\prime} \circ i^\T_b = i^{\F^*}_{M_\infty \infty}$ and so $(i^\U_{b^\prime} \circ i^\T_b) \restriction \delta_\infty = * \restriction \delta_\infty$. If there is no cofinal branch $c^\prime$ of $\U$ such that $(i^\U_{c^\prime} \circ i^\T_c) \restriction \delta_\infty = * \restriction \delta_\infty$, we are done. So, suppose that there is such $c^\prime$. Note that $c^\prime \neq b^\prime$. This means that $\ran(i^\U_{b^\prime}) \cap \ran(i^\U_{c^\prime}) \supseteq * \restriction \delta_\infty$. However, $* \restriction \delta_\infty$ is cofinal in $\delta^\infty_\infty$, which is a contradiction to Lemma 2.6 of \cite{Steel_Woodin_2016}. Thus, $\{b\}$ is $\Sigma_1$-definable over $M_\infty[*]$.
    By the Spector-Gandy Theorem it follows that $b \in M_\infty[*]$.  
\eprf

\section{$\delta_\infty$ is Woodin in $M_\infty[\Sigma_0]$}

In this section, we will show that $\delta_\infty$ remains a Woodin cardinal in $M_\infty[\Sigma_0]$. The proof is an adaption of \cite{schlutzenberg2021local} to our context.

\begin{dfn}
    Let $j := i_{\mad \infty} \colon \mad \to M_\infty$ be the iteration map given by $\Sigma^{\mad}$ and let $\bar{S}_\infty$ be defined over $\mad$ as $S_\infty$ is defined over $N$.
\end{dfn}

\begin{rem}
    Note that for any $N \in \F$, $S_\infty \subseteq \ran(i_{\mad N})$, since $S_\infty$ is an $r\Sigma_{n+1}$-definable class of $N$, where $i_{\mad N}$ denotes the iteration map given by $\Sigma^{\mad}$, and $i_{\mad N}[\bar{S}_\infty] = S_\infty$. Moreover, $j[\bar{S}_\infty] = S^*_\infty$.
\end{rem}

\begin{lem} \label{lem: hulls are the same}
    $\Hull{M_\infty[*]}{n}(\ran(j)) = \Hull{M_\infty[*]}{n}(S_\infty^*)$, where we consider $M_\infty[*]$ with the predicates $\E^{M_\infty}$ and $*$.
\end{lem}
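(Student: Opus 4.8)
The plan is to prove the two inclusions separately. The inclusion $\Hull{M_\infty[*]}{n}(S_\infty^*)\subseteq\Hull{M_\infty[*]}{n}(\ran(j))$ is immediate from the remark preceding the statement: since $S_\infty^*=j[\bar S_\infty]$ and $\bar S_\infty\subseteq\mad$, we have $S_\infty^*\subseteq\ran(j)$. The content of the lemma is therefore the reverse inclusion, for which it suffices to show $\ran(j)\subseteq\Hull{M_\infty[*]}{n}(S_\infty^*)$.

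First I would rewrite $\ran(j)$ as an internal hull of $M_\infty$. By Lemma~\ref{lem: standard parameter of admissible mice}, $\mad=\Hull{\mad}{n+1}(\emptyset)$, so $\mad$ is $(n+1)$-sound; as $j=i_{\mad\infty}$ is an iteration map along $n$-maximal trees it is a near $n$-embedding, and on $\Hull{\mad}{n+1}(\emptyset)=\mad$ it is $r\Sigma_{n+1}$-elementary. Hence $\ran(j)$ is an $r\Sigma_{n+1}$-elementary substructure of $M_\infty$, so $\ran(j)=\Hull{M_\infty}{n+1}(\emptyset)=\Hull{M_\infty}{n+1}(S_\infty^*)$, the last equality because $S_\infty^*\subseteq\ran(j)$. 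Two remarks: $p_n^{M_\infty}=j(p_n^{\mad})$ equals $\{\theta_\infty\}$ if $n=1$ and $\emptyset$ if $n\ge 2$ (Lemma~\ref{lem: standard parameter of admissible mice}, with $j(\theta^{\mad})=\theta_\infty$ since $j$ sends the largest cardinal to the largest cardinal), and in either case this parameter lies in $\Hull{M_\infty[*]}{n}(\emptyset)$, as $M_\infty=L_\alpha[\E^{M_\infty}]$ is $\Sigma_1$-definable over $M_\infty[*]$ from the predicate $\E^{M_\infty}$; and, again since $M_\infty$ is a $\Sigma_1$-definable class of $M_\infty[*]$, the $r\Sigma_n$ relations of $M_\infty$ are $r\Sigma_n$ over $M_\infty[*]$. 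Thus the task reduces to $\Hull{M_\infty}{n+1}(S_\infty^*)\subseteq\Hull{M_\infty[*]}{n}(S_\infty^*)$, and --- applying the proof of Lemma~\ref{lem: S infty is generating} to $\mad$ in place of $N$, Corollary~\ref{cor: S is generating}, and continuity of $j$ at $\delta^{\mad}$ --- further to showing that $\ran(j)\cap\OR$, a set cofinal in $\alpha$, is contained in $\Hull{M_\infty[*]}{n}(S_\infty^*)$.

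The remaining step is the heart of the argument, and is where the predicate $*$ enters. By Lemma~\ref{lem: iteration maps and star agree} and (the proof of) Lemma~\ref{lem: definability of the star map}, $M_\infty[*]$ defines the $(\theta,\delta^\infty_\infty)$-extender $E$ derived from $k:=i^{\tilde{\D}^\infty}_{M_\infty\infty}$ and thereby recovers $M_\infty^\infty=\fu{M_\infty}{E}{n}$ together with the iteration map $k\colon M_\infty\to M_\infty^\infty$ at low complexity; by the proof of Lemma~\ref{direct limits agree 2} we have $k\restriction S_\infty^*=\id$, $M_\infty^\infty=\Hull{M_\infty^\infty}{n}(\delta^\infty_\infty\cup S_\infty^*)$ with $\delta^\infty_\infty=\sup(\delta^\infty_\infty\cap\Hull{M_\infty^\infty}{n}(S_\infty^*))$, and $k\restriction\delta_\infty$ cofinal in $\delta^\infty_\infty$. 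Given $\beta\in\ran(j)\cap\OR$, since $k$ is injective and definable over $M_\infty[*]$ it is enough to put $k(\beta)$ into $\Hull{M_\infty[*]}{n}(S_\infty^*)$; and $k(\beta)\in\ran(j^\infty)=\Hull{M_\infty^\infty}{n+1}(\emptyset)$, where $j^\infty:=k\circ j$ is the iteration map $\mad\to M_\infty^\infty$. Writing $k(\beta)=m\tau_\varphi^{M_\infty^\infty}()$ for an $r\Sigma_{n+1}$ formula $\varphi$ relative to $p_n^{M_\infty^\infty}$, Definition~\ref{def: minimal skolem terms} gives $k(\beta)=(\tau_\varphi)^{\gamma}()$ with $\gamma=\lv_\varphi^{M_\infty^\infty}()<\rho_n^{M_\infty^\infty}=\theta^\infty_\infty$, so that $k(\beta)$ is recursively computed from $\tho_{r\Sigma_n}^{M_\infty^\infty}(\gamma\cup\{p_n^{M_\infty^\infty}\})\in M_\infty^\infty$; this theory is $\Delta_n$ over $M_\infty^\infty$, hence over $M_\infty[*]$, from the parameters $M_\infty^\infty\vert(\gamma+\omega)$, $p_n^{M_\infty^\infty}$ and $\gamma$, while $\gamma$ itself again lies in $\ran(j^\infty)\cap\OR$. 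One then closes by a reflection argument --- using that the relevant instances of $\Sigma_n$-collection hold in $M_\infty$ and that $\Hull{M_\infty^\infty}{n}(S_\infty^*)\cap\delta^\infty_\infty$ is cofinal in $\delta^\infty_\infty$ while $\ran(j^\infty)\cap\OR$ is cofinal in $\alpha$ --- which lets $M_\infty[*]$ compute, with parameters from $S_\infty^*$, the theories $\tho_{r\Sigma_n}^{M_\infty^\infty}(\gamma\cup\{p_n^{M_\infty^\infty}\})$ needed for all the minimal Skolem terms naming members of $\ran(j^\infty)$; thus $\ran(j^\infty)\cap\OR\subseteq\Hull{M_\infty[*]}{n}(S_\infty^*)$, and therefore $\ran(j)\cap\OR\subseteq\Hull{M_\infty[*]}{n}(S_\infty^*)$ as well, completing the proof.

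I expect the main obstacle to be exactly this last descent: arranging that the translation of an $r\Sigma_{n+1}$-definition over $M_\infty$ (equivalently over $M_\infty^\infty$) into an $r\Sigma_n$-definition over $M_\infty[*]$ does not introduce parameters outside $S_\infty^*$, and checking that the attendant recursion through the $\lv$-ranks terminates. The double role of $*$ is what makes this work: it makes $M_\infty^\infty$ and $k$ available inside $M_\infty[*]$, which permits shifting a definition one fine-structural level down, and its restriction $k\restriction\delta_\infty$ supplies the cofinal family of generators of $M_\infty^\infty$ below its Woodin that the $\Sigma_n$-collection step requires.
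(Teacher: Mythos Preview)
Your proposal takes a substantial detour and ultimately leaves the key step unfinished, whereas the paper's proof is essentially a one-line observation that you nearly had in hand.

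The main point you miss is that the predicate $*$ plays no role in the nontrivial inclusion. The paper simply observes that it suffices to show $\mad=\Hull{\mad}{n}(\bar S_\infty)$: once this is known, the $n$-embedding $j$ carries it to $\ran(j)\subseteq\Hull{M_\infty}{n}(S_\infty^*)\subseteq\Hull{M_\infty[*]}{n}(S_\infty^*)$, and the reverse inclusion is the trivial one you identified. And $\mad=\Hull{\mad}{n}(\bar S_\infty)$ follows directly from the proof of Lemma~\ref{lem: S infty is generating} (which you cite) together with $\mad=\Hull{\mad}{n+1}(\emptyset)$: in that proof one shows each theory $T_k=\tho_n^N(\alpha_k\cup\{\theta\})$ lies in $\Hull{N}{n}(S_\infty)$, and then each $r\Sigma_{n+1}$ Skolem term value is read off from some $T_k$ and parameters below $\delta^N$. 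For $N=\mad$ there are no parameters below $\delta^{\mad}$, so every element of $\mad$ already lies in $\Hull{\mad}{n}(\bar S_\infty)$.

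You actually invoke the proof of Lemma~\ref{lem: S infty is generating}, but only to reduce to the ordinals of $\ran(j)$, and then you embark on a separate argument through $k$, $M_\infty^\infty$, and the $\lv$-ranks to handle those ordinals. That machinery is unnecessary here, and your final ``reflection argument'' is not carried out: the parameter $\gamma=\lv_\varphi^{M_\infty^\infty}()$ you introduce is again an element of the very set whose membership in the hull you are trying to establish, and you do not explain how the resulting recursion closes off with parameters only in $S_\infty^*$. The paper avoids this entirely by staying inside $M_\infty$ (indeed inside $\mad$) rather than passing to $M_\infty[*]$ and $M_\infty^\infty$.
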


\prf
    Notice that since $\Hull{M_\infty}{n}(X) \subseteq \Hull{M_\infty[*]}{n}(X)$ for all $X$, it suffices to show that $\mad = \Hull{\mad}{n} (\bar{S}_\infty)$. Let $N \in \F$. Note that $\bar{S}_\infty$ is cofinal in $\OR^{\mad}$ and $\theta^{\mad}$. It follows from the proof of Lemma \ref{lem: S infty is generating} that $\mad = \Hull{\mad}{n} (\bar{S}_\infty)$.
\eprf

\setcounter{claimcounter}{0}

\begin{lem} \label{lem: hull and range are the same}
    Let $H := \Hull{M_\infty[*]}{n}(\ran(j))$. Then $H \cap \alpha = \ran(j) \cap \alpha$.
\end{lem}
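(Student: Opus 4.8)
The inclusion $\ran(j)\cap\alpha\subseteq H\cap\alpha$ is immediate, since $\ran(j)\subseteq H$. For the reverse inclusion I would first record that
\[
    \ran(j)=\Hull{M_\infty}{n}(S_\infty^*).
\]
Indeed, $j$ is an $n$-embedding, hence $r\Sigma_{n+1}$-elementary and fixing $\vec p_{n-1}$, so it commutes with the minimal Skolem terms $m\tau_\varphi$ and with the functions $\lv_\varphi$; therefore $j$ carries $\Hull{\mad}{n}(\bar{S}_\infty)$ onto $\Hull{M_\infty}{n}(j[\bar{S}_\infty])=\Hull{M_\infty}{n}(S_\infty^*)$, and $\Hull{\mad}{n}(\bar{S}_\infty)=\mad$ was established in the proof of Lemma~\ref{lem: hulls are the same}. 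Combining this with Lemma~\ref{lem: hulls are the same}, which gives $H=\Hull{M_\infty[*]}{n}(S_\infty^*)$, the assertion reduces to
\[
    \Hull{M_\infty[*]}{n}(S_\infty^*)\cap\alpha\ \subseteq\ \Hull{M_\infty}{n}(S_\infty^*);
\]
that is, adjoining the predicate $*$ adds no new ordinals below $\alpha=\OR^{M_\infty}$ to the $r\Sigma_n$-hull of $S_\infty^*$.

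To prove this I would fix $\beta<\alpha$ in $\Hull{M_\infty[*]}{n}(S_\infty^*)$ and eliminate $*$ from a definition of $\beta$. By Lemma~\ref{lem: definability of the star map}, $M_\infty[*]=M_\infty[*\restriction\theta]$ with $\theta=\delta_\infty$, so $\beta$ is $r\Sigma_n$-definable over the structure $(M_\infty;\,*\restriction\theta)$ from a finite tuple $\vec s\in\fin{S_\infty^*}$. Since $*\restriction\theta$ is a bounded predicate (it is essentially a subset of $\delta_\infty^\infty<\alpha$), adjoining it to $M_\infty$ preserves $\Sigma_n$-admissibility and the $\Sigma_{n-1}$-elementarity of initial segments. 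Using that $\vec\gamma\subseteq S_\infty^*\cap S^{M_\infty}_{n-1}$ is cofinal in $\alpha$ and that $\theta<\gamma_k$ for every $k$, I would pick $\gamma=\gamma_k\in S_\infty^*$ with $\beta,\vec s\subseteq\gamma$ and $(M_\infty\vert\gamma;\,*\restriction\theta)\prec_{\Sigma_{n-1}}(M_\infty;\,*\restriction\theta)$; as $\gamma$ is then a $\Sigma_{n-1}$-elementary cut, the least level witnessing the minimal-Skolem-term definition of $\beta$ lies below $\gamma$, so $\beta=m\tau_\psi^{(M_\infty\vert\gamma;\,*\restriction\theta)}(\vec s)$. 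Thus $\beta$ is computed over $M_\infty$ from $\gamma$, from $\vec s$, and from the single bounded object $*\restriction\theta$.

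It remains to show that $*\restriction\theta$, as it enters this computation, is itself $r\Sigma_n$-definable over $M_\infty$ from parameters in $S_\infty^*$. Here I would appeal to Lemma~\ref{lem: iteration maps and star agree} ($k\restriction\alpha=*$) together with the proof of Lemma~\ref{direct limits agree 2}: every $N\in\F^*$ is $s$-stable for all $s\in\fin{S_\infty^*}$, and $\{(N,s):N\in\F^*,\ s\in\fin{S_\infty^*}\}$ is dense in $\tilde{\D}^\infty$, so each value $k(\xi)$ with $\xi<\delta_\infty$ is obtained by a direct-limit computation in $\tilde{\D}^\infty$ whose relevant finite part is an element of $M_\infty$; and since $\tilde{\D}^\infty$ is locally definable over $M_\infty$ via the forcing relation and the $\col(\omega,{<\kappa_\infty})$-homogeneity exploited in the proof of Lemma~\ref{lem: iteration maps and star agree}, the fragment of $*\restriction\theta=k\restriction\delta_\infty$ needed above is $r\Sigma_n$-definable over $M_\infty$ from $\kappa_\infty$ together with finitely many $S_\infty^*$-parameters. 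Substituting this description back into the preceding paragraph gives $\beta\in\Hull{M_\infty}{n}(S_\infty^*)=\ran(j)$, completing the argument.

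The main obstacle is exactly this last step: one must pin down which bounded fragment of the star map is actually used to define a given $\beta<\alpha$, trace the corresponding computation in $\tilde{\D}^\infty$ down to a level below a suitable $\gamma_k$, and verify that this fragment is genuinely $r\Sigma_n$-definable over $M_\infty$ from finitely many parameters in $S_\infty^*$, rather than merely available in $M_\infty[*]$. All of this is bookkeeping with the fine-structural hulls $H^N_s$, the minimal Skolem terms, and the fixed-point and stability properties of $S_\infty^*$; the remaining reflection and elementarity are routine.
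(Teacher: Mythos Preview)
Your setup is correct: $\ran(j)=\Hull{M_\infty}{n}(S_\infty^*)$ and, by Lemma~\ref{lem: hulls are the same}, $H=\Hull{M_\infty[*]}{n}(S_\infty^*)$, so the task is exactly to show that adjoining the predicate $*$ adds no new ordinals to this hull. The gap is in your final step. You assert that ``the fragment of $*\restriction\theta$ needed above is $r\Sigma_n$-definable over $M_\infty$ from $\kappa_\infty$ together with finitely many $S_\infty^*$-parameters'', but once you have localized to $(M_\infty\vert\gamma;\,*\restriction\theta)$ the predicate being used is \emph{all} of $*\restriction\theta$, not a proper fragment; and for $\beta$ to land in $\Hull{M_\infty}{n}(S_\infty^*)$ via your route you would need the set $*\restriction\theta$ itself to lie in that hull, hence in $M_\infty$. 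That fails: $M_\infty[*]=M_\infty[\Sigma_0]$ is strictly larger than $M_\infty$ (a $1$-small mouse cannot compute its own tail strategy), so $*\restriction\theta\notin M_\infty$. Even granting that each individual value $\xi\mapsto\xi^*$ can be read off from the locally definable system $\tilde{\D}^\infty$, substituting a merely $r\Sigma_n$-definable \emph{class} relation for the predicate in an $r\Sigma_n$ formula does not yield an $r\Sigma_n$ formula, so the complexity bookkeeping you allude to does not close.

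The paper does not try to eliminate $*$ from the definition of $\beta$ at all. It first proves that $\ran(j)$ is closed under $*$ and $*^{-1}$, reducing the problem to showing $\beta^*\in\ran(j)$. The point of this shift is that $\beta^*$, unlike $\beta$, acquires a definition \emph{inside $M_\infty$} (with no $*$-predicate): one descends to an $\{\beta\}\cup s^+$-stable $N\in\F$, uses that $M_\infty[*]$ is $\Sigma_1$-definable over $\lxg=N[\text{generic}]$ to turn the $M_\infty[*]$-definition of $\beta$ into a forcing statement over $N$, so that $\beta\in H^N_{s\cup\{\eta\}}$ for a suitable $\eta\in S_\infty$; pushing this forward by $i^{\tilde{\D}}_{(N,s^+)\infty}$ lands $\beta^*$ in $H^{M_\infty}_{s^+}$, and since $s^+\subseteq S_\infty^*\subseteq\ran(j)$ and $\ran(j)\prec_{r\Sigma_{n+1}} M_\infty$, one gets $\beta^*\in\ran(j)$. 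The $*$-predicate is never unpacked inside $M_\infty$; it is absorbed into the direct-limit step, which is precisely what moves $\beta$ to $\beta^*$.
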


\prf
    Clearly, $\ran(j) \subset H$. In order to see the other inclusion, we first prove the following claim. 

    \begin{claim}
        $\ran(j)$ is closed under $*$ and $*^{-1}$. 
    \end{claim}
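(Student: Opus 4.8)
The plan is to prove the claim that $\ran(j)$ is closed under $*$ and $*^{-1}$, which will then let us run a condensation-style argument to conclude $H \cap \alpha = \ran(j) \cap \alpha$. Here $j = i_{\mad\infty} \colon \mad \to M_\infty$ is the iteration map and $*$ is the $M_\infty$-internal direct limit star map on $\alpha$, which by Lemma \ref{lem: iteration maps and star agree} coincides with $k\restriction\alpha$, where $k\colon M_\infty \to M_\infty^\infty$ is the iteration map given by $\Sigma^{\mad}$.

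First I would prove the closure under $*$. Let $\beta \in \ran(j) \cap \alpha$, say $\beta = j(\bar\beta)$. Since $k\colon M_\infty \to M_\infty^\infty$ is the iteration map and $k\circ j = i_{\mad,M_\infty^\infty}$ is also an iteration map according to $\Sigma^{\mad}$, we have $\beta^* = k(\beta) = k(j(\bar\beta)) = (k\circ j)(\bar\beta)$. The point is to see this ordinal lies in $\ran(j)$: by Lemma \ref{lem: generators of the direct limit}, $M_\infty^\infty = \Hull{M_\infty^\infty}{n}(\delta^\infty_\infty \cup S_\infty^{**})$ where $S_\infty^{**} = k[S_\infty^*]$, and $k\restriction S_\infty^* = \id$ by the proof of Lemma \ref{direct limits agree 2}, so in fact $S_\infty^{**} = S_\infty^*$; combined with $j[\bar S_\infty] = S_\infty^*$ and the fact that the critical point of $k$ is $\delta_\infty = \theta$ (so $k\restriction\theta = \id$), one sees $k(\beta) \in \Hull{M_\infty^\infty}{n}(\theta \cup S_\infty^*) = \ran(k\circ j)\cap\text{(ordinals below }\alpha^{M_\infty^\infty}\text{)}$... but we want membership in $\ran(j)$, not $\ran(k\circ j)$. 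The cleaner route is: $\beta^*$ is the image of $\beta$ under the direct limit map of $\F^*$, and for $\beta \in H^{M_\infty}_{s^*}$ with $s \in \fin{S_\infty}$ (using Lemma \ref{lem: generators of the direct limit}), $\beta$ is $r\Sigma_{n+1}$-definable over $M_\infty$ from $s^*$ and ordinals below $\theta$; since $s^* = j[\text{something in }\fin{\bar S_\infty}]$ lies in $\ran(j)$ pointwise and $\theta = \delta^{\mad}$-image is fixed, $\beta^*$ — being defined from these by the same term in $M_\infty^\infty$, pulled back — lies in $\ran(j)$. I would spell this out using that $j$ is an $n$-embedding and $\bar S_\infty$ is $r\Sigma_{n+1}$-definable over $\mad$ (from the Remark before this lemma), so $\mad = \Hull{\mad}{n+1}(\bar S_\infty)$, forcing every ordinal of $\ran(j)$ below $\alpha$ to be a term applied to parameters from $S_\infty^*$, hence $*$-closed because $*$ commutes with those terms and fixes $S_\infty^*$.

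For closure under $*^{-1}$: if $\beta \in \ran(j)\cap\alpha$ and $\beta = \gamma^*$ for some $\gamma < \alpha$, I need $\gamma \in \ran(j)$. Again write $\gamma \in H^{M_\infty}_{t^*}$ for some $t \in \fin{S_\infty}$, so $\gamma = \tau_\varphi^{M_\infty}(\vec{y}, t^*)$ for some $r\Sigma_{n+1}$ term $\tau_\varphi$ and $\vec{y} \in \fin{\theta}$. Then $\beta = \gamma^* = \tau_\varphi^{M_\infty^\infty}(\vec{y}, t^*)$ by elementarity of $k$ and $k\restriction(\theta\cup S_\infty^*) = \id$. Since $\beta \in \ran(j)$, $\beta = j(\bar\beta)$, and since $\mad = \Hull{\mad}{n+1}(\bar S_\infty)$, we get $\beta = \tau_\psi^{M_\infty^\infty}(j(\vec s))$ for $\vec s \in \fin{\bar S_\infty}$, i.e. $\beta$ is definable in $M_\infty^\infty$ purely from $S_\infty^*$. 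The uniqueness of such definitions (using that $\leq^*$-minimal Skolem terms give a well-defined notion, from Definition \ref{def: minimal skolem terms} and the attendant lemmas) then forces $\vec y$ to be eliminable, so $\gamma$ itself is definable over $M_\infty$ from $t^* \subseteq S_\infty^* = j[\bar S_\infty]$, whence $\gamma \in \ran(j)$. With the claim in hand, the lemma follows: any $\xi \in H\cap\alpha$ is $\tau_\varphi^{M_\infty[*]}(\vec a)$ for $\vec a \in \fin{\ran(j)}$; decoding the predicate $*$ via Lemma \ref{lem: definability of the star map} (so that $M_\infty[*]$-definability reduces to $M_\infty$-definability together with applications of $*$ and $*^{-1}$), the closure of $\ran(j)$ under $*, *^{-1}$ and under $M_\infty$-Skolem terms (since $j$ is an $n$-embedding and $\mad$ is generated by $\bar S_\infty$) shows $\xi \in \ran(j)$.

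The main obstacle I expect is the careful bookkeeping in the $*^{-1}$-closure direction: one must ensure that when $\gamma^* = \beta \in \ran(j)$, the preimage $\gamma$ is not merely in $\ran(k)$ or in some larger hull but genuinely in $\ran(j)$, and this requires exploiting that $\mad$ — not just $M_\infty$ — is the $\Hull{}{n+1}$ of $\bar S_\infty$, together with the uniqueness of $\leq^*$-minimal Skolem term representations to cancel the extra ordinal parameters below $\theta$. A secondary subtlety is handling the predicate $*$ inside $M_\infty[*]$: one needs Lemma \ref{lem: definability of the star map} to know that $*$-definability over $M_\infty[*]$ unwinds into finitely many alternations of $M_\infty$-definability and applications of $*$ and $*^{-1}$, so that closure of $\ran(j)$ under those three operations suffices.
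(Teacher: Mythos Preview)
Your approach has a fundamental error: you repeatedly assert that $k \restriction \theta = \id$ (equivalently that $\crt(k) = \delta_\infty = \theta$) and that $k \restriction S_\infty^* = \id$. Neither holds. The trees in $\F^*$ are based on $M_\infty \vert \delta_\infty$ and use extenders with critical points below $\delta_\infty$, so $\crt(k) < \delta_\infty = \theta$; in fact $k \restriction \delta_\infty$ is exactly the interesting part of the $*$-map. Likewise, Lemma~\ref{direct limits agree 2} only shows that the \emph{intermediate} maps $i^{\F^*}_{NP}$ fix $S_\infty^*$ pointwise, not that the direct limit map $k$ does --- and indeed it does not, just as $i^\F_{N\infty}$ moves $S_\infty$ to $S_\infty^*$ even though each $i^\F_{NP}$ fixes $S_\infty$. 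This breaks your $*^{-1}$ argument outright: from $\gamma = \tau_\varphi^{M_\infty}(\vec y, t^*)$ you only get $\beta = \gamma^* = \tau_\varphi^{M_\infty^\infty}(k(\vec y), k(t^*))$, with both parameter blocks moved. Your proposed fix --- ``eliminate $\vec y$ by uniqueness of $\leq^*$-minimal Skolem terms'' --- is not an argument: there is no cancellation principle that strips away moved parameters just because the output happens to lie in $\ran(j)$.

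The paper's proof is both different and much shorter. Rather than chasing term representations, it observes that for each $s \in \fin{S_\infty}$ the local restriction $k_s = k \restriction H^{M_\infty}_{s^*} = i^{\tilde\D^\infty}_{(M_\infty,s^*)\infty}$ is a \emph{set}, and is itself an element of $\ran(j)$, being definable from the parameter $s^* \in \ran(j)$. Once $k_s \in \ran(j)$, both directions of the claim are immediate: for $\beta \in H^{M_\infty}_{s^*}$ one has $\beta^* = k_s(\beta)$, and $\ran(j)$, as the range of an elementary embedding, is closed under function application and under taking inverses of injective functions. Hence $\beta \in \ran(j) \iff k_s(\beta) \in \ran(j)$, with no need to commute $k$ past anything.
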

    \prf
        For $s \in \fin{S_\infty}$ let $k_s = k \restriction H_{s^*}^{M_\infty} = i^{\tilde{\D}^\infty}_{(M^\infty,s^*) \infty}$. Note that $k_s$ is definable from $s^*$ and $* \restriction \max(s^*)$ over $M_\infty[*]$, so that $k_s \in \ran(j)$.
        Let $\beta < \alpha$ and let $s \in \fin{S_\infty}$ be such that $\beta \in H^{M_\infty}_{s^*}$. We have $\beta^* = k_s (\beta)$ and $k_s \in \ran(j)$. Thus, $\beta \in \ran(j)$ if and only if $\beta^* \in \ran(j)$.
    \eprf

    Let $\beta \in H \cap \alpha$. We aim to see that $\beta \in \ran(j)$. By the claim, it suffices to see that $\beta^* \in \ran(j)$. By Lemma \ref{lem: hulls are the same}, we can fix $s\in \fin{S_\infty}$ and an $r\Sigma_n$ formula $\varphi$ such that $\beta$ is the unique $\beta^\prime < \alpha$ such that $M_\infty[*] \models \varphi(s^*,\beta^\prime)$. Let $\eta \in S_\infty^*$ be such that $\eta > \max\{\beta,\theta, \max(s^*)\}$ and $\eta$ is sufficiently large so that $M_\infty[*] \vert \eta$ can compute the values of the elements of $s \cup \kappa$ under the $*$-map. Let $s^+ = s^* \cup \{ \eta^* \}$.

    Let $N \in \F$ be such that $N$ is $\{ \beta \} \cup s^+$-stable. Note that since $M_\infty [*]$ is $\Sigma_1$-definable over $\lxg$ from the parameters $\theta$ and $\R^{\lxg}$, we have that $\beta$ is the unique ordinal less than $\alpha$ such that
    \[
        N \models \col(\omega, {< \kappa}) \Vdash ``M_\infty[*] \models \varphi(\beta,s^*)",
    \]
    and thus $\beta$ is $\Sigma_1$-definable over $N$ from the parameter $(s, \theta)$\footnote{Note that we use here that $* \restriction \theta$ as computed in $\lx$ is the same as computed in $\lxg$. This follows by quite similar arguments as in the remark before Lemma \ref{lem: iteration maps and star agree}.} (note that $\theta$ determines $\kappa$ and $\dot{\R}$). Note that then $\beta$ is also definable over $N \vert \eta$ from the parameter $s$ and a formula $\psi$, so that $\beta \in H^N_{s \cup \{ \eta \}}$.
    Since $N$ is $\{ \beta \}$-stable, $\beta^* = i^{\tilde{\D}}_{(N,s^+)\infty} (\beta)$. But this means that 
    \[
        H^{M_\infty}_{s^+} \models ``\beta^* \text{ is the unique } \gamma \text{ such that }\psi(\gamma,s^*)".
    \]
    Since $\ran(j) \prec_{r\Sigma_{n+1}} M_\infty$ and $s^+ \in \ran(j)$ it follows that $\beta^* \in \ran(j)$.
\eprf

\begin{dfn}
    Let $N$ be a non-dropping $\Sigma^{\mad}$-iterate of $\mad$ such that $N$ is a pre-$\mad$-like $x$-weasel and $N^- \in \lx \vert \kappa$. Let $\Lambda_N$ be the restriction of $\Sigma^{\mad}$ to non-dropping stacks on $N$ such that the last model is pre-$\mad$-like $x$-weasel and in $N \vert \mu^N$. 
    Let $\Lambda = \Lambda_{\mad} = \Sigma_0$.
\end{dfn}

\begin{lem} \label{cor: the correct collapse}
    The transitive collapse of $\Hull{M_\infty[\Sigma_0]}{n}(\ran(j))$ is $\mad [\Lambda]$.
    Moreover, $V_{\delta^{\mad}}^{\mad} = V_{\delta^{\mad}}^{\mad[\Lambda]}$ and $\delta_{\mad}$ is regular in both models.
\end{lem}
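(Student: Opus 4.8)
The plan is to compute the transitive collapse $\bar M$ of $H:=\Hull{M_\infty[\Sigma_0]}{n}(\ran(j))$ by showing directly that $\bar M=\mad[\Lambda]$ and that the uncollapse map $\pi\colon\bar M\to M_\infty[\Sigma_0]$ satisfies $\pi\restriction\mad=j$. By Lemma~\ref{strategy mouse} we may work with $M_\infty[*]$ in place of $M_\infty[\Sigma_0]$, and by Lemmas~\ref{lem: hulls are the same} and~\ref{lem: hull and range are the same} we have $H=\Hull{M_\infty[*]}{n}(S_\infty^*)$ and $H\cap\OR=\ran(j)\cap\OR$, so $\OR^{\bar M}=\otp(\ran(j)\cap\OR)=\OR^{\mad}$. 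Recall moreover from the proof of Lemma~\ref{lem: hulls are the same} that $\mad=\Hull{\mad}{n}(\bar S_\infty)$; since $j$ is an $n$-embedding and preserves the minimal Skolem terms, it follows that $\ran(j)=\Hull{M_\infty}{n}(S_\infty^*)$.

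The first main point is $H\cap M_\infty=\ran(j)$. The inclusion $\supseteq$ holds by the last remark. For $\subseteq$, one uses that the maps $k_s=k\restriction H^{M_\infty}_{s^*}$, $s\in\fin{S_\infty}$, lie in $\ran(j)$ and exhaust $*$, so that $\ran(j)$ is closed under $*$ and, for each $\gamma\in\ran(j)\cap\OR$, $*\restriction\gamma\in\ran(j)$ (proof of Lemma~\ref{lem: hull and range are the same}). Given $y\in H\cap M_\infty$, write $y=m\tau^{M_\infty[*]}_\varphi(\vec a)$ with $\vec a\in\ran(j)$; one localizes the computation of $y$ to $M_\infty\vert\xi$ for a suitable $\xi\in\ran(j)$, replaces the predicate $*$ occurring there by the parameter $*\restriction\gamma\in\ran(j)$ for an appropriate $\gamma\in\ran(j)$, and concludes $y\in\Hull{M_\infty}{n}(\ran(j))=\ran(j)$. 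Hence the premouse reduct of $\bar M$ is the transitive collapse of $H\cap M_\infty=\ran(j)$, namely $\mad$, with uncollapse $j$; so $\pi\restriction\mad=j$ and $\bar M=\mad[\bar{*}]$, where $\bar{*}(\xi)=j^{-1}(j(\xi)^*)$ is the collapse of $*\restriction(H\cap\OR)$, total by closure of $\ran(j)$ under $*$, and equal to $j^{-1}\circ k\circ j\restriction\OR^{\mad}$ by Lemma~\ref{lem: iteration maps and star agree}.

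The second main point is $\mad[\bar{*}]=\mad[\Lambda]$, proved by repeating the proof of Lemma~\ref{strategy mouse} over $\mad$ in place of $M_\infty$, with $\Lambda=\Lambda_{\mad}=\Sigma_0$: for $\mad[\bar{*}]\subseteq\mad[\Lambda]$ one argues that the system of $\mad$-like $x$-weasel iterates of $\mad$ lying in $\mad\vert\mu^{\mad}$, together with its direct limit maps, is computable from $\Lambda$ and recovers $\bar{*}$; for $\mad[\Lambda]\subseteq\mad[\bar{*}]$ one runs the Spector--Gandy argument, pinning down, for each maximal tree $\T$ on an iterate of $\mad$ with $\delta(\T)$ the Woodin of its branch model, the $\Lambda$-branch as the unique cofinal branch whose iteration embedding agrees with $\bar{*}$ (equivalently, via $j$, with $*$) on a set cofinal in that Woodin, such a branch being $\Sigma_1$ over $\mad[\bar{*}]$ and hence an element, and using $1$-smallness of $\mad$ to keep the $Q$-structures of short trees internal. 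I expect this transfer to be the main obstacle: one must reconcile $\bar{*}$ (whose underlying iterate is $M_\infty^\infty$, rather than the direct limit of $\mad$'s own internal system) with $\Lambda$ and check that the two generate the same inner model over $\mad$.

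Finally, for the ``moreover'': $\delta^{\mad}$ is Woodin, hence inaccessible, in $\mad$, so $V^{\mad}_{\delta^{\mad}}=\mad\vert\delta^{\mad}$. Every tree or stack in the domain of $\Lambda$ is based on $\mad\vert\delta^{\mad}$; those of size $<\delta^{\mad}$ are decided internally by $Q$-structures (by $1$-smallness), and the unique maximal branch $b$ with $\delta(\T)=\delta^{\mad}$ yields $\M^{\T}_b\vert(\delta^{\mad})^{+\M^{\T}_b}=\mad\vert(\delta^{\mad})^{+\mad}$, so in neither case is a new bounded subset of $\delta^{\mad}$ added. Hence $\mad[\Lambda]\vert\delta^{\mad}=\mad\vert\delta^{\mad}$, $\delta^{\mad}$ remains regular (indeed inaccessible) in $\mad[\Lambda]$, and therefore $V^{\mad[\Lambda]}_{\delta^{\mad}}=\mad[\Lambda]\vert\delta^{\mad}=\mad\vert\delta^{\mad}=V^{\mad}_{\delta^{\mad}}$ with $\delta^{\mad}$ regular in both.
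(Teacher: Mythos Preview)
Your approach is substantially more involved than the paper's, and it takes a different route to the ``moreover'' clause. The paper works entirely at the $M_\infty$ level: since $M_\infty[\Sigma_0] \subseteq \lxg$ and $\delta_\infty = \theta$ is regular in $\lxg$, regularity of $\delta_\infty$ in $M_\infty[\Sigma_0]$ is immediate; the equality $V_{\delta_\infty}^{M_\infty} = V_{\delta_\infty}^{M_\infty[\Sigma_0]}$ is taken as known (it follows from $*\restriction\beta \in M_\infty$ for all $\beta < \delta_\infty$, which was established earlier). The statements about $\mad$ and $\mad[\Lambda]$ then follow simply by transporting along the uncollapse map $\pi$, which is elementary and sends $\delta^{\mad}$ to $\delta_\infty$. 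Your Parts~1 and~2 are on the right track (and indeed $\bar{*}$ coincides with $\mad$'s own internal $*$-map by elementarity of $j$, so the transfer of Lemma~\ref{strategy mouse} does go through), but all of that work is unnecessary once one has the $M_\infty$-level facts; the paper treats the identification of the collapse as $\mad[\Lambda]$ as routine given those.

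Your Part~3 contains a genuine gap. You infer regularity (and inaccessibility) of $\delta^{\mad}$ in $\mad[\Lambda]$ from $\mad[\Lambda]\vert\delta^{\mad} = \mad\vert\delta^{\mad}$, but this implication fails in general: a short cofinal function into $\delta^{\mad}$ could appear at a level of $\mad[\Lambda]$ above $\delta^{\mad}$, since $\Lambda$ records branches for maximal trees of length $\geq \delta^{\mad}$. Moreover, your case split is incomplete---$\delta(\T)$ for a maximal tree can strictly exceed $\delta^{\mad}$, contrary to your ``unique maximal branch with $\delta(\T)=\delta^{\mad}$'' phrasing---and your analysis only shows that no \emph{single} branch codes a new bounded subset of $\delta^{\mad}$, not that the full predicate $\Lambda$ preserves regularity. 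The paper's device of pulling regularity back from $\lxg$ circumvents this entirely; without it, you would need a separate argument (e.g.\ that every bounded piece of $\Lambda$, suitably organized as a strategy premouse, already lies in $\mad$, together with a condensation-style argument above $\delta^{\mad}$).
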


\prf
    Since $M_\infty [\Sigma_0] \subseteq \lxg$ and $\theta = \delta_\infty$ is regular in $\lxg$, it follows that $\delta_\infty$ is regular in $M_\infty [\Sigma_0]$. But then, since $V_{\delta_\infty}^{M_\infty} = V_{\delta_\infty}^{M_\infty[\Sigma_0]}$ the claim follows easily.
\eprf

Let 
\[
    \pi_0 \colon \mad \to \mad \subseteq \mad[\Lambda]
\]
be the identity map. Let $\Psi$ be the putative iteration strategy for $\mad[\Lambda]$ given by ``inverse copying" via $\pi_0$. This makes sense by Lemma \ref{cor: the correct collapse}.

For a putative iteration tree $\U$ on $\mad [\Lambda]$ via $\Psi$ and $\beta < \lh(\U)$ such that $[0,\beta]_\U$ does not drop, we write $\M^\U_{\beta} = \N^\U_\beta [\Lambda_\beta^\U]$, i.e.~$\N^\U_\beta$ is putatively $\mad$-like and $\Lambda^\U_\beta$ is some putative iteration strategy. Letting $\T$ be the inverse copy on $\mad$, let
\[
    \pi_\beta \colon \M^\T_\beta \to \N^\U_\beta
\]
be the copy map.

\begin{lem} \label{lem: correct iterations with strategies}
    Let $\T, \U, \beta,$ and $\pi_\beta$ be as above. Then $\N^\U_\beta = \M^\T_\beta$, $\pi_\beta = \id$, and $\Lambda_\beta^\U = \Lambda_{\M^\T_\beta}$.
    Thus, $\M_\beta^\U = \M^\T_\beta [\Lambda_{\M^\T_\beta}]$, and $\M_\beta^\U$ is wellfounded.
\end{lem}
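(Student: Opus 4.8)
The plan is to argue by induction on $\beta < \lh(\U)$, proving simultaneously: (i) $\T \restriction (\beta+1)$ and $\U \restriction (\beta+1)$ have the same tree order and the same drops, and for every non-dropping $\gamma \leq \beta$ the copy map $\pi_\gamma$ is the identity, so $\N^\U_\gamma = \M^\T_\gamma$; (ii) for such $\gamma$, $\Lambda^\U_\gamma = \Lambda_{\M^\T_\gamma}$, hence $\M^\U_\gamma = \M^\T_\gamma[\Lambda_{\M^\T_\gamma}]$; and (iii) every $\M^\U_\gamma$ with $\gamma \leq \beta$ is wellfounded. The base case $\beta = 0$ is immediate: $\pi_0$ is the identity $\mad \to \mad$, $\M^\T_0 = \mad = \N^\U_0$, and $\Lambda^\U_0 = \Lambda = \Sigma_0 = \Lambda_{\mad} = \Lambda_{\M^\T_0}$ directly from the definition of $\Lambda_N$ with $N = \mad$; and $\mad[\Lambda]$ is wellfounded.

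For the successor step, assume first that $[0,\beta+1]_\U$ does not drop (the dropping case is easier, as then $\M^\U_{\beta+1}$ is a bare premouse and only wellfoundedness is at issue, which follows as below). Let $E = E^\U_\beta$. In the strategy--premouse formalism used here the extender sequence of a structure $\N[\Lambda']$ is just $\E^\N$ --- the strategy predicate adds no extenders, as is visible from the form $M_\infty[*] = L_\alpha[\E^{M_\infty}, *]$ and Lemma \ref{strategy mouse} --- so, by (i) at stage $\beta$, $E$ lies on $\M^\T_\beta$'s sequence and, since $\pi_\beta = \id$, inverse copying uses $E^\T_\beta = \pi_\beta^{-1}(E) = E$ in $\T$. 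As the two trees agree below $\beta$, $\pred^\U(\beta+1) = \pred^\T(\beta+1) =: \gamma$, there is no drop, and $\M^\U_{\beta+1} = \Ult{\M^\U_\gamma}{E}$, $\M^\T_{\beta+1} = \Ult{\M^\T_\gamma}{E}$. By the induction hypothesis $\M^\U_\gamma = \M^\T_\gamma[\Lambda_{\M^\T_\gamma}]$, so $\M^\U_{\beta+1} = \Ult{\M^\T_\gamma[\Lambda_{\M^\T_\gamma}]}{E}$, and the ultrapower of a strategy premouse splits as $(\Ult{\M^\T_\gamma}{E})[\Lambda'] = \M^\T_{\beta+1}[\Lambda']$, where $\Lambda'$ is the canonical pushforward of $\Lambda_{\M^\T_\gamma}$ along the ultrapower map $i_E \colon \M^\T_\gamma \to \M^\T_{\beta+1}$. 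The key point is that $\Lambda' = \Lambda_{\M^\T_{\beta+1}}$: since $i_E$ is itself an iteration map according to $\Sigma^{\mad}$ (it is part of the tree $\T$ on $\mad$) and both $\Lambda_{\M^\T_\gamma}$ and $\Lambda_{\M^\T_{\beta+1}}$ are, by definition, the internal restrictions of the corresponding tail strategies of $\Sigma^{\mad}$, pullback consistency of $\Sigma^{\mad}$ --- which follows from the uniqueness of $(n,\omega_1,\omega_1+1)^*$-strategies for $1$-small premice together with the normalization results of \cite{schlutzenberg2024normalizationtransfinitestacks} --- gives that $i_E$ carries $\Lambda_{\M^\T_\gamma}$ precisely to $\Lambda_{\M^\T_{\beta+1}}$. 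Finally $\pi_{\beta+1}$, being the ultrapower copy of $\pi_\gamma = \id$ by equal extenders, is again the identity, so $\N^\U_{\beta+1} = \M^\T_{\beta+1}$ and $\Lambda^\U_{\beta+1} = \Lambda_{\M^\T_{\beta+1}}$; and $\M^\U_{\beta+1} = \M^\T_{\beta+1}[\Lambda_{\M^\T_{\beta+1}}]$ is wellfounded because $\Sigma^{\mad}$ is a genuine iteration strategy, so $\M^\T_{\beta+1}$ is wellfounded.

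The limit step $\beta$ is parallel. By the induction hypothesis $\T \restriction \beta$ and $\U \restriction \beta$ agree, and by the very definition of $\Psi$ via inverse copying, $\Psi$'s branch $[0,\beta)_\U$ is the one whose inverse copy to $\T \restriction \beta$ is $\Sigma^{\mad}$'s branch; since inverse copying preserves tree order, $[0,\beta)_\U = [0,\beta)_\T$. Taking direct limits along this common branch gives $\N^\U_\beta = \M^\T_\beta$ and $\pi_\beta = \id$, and $\M^\U_\beta = \dir\lim_{\gamma} \M^\T_\gamma[\Lambda_{\M^\T_\gamma}] = \M^\T_\beta[\Lambda_{\M^\T_\beta}]$, using once more that the internal tail strategies cohere under the direct limit maps (the same pullback-consistency input). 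Wellfoundedness of $\M^\U_\beta$ reduces to that of $\M^\T_\beta$, which holds because $[0,\beta)_\T$ is the branch chosen by $\Sigma^{\mad}$.

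I expect the main obstacle to be exactly the strategy-coherence claim: that the internal tail strategies $\Lambda_N$ attached to the $\Sigma^{\mad}$-iterates $N$ of $\mad$ form a system preserved by the iteration maps, both by single ultrapowers $i_E$ and by direct-limit maps. This is a pullback-consistency statement for $\Sigma^{\mad}$; while it should follow from the uniqueness of $(n,\omega_1,\omega_1+1)^*$-iteration strategies and the normalization theory cited in Section \ref{direct limit systems}, the delicate part is to check that the ultrapower (resp.\ direct limit) of the strategy premouse $\M^\T_\gamma[\Lambda_{\M^\T_\gamma}]$ carries \emph{precisely} the predicate coding $\Lambda_{\M^\T_{\beta+1}}$ (resp.\ $\Lambda_{\M^\T_\beta}$), rather than some a priori different fragment --- which requires attention to the indexing and coding conventions for the strategy predicate. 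The remaining ingredients --- agreement of tree orders and drop structure, the copy maps being identities, and wellfoundedness from genuineness of $\Sigma^{\mad}$ on the inverse-copied tree $\T$ --- are routine consequences of the standard (inverse) copying construction.
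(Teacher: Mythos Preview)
Your inductive approach is quite different from the paper's, which does not induct on $\beta$ at all. The paper fixes $R = \M^\T_\beta$, relativizes the entire direct-limit construction to $R$ (so $M_\infty$ and $M_\infty[*]$ now denote $R$'s versions), and applies the hull argument of Lemmas~\ref{lem: hull and range are the same} and~\ref{cor: the correct collapse} in this relativized setting: one gets $\Hull{R[\Lambda_R]}{n}(\ran(i_{\mad R})) \cap R = \ran(i_{\mad R})$, so the collapse of this hull is $\mad[\Lambda]$ and the uncollapse map $i^+_{\mad R}$ extends $i_{\mad R}$. Letting $E_{\mad R}$ be the $(\delta^{\mad},\delta^R)$-extender of $i_{\mad R}$, a factor-map argument then yields $R[\Lambda_R] = \fu{\mad[\Lambda]}{E_{\mad R}}{n}$ with $i^+_{\mad R}$ the ultrapower map. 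Since $E_{\mad R}$ is also the branch extender of $[0,\beta]_\U$, this identifies $\M^\U_\beta$ with $R[\Lambda_R]$ in one stroke; $\pi_\beta = \id$ then follows from $\pi_\beta \restriction \delta^R = \id$, $i^\T_{0\beta} \subseteq i^\U_{0\beta}$, and the generation $R = \Hull{R}{n}(\delta^R \cup i_{\mad R}[\bar S_\infty])$.

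The gap in your argument is precisely the step you flag as the main obstacle, and your proposed resolution is not sufficient. Pullback consistency of $\Sigma^{\mad}$ says that an iteration map $\pi \colon N \to P$ pulls the \emph{full external} tail strategy $\Sigma_P$ back to $\Sigma_N$; what you need is that the \emph{internally coded fragment} $\Lambda_N$ pushes forward under the ultrapower to $\Lambda_{\Ult{N}{E}}$, i.e., that for $\T' = [f]_E \in \Ult{N}{E}$ one has $[\xi \mapsto \Lambda_N(f(\xi))]_E = \Sigma_{\Ult{N}{E}}(\T')$. For $(\kappa,\kappa)$-maximal trees (where no $\Q$-structure lies in $\lx \vert \kappa$) this is not a formal consequence of uniqueness of $(n,\omega_1,\omega_1+1)^*$-strategies or of normalization; it is exactly the content that the paper's hull computation supplies. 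Indeed, the equality $R[\Lambda_R] = \fu{\mad[\Lambda]}{E_{\mad R}}{n}$ \emph{is} the statement that the ultrapower carries the correct strategy predicate, and the paper proves it by exhibiting $\mad[\Lambda]$ as a hull of $R[\Lambda_R]$ via Lemma~\ref{lem: hull and range are the same}. Without an argument of this kind your successor and limit steps do not close.
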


\prf
    Let $R = \M^\T_\beta$. Note that $R$ is a $\Sigma^{\mad}$-iterate of $\mad$. Since $M_\infty^\infty$ is a $\Sigma_2$ definable class of $M_\infty$, we may pull this definition back via $j$, so that every iterate $P$ of $\mad$ and $h$ that is $(P,\col(\omega, {< \mu^P}))$-generic has its own version of $M_\infty$ which we denote by $(M_\infty)^{P[h]}$.

    Let us write for the remaining proof $M_\infty = (M_\infty)^{R[h]}$ for some fixed $h$ which is $(R,\col(\omega, {< \mu^R}))$-generic and $M_\infty [\Sigma] = (M_\infty[*])^{R[h]}$.\footnote{Note that by the same argument as in the proof of Lemma \ref{strategy mouse}, $(M_\infty[*])^{R[h]} = (M_\infty)^{R[h]} [\Lambda_{(M_\infty)^{R[h]}}]$.} It is easy to see by the earlier proofs that $M_\infty$ is a $\Sigma^{\mad}$-iterate of $R$ and therefore a $\Sigma^{\mad}$-iterate of $\mad$. We have the following commuting diagram.
    \[
    \begin{tikzcd} 
        \mad \arrow[r, "i_{\mad R}"] \arrow[rdd, "i_{\mad M_\infty}"'] & R \arrow[dd, "i_{R M_\infty}"] \\
        &  \\
        & M_\infty
    \end{tikzcd}
    \]
    Let 
    \begin{align*}
        \begin{split}
            H^\infty_{\ad} &= \Hull{M_\infty [\Sigma]}{n} (\ran(i_{\mad {M_\infty}})), \\
            H^\infty_R &= \Hull{M_\infty [\Sigma]}{n} (\ran(i_{R M_\infty})), \text{ and} \\
            H^{R[\Lambda_R]}_{\ad} &= \Hull{R[\Lambda_R]}{n} (\ran(i_{\mad R})).
        \end{split}
    \end{align*}
    By the same argument as in the proof of Lemma \ref{lem: hull and range are the same}, we have 
    \begin{align*}
        \begin{split}
            H^\infty_{\ad} \cap \OR &= \ran(\pi_{\mad M_\infty}) \cap \OR \text{, and} \\
            H^\infty_R \cap \OR &= \ran(\pi_{R M_\infty}) \cap \OR,
        \end{split}
    \end{align*}
    so that
    \begin{align*}
        \begin{split}
            H^\infty_{\ad} \cap M_\infty &= \ran(i_{\mad M_\infty}), \\
            H^\infty_R \cap M_\infty &= \ran(i_{R M_\infty}),
        \end{split}
    \end{align*}
    and the transitive collapse of $H^\infty_{\ad}$ is $\mad[\Lambda]$, and the transitive collapse of $H^\infty_R$ is $R[\Lambda_R]$. Note that both their strategies lift to $M_\infty [\Sigma]$. By the commutativity of the maps, it follows that the transitive collapse of $H^{R[\Lambda_R]}_{\ad}$ is $\mad [\Lambda]$.

    Let $i^+_{\mad M_\infty} \colon \mad [\Lambda] \to M_\infty [\Sigma]$ be the inverse of the transitive collapse map. Note that $i_{\mad M_\infty} \subseteq i^+_{\mad M_\infty}$. Likewise, define $i^+_{R M_\infty}$ and $i^+_{\mad R}$. Let $E_{\ad \infty}$ be the $(\delta^{\mad},\delta^{M_\infty})$-extender derived from $\pi_{\mad M_\infty}$, $E_{R \infty}$ be the $(\delta^{R},\delta^{M_\infty})$-extender derived from $\pi_{R M_\infty}$, and $E_{\ad R}$ be the $(\delta^{\mad},\delta^{R})$-extender derived from $\pi_{\mad R}$. Then
    \[
        E_{\ad \infty} = E_{R \infty} \circ E_{\ad R}
    \]
    and
    \begin{align*}
        \begin{split}
            R &= \fu{\mad}{E_{\ad R}}n, \\
            M_\infty &= \fu{\mad}{E_{\ad \infty}}n = \fu{R}{E_{R \infty}}n,
        \end{split}
    \end{align*}
    and $i_{\mad R}$, $i_{\mad M_\infty}$, and $i_{R M_\infty}$ are the ultrapower maps. Note that the extender $E_{\ad \infty}$ can be applied to $\mad [\Lambda]$, since $V_{\delta^{\mad}}^{\mad} = V_{\delta^{\mad}}^{\mad [\Lambda]}$. Likewise for $E_{\ad R}$ and $E_{R \infty}$. The factor map $\rho \colon \fu{\mad[\Lambda]}{E_{\ad \infty}}n \to M_\infty [\Sigma]$ must be the identity, since $i_{\mad M_\infty} \subseteq i^+_{\mad M_\infty}$. The same holds for the other factor maps, so that 
        \begin{align*}
            \begin{split}
                M_\infty [\Sigma] &= \fu{\mad [\Lambda]}{E_{\ad \infty}}n \\
                M_\infty [\Sigma] &= \fu{R [\Lambda_R]}{E_{R \infty}}n, \\
                R [\Lambda_R] &= \fu{\mad [\Lambda]}{E_{\ad R}}n,
            \end{split}
        \end{align*}

    and $i^+_{\ad \infty}$, $i^+_{R \infty}$, and $i^+_{\ad R}$ are the ultrapower maps. However, $E_{\ad R}$ is the branch extender of $[0,\beta]_\U$ in $\U$, so that 
    \[
        \N^\U_\beta [\Lambda^\U_\beta] = \M^\U_\beta = \Ult{\mad[\Lambda]}{E_{\ad R}} = R[\Lambda_R]
    \]
    and $i^\U_{0\beta} = i^+_{\mad R}$ is the ultrapower map. Thus, $\N^\U_\beta = R = \M_\beta^\T$ and $\Lambda_\beta^\U = \Lambda_R = \Lambda_{\M^\T_\beta}$.

    Note that since $\pi_\beta \colon R \to \N^\U_\beta$ is the inverse copy map, we have $\pi_\beta \restriction \delta^R = \id$ and $i^\U_{0\beta} = \pi_\beta \circ i^\T_{0\beta}$. However, since
    \[
        i^\T_{0\beta} = i_{\mad R} \subseteq i^+_{\mad R} = i^\U_{0\beta},
    \]
    it follows that $\pi_\beta \restriction \ran(i^\T_{0\beta}) = \id$. But then
    \[
        \delta^{\N^\U_\beta} \cup \ran({i^\T_{0\beta}} \restriction \bar{S}_\infty) \subseteq \ran(\pi_\beta),
    \]
    so that by the argument of Lemma \ref{lem: S infty is generating}, $\ran(\pi_\beta) = R$, and so $\pi_\beta = \id$.
\eprf

\begin{lem}
    $\delta_\infty$ is Woodin in $M_\infty[\Sigma_0]= M_\infty[*]$.
\end{lem}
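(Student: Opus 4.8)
The plan is to verify the extender-reflection characterization of Woodinness directly in $W := M_\infty[*] = M_\infty[\Sigma_0]$: for every $A \subseteq \delta_\infty$ with $A \in W$ one must find $\kappa < \delta_\infty$ such that for all $\eta < \delta_\infty$ there is an extender $E$ with $\crt(E) = \kappa$, $j_E(\kappa) > \eta$, $\Ult(W,E)$ wellfounded, $V_\eta^W \subseteq \Ult(W,E)$, and $j_E(A) \cap \eta = A \cap \eta$. Since $M_\infty \models \tho$ and $\delta_\infty$ is its unique Woodin cardinal, $\delta_\infty$ is Woodin in $M_\infty$, so extenders on $\E^{M_\infty}$ with critical point below $\delta_\infty$ and the needed strongness are available; following \cite{schlutzenberg2021local}, the idea is to use such $E$ and to check that they also do the job over $W$.

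First I would record that an $E \in \E^{M_\infty}$ with $\crt(E) = \kappa < \delta_\infty$ and $\lh(E) < \delta_\infty$ acts transparently on $W$. The length-$2$ $n$-maximal tree $\T_E$ on $M_\infty$ given by $E$ lies in $M_\infty \vert \kappa_\infty$, and its last model $M' := \Ult(M_\infty,E)$ is a non-dropping iterate with $M'^- \in M_\infty \vert \kappa_\infty$, so $M' \in \F^*$. By the relativization to $M_\infty$ of Lemma \ref{lem: correct iterations with strategies} (inverse-copying $\T_E$ up to the strategy mouse via the identity map, exactly as there), $\Ult(W,E)$ is wellfounded and equals $M'[\Lambda_{M'}]$, which equals $M'[*']$ by the corresponding relativization of Lemma \ref{strategy mouse}, where $*'$ is the $*$-map of $M'$; moreover $j_E \restriction M_\infty = i^{\F^*}_{M_\infty M'}$ and $j_E(*) = *'$. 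That the $*$-predicate, and not merely the first-order structure, is transformed correctly is precisely where Lemma \ref{lem: definability of the star map} and Lemma \ref{lem: iteration maps and star agree} enter: the $*$-map is computed from the extender sequence in a way that commutes with $j_E$.

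For the reflection of $A$: using Lemma \ref{lem: definability of the star map} (and $W = M_\infty[* \restriction \theta]$), and reducing parameters above $\delta_\infty$ via $M_\infty = \Hull{M_\infty}{n}(\delta_\infty \cup S_\infty^*)$, the set $A$ is definable over $W$ from $* \restriction \theta$, an ordinal $\xi < \theta = \delta_\infty$, and a finite $t \in \fin{S_\infty^*}$. Since $\delta_\infty$ is Woodin in $M_\infty$, and since the restriction of $\Sigma_0$ to trees of length $<\delta_\infty$ based on $M_\infty \vert \delta_\infty$ is guided by $M_\infty$-internal $\Q$-structures (so the relevant short-tree information is a genuine element of $M_\infty$), one can fix $\kappa \in (\xi, \delta_\infty)$ that is $<\delta_\infty$-strong in $M_\infty$ with respect to that information. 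Given $\eta < \delta_\infty$, choose $E \in \E^{M_\infty}$ witnessing $\eta$-strongness of $\kappa$; then $\crt(j_E) = \kappa > \xi$ fixes $\xi$, and $j_E \restriction S_\infty^* = \id$ by the argument in the proof of Lemma \ref{direct limits agree 2} (as $M' \in \F^*$), so $j_E$ fixes $t$. Combining this with the transparency from the previous paragraph, one reads off $j_E(A) \cap \eta = A \cap \eta$ and $V_\eta^W \subseteq \Ult(W,E)$, and $j_E(\kappa) > \eta$; thus $E$ witnesses the required reflection. As $A$ and $\eta$ were arbitrary, $\delta_\infty$ is Woodin in $W = M_\infty[*] = M_\infty[\Sigma_0]$.

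The main obstacle is making the third paragraph rigorous: the extenders furnished by Woodinness of $\delta_\infty$ in $M_\infty$ must reflect $A$ even though $A$ need not belong to $M_\infty$, and $\Ult(W,E)$ must capture $V_\eta^W$ despite $W$ having more bounded subsets of $\delta_\infty$ than $M_\infty$. This is the heart of the argument, handled as in \cite{schlutzenberg2021local} by combining the $\Sigma_0$-internality of the short-tree strategy below $\delta_\infty$ with the fine-structural definability of the $*$-map from Lemma \ref{lem: definability of the star map}, so that the dependence of $A$ on $*$ is, below each $\eta < \delta_\infty$, under the control of the $M_\infty$-extender being applied.
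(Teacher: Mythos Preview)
Your approach is genuinely different from the paper's. The paper does not verify extender reflection directly; it argues by contradiction. Assuming $\delta_\infty$ is not Woodin in $M_\infty[\Sigma_0]$, there is a $\Q$-structure $Q \lhd M_\infty[\Sigma_0]$ killing Woodinness. One takes the normal tree $\T$ on $(\mad)^-$ with $\M(\T) = M_\infty\vert\delta_\infty$ (so $\T \in \lxg$) and its $\Sigma^{\mad}$-branch $b$, copies $\T$ to $\U$ on $\mad[\Lambda]$ via Lemma \ref{lem: correct iterations with strategies}, and pulls $Q$ back to $\bar{Q} \lhd \mad[\Lambda]$ with $i_b^\U(\bar{Q}) = Q$. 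In $\lxg\vert(\theta+\omega\cdot\omega)[h]$ (for a suitable $\col(\omega,\theta)$-generic $h$) one searches for pairs $(R,c)$ with $R$ a strategy premouse extending $\mad\vert\delta^{\mad}$, $\delta^{\mad}$ inaccessible in $\mathcal{J}(R)$, $c$ a $\T$-cofinal branch, and $i_c^\T(R) = Q$. The Zipper Lemma (run across two such putative pairs, which agree sufficiently below $\delta^{\mad}$) shows $(\bar{Q},b)$ is the unique solution, hence $\Delta_1$; Spector--Gandy plus homogeneity of $\col(\omega,\theta)$ then give $(\bar{Q},b) \in \lxg$. But $i_b^\T$ is continuous at $\delta^{\mad}$, so $b \in \lxg$ forces $\cof^{\lxg}(\theta) = \omega$, contradicting regularity of $\theta = \kappa^{+\lxg}$.

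The gap you flag in your third paragraph is exactly what the paper's argument sidesteps, and your diagnosis of it is slightly off. You write that $W$ has more bounded subsets of $\delta_\infty$ than $M_\infty$, but the paper uses (in the proof of Lemma \ref{cor: the correct collapse}) that $V_{\delta_\infty}^{M_\infty} = V_{\delta_\infty}^{M_\infty[\Sigma_0]}$: the restriction of $\Sigma_0$ to trees of length $<\delta_\infty$ is $\Q$-guided and hence definable over $M_\infty$, while the genuinely new content of $\Sigma_0$ consists only of cofinal branches through maximal trees of length $\delta_\infty$. Granting that equality, every $A \in \mathcal{P}(\delta_\infty) \cap W$ already lies in $M_\infty$, and then Woodinness in $M_\infty$ together with (the relativization to $M_\infty$ of) Lemma \ref{lem: correct iterations with strategies} does yield your reflection statement: for $E \in \E^{M_\infty}$ one gets $\Ult(W,E) = \Ult(M_\infty,E)[\Lambda_{\Ult(M_\infty,E)}]$ wellfounded with $j_E^W \restriction M_\infty = j_E^{M_\infty}$, and the $V_\eta$-agreement transfers. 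So your outline is completable, but only after first isolating and proving $V_{\delta_\infty}^{M_\infty} = V_{\delta_\infty}^W$, which you neither state nor use; without it your reduction of $A$ to parameters $(\ast\restriction\theta,\xi,t)$ does not give a single $\kappa$ that works for all $\eta$. The paper's contradiction route avoids this bookkeeping entirely, trading it for one $\Delta_1$-definability computation and a cofinality argument.
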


\prf
    Suppose for the sake of contradiction that $\delta_\infty$ is not Woodin in $M_\infty [\Sigma_0]$. We arrange $M_\infty [\Sigma_0]$ as a fine structural strategy premouse. Let $Q \lhd M_\infty [\Sigma_0]$ be the Q-structure witnessing that $\delta_\infty$ is not Woodin. Since $M_\infty$ is a normal iterate of $\mad$, there is a limit length $n$-maximal tree $\T$ on $({\mad})^-$ such that $\M(\T) = M_\infty \vert \delta_\infty$. Since $\T$ is definable from $({\mad})^-$ and $M_\infty \vert \delta_\infty$, $\T$ is definable over $\lxg \vert \theta$ and therefore $\T \in \lxg$. Let $b = \Sigma(\T)$ be the unique cofinal wellfounded branch of $\T$. Let $\U$ be the copy of $\T$ on $\mad [\Lambda]$ via the copy map $\pi_0 \colon \mad \to \mad [\Lambda]$. By Lemma \ref{lem: correct iterations with strategies}, $\U$ is the iteration tree that leads from $\mad [\Lambda]$ to $M_\infty [\Sigma_0]$. Note that we assume $\mad [\Lambda]$ to be arranged as a fine structural strategy premouse in this context. We have $i_b^\U (\bar{Q}) = Q$, where $\bar{Q} \lhd \mad [\Lambda]$ is the Q-structure witnessing that $\delta^{\mad}$ is not Woodin in $\mad [\Lambda]$.

    Let $h$ be sufficiently $(\lxg, \col(\omega,\theta))$-generic, so that $\lxg [h]$ is a model of $\Sigma_n$-KP.
    Over $\lxg \vert (\theta + \omega \cdot \omega) [h]$ we might define a tree searching for a pair $(R,c)$ such that
    \begin{enumerate}
        \item $R$ is a strategy premouse extending $\mad \vert \delta^{\mad}$,
        \item $\delta^{\mad}$ is inaccessible in $\mathcal{J} (R)$,
        \item $c$ is a non-dropping $\T$-cofinal branch, and
        \item considering $\T$ as a tree on $\mathcal{J} (R)$, then $i_c^\T (R) = Q$.
    \end{enumerate}
    Note that the pair $(\bar{Q},b)$ witnesses that there is a branch through this tree.

    We claim that $(\bar{Q},b)$ is the unique such witness: For, suppose that $(R,c)$ and $(R^\prime,c^\prime)$ are given by branches of this tree. Then we may consider $\T$ as a tree $\U$ on $\mathcal{J}(R)$ and $\T$ as a tree $\U^\prime$ on $\mathcal{J}(R^\prime)$. Since $\mathcal{J}(R)$ and $\mathcal{J}(R^\prime)$ agree below $\delta^{\mad}$, $\U,\U^\prime$ are based below $\delta^{\mad}$, and $\delta^{\mad}$ is inaccessible in $\mathcal{J}(R)$ and $\mathcal{J}(R^\prime)$, we have enough agreement between the models of $\U$ and $\U^\prime$ in order to run the proof of the Zipper Lemma, even though the trees are not based on the same model. This means that if $(R,c) \neq (R^\prime,c^\prime)$, then $\mad \vert \delta^{\mad} \models \exists \delta (``\delta \text{ is Woodin}")$, which is a contradiction!

    This shows that $(\bar{Q},b)$ is a $\Delta_1$ definable real over $\lxg [h]$. But then by the Spector-Gandy theorem, $(\bar{Q},b) \in \lxg [h]$. Since $\col(\omega, \theta)$ is a homogeneous forcing, it follows that $(\bar{Q},b) \in \lxg$. But $i_b^\T$ is continuous at $\delta^{\mad}$, so that the cofinality of $\delta_\infty = \theta$ is countable in $\lxg$, a contradiction!
\eprf

\begin{cor}
    $\delta_\infty$ is Woodin in $M_\infty[\Lambda]$.
\end{cor}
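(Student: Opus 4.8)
\prf
    The plan is to obtain the corollary directly from the preceding lemma, the only real work being to reconcile the notations $M_\infty[\Sigma_0]$ and $M_\infty[\Lambda]$. First, recall that $\Lambda = \Lambda_{\mad} = \Sigma_0$ by definition, and that Lemma \ref{strategy mouse} gives $M_\infty[\Sigma_0] = M_\infty[*] = L_\alpha[\E^{M_\infty},*]$. Under the literal reading, $M_\infty[\Lambda]$ is therefore the very structure $M_\infty[\Sigma_0] = M_\infty[*]$, and the preceding lemma already states that $\delta_\infty$ is Woodin in it; so there is nothing to do beyond invoking that lemma.

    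If instead one reads $M_\infty[\Lambda]$ as $M_\infty$ equipped with the relevant fragment of $\Sigma^{\mad}$ regarded as a strategy for $\mad$ rather than for $M_\infty$, the plan is to check $L_\alpha[\E^{M_\infty},\Lambda] = L_\alpha[\E^{M_\infty},\Sigma_0]$ and then quote the lemma verbatim. For this I would use the machinery of Lemma \ref{lem: correct iterations with strategies}: the tree $\U$ obtained by inverse-copying, via $\pi_0 = \id$, the tree $\T$ on $\mad$ with $\M(\T) = M_\infty \vert \delta_\infty$ leads from $\mad[\Lambda]$ to $M_\infty[\Sigma_0]$, and $M_\infty[\Sigma_0] = \fu{\mad[\Lambda]}{E_{\ad\infty}}{n}$, where $E_{\ad\infty}$ is the $(\delta^{\mad},\delta_\infty)$-extender derived from $j = i_{\mad\infty}$. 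Pulling the definition of the $\Sigma_0$-style strategy back along $j$ (equivalently, along $E_{\ad\infty}$) then exhibits $\Lambda$ as $\Sigma_1$-definable over $M_\infty[\Sigma_0]$ from the parameter $* \restriction \theta$, which belongs to $M_\infty[\Sigma_0]$ by Lemma \ref{lem: definability of the star map}; conversely $\Sigma_0$ is recovered from $\Lambda$ by running the direct limit construction of Section \ref{direct limit systems} internally to $M_\infty[\Lambda]$. Hence the two hulls $L_\alpha[\E^{M_\infty},\cdot]$ coincide, and the previous lemma applies unchanged.

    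The only genuine obstacle is bookkeeping --- pinning down exactly which strategy predicate is adjoined to $M_\infty$ in the statement. Once that is fixed nothing remains to be proved, since the Woodinness of $\delta_\infty$ in the structure in question is precisely the content of the previous lemma. I would therefore keep the written proof to a line or two, citing the preceding lemma together with Lemma \ref{strategy mouse} (and, for the second reading, Lemma \ref{lem: correct iterations with strategies}).
\eprf
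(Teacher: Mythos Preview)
Your proposal is correct and matches the paper's approach: the paper states the corollary with no proof at all, since by the definition ``Let $\Lambda = \Lambda_{\mad} = \Sigma_0$'' the structure $M_\infty[\Lambda]$ is literally $M_\infty[\Sigma_0]$, and the preceding lemma already gives Woodinness there. Your first paragraph captures exactly this; the alternative reading you explore in the second paragraph is unnecessary here (the paper does not pursue it), though your sketch for handling it is reasonable.
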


\section{$M_\infty[*]$ is a ground of $\lx$} \label{end of the finite cases} \label{section: ground}

We show that $M_\infty[*]$ is a ground of $\lx$. The argument we give is closely related to the argument for $L[x]$ in \cite{schlutzenberg2021local}, which is due to Schindler.

\begin{dfn}
    Let $\mathcal{L}$ be the infinitary Boolean language, given by starting with a collection $\{ v_n \}_{n < \omega} \in M_\infty [*] \vert \delta_\infty$ of propositional variables, and closing under negation and arbitrarily set-sized disjunctions in $M_\infty [*] \vert \delta_\infty$, so that $\mathcal{L}$ is a definable class of $M_\infty [x] \vert \delta_\infty$ and $\mathcal{L} \in L_1 (M_\infty [*] \vert \delta_\infty)$. Let $\mathbb{C}$ be the subalgebra of $\mathbb{B} = \mathbb{B}_\omega^{M_\infty}$ such that 
    \[
        \mathbb{C} = \{ \Vert k(\varphi) \Vert_{\mathbb{B}} \mid \varphi \in \mathcal{L} \},
    \]
    where $\Vert \psi \Vert_{\mathbb{B}}$ denotes the Boolean value of $\psi$ with respect to $\mathbb{B}$, and we interpret $\langle v_n \rangle_{n<\omega}$ as the generic real for $\mathbb{B}$.
\end{dfn}

Since $V_{\delta_\infty}^{M_\infty} = V_{\delta_\infty}^{M_\infty [*]}$ and $\delta_\infty$ is a Woodin cardinal in $M_\infty [*]$, $\mathbb{B}$ is a Boolean algebra with the $\delta_\infty$-c.c.~in $M_\infty [*]$. Thus, $\mathbb{C}$ is well-defined. We have $\mathbb{C} \in \lxg$. Note that for all $\varphi \in \mathcal{L}$, $\Vert k(\varphi) \Vert_{\mathbb{B}}^{M_\infty} = \Vert k(\varphi) \Vert_{\mathbb{B}}^{M_\infty [*]}$, so $\mathbb{C} \subseteq \delta_\infty$.

\begin{lem}
    $x$ is $(M_\infty [*],\mathbb{C})$-generic in the sense that 
    \[
        G_x = \{ \Vert k(\varphi) \Vert_{\mathbb{B}} \mid \varphi \in \mathcal{L} \land x \models \varphi \}
    \]
    is $(M_\infty [*],\mathbb{C})$-generic, and $M_\infty [*] [G_x] = \lxg$.
\end{lem}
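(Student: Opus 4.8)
The plan is to verify the two assertions in turn: that the filter $G_x$ is $(M_\infty[*],\mathbb{C})$-generic, and that $M_\infty[*][G_x]=\lx$.

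For the genericity, the starting point is that $x$ is $(M_\infty,\mathbb{B})$-generic, where $\mathbb{B}=\mathbb{B}_\omega^{M_\infty}$. This is built into the construction: $M_\infty\vert\delta_\infty=\M(\T)$ for a genericity iteration tree $\T$ (exactly as in the proof that $\delta_\infty$ is Woodin in $M_\infty[\Sigma_0]$), so the extender algebra theorem gives that $x$ is $(\M(\T),\mathbb{B})$-generic, and since $M_\infty\vert\delta_\infty=\M(\T)$ and $\mathbb{B}$ has the $\delta_\infty$-c.c.\ in $M_\infty$, also $x$ is $(M_\infty,\mathbb{B})$-generic. Because $V_{\delta_\infty}^{M_\infty}=V_{\delta_\infty}^{M_\infty[*]}$ and $\mathbb{B}$ retains the $\delta_\infty$-c.c.\ in $M_\infty[*]$, one then gets that $x$ is $(M_\infty[*],\mathbb{B})$-generic; let $G$ be the induced $\mathbb{B}$-generic filter over $M_\infty[*]$. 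Next, by the elementarity of $k$ the map $\varphi\mapsto\Vert k(\varphi)\Vert_{\mathbb{B}}$ is a homomorphism of $\mathcal{L}$ (modulo logical equivalence) onto $\mathbb{C}$, so $\mathbb{C}$ is a complete subalgebra of $\mathbb{B}$, and remains complete as computed in $M_\infty[*]$ by the $\delta_\infty$-c.c.; hence $G\cap\mathbb{C}$ is $(M_\infty[*],\mathbb{C})$-generic. Finally I would check that $G_x=G\cap\mathbb{C}$, which is precisely the statement that for each $\varphi\in\mathcal{L}$ one has $x\models\varphi$ iff $\Vert k(\varphi)\Vert_{\mathbb{B}}\in G$ — the defining property of the genericity iteration witnessing $(M_\infty,\mathbb{B})$-genericity of $x$. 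This yields the first assertion.

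For the identification of $M_\infty[*][G_x]$ with $\lx$, I would argue two inclusions. For $M_\infty[*][G_x]\subseteq\lx$: by Lemma \ref{lem: definability of the direct limit} and Lemma \ref{lem: definability of the star map}, $M_\infty[*]$ is a $\Sigma_1$-definable class of $\lx$ in the parameters $\R^{\lx}$ and $\theta$ with every proper initial segment an element of $\lx$, and $G_x$ is definable over $\lx$ from $x$, $\mathbb{B}$ and ${*}\restriction\theta$, hence $G_x\in\lx$; since $M_\infty[*][G_x]=L_\alpha[\E^{M_\infty},*,G_x]$ this is contained in $\lx$. For the reverse inclusion: $x$ is recovered from $G_x$, reading off the truth values of the propositional variables via $\langle\Vert k(\dot v_n)\Vert_{\mathbb{B}}\rangle_{n<\omega}\in M_\infty[*]$, so $x\in M_\infty[*][G_x]$; and since $\mathbb{C}\subseteq\delta_\infty<\alpha$, forcing with $\mathbb{C}$ adds no ordinals, so $M_\infty[*][G_x]$ has height $\alpha$, whence $\lx=L_\alpha[x]\subseteq M_\infty[*][G_x]$. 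Combining the inclusions gives $M_\infty[*][G_x]=\lx$.

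The hard part will be the genericity half, and within it the verification that $\mathbb{C}$ — defined through $k$ and the Boolean values in $\mathbb{B}$ — is genuinely a complete subalgebra of $\mathbb{B}$ \emph{as computed inside} $M_\infty[*]$, together with the identity $G_x=G\cap\mathbb{C}$; both require a careful use of the agreement $V_{\delta_\infty}^{M_\infty}=V_{\delta_\infty}^{M_\infty[*]}$, of the $\delta_\infty$-c.c., and of the combinatorics of the genericity iteration relating satisfaction of infinitary formulas by $x$ to membership in the $\mathbb{B}$-generic. The two inclusions establishing $M_\infty[*][G_x]=\lx$ are by contrast routine once the definability results of the preceding sections are available.
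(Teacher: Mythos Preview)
Your genericity argument rests on the claim that $x$ is $(M_\infty,\mathbb{B})$-generic, justified by asserting that $M_\infty\vert\delta_\infty=\M(\T)$ for a genericity iteration tree $\T$. This is a genuine gap: while $M_\infty\vert\delta_\infty$ is indeed $\M(\T)$ for \emph{some} normal tree $\T$ on $(\mad)^-$ (as used in the Woodinness proof), that tree is the direct-limit tree, not an $x$-genericity iteration. The elements of $\F$ are individually genericity iterates making $x$ generic, but their direct limit $M_\infty$ is not one, and there is no reason for $x$ to be $(M_\infty,\mathbb{B}^{M_\infty})$-generic. This is precisely why the $k$-shifted subalgebra $\mathbb{C}$ is introduced: if $\mathbb{B}$ itself worked there would be no need for $\mathbb{C}$. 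Consequently your downstream steps (that $\mathbb{C}$ is a complete subalgebra and that $G_x=G\cap\mathbb{C}$ via ``the defining property of the genericity iteration'') are unsupported.

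The paper's proof proceeds differently. It verifies genericity directly by showing that $G_x$ meets every maximal antichain $\langle\Vert k(\varphi_\alpha)\Vert_{\mathbb{B}}\rangle_{\alpha<\lambda}$ of $\mathbb{C}$ lying in $M_\infty[*]$: form $\psi=\bigvee_\alpha\varphi_\alpha\in\mathcal{L}$ (using $\lambda<\delta_\infty$ by the chain condition and inaccessibility of $\delta_\infty$), and argue $x\models\psi$. If instead $x\models\neg\psi$, choose $N\in\F$ which is $\{\alpha_\psi\}$-stable (where $\alpha_\psi$ is the constructibility rank of $\psi$ in $M_\infty$); genericity of $x$ over $N$ gives $\Vert\neg\psi\Vert_{\mathbb{B}^N}\neq 0$, and elementarity then yields $\Vert k(\neg\psi)\Vert_{\mathbb{B}}\neq 0$, a nonzero element of $\mathbb{C}$ incompatible with every $\Vert k(\varphi_\alpha)\Vert_{\mathbb{B}}$, contradicting maximality. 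The essential point is that one only ever invokes genericity of $x$ over the individual $N\in\F$, transferring conclusions to $M_\infty$ via stability and $k$ --- never genericity over $M_\infty$ itself.

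Your argument for the identification $M_\infty[*][G_x]=L_\alpha[x]$ via the two inclusions is along standard lines; the paper in fact leaves this part implicit.
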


\prf
    It is easy to see that $G_x$ is a filter. In order to see genericity, let $\langle \varphi_\alpha \rangle_{\alpha < \lambda} \in M_\infty [*]$ be such that $\langle \Vert k(\varphi_\alpha) \Vert_{\mathbb{B}} \rangle_{\alpha < \lambda}$ is a maximal antichain of $\mathbb{C}$. Since $\delta_\infty$ is Woodin in $M_\infty [*]$, $\lambda < \delta_\infty$. Let $\psi = \bigvee_{\alpha < \lambda} \varphi_{\alpha}$, and note that $\varphi \in \mathcal{L}$, since $\delta_\infty$ is inaccessible, so that $\langle \varphi_\alpha \rangle_{\alpha < \lambda}$ cannot be cofinal in $\delta_\infty$, and $\Vert \psi \Vert_{\mathbb{B}} = \bigvee_{\alpha < \lambda} \Vert \varphi_\alpha \Vert_{\mathbb{B}}$. It suffices to see that $x \models \psi$. Suppose for the sake of contradiction that $x \models \neg \psi$. Let $\alpha_\psi = \rk_{<^{M_\infty}} (\psi)$ be the rank of $\psi$ in the order of constructability of $M_\infty$ and let $N \in \F$ be  $\{ \alpha_\psi \}$-stable. It follows that $N \models \Vert \neg \psi \Vert_{\mathbb{B}^N} \neq 0$. Thus, $M_\infty \models \Vert k( \neg \psi) \Vert_{\mathbb{B}} \neq 0$ and so $\Vert k( \neg \psi) \Vert_{\mathbb{B}} \in \mathbb{C}$ is a nonzero condition. But then it is easy to see that $\Vert k(\neg \psi) \Vert_{\mathbb{B}} \bot \Vert k(\varphi_\alpha) \Vert_{\mathbb{B}}$ for all $\alpha < \lambda$. This contradicts the maximality of $\langle \varphi_\alpha \rangle_{\alpha < \lambda}$!
\eprf

\section{$\Sigma_n$-HOD} \label{hod}

\setcounter{claimcounter}{0}

Finally, we aim to characterize $\Sigma_n$-HOD. Note that just as in the classical analysis of $\hod^{L[x,G]}$, we can only characterize the $\Sigma_n$-HOD of $\lxg$ and not of $\lx$. This is for the same reasons as outlined on page 267 of \cite{Steel_Woodin_2016} in the $L[x]$ case. 
Let us fix $G$ which is $(\lx,\col(\omega,{<\kappa}))$-generic and let us write $\Sigma_n \text{-} \hod_{\{\R\}}$ for $\Sigma_n \text{-} \hod_{\{\R^{\lxg}\}}^{\lxg}$ and $\Sigma_n \text{-} \od_{\{\R\}}$ for $\Sigma_n \text{-} \od_{\{\R^{\lxg}\}}^{\lxg}$.
We will work in this last section only the straightforward adaption of the contents of Section \ref{direct limit systems} through Section \ref{M infinitys version} to the context of $\lxg$, i.e.~$\tilde{\D}$ and its related models denote the direct limit systems and their limits computed in $\lxg$.

\begin{lem} \label{lem: sigma_n hod is almost admissible}
    $\Sigma_n \text{-} \hod_{\{\R\}} \models \Sigma_n \text{-}\kp \setminus \{ \Sigma_n \text{-Collection} \} + AC$
\end{lem}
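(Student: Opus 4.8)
The plan is to verify each axiom of $\Sigma_n\text{-}\kp$ (other than $\Sigma_n$-Collection) together with the Axiom of Choice directly in $H := \Sigma_n\text{-}\hod_{\{\R\}}$, using the characterization $H = M_\infty[\Sigma_0]$ from the main theorem, or rather working directly from the definition of $\Sigma_n\text{-}\hod$ as a transitive subclass of $\lxg$. First I would note that $H$ is transitive by construction (it is the class of $y$ with $\tc(\{y\}) \subseteq \Sigma_n\text{-}\od_{\{\R\}}$), so Extensionality and Foundation are inherited from $\lxg$ automatically. Pairing and Union are straightforward: if $a,b \in H$ then $\{a,b\}$ and $\bigcup a$ are $\Sigma_n$-definable over $\lxg$ from $a,b$ (which lie in $\Sigma_n\text{-}\od_{\{\R\}}$, hence are themselves ordinal definable from $\{\R\}$ by a $\Sigma_n$ formula), so they lie in $\Sigma_n\text{-}\od_{\{\R\}}$, and their transitive closures are contained in $H$ since $H$ is transitive. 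Infinity holds because $\omega \in H$: $\omega$ is $\Sigma_0$-definable outright.

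For $\Delta_n$-Aussonderung I would argue as follows. Suppose $a \in H$ and $\varphi,\psi$ are $\Sigma_n$ formulas in $\mathcal{L}_{\dot\in}$ with parameters from $H$ such that $H \models \forall z(\varphi(z) \leftrightarrow \neg\psi(z))$, and set $b = \{z \in a : H \models \varphi(z)\}$. The key point is that ``$z \in \Sigma_n\text{-}\od_{\{\R\}}$'' and hence ``$z \in H$'' is itself expressible over $\lxg$ without too much complexity — in fact, by the main theorem $H = M_\infty[\Sigma_0]$ and by Lemma~\ref{lem: definability of the direct limit} (and its adaptation to $\lxg$), $M_\infty$ and the relevant attendant structure are $\Sigma_1$- (indeed $\Sigma_2$-) definable over $\lxg$ from $\theta$ and $\R^{\lxg}$; so quantifiers relativized to $H$ only cost a bounded amount of complexity over $\lxg$. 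Using this, the relation ``$H \models \varphi(z)$'' is $\Sigma_n$ over $\lxg$ in the parameters $a$, the parameters of $\varphi$, and $\R^{\lxg}$, and likewise ``$H \models \psi(z)$'' is $\Sigma_n$; since the two are complementary on all of $\lxg$ (not just on $H$ — here one must check that the $\Delta_n$-ness genuinely transfers, which follows because $H$ is a $\Sigma_n$-substructure-like object in the relevant sense, or more simply because the disjunction/conjunction manipulations go through), $b$ is in fact $\Delta_n$-definable over $\lxg$ from ordinal parameters and $\{\R\}$, hence $b \in \Sigma_n\text{-}\od_{\{\R\}}$, and $\tc(\{b\}) \subseteq \tc(\{a\}) \cup \{b\} \subseteq H$, so $b \in H$.

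For the Axiom of Choice: by the main theorem $H = M_\infty[\Sigma_0] = M_\infty[*] = L_\alpha[\E^{M_\infty},*]$, which is a fine-structural $L$-like model, so it has a definable wellorder of the universe and hence satisfies $AC$ (indeed global choice). Alternatively, one can observe directly that $\lxg$ has a $\Sigma_n$-good wellorder and that its restriction to $H$ is $\Sigma_n$-definable over $\lxg$ from $\{\R\}$, giving choice inside $H$.

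\textbf{Main obstacle.} The delicate step is the complexity bookkeeping for $\Delta_n$-Aussonderung: one must be careful that relativizing a $\Sigma_n$ formula to $H$ stays $\Sigma_n$ over $\lxg$ (exploiting that $H$, equivalently $M_\infty[*]$, is low-level definable over $\lxg$, so that ``$x \in H$'' is absorbed without pushing the complexity past $\Sigma_n$), and, crucially, that the hypothesis ``$\varphi$ and $\neg\psi$ agree on $H$'' upgrades to genuine $\Delta_n$-definability of $b$ \emph{over $\lxg$}. This requires knowing that $H$ reflects enough of $\lxg$ — concretely, that for the formulas in question $H \models \chi(z) \iff \lxg \models \chi^H(z)$ and that the $H$-relativization preserves the logical form — which is where the identification $H = M_\infty[*]$ and its definability (Lemma~\ref{lem: definability of the direct limit}, adapted to $\lxg$, plus Lemma~\ref{strategy mouse}) does the real work. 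Everything else is routine closure-under-definable-operations, using transitivity of $H$ throughout.
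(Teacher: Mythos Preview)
Your approach has a structural problem: it is circular. You repeatedly invoke the identification $H=\Sigma_n\text{-}\hod_{\{\R\}}=M_\infty[\Sigma_0]=M_\infty[*]$ (both for the complexity of ``$z\in H$'' in the Aussonderung argument and for AC via the $L$-like structure of $M_\infty[*]$), but in the paper this identification is established \emph{after} the present lemma and \emph{uses} it. Concretely, the proofs that $V_{\delta_\infty}^{H}=V_{\delta_\infty}^{M_\infty}$ and that $H=M_\infty[*]$ both reduce an arbitrary element of $H$ to a bounded set of ordinals by appealing to the wellorder $\leq^{***}$ constructed in the proof of the present lemma. So you cannot assume the characterization here; the lemma must be proved directly from the definition of $\Sigma_n\text{-}\hod_{\{\R\}}$.

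For AC the paper does this by stratifying $\Sigma_n\text{-}\od_{\{\R\}}$ as $\bigcup_{\beta<\alpha}\Sigma_n\text{-}\od_{\beta,\{\R\}}$, where the $\beta$-th layer consists of sets $\Sigma_n$-definable over $\lxg\vert\beta$ from ordinals and $\R^{\lxg}$. Two claims, using $\Sigma_n$-Collection in the ambient $\lxg$ (not in $H$), show that this union exhausts $\Sigma_n\text{-}\od_{\{\R\}}$ and likewise for $\Sigma_n\text{-}\hod_{\{\R\}}$. One then wellorders each layer by the least defining pair (formula, ordinal tuple) and concatenates along $\beta$ to obtain $\leq^{***}$; the point is that the restriction of $\leq^{***}$ to each $\Sigma_n\text{-}\hod_{\beta,\{\R\}}$ is itself in $\Sigma_n\text{-}\hod_{\{\R\}}$. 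Your alternative ``restrict a $\Sigma_n$-good wellorder of $\lxg$ to $H$'' points in the right direction, but without this stratification you have no argument that the restrictions land in $H$. For Aussonderung the paper does not attempt your complexity bookkeeping for relativized quantifiers at all; it argues directly that the relevant set is $\Sigma_n\text{-}\od_{\{\R\}}$ by combining the $\Sigma_n$ definitions of the parameters with the given formula, bypassing the issue you flag as the main obstacle.
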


\prf
    Note that for $b \in a \in \Sigma_n \text{-} \hod_{\{\R\}}$, $\tc(\{ b \}) \subseteq \tc (\{ a \})$. Thus, $\Sigma_n \text{-} \hod_{\{\R\}}$ is transitive, and the Axiom of Extensionality and the Axiom of Foundation hold trivially. Clearly, $\emptyset \in \Sigma_n \text{-} \hod_{\{\R\}}$. Moreover, it is easy to see that the Axiom of Pairing and the Axiom of Union hold in $\Sigma_n \text{-} \hod_{\{\R\}}$.
    
    Let us verify the Axiom of $\Sigma_{n-1}$-Aussonderung. Let $a \in \Sigma_n \text{-} \hod_{\{\R\}}$, $\varphi \in \la$  be $\Sigma_{n-1}$, and $p \in \Sigma_n \text{-} \hod_{\{\R\}}$. We aim to see that 
    \[
        b:= \{ u \in a : \Sigma_n \text{-} \hod_{\{\R\}} \models \varphi(u,p)\} \in \Sigma_n \text{-} \hod_{\{\R\}}.
    \]
    Note that since $\tc(b) \subseteq \tc(a) \subseteq \Sigma_n \text{-} \hod_{\{\R\}}$ it suffices to see that $b \in \Sigma_n \text{-} \od_{\{\R\}}$. However, since $a \in \Sigma_n \text{-} \od_{\{\R\}}$, there is a $\Sigma_n$ formula $\psi \in \mathcal{L}_{\dot{\in}}$ and $q \in \fin{\alpha}$ that define $a$ via $\R^{\lxg}$ and likewise $\psi^\prime$ and $q^\prime \in \fin{\alpha}$ that define $\{p\}$ via $\R^{\lxg}$. Thus, $\exists z (\psi(u,q,\R^{\lxg}) \land \varphi(u,z,\R^{\lxg}) \land \psi^\prime(z,q^\prime,\R^{\lxg}))$ defines $b$.

    It remains to see that the Axiom of Choice holds in $\Sigma_n \text{-} \hod_{\{\R\}}$. Let $S := S^{\lxg}_{n-1} = \{ \beta < \alpha : \lxg \vert \beta \prec_{\Sigma_{n-1}} \lxg \}$. For $\beta < \alpha$, let $\Sigma_n \text{-} \od_{\beta,\{\R\}}$ be the class of all $y \in \lxg \vert \beta$ that are ordinal definable over $\lxg \vert \beta$ via a $\Sigma_n$ formula in the language $\mathcal{L}_{\dot{\in}}$ and the parameter $\R^{\lxg}$.

    \begin{claim}
        $\Sigma_n \text{-}\od_{\{\R\}}  = \bigcup_{\beta < \alpha}  \Sigma_n \text{-} \od_{\beta,\{\R\}}$.
    \end{claim}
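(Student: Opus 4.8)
The plan is to prove the two inclusions of the claimed equality separately. The inclusion $\bigcup_{\beta<\alpha}\Sigma_n\text{-}\od_{\beta,\{\R\}}\subseteq\Sigma_n\text{-}\od_{\{\R\}}$ is the routine direction; the reverse inclusion is where the reflecting ordinals $S=S^{\lxg}_{n-1}$ (just introduced, and cofinal in $\alpha$ since $\lxg$ is $\Sigma_n$-admissible with largest cardinal $\theta$) are used. Throughout I would work with the singleton reformulation of ordinal definability supplied by the Remark following the definition of $\Sigma_n\text{-}\od^{\lxg}_{\{X\}}$, i.e.\ identify $y\in\Sigma_n\text{-}\od_{\{\R\}}$ with ``$\{y\}$ is $\Sigma_n$-definable over $\lxg$ from ordinals and $\R^{\lxg}$'', and similarly for $\lxg\vert\beta$.

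For $\supseteq$, suppose $y\in\lxg\vert\beta$ and $\varphi$ is a $\Sigma_n$ formula, $\vec\gamma<\beta$, with $\{y\}$ defined over $\lxg\vert\beta$ by $\varphi(\cdot,\vec\gamma,\R^{\lxg})$. First I would invoke the standard fact that for each fixed $\Sigma_k$ formula $\psi$ the relation ``$\lxg\vert\eta\models\psi(\vec a)$'' is $\Sigma_k$ over $\lxg$ uniformly in $(\eta,\vec a)$, and dually $\Pi_k$ for $\Pi_k$ formulas; this is proved by induction on the Levy hierarchy, using only that $\lxg\vert\eta$ is uniformly $\Delta_1$-definable from $\eta$ over $\lxg$ (no admissibility needed). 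Granting this, ``$z\in\lxg\vert\beta\wedge\lxg\vert\beta\models\varphi(z,\vec\gamma,\R^{\lxg})$'' is a $\Sigma_n$ condition on $z$ over $\lxg$ with ordinal parameters $\beta,\vec\gamma<\alpha$ and parameter $\R^{\lxg}$, and it defines $\{y\}$; hence $y\in\Sigma_n\text{-}\od_{\{\R\}}$.

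For $\subseteq$, suppose $\{y\}$ is defined over $\lxg$ by a $\Sigma_n$ formula $\varphi\equiv\exists u\,\psi$ with $\psi$ being $\Pi_{n-1}$, from ordinals $\vec\gamma<\alpha$ and $\R^{\lxg}$. I would pick $\beta\in S$ large enough that $y$, $\vec\gamma$, $\R^{\lxg}$, and some witness $u_0$ with $\lxg\models\psi(y,u_0,\vec\gamma,\R^{\lxg})$ all lie in $\lxg\vert\beta$. Since $\lxg\vert\beta\prec_{\Sigma_{n-1}}\lxg$, the $\Pi_{n-1}$ formula $\psi$ is absolute between $\lxg\vert\beta$ and $\lxg$ for parameters in $\lxg\vert\beta$. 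This yields, on the one hand, $\lxg\vert\beta\models\varphi(y,\vec\gamma,\R^{\lxg})$, and on the other hand that any $z\in\lxg\vert\beta$ with $\lxg\vert\beta\models\varphi(z,\vec\gamma,\R^{\lxg})$ also satisfies $\lxg\models\varphi(z,\vec\gamma,\R^{\lxg})$, hence $z=y$ by the choice of $\varphi$. Thus $\varphi$ defines $\{y\}$ over $\lxg\vert\beta$ from $\vec\gamma<\beta$ and $\R^{\lxg}$, so $y\in\Sigma_n\text{-}\od_{\beta,\{\R\}}$.

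The only genuinely delicate point is the complexity bookkeeping in the first inclusion: one must check that the relativized satisfaction relation stays exactly $\Sigma_n$ rather than acquiring an extra quantifier. The key observations that make this go through are that $\lxg\vert\eta$ is $\Delta_1$ over $\lxg$ uniformly in $\eta$, and that for $n\ge 2$ a $\Delta_1$ matrix is absorbed into $\Pi_{n-1}$, so the leading existential quantifier of $\varphi$ keeps the relativization $\Sigma_n$; for $n=1$ it is immediate since $\Delta_0$ truth is absolute between transitive sets (and $S^{\lxg}_0=\alpha$). Beyond this I do not anticipate any real obstacle.
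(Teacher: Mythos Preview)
Your proof is correct and follows essentially the same route as the paper: both directions hinge on the reflecting set $S=S^{\lxg}_{n-1}$ and $\Pi_{n-1}$-absoluteness between $\lxg\vert\beta$ and $\lxg$ for $\beta\in S$. The only noteworthy difference is where $\Sigma_n$-Collection enters: the paper works with the membership formulation of $\Sigma_n\text{-}\od$, so for $\subseteq$ it must bound witnesses for \emph{all} elements of $A$ simultaneously and invokes $\Sigma_n$-Collection explicitly to get a function $f$ (assigning witnesses) into some $\lxg\vert\beta$ with $\beta\in S$; you instead switch to the singleton formulation up front (already using $\Sigma_n$-Collection via the Remark), so only a single witness $u_0$ is needed and the reflection step is immediate. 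Your treatment of $\supseteq$ is also more explicit than the paper's, which simply declares that direction ``clear''; your complexity bookkeeping for relativized satisfaction is correct and is indeed the content behind that word.
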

    \prf
    It is clear that $\Sigma_n \text{-}\od_{\{\R\}}  \supseteq \bigcup_{\beta < \alpha}  \Sigma_n \text{-} \od_{\beta,\{\R\}}$ holds. In order to show that $\Sigma_n \text{-}\od_{\{\R\}}  \subseteq \bigcup_{\beta < \alpha}  \Sigma_n \text{-} \od_{\beta,\{\R\}}$, let $A \in \Sigma_n \text{-}\od_{\{\R\}} $. Then there is a $\Sigma_n$ formula $\varphi \equiv \exists y \psi$, where $\psi$ is $\Pi_{n-1}$ and $\alpha_1,..., \alpha_m < \alpha$ such that
    \[
        z \in A \iff \lxg \models \exists y \psi (z,y,\alpha_1,...,\alpha_m,\R^{\lxg}).
    \]
    Note that this defines a $\Sigma_n$-definable function $f$ with domain $A$. By $\Sigma_ n$-Collection, $f \in \lxg$. Let $\beta \in S$ be such that $f \in \lxg \vert \beta$. Then $A \in \Sigma_n \text{-}\od_{\beta,\{\R\}}$.
    \eprf

    Note that for all $\beta < \alpha$, $\Sigma_n \text{-} \od_{\beta,\{\R\}} \in \Sigma_n \text{-} \od_{\{\R\}}$, since $\Sigma_n \text{-} \od_{\beta,\{\R\}}$ is $\Sigma_n$-definable over $\lxg$ via the parameters $\beta$ and $\R^{\lxg}$.
    For $A \in \Sigma_n \text{-} \od_{\{\R\}}$, let $\alpha_A$ be the least $\beta < \alpha$ such that $A \in \Sigma_n \text{-} \od_{\beta,\{\R\}}$. Note that $\alpha_A$ is $\Sigma_n$-definable over $\lxg$ from the parameter $A$ and $\R^{\lxg}$, since $\alpha_A$ may be defined as the unique $\gamma < \alpha$ such that $A \in \Sigma_n \text{-} \od_{\gamma,\{\R\}}$ and for all $\beta < \gamma$ either $A \not \subset \lxg \vert \beta$ or for all $\Sigma_n$ formulas $\varphi$ and for all $a \in \fin{\beta}$ there exists $z \in A$ such that $\lxg \vert \beta \not \models \varphi(z,a,\R^{\lxg})$.

    Define the order $\leq^{***}$ as follows. For $a,b \in \fin{\alpha}$, let $a \leq^* b$, if $a = b$ or $\max(a \Delta b) \in b$. Note that $\leq^*$ is a well-order on $\fin{\alpha}$. Moreover, it is easy to see that $\leq^* \cap \fin{\beta}$ is $\Sigma_0$-definable over $\lxg \vert \beta$ without parameters for all $\beta \leq \alpha$.

    For $\Sigma_n$ formulas $\varphi(v_0,v_1,...,v_m)$ and $\psi(v_0,v_1,...,v_k)$, and $a \in [\alpha]^m$ and $b \in [\alpha]^k$, let $(\varphi,a) \leq^{**} (\psi,b)$, if the Gödel number of $\varphi$ is less than the Gödel-number of $\psi$, or else $\varphi = \psi$ and $a \leq^* b$. Note that $\leq^{**} \cap  ~\omega \times \fin{\beta}$ is a well-order and is $\Sigma_ 1$-definable over $\lxg \vert \beta$ without parameters for all $\beta \leq \alpha$.

    Let $\beta < \alpha$. For $A \in \Sigma_n \text{-} \od_{\beta,\{\R\}}$ let $(\varphi_A, a_A) \in \omega \times \fin{\beta}$ be the $\leq^{**}$-least pair $(\varphi,a) \in \omega \times \fin{\beta}$ such that for all $z \in \lxg \vert \beta$
    \[
        z \in A \iff \lxg \vert \beta \models \varphi(z,a,\R^{\lxg}).
    \]
    Let $\leq_\beta$ be the order induced by $\leq^{**}$ on $\Sigma_n \text{-}\od_{\beta,\{\R\}}$, i.e.~for $A,B \in  \Sigma_n \text{-} \od_{\beta,\{\R\}}$, $A \leq_\beta B$ iff $(\varphi_A, a_A) \leq (\varphi_B, a_B)$.
    Note that $\Sigma_n \text{-} \od_{\beta,\{\R\}} \in \Sigma_n \text{-} \od_{\{\R\}}$ and $\leq_\beta \in \Sigma_n \text{-} \od_{\{\R\}}$. Now define $\leq^{***}$ such that if $A, B \in \Sigma_n \text{-} \od_{\{\R\}}$, then $A \leq^{***} B$ if $\alpha_A < \alpha_B$, or else $\alpha_A = \alpha_B$ and $A \leq_{\alpha_A} B$.

    Note that for all $\beta < \alpha$ the restriction of $\leq^{***}$ to $\Sigma_n \text{-} \od_{\beta,\{\R\}}$ is in $\Sigma_n \text{-} \od_{\{\R\}}$. For $\beta < \alpha$, let $\Sigma_n \text{-} \hod_{\beta,\{\R\}}$ be the class of all $y \in \lxg \vert \beta$ such that $\tc(\{y\}) \subset \Sigma_n \text{-}\od_{\beta,\{\R\}}$. Next, we aim to see that for all $\beta < \alpha$ the restriction of $\leq^{***}$ to $\Sigma_n \text{-} \hod_{\beta,\{\R\}}$ is in $\Sigma_n \text{-} \hod_{\{\R\}}$. 
    This follows immediately from the following claim.

    \begin{claim}
        $\Sigma_n \text{-} \hod_{\{\R\}} = \bigcup_{\beta < \alpha} \Sigma_n \text{-} \hod_{\beta,\{\R\}}$
    \end{claim}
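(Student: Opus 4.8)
The plan is to prove the two inclusions of the displayed identity; the nontrivial content is one further application of $\Sigma_n$-Collection in $\lxg$ together with a monotonicity property of the classes $\Sigma_n\text{-}\od_{\beta,\{\R\}}$, both of exactly the same flavour as in the proof of the preceding claim.

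First I would dispatch $\bigcup_{\beta<\alpha}\Sigma_n\text{-}\hod_{\beta,\{\R\}}\subseteq\Sigma_n\text{-}\hod_{\{\R\}}$. Since $\tc(\{y\})$ is $\in$-downward closed, this reduces to $\Sigma_n\text{-}\od_{\beta,\{\R\}}\subseteq\Sigma_n\text{-}\od_{\{\R\}}$ for all $\beta<\alpha$: if $z\in\lxg\vert\beta$ and, for all $w$, $w\in z$ iff $\lxg\vert\beta\models\varphi(w,\vec\gamma,\R^{\lxg})$ with $\varphi$ a $\Sigma_n$ formula and $\vec\gamma\in\fin\beta$, then by the standard fact that satisfaction of $\Sigma_n$ formulas over the transitive set $\lxg\vert\beta$ is uniformly $\Sigma_n$ over $\lxg$ in the parameter $\beta$ (proved by induction on $n$ from the $\Sigma_1$-satisfaction relation), the predicate $w\mapsto(w\in\lxg\vert\beta\wedge\lxg\vert\beta\models\varphi(w,\vec\gamma,\R^{\lxg}))$ is $\Sigma_n$ over $\lxg$ in $\beta,\vec\gamma,\R^{\lxg}$, so $z\in\Sigma_n\text{-}\od_{\{\R\}}$. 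Running the same computation over $\lxg\vert\beta^{*}$ in place of $\lxg$ yields, for $\beta<\beta^{*}<\alpha$ with $\R^{\lxg}\in\lxg\vert\beta^{*}$, the monotonicity statement $\Sigma_n\text{-}\od_{\beta,\{\R\}}\subseteq\Sigma_n\text{-}\od_{\beta^{*},\{\R\}}$ that will be needed below; here one also uses that such a $z$, being definable over $\lxg\vert\beta$, lies in $\lxg\vert(\beta+1)\subseteq\lxg\vert\beta^{*}$.

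For the reverse inclusion, fix $y\in\Sigma_n\text{-}\hod_{\{\R\}}$ and set $a=\tc(\{y\})$, a set in $\lxg$ with $a\subseteq\Sigma_n\text{-}\od_{\{\R\}}$ and $y\in a$. By the preceding claim each $z\in a$ belongs to $\Sigma_n\text{-}\od_{\beta,\{\R\}}$ for some $\beta\in S^{\lxg}_{n-1}$, and inspecting that proof one sees that such a $\beta$ may be taken to be the least level of $\lxg$ in $S^{\lxg}_{n-1}$ containing the witnessing function $f_{z}$ attached to the $\leq^{**}$-least $\Sigma_n$-definition of $z$ over $\lxg$; exactly as there, the assignment $z\mapsto f_{z}$, and hence $z\mapsto\beta_{z}$, is $\Sigma_n$-definable over $\lxg$ with a $\Delta_n$ graph, so that $\Sigma_n$-Collection applied to the set $a$ furnishes $\beta_{0}<\alpha$ with $\beta_{z}<\beta_{0}$ for every $z\in a$. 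Now pick $\beta^{*}<\alpha$ large enough that $\beta^{*}\geq\beta_{0}$, $a\in\lxg\vert\beta^{*}$ and $\R^{\lxg}\in\lxg\vert\beta^{*}$. Then $a\subseteq\lxg\vert\beta^{*}$, and by the monotonicity established in the first step $a\subseteq\Sigma_n\text{-}\od_{\beta^{*},\{\R\}}$; since also $y\in a\subseteq\lxg\vert\beta^{*}$, this gives $y\in\Sigma_n\text{-}\hod_{\beta^{*},\{\R\}}$, as desired.

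The only delicate points are the two just flagged: the $\Sigma_n$-expressibility over $\lxg$ of satisfaction in its proper initial segments, and the check that $z\mapsto\beta_{z}$ is definable enough for $\Sigma_n$-Collection to bound its range. Both are handled by literally the bookkeeping already carried out in the proof of the preceding claim, so I expect no genuine obstacle here; the argument amounts to a routine localization of the class $\Sigma_n\text{-}\hod_{\{\R\}}$, and in particular, unlike the earlier part of the lemma, it requires no use of $S^{\lxg}_{n-1}$ beyond what the preceding claim already supplies, since $\beta^{*}$ only needs to be taken large.
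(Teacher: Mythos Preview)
Your proposal is correct and follows essentially the same route as the paper: both directions reduce to the previous claim, and for the nontrivial inclusion you apply $\Sigma_n$-Collection to the map sending each $z\in\tc(\{y\})$ to an ordinal $\beta_z$ with $z\in\Sigma_n\text{-}\od_{\beta_z,\{\R\}}$, then choose a single $\beta^*$ above the bound. The only presentational difference is that the paper, having already introduced and verified the $\Sigma_n$-definability of $\alpha_B$ a few lines earlier, uses that function directly and takes the final $\beta\in S$, whereas you supply the (easy) monotonicity $\Sigma_n\text{-}\od_{\beta,\{\R\}}\subseteq\Sigma_n\text{-}\od_{\beta^*,\{\R\}}$ to avoid needing $\beta^*\in S$; your monotonicity argument in fact goes through with $\Delta_1$ in place of $\Sigma_n$, since full satisfaction over the set $\lxg\vert\beta$ is already $\Delta_1$ in any admissible structure containing it.
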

    \prf
        By the previous claim, $\Sigma_n \text{-} \hod_{\{\R\}} \supseteq \bigcup_{\beta \in S} \Sigma_n \text{-} \hod_{\beta,\{\R\}}$.
        Let $A \in \Sigma_n \text{-} \hod_{\{\R\}}$. Since $\tc(\{A\}) \subseteq \Sigma_n \text{-}\od_{\{\R\}}$ and for all $B \in \tc(\{A\})$, $\alpha_B$ is $\Sigma_n$-definable over $\lxg$, the function $f$ with domain $\tc(\{A\})$ such that $f(B) = \alpha_B$ is $\Sigma_n$-definable over $\lxg$. By $\Sigma_n$-Collection $f \in \lxg$. Then $A \in \Sigma_n \text{-} \hod_{\beta,\{\R\}}$, where $\beta \in S \setminus \sup(\ran(f))$.
    \eprf

    Thus, the Axiom of Choice holds in $\Sigma_n \text{-} \hod_{\{\R\}}$.
\eprf

\begin{lem}
    $V_{\delta_\infty}^{\Sigma_n \text{-} \hod_{\{\R\}}} = V_{\delta_\infty}^{M_\infty}$.
\end{lem}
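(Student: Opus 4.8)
The statement to prove is $V_{\delta_\infty}^{\Sigma_n\text{-}\hod_{\{\R\}}} = V_{\delta_\infty}^{M_\infty}$. Since $\delta_\infty = \theta = \kappa^{+\lxg}$ by the earlier lemma, and $M_\infty \vert \delta_\infty = M_\infty \vert \theta$ has already been shown (Lemma \ref{definability up to kappa+} and the surrounding analysis) to be $\Sigma_1$-definable over $\lxg$ from the parameters $\theta$ and $\R^{\lxg}$, we get $M_\infty \vert \delta_\infty \subseteq \Sigma_n\text{-}\od_{\{\R\}}$, and since it is transitive, $M_\infty \vert \delta_\infty \subseteq \Sigma_n\text{-}\hod_{\{\R\}}$. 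As $\delta_\infty$ is inaccessible in $M_\infty$ and $M_\infty \vert \delta_\infty$ computes $V_{\delta_\infty}$ correctly (it is a $\zfc$-model below $\delta_\infty$ in the relevant sense), this gives the inclusion $V_{\delta_\infty}^{M_\infty} \subseteq V_{\delta_\infty}^{\Sigma_n\text{-}\hod_{\{\R\}}}$. Here I would also use that $\Sigma_n\text{-}\hod_{\{\R\}} \subseteq \lxg$ and $\lxg$ agrees with $M_\infty$ on $V_{\delta_\infty}$ in the sense that $\theta$ is a cardinal of $\lxg$ and $M_\infty \vert \theta = \lxg \vert \theta$ is not literally true, but $\po(\gamma)^{M_\infty} = \po(\gamma)^{\lxg}$ for $\gamma < \delta_\infty$ follows from the genericity iteration arguments; I'd cite the fact, already implicitly used, that $M_\infty \vert \delta_\infty$ is a normal iterate of $\mad^-$ absorbed into $\lx$'s collapse, so bounded subsets of $\delta_\infty$ in $M_\infty$ are in $\lxg$.

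For the reverse inclusion $V_{\delta_\infty}^{\Sigma_n\text{-}\hod_{\{\R\}}} \subseteq V_{\delta_\infty}^{M_\infty}$, the plan is to take a set $A \in \Sigma_n\text{-}\hod_{\{\R\}}$ with $A \subseteq V_\gamma$ for some $\gamma < \delta_\infty$, and to show $A \in M_\infty$. The key tool is the $\Sigma_n$-definability of $M_\infty$ and the covering of $\tilde{\D}$ by $\tilde{\D}\restriction S_\infty$. The intuition is that, by the classical argument (as in \cite{Steel_Woodin_2016} and \cite{SchlutzenbergTheta2016}), any $\Sigma_n$-OD-from-$\R^{\lxg}$ subset of $\delta_\infty$ is already computed inside $M_\infty$ because $M_\infty$ sees $\tilde{\D}$ (it is $\Sigma_1$-definable over $\lxg$, hence over $M_\infty[h]$ for a suitable generic $h$) and because the direct limit $M_\infty$ "knows itself" as $M_\infty^{\tilde{\D}^\infty}$ via the $*$-map. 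Concretely, I would argue: since $A$ is $\Sigma_n$-definable over $\lxg$ from $\R^{\lxg}$ and ordinal parameters, and $\lxg \subseteq M_\infty[*][G_x]$ is a generic extension by the homogeneous algebra $\mathbb{C}$ (from Section \ref{section: ground}), the set $A$, being forced by the empty condition (by homogeneity, since $\R^{\lxg}$ and the ordinal parameters are in the ground model once we pass to $M_\infty[*]$ — here I use that $\R^{\lxg}$ is symmetric), is already in $M_\infty[*]$. But $M_\infty[*] = M_\infty[\Sigma_0]$ (Lemma \ref{strategy mouse}), and since $\delta_\infty$ is Woodin in $M_\infty[*]$ while $V_{\delta_\infty}^{M_\infty} = V_{\delta_\infty}^{M_\infty[*]}$ (established in Section 8), a bounded subset of $\delta_\infty$ lying in $M_\infty[*]$ need not a priori lie in $M_\infty$; so this alone is not quite enough and I need the OD-ness more carefully.

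The correct route for the reverse inclusion, then: use that $A$ is $\Sigma_n$-OD over $\lxg$ from $\R^{\lxg}$, hence by the analysis of $\Sigma_n\text{-}\od_{\{\R\}}$ in Lemma \ref{lem: sigma_n hod is almost admissible} (the claim that $\Sigma_n\text{-}\od_{\{\R\}} = \bigcup_{\beta<\alpha}\Sigma_n\text{-}\od_{\beta,\{\R\}}$), $A$ is definable over some level $\lxg\vert\beta$ with $\beta \in S^{\lxg}_{n-1}$, from an ordinal parameter and $\R^{\lxg}$. Now restrict attention to $A \subseteq \gamma < \delta_\infty = \theta$: such $A$ is then a bounded subset of $\theta$ which is OD over $\lxg$, hence (by homogeneity of $\col(\omega,{<\kappa})$, since it is a subset of $\theta$ coded by a set of ordinals and hence if OD from $\R^{\lxg}$ alone then OD over $\lx$ — this is exactly the step where the classical argument shows $\hod^{L[x,G]}\cap V_\Theta$ subsets are in $\lx$) $A \in \lx$. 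Then $A$ is a bounded subset of $\kappa^{+\lx}$ in $\lx$, and by the density of $\F$ in $\tilde{\D}$ and the fact that $M_\infty\vert\theta$ is cofinally generated by the $H^N_s$ with $N \in \F$, $s \in \fin{S_\infty}$, together with the key point that $V_\theta^{M_\infty}$ contains all bounded subsets of $\theta$ that are ordinal-definable over $\lx$ from $\R^{\lx}$ (this is where one uses that $M_\infty$ is precisely the "$\hod$ below $\theta$" computed internally) — one concludes $A \in M_\infty$. \textbf{The main obstacle} is making the last step rigorous: showing that every bounded subset of $\delta_\infty$ which is OD over $\lxg$ (equivalently $\lx$) from $\R$ actually lands in $M_\infty$, rather than merely in $M_\infty[*]$ or $\lxg$. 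This requires identifying $M_\infty\vert\delta_\infty$ with $(\Sigma_n\text{-}\hod_{\{\R\}})\vert\delta_\infty$ directly, which I would do by running the "boolean-valued comparison" argument: given OD $A \subseteq \gamma$, build for each $N \in \F$ the corresponding $A^N \in N\vert\delta^N$ forced by the genericity iteration to project to $A$, check these cohere under the maps $i^{\tilde{\D}}_{(N,s),(P,t)}$ (using $s$-stability and Lemma \ref{lem: agreement of maps}), and take the direct limit image in $M_\infty$; the coherence verification and the check that the limit really equals $A$ (not just agrees below each $\gamma^N_s$) is the delicate part, essentially a relativization of Lemma \ref{definability up to kappa+} to sets rather than ordinals.
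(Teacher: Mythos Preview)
Your treatment of the easy inclusion $V_{\delta_\infty}^{M_\infty} \subseteq V_{\delta_\infty}^{\Sigma_n\text{-}\hod_{\{\R\}}}$ is fine and matches the paper: $M_\infty$ is $\Sigma_1$-definable over $\lxg$ from $\theta$ and $\R^{\lxg}$, so its bounded levels are in $\Sigma_n\text{-}\hod_{\{\R\}}$.

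For the reverse inclusion you circle around several ideas but miss the mechanism the paper actually uses, and your ``main obstacle'' is exactly where the paper has a direct one-line answer. The key point you do not invoke is the $*$-map (equivalently $k = i^{\tilde{\D}^\infty}_{M_\infty\infty}$) together with the fact that for every $\beta < \delta_\infty$ one has $k \restriction \beta \in M_\infty$. This holds because $\delta_\infty = \sup(\delta_\infty \cap \Hull{M_\infty}{n}(S_\infty^*))$, so any $\beta < \delta_\infty$ lies below some $\gamma_s^{M_\infty}$ with $s \in \fin{S_\infty^*}$, and $k \restriction \gamma_s^{M_\infty} = i^{\tilde{\D}^\infty}_{(M_\infty,s)\infty} \restriction \gamma_s^{M_\infty}$ is an element of $M_\infty$. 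Given $A \subseteq \beta$ defined by a $\Sigma_n$ formula $\varphi$ with ordinal parameters $\vec{\gamma}$ and $\R^{\lxg}$, the paper simply computes
\[
\xi \in A \iff \lxg \models \varphi(\xi,\vec{\gamma},\R^{\lxg}) \iff M_\infty \models \col(\omega,{<\kappa}) \Vdash \varphi(k(\xi),\vec{\gamma}^*,\dot{\R}),
\]
which is a definition of $A$ inside $M_\infty$ using the parameters $k \restriction \beta \in M_\infty$ and $\vec{\gamma}^* \in \alpha$. Your attempted routes through $M_\infty[*]$ or through building coherent $A^N$ for $N \in \F$ are detours; the second equivalence above (the pullback of truth in $\lxg$ to forcing in $M_\infty$ via $*$) is precisely what the internal system $\tilde{\D}^\infty$ and Lemma~\ref{lem: iteration maps and star agree} were set up to deliver, and it bypasses the ``delicate part'' you flag entirely.
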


\prf
Since $M_\infty$ is $\Sigma_1$-definable over $\lxg$ from ordinal parameters and $\R^{\lxg}$, we have $V_{\delta_\infty}^{\Sigma_n \text{-} \hod_{\{\R\}}} \supseteq V_{\delta_\infty}^{M_\infty}$.
For the other inclusion, let $A \in V_{\delta_\infty}^{\Sigma_n \text{-} \hod_{\{\R\}}}$. Note that we may code $A$ as a set of ordinals using the order $\leq^{***}$ defined in the proof of Lemma \ref{lem: sigma_n hod is almost admissible}.
Let $\beta < \delta_\infty$ be such that $A \subseteq \beta$, and let $\varphi$ and $\vec{\gamma} \in \fin{\alpha}$ define $A$ over $\lxg$ from the parameter $\R^{\lxg}$.

Since $\delta_\infty = \sup (\delta_\infty \cap \Hull{M_\infty}{n} (S_\infty^*))$, there is for every $\beta < \delta_\infty$ some $s \in \fin{S_\infty^*}$ such that $\beta < \gamma_s^{M_\infty}$. Since $k \restriction \gamma_s^{M_\infty} = i^{\tilde{\D}^\infty}_{(M_\infty,s)\infty} \restriction \gamma_s^{M_\infty}$, we have that for every $\beta < \delta_\infty$, $k \restriction \beta \in M_\infty$.
We have 
\begin{align*}
    \begin{split}
    &\xi \in A \\
    \iff & \lxg \models \varphi (\xi,\gamma_1,...,\gamma_n,\R) \\
    \iff & M_\infty \models \col(\omega, {< \kappa})
    \Vdash \varphi(\xi^*, \gamma_1^*,...,\gamma_n^*,\R) \\
    \iff & M_\infty \models \col(\omega, {< \kappa})
    \Vdash \varphi(k(\xi), \gamma_1^*,...,\gamma_n^*,\R)
    \end{split}
\end{align*}

Since $k \restriction \beta \in M_\infty$ and $\gamma_1^*,...,\gamma_n^* < \alpha$, $A \in M_\infty$.
\eprf

\begin{lem}
    $\Sigma_n \text{-} \hod_{\{\R\}} = M_\infty [*]$.
\end{lem}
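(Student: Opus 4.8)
The plan is to prove the two inclusions $M_\infty[*]\subseteq\Sigma_n\text{-}\hod_{\{\R\}}$ and $\Sigma_n\text{-}\hod_{\{\R\}}\subseteq M_\infty[*]$ separately; write $H=\Sigma_n\text{-}\hod_{\{\R\}}$ and $M=M_\infty[*]$. For the first inclusion the key point is that $M$ is definable over $\lxg$ at a complexity fitting inside $\Sigma_n$: by the $\lxg$-versions of Lemma \ref{lem: definability of the direct limit} and Lemma \ref{definability up to kappa+} the class $M_\infty=M_\infty^{\lxg}$ and the restricted map $*\restriction\theta$ are $\Sigma_1$-definable over $\lxg$ from $\R^{\lxg}$ and $\theta$, hence by Lemma \ref{lem: definability of the star map} so is $*$, and therefore so is $M=L_\alpha[\E^{M_\infty},*]$. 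Consequently every $y\in M$ appears in some level $L_\xi[\E^{M_\infty},*]$ over which it is definable from ordinals, so $\{y\}$ is $\Sigma_1$- (a fortiori $\Sigma_n$-) definable over $\lxg$ from $\R^{\lxg}$ and ordinals; since $M$ is transitive this gives $M\subseteq\Sigma_n\text{-}\od_{\{\R\}}$ and hence $M\subseteq H$. I would record along the way that the canonical wellorder $<_M$ of $M$ is likewise $\Sigma_1$-definable over $\lxg$ from $\R^{\lxg}$ and an ordinal, which is needed below.

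For the converse inclusion I would first reduce to sets of ordinals: it suffices to show that every $A\subseteq\lambda$ with $A\in H$ lies in $M$. Granting this, if $a\in H\setminus M$ had minimal rank then $a\subseteq M$ (as $H$ is transitive and by minimality), so $a$ is coded by a set of ordinals $A$ obtained by replacing each element of $a$ with its $<_M$-rank; since $a\in H$ and $<_M$ is definable over $\lxg$ from $\R^{\lxg}$ and an ordinal, $A$ is $\Sigma_n$-$\od$ over $\lxg$ from $\R^{\lxg}$, so $A\in H$, so $A\in M$; but then $a$ is recovered from $A$, $<_M$ and an ordinal by $\Delta_0$-separation in $M$, so $a\in M$, a contradiction.

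For the main case, let $A\subseteq\lambda$ with $A\in H$, and fix a $\Sigma_n$ formula $\varphi$ and $\vec{\gamma}\in\fin{\alpha}$ with $A=\{\xi<\lambda:\lxg\models\varphi(\xi,\vec{\gamma},\R^{\lxg})\}$. The plan is to use the translation between truth in $\lxg$ and the $\col(\omega,{<\kappa})$-forcing relation over $M_\infty$ already exploited in the proof of the preceding lemma, now for ordinals all the way up to $\alpha$ and with the real parameter $\R^{\lxg}$: namely, for $\xi<\lambda$,
\[
    \xi\in A \iff M_\infty\models\col(\omega,{<\kappa})\Vdash\varphi(\xi^{*},\vec{\gamma}^{*},\dot{\R}) \iff M_\infty\models\col(\omega,{<\kappa})\Vdash\varphi(k(\xi),\vec{\gamma}^{*},\dot{\R}),
\]
where $\dot{\R}$ is the canonical name for the symmetric reals over $M_\infty$ (as in the proof of Lemma \ref{definability of LR}) and the second equivalence uses $k\restriction\alpha={*}$ from Lemma \ref{lem: iteration maps and star agree}. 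Since ${*}\restriction\lambda=k\restriction\lambda\in M$ and $\vec{\gamma}^{*}$ is a fixed finite tuple of ordinals below $\alpha$ lying in $M$, and since $M_\infty$ is a definable class of $M$ with $M_\infty\vert\delta_\infty\in M$, the forcing theorem for $\Sigma_n$-admissible premice (Lemma \ref{lem: forcing theorem}, via Lemma \ref{lem: level by level forcing}) together with the homogeneity of $\col(\omega,{<\kappa})$ shows that the predicate $\xi\in A$ is $\Delta_n$-definable over $M$ in the parameters ${*}\restriction\lambda$ and $\vec{\gamma}^{*}$ — the $\Sigma_n$ side applying the strong forcing relation to $\varphi$, the $\Pi_n$ side applying it to $\neg\varphi$ and using homogeneity. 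As $M=M_\infty[\Sigma_0]$ satisfies $\Sigma_n\text{-}\kp$ (preservation of $\Sigma_n\text{-}\kp$ under passing to the strategy mouse), $\Delta_n$-Aussonderung in $M$ gives $A\in M$, finishing the proof.

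The hard part will be the translation equivalence itself and the accompanying complexity bookkeeping. That $\lxg$-truth of $\Sigma_n$ formulas with real parameter corresponds, after applying ${*}$, to $\col(\omega,{<\kappa})$-forcing over $M_\infty$ is exactly the correctness of $M_\infty$'s internal version of the direct limit system; although this was used in the proof of the preceding lemma, there it was only needed below $\delta_\infty$, and one must check carefully, tracing through the maps $i^{\F}_{N\infty}$ and $k$ and their $\Sigma_1$-elementarity established in Section \ref{M infinitys version}, that it persists uniformly up to $\alpha$ and with the parameter $\R^{\lxg}$. Intertwined with this is the need to keep the resulting definition of $A$ over $M$ at complexity exactly $\Delta_n$, so that the separation available from $\Sigma_n\text{-}\kp$ in $M$ suffices; this rests on the $r\Sigma_n$-ness of the strong forcing relation from Lemma \ref{lem: level by level forcing}(3) and on homogeneity.
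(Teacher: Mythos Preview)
Your overall strategy matches the paper's: establish $M_\infty[*]\subseteq\Sigma_n\text{-}\hod_{\{\R\}}$ by definability, reduce the converse to sets of ordinals, and then use the translation $\xi\in A\iff M_\infty\models\col(\omega,{<\kappa})\Vdash\varphi(\xi^*,\vec{\gamma}^*,\dot{\R})$. The first inclusion and the translation step are essentially as in the paper.

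The gap is in the last step. You conclude $A\in M_\infty[*]$ by arguing that $\xi\in A$ is $\Delta_n$ over $M_\infty[*]$ and then invoking $\Delta_n$-Aussonderung there. But at this point in the paper, $M_\infty[*]\models\Sigma_n\text{-}\kp$ has \emph{not} been established; it is derived only \emph{after} the present lemma, via Corollary~\ref{cor: characterization of sigma_n hod} (which uses $M_\infty[*]=\Sigma_n\text{-}\hod_{\{\R\}}$) and the subsequent lemma. Your justification ``preservation of $\Sigma_n\text{-}\kp$ under passing to the strategy mouse'' is not proved anywhere in the paper and is not obvious: adding the predicate $*$ could in principle destroy admissibility. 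So as written your argument is circular relative to the paper's development.

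The paper avoids this by a locality argument you do not use. By the claim in the proof of Lemma~\ref{lem: sigma_n hod is almost admissible}, every $A\in\Sigma_n\text{-}\od_{\{\R\}}$ is already $\Sigma_n$-ordinal-definable over some \emph{bounded} level $\lxg\vert\beta$. Combined with the fact (from the proof of Lemma~\ref{lem: definability of the star map}) that $*\restriction\gamma\in M_\infty[*\restriction\theta]$ for every $\gamma<\alpha$, the forcing translation then exhibits $A$ as definable over a bounded level of $M_\infty[*]$, so $A\in M_\infty[*]$ outright, with no appeal to any Aussonderung in $M_\infty[*]$. If you want to salvage your approach, replace the global $\Delta_n$-Aussonderung step by this boundedness observation.
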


\prf
    Note that $\Sigma_n \text{-} \hod_{\{\R\}} \supseteq L_\alpha [M_\infty, *\restriction \theta]$, since $L_\alpha [M_\infty, *\restriction \theta]$ is $\Sigma_1$-definable over $\lxg$ from ordinal parameters and $\R^{\lxg}$.

    In order to see that $\Sigma_n \text{-} \hod_{\{\R\}} \subseteq L_\alpha [M_\infty, *\restriction \theta]$, let $A \in \Sigma_n \text{-}\hod_{\{\R\}}$. By Lemma \ref{lem: sigma_n hod is almost admissible}, we may assume that $A \subseteq \alpha$. Moreover, since $A \in \lxg$, $A$ is bounded in $\alpha$. Let $\varphi$ be a $\Sigma_n$ formula in the language $\mathcal{L}_{\dot{\in}}$ and let $\alpha_1,...,\alpha_m < \alpha$ be such that for all $\xi \in \lxg$
    \[
        \xi \in A \iff \lxg \models \varphi (\xi,\alpha_1,...,\alpha_m,\R^{\lxg}).
    \]
    Then
    \begin{align*}
        \begin{split}
        &\xi \in A \\
        \iff &\lxg \models \varphi (\xi,\alpha_1,...,\alpha_m,\R^{\lxg}) \\
        \iff &M_\infty \models \col(\omega, {< \kappa})
        \Vdash \varphi(\xi^*, \alpha_1^*,...,\alpha_m^*,\R^{\lxg})
        \end{split}
    \end{align*}

    Let $\beta < \alpha$ be such that $A$ is definable over $\lxg \vert \beta$ from ordinal parameters and $\R^{\lxg}$. Note that the proof of Lemma \ref{lem: definability of the star map} shows that $* \restriction \gamma \in M_\infty[* \restriction \theta]$. It follows that $A \in M_\infty[*]$.
\eprf

\begin{cor} \label{cor: characterization of sigma_n hod}
    $M_\infty [*] = \Sigma_n\text{-HOD}_{\{\R\}} = L_\alpha (A)$ for some $A \in \mathcal{P} (\alpha) \cap \lxg$.
\end{cor}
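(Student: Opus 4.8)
The final statement, Corollary \ref{cor: characterization of sigma_n hod}, is an immediate consequence of the two lemmas that precede it together with Lemma \ref{strategy mouse}. So my plan is to assemble these pieces rather than prove anything new.

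First I would recall that the lemma immediately above states $\Sigma_n\text{-}\hod_{\{\R\}} = M_\infty[*]$, and that Lemma \ref{strategy mouse} gives $M_\infty[*] = M_\infty[\Sigma_0]$; chaining these two equalities yields $M_\infty[*] = \Sigma_n\text{-}\hod_{\{\R\}} = M_\infty[\Sigma_0]$, which is clauses (1) and the identification of the structure. The remaining assertion is the cosmetic one that this common model has the form $L_\alpha(A)$ for a single $A \in \mathcal{P}(\alpha) \cap \lxg$. For this I would argue as follows: by definition $M_\infty[*] = L_\alpha[\E^{M_\infty}, *]$, and by Lemma \ref{lem: definability of the star map} the map $*$ is $\Sigma_1$-definable from $*\restriction\theta$ over $M_\infty[*\restriction\theta]$, so the full predicate $*$ adds nothing over $L_\alpha[\E^{M_\infty}, *\restriction\theta]$. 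Since $\theta = \delta_\infty < \alpha$ and $\E^{M_\infty}\restriction\delta_\infty = \E^{M_\infty}$ (as $\delta_\infty$ is the largest cardinal of $M_\infty$, hence $M_\infty = L_\alpha(M_\infty\vert\delta_\infty)$), both $\E^{M_\infty}$ and $*\restriction\theta$ can be coded by a single subset of $\theta$, hence of $\alpha$; call it $A$. Then $M_\infty[*] = L_\alpha(A)$. Finally $A \in \lxg$ because $M_\infty$ and $*\restriction\theta$ are each $\Sigma_1$-definable over $\lxg$ from the parameters $\theta$ and $\R^{\lxg}$ (Lemma \ref{lem: definability of the direct limit} and the proof of Lemma \ref{lem: definability of the star map}), and $\lxg$ satisfies enough separation — in particular it is closed under such a bounded coding since $\theta < \alpha$ — to collect this into an element $A \in \mathcal{P}(\alpha)$.

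Concretely, the write-up would read: By the previous lemma, $\Sigma_n\text{-}\hod_{\{\R\}} = M_\infty[*]$, and by Lemma \ref{strategy mouse}, $M_\infty[*] = M_\infty[\Sigma_0]$. It remains to exhibit $A$. By Lemma \ref{lem: definability of the star map}, $*$ is $\Sigma_1$-definable over $M_\infty[*\restriction\theta]$ from $*\restriction\theta$, so $M_\infty[*] = L_\alpha[\E^{M_\infty}, *\restriction\theta]$. Since $\delta_\infty$ is the largest cardinal of $M_\infty$ we have $M_\infty = L_\alpha(M_\infty\vert\delta_\infty)$, so $\E^{M_\infty}$ is coded by a subset of $\delta_\infty$; together with $*\restriction\theta \subseteq \theta\times\alpha$ (recoded as a subset of $\theta$ via a pairing function, using $\theta < \alpha$) we obtain a single $A \subseteq \alpha$ with $L_\alpha(A) = M_\infty[*]$. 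Since $M_\infty\vert\delta_\infty$ and $*\restriction\theta$ are $\Sigma_1$-definable over $\lxg$ from $\theta$ and $\R^{\lxg}$, and $\theta < \alpha = \OR^{\lxg}$, the set $A$ is an element of $\lxg$ by $\Delta_n$-Aussonderung in $\lxg$ applied to this bounded definition. This gives $A \in \mathcal{P}(\alpha) \cap \lxg$ as required.

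There is essentially no obstacle here — the corollary is a packaging of Lemmas already proved. The only point requiring the tiniest care is confirming that $A$ genuinely lands inside $\lxg$ rather than merely being a definable class of it, which is where one uses that $\theta < \alpha$ so the relevant definition is bounded and $\lxg$'s separation suffices; this is routine given the definability facts established in Section \ref{definability of direct limit} and in the proof of Lemma \ref{lem: definability of the star map}.
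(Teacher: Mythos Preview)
Your overall approach is correct and is essentially what the paper has in mind (the corollary is stated without proof, as immediate from the preceding lemma together with the definability lemmas you cite). Two small factual slips, however, should be fixed.

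First, $\delta_\infty$ is \emph{not} the largest cardinal of $M_\infty$; that is $\theta_\infty = (\kappa_\infty)^{+M_\infty}$, since $M_\infty \models \tho$ has a Woodin $\delta_\infty$, an inaccessible $\kappa_\infty > \delta_\infty$, and $\kappa_\infty^+$. The equality $M_\infty = L_\alpha(M_\infty\vert\delta_\infty)$ you want is nonetheless true, but for a different reason: there are no extenders on $\E^{M_\infty}$ indexed above $\delta_\infty$ (this is used explicitly in the proof of Lemma~\ref{lem: ordinal height of the direct limit}, and ultimately reflects the minimality of $\mad$). So replace your justification accordingly.

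Second, $*\restriction\theta$ need not be codable as a subset of $\theta$: its range is bounded by $\delta_\infty^\infty = k(\delta_\infty) < \kappa_\infty$, which can exceed $\theta$. It is, however, a function $\theta \to \kappa_\infty$ and hence codes as a bounded subset of $\alpha$, which is all you need. For membership in $\lxg$, the cleanest route is to observe (as in the proof of Lemma~\ref{strategy mouse}) that $*\restriction\theta = i^{\F^*}_{M_\infty\infty}\restriction\theta \in M_\infty[\Sigma_0] \subseteq \lxg$; alternatively, your $\Delta_n$-Aussonderung argument works once you note that the graph of $*$ is in fact $\Delta_1$ over $\lxg$ (it is $\Sigma_1$, and $\eta \neq \xi^*$ is equivalent to $\exists \eta'(\eta' = \xi^* \wedge \eta' \neq \eta)$).

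Finally, the detour through Lemma~\ref{strategy mouse} and $M_\infty[\Sigma_0]$ in your first paragraph is not needed for the corollary as stated, which does not mention $\Sigma_0$.
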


\begin{lem}
    $M_\infty [*] \models \Sigma_n\text{-}\kp + \mathrm{AC}$ and $V_{\theta}^{M_\infty [*]} = V_{\theta}^{M_\infty}$.
\end{lem}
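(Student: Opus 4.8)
The plan is to combine the earlier structural results about $M_\infty[*]$ with the characterization $M_\infty[*] = \Sigma_n\text{-}\hod_{\{\R\}}$ just established in Corollary \ref{cor: characterization of sigma_n hod}. First I would note that by Lemma \ref{lem: sigma_n hod is almost admissible} we already know $\Sigma_n\text{-}\hod_{\{\R\}} \models \Sigma_n\text{-}\kp \setminus \{\Sigma_n\text{-Collection}\} + \mathrm{AC}$, so the only outstanding axiom is $\Sigma_n$-Collection. Since by Corollary \ref{cor: characterization of sigma_n hod} we have $M_\infty[*] = L_\alpha(A)$ for some $A \in \mathcal{P}(\alpha) \cap \lxg$, and $\delta_\infty = \theta$ is the largest cardinal of $M_\infty$ (hence of $M_\infty[*]$, as the remaining part of the statement $V_\theta^{M_\infty[*]} = V_\theta^{M_\infty}$ will show), the model $M_\infty[*]$ has a largest cardinal, so by Lemma \ref{lem:KP in Premice} (in the strategy-premouse/$L[A]$ setting, using the obvious adaptation) it suffices to verify that there is no unbounded $\Sigma_n$-definable (over $M_\infty[*]$) function $f\colon \xi \to \OR^{M_\infty[*]}$ with $\xi < \alpha$.

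For this I would argue that any such $f$ would be an element of $\lxg$, since $M_\infty[*]$ is a $\Sigma_1$-definable class of $\lxg$ from the parameters $\theta$ and $\R^{\lxg}$, so a $\Sigma_n$-over-$M_\infty[*]$ definition relativizes to a definition over $\lxg$ (of complexity roughly $\Sigma_n$ after a bounded translation), and then $\Sigma_n$-Collection in $\lxg$ (which holds by Lemma \ref{lem: preservation of kp}) bounds the range of $f$, contradicting unboundedness. Alternatively, and more directly, I would transfer $\Sigma_n$-Collection from $\lxg$ to $M_\infty[*]$ exactly as in the claims in the proof of Lemma \ref{lem: sigma_n hod is almost admissible}: given $a, p \in M_\infty[*]$ with $M_\infty[*] \models \forall u \in a\, \exists y\, \varphi(u,y,p)$ for $\varphi$ a $\Sigma_{n-1}$ formula, one pushes the statement up to $\lxg$ using the $\Sigma_1$-definability of $M_\infty[*]$, applies $\Sigma_n$-Collection in $\lxg$ to get a bound $\beta < \alpha$, and uses that for $\beta \in S^{\lxg}_{n-1}$ the restriction of $M_\infty[*]$ below $\beta$ is definable over $\lxg \vert \beta$, so $M_\infty[*] \vert \beta$ contains a witnessing set. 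Together with Lemma \ref{lem: sigma_n hod is almost admissible} this gives $M_\infty[*] \models \Sigma_n\text{-}\kp + \mathrm{AC}$.

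For the second assertion, $V_\theta^{M_\infty[*]} = V_\theta^{M_\infty}$, I would recall that the iteration from $M_\infty$ to $M_\infty^\infty$ is based on $M_\infty \vert \delta_\infty$, so $*$ (equivalently $k \restriction \alpha$, by Lemma \ref{lem: iteration maps and star agree}) adds no new subsets of $\delta_\infty = \theta$ below $\theta$; more precisely, $\theta = \delta_\infty$ is Woodin in $M_\infty[*] = M_\infty[\Sigma_0]$ by the lemma asserting this in Section \ref{end of the finite cases}, and the argument there (together with Corollary \ref{cor: the correct collapse}, which gives $V_{\delta^{\mad}}^{\mad} = V_{\delta^{\mad}}^{\mad[\Lambda]}$, transferred via $j$) yields $V_{\theta}^{M_\infty} = V_\theta^{M_\infty[\Sigma_0]}$. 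Since by Corollary \ref{cor: characterization of sigma_n hod} $M_\infty[*] = L_\alpha(A)$ with $A \subseteq \alpha$, the only possible new sets in $V_\theta^{M_\infty[*]}$ would come from $A$ coding something below $\theta$, but $A$ is essentially $* \restriction \theta$ together with $M_\infty$-information, and by Lemma \ref{lem: definability of the star map} $* \restriction \theta$ (hence the relevant coding) lives in $M_\infty[* \restriction \theta]$ without injecting subsets of $\theta$ beyond those already in $M_\infty$.

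The main obstacle I anticipate is the verification of $\Sigma_n$-Collection, specifically making precise the translation of a $\Sigma_n$-over-$M_\infty[*]$ statement into a statement of controlled complexity over $\lxg$: one must use both the $\Sigma_1$-definability of $M_\infty[*]$ over $\lxg$ (from $\theta, \R^{\lxg}$) and the level-by-level agreement $M_\infty[*] \vert \beta \in \lxg \vert \gamma$ for suitable $\gamma$, so that bounds obtained from $\Sigma_n$-Collection in $\lxg$ genuinely yield witnessing \emph{sets} inside $M_\infty[*]$; but this is exactly the bookkeeping already carried out in the claims within the proof of Lemma \ref{lem: sigma_n hod is almost admissible}, so it should go through without essential new difficulty.
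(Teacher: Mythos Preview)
Your proposal is correct and follows essentially the same route as the paper, but you have made both halves more laborious than necessary. For $\Sigma_n$-Collection the paper simply invokes Corollary~\ref{cor: characterization of sigma_n hod}: since $M_\infty[*] = L_\alpha(A)$ for some $A \in \lxg$, the model $M_\infty[*]$ is a $\Delta_1(\{A\})$-definable inner class of $\lxg$ with the same ordinal height $\alpha$, so any $\Sigma_n$-over-$M_\infty[*]$ instance of Collection is literally a $\Sigma_n$-over-$\lxg$ instance with the extra parameter $A$, and the bound produced in $\lxg$ works in $L_\alpha(A)$; there is no need to pass through Lemma~\ref{lem:KP in Premice} or to track the $\Sigma_1$-definability of $M_\infty$ from $\theta,\R^{\lxg}$. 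For $V_\theta^{M_\infty[*]} = V_\theta^{M_\infty}$, this is exactly the preceding lemma $V_{\delta_\infty}^{\Sigma_n\text{-}\hod_{\{\R\}}} = V_{\delta_\infty}^{M_\infty}$ combined with $\delta_\infty = \theta$ and $M_\infty[*] = \Sigma_n\text{-}\hod_{\{\R\}}$, so no further argument about $*$, $k$, or Lemma~\ref{cor: the correct collapse} is required.
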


\prf
    By Lemma \ref{lem: sigma_n hod is almost admissible}, it suffices to see that $\Sigma_n$-Collection holds in $\Sigma_n$-$\hod_{\{\R\}}$. But this follows immediately from Corollary \ref{cor: characterization of sigma_n hod} and the fact that $\lxg \models \Sigma_n\text{-}\kp$.
\eprf

\section*{Acknowledgments}

The first author would like to thank the organizers of the workshop ``Determinacy, Inner Models and Forcing Axioms'' for giving him the opportunity to present parts of this paper.

\section*{Funding}
The first author was funded by the Deutsche Forschungsgemeinschaft (DFG, German Research Foundation) - project number 445387776. In editing this paper the first author was funded by the Austrian Science Fund (FWF) [10.55776/Y1498]. The second author was funded by the Austrian Science Fund (FWF) [10.55776/Y1498]. For open access purposes, the authors have applied a CC BY public copy-right license to any author accepted manuscript version arising from this submission.

\printbibliography

\end{document}